\documentclass[12pt]{article}
\usepackage[margin=24truemm]{geometry}
\usepackage{float}
\usepackage{amsmath,amssymb,bm}
\usepackage{mathtools}
\usepackage{ascmac}
\usepackage{array}
\usepackage{amsthm}
\usepackage[all]{xy}
\usepackage{graphics}
\usepackage[backend=biber, style=ieee-alphabetic, sorting=nyt, maxbibnames=99]{biblatex}
\addbibresource{potential3.bib}

\theoremstyle{plain}
\newtheorem{thm}{Theorem}[section]
\newtheorem*{thm*}{Theorem}
\newtheorem{cor}[thm]{Corollary}
\newtheorem{lem}[thm]{Lemma} 
\newtheorem{conj}[thm]{Conjecture}
\newtheorem{prop}[thm]{Proposition}

\theoremstyle{definition}
\newtheorem{dfn}[thm]{Definition}
\newtheorem{rem}[thm]{Remark}

\begin{document}

\begin{center}

\textbf{On the potential automorphy and the local-global compatibility for the monodromy operators at $p \neq l$ over CM fields}

\vspace{0.5 \baselineskip}

Kojiro Matsumoto

\vspace{0.5 \baselineskip}

\end{center}
    
Abstract: Let $F$ be a CM field. In this paper, we prove the local-global compatibility for cohomological cuspidal automorphic representations of $\mathrm{GL}_n(\mathbb{A}_F)$ at $p \neq l$ by using certain potential automorphy theorems in some cases including higher dimensional cases. Moreover, we also prove the Ramanujan conjecture for the cohomological cuspidal automorphic representations of $\mathrm{GL}_2(\mathbb{A}_F)$.

\tableofcontents

\section{Introduction}

Let $F$ be a CM field (i.e. imaginary CM field or totally real field), $n$ be a positive integer, $F^+$ be the maximal totally real subfield of $F$ and $G_{F}$ be the absolute Galois group of $F$. We fix a prime $l$, an isomorphism of fields $\iota : \overline{\mathbb{Q}}_l \stackrel{\sim}{\rightarrow} \mathbb{C}$ and a regular algebraic (i.e. cohomological) cuspidal automorphic representation $\pi$ of $\mathrm{GL}_n(\mathbb{A}_F)$. 

In \cite{GH}, Harris-Lan-Taylor-Thorne constructed the $l$-adic Galois representation $r_{\iota}(\pi)$ of $G_F$ corresponding to $\pi$. (In \cite{SG}, Scholze also constructed $r_{\iota}(\pi)$ by another method.) It is important to prove the compatibility of this construction and the local Langlands correspondence of $\mathrm{GL}_n(F_v)$ at all finite places $v \nmid l$ of $F$: $$\iota\mathrm{WD}(r_{\iota}(\pi)|_{G_{F_v}})^{F-ss} \cong \mathrm{rec}_{F_v}(\pi_v|\mathrm{det}|_v^{\frac{1-n}{2}}).$$

In \cite{ss}, Ila Varma proved this up to semisimplification. (See Theorem \ref{Ila Varma}.) However, the compatibility for the monodromy operators hasn't been proved in general. 

If $\pi$ satisfies $\pi^c \cong \pi^{\vee}$ (i.e. $\pi$ is conjugate self-dual), a base change of $r_{\iota}(\pi)$ \footnote{Precisely, in  general, only a base change of $r_{\iota}(\pi)^{\otimes 2}$ appears in the $\acute{\mathrm{e}}$tale cohomology of a unitary Shimura variety over $F$.} appears in the $\acute{\mathrm{e}}$tale cohomology of a unitary Shimura variety over $F$ and the correspondence of the monodromy operators at a finite place $v \nmid l$ of $F$ has been proved by showing the weight monodromy conjecture (i.e. the purity) for $r_{\iota}(\pi)|_{G_{F_v}}$.  (See \cite{GLLC}, \cite{CS}, \cite{CM}, \cite{pWM} and 2 of Proposition \ref{purity local-global}.) 

In general cases, such a geometric interpretation of $r_{\iota}(\pi)$ is not known. Therefore, we need other ideas.

Recently, by using certain potential automorphy theorems, Allen-Newton (resp. Yang) proved the local-global compatibility at all $v \nmid l$ in many two-dimensional weight zero crystalline cases (resp. in many two-dimensional ordinary cases). (See \cite{AN} and \cite{YY}.) However, their methods don't work in higher dimensional cases.

\vspace{0.5 \baselineskip}

In this paper, we modify their methods and prove the local-global compatibility at $p \neq l$ in some cases including higher dimensional cases without assuming essential conjugate self-duality. (We will explain more detailed structure of the proof after stating the main theorems of this paper.)

\subsection{Main theorems}

Let $F$ be a CM field and $n$ be a positive integer. The main theorems of this paper are the following.

\begin{thm} (Theorem \ref{ordinary local-global}) \label{1}

    Let $l$ be a prime such that $l > n^2$, $\iota: \overline{\mathbb{Q}}_l \stackrel{\sim}{\longrightarrow} \mathbb{C}$ be an isomorphism of fields and $\pi$ be an $\iota$-ordinary cohomological cuspidal automorphic representation of $\mathrm{GL}_{n}(\mathbb{A}_F)$. (See Definition \ref{iota ordinary definition} for the definition of $\iota$-ordinarity.)
    
    We suppose the following conditions.

    (1) \ The residual representation $\overline{r_{\iota}(\pi)}$ of $r_{\iota}(\pi)$ is fully decomposed generic. (See Definition \ref{strongly decomposed generic} for the definition of the fully decomposed genericity.)
    
    (2) \ $\overline{r_{\iota}(\pi)}|_{G_{F(\zeta_l)}}$ is absolutely irreducible. 
    
    (3) \ $\zeta_l \notin F$.

    Then for all finite places $v \nmid l$ of $F$, we have $$\iota\mathrm{WD}(r_{\iota}(\pi)|_{G_{F_v}})^{F-ss} \cong \mathrm{rec}_{F_v}(\pi_v|\mathrm{det}|_{v}^{\frac{1-n}{2}}).$$

\end{thm}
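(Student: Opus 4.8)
The plan is to establish the local-global compatibility for the monodromy operators by a potential-automorphy argument that reduces to the conjugate self-dual case, where the result is known via the weight-monodromy conjecture for unitary Shimura varieties (Proposition \ref{purity local-global}). Fix a finite place $v \nmid l$. Since the compatibility up to semisimplification is already known (Theorem \ref{Ila Varma}), the only thing left to prove is that the monodromy operator $N$ on $\mathrm{WD}(r_\iota(\pi)|_{G_{F_v}})$ matches the one prescribed by $\mathrm{rec}_{F_v}(\pi_v |\det|_v^{(1-n)/2})$. Equivalently, writing the Frobenius-semisimple Weil--Deligne representations on both sides with the same underlying semisimple part, one must show the two nilpotent operators have the same Jordan type; by the theory of Weil--Deligne representations this is controlled by the ranks of the powers of $N$, or equivalently by the dimensions of the generalized eigenspaces of Frobenius acting on the associated graded pieces. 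The automorphic side's $N$ is determined by $\pi_v$, and the Galois side's $N$ is what we need to pin down.

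First I would use the hypotheses (1)--(3), together with $l > n^2$ and $\iota$-ordinarity, to invoke the relevant potential automorphy theorem: after a suitable solvable (or more general allowable) base change to a CM field $F'/F$, chosen so that $v$ splits into places $w$ over which things are controlled, there is a conjugate self-dual, cohomological, $\iota$-ordinary automorphic representation $\Pi$ of $\mathrm{GL}_n(\mathbb{A}_{F'})$ (or a representation of a unitary group) with $r_\iota(\Pi) \cong r_\iota(\pi)|_{G_{F'}}$ up to twist. The fully decomposed genericity of $\overline{r_\iota(\pi)}$ and the big-image condition (2) are exactly what is needed to run the Calegari--Geraghty/Taylor--Wiles patching behind such a theorem and to ensure the base change can be arranged while preserving irreducibility of the residual representation on $G_{F'(\zeta_l)}$; condition (3) guarantees $\zeta_l \notin F'$ after base change can be maintained. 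For the conjugate self-dual $\Pi$, Proposition \ref{purity local-global} gives $\iota\mathrm{WD}(r_\iota(\Pi)|_{G_{F'_w}})^{F\text{-}ss} \cong \mathrm{rec}_{F'_w}(\Pi_w |\det|_w^{(1-n)/2})$, including the monodromy.

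Next I would descend this from $F'_w$ back to $F_v$. The key point is that $N$, the monodromy operator, is insensitive to restriction to a finite-index subgroup of the inertia: for an extension of local fields $F'_w/F_v$, the Weil--Deligne representation of $G_{F_v}$ and its restriction to $G_{F'_w}$ have the \emph{same} nilpotent operator $N$ (up to the harmless scaling of the uniformizer), since $N$ is defined through the action of a generator of (a quotient of) tame inertia and inertia indices only rescale it. On the automorphic side, local base change for $\mathrm{GL}_n$ is compatible with $\mathrm{rec}$, so $\mathrm{rec}_{F'_w}(\Pi_w |\det|_w^{(1-n)/2})$ is the base change of $\mathrm{rec}_{F_v}(\pi_v |\det|_v^{(1-n)/2})$ and again has the same $N$. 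Therefore the equality of monodromy operators over $F'_w$ forces the equality over $F_v$: both $N$'s, viewed inside $\mathrm{WD}(r_\iota(\pi)|_{G_{F_v}})$ resp. $\mathrm{rec}_{F_v}(\pi_v|\det|_v^{(1-n)/2})$, are determined by their restrictions to the finite-index subgroup $W_{F'_w}$, where they agree. Combined with the already-known isomorphism of the Frobenius-semisimple parts, this yields the full isomorphism $\iota\mathrm{WD}(r_\iota(\pi)|_{G_{F_v}})^{F\text{-}ss} \cong \mathrm{rec}_{F_v}(\pi_v|\det|_v^{(1-n)/2})$.

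The main obstacle I anticipate is not the Galois-theoretic descent, which is formal, but arranging the potential automorphy input in a way that is simultaneously strong enough and compatible with the hypotheses: one needs a base change $F'/F$ over which $r_\iota(\pi)|_{G_{F'}}$ becomes (essentially) conjugate self-dual and automorphic by a representation to which Proposition \ref{purity local-global} applies, while not destroying conditions (1)--(3) (in particular keeping $\overline{r_\iota(\pi)}|_{G_{F'(\zeta_l)}}$ absolutely irreducible and $\zeta_l \notin F'$), and while controlling the place $v$. The bound $l > n^2$ and the fully decomposed genericity are presumably used precisely to make the relevant automorphy lifting / patching theorem applicable after base change; checking these compatibilities carefully, and verifying that the potential automorphy theorem available in the literature (or established earlier in this paper) indeed supplies an $\iota$-ordinary conjugate self-dual descent, is where the real work lies. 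A secondary technical point is ensuring that the $F$-semisimplification and the local base change interact correctly, i.e. that Frobenius-semisimplification commutes with restriction to $W_{F'_w}$, which holds since a finite-order twist of Frobenius remains of finite order.
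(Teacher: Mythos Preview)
Your descent argument from $F'_w$ back to $F_v$ is correct and is exactly what the paper packages as Proposition \ref{potential automorphy and local-global compatibility}. The genuine gap is upstream: you posit a base change $F'/F$ after which $r_\iota(\pi)|_{G_{F'}}$ is (up to twist) essentially conjugate self-dual, so that Theorem \ref{polarizable local-global compatibility} or Proposition \ref{purity local-global} applies to a conjugate self-dual $\Pi$ with $r_\iota(\Pi)\cong r_\iota(\pi)|_{G_{F'}}$. No such base change exists in general. Conjugate self-duality is the relation $r^c \cong r^{\vee}\mu$; since the complex conjugation on $F'$ restricts to that on $F$, passing to $G_{F'}$ cannot manufacture this relation if it was absent over $F$. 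Thus when $\pi$ is not polarizable (precisely the case the theorem is aimed at), there is no conjugate self-dual $\Pi$ on $\mathrm{GL}_n(\mathbb{A}_{F'})$ with $r_\iota(\Pi)\cong r_\iota(\pi)|_{G_{F'}}$, even up to a character twist. The obstacle you flag at the end of your proposal is therefore not merely technical but fatal to the strategy as stated.

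The paper's actual route (Theorem \ref{potential ordinary automorphy}, invoked via Theorem \ref{ordinary semisimple}) is Harris's tensor-product trick, which avoids ever making $r_\iota(\pi)$ itself conjugate self-dual. After base change to some $F_2$, one works in dimension $n^2$: set $\tau_3 := r_\iota(\pi)|_{G_{F_2}} \otimes \mathrm{Ind}^{G_{F_2}}_{G_N}\phi$ for a suitable induced character, and compare with $\tau_2 := r_\iota(\Pi)|_{G_{F_2}} \otimes r_\iota(\pi')$, where $\Pi$ is an ordinary automorphic lift of $\overline{r_\iota(\pi)}|_{G_{F_2}}$ that is unramified and pure above $v$ (Theorem \ref{ordinary residual potential automorphy}), and $\pi'$ is a \emph{polarizable} ordinary lift of the conjugate self-dual residual $\overline{\mathrm{Ind}\,\phi}$ whose monodromy at a fixed place $u\mid v$ is arranged to match that of $r_\iota(\pi)$ (Proposition \ref{conjugate self-dual ordinary}). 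Purity of both factors of $\tau_2$ at $u$ gives local-global compatibility for $\tau_2$ there; the paper's new paraholic-level mechanism (Proposition \ref{automorphy lifting and local-global compatibility} built into Theorem \ref{ordinary automorphy lifting}) then transports local-global compatibility at $u$ from $\tau_2$ to $\tau_3$ through the automorphy lifting step, since their monodromy types at $u$ agree by construction. Finally one unwinds the tensor and the induction via Lemma \ref{local global induction 2} and Proposition \ref{base change}. Conjugate self-duality enters only for the auxiliary factor $\pi'$, never for $\pi$.
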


\begin{rem}

We will prove that for almost all primes $l$, $\iota: \overline{\mathbb{Q}}_l \stackrel{\sim}{\rightarrow} \mathbb{C}$, the representation $\overline{r_{\iota}(\pi)}$ is fully decomposed generic. (See Lemma \ref{decomposed genericity 1}.)

\end{rem}

\begin{thm} (Theorems \ref{self-dual local-global} and \ref{sufficiently regular}) \label{2}

Let $\pi$ be a cohomological cuspidal automorphic representation of $\mathrm{GL}_{n}(\mathbb{A}_F)$ and $\mathcal{L}$ be a set of primes having Dirichlet density one. 
        
We suppose the following conditions.
    
1 \ For any $l \in \mathcal{L}$ and $\iota: \overline{\mathbb{Q}}_l \stackrel{\sim}{\longrightarrow} \mathbb{C}$, the representation $\overline{r_{\iota}(\pi)}|_{G_{F(\zeta_l)}}$ is absolutely irreducible.

2 \ We have one of the following properties.

(1) \ There exists an algebraic Hecke character $\chi : \mathbb{A}_{F^+}^{\times}/(F^+)^{\times} \rightarrow \mathbb{C}^{\times}$ such that $\pi \cong \pi^{\vee} \otimes \chi \circ N_{F/F^+} \circ \mathrm{det}$ and $\chi_v(-1) = \chi_w(-1)$ for all $v, w \mid \infty$. ($\pi^{\vee}$ denotes the dual representation of $\pi$.)

(2) \ The weight $\lambda \in (\mathbb{Z}^{n}_+)^{\mathrm{Hom}(F,\mathbb{C})}$ of $\pi$ satisfies $\lambda_{\tau, i} - \lambda_{\tau, i+1} \ge n - 1$ for all $\tau \in \mathrm{Hom}(F, \mathbb{C})$ and $i=1, \cdots, n-1$. (See the last sentence of {\S} \ref{Notation} for the definition of weight.)

Then there exists a subset $\mathcal{L}_0$ of $\mathcal{L}$ having positive Dirichlet density such that for all $l \in \mathcal{L}_0$, $\iota: \overline{\mathbb{Q}}_l \stackrel{\sim}{\longrightarrow} \mathbb{C}$ and finite places $v \nmid l$ of $F$, we have $$\iota\mathrm{WD}(r_{\iota}(\pi)|_{G_{F_v}})^{F-ss} \cong \mathrm{rec}_{F_v}(\pi_v|\mathrm{det}|_{v}^{\frac{1-n}{2}}).$$
    
\end{thm}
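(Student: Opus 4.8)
The plan is to reduce Theorem \ref{2} to the conjugate self-dual case, where the local-global compatibility including the monodromy operators is known by the purity of $r_{\iota}(\pi)|_{G_{F_v}}$ coming from the cohomology of unitary Shimura varieties (item 2 of Proposition \ref{purity local-global}). Since we already know the compatibility up to semisimplification by Ila Varma's theorem (Theorem \ref{Ila Varma}), at each finite place $v \nmid l$ the only remaining ambiguity is the monodromy operator $N$ on $\mathrm{WD}(r_{\iota}(\pi)|_{G_{F_v}})$: we must show it is as large as the one predicted by $\mathrm{rec}_{F_v}(\pi_v|\det|_v^{(1-n)/2})$. Because the nilpotent operator attached to a Weil--Deligne representation is determined by the Frobenius-semisimplified representation together with its monodromy filtration, and a specialization/deformation argument shows that $N$ for $r_{\iota}(\pi)$ can only be \emph{smaller} (more degenerate) than the automorphic one, it suffices to produce, for each $v$, \emph{one} situation where purity forces $N$ to be maximal.

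First I would fix $\pi$ and, using the hypothesis that $\overline{r_{\iota}(\pi)}|_{G_{F(\zeta_l)}}$ is absolutely irreducible for all $l$ in a density-one set $\mathcal{L}$, choose $l \in \mathcal{L}$ (avoiding finitely many bad primes, in particular those below the places $v$ in question and those where $\overline{r_{\iota}(\pi)}$ fails to be suitably generic) so that the relevant potential automorphy machinery applies to $r = r_{\iota}(\pi)$. Under hypothesis (1), $\pi$ is already essentially conjugate self-dual with the matching sign condition $\chi_v(-1) = \chi_w(-1)$, so $r_{\iota}(\pi)$ (or a character twist) is polarizable and, after a soluble CM base change $F'/F$ chosen so that $\pi_{F'}$ remains cuspidal and the local components at places above $v$ become, say, Steinberg-type-controlled or at least unramified enough, $r_{\iota}(\pi)|_{G_{F'}}$ appears in the cohomology of a unitary Shimura variety; then purity (weight-monodromy) at the place $v'$ above $v$ gives the monodromy compatibility there, and descending along $F'/F$ (the monodromy operator and the Frobenius-semisimple WD-representation are insensitive to finite base change in the precise way recorded in the cited local-global statements) yields it at $v$. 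Under hypothesis (2), $\pi$ need not be self-dual, so instead I would invoke a potential automorphy theorem: there is a CM extension $F'/F$ and a conjugate self-dual cohomological cuspidal $\pi'$ of $\mathrm{GL}_n(\mathbb{A}_{F'})$ with $r_{\iota}(\pi')$ residually isomorphic (hence, by an automorphy-lifting theorem whose hypotheses are secured by the irreducibility and genericity assumptions, $\iota$-isomorphic) to $r_{\iota}(\pi)|_{G_{F'}}$; the regularity condition $\lambda_{\tau,i} - \lambda_{\tau,i+1} \ge n-1$ is exactly what is needed for the local components of $\pi'$ at places above $v$ to be controlled and for the automorphy lifting theorem and the construction of the right Shimura datum to go through. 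Once $\pi'$ is conjugate self-dual we are back in the purity situation and conclude as before, then descend to $F$.

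The set $\mathcal{L}_0 \subseteq \mathcal{L}$ of positive Dirichlet density arises because the potential automorphy / automorphy-lifting input requires $l$ to be unramified in $F$, large relative to $n$, $\zeta_l \notin F$, and $\overline{r_{\iota}(\pi)}$ to have image containing a suitable subgroup (big image / decomposed genericity), each of which holds for a positive-density (indeed density-one, intersected appropriately) set of $l$; one then further removes, for each of the finitely many ramified places $v$ of $\pi$, the rational prime below $v$, which is harmless. For the \emph{unramified} places $v$ there is nothing to prove beyond Theorem \ref{Ila Varma} since $N = 0$ on both sides, so the whole argument only needs to address the finitely many $v$ dividing the conductor of $\pi$, and for those one cleanly chooses $l$ outside the residue characteristics involved.

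The main obstacle will be arranging the CM base change $F'/F$ (soluble in case (1), coming from potential automorphy in case (2)) so that simultaneously (a) $\pi_{F'}$ (resp.\ $\pi'$) stays cuspidal, (b) it is conjugate self-dual with the correct polarization and sign, (c) its local components at the places above each bad $v$ are of a shape for which the unitary Shimura variety carries the Galois representation with the expected monodromy — the last point is where hypothesis (2)'s regularity bound and the careful choice of $l$ really do the work — and (d) the descent from $v'$ to $v$ does not lose monodromy information. Tracking these local conditions through the base change and the automorphy-lifting step, and verifying that the monodromy operator only grows under specialization so that the purity-forced lower bound matches the Varma-forced upper bound, is the technical heart of the proof.
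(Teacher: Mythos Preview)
Your proposal rests on a misreading of hypothesis~(1): the condition $\pi \cong \pi^{\vee}\otimes(\chi\circ N_{F/F^+}\circ\det)$ is \emph{self}-duality of $\pi$ over $F$, not \emph{conjugate} self-duality $\pi^c \cong \pi^{\vee}\otimes(\chi\circ N_{F/F^+}\circ\det)$. Only the latter (``polarizable'') case is the one covered by the unitary Shimura variety argument (Theorem~\ref{polarizable local-global compatibility}); under~(1) there is no reason for $r_{\iota}(\pi)|_{G_{F'}}$ to appear in such cohomology after a solvable base change, and indeed the whole point of the paper is to treat representations that are not essentially conjugate self-dual. The same issue sinks your plan for~(2): you propose to produce a conjugate self-dual $\pi'$ on $\mathrm{GL}_n(\mathbb{A}_{F'})$ with $r_{\iota}(\pi')\cong r_{\iota}(\pi)|_{G_{F'}}$, but if such a $\pi'$ existed then $r_{\iota}(\pi)|_{G_{F'}}$ would itself be essentially conjugate self-dual, which is not assumed and typically false.

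The paper bridges this gap by two devices you do not mention. In case~(1), one twists $r:=r_{\iota}(\pi)$ by a carefully chosen character $\theta$ and forms $\sigma:=\mathrm{Ind}^{G_{F^+}}_{G_F}(r\theta)$; over the totally real field $F^+$ self-duality \emph{is} conjugate self-duality, so $\overline{\sigma}$ is essentially conjugate self-dual and one can run potential automorphy plus the crystalline automorphy lifting theorem for $\sigma$ (Theorem~\ref{potential diagonalizable automorphy}), then descend to $r$ via Lemma~\ref{local global induction 2}. In case~(2), one does not make $r_{\iota}(\pi)|_{G_{F'}}$ itself automorphic on $\mathrm{GL}_n$; instead one tensors it with an auxiliary polarizable $2n$-dimensional automorphic Galois representation $r_{\iota}(\pi_1)$ unramified at the bad places, proves automorphy and local--global compatibility for the $2n^2$-dimensional tensor product (Theorem~\ref{potential tensor automorphy}), and recovers the statement for $\pi$ via Lemma~\ref{purity tensor}. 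The regularity bound $\lambda_{\tau,i}-\lambda_{\tau,i+1}\ge n-1$ is used to ensure this tensor product is Hodge--Tate regular, not to control local components at bad places. Finally, the mechanism that actually pins down $N$ is not a ``specialization only shrinks $N$'' argument but the paper's new method (Proposition~\ref{automorphy lifting and local-global compatibility}): one proves the automorphy lifting with a paraholic level $K_v$ determined by the monodromy type of $\mathrm{WD}(r|_{G_{F_v}})$, and $\Pi_v^{K_v}\neq 0$ then forces the monodromy types to match.
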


\begin{rem}

Note that if $\mathcal{R}_{\pi}$ is an irreducible very weakly compatible system of $l$-adic representation of $G_F$\footnote{This assumption conjecturally always holds.}, then we obtain a set $\mathcal{L}$ of primes having Dirichlet density one such that for all $l \in \mathcal{L}$ and $\iota: \overline{\mathbb{Q}}_l \stackrel{\sim}{\longrightarrow} \mathbb{C}$, the representation $\overline{r_{\iota}(\pi)}|_{G_{F(\zeta_l)}}$ is absolutely irreducible by Proposition \ref{irreducibility}. (See Definition \ref{automorphic compatible system} for the definition of $\mathcal{R}_{\pi}$. ) In particular, if there exists a finite place $v$ of $F$ such that $\pi_v$ is supercuspidal, then we have such an $\mathcal{L}$. (See Lemma \ref{supercuspidal residually irreducible}.)
        
\end{rem}

In two-dimensional cases, we have the following unconditional results. (Note that if $\pi$ has parallel weight, then the following results have already been proved in \cite[Theorem 1.1]{AN}, \cite[Corollary 7.1.13]{10} and \cite[Theorem A]{pw}.) 

\begin{thm} (Theorems \ref{two-dimensional local-global}, \ref{Ramanujan} and \ref{potential symmetric power})\label{Ramanujan2}

Let $\pi$ be a cohomological cuspidal automorphic representation of $\mathrm{GL}_2(\mathbb{A}_F)$. Then we obtain the following results.

1 \ There exists a set $\mathcal{L}_0$ of primes having positive Dirichlet density such that for all $l \in \mathcal{L}_0$, $\iota : \overline{\mathbb{Q}}_l \stackrel{\sim}{\longrightarrow} \mathbb{C} $ and finite places $v\nmid l $ of $F$, we have $$\iota \mathrm{WD}(r_{\iota}(\pi)|_{G_{F_v}})^{F-ss} \cong \mathrm{rec}_{F_v}(\pi_v|\mathrm{det}|_v^{-\frac{1}{2}}).$$

2 \ $\pi_v$ is tempered for all finite places $v$ of $F$. ( Ramanujan conjecture )

\end{thm}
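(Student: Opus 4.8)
The plan is to derive the local-global compatibility statement from Theorem \ref{2}, and then to deduce the Ramanujan conjecture from it together with the purity of $r_{\iota}(\pi)$ at the finite places prime to $l$.

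For the local-global compatibility statement I would first note that hypothesis (2) of Theorem \ref{2} is automatic when $n=2$: regular algebraicity of $\pi$ forces $\lambda_{\tau,1} > \lambda_{\tau,2}$, hence $\lambda_{\tau,1} - \lambda_{\tau,2} \ge 1 = n-1$ for all $\tau \in \mathrm{Hom}(F,\mathbb{C})$. It then suffices to produce a set $\mathcal{L}$ of primes of Dirichlet density one for which $\overline{r_{\iota}(\pi)}|_{G_{F(\zeta_l)}}$ is absolutely irreducible for every $l \in \mathcal{L}$ and every $\iota$. For this I would check that the family $\mathcal{R}_{\pi}$ of Definition \ref{automorphic compatible system}, which is a very weakly compatible system by \cite{GH} and Theorem \ref{Ila Varma}, is irreducible: a proper sub-compatible-system would have to be of rank one, hence attached to an algebraic Hecke character, and strong multiplicity one would then exhibit $\pi$ as an isobaric sum of two Hecke characters, contradicting cuspidality. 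Given the irreducibility of $\mathcal{R}_{\pi}$, Proposition \ref{irreducibility} furnishes such an $\mathcal{L}$ (in particular the supercuspidal case of Lemma \ref{supercuspidal residually irreducible} is subsumed), and Theorem \ref{2} then applies and yields a subset $\mathcal{L}_0 \subseteq \mathcal{L}$ of positive Dirichlet density realizing the asserted compatibility at all finite places $v \nmid l$.

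For the Ramanujan conjecture I would fix a finite place $v$ of $F$, choose $l \in \mathcal{L}_0$ with $v \nmid l$ (possible since $\mathcal{L}_0$ is infinite) and an isomorphism $\iota$, and combine two inputs: the identity $\iota\mathrm{WD}(r_{\iota}(\pi)|_{G_{F_v}})^{F-ss} \cong \mathrm{rec}_{F_v}(\pi_v|\mathrm{det}|_v^{-\frac{1}{2}})$ just obtained, and the purity (weight-monodromy) of $r_{\iota}(\pi)|_{G_{F_v}}$. The latter is part of the input underlying Theorem \ref{2}: after base change to a CM extension $F'/F$ and tensoring with an auxiliary Galois representation of motivic (hence pure) origin, $r_{\iota}(\pi)$ is exhibited as a tensor factor of a regular algebraic conjugate self-dual automorphic Galois representation, and the latter is pure by the weight-monodromy theorem for the $l$-adic cohomology of unitary Shimura varieties (cf.\ 2 of Proposition \ref{purity local-global}); purity then descends along the finite extension $F'_{v'}/F_v$ and onto the factor $r_{\iota}(\pi)|_{G_{F_v}}$. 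Granting this, $\mathrm{rec}_{F_v}(\pi_v|\mathrm{det}|_v^{-\frac{1}{2}})$ is a pure Weil-Deligne representation; after unitarily normalizing $\pi$ and matching Frobenius weights against the (regular) Hodge-Tate weights of $r_{\iota}(\pi)$, this forces the Frobenius eigenvalues of $\mathrm{rec}_{F_v}(\pi_v)$ to have absolute value $1$ and the monodromy to be of Steinberg type on each Jordan block, i.e.\ $\pi_v$ is tempered. As $v$ was arbitrary, the Ramanujan conjecture follows.

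The step I expect to be the main obstacle is securing the purity of $r_{\iota}(\pi)|_{G_{F_v}}$ used in the previous paragraph: in the non-parallel-weight range the naive auxiliary twists fail to make the relevant tensor product simultaneously regular and conjugate self-dual, which is exactly why the argument must be routed through Theorems \ref{self-dual local-global} and \ref{sufficiently regular} rather than through the parallel-weight results of \cite{AN}, \cite{10} and \cite{pw}; by contrast, the irreducibility of $\mathcal{R}_{\pi}$ and the verification of the hypotheses of Theorem \ref{2} are routine in dimension two.
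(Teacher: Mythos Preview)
Your proposal has genuine gaps in both parts.

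For Part 1, your claim that regular algebraicity forces $\lambda_{\tau,1} > \lambda_{\tau,2}$ is false: cohomological only gives $\lambda_{\tau,1} \ge \lambda_{\tau,2}$, and the trivial weight $(0,0)$ is perfectly allowed (the Hodge--Tate weights $\lambda_{\tau,1}+1,\lambda_{\tau,2}$ are still distinct). So hypothesis (2) of Theorem \ref{2} is \emph{not} automatic for $n=2$, and hypothesis (1) is not available either since $\omega_\pi$ is a character of $\mathbb{A}_F^\times$, not of the form $\chi\circ N_{F/F^+}$ in general. The paper does not apply Theorem \ref{2} to $\pi$ at all. Instead (Proposition \ref{automorphic LR} and Theorem \ref{two-dimensional local-global}) it passes to $\mathrm{Symm}^2 r_\iota(\pi)\cdot r_\iota(\omega_\pi)^{-1}$, which is genuinely self-dual up to a cyclotomic twist coming from $G_{F^+}$, and applies the self-dual potential automorphy theorem (Theorem \ref{potential diagonalizable automorphy}); local--global compatibility for the resulting $\mathrm{GL}_3$ cuspidal representation then forces the monodromy of $r_\iota(\pi)$ at Steinberg places to be nonzero, which is exactly the missing case. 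The large-image input is Lemma \ref{GL_2}, not an irreducibility argument for $\mathcal{R}_\pi$.

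For Part 2, the purity of $\mathrm{WD}(r_\iota(\pi)|_{G_{F_v}})$ you rely on is exactly what the paper says it \emph{cannot} prove (Remark after Theorem 1.5, item 2). Your sketch claims that the tensor-product construction exhibits $r_\iota(\pi)$ as a factor of a conjugate self-dual automorphic Galois representation, but the big representation $\Pi$ produced in Theorem \ref{potential tensor automorphy} is not conjugate self-dual, so Shimura-variety weight--monodromy does not apply to it; you only know local--global compatibility for $\Pi$, not purity. The paper's proof of Theorem \ref{Ramanujan} avoids purity entirely: for each even $n$ it makes $\mathrm{Symm}^n r_\iota(\pi)$ potentially automorphic with local--global compatibility (Proposition \ref{automorphic LR}), so at an unramified place the generic-unitary bound (Theorem \ref{generic preunitary}, Proposition \ref{purity}) gives $q_v^{(wn-1)/2}\le |\beta_{v,i}|^n \le q_v^{(wn+1)/2}$, and letting $n\to\infty$ forces $|\beta_{v,i}|=q_v^{w/2}$.
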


\begin{rem}

1 \ Note that the purity of irreducible regular very weakly compatible systems of $2$-dimensional $l$-adic representations of $G_F$ can't be proved by the method in this paper unlike \cite[Corollary 7.1.13]{10} and \cite[Theorem C]{pw}. However, we will prove the Ramanujan conjecture for cohomological cuspidal automorphic representations of $\mathrm{GL}_2(\mathbb{A}_F)$ by using the weak Ramanujan property (see \cite[Definition 3.6]{sym} and 2 of Lemma \ref{decomposed genericity 1}) and Proposition \ref{Hecke field 2}, which are automorphic results.

2 \ In {\S} 6.2, we will also prove the potential automorphy and the purity of some $l$-adic representations of $G_F$. (See Definition \ref{definition of purity} for the definition of the purity.)

3 \ By the results in this paper and a variant of \cite[Theorem 1.0.1]{RG}, we can prove the vanishing of the Bloch-Kato Selmer groups of adjoint representations of many potentially automorphic Galois representations which are potentially crystalline at all $l$-adic places. (See Theorem \ref{selmer group}.) 

\end{rem}

\subsection{New ideas in this paper}

\subsubsection{Local-global compatibility}

\vspace{0.5 \baselineskip}

Let $F$ be an imaginary CM field, $n$ be a positive integer, $l$ be a prime and $\iota : \overline{\mathbb{Q}}_l \stackrel{\sim}{\rightarrow} \mathbb{C}$ be an isomorphism of fields. 

In order to explain the method of proving the local-global compatibility of this paper, we first recall a key observation in the proofs of \cite{AN} and \cite{YY}.

\vspace{0.5 \baselineskip}

Let $\pi$ be a cohomological cuspidal automorphic representation of $\mathrm{GL}_n(\mathbb{A}_F)$ and we fix a finite place $v \nmid l$ such that the monodromy operator of $\mathrm{rec}_{F_v}(\pi_v|\mathrm{det}|_v^{\frac{1-n}{2}})$ is not zero. If the monodromy operator of $\iota \mathrm{WD}(r_{\iota}(\pi)|_{G_{F_v}})$ were zero, then after some base change, we would obtain unramified characters $\psi_1, \cdots, \psi_{n-1} : F_v^{\times} \rightarrow \mathbb{C}^{\times}$ such that $\iota\mathrm{WD}(r_{\iota}(\pi)|_{G_{F_v}})^{F-ss} \cong \iota\mathrm{WD}(r_{\iota}(\pi)|_{G_{F_v}})^{ss} \cong \mathrm{rec}_{F_v}(\pi_v|\mathrm{det}|_v^{\frac{1-n}{2}})^{\mathrm{ss}} = \mathrm{rec}_{F_v}((\psi_1 \boxplus \psi_1| \ |_v \boxplus \psi_2 \boxplus \cdots \boxplus \psi_{n-1})|\mathrm{det}|_v^{\frac{1-n}{2}})$ by the local-global compatibility up to semisimplification \cite{ss} (see Theorem \ref{Ila Varma}) and the assumption that the monodromy operator of $\mathrm{rec}_{F_v}(\pi_v|\mathrm{det}|_v^{\frac{1-n}{2}})$ is not zero. If we can prove that there exist a finite extension $F'/F$ of CM fields and a cohomological cuspidal automorphic representation $\Pi$ of $\mathrm{GL}_n(\mathbb{A}_{F'})$ such that $\Pi_u$ is unramified for some $u|v$ and $r_{\iota}(\Pi) \cong r_{\iota}(\pi)|_{G_{F'}}$, then $\mathrm{rec}_{F'_u}(\Pi_u|\mathrm{det}|_u^{\frac{1-n}{2}}) = \mathrm{rec}_{F'_u}(\Pi_u|\mathrm{det}|_{u}^{\frac{1-n}{2}})^{ss} \cong \iota\mathrm{WD}(r_{\iota}(\pi)|_{G_{F'_u}})^{ss} \cong \mathrm{rec}_{F_u'}((\psi_1 \circ N_{F_u'/F_v} \boxplus \psi_1 \circ N_{F_u'/F_v}| \ |_u \boxplus \psi_2 \circ N_{F_u'/F_v} \boxplus \cdots \boxplus \psi_{n-1} \circ N_{F_u'/F_v})|\mathrm{det}|_u^{\frac{1-n}{2}})$ and consequently $\Pi_u \cong \psi_1 \circ N_{F_u'/F_v} \boxplus \psi_1 \circ N_{F'_u/F_v}| \ |_u \boxplus \psi_2 \circ N_{F_u'/F_v} \boxplus \cdots \boxplus \psi_{n-1} \circ N_{F_u'/F_v}$. 

However, this is absurd since $ \psi_1 \circ N_{F_u'/F_v} \boxplus \psi_1 \circ N_{F'_u/F_v}| \ |_u \boxplus \psi_2 \circ N_{F_u'/F_v} \boxplus \cdots \boxplus \psi_{n-1} \circ N_{F_u'/F_v}$ is not generic, while all local components of all cohomological cuspidal representations of $\mathrm{GL}_n(\mathbb{A}_{F'})$ are generic. (See Proposition \ref{purity}.) This implies that the monodromy operator of $\iota \mathrm{WD}(r_{\iota}(\pi)|_{G_{F_v}})$ is not zero. If $n=2$, this is equivalent to $\iota \mathrm{WD}(r_{\iota}(\pi)|_{G_{F_v}})^{F-ss} \cong \mathrm{rec}_{F_v}(\pi_v|\mathrm{det}|_v^{-\frac{1}{2}})$. 

\vspace{0.5 \baselineskip}

However, if $n > 2$, this is far from the local-global compatibility.

In order to prove the local-global compatibility in higher dimensional cases, it is important to generalize the following method to ramified cases.

\vspace{0.5 \baselineskip}

\textbf{(Previous method)} \ When we prove automorphy lifting theorems\footnote{The automorphy lifting theorem roughly says that for a given Galois representation $r$ of $G_F$, if there exists a cohomological cuspidal automorphic representation $\pi$ of $\mathrm{GL}_n(\mathbb{A}_F)$ such that $\overline{r} \cong \overline{r_{\iota}(\pi)}$ and $r$ and $\pi$ satisfy certain technical conditions, then there exists a cohomological cuspidal automorphic representation $\Pi$ of $\mathrm{GL}_n(\mathbb{A}_F)$ such that $r \cong r_{\iota}(\Pi)$.}, we also prove the cuspidal automorphic representation $\Pi$ of $\mathrm{GL}_n(\mathbb{A}_F)$ corresponding to a given $l$-adic Galois representation $r$ of $G_F$ is unramified at $v \nmid l$ if $r$ and a given cuspidal automorphic representation $\pi$ of $\mathrm{GL}_n(\mathbb{A}_F)$ are unramified at $v$. 

\vspace{0.5 \baselineskip}

This is easily proved by setting the level at $v$ to be $\mathrm{GL}_n(\mathcal{O}_{F_v})$. Note that this method has also been used in the proof of the Ramanujan conjecture of cohomological cuspidal automorphic representations of $\mathrm{GL}_2(\mathbb{A}_{F})$ of parallel weight \cite[Corollary 7.1.13]{10} and \cite[Theorem A]{pw}.

In {\S} 2, we will prove the following result. 

\begin{prop} (Proposition \ref{automorphy lifting and local-global compatibility}) \label{interpretation}

Let $\pi$ be a cohomological cuspidal automorphic representation of $\mathrm{GL}_n(\mathbb{A}_F)$ and $v \nmid l$ be a finite place of $F$. Then the following conditions are equivalent.
    
(1) \ $\iota\mathrm{WD}(r_{\iota}(\pi)|_{G_{F_v}})^{F-ss} \cong \mathrm{rec}_{F_v}(\pi_v|\mathrm{det}|_v^{\frac{1-n}{2}})$.
    
(2) \ The monodromy operators of $\iota\mathrm{WD}(r_{\iota}(\pi)|_{G_{F_v}})$ and $\mathrm{rec}_{F_v}(\pi_v|\mathrm{det}|_v^{\frac{1-n}{2}})$ have the same Jordan canonical form.

If $\pi_v$ has an Iwahori fixed vector, there exists a certain parahoric subgroup $K_v$ defined by the Jordan canonical form of the monodromy operator of $\mathrm{WD}(r_{\iota}(\pi)|_{G_{F_v}})$. (See Proposition \ref{automorphy lifting and local-global compatibility} for the precise definition of $K_v$.) such that (1) and (2) are also equivalent to the following condition.
    
(3) \ $\pi_v^{K_v} \neq 0$.
    
\end{prop}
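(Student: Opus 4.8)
The plan is to translate everything into the combinatorics of Frobenius-semisimple Weil--Deligne representations and multisegments. Set $A:=\iota\mathrm{WD}(r_{\iota}(\pi)|_{G_{F_v}})^{F-ss}$ and $B:=\mathrm{rec}_{F_v}(\pi_v|\mathrm{det}|_v^{\frac{1-n}{2}})$; both are Frobenius-semisimple Weil--Deligne representations of $W_{F_v}$ (the local Langlands correspondence outputs Frobenius-semisimple representations), with monodromy operators $N_A,N_B$ of Jordan types $\lambda_A,\lambda_B$. The implication $(1)\Rightarrow(2)$ is immediate. For $(2)\Rightarrow(1)$, the first input is Theorem \ref{Ila Varma}: it gives $A^{ss}\cong B^{ss}$, and since $A$ and $B$ are already Frobenius-semisimple this just says that $A$ and $B$ have the same underlying semisimple Weil representation, equivalently the same supercuspidal support. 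The second input is Proposition \ref{purity}: as $\pi$ is cuspidal, $\pi_v$ is generic, so by the Zelevinsky classification the multisegment attached to $B$ is the unique \emph{unlinked} multisegment with the given supercuspidal support.

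The combinatorial core of $(2)\Rightarrow(1)$ is the following: among Frobenius-semisimple Weil--Deligne representations with a fixed supercuspidal support, the unlinked one has strictly dominance-maximal monodromy Jordan type, attained only by it. Indeed, every multisegment with a given support is carried to the unique unlinked one by a finite chain of elementary merges $([a,b],[c,d])\rightsquigarrow([a,d],[c,b])$ on linked pairs (deleting an empty segment when it occurs), and each merge replaces a pair of block sizes $\{x,y\}$ (with $x\le y$) by $\{y+t,\,x-t\}$ for some integer $t\ge 1$ (each with the multiplicity $\dim\sigma$ of the underlying supercuspidal), hence strictly raises the monodromy Jordan type in the dominance order $\trianglelefteq$. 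Therefore $\lambda_A\trianglelefteq\lambda_B$ always, and $(2)$ (i.e. $\lambda_A=\lambda_B$) forces the multisegment of $A$ to be unlinked, hence to equal that of $B$; since a Frobenius-semisimple Weil--Deligne representation is determined up to isomorphism by its multisegment, this yields $A\cong B$, which is $(1)$. In particular $(2)$ is also equivalent to the reverse inequality $\lambda_B\trianglelefteq\lambda_A$, the inequality $\lambda_A\trianglelefteq\lambda_B$ being automatic.

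For the third equivalence, suppose $\pi_v$ has an Iwahori-fixed vector; then $\pi_v$ is irreducible, generic and Iwahori-spherical, so $\pi_v\cong\mathrm{St}(\Delta_1)\times\cdots\times\mathrm{St}(\Delta_r)$ for pairwise unlinked segments $\Delta_1,\dots,\Delta_r$ of unramified characters, and $\lambda_B$ is the partition with parts $\ell(\Delta_1),\dots,\ell(\Delta_r)$. I would then invoke the classification of parahoric-fixed vectors in generic Iwahori-spherical representations of $\mathrm{GL}_n(F_v)$: for the standard parahoric $P_\mu$ attached to a composition $\mu$ of $n$, one has $\pi_v^{P_\mu}\neq 0$ if and only if $\mu\trianglelefteq\lambda_B^{T}$. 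Since, by its definition in the statement, $K_v$ is the parahoric $P_{\lambda_A^{T}}$ built from the Jordan type of $N_A$, this gives $\pi_v^{K_v}\neq 0$ if and only if $\lambda_A^{T}\trianglelefteq\lambda_B^{T}$, i.e. if and only if $\lambda_B\trianglelefteq\lambda_A$, which by the previous paragraph is equivalent to $(2)$, hence to $(1)$.

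The step I expect to be the main obstacle is the last one: pinning down the exact statement and normalisation of the parahoric-invariants dictionary — in particular checking that the computation of $\pi_v^{P_\mu}$ via Jacquet modules and the geometric lemma really produces precisely the dominance condition above, and that $K_v$ is built from the Galois-side Jordan type with exactly the transpose convention that makes $(3)\Leftrightarrow(2)$ come out right. Making the merge/dominance argument of the second paragraph fully rigorous, while routine in spirit, also requires some care to keep track of the multiplicities coming from higher-dimensional pieces of the supercuspidal support.
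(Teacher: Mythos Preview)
Your proof is correct. The approach differs from the paper's in one key step, the implication $(2)\Rightarrow(1)$.

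The paper exploits that $B$ is not merely generic but \emph{weakly tempered}: by the generic preunitary bound of Proposition~\ref{purity}, after a twist every supercuspidal constituent of $B$ has Frobenius eigenvalues in the open annulus $q_v^{-1/2}<|\alpha|<q_v^{1/2}$. This lets the paper peel off the largest $\mathrm{Sp}$-block of $A$ and match it with one of $B$ by an absolute-value argument (Lemma~\ref{RG}), then induct. You instead argue purely combinatorially via Zelevinsky's elementary operations: each merge of a linked pair strictly raises the Jordan type in dominance, so among all Frobenius-semisimple Weil--Deligne representations with a fixed supercuspidal support the unique unlinked one has strictly maximal Jordan type; hence $\lambda_A=\lambda_B$ forces $A$ unlinked and thus $A\cong B$. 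Your route uses only the genericity of $\pi_v$ (to know $B$ is unlinked) and the semisimplified part of Theorem~\ref{Ila Varma}, and in fact re-derives from scratch the dominance $\lambda_A\trianglelefteq\lambda_B$ that the paper extracts directly from the relation $A\prec B$ in Theorem~\ref{Ila Varma}. The paper's approach has the advantage that the weakly-tempered framework is reused elsewhere (e.g.\ Lemma~\ref{semisimple}); yours has the advantage of separating the representation-theoretic combinatorics from the analytic input.

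For the equivalence with $(3)$ the two arguments coincide: the parahoric-invariants dictionary you invoke is exactly Lemma~\ref{paraholic} (proved via the geometric lemma and the fact, Lemma~\ref{Iwahori fixed vectors}, that $\mathrm{St}_m(\psi)$ has nonzero $\mathfrak p$-invariants only for $\mathfrak p=\mathrm{Iw}$), and the transpose convention does come out as you anticipate: $K_v$ is the parahoric attached to the dual partition $\lambda_A^{T}$, and $\pi_v^{K_v}\neq 0$ is equivalent to $\lambda_A\trianglerighteq\lambda_B$, which together with the automatic $\lambda_A\trianglelefteq\lambda_B$ gives $(2)$.
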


In order to prove this, we will use the result of Ila Varma \cite{ss} (see Theorem \ref{Ila Varma}) : $\iota \mathrm{WD}(r_{\iota}(\pi)|_{G_{F_v}})^{F-ss} \prec \mathrm{rec}_{F_v}(\pi_v|\mathrm{det}|_v^{\frac{1-n}{2}})$, which implies $\iota \mathrm{WD}(r_{\iota}(\pi)|_{G_{F_v}})^{ss} \cong \mathrm{rec}_{F_v}(\pi_v|\mathrm{det}|_v^{\frac{1-n}{2}})^{ss}$ and the monodromy operator of $\iota \mathrm{WD}(r_{\iota}(\pi)|_{G_{F_v}})$ is smaller than the monodromy operator of $\mathrm{rec}_{F_v}(\pi_v|\mathrm{det}|_v^{\frac{1-n}{2}})$ in some sense (see Definition \ref{Weil-Deligne def} for the precise definition of the notion $\prec$.\footnote{For example, in 3 dimensional cases, we have the three conjugacy classes of nilpotent matrices : zero $\gamma_1 = [\begin{pmatrix}
0 & 0 & 0 \\
0 & 0 & 0 \\
0 & 0 & 0
\end{pmatrix}]$, maximal one $\gamma_2 = [\begin{pmatrix}
0 & 1 & 0 \\
0 & 0 & 1 \\
0 & 0 & 0
\end{pmatrix}]$ and the other one $\gamma_3 = [\begin{pmatrix}
0 & 1 & 0 \\
0 & 0 & 0 \\
0 & 0 & 0
\end{pmatrix}]$, where $[ \ ]$ denotes the conjugacy class. Then we have $\gamma_1 \prec \gamma_3 \prec \gamma_2$. }). 

About the proof of (3) $\Rightarrow$ (1), the key result is that for any partition $\gamma$ of $n$, we can construct a parahoric subgroup $K_{\gamma}$ of $\mathrm{GL}_n(F_v)$ such that if an irreducible smooth representation $\sigma$ of $\mathrm{GL}_n(F_v)$ satisfies $\sigma^{K_{\gamma}} \neq 0$, then the monodromy operator of $\mathrm{rec}_{F_v}(\sigma)$ is smaller than the nilpotent matrix corresponding to $\gamma$ in some sense. (See Lemma \ref{parahoric}. For example, $K_{\gamma}$ is equal to $\mathrm{GL}_n(\mathcal{O}_{F_v})$ when $\gamma$ corresponds to the zero monodromy and $K_{\gamma}$ is equal to the Iwahori subgroup when $\gamma$ corresponds to the maximal monodromy. Roughly speaking, $K_{\gamma}$ becomes bigger when the monodromy becomes smaller.) Therefore, by combining the result of Ila Varma, we can construct a parahoric subgroup $K_v$ defined by the Jordan canonical form of the monodromy operator of $\mathrm{WD}(r_{\iota}(\pi)|_{G_{F_v}})$ such that if $\pi_v^{K_v} \neq 0$, then we obtain $\iota\mathrm{WD}(r_{\iota}(\pi)|_{G_{F_v}})^{F-ss} \cong \mathrm{rec}_{F_v}(\pi_v|\mathrm{det}|_v^{\frac{1-n}{2}})$.

\vspace{0.5 \baselineskip}

From this proposition, we obtain the following generalization of the previous method.

\vspace{0.5 \baselineskip}

\textbf{(New method)} \ When we prove automorphy lifting theorems, we also prove that the cuspidal automorphic representation $\Pi$ of $\mathrm{GL}_n(\mathbb{A}_F)$ corresponding to a given $l$-adic Galois representation $r$ of $G_F$ satisfies the local-global compatibility at $v \nmid l$ if a given cuspidal automorphic representation $\pi$ of $\mathrm{GL}_n(\mathbb{A}_F)$ satisfies the local-global compatibility at $v$ and if the monodromy operators of $\mathrm{WD}(r|_{G_{F_v}})$ and $\mathrm{WD}(r_{\iota}(\pi)|_{G_{F_v}})$ have the same Jordan canonical form. (See Theorems \ref{automorphy lifting theorem in crystalline cases} and \ref{ordinary automorphy lifting}. We used the same notations as in \textbf{Previous method}.)

\vspace{0.5 \baselineskip}

In fact, let $K_v$ be the parahoric subgroup of $\mathrm{GL}_n(F_v)$ defined by the Jordan canonical form of the monodromy operator of $\mathrm{WD}(r_{\iota}(\pi)|_{G_{F_v}})$ (see $(3)$ of Proposition \ref{interpretation}). Note that we may assume $\pi_v^{\mathrm{Iw}_v} \neq 0$ since we can replace $F$ by an appropriate solvable CM extension. Thus we have $\pi_v^{K_v} \neq 0$ by $(1) \Rightarrow (3)$ of Proposition \ref{interpretation}. If we can prove that there exists a cohomological cuspidal automorphic representation $\Pi$ of $\mathrm{GL}_n(\mathbb{A}_F)$ such that $r_{\iota}(\Pi) \cong r$ and $\Pi_v^{K_v} \neq 0$, then $\mathrm{WD}(r_{\iota}(\Pi)|_{G_{F_v}})^{F-ss} \cong \mathrm{rec}_{F_v}(\Pi_v|\mathrm{det}|_v^{\frac{1-n}{2}})$ by $(3) \Rightarrow (1)$ of Proposition \ref{interpretation} and by the assumption that the monodromy operator of $\mathrm{WD}(r_{\iota}(\Pi)|_{G_{F_v}}) \cong \mathrm{WD}(r|_{G_{F_v}})$ and $\mathrm{WD}(r_{\iota}(\pi)|_{G_{F_v}})$ have the same Jordan canonical form. 

\vspace{0.5 \baselineskip}

We will explain an outline of the proof of the local-global compatibility in ordinary cases by using this method. (See Theorem \ref{potential ordinary automorphy}.) Note that the automorphy lifting and the potential automorphy of ordinary Galois representations have already been proved in many cases. (See \cite[Theorem 1.1 and 1.4]{opta}, \cite[Corollary 5.5.2]{10} and their refinements Theorem \ref{ordinary residual potential automorphy}, Proposition \ref{ordinary potential autormorphy} and Theorem \ref{ordinary automorphy lifting}.) 

Let $\pi_0$ be an $\iota$-ordinary cohomological cuspidal automorphic representation such that $\overline{r_{\iota}(\pi_0)}$ is absolutely irreducible. (See Definition \ref{iota ordinary definition} for the definition of the $\iota$-ordinarity.) We fix a finite place $v \nmid l$. By the result \cite[Theorem 1.1]{opta} (see Theorem \ref{ordinary residual potential automorphy}), we obtain a finite CM extension $F'/F$ linearly disjoint from $\overline{F}^{\mathrm{Ker}(\overline{r_{\iota}(\pi_0)})}$ and an $\iota$-ordinary cohomological cuspidal automorphic representation $\pi$ of $\mathrm{GL}_n(\mathbb{A}_{F'})$ such that $\overline{r_{\iota}(\pi)} \cong \overline{r_{\iota}(\pi_0)}|_{G_{F'}}$ and $r_{\iota}(\pi)$ is a direct summand of the $\acute{\mathrm{e}}$tale cohomology of a smooth hypersurface in a projective space over $F'$. Note that $\pi$ satisfies the local-global compatibility for all $v \nmid l$ since $\mathrm{WD}(r_{\iota}(\pi)|_{G_{F'_v}})$ is pure. (See Proposition \ref{purity local-global}.)

If we could take $\pi$ such that the monodromy operators of $\mathrm{WD}(r_{\iota}(\pi)|_{G_{F'_w}})$ and $\mathrm{WD}(r_{\iota}(\pi_0)|_{G_{F'_w}})$ have the same Jordan canonical form for some $w \mid v$, then by using the new method and the ordinary automorphy lifting theorem (Theorem \ref{ordinary automorphy lifting}), we would obtain the $\iota$-ordinary cohomological cuspidal automorphic representation $\Pi$ of $\mathrm{GL}_n(\mathbb{A}_{F'})$ such that $r_{\iota}(\pi_0)|_{G_{F'}} \cong r_{\iota}(\Pi)$ and $\iota \mathrm{WD}(r_{\iota}(\Pi)|_{G_{F'_w}})^{F-ss} \cong \mathrm{rec}_{F'_w}(\Pi_w|\mathrm{det}|_w^{\frac{1-n}{2}})$ and this would imply $\iota \mathrm{WD}(r_{\iota}(\pi_0)|_{G_{F_v}})^{F-ss} \cong \mathrm{rec}_{F_v}(\pi_{0, v}|\mathrm{det}|_v^{\frac{1-n}{2}})$. (See Proposition \ref{potential automorphy and local-global compatibility}.)

However, finding such $\pi$ is difficult in general at least for the author. (Recently, Campo posted the paper \cite{DworkCampo} studying this problem.) In order to overcome this difficulty, we employ the tensor product trick by Harris (see \cite{TPT}) and we use Proposition \ref{conjugate self-dual ordinary}, which is a modification of a strong result \cite[Proposition 3.2.1]{CW} in essentially conjugate self-dual cases. We take $\pi$ such that $\pi_w$ is unramified for some $w|v$. Moreover, we consider an $n$-dimensional ordinary conjugate self-dual automorphic Galois representation $\mathrm{Ind}^{G_{F'}}_{G_M}\phi$. Then Proposition \ref{conjugate self-dual ordinary} provides a conjugate self-dual ordinary automorphic lifting $r_{\iota}(\pi_1)$ of $\overline{\mathrm{Ind}^{G_{F'}}_{G_M}\phi}$ such that the monodromy operators of $\mathrm{WD}(r_{\iota}(\pi_0)|_{G_{F'_w}})$ and $\mathrm{WD}(r_{\iota}(\pi_1)|_{G_{F'_w}})$ have the same Jordan canonical form.\footnote{Precisely, $\mathrm{Ind}^{G_{F'}}_{G_M}\phi$ and $r_{\iota}(\pi_1)$ satisfy some technical conditions.} 

We compare two representations $r_{\iota}(\pi_0)|_{G_{F'}} \otimes \mathrm{Ind}^{G_{F'}}_{G_M}\phi$ and $r_{\iota}(\pi) \otimes r_{\iota}(\pi_1)$. Note that $r_{\iota}(\pi) \otimes r_{\iota}(\pi_1)$ corresponds to the cuspidal automorphic representation satisfying the local-global compatibility at $w$ since $\overline{r_{\iota}(\pi) \otimes r_{\iota}(\pi_1)} \cong \overline{r_{\iota}(\pi) \otimes \mathrm{Ind}^{G_{F'}}_{G_{M}} \phi} \cong \overline{\mathrm{Ind}^{G_{F'}}_{G_{M}} (r_{\iota}(\pi)|_{G_{M}}\phi)}$ is automorphic and $\mathrm{WD}((r_{\iota}(\pi) \otimes r_{\iota}(\pi_1))|_{G_{F'_w}})$ is pure. Moreover, the monodromy operators of $\mathrm{WD}((r_{\iota}(\pi_0)|_{G_{F'}} \otimes \mathrm{Ind}^{G_{F'}}_{G_M}\phi)|_{G_{F'_{w}}})$ and $\mathrm{WD}((r_{\iota}(\pi) \otimes r_{\iota}(\pi_1))|_{G_{F_w'}})$ have the same Jordan canonical form.

Thus by the new method and by the ordinary automorphy lifting theorem, $r_{\iota}(\pi_0)|_{G_{F'}} \otimes \mathrm{Ind}^{G_{F'}}_{G_M}\phi$ corresponds to a cuspidal automorphic representation satisfying the local-global compatibility at $w$. (See Theorem \ref{ordinary automorphy lifting}.) This implies $\iota \mathrm{WD}(r_{\iota}(\pi_0)|_{G_{F_{v}}})^{F-ss} \cong \mathrm{rec}_{F_{v}}(\pi_{0, v}|\mathrm{det}|_v^{\frac{1-n}{2}})$. (See Lemma \ref{purity tensor} for details.)

\subsubsection{Potential automorphy}

As we have seen above in ordinary cases, in order to prove the local-global compatibility for some representations, we need certain potential automorphy theorems for such representations. In this paper, we study the following problem.

\vspace{0.5 \baselineskip}

(*) The potential automorphy of $l$-adic Galois representations of $G_F$ which are Hodge-Tate regular and potentially diagonalizable at all $l$-adic places.

\vspace{0.5 \baselineskip}

For $v \mid l$, we say that a continuous representation $\rho : G_{F_v} \rightarrow \mathrm{GL}_n(\mathcal{O}_{\overline{\mathbb{Q}}_l})$ is potentially diagonalizable if there exists a finite extension $F'_v/F_v$ such that $\rho|_{G_{F_v'}}$ is crystalline and contained in an irreducible component of the crystalline lifting ring of $\overline{\rho}|_{G_{F'_v}}$ containing a direct sum of crystalline characters of $G_{F_v'}$. If $\rho$ is Fontaine-Laffaille, then $\rho$ is potentially diagonalizable. (See Proposition \ref{trace irreducible component}.) Note that the irreducible $n$-dimensional algebraic\footnote{We say that an $l$-adic Galois representation $r$ of $G_F$ is algebraic if $r$ is unramified at almost all finite places of $F$ and de Rham at all $l$-adic places of $F$.} $l$-adic Galois representations of $F$ which are Hodge-Tate regular at all $l$-adic places, are expected to correspond to the cohomological cuspidal automorphic representations of $\mathrm{GL}_n(\mathbb{A}_F)$. (See Theorem \ref{polarizable local-global compatibility} and \ref{Lambert 2}.)

In essentially conjugate self-dual cases, we have already many general results about (*). (See \cite{CW}.) In this paper, we will prove (*) in many essentially self-dual cases and many sufficiently regular weight cases. (See Theorems \ref{potential diagonalizable automorphy} and \ref{potential tensor automorphy}.)

In essentially conjugate self-dual cases, one of the most fundamental works about (*) is a crystalline automorphy lifting theorem \cite[Theorem 7.1]{small}. Now, we can prove it without assuming any conjugate self-duality thanks to the recent works \cite[Theorem 1.3]{MEI} on properties of automorphic Galois representations at $l$-adic places and the generalization \cite{ade} of the work \cite{small} about patching arguments. (See Theorems \ref{Caraiani-Newton}, \ref{Caraiani-Newton 2}, \ref{Lambert 2}, \ref{Patching argument} and \ref{automorphy lifting theorem in crystalline cases}.) By this crystalline automorphy lifting theorem, in order to prove the potential automorphy of a continuous representation $r : G_F \rightarrow \mathrm{GL}_n(\mathcal{O}_{\overline{\mathbb{Q}}_l})$, it suffices to construct a finite CM extension $F'/F$ and a cohomological cuspidal automorphic representations $\pi$ of $\mathrm{GL}_n(\mathbb{A}_{F'})$ satisfying the following conditions. (See Theorem \ref{automorphy lifting theorem in crystalline cases}. In order to prove useful results about the following conditions 2 and 3, we will study irreducible components of local lifting rings in section 3.) \footnote{For simplicity, we ignore some properties, e.g. the irreducibility and decomposed genericity of $\overline{r}|_{G_{F'}}$. See Definition \ref{decomposed generic} for the definition of the decomposed genericity.} 

\vspace{0.5 \baselineskip}

\textbf{Conditions for the crystalline automorphy lifting theorem.}

1 \ $\overline{r}|_{G_{F'}} \cong \overline{r_{\iota}(\pi)}$.

2 \ For each $v \mid l$, $r|_{G_{F'_v}}$ and $r_{\iota}(\pi)|_{G_{F_v'}}$ are contained in the same irreducible component of the crystalline lifting ring of $\overline{r}|_{G_{F_v'}}$.

3 \ For each $v \nmid l$, $r|_{G_{F'_v}}$ and $r_{\iota}(\pi)|_{G_{F_v'}}$ are contained in the same irreducible component of the universal lifting ring of $\overline{r}|_{G_{F_v'}}$. In section 3, we will prove that this is essentially implied by the condition that the monodromy operators of $\mathrm{WD}(r|_{G_{F'_v}})$ and $\mathrm{WD}(r_{\iota}(\pi)|_{G_{F_v'}})$ have the same Jordan canonical form. (See Proposition \ref{monodromy type irreducible component}.)

4 \ For each $v \nmid l$ of $F$, the representation $r_{\iota}(\pi)|_{G_{F_v'}}$ is contained in the unique irreducible component of the universal lifting rings of $\overline{r}|_{G_{F_v'}}$. This is known to be a consequence of the local-global compatibility of $\pi$ at $v$. (See Lemma \ref{regular point} and Proposition \ref{purity}.)

\vspace{0.5 \baselineskip}

Note that the conditions 3 and 4 are also needed for the new method.

The main difficulty is to construct $F'$ and $\pi$ which satisfy the conditions 1 and 2.

About the condition 1, in general, we only have the result \cite[Theorem 1.1]{opta}, which proved that any residual representation is potentially isomorphic to the residual representation of an ordinary automorphic Galois representation with consecutive weights. If the residual representation is essentially conjugate self-dual, then we already have many strong results. (See \cite[Proposition 3.3.1 and Corollary 4.5.3]{CW} and their modifications Proposition \ref{potential conjugate} and Proposition \ref{conjugate self-dual ordinary}. For example, these results work without any restriction on the weights and also work in some non-ordinary cases.)

About the condition 2, in conjugate self-dual cases, it has been a very useful property that for any essentially conjugate self-dual Galois representation $r$ of $G_F$ which is potentially diagonalizable and Hodge-Tate regular at all $l$-adic places, we can construct an essentially conjugate self-dual Galois representation $s$ which is induced from a character (and thus is automorphic) such that $\overline{r}$ and $\overline{s}$ are isomorphic at any $l$-adic places and are contained in the same irreducible component of the crystalline lifting rings at any $l$-adic places after replacing $F$ by an appropriate solvable CM extension.  (See \cite[proof of Proposition 4.1.1]{CW} and Lemma \ref{diagonalizable induction character}.) Without assuming the conjugate self-duality, this property doesn't necessarily hold\footnote{For example, when $r$ is the Tate module of an elliptic curve over $F$ which has good ordinary reduction at $v$ and supersingular reduction at $v^c$ for some $v \mid l$.} and the potential automorphy of non-conjugate self-dual Galois representations has previously been proved only in relatively special cases. (See \cite{pw} and \cite{10} for symmetric powers of parallel regular weight two-dimensional representations and see \cite{opta} for ordinary representations.) However, we will prove the following proposition in {\S}6 and consequently in many cases, we have this property. (Precisely, we will use this property in the proof of Theorem \ref{potential tensor automorphy}. In the proof of Theorem \ref{potential diagonalizable automorphy}, we will use Proposition \ref{potential conjugate}. This proposition is a modification of a stronger result \cite[Corollary 4.5.3]{CW} in conjugate self-dual cases, which is an application of this property.)

\vspace{0.5 \baselineskip}

\begin{prop} (Lemma \ref{Hecke field 2} and Lemma \ref{Hecke field}) \label{conjugate self-duality}

Let $\mathcal{R}:=( M, S, \{Q_v(X)\}_{v \notin S}, \{ r_{\lambda} : G_{F} \rightarrow \mathrm{GL}_n(\overline{M}_{\lambda}) \}_{\lambda}, \{ H_{\tau} \})$ be a rank $n$ very weakly compatible system of $l$-adic Galois representations of $G_F$ which is pure or comes from a cohomological cuspidal representation of $\mathrm{GL}_n(\mathbb{A}_F)$ and $\mathcal{L}$ be a set of primes having Dirichlet density one such that for all $l \in \mathcal{L}$ and $\lambda \mid l$, the representation $\overline{r_{\lambda}}$ is absolutely irreducible. (See {\S} 6.2 for the definition of compatible systems of $l$-adic Galois representations.)

Then there exist an integer $w$\footnote{When $\mathcal{R}$ is pure, this $w$ is equal to the weight.} and a subset $\mathcal{L}_0$ of $\mathcal{L}$ having positive Dirichlet density such that for all $l \in \mathcal{L}_0$, $\lambda|l$ and $v \mid l$, the representation $r_{\lambda}|_{G_{F_v}}$ is potentially diagonalizable and there exists a finite extension $F_v'/F_v$ such that $r_{\lambda}^c|_{G_{F'_v}}$ and $r_{\lambda}^{\vee}\varepsilon_l^{-w}|_{G_{F_{v}'}}$ are contained in the same irreducible components of the crystalline lifting ring of $\overline{r_{\lambda}^c}|_{G_{F'_v}}$. \end{prop}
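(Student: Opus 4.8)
The plan is to establish the statement in two stages. First, I would prove that for a density-one set of primes the compatible system $\mathcal{R}$ is, after a solvable CM base change, ``essentially conjugate self-dual modulo $l$'' in a precise sense: namely, by analyzing the Hecke field and the field of coefficients of $\mathcal{R}$ (this is the content invoked from Lemma \ref{Hecke field 2} and Lemma \ref{Hecke field}), one shows there is an integer $w$ and a positive-density subset $\mathcal{L}_0 \subseteq \mathcal{L}$ so that for $l \in \mathcal{L}_0$ and $\lambda \mid l$ the residual representations $\overline{r_\lambda^c}$ and $\overline{r_\lambda^\vee \varepsilon_l^{-w}}$ are isomorphic (not merely abstractly, but compatibly with the given $\mathcal{O}_{\overline{\mathbb{Q}_l}}$-structure), using that $\overline{r_\lambda}$ is absolutely irreducible so such an isomorphism, if it exists, is unique up to scalar. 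The purity hypothesis (or the automorphy hypothesis, via Theorem \ref{Ila Varma} and the known properties of $r_\iota(\pi)$) is what pins down the Hodge--Tate weights and the determinant well enough to force the twisting exponent $w$ to be independent of $l$.

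Second, for the local $l$-adic statement at $v \mid l$, I would first arrange by shrinking $\mathcal{L}_0$ (still positive density) that each $l \in \mathcal{L}_0$ is unramified in $F$ and large relative to the Hodge--Tate weights, so that $r_\lambda|_{G_{F_v}}$ is crystalline and in the Fontaine--Laffaille range; then $r_\lambda|_{G_{F_v}}$ is potentially diagonalizable by the cited Proposition \ref{trace irreducible component} (Fontaine--Laffaille implies potentially diagonalizable). For the irreducible-component claim, the key input is that potentially diagonalizable crystalline lifts are rigid in their components in a way compatible with duality: after a further finite extension $F_v'/F_v$ I want $r_\lambda^c|_{G_{F_v'}}$ to become a sum of crystalline characters in its component, and likewise $r_\lambda^\vee \varepsilon_l^{-w}|_{G_{F_v'}}$; since taking duals and Tate twists sends a sum of crystalline characters to a sum of crystalline characters, and since the residual representations $\overline{r_\lambda^c}|_{G_{F_v'}}$ and $\overline{r_\lambda^\vee \varepsilon_l^{-w}}|_{G_{F_v'}}$ agree (by the first stage) with matching Hodge--Tate--Sen weights, both $r_\lambda^c|_{G_{F_v'}}$ and $r_\lambda^\vee\varepsilon_l^{-w}|_{G_{F_v'}}$ lie in the unique irreducible component of the crystalline lifting ring of $\overline{r_\lambda^c}|_{G_{F_v'}}$ that contains a diagonal point with those weights. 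I would use the connectedness statements about crystalline lifting rings in the Fontaine--Laffaille case (from section 3) to conclude there is in fact only one such component, which then trivially contains both.

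The main obstacle I expect is the first stage: extracting a density-one (then positive-density) set of primes on which the residual duality $\overline{r_\lambda^c} \cong \overline{r_\lambda^\vee \varepsilon_l^{-w}}$ holds \emph{with a uniform weight $w$ and compatibly with coefficients}. A priori the compatible system only carries a self-duality of the Galois \emph{character} data (determinants, or the fact that $\mathcal{R}$ is pure, which constrains Frobenius eigenvalues at unramified places by $|\alpha|^2 = q_v^{w}$), and one must promote this to a genuine isomorphism of the residual $n$-dimensional representations. This is where the Hecke-field argument enters: one shows the coefficient field of $\mathcal{R}$ contains, or can be arranged to contain after base change, the relevant quadratic data, and then a Chebotarev/Brauer--Nesbitt argument shows $\overline{r_\lambda^c}$ and $\overline{r_\lambda^\vee\varepsilon_l^{-w}}$ have equal characteristic polynomials of Frobenius at a density-one set of places for a density-one set of $l$; absolute irreducibility then upgrades equal traces to isomorphism. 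I would carry this out by invoking Lemma \ref{Hecke field 2} for the purity/temperedness input and Lemma \ref{Hecke field} for the coefficient-field bookkeeping, and reduce the remaining local claim to the Fontaine--Laffaille rigidity discussed above; the rest of the argument is then formal, using that duality and Tate twisting are exact functors preserving crystallinity and (up to reindexing) Hodge--Tate weights.
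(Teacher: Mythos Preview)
Your first stage contains a genuine gap: the claim that $\overline{r_\lambda^c} \cong \overline{r_\lambda^\vee \varepsilon_l^{-w}}$ as global residual representations of $G_F$ is false in general. Here $r_\lambda^c(g) = r_\lambda(\tilde c g \tilde c^{-1})$ conjugates by the CM involution on $F$, and there is no reason for $r_\lambda^c$ and $r_\lambda^\vee\varepsilon_l^{-w}$ to have equal Frobenius characteristic polynomials at unramified places: that would say $Q_{v^c}(X)$ is determined by $Q_v(X)$ via $\alpha \mapsto q_v^w/\alpha$, which is precisely the essentially-conjugate-self-dual condition the proposition is designed to \emph{avoid} assuming. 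Purity only tells you $|\alpha|^2 = q_v^w$ for roots of $Q_v$, i.e.\ that the complex conjugate roots are $q_v^w/\alpha$; it says nothing about $Q_{v^c}$. So no Brauer--Nesbitt argument can establish this isomorphism, and your second stage (which would indeed work via 4 of Lemma \ref{p=l}) never gets off the ground.

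The paper's mechanism is different and more delicate. What purity (or automorphy, via Lemma \ref{example 1}) does give globally is a \emph{coefficient} conjugation identity $^{c}\mathcal{R} \cong \mathcal{R}^\vee\otimes\{\varepsilon_l^{-w}\}$: if $\tilde c$ is an automorphism of the coefficient field $\overline{M_\lambda}$ lifting complex conjugation on $M$, then $^{\tilde c}r_\lambda \cong r_\lambda^\vee\varepsilon_l^{-w}$. This is not the same as $r_\lambda^c$. The positive-density set $\mathcal{L}_0$ is chosen so that $l$ splits completely in $\widetilde{MF}^+$ and the $l$-adic places of $\widetilde{MF}^+$ are inert in $\widetilde{MF}$; this ensures that complex conjugation on $\widetilde{MF}$ extends to an element $\tilde c \in G_{\mathbb{Q}_l}$, and that for a suitable embedding $\tilde\tau: F_v \hookrightarrow \overline{\mathbb{Q}_l}$ one has $\tilde\tau^{-1}\tilde c\,\tilde\tau|_F = c$. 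Then Proposition \ref{trace irreducible component} (the Fontaine--Laffaille input) yields $^{\tilde c}r_\lambda|_{G_L} \sim (r_\lambda|_{G_{F_v}})^{\tilde\tau^{-1}\tilde c\tilde\tau}|_{G_L} = r_\lambda^c|_{G_L}$ after a finite extension $L/F_v$. The conclusion $r_\lambda^c \sim r_\lambda^\vee\varepsilon_l^{-w}$ is thus purely local at $l$-adic places and holds only in the weak $\sim$ sense; it is not deduced from any global residual isomorphism.
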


We explain an outline of the proof of this proposition. We may assume that for all $l \in \mathcal{L}$ and $\lambda \mid l$, $r_{\lambda}$ is Fontaine-Laffaille at all $l$-adic places and consequently $r_{\lambda}$ is potentially diagonalizable at all $l$-adic places. About the second property, we use the property $^c\mathcal{R} \cong \mathcal{R}^{\vee} \otimes \{ \varepsilon_l^{-w} \}$\footnote{$^c\mathcal{R}$ denotes the compatible system obtained by twisting the characteristic polynomial of $\mathcal{R}$ by $c$.} for some integer $w$. This follows from the assumptions that $\mathcal{R}$ is pure or comes from a cohomological cuspidal representation of $\mathrm{GL}_n(\mathbb{A}_F)$. (See Lemma \ref{example 1} and the proof of Lemma \ref{Hecke field}.) We take the subset $\mathcal{L}_0$ of $\mathcal{L}$ consisting of all $l \in \mathcal{L}$ satisfying that the complex conjugation $c$ is $l$-adically continuous on $F$ and $M$. (We may assume that $M$ is a CM field. See Lemma \ref{example 1} and \cite[Lemma 1.2]{irr}.) Fix $l \in \mathcal{L}_0$, $\lambda \mid l$ and $v \mid l$. By using the fact that $r_{\lambda}|_{G_{F_v}}$ is potentially diagonalizable, it suffices to prove that $r_{\lambda}^c|_{G_{F_v}}$ and $^cr_{\lambda}|_{G_{F_v}}( = r_{\lambda}^{\vee}\varepsilon_l^{-w}|_{G_{F_{v}}})$\footnote{Precisely, we need to take an extension $\tilde{c}$ of $c$ to an automorphism of $\overline{M}_{\lambda}$ over $\mathbb{Q}_l$ and we need to replace $^cr_{\lambda}$ by $\overline{M}_{\lambda} \otimes_{\tilde{c}, \overline{M}_{\lambda}} r_{\lambda}$.} are potentially contained in the same irreducible component of the crystalline lifting ring in one-dimensional cases. (See Proposition \ref{trace irreducible component}.)

\vspace{0.5 \baselineskip}

We will see outlines of the proof of the potential automorphy of essentially self-dual Galois representations of $G_F$ which are Hodge-Tate regular and potentially diagonalizable at all $l$-adic places. (See Theorem \ref{potential diagonalizable automorphy}.)

Let $r : G_{F} \rightarrow \mathrm{GL}_n(\mathcal{O}_{\overline{\mathbb{Q}}_l})$ be an $n$-dimensional $l$-adic Galois representation satisfying the following conditions.

(i) For all $v \mid l$, $r|_{G_{F_v}}$ is Hodge-Tate regular and potentially diagonalizable.

(ii) There exists an integer $w$ such that there exists a finite extension $F_v'/F_v$ such that $r^c|_{G_{F_v'}}$ and $r^{\vee} \varepsilon_l^{-w}|_{G_{F_v'}}$ are contained in the crystalline lifting ring of $\overline{r^c}|_{G_{F_v'}}$. (Note that this assumption holds in many cases by Proposition \ref{conjugate self-duality}.)

(iii) $r$ is unramified at almost all finite places of $F$.

(iv) $r \cong r^{\vee} \varepsilon_l^{-w}$.

(v) $\mathrm{Ind}^{G_{F^+}}_{G_{F}}\overline{r}$ is absolutely irreducible.

A basic observation is that $\mathrm{Ind}^{G_{F^+}}_{G_{F}}r$ is an essentially self-dual representation of the totally real field $F^+$ and consequently, conjugate self-dual. However, $\mathrm{Ind}_{G_{F}}^{G_{F+}}r$ is not necessarily Hodge-Tate regular. (In fact, if $r$ is comes from a proper smooth variety over $F$ or a cohomological cuspidal automorphic representation of $\mathrm{GL}_n(\mathbb{A}_F)$, we can prove that $\mathrm{Ind}_{G_{F}}^{G_{F+}}r$ is not Hodge-Tate regular.) In order to overcome this difficulty, we consider $\mathrm{Ind}_{G_{F}}^{G_{F+}}(r \theta)$ for some algebraic $l$-adic character $\theta : G_{F} \rightarrow \overline{\mathbb{Q}}_l^{\times}$ such that $\sigma := \mathrm{Ind}_{G_{F}}^{G_{F+}}(r \theta)$ is Hodge-Tate regular at all $l$-adic places and $\overline{\theta}$ and $\theta \theta^c$ are trivial. (We can take such a character by Proposition \ref{character}.) Then $\overline{\mathrm{Ind}_{G_{F}}^{G_{F+}}(r \theta)} = \overline{\mathrm{Ind}_{G_{F}}^{G_{F+}}r}$ is essentially self-dual. 

Then $\sigma$ satisfies the following properties after replacing $F^+$ by an appropriate solvable CM extension.

$(a)$ \ $\overline{\sigma} \cong \overline{\sigma}^c \cong \overline{\sigma^{\vee}\varepsilon_l^{-w}}$.

$(b)$ \ For all $v \mid l$, $\sigma|_{G_{F^+_v}}$ is potentially diagonalizable, Hodge-Tate regular and $\sigma|_{G_{F_v^+}} = \sigma^c|_{G_{F^+_v}}$ and $\sigma^{\vee}\varepsilon_l^{-w}|_{G_{F^+_v}}$ are contained in the same irreducible component of the crystalline lifting ring of $\overline{\sigma}^c|_{G_{F^+_v}}$. (This follows from the assumption (ii).)

$(c)$ \ For all $v \nmid l$, the monodromy operators of $\mathrm{WD}(\sigma|_{G_{F^+_v}})$ = $\mathrm{WD}(\sigma^c|_{G_{F^+_v}})$ and $\mathrm{WD}(\sigma^{\vee}\varepsilon_l^{-w}|_{G_{F^+_v}})$ have the same Jordan canonical form.

From the above conjugate self-dual properties, the strong result Proposition \ref{potential conjugate} in the conjugate self-dual case gives us a finite CM extension $E/F^+$ and an essentially conjugate self-dual cohomological cuspidal automorphic representation $\pi$ of $\mathrm{GL}_{2n}(\mathbb{A}_{E})$ satisfying the above conditions $1 \sim 4$ of the crystalline automorphy lifting theorem. Thus, we obtain the automorphy of $\sigma|_{G_{E}}$ and the local-global compatibility at non-$l$-adic places for the cuspidal automorphic representation corresponding to $\sigma|_{G_{E}}$ by the new method. This implies the automorphy of $r|_{G_{FE}}$ and the local-global compatibility at non-$l$-adic places for the cuspidal automorphic representation $\Pi$ corresponding to $r|_{G_{FE}}$. (See Lemma \ref{local global induction 2} for details.) 

Note that for a place $v \nmid l$, if $r$ is unramified at $v$, then $\Pi_u$ is unramified for all $u \mid v$ by $\iota\mathrm{WD}(r|_{G_{(FE)_u}})^{F-ss} \cong \mathrm{rec}_{(FE)_u}(\Pi_u |\mathrm{det}|_u^{\frac{1-n}{2}})$. However, the previous method is not enough to prove this property since $\mathrm{Ind}^{G_{F+}}_{G_{F}}r\theta$ is ramified at the place lying below $v$ if $r$ is ramified at $v^c$. For this reason, we also need the new method in the proof of the Ramanujan conjecture for cohomological cuspidal automorphic representations of $\mathrm{GL}_2(\mathbb{A}_F)$.\footnote{Actually, this is the initial motivation to introduce the method of this paper for the author.} (See Theorem \ref{Ramanujan}.)

\vspace{0.5 \baselineskip}

\textbf{Comparison with the paper \cite{pw}}

\vspace{0.5 \baselineskip}

Note that the method of \cite{pw} to prove the Ramanujan conjecture for cohomological cuspidal automorphic representations of $\mathrm{GL}_2(\mathbb{A}_{F})$ in the parallel weight case is based on refining our understanding on local properties of key ingredients used in the potential automorphy argument such as crystalline lifting rings and the Dwork family (see \cite[Theorem D and Proposition 4.2.6]{pw}) while the method of this paper to prove the Ramanujan conjecture for the cohomological cuspidal automorphic representations of $\mathrm{GL}_2(\mathbb{A}_{F})$ is using Proposition \ref{conjugate self-duality} and strong results which were proved in the conjugate self-dual case \cite{CW}.

\subsection{Organization of this paper}

We sketch the content of each section. In section 2, we find the simple interpretation of the local-global compatibility at non-$l$-adic places. (See Proposition \ref{interpretation}.) In section 3, we examine irreducible components of local lifting rings. At non-$l$-adic finite places, we will prove that the irreducible components of the unipotently ramified lifting rings are determined by the monodromy operators in residually trivial cases. (See Proposition \ref{monodromy type irreducible component}. This is the condition 3 of the crystalline automorphy lifting theorem.) At $l$-adic places, we will prove Proposition \ref{trace irreducible component}, which is a refinement of \cite[Lemma 1.4.3]{CW} and is crucial for showing Proposition \ref{conjugate self-duality}. In section 4, we prove automorphy lifting theorems and the local-global compatibility at non-$l$-adic places for cuspidal automorphic representations corresponding to given Galois representations in crystalline and ordinary cases. In section 5, we prove the potential automorphy of some Galois representations and the local-global compatibility at non-$l$-adic places for cuspidal automorphic representations corresponding to them in essentially self-dual potentially diagonalizable cases, ordinary cases and sufficiently regular weight potentially diagonalizable cases.

\subsection{Notation} \label{Notation}

For a field $K$, we write $\overline{K}$ for a separable closure of $K$. We put $G_K:=\mathrm{Gal}(\overline{K}/K)$. For an integer $n \nmid \mathrm{char} \, K$, we write $\zeta_n \in \overline{K}$ for a primitive $n$-th root of unity. For a quadratic Galois extension $L/K$ of fields, we write $\delta_{L/K}$ for the generator of $\mathrm{Hom}(\mathrm{Gal}(L/K), \{\pm 1\})$. 

For a local ring $A$, we write $\mathfrak{m}_A$ for the maximal ideal of $A$ and $\mathbb{F}_A$ for the residue field of $A$. 

For a scheme $X$ and $x \in X$, we write $k(x)$ for the residue field of $X$ at $x$. Let $A$ be a commutative ring and $X$ be a scheme over $A$. For an $A$-algebra $B$, $X(B)$ denotes the set of all $A$-morphisms $\mathrm{Spec} \, B \rightarrow X$.

Let $B_n$ denote the subgroup of $\mathrm{GL}_n$ consisting of all upper triangular matrices and $T_n$ denote the subgroup of $\mathrm{GL}_n$ consisting of all diagonal matrices of $\mathrm{GL}_n$.

For a profinite group $G$, a positive integer $n$, a prime $l$, a continuous representation $r: G \rightarrow \mathrm{GL}_n(\overline{\mathbb{Q}}_l)$ is also called an $l$-adic representation of $G$. For a continuous representation $r:G \rightarrow \mathrm{GL}_n(\mathcal{O}_{\overline{\mathbb{Q}}_l})$, we write $\overline{r}$ for the composition $G \rightarrow \mathrm{GL}_n(\mathcal{O}_{\overline{\mathbb{Q}}_l}) \rightarrow \mathrm{GL}_n(\overline{\mathbb{F}_l})$. For a continuous representation $r: G \rightarrow \mathrm{GL}_n(\overline{\mathbb{Q}}_l)$, we write $\overline{r}$ for the semisimplification of $\overline{grg^{-1}}: G \rightarrow \mathrm{GL}_n(\overline{\mathbb{F}_l})$ for some $g \in \mathrm{GL}_n(\overline{\mathbb{Q}}_l)$ such that $\mathrm{Im}(grg^{-1}) \subset \mathrm{GL}_n(\mathcal{O}_{\overline{\mathbb{Q}}_l})$. The isomorphism class of $\overline{r}$ is independent of the choice of $g$. For an open subgroup $H$ of $G$ and a continuous representation $r : H \rightarrow \mathrm{GL}_n(\overline{\mathbb{Q}}_l)$, we put $\mathrm{Ind}^G_Hr:=\{ f : G \rightarrow \overline{\mathbb{Q}}_l^{\oplus n} \mid f(hg) = r(h)f(g) \ \mathrm{for \ any \ } g \in G,\ h \in H \}$. This become a continuous representation of $G$ by $(g_1f)(g_2):=f(g_2g_1)$ for any $g_1, g_2 \in G$. 

For a group $G$ and a prime $l$, let $G(l)$ denote the pro-$l$-completion of $G$.

For a locally profinite group $G$, a compact open subgroup $K$ of $G$, an open submonoid $\Delta$ of $G$ such that $K \Delta K = \Delta$ and a commutative ring $R$, we write $\mathcal{H}(\Delta, K)_{R}$ for the set of all function $f : \Delta \rightarrow R$ such that $f(k_1gk_2)=f(g)$ for any $g \in \Delta$, $k_1, k_2 \in K$ and $\{ g \in \Delta \mid f(g) \neq 0 \}$ is compact. For $g \in \Delta$, we write $[KgK]$ for the element of $\mathcal{H}(\Delta, K)_R$ satisfying $[KgK](h) = 1$ for $h \in KgK$ and $[KgK](h) = 0$ for $h \notin KgK$. Then $\mathcal{H}(\Delta, K)_{R}$ become an $R$-algebra by the formula $f_1 * f_2(g):=\sum_{x \in G/K} f_1(x)f_2(x^{-1}g)$.

For a (not necessary commutative) ring $S$, we write $D(S)$ for the derived category of $S$-modules. For a positive integer $n$, we write $\mathfrak{S}_n$ for the symmetric group of degree $n$.

\underline{Local theory}

For a prime $p$ and a finite extension $K$ of $\mathbb{Q}_p$, we write $\mathcal{O}_K$ for the ring of integers of $K$, $\mathbb{F}_K$ for the residue field of $\mathcal{O}_K$, $\mathrm{Frob}_K \in W_K$ for a geometric Frobenius lift and $I_K$ for the kernel of the natural map $G_K \rightarrow G_{\mathbb{F}_K}$. We put $q_K:=|\mathbb{F}_K|$ (the order of $\mathbb{F}_K$) and we write $W_K$ for the Weil group of $K$. 

Let $\mathrm{Art}_K: K^{\times} \stackrel{\sim}{\longrightarrow} W_K^{ab}$ denote the local Artin map normalized to send primes to geometric Frobenius lifts. We write $| \ |_K: K^{\times} \rightarrow \mathbb{R}_{>0}$ for the $p$-adic valuation on $K^{\times}$ normalized to send primes to $q_K^{-1}$. We put $|| \ ||_K:=| \mathrm{Art}^{-1}_K |_K: W_{K} \rightarrow \mathbb{R}_{>0}$.

In this paper, we always assume that all smooth representations of $\mathrm{GL}_n(K)$ and Weil-Deligne representations of $W_K$ are defined over $\mathbb{C}$ or $\overline{\mathbb{Q}}_l$ for some prime $l$. For an irreducible smooth representation $\pi$ of $\mathrm{GL}_n(K)$, we write $\omega_{\pi}$ for the central character of $\pi$. For a Weil-Deligne representation $r$, we write $r^{F-ss}$ for the Frobenius semisimplification of $r$. 

For a prime $l \neq p$ and a continuous representation $r: W_K \rightarrow \mathrm{GL}_n(\overline{\mathbb{Q}}_l)$, we write $\mathrm{WD}(r)$ for the Weil-Deligne representation of $W_K$ over $\overline{\mathbb{Q}}_l$ corresponding to $r$.

For a prime $l = p$ and a Hodge-Tate representation $r: G_K \rightarrow \mathrm{GL}_n(\overline{\mathbb{Q}}_l)$ and $\tau \in \mathrm{Hom}_{\mathbb{Q}_l}(K,\overline{\mathbb{Q}}_l)$, we write $\mathrm{HT}_{\tau}(r):=[\underbrace{i, \cdots \cdots,  i}_{\mathrm{dim}_{\overline{\mathbb{Q}}_l}(r \otimes_{\tau, K} \widehat{\overline{K}}(i))^{G_K}}]_i \in (\mathbb{Z}^n/\mathfrak{S}_n)$ for $\tau$-Hodge-Tate weight of $r$. For example, the $l$-adic cyclotomic character $\varepsilon_l$ satisfies $\mathrm{HT}_{\tau}(\varepsilon_l)=-1$ for all $\tau$. We say that $r$ is Hodge-Tate regular if $r$ is Hodge-Tate and $\mathrm{HT}_{\tau}(r)$ consists of $n$-distinct integers for all $\tau \in \mathrm{Hom}_{\mathbb{Q}_l}(K, \overline{\mathbb{Q}}_l)$. For a de Rham representation $r : G_K \rightarrow \mathrm{GL}_n(\overline{\mathbb{Q}}_l)$, we write $\mathrm{WD}(r)$ for the Weil-Deligne representation of $W_K$ over $\overline{\mathbb{Q}}_l$ corresponding to $r$.

Let $\mathrm{rec}_K$ denote the local Langlands correspondence in \cite{LLC}. For a parabolic subgroup $P$ of $\mathrm{GL}_n$ and a smooth representation $\pi$ of a Levi-subgroup $M(K)$ of $P(K)$, we write n-Ind$^{\mathrm{GL}_n(K)}_{P(K)}\pi$ for the normalized induction of $\pi$ to $\mathrm{GL}_n(K)$. For positive integers $n_1, n_2$ such that $n_1 + n_2 = n$ and irreducible smooth representations $\pi_1, \pi_2$ of $\mathrm{GL}_{n_1}(K), \mathrm{GL}_{n_2}(K)$, we define the irreducible smooth representation $\pi_1 \boxplus \pi_2$ of $\mathrm{GL}_n(K)$ by $\mathrm{rec}_K(\pi_1 \boxplus \pi_2) = \mathrm{rec}_K(\pi_1) \oplus \mathrm{rec}_K(\pi_2)$. For positive integers $m|n$ and a supercuspidal representation $\pi$ of $\mathrm{GL}_{\frac{n}{m}}(K)$, we write $\mathrm{St}_m(\pi)$ for the generalized Steinberg representation associated to $\pi$. This is a unique irreducible quotient of n-$\mathrm{Ind}^{\mathrm{GL}_n(K)}_{P(K)}(\pi|\mathrm{det}|_K^{\frac{1-m}{2}} \boxtimes \cdots \boxtimes \pi|\mathrm{det}|_K^{\frac{m-1}{2}})$. For an irreducible Weil-Deligne representation $s$, we define $\mathrm{Sp}_m(s)$ by the condition $\mathrm{Sp}_m(\mathrm{rec}_{K}(\pi)) = \mathrm{rec}_K(\mathrm{St}_m(\pi)$) for any supercuspidal representation $\pi$ of $\mathrm{GL}_{\frac{n}{m}}(K)$. 

For a finite extension $L/K$ of degree $d$ and an irreducible smooth representation $\pi$ of $\mathrm{GL}_n(K)$, we write $\mathrm{BC}_{L/K}(\pi)$ for the irreducible smooth representation of $\mathrm{GL}_n(L)$ defined by $\mathrm{rec}_L(\mathrm{BC}_{L/K}(\pi)) := \mathrm{rec}_{K}(\pi)|_{W_L}$. For an irreducible smooth representation $\pi$ of $\mathrm{GL}_n(L)$, we write $\mathrm{AI}_{L/K}(\pi)$ for the irreducible smooth representation of $\mathrm{GL}_{dn}(K)$ defined by $\mathrm{rec}_{K}(\mathrm{AI}_{L/K}(\pi)) := \mathrm{Ind}^{W_{K}}_{W_L}\mathrm{rec}_{L}(\pi)$.

\underline{Global theory}

For a number field $F$ and a finite place $v$ of $F$, we write $F_v$ for the completion of $F$ by $v$, $\mathbb{F}_v$ for the residue field of $\mathcal{O}_{F_v}$, $\varpi_v$ for a prime of $\mathcal{O}_{F_v}$, $q_v$ for $|\mathbb{F}_v|$ and $\mathrm{Frob}_v \in W_{F_v}$ for a geometric Frobenius lift. Let $\mathbb{A}_F$ denote the adele ring of $F$ and $|| \ ||_F$ denote the adelic norm on $\mathbb{A}_F^{\times}$.

For a number field $F$, a prime $l$, a positive integer $n$ and a continuous representation $r: G_F \rightarrow \mathrm{GL}_n(\overline{\mathbb{Q}}_l)$, we say that $r$ is algebraic if $r$ is unramified at almost all finite places of $F$ and $r$ is de Rham at all $l$-adic places. In this case, for any $v \mid l$ and $\tau \in \mathrm{Hom}_{\mathbb{Q}_l}(F_v, \overline{\mathbb{Q}}_l) \subset \mathrm{Hom}(F, \overline{\mathbb{Q}}_l)$, we put $\mathrm{HT}_{\tau}(r) := \mathrm{HT}_{\tau}(r|_{G_{F_v}})$ and we say that $r$ is Hodge-Tate regular if $r|_{G_{F_v}}$ is Hodge-Tate regular for all $v  \mid l$.

For a CM field $F$, we write $F^+$ for the maximal totally real subfield of $F$. $c$ denotes the generator of $\mathrm{Gal}(F/F^{+})$. For an infinite place $w$ of $F^+$, we write $c_w \in G_{F^+}$ for the complex conjugation induced by $w$.

For a finite solvable extension $L/K$ of number fields of degree $d$ and an isobaric automorphic representation $\Pi$ of $\mathrm{GL}_n(\mathbb{A}_K)$, we write $\mathrm{BC}_{L/K}(\Pi)$ for the isobaric automorphic representation of $\mathrm{GL}_n(\mathbb{A}_L)$ satisfying $\mathrm{BC}_{L/K}(\Pi)_w = \mathrm{BC}_{L_w/K_v}(\Pi_v)$ for all finite places $w|v$. For an isobaric automorphic representation $\pi$ of $\mathrm{GL}_n(\mathbb{A}_L)$, we write $\mathrm{AI}_{L/K}(\pi)$ for the isobaric automorphic representation of $\mathrm{GL}_{dn}(\mathbb{A}_K)$ satisfying $(\mathrm{AI}_{L/K}(\pi))_v = \boxplus_{w|v}\mathrm{AI}_{L_w/K_v}(\pi)$ for all finite places $v$ in $K$. For integers $n_1, n_2$ satisfying $n_1 + n_2 = n$ and isobaric automorphic representations $\pi_1, \pi_2$ of $\mathrm{GL}_{n_1}(\mathbb{A}_K)$, $\mathrm{GL}_{n_2}(\mathbb{A}_K)$, we write $\pi_1 \boxplus \pi_2$ for the isobaric automorphic representation of $\mathrm{GL}_n(\mathbb{A}_K)$ satisfying $(\pi_1 \boxplus \pi_2)_v = (\pi_1)_v \boxplus (\pi_2)_v$ for all finite places $v$ of $K$.

For a number field $K$ and an automorphic representation $\pi$ of $\mathrm{GL}_n(\mathbb{A}_K)$, we say that $\pi$ is cohomological if $\pi_{\infty}$ has the same infinitesimal character as that of an irreducible algebraic representation $V$ of $\prod_{\tau \in \mathrm{Hom}(K, \mathbb{C})}\mathrm{GL}_n(\mathbb{C})$. In this case, we say that $\pi$ has weight $\lambda \in (\mathbb{Z}^n_+)^{\mathrm{Hom}(K, \mathbb{C})}$ if the dual representation $V^{\vee}$ of $V$ has highest weight $\lambda$.

\vspace{0.5 \baselineskip}

\textbf{Acknowledgements}

The author is grateful to his advisor Takeshi Saito for careful reading of an earlier version of this paper and for his constant support and encouragement. He also thanks Yugo Takanashi for answering his questions about the representation theory of $p$-adic reductive groups. This work was supported by the WINGS-FMSP program at the Graduate School of Mathematical Sciences, the University of Tokyo.

\section{Some observations on the local-global compatibility for the monodromy operators at $p \neq l$}

In this section, we find a simple interpretation of the local-global compatibility for the monodromy operators at $p \neq l$. Proposition \ref{automorphy lifting and local-global compatibility} is the main result in this section.

We fix a prime $p$, a finite extension $K$ of $\mathbb{Q}_p$ and a positive integer $n$ in this section. 

\begin{dfn}

1 \ We say that a sequence of positive integers $(n_1, \cdots, n_k)$ is a decomposition of $n$ if they satisfy $n = n_1 + \cdots + n_k$.  

2 \ We say that two decompositions of $n$ are equivalent if they coincide after changing their order.

3 \ A partition of $n$ means the equivalence class of a decomposition of $n$. 

    We write $[n_1, \cdots, n_k]$ for the equivalence class of $(n_1, \cdots, n_k)$. Unless otherwise specified, when we write $[n_1, \cdots ,n_k]$, we assume $n_1 \ge n_2 \ge \cdots \ge n_k$.

    4 \ For two partitions $\sigma := [n_1, \cdots, n_k]$ , $\tau:=[m_1, \cdots ,m_s]$ of $n$, we say that $\sigma \succ \tau$ if $k \le s$ and $n_1 + \cdots + n_i \ge m_1 + \cdots + m_i$ for all $1 \le i \le k$.

    5 \ Let $n_1$ and $n_2$ be positive integers and $\sigma := [a_1, \cdots, a_k]$, $\tau:=[b_1, \cdots ,b_s]$ be partitions of $n_1$, $n_2$ respectively. We write $\sigma \sqcup \tau$ for the partition $$[a_1, \cdots, a_k, b_1, \cdots ,b_s]$$ of $n_1+n_2$.

    6 \ For a partition $[n_1, \cdots, n_k]$ of $n$, we put $m_u:=|\{ t \mid n_t \ge u \}|$ for $1 \le u \le n_1=:j$. Then $[m_1, \cdots, m_j]$ is a partition of $n$. We call $[m_1, \cdots, m_j]$ the dual partition of $[n_1, \cdots, n_k]$.

\end{dfn}

\begin{lem}\label{partition lemma}

    Let $[n_1, \cdots, n_i]$ be a partition of $n$ and $[m_1, \cdots, m_j]$ be the dual partition of $[n_1, \cdots, n_i]$. Then for a partition $[l_1, \cdots, l_k]$ of $n$, the following conditions are equivalent.
    
    (1) \ $[n_1, \cdots, n_i] \succ [l_1, \cdots , l_k]$.
    
    (2) \ There exists a $k \times j$ matrix of integers $(a_{s, u})_{s=1, \cdots, k, u=1, \cdots j}$ such that $a_{s, u} = 0$ or $1$ for all $s$ and $u$, $\sum_{u=1}^{j} a_{s,u} = l_s$ for all $s$ and $\sum_{s=1}^k a_{s, u} = m_u$ for all $u$. 

    \end{lem}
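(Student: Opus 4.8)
The plan is to prove the equivalence by interpreting both conditions combinatorially in terms of Young diagrams. Recall that $[m_1,\dots,m_j]$ being the dual partition of $[n_1,\dots,n_i]$ means that the Young diagram of $[m_1,\dots,m_j]$ is the transpose of that of $[n_1,\dots,n_i]$; concretely $m_u = |\{t : n_t \ge u\}|$, and symmetrically $n_t = |\{u : m_u \ge t\}|$. The dominance order statement $[n_1,\dots,n_i]\succ[l_1,\dots,l_k]$ (with the convention that all partitions are written in weakly decreasing order and $i\le k$) is the classical dominance partial order on partitions of $n$, and it is a standard fact that dominance is reversed under transposition: $[n_1,\dots,n_i]\succ[l_1,\dots,l_k]$ if and only if $[l_1,\dots,l_k]^\vee \succ [m_1,\dots,m_j]$, where $[l_1,\dots,l_k]^\vee$ is the dual partition of $[l_1,\dots,l_k]$.

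The main work is then the Gale--Ryser type statement: for two partitions $\mu=[l_1,\dots,l_k]$ and $\nu=[m_1,\dots,m_j]$ of the same integer $n$, there exists a $\{0,1\}$-matrix with row sums $l_1,\dots,l_k$ and column sums $m_1,\dots,m_j$ if and only if $\mu^\vee \succ \nu$ (equivalently $\nu^\vee \succ \mu$, since the two are the same relation). So I would structure the proof as: first, condition (2) says exactly that there is a $\{0,1\}$-matrix with the prescribed row sums $(l_s)$ and column sums $(m_u)$; second, invoke (or prove) the Gale--Ryser theorem to rewrite this as $[l_1,\dots,l_k]^\vee\succ[m_1,\dots,m_j]$; third, use the transpose-reverses-dominance fact to rewrite this as $[n_1,\dots,n_i]=[m_1,\dots,m_j]^\vee\succ[l_1,\dots,l_k]$, which is condition (1).

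For the direction of Gale--Ryser that builds the matrix (the harder one): given $[l_1,\dots,l_k]^\vee\succ[m_1,\dots,m_j]$, I would construct the $\{0,1\}$-matrix greedily. Order the rows so that $l_1\ge l_2\ge\cdots\ge l_k$. Process columns $u=1,\dots,j$ from largest column sum to smallest; for column $u$ place $m_u$ ones in the rows with the currently largest residual row-sum requirement (ties broken consistently, e.g. by row index). One checks by induction that after each column the residual row requirements still form, after reordering, a partition dominated appropriately so that the process never gets stuck — the dominance hypothesis is exactly what guarantees $m_u$ never exceeds the number of rows with positive residual requirement and that the ``conjugate dominates'' condition is preserved. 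The converse direction (matrix $\Rightarrow$ dominance) is easier: from a $\{0,1\}$-matrix with the given margins, for each $i_0\le k$ the total number of ones in the first $i_0$ rows (after sorting rows by decreasing $l_s$) is $\sum_{s\le i_0} l_s$, and distributing these among the $j$ columns with each column receiving at most $m_u$ but also at most $i_0$ ones gives $\sum_{s\le i_0} l_s \le \sum_u \min(i_0,m_u) = \sum_{t\le i_0}(\text{$t$-th part of }[m_1,\dots,m_j]^\vee)$, which after translating back is precisely the inequality defining $[n_1,\dots,n_i]\succ[l_1,\dots,l_k]$.

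The main obstacle is the constructive half of Gale--Ryser, i.e. verifying that the greedy column-filling procedure terminates successfully; everything else is bookkeeping with Young diagrams and the elementary identities $\sum_u \min(i_0,m_u) = \sum_{t\le i_0} n_t$ relating a partition's partial sums to its dual's. In practice I would probably not reprove Gale--Ryser from scratch but cite it, reducing the lemma to the transpose-reverses-dominance fact plus the identification of condition (2) with the existence of a $\{0,1\}$-matrix with the stated margins; if a self-contained argument is wanted, the induction on the number of columns sketched above is the cleanest route. It is also worth noting one subtlety to check: condition (1) as stated imposes $i\le k$ as part of the definition of $\succ$, and one should confirm this is automatically forced by condition (2), since a $\{0,1\}$-matrix with row sums $l_s$ and exactly $j$ columns forces each $l_s\le j = n_1$, hence the number of nonzero parts $k\ge i$.
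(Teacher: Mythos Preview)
Your approach is correct and essentially identical to the paper's: the paper proves (2)$\Rightarrow$(1) by exactly your partial-sum bound $\sum_{s\le t} l_s \le \sum_u \min(t,m_u) = \sum_{s\le t} n_s$, and proves (1)$\Rightarrow$(2) by the same greedy column-by-column construction you describe, carried out as an explicit induction on $n$ (fill column~1 with ones in rows $1,\dots,i=m_1$, then verify the residual dominance $[n_1-1,\dots,n_i-1]\succ[l_1-1,\dots,l_i-1,l_{i+1},\dots,l_k]$ directly rather than appealing to Gale--Ryser as a black box). One small correction to your final remark: the implication ``each $l_s\le j=n_1$, hence $k\ge i$'' does not follow as stated (e.g.\ $[n_1,n_2,n_3]=[4,1,1]$ and $[l_1,l_2]=[3,3]$ has all $l_s\le 4$ but $k=2<3=i$); the clean way to deduce $k\ge i$ from (2), which is what the paper does, is that column~1 has sum $m_1=i$ with $\{0,1\}$ entries in $k$ rows.
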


    \begin{proof} We will prove $(2) \Rightarrow (1)$. We take $(a_{s,u})_{s=1, \cdots, k, u=1, \cdots j}$ be a $k \times j$-matrix of integers satisfying the condition (2). Then $k \ge \sum^{k}_{s=1}a_{s, 1} = m_1 = i$ since $a_{s,1} = 0$ or $1$. 
    
 Let $b_{s, u}$ be $1$ if $1 \le s \le m_u$ and $0$ if $m_u + 1 \le s \le k$. Then we have $\sum_{u = 1}^j b_{s, u} = |\{ 1 \le u \le j \mid m_u \ge s \}| = n_s$. Moreover, for any $1 \le u \le j$ and any $1 \le t \le k$, we have $\sum_{s = 1}^t a_{s, u} \le \mathrm{min} \{ m_u, t \} = \sum^{t}_{s=1} b_{s, u}$.
    
Therefore, we obtain $\sum^{t}_{s = 1} l_s = \sum^t_{s=1} \sum_{u=1}^{j} a_{s, u} = \sum_{u=1}^{j} \sum^t_{s=1} a_{s, u} \le \sum_{u=1}^{j} \sum^t_{s=1} b_{s, u} = \sum^t_{s=1} \sum_{u=1}^{j} b_{s, u} = \sum_{s = 1}^{t} n_s$. Consequently, $[n_1, \cdots, n_i] \succ [l_1, \cdots, l_k]$.

\vspace{0.5 \baselineskip}

We prove $(1) \Rightarrow (2)$ by induction on $n$. If $n = 1$, then (2) always holds. We assume $n > 1$ and $[n_1, \cdots, n_i] \succ [l_1, \cdots , l_k]$. Note that $[m_2, \cdots, m_j]$ is the dual partition of $[n_1 - 1, \cdots, n_i-1]$. (We regard $[n_1 - 1, \cdots, n_i-1]$ be the partition of $n-i$ by removing zero components. In the following argument, we use similar conventions.) Let $a_{s,1}$ be $1$ for any $1 \le s \le i$ and $0$ for any $i + 1 \le s \le k$. If we can prove that $[n_1-1, \cdots, n_i - 1] \succ [l_1 - 1, \cdots, l_i - 1, l_{i+1}, \cdots, l_k]$ (note that we need not have $l_i - 1 \ge l_{i+1}$), then by the induction hypothesis, we can take $(a_{s,u})_{s = 1, \cdots, k, u = 2, \cdots, j}$ satisfying $a_{s, u} = 0$ or $1$ for all $s$ and $u$, $\sum_{u=2}^{j} a_{s,u} = l_s - 1$ for all $1 \le s \le i$, $\sum_{u=2}^{j} a_{s,u} = l_s$ for all $i + 1 \le s \le k$ and $\sum_{s=1}^k a_{s, u} = m_u$ for all $2 \le u \le j$. Then, $(a_{s,u})_{s = 1, \cdots, k, u = 1, \cdots, j}$ satisfies the conditions of (2).

Thus, it suffices to prove $[n_1-1, \cdots, n_i - 1] \succ [l_1 - 1, \cdots, l_i - 1, l_{i+1}, \cdots, l_k]$. We put $l_{m-1} > l_m = \cdots = l_i \ge \cdots$ and $[l_1 - 1, \cdots, l_i - 1, l_{i+1}, \cdots, l_k] = [l_1', \cdots, l_k']$ such that $l_1' \ge l_2' \ge \cdots \ge l_k'$. Note that we have $l_s - 1 = l_s'$ for all $1 \le s \le m-1$ and $l_s \ge l_s' \ge l_s - 1$ for all $m \le s \le i$. Then, the condition $[n_1-1, \cdots, n_i - 1] \succ [l_1 - 1, \cdots, l_i - 1, l_{i+1}, \cdots, l_k]$ is equivalent to the condition that for any $m \le t \le i$, we have $\sum^{t}_{s = 1}(n_s - 1) \ge \sum^{t}_{s=1}l_s'$. For $t = i$, this condition always holds by $\sum^{i}_{s = 1}(n_s - 1) = n - i = \sum^{k}_{s=1}l_s' \ge \sum^{i}_{s=1}l_s'$. 

We suppose that there were $m \le t \le i$ such that $\sum^{t}_{s = 1}(n_s - 1) < \sum^{t}_{s=1}l_s'$. We may assume that $t$ is the maximum. Then we would have $t + 1 \le i$, $\sum^{t+1}_{s = 1}(n_s - 1) \ge \sum^{t+1}_{s=1}l_s'$ and consequently $n_{t + 1} \ge l_{t+1}' + 2 \ge l_{t+1} + 1$. On the other hand, $n_{m} \ge \cdots \ge n_{t-1} \ge n_{t} \ge n_{t+1}$ and $l_{m} = \cdots = l_{t} = l_{t+1} \le n_{t+1} - 1$. Therefore, we would obtain $\sum^{t}_{s = 1}(n_s - 1) \ge \sum^{m-1}_{s=1}(l_s - 1) + \sum_{s=m}^{t}l_s \ge \sum^{t}_{s=1}l_s'$ and this is absurd. \end{proof}

\begin{lem}\label{partition sum}

Let $n_1, \cdots n_k$ be positive integers and $\tau_i,  \sigma_i$ be partitions of $n_i$ for any $1 \le i \le k$. We assume that $\tau_i \succ \sigma_i$ for all $i$ and $\tau_1 \sqcup \tau_2 \sqcup \cdots \sqcup \tau_k = \sigma_1 \sqcup \sigma_2 \sqcup \cdots \sqcup \sigma_k$.

Then we obtain $\tau_i = \sigma_i$ for any $1 \le i \le k$.

\end{lem}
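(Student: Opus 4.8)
The plan is to test the partitions against a family of additive functionals. For an integer $j \ge 1$ and a finite multiset of positive integers $\mu = \{\mu_a\}_a$, set $g_j(\mu) := \sum_a \min(\mu_a, j)$. Two elementary facts drive the argument. First, $g_j$ is additive: $g_j(\mu \sqcup \nu) = g_j(\mu) + g_j(\nu)$, directly from the definition. Second, the whole sequence $(g_j(\mu))_{j \ge 1}$ recovers $\mu$, since with the convention $g_0 := 0$ one has $g_j(\mu) - g_{j-1}(\mu) = |\{a : \mu_a \ge j\}|$, which is exactly the $j$-th part of the dual partition of $\mu$; knowing the dual partition we know $\mu$. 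The key additional input is monotonicity: if $\mu \succ \nu$ are partitions of the \emph{same} integer, then $g_j(\mu) \le g_j(\nu)$ for every $j \ge 1$.

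Granting monotonicity, the lemma is immediate. By additivity, for each $j$ we get $\sum_{i=1}^{k} g_j(\tau_i) = g_j(\tau_1 \sqcup \cdots \sqcup \tau_k) = g_j(\sigma_1 \sqcup \cdots \sqcup \sigma_k) = \sum_{i=1}^{k} g_j(\sigma_i)$, using the hypothesis that the two disjoint unions coincide. On the other hand, monotonicity applied to each $\tau_i \succ \sigma_i$ gives $g_j(\tau_i) \le g_j(\sigma_i)$ for every $i$. A sum of inequalities that is an equality forces every summand to be an equality, so $g_j(\tau_i) = g_j(\sigma_i)$ for all $i$ and all $j \ge 1$; by the reconstruction fact above, $\tau_i = \sigma_i$ for every $i$.

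So the one point that needs work is monotonicity, and I expect it to be the only real obstacle. I would prove it via the ``min-plus'' formula $g_j(\mu) = \min_{r \ge 0}\bigl(rj + \sum_{a > r}\mu_a\bigr) = \min_{r \ge 0}\bigl(rj + N - \sum_{a=1}^{r}\mu_a\bigr)$, valid for $\mu = [\mu_1 \ge \mu_2 \ge \cdots]$ of size $N$ (padding with zeros past the last part): each term $rj + \sum_{a>r}\mu_a$ dominates $g_j(\mu)$ term-by-term because $j \ge \min(\mu_a,j)$ and $\mu_a \ge \min(\mu_a,j)$, and equality is attained at $r = |\{a : \mu_a \ge j\}|$. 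Now suppose $\mu \succ \nu$, both of size $N$. The definition of $\succ$, together with the trivial observation that $\sum_{a=1}^{r}\mu_a = N \ge \sum_{a=1}^{r}\nu_a$ once $r$ exceeds the number of parts of $\mu$, gives $\sum_{a=1}^{r}\mu_a \ge \sum_{a=1}^{r}\nu_a$ for every $r \ge 0$. Picking $r^*$ to attain the minimum in the formula for $g_j(\nu)$, we obtain $g_j(\mu) \le r^* j + N - \sum_{a=1}^{r^*}\mu_a \le r^* j + N - \sum_{a=1}^{r^*}\nu_a = g_j(\nu)$, which is the desired inequality. (One could instead cite that conjugation reverses the dominance order and that $g_j(\mu)$ equals the sum of the first $j$ parts of the dual partition, but the estimate above is self-contained and uses only the definition of $\succ$ recorded earlier.)
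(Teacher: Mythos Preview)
Your proof is correct and takes a genuinely different route from the paper. You package everything into the additive, order-reversing functionals $g_j(\mu)=\sum_a\min(\mu_a,j)$: additivity under $\sqcup$ turns the union hypothesis into $\sum_i g_j(\tau_i)=\sum_i g_j(\sigma_i)$, the monotonicity $\mu\succ\nu\Rightarrow g_j(\mu)\le g_j(\nu)$ (which you prove cleanly via the min--plus identity $g_j(\mu)=\min_{r\ge 0}(rj+N-\sum_{a\le r}\mu_a)$) forces termwise equality, and the reconstruction $g_j-g_{j-1}=\mu'_j$ finishes. This is essentially the statement that conjugation reverses dominance, recast in a way that makes the additivity transparent.

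The paper instead argues by induction on the total size: it isolates the globally largest part $u_0$, shows from $\tau_i\succ\sigma_i$ that each $\tau_i$ has at least as many copies of $u_0$ as $\sigma_i$, uses the union equality to upgrade these inequalities to equalities, strips the $u_0$'s, and repeats. Your approach is more structural (one monotone family of invariants does all the work at once) and the monotonicity lemma is reusable elsewhere; the paper's argument is more hands-on and avoids introducing auxiliary functionals, at the cost of an explicit peeling step. Either is perfectly adequate for the application here.
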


\begin{proof} We put $\tau_i:=[a_{i,1}, \cdots, a_{i, m_i}]$ and $\sigma_i:=[b_{i,1}, \cdots, b_{i,l_i}]$. 

By $\sigma_i \succ \tau_i$ for any $i$, we obtain \begin{equation} \sum_{j \le s} a_{i, j} \ge \sum_{j \le s} b_{i,j} \end{equation} for any $i$ and any $1 \le s \le m_i$.

By $\tau_1 \sqcup \tau_2 \sqcup \cdots \sqcup \tau_k = \sigma_1 \sqcup \sigma_2 \sqcup \cdots \sqcup \sigma_k$, we have \begin{equation}\sum_{1 \le i \le k, a_{i, j} \ge u} a_{i, j} = \sum_{1 \le i \le k, b_{i, j} \ge u} b_{i,j} \end{equation} for any positive integer $u$.
    
We put $u_0:= \mathrm{max}\{ a_{i,1} \mid 1 \le i \le k \}$. Then we obtain \begin{equation} \sum_{a_{i, j} \ge u_0} a_{i, j} \ge \sum_{b_{i,j} \ge u_0} b_{i,j} \end{equation} for any $i$ by the inequality (1).

Thus we obtain $a_{i, j} = b_{i, j}$ for any $i, j$ satisfying $a_{i,j} = u_0$ by the equality (2) and the inequality (3). By induction on $n_1 + \cdots + n_k$, we obtain $[a_{i, 1}, \cdots, a_{i, m_i}] = [b_{i, 1}, \cdots, b_{i, l_i}]$ for all $i$. \end{proof}

\begin{dfn}

    Let $(n_1, \cdots, n_k)$ be a decomposition of $n$ and $P$ be the parabolic subgroup of $\mathrm{GL}_n$ corresponding to this decomposition.
    
    The parahoric subgroup of $\mathrm{GL}_n(K)$ corresponding to $(n_1, \cdots, n_k)$ means the inverse image of $P(\mathbb{F}_K)$ by the reduction map $\mathrm{GL}_n(\mathcal{O}_K) \rightarrow \mathrm{GL}_n(\mathbb{F}_K)$.
    
    In particular, if $(n_1, \cdots, n_k) = (1, \cdots, 1)$, we call the corresponding parahoric subgroup the Iwahori subgroup and Iw$_K$ denotes this subgroup. 

\end{dfn}

\begin{lem}\label{Iwahori fixed vectors}

Let $\psi$ be an unramified character of $K^{\times}$.

Then $\mathrm{St}_n(\psi)$ has a nonzero vector fixed by $\mathrm{Iw}_K$ and doesn't have nonzero vectors fixed by the parahoric subgroup corresponding to any other decomposition of $n$.

\end{lem}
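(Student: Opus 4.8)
The plan is to analyze the Iwahori-Hecke module structure of $\mathrm{St}_n(\psi)$, or equivalently to use the Jacquet module / Borel's classification of Iwahori-spherical representations. First I would recall that by Borel's theorem the functor $\sigma \mapsto \sigma^{\mathrm{Iw}_K}$ is an equivalence between the category of smooth representations of $\mathrm{GL}_n(K)$ generated by their Iwahori-fixed vectors and the category of modules over the Iwahori-Hecke algebra $\mathcal{H}(\mathrm{GL}_n(K), \mathrm{Iw}_K)$. Since $\mathrm{St}_n(\psi)$ is a subquotient of $\mathrm{n\text{-}Ind}^{\mathrm{GL}_n(K)}_{B_n(K)}(\psi|\ |_K^{(1-n)/2}\boxtimes\cdots\boxtimes\psi|\ |_K^{(n-1)/2})$, which is Iwahori-spherical, it has a nonzero Iwahori-fixed vector; so the first assertion is immediate once one notes that the Steinberg is a genuine constituent of this induced representation. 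The real content is the second assertion: that $\mathrm{St}_n(\psi)$ has no vectors fixed by the paraholic $K_{(n_1,\dots,n_k)}$ for any decomposition $(n_1,\dots,n_k) \neq (1,\dots,1)$.

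For that, I would use that the space of $K_{(n_1,\dots,n_k)}$-fixed vectors in any Iwahori-spherical $\sigma$ is computed, via the Iwahori decomposition of the paraholic, as the subspace of $\sigma^{\mathrm{Iw}_K}$ on which the finite parahoric Hecke algebra — i.e. the Hecke algebra of $(\mathrm{GL}_n(\mathbb{F}_K), P(\mathbb{F}_K))$ acting through the parabolic Hecke operators $[\mathrm{Iw}_K\, w\, \mathrm{Iw}_K]$ for $w$ in the finite Weyl subgroup $W_{(n_1,\dots,n_k)}$ — acts through the trivial (or rather, through a specific idempotent) character. Concretely, $\sigma^{K_{(n_1,\dots,n_k)}} = e_{(n_1,\dots,n_k)}\cdot\sigma^{\mathrm{Iw}_K}$ where $e_{(n_1,\dots,n_k)}$ is the idempotent $[K_{(n_1,\dots,n_k)}]$ in the Iwahori-Hecke algebra. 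Now $\dim \mathrm{St}_n(\psi)^{\mathrm{Iw}_K} = 1$: indeed the Iwahori-fixed vectors of the full principal series $\mathrm{n\text{-}Ind}^{\mathrm{GL}_n(K)}_{B_n(K)}(\chi)$ with regular unramified character $\chi$ form the regular representation of $W = \mathfrak{S}_n$, and the constituents of this principal series are indexed so that $\mathrm{St}_n(\psi)$ picks out exactly the sign representation of $W$, which is one-dimensional. So I must show that the idempotent $e_{(n_1,\dots,n_k)}$ annihilates the sign line whenever the decomposition is nontrivial.

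The key computation is therefore the action of the finite parahoric Hecke operators on the one-dimensional $W$-representation realized in $\mathrm{St}_n(\psi)^{\mathrm{Iw}_K}$. On this line each simple reflection $s_i$ (for $i$ in the support of the parabolic $P_{(n_1,\dots,n_k)}$) acts by the scalar $-1$ — this is the defining property that forces the constituent to be the Steinberg: the Steinberg is precisely the constituent on which all the ``Atkin–Lehner at a simple root'' operators act as $U_{s_i} \mapsto -1$ rather than $U_{s_i} \mapsto q$, matching the sign character of the finite Iwahori-Hecke algebra $\mathcal{H}_q(\mathfrak{S}_n)$. Since the idempotent $e_{(n_1,\dots,n_k)}$ projecting onto the parahoric-fixed vectors is, up to normalization, $\sum_{w \in W_{(n_1,\dots,n_k)}} q^{?}\,U_w$ composed with the trivial projector of the finite parabolic, and the sign character of $\mathcal{H}_q(W_{(n_1,\dots,n_k)})$ is orthogonal to its trivial character as soon as $W_{(n_1,\dots,n_k)}$ is nontrivial (which happens exactly when $(n_1,\dots,n_k)\neq(1,\dots,1)$), this idempotent kills the sign line. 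Hence $\mathrm{St}_n(\psi)^{K_{(n_1,\dots,n_k)}} = 0$.

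The main obstacle I anticipate is not any single deep theorem but getting the normalizations right: one must be careful that ``paraholic fixed vectors'' translate to ``trivial-character-isotypic part under the finite parabolic Hecke algebra'' with the correct idempotent, and one must pin down that the relevant $W$-line inside $\mathrm{St}_n(\psi)^{\mathrm{Iw}_K}$ is the sign character and that each generator of the finite parahoric Hecke algebra acts as $-1$ on it. An alternative, perhaps cleaner, route avoiding Hecke-algebra bookkeeping: use that $\mathrm{St}_n(\psi)^{K_{(n_1,\dots,n_k)}} = (r^{\mathrm{GL}_n(K)}_{P(K)}\mathrm{St}_n(\psi))^{K_{(n_1,\dots,n_k)}\cap M(K)}$ is governed by the Jacquet module $r^{\mathrm{GL}_n(K)}_{P(K)}\mathrm{St}_n(\psi)$, compute this Jacquet module via the geometric lemma (it is a sum of $\boxtimes_i \mathrm{St}_{n_i}(\psi|\ |^{a_i})$ up to twist), and observe that each factor $\mathrm{St}_{n_i}(\psi')$ with $n_i \ge 2$ has no $\mathrm{GL}_{n_i}(\mathcal{O}_K)$-fixed vector since the Steinberg is not unramified — so as soon as some $n_i \ge 2$ the parahoric-fixed space vanishes, and the case $(n_1,\dots,n_k)\neq(1,\dots,1)$ is precisely the case that some $n_i\ge 2$. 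I would present the proof along this Jacquet-module line, as it reduces everything to the elementary fact that a generalized Steinberg representation of $\mathrm{GL}_m$ with $m\ge 2$ has no spherical vector.
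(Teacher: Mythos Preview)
Your proposal is correct, and the Jacquet-module route you ultimately recommend is exactly the paper's proof: the paper uses $\pi^{\mathfrak{p}} \cong r_N(\pi)^{M(K)\cap\mathfrak{p}}$, computes $r_N(\mathrm{St}_n(\psi)) \cong \mathrm{St}_{n_1}(\psi_1)\otimes\cdots\otimes\mathrm{St}_{n_k}(\psi_k)$ via Zelevinsky, and concludes from the fact that $\mathrm{St}_{n_i}(\psi_i)$ is spherical iff $n_i=1$. Your first approach via the Iwahori--Hecke algebra and the sign character is a valid alternative, but as you yourself anticipated, the Jacquet-module argument is cleaner and avoids the normalization bookkeeping.
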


\begin{proof} 
    
    Let $(n_1, \cdots, n_k)$ be a decomposition of $n$, $P$ (resp. $\mathfrak{p}$) be a parabolic (resp. parahoric) subgroup corresponding to this decomposition and $P=MN$ be the natural Levi decomposition.

    By \cite[Lemma 5.15]{TW}, we have $\pi^{\mathfrak{p}} \cong r_{N}(\pi)^{M(K) \cap \mathfrak{p}}$ for any smooth representation $\pi$ of $\mathrm{GL}_n(K)$. ($r_N$ denotes the normalized Jacquet module.) By \cite[Proposition 1.1, Proposition 1.5 and Proposition 9.5]{ZI}, there exist unramified characters $ \psi_1, \cdots, \psi_k$ of $K^{\times}$ such that $r_{N}(\mathrm{St}_n(\psi)) \cong \mathrm{St}_{n_1}(\psi_1) \otimes \cdots \otimes \mathrm{St}_{n_k}(\psi_k)$. 

 Note that $\mathrm{St}_{n_i}(\psi_i)$ is unramified if and only if $n_i=1$. Thus, $\mathrm{St}_{n}(\psi)^{\mathfrak{p}} \neq 0$ if and only if $\mathfrak{p}=\mathrm{Iw}_K$.  \end{proof}

\begin{lem}\label{parahoric}

Let $[n_1, \cdots, n_i]$ be a partition of $n$ and $[m_1, \cdots, m_j]$ be the dual partition of $[n_1, \cdots, n_i]$. Then for any partition $[l_1, \cdots, l_k]$ of $n$ and unramified characters $\psi_1, \cdots, \psi_{k}$ of $K^{\times}$, the following conditions are equivalent. ($Q$ denotes the parabolic subgroup corresponding to $(l_1, \cdots, l_k)$ and $\mathfrak{p}$ denotes the parahoric subgroup corresponding to $(m_1, \cdots, m_j)$.)

(1) \ $(\mathrm{n}$-$\mathrm{Ind}^{\mathrm{GL}_n(K)}_{Q(K)}\mathrm{St}_{l_1}(\psi_1) \otimes \cdots \otimes \mathrm{St}_{l_k}(\psi_{k}))^{\mathfrak{p}} \neq 0$.

(2) $[n_1, \cdots, n_i] \succ [l_1, \cdots , l_k]$.

\end{lem}

\begin{proof} By \cite[Lemma 3.3 and 3.4]{ade}, we have $(\mathrm{n}\textrm{-}\mathrm{Ind}^{\mathrm{GL}_n(K)}_{Q(K)}\mathrm{St}_{l_1}(\psi_1) \otimes \cdots \otimes \mathrm{St}_{l_k}(\psi_{k}))^{\mathfrak{p}} = \oplus_{w \in \mathcal{S}} \  \mathrm{St}_{l_1}(\psi_1)^{\mathfrak{p}_1^w} \otimes \cdots \otimes \mathrm{St}_{l_k}(\psi_{k})^{\mathfrak{p}_k^w}.$

Here, $\mathcal{S} := \{ (m_{s, 1}, \cdots, m_{s, j})_{1 \le s \le k} \mid m_{s, u} \in \mathbb{Z}_{\ge 0}, \sum_{u=1}^{j} m_{s,u} = l_s \ \forall s, \ \sum_{s=1}^k m_{s, u} = m_u \ \forall u \}$ and for $w = (m_{s, 1}, \cdots, m_{s, j})_{1 \le s \le k} \in \mathcal{S}$ and $1 \le s \le k$, $\mathfrak{p}^{w}_{s}$ denotes the parahoric subgroup of $\mathrm{GL}_{l_s}(K)$ corresponds to the decomposition $(m_{s, 1}, \cdots, m_{s, j})$ of $l_s$. Note that we regard $(m_{s, 1}, \cdots, m_{s, j})$ as the decomposition of $l_s$ by removing zero components.

By Lemma \ref{Iwahori fixed vectors}, (n-Ind$^{\mathrm{GL}_n(K)}_{Q(K)}\mathrm{St}_{l_1}(\psi_1) \otimes \cdots \otimes \mathrm{St}_{l_k}(\psi_{k}))^{\mathfrak{p}} \neq 0$ is equivalent to the existence of $w = (m_{s, 1}, \cdots, m_{s, j})_{1 \le s \le k} \in \mathcal{S}$ such that $m_{s, u} = 0$ or $1$ for all $s, u$. By Lemma \ref{partition lemma}, this is equivalent to $[n_1, \cdots, n_i] \succ [l_1, \cdots, l_k]$. \end{proof}

\begin{dfn} \label{Weil-Deligne def}

1 \ For a Weil-Deligne representation $r$ of $W_K$, the monodromy type of $r$ means the partition corresponding to the Jordan canonical form of the monodromy operator of $r$.

2 \ We say that two irreducible Weil-Deligne representations $r_1$, $r_2$ of $W_{K}$ are equivalent if there exists an unramified character $\chi$ of $W_{K}$ such that $r_1 \cong r_2 \chi$.

3 \ Let $\mathcal{W}$ be the set of equivalence classes of irreducible Weil-Deligne representations of $W_{K}$.

4 \ For a Frobenius semisimple Weil-Deligne representation $r$ of $W_{K}$ and $w \in \mathcal{W}$, we write $r[w]$ for the maximal subrepresentation of $r$ whose all irreducible subquotients are contained in $w$. Note that if $r[w] \neq 0$, there exist $s_1, \cdots, s_k \in w$ and positive integers $n_1 \ge \cdots \ge n_k$ such that $r[w] \cong \mathrm{Sp}_{n_1}(s_1) \oplus \mathrm{Sp}_{n_2}(s_2) \oplus \cdots \oplus \mathrm{Sp}_{n_k}(s_k)$, $n_1 + \cdots + n_k = \frac{\mathrm{dim} \, r[w]}{\mathrm{dim} \, s_1}$ and the monodromy type of $r[w]$ is equal to $[ \underbrace{n_1, \cdots, n_1}_{\mathrm{dim} \, s_1}, \cdots, \underbrace{n_k, \cdots, n_k}_{\mathrm{dim} \, s_1} ]$. 

5 \ Let $r_1, r_2$ be $n$-dimensional Frobenius semisimple Weil-Deligne representations of $W_{K}$.

We say that $r_1 \succ r_2$ if $r_1^{ss} \cong r_2^{ss}$ and for all $w \in \mathcal{W}$ satisfying $r_1[w] \neq 0$, we have (the monodromy type of $r_1[w]$) $\succ$ (the monodromy type of $r_2[w]$). (Note that if $r_1^{ss} \cong r_2^{ss}$, then $\mathrm{dim} \, r_1[w] = \mathrm{dim} \, r_2[w]$ for any $w \in \mathcal{W}$.)

\end{dfn}

\begin{rem} Let $r_1, r_2$ be $n$-dimensional Frobenius semisimple Weil-Deligne representations of $W_{K}$. If $r_1 \succ r_2$, then $r_1|_{W_L} \succ r_2|_{W_L}$ for any finite extension $L/K$.

\end{rem}

\begin{lem}\label{supercuspidal absolute values}

    Let $\pi$ be an irreducible supercuspidal representation of $\mathrm{GL}_n(K)$ over $\mathbb{C}$. Then there exists $\alpha \in \mathbb{R}_{>0}$ such that all absolute values of the Frobenius eigenvalues of $\mathrm{rec}_{K}(\pi)$ are equal to $\alpha$. Moreover, $\pi$ is unitary if and only if $\alpha = 1$.

    \end{lem}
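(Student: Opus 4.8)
The plan is to reduce to the case where $\pi$ is preunitary by an unramified twist. Since $\pi$ is supercuspidal, $\mathrm{rec}_K(\pi)$ is an irreducible $n$-dimensional representation of $W_K$ with trivial monodromy operator, so the Frobenius eigenvalues of $\mathrm{rec}_K(\pi)$ are the eigenvalues of $\mathrm{rec}_K(\pi)(\mathrm{Frob}_K)$ for a geometric Frobenius lift $\mathrm{Frob}_K$. The central character $\omega_\pi$ of $\pi$ corresponds under $\mathrm{Art}_K$ to $\det\circ\mathrm{rec}_K(\pi)$, and since $\omega_\pi$ is automatically trivial on a compact open subgroup of $K^\times$, there is a unique $s \in \mathbb{R}$ such that $\omega_{\pi\otimes|\det|_K^s} = \omega_\pi\cdot|\cdot|_K^{ns}$ takes values in the unit circle. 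Set $\pi^u := \pi\otimes|\det|_K^s$, so $\pi = \pi^u\otimes|\det|_K^{-s}$, and $\pi^u$ is again irreducible supercuspidal, now with unitary central character.

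Next I would invoke the classical fact that an irreducible supercuspidal representation of $\mathrm{GL}_n(K)$ with unitary central character is preunitary: its matrix coefficients are compactly supported modulo the center $Z$, so integrating a product of two such matrix coefficients over $\mathrm{GL}_n(K)/Z$ produces a $\mathrm{GL}_n(K)$-invariant positive-definite Hermitian form (convergence uses the compact support together with the unitarity of $\omega_{\pi^u}$, and non-degeneracy follows from irreducibility of $\pi^u$). The same compact support shows that every nonzero matrix coefficient of $\pi^u$ lies in $L^{2+\varepsilon}(\mathrm{GL}_n(K)/Z)$ for all $\varepsilon>0$, so $\pi^u$ is a discrete series representation, in particular tempered. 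By the compatibility of the local Langlands correspondence with temperedness (see \cite{LLC}), $\mathrm{rec}_K(\pi^u)$ is then a tempered Weil--Deligne representation; being irreducible with trivial monodromy, this means precisely that $\mathrm{rec}_K(\pi^u)$ has relatively compact image, i.e. all Frobenius eigenvalues of $\mathrm{rec}_K(\pi^u)$ have absolute value $1$. Since $\mathrm{rec}_K(\pi) = \mathrm{rec}_K(\pi^u)\otimes||\cdot||_K^{-s}$ and $||\mathrm{Frob}_K||_K = q_K^{-1}$ for every geometric Frobenius lift, it follows that all Frobenius eigenvalues of $\mathrm{rec}_K(\pi)$ have absolute value $\alpha := q_K^{s} \in \mathbb{R}_{>0}$, proving the first assertion.

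For the last assertion: if $\pi$ is preunitary then $\omega_\pi$ is unitary, and since $\omega_{\pi^u}$ is unitary this forces $|\cdot|_K^{-ns}$, hence $s$, to vanish, so $\alpha = 1$; conversely, if $\alpha = 1$ then $s = 0$ and $\pi = \pi^u$, which is preunitary by the above. The only non-formal ingredient here is the compatibility of $\mathrm{rec}_K$ with temperedness (equivalently, the fact that a unitary supercuspidal representation has a bounded Langlands parameter); the reduction step and the bookkeeping with the twist $|\det|_K^s$ are routine, so I expect that citation to be the main point to pin down carefully.
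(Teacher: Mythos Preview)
Your proof is correct but takes a heavier route than the paper's. For the first assertion the paper argues entirely on the Galois side, with no appeal to the compatibility of $\mathrm{rec}_K$ with temperedness: the sum of all generalized eigenspaces of $\mathrm{rec}_K(\pi)(\mathrm{Frob}_K)$ whose eigenvalues have minimal absolute value is a nonzero $W_K$-subrepresentation (stability under inertia uses only that $r(I_K)$ is finite, so that iterated Frobenius conjugation would otherwise produce unbounded entries), and irreducibility forces it to be the whole space. For the second assertion both proofs invoke the standard fact that a supercuspidal is preunitary iff its central character is unitary, and then read off $\alpha=1$ from $\det(\mathrm{rec}_K(\pi))=\omega_\pi\circ\mathrm{Art}_K^{-1}$. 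Your approach instead imports the tempered/bounded dictionary from \cite{LLC} as a black box; this is legitimate, but the statement you invoke (a preunitary supercuspidal has bounded Langlands parameter) already contains the lemma as a special case, so your argument is closer to a reduction to a known theorem than a self-contained proof. The paper's argument is elementary and needs nothing beyond the finiteness of the inertia image; yours has the virtue of situating the lemma within the general tempered framework, at the cost of a much heavier citation.
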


\begin{proof} We fix a Frobenius lift $\mathrm{Frob}_K \in W_K$ and consider the sum of all eigenspaces corresponding to eigenvalues of $\mathrm{rec}_K(\pi)(\mathrm{Frob}_K)$ whose absolute values are the minimum. This subspace is a nonzero sub-Weil-Deligne representation of $\mathrm{rec}_{K}(\pi)$. Since $\mathrm{rec}_{K}(\pi)$ is irreducible, we obtain the first result. Note that $\pi$ is unitary if and only if the central character $\omega_{\pi}$ of $\pi$ is unitary. Since $\mathrm{det(rec}_{K}(\pi)) = \omega_{\pi} \circ \mathrm{Art}_{K}^{-1}$, we obtain the second result by the first result. \end{proof}
 
We recall the following important results.

\begin{thm}\label{generic unitary}

Let $\pi$ be an irreducible smooth representation of $\mathrm{GL}_n(K)$ over $\mathbb{C}$. Then the following conditions are equivalent.

1 \ $\pi$ is generic unitary.

2 \ There exist irreducible unitary supercuspidal representations $\pi_1, \cdots, \pi_s, \pi_1', \cdots, \pi_r'$ and $0 < a_1, \cdots, a_r < \frac{1}{2}$ such that $\pi \cong \mathrm{St}_{n_1}(\pi_1) \boxplus \cdots \boxplus \mathrm{St}_{n_s}(\pi_s) \boxplus \mathrm{St}_{m_1}(\pi_1')|\mathrm{det}|_K^{a_1} \boxplus \mathrm{St}_{m_1}(\pi_1')|\mathrm{det}|_K^{-a_1} \boxplus \cdots \boxplus \mathrm{St}_{m_r}(\pi_r')|\mathrm{det}|_K^{a_r} \boxplus \mathrm{St}_{m_r}(\pi_r')|\mathrm{det}|_K^{-a_r}$.

\end{thm}

\begin{proof} 

See \cite[Lemma I.3.8]{LLC} or \cite[Theorem A]{GP}. \end{proof}

\begin{thm} \label{tempered}

Let $\pi$ be an irreducible smooth representation of $\mathrm{GL}_n(K)$ over $\mathbb{C}$. Then the following conditions are equivalent.

1 \ $\pi$ is tempered unitary.
    
2 \ There exist irreducible unitary supercuspidal representations $\pi_1, \cdots, \pi_s$ such that $\pi \cong \mathrm{St}_{n_1}(\pi_1) \boxplus \cdots \boxplus \mathrm{St}_{n_s}(\pi_s)$.

\end{thm}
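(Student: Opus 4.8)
The plan is to reduce the equivalence to Theorem \ref{generic preunitary}, using two standard facts about $\mathrm{GL}_n$: every irreducible tempered representation is generic, and the normalized parabolic induction of a discrete series of a Levi subgroup is irreducible, tempered and unitary. For the direction $2 \Rightarrow 1$ I would argue as follows: if each $\pi_i$ is an irreducible preunitary supercuspidal representation of $\mathrm{GL}_{d_i}(K)$, then $\mathrm{St}_{n_i}(\pi_i)$ is a genuine square-integrable representation of $\mathrm{GL}_{n_i d_i}(K)$ (its central character is $\omega_{\pi_i}^{n_i}$, which is unitary). Hence $\mathrm{St}_{n_1}(\pi_1) \boxtimes \cdots \boxtimes \mathrm{St}_{n_s}(\pi_s)$ is a discrete series of $\prod_i \mathrm{GL}_{n_i d_i}(K)$, its normalized induction to $\mathrm{GL}_n(K)$ is irreducible — hence equals $\mathrm{St}_{n_1}(\pi_1) \boxplus \cdots \boxplus \mathrm{St}_{n_s}(\pi_s)$ by the compatibility of $\mathrm{rec}_K$ with normalized induction and the definition of $\boxplus$ — and a normalized induction of a discrete series of a Levi is tempered and unitary. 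This gives condition $1$.

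For $1 \Rightarrow 2$: since $\pi$ is irreducible and tempered it is generic (for $\mathrm{GL}_m$, the discrete series $\mathrm{St}_m(\rho)$ with $\rho$ unitary supercuspidal are generic, and any normalized induction of generic representations is generic), hence generic preunitary, so Theorem \ref{generic preunitary} writes $\pi \cong \mathrm{St}_{n_1}(\pi_1) \boxplus \cdots \boxplus \mathrm{St}_{n_s}(\pi_s) \boxplus \mathrm{St}_{m_1}(\pi_1')|\mathrm{det}|_K^{a_1} \boxplus \mathrm{St}_{m_1}(\pi_1')|\mathrm{det}|_K^{-a_1} \boxplus \cdots \boxplus \mathrm{St}_{m_r}(\pi_r')|\mathrm{det}|_K^{a_r} \boxplus \mathrm{St}_{m_r}(\pi_r')|\mathrm{det}|_K^{-a_r}$ with $\pi_i, \pi_j'$ preunitary supercuspidal and $0 < a_j < 1/2$. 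It then suffices to show $r = 0$, since the first line is exactly condition $2$. I would do this by observing that $\pi$ is the (irreducible) normalized induction from a Levi $M_0$ of $\delta := \mathrm{St}_{n_1}(\pi_1) \boxtimes \cdots \boxtimes \mathrm{St}_{n_s}(\pi_s) \boxtimes \mathrm{St}_{m_1}(\pi_1')|\mathrm{det}|_K^{a_1} \boxtimes \cdots \boxtimes \mathrm{St}_{m_r}(\pi_r')|\mathrm{det}|_K^{-a_r}$, and that after reordering the blocks so the unramified-twist exponents are non-increasing this presents $\pi$ as the Langlands quotient attached to the tempered part of $\delta$ and the real parameter $\nu = (a_1, \dots, a_r, 0, \dots, 0, -a_r, \dots, -a_1)$. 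By the Langlands classification for $\mathrm{GL}_n$ — equivalently, by Casselman's temperedness criterion applied to the Jacquet modules of $\pi$ — such a representation is tempered if and only if $\nu = 0$; since $a_j > 0$ this forces $r = 0$.

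The hard part will be exactly this last step: checking that a genuine complementary-series block $\mathrm{St}_{m_j}(\pi_j')|\mathrm{det}|_K^{\pm a_j}$ with $a_j \neq 0$ obstructs temperedness. Concretely, $\mathrm{St}_{m_j}(\pi_j')|\mathrm{det}|_K^{a_j}$ has central character of absolute value $| \ |_K^{m_j a_j \dim \pi_j'}$, so a suitable Weyl conjugate of the central character of $\delta$ has a strictly positive exponent on some $\mathrm{GL}$-block and therefore violates Casselman's inequality on the corresponding Jacquet module of $\pi$. (Alternatively, one can argue through $\mathrm{rec}_K$: by Lemma \ref{supercuspidal absolute values} the Frobenius eigenvalues of $\mathrm{rec}_K(\mathrm{St}_{n_i}(\pi_i))$ lie on the circles dictated by $\mathrm{Sp}_{n_i}$, whereas the twist $| \ |_K^{\pm a_j}$ pushes those of $\mathrm{rec}_K(\mathrm{St}_{m_j}(\pi_j')|\mathrm{det}|_K^{\pm a_j})$ off them, contradicting the eigenvalue characterization of tempered $\mathrm{rec}_K$-parameters.) Everything else is a direct consequence of Theorem \ref{generic preunitary} and the irreducibility of normalized induction of discrete series for $\mathrm{GL}_n$.
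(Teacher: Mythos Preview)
Your argument is correct; the paper does not prove this but simply cites \cite[Lemma I.3.8]{LLC} (the same reference invoked for Theorem~\ref{generic preunitary}), so your sketch is genuinely adding content. One small remark: the parenthetical justification you give for ``tempered implies generic'' (discrete series are generalized Steinbergs, normalized induction preserves genericity) already presupposes the classification of tempered representations of $\mathrm{GL}_n$, which is essentially condition~2 itself; so that parenthetical is circular, though the fact is standard and can be cited independently. Alternatively, for $1 \Rightarrow 2$ one can bypass Theorem~\ref{generic preunitary} and your $r = 0$ argument entirely: Harish-Chandra's general theorem says every irreducible tempered representation is a direct summand of a normalized parabolic induction of a discrete series of a Levi; for $\mathrm{GL}_n$ such inductions are irreducible (Bernstein), and the discrete series of $\mathrm{GL}_m$ are exactly the $\mathrm{St}_m(\rho)$ with $\rho$ unitary supercuspidal (Bernstein--Zelevinsky), giving condition~2 directly. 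Your $2 \Rightarrow 1$ is fine as written.
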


\begin{proof}

    See \cite[Lemma I.3.8]{LLC}. \end{proof}

\begin{dfn}

For a Frobenius semisimple $n$-dimensional Weil-Deligne representation $r = \mathrm{Sp}_{n_1}(s_1) \oplus \cdots \oplus \mathrm{Sp}_{n_k}(s_k)$ over $\mathbb{C}$, we say that $r$ is weakly tempered if there exists a continuous character $\chi : W_K \rightarrow \mathbb{C}^{\times}$ such that for any $i$ and any eigenvalue $\alpha$ of $s_i(\mathrm{Frob}_K)$, we have $q_K^{-\frac{1}{2}} < |\alpha \chi(\mathrm{Frob}_K)| < q_K^{\frac{1}{2}}$.

\end{dfn}

\begin{rem}\label{weakly tempered}

1 \ If $\pi$ is a character twist of a generic unitary irreducible smooth representation of $\mathrm{GL}_n(K)$ over $\mathbb{C}$ (in particular, if $\pi$ is tempered), then $\mathrm{rec}_{K}(\pi)$ is weakly tempered by Theorem \ref{generic unitary} and Lemma \ref{supercuspidal absolute values}.

2 \ For any $n$-dimensional Frobenius semisimple weakly tempered Weil-Deligne representation $r$ and any finite extension $L/K$, the representation $r|_{W_L}$ is weakly tempered.

3 \ Let $r$ be an $n$-dimensional Frobenius semisimple weakly tempered Weil-Deligne representation $r$ and $\pi$ be the irreducible smooth representation $\pi$ of $\mathrm{GL}_n(K)$ satisfying $\mathrm{rec}_K(\pi) \cong r$. Then $\pi$ is generic by Lemma \ref{generic equivalent} later or \cite[p 36]{LLC}. 

\end{rem}

\begin{lem} \label{RG}

Let $r = \mathrm{Sp}_{n_1}(s_1) \oplus \cdots \oplus \mathrm{Sp}_{n_k}(s_k)$ be an $n$-dimensional Frobenius semisimple weakly tempered Weil-Deligne representations of $W_{K}$ over $\mathbb{C}$, $t$ be an irreducible Weil-Deligne representation of $W_K$ over $\mathbb{C}$ and $m$ be a positive integer. We assume that $n_1 \ge \cdots \ge n_k \ge 1$, $t|| \ ||_{K}^{\frac{m-1}{2}}$ and $t|| \ ||^{\frac{1-m}{2}}_{K}$ are contained in $r^{ss}$ and $m \ge n_1$. Then $m=n_1$ and there exists an integer $1 \le l \le k$ such that $t \cong s_l$ and $n_1 = n_l$.

\end{lem}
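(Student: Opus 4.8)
The plan is to extract, from the weakly tempered hypothesis, a bound on the ``size'' of the twists by $|| \ ||_K^{\pm 1/2}$ that can appear inside $r^{ss}$, and then to read off the claim by comparing absolute values of Frobenius eigenvalues. First I would unwind the structure: $r^{ss} \cong \bigoplus_{i=1}^k \bigl( s_i || \ ||_K^{\frac{n_i-1}{2}} \oplus s_i || \ ||_K^{\frac{n_i-3}{2}} \oplus \cdots \oplus s_i || \ ||_K^{\frac{1-n_i}{2}} \bigr)$. Fix the character $\chi$ from the definition of weakly tempered, so that for every eigenvalue $\alpha$ of $s_i(\mathrm{Frob}_K)$ we have $q_K^{-1/2} < |\alpha \chi(\mathrm{Frob}_K)| < q_K^{1/2}$. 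Since each $s_i$ is irreducible, by the argument of Lemma \ref{supercuspidal absolute values} all Frobenius eigenvalues of $s_i$ have the same absolute value, say $\beta_i = |\alpha \chi(\mathrm{Frob}_K)| \cdot q_K^{-v_i}$ for the appropriate normalization; more simply, writing $c := |\chi(\mathrm{Frob}_K)|$, every Frobenius eigenvalue of $s_i || \ ||_K^{j/2}$ has absolute value $\gamma_i q_K^{-j/2}$ for a fixed $\gamma_i$ with $q_K^{-1/2} < \gamma_i c < q_K^{1/2}$, i.e. $\gamma_i \in (c^{-1}q_K^{-1/2}, c^{-1}q_K^{1/2})$.

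Next I would apply this to the two hypothesized constituents. Write $t$ itself; since $t|| \ ||_K^{\frac{m-1}{2}}$ is an irreducible constituent of $r^{ss}$, it must be isomorphic to $s_l || \ ||_K^{\frac{n_l-1-2a}{2}}$ for some $1 \le l \le k$ and some $0 \le a \le n_l - 1$; likewise $t || \ ||_K^{\frac{1-m}{2}} \cong s_{l'} || \ ||_K^{\frac{n_{l'}-1-2b}{2}}$ for some $l'$ and $0 \le b \le n_{l'}-1$. Since these two Weil-Deligne representations differ by the unramified twist $|| \ ||_K^{m-1}$, they have the same underlying irreducible $\cong t$ up to unramified twist, but more importantly comparing Frobenius-eigenvalue absolute values: the eigenvalues of $t$ have a fixed absolute value $\delta$, and the constraint from $r^{ss}$ forces $\delta q_K^{(m-1)/2}$ to equal $\gamma_l q_K^{-(n_l-1-2a)/2}$ and $\delta q_K^{-(m-1)/2}$ to equal $\gamma_{l'} q_K^{-(n_{l'}-1-2b)/2}$. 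The key inequality is that $a \le n_l - 1 \le n_1 - 1 \le m-1$ and $b \le n_{l'}-1 \le n_1 - 1 \le m - 1$. Dividing the two eigenvalue identities gives $q_K^{m-1} = (\gamma_l/\gamma_{l'}) q_K^{(n_{l'}-1-2b - n_l + 1 + 2a)/2}$, and since $\gamma_l, \gamma_{l'}$ both lie in the same interval of multiplicative length $q_K$, the ratio $\gamma_l/\gamma_{l'} \in (q_K^{-1}, q_K)$. Hence $m - 1 < \tfrac{1}{2}(n_{l'} - n_l) + a - b + \tfrac12 \le \tfrac12(n_1 - 1) + (n_1-1) + \tfrac12$ roughly — here I would need to be careful with the arithmetic, but the point is that the exponent on the right is at most $m-1$ plus something forced to vanish, squeezing everything to equality.

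The main obstacle I anticipate is precisely this bookkeeping of exponents: one must show that the two constraints $a \le n_l - 1$ and $m \ge n_1 \ge n_l$ together with the weakly tempered bound force $a = 0$, $l = l'$, $b = 0$, $m = n_l$ (so in particular $m = n_1$ and $n_l = n_1$), rather than just an inequality. Concretely, from $t|| \ ||_K^{\frac{m-1}{2}} \cong s_l || \ ||_K^{\frac{n_l-1}{2}-a}$ one gets $m - 1 = n_l - 1 - 2a + (\text{something in } \{-1,0,1\} \text{ from the }\gamma\text{-ratio})$; since $m \ge n_1 \ge n_l$ and $a \ge 0$, this forces $a = 0$ and $m = n_l$ and the $\gamma$-ratio contribution to be exactly $0$, which then also pins down $l' = l$ and $b = 0$. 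Thus $m = n_l = n_1$ (using $n_l \le n_1 \le m$) and $t \cong s_l$. I would organize the write-up as: (1) reduce to absolute values of Frobenius eigenvalues via Lemma \ref{supercuspidal absolute values}; (2) locate $t|| \ ||_K^{\pm(m-1)/2}$ among the constituents $s_i || \ ||_K^{j/2}$; (3) run the exponent inequality using $m \ge n_1 \ge n_i$ to force equality throughout.
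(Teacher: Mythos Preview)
Your approach is essentially the same as the paper's: both arguments reduce to bounding absolute values of Frobenius eigenvalues using the weakly tempered hypothesis and Lemma \ref{supercuspidal absolute values}. The paper's write-up is cleaner in two respects. First, it twists $r$ and $t$ by $\chi^{-1}$ at the outset, so one may assume each $s_i(\mathrm{Frob}_K)$ has all eigenvalues of absolute value in $(q_K^{-1/2}, q_K^{1/2})$; this eliminates your parameter $c$ entirely. Second, the paper separates the two conclusions: since every Frobenius eigenvalue of $r^{ss}$ lies in $(q_K^{-n_1/2}, q_K^{n_1/2})$ while those of $t||\ ||_K^{(m-1)/2}$ and $t||\ ||_K^{(1-m)/2}$ differ multiplicatively by $q_K^{m-1}$, one immediately gets $m-1 < n_1$, hence $m=n_1$. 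Only then does the paper locate the index $l$: after assuming without loss of generality that the Frobenius eigenvalues of $t$ have absolute value $\le 1$, the constituent $t||\ ||_K^{(n_1-1)/2}$ has Frobenius eigenvalues of absolute value $\le q_K^{-(n_1-1)/2}$, and the only constituents of $r^{ss}$ with this property are the bottom pieces $s_l||\ ||_K^{(n_l-1)/2}$ with $n_l = n_1$.

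Your more direct bookkeeping can be made to work, but be careful: your claim that the exponent comparison forces $a=0$, $m=n_l$, $l=l'$, $b=0$ simultaneously is slightly too strong. The quantity you are bounding is a half-integer in $[0,1)$, so it can equal $1/2$; in that case one of $n_l, n_{l'}$ equals $n_1$ and the other equals $n_1-1$, and the desired index is whichever of $l,l'$ has $n_{\bullet}=n_1$. So the conclusion still holds, but you need a small case split. The paper's WLOG on the size of $\delta$ sidesteps this entirely.
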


\begin{proof}

After twisting $r$ and $t$ by a character, we may assume that for any $i$ and any eigenvalue $\alpha$ of $s_i(\mathrm{Frob}_K)$, we have $q_K^{-\frac{1}{2}} < |\alpha| < q_K^{\frac{1}{2}}$. Consequently, for any eigenvalue $\beta$ of $\mathrm{Sp}_{n_i}(s_i)(\mathrm{Frob}_K)$, we have $q_K^{-\frac{n_i}{2}} < |\beta| < q_K^{\frac{n_i}{2}}$. Since $t|| \ ||_{K}^{\frac{m-1}{2}}$ and $t|| \ ||^{\frac{1-m}{2}}_{K}$ are contained in $r^{ss}$ and $m \ge n_1$, we obtain $m = n_1$. We may assume that all eigenvalues $\gamma$ of $t|| \ ||_{K}^{\frac{n_1-1}{2}}(\mathrm{Frob}_K)$ satisfy $|\gamma| \le q_K^{\frac{1-n_1}{2}}$. (Otherwise, all eigenvalues $\gamma$ of $t|| \ ||_{K}^{\frac{1-n_1}{2}}(\mathrm{Frob}_K)$ satisfies $|\gamma| \ge q_K^{\frac{n_1-1}{2}}$.) Thus, we obtain $l$ such that $t|| \ ||_K^{\frac{1-n_1}{2}} \cong s_l|| \ ||_K^{\frac{1-n_l}{2}}$ and $n_1=n_l$.  \end{proof}

\begin{prop}\label{monodromy operator}

Let $r_1$ and $r_2$ be $n$-dimensional Frobenius semisimple Weil-Deligne representations of $W_{K}$ over $\mathbb{C}$. We assume that $r_1^{ss} \cong r_2^{ss}$ and $r_2$ is weakly tempered.

Then the following conditions are equivalent.
        
(1) \ $r_1 \cong r_2$.
        
(2) \ $r_1$ and $r_2$ have the same monodromy type.
        
Let $\pi$ be the irreducible smooth representation of $\mathrm{GL}_n(K)$ satisfying $\mathrm{rec}_{K}(\pi) \cong r_2$. If $\pi^{\mathrm{Iw}_K} \neq 0$ and $r_1 \prec r_2$, the conditions (1) and (2) are equivalent to the following condition.
        
(3) \ $\pi^{\mathfrak{p}} \neq 0$, where $\mathfrak{p}$ is the parahoric subgroup corresponding to the dual partition of the monodromy type of $r_1$.
            
\end{prop}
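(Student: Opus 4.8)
The plan is to prove $(1)\Leftrightarrow(2)$ first, and then $(1)\Leftrightarrow(3)$ under the extra hypotheses. Both $r_1$ and $r_2$ decompose as direct sums $r_i = \bigoplus_{w\in\mathcal{W}} r_i[w]$ over equivalence classes of irreducible Weil-Deligne representations, and the hypothesis $r_1^{ss}\cong r_2^{ss}$ forces $\dim r_1[w] = \dim r_2[w]$ for every $w$. Since the monodromy type of $r_i$ is the disjoint union $\bigsqcup_w(\text{monodromy type of }r_i[w])$, and similarly $r_1\cong r_2$ iff $r_1[w]\cong r_2[w]$ for all $w$, I can work one $w$ at a time. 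For a fixed $w$ with $r_2[w]\neq 0$, write $r_2[w]\cong\bigoplus_{a}\mathrm{Sp}_{n_a}(s_a)$ with $s_a\in w$; since $r_2$ is weakly tempered, each $s_a$ is weakly tempered. The content is that a weakly tempered $r_2[w]$ is determined among Frobenius-semisimple representations with the same semisimplification by its monodromy type: this is exactly where Lemma \ref{RG} enters. Starting from the largest block $n_1$, Lemma \ref{RG} applied with $t$ an irreducible constituent of maximal "Steinberg length" in $r_1[w]$ identifies a matching $s_l$ with $n_l=n_1$ on the $r_2$ side; peeling off $\mathrm{Sp}_{n_1}(s_l)$ from both sides and inducting on dimension yields $r_1[w]\cong r_2[w]$. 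The direction $(1)\Rightarrow(2)$ is immediate.

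For $(1)\Leftrightarrow(3)$, first note that $\pi^{\mathrm{Iw}_K}\neq 0$ together with $\mathrm{rec}_K(\pi)\cong r_2$ means (by the theory of types / the description of Iwahori-spherical representations) that $\pi$ is a full normalized induction $\mathrm{n\text{-}Ind}^{\mathrm{GL}_n(K)}_{Q(K)}(\mathrm{St}_{l_1}(\psi_1)\otimes\cdots\otimes\mathrm{St}_{l_k}(\psi_k))$ for unramified characters $\psi_s$, where $Q$ corresponds to $(l_1,\dots,l_k)$ and $[l_1,\dots,l_k]$ is precisely the monodromy type of $r_2$ — because $\mathrm{rec}_K$ sends $\mathrm{St}_{l_s}(\psi_s)$ to $\mathrm{Sp}_{l_s}(\psi_s)$ and the monodromy operator of a direct sum of $\mathrm{Sp}$'s is block-diagonal with these Jordan block sizes. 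Now apply Lemma \ref{paraholic}: with $[n_1,\dots,n_i]$ the monodromy type of $r_1$ and $\mathfrak p$ the paraholic for its dual partition, we get $\pi^{\mathfrak p}\neq 0$ iff $[n_1,\dots,n_i]\succ[l_1,\dots,l_k]$, i.e. iff the monodromy type of $r_1$ dominates that of $r_2$. On the other hand, the hypothesis $r_1\prec r_2$ means (Definition \ref{Weil-Deligne def}, part 5) that for every $w$ the monodromy type of $r_1[w]$ is $\succ$ that of $r_2[w]$; since the global monodromy types are the disjoint unions of the per-$w$ ones, Lemma \ref{partition sum} shows $[n_1,\dots,n_i]\succ[l_1,\dots,l_k]$ holds iff the monodromy types of $r_1$ and $r_2$ are \emph{equal}, and moreover that equality of the full monodromy types forces equality $r_1[w]$-monodromy-type $=$ $r_2[w]$-monodromy-type for each $w$. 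Combining with the first part $(1)\Leftrightarrow(2)$, we conclude $(3)\Leftrightarrow$ (monodromy types equal) $\Leftrightarrow (2)\Leftrightarrow(1)$.

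The main obstacle is the per-$w$ rigidity statement underlying $(2)\Rightarrow(1)$: one must be careful that Lemma \ref{RG} is only directly applicable after twisting to arrange the eigenvalue size condition, and that the inductive peeling respects the constraint $m\ge n_1$ (one always extracts the largest Jordan block first, so the hypothesis $m\ge n_1$ of Lemma \ref{RG} is automatic). A secondary point requiring care is the passage between the decomposition $r=\bigoplus_w r[w]$ and the additivity of monodromy types under $\sqcup$, together with the use of Lemma \ref{partition sum} to upgrade "$\succ$ componentwise plus equal total" to "equal componentwise"; this is what lets the global domination condition coming from Lemma \ref{paraholic} be matched with the componentwise definition of $\prec$. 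The reduction $\pi^{\mathrm{Iw}_K}\neq 0\Rightarrow\pi$ is Iwahori-induced is standard but should be cited (it follows from the classification of Iwahori-spherical representations, or can be seen via $\mathrm{rec}_K$ and Lemma \ref{Iwahori fixed vectors} applied to Jacquet modules as in the proof of Lemma \ref{Iwahori fixed vectors}).
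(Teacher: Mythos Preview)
Your per-$w$ reduction in $(2)\Rightarrow(1)$ has a gap. You claim that because the global monodromy type decomposes as $\bigsqcup_w(\text{type of }r_i[w])$, condition (2) can be checked one $w$ at a time; but equality of disjoint unions of partitions does not force equality of the individual pieces (e.g.\ $[2]\sqcup[1,1]=[1,1]\sqcup[2]$). Consequently, in your peeling argument inside a fixed $w$ you have no justification that the largest Steinberg block of $r_1[w]$ is at least as large as that of $r_2[w]$, which is what the hypothesis $m\ge n_1$ of Lemma \ref{RG} needs. The paper's proof avoids this by working globally: with $r_1=\bigoplus_a\mathrm{Sp}_{n_a}(s_a)$ and $r_2=\bigoplus_b\mathrm{Sp}_{l_b}(t_b)$ sorted decreasingly, (2) gives $n_1=l_1$ outright, and Lemma \ref{RG} applied to all of $r_2$ (not $r_2[w]$) with $t=s_1$ and $m=n_1=l_1$ locates a matching summand; peel and induct. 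Your approach could be repaired by first proving that $r_2$ weakly tempered forces $r_1\prec r_2$ for \emph{any} $r_1$ with the same semisimplification (then Lemma \ref{partition sum} plus global (2) would yield per-$w$ equality), but this extra lemma is neither in the paper nor supplied.

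For (3) you have the dominance direction reversed: $r_1\prec r_2$ means, by Definition \ref{Weil-Deligne def}(5), that the monodromy type of $r_2[w]$ dominates that of $r_1[w]$ for each $w$, not the other way. Once corrected, the argument is much simpler than you make it: since $\pi^{\mathrm{Iw}_K}\neq 0$ and $r_1^{ss}\cong r_2^{ss}$, every irreducible constituent of both $r_1$ and $r_2$ is an unramified character, so there is only \emph{one} relevant class $w$, and $r_1\prec r_2$ directly gives $[n_1,\dots,n_i]\prec[l_1,\dots,l_k]$. Combined with Lemma \ref{paraholic} (which gives $(3)\Leftrightarrow[n_1,\dots,n_i]\succ[l_1,\dots,l_k]$), one gets $(3)\Leftrightarrow(2)$ without invoking Lemma \ref{partition sum} at all. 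Finally, the fact that $\pi$ equals the full induction follows cleanly from genericity of $\pi$ (Remark \ref{weakly tempered}(3)), which is the route the paper takes.
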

            
\begin{rem} The equivalence between (1) and (2) is false without assuming that $r_2$ is weakly tempered. For example, the representations $\mathrm{Sp}_m(\chi_1)^{ss} \oplus \mathrm{Sp}_m(\chi_2)$ and $\mathrm{Sp}_m(\chi_1) \oplus \mathrm{Sp}_m(\chi_2)^{ss}$ satisfy the condition (2) but don't satisfy the condition (1) for $m > 1$ and smooth characters $\chi_1 \neq \chi_2$.  \end{rem}

        \begin{proof} $(1) \Rightarrow (2)$ is trivial. 
            
    We assume the condition $(2)$. We put $r_1 = \mathrm{Sp}_{n_1}(s_1) \oplus \cdots \oplus \mathrm{Sp}_{n_i}(s_i)$ and $r_2 = \mathrm{Sp}_{l_1}(t_1) \oplus \cdots \oplus \mathrm{Sp}_{l_k}(t_k)$ ($n_1 \ge \cdots \ge n_i \ge 1$, $l_1 \ge \cdots \ge l_k \ge 1$). By the assumption (2), we have $n_1 = l_1$.
    
    Since $r_1^{ss} \cong r_2^{ss}$, $s_1|| \ ||_K^{\frac{n_1-1}{2}}$ and $s_1|| \ ||_K^{\frac{-n_1+1}{2}}$ are contained in $r_2^{ss}$. By Lemma \ref{RG}, this implies that there exists an integer $j$ such that $s_1 \cong t_j$ and $n_1=l_j$. Thus, we may assume $j = 1$. Note that $(\mathrm{Sp}_{n_2}(s_2) \oplus \cdots \oplus \mathrm{Sp}_{n_i}(s_i))^{ss} \cong (\mathrm{Sp}_{l_2}(t_2) \oplus \cdots \oplus \mathrm{Sp}_{l_k}(t_k))^{ss}$ and $\mathrm{Sp}_{l_2}(t_2) \oplus \cdots \oplus \mathrm{Sp}_{l_k}(t_k)$ is weakly tempered. By induction on $i$, we obtain (1).
            
    We assume $\pi^{\mathrm{Iw}_K} \neq 0$. Then $t_i$ is an unramified character for any $i$. Let $\psi_i:=t_i \circ \mathrm{Art}_{F_v}$ and $Q$ be a parabolic subgroup of $\mathrm{GL}_n$ corresponding to $(l_1, \cdots, l_k)$. Since $\pi$ is generic by 3 of Remark \ref{weakly tempered}, we obtain $\pi = \mathrm{St}_{l_1}(\psi_1) \boxplus \mathrm{St}_{l_2}(\psi_2) \boxplus \cdots \boxplus \mathrm{St}_{l_k}(\psi_k) = $n-$\mathrm{Ind}^{\mathrm{GL}_n(K)}_{Q(K)}\mathrm{St}_{l_1}(\psi_1) \otimes \mathrm{St}_{l_2}(\psi_2) \otimes \cdots \otimes \mathrm{St}_{l_k}(\psi_k)$. By the assumption $r_1 \prec r_2$, we have $[n_1, n_2, \cdots ,n_i] \prec [l_1, l_2, \cdots, l_k]$. Therefore, the condition (3) is equivalent to $[n_1, n_2, \cdots, n_i] = [l_1, l_2, \cdots, l_k]$ by Lemma \ref{parahoric}. This is equivalent to the condition (2). \end{proof}

    \begin{lem}\label{semisimple}

        Let $r_1, r_2$ be Frobenius semisimple weakly tempered Weil-Deligne representations of $W_K$ over $\mathbb{C}$. If $r_1^{ss} \cong r_2^{ss}$, then $r_1 \cong r_2$. 
            
        \end{lem}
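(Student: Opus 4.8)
The plan is to run the same "peel off the largest generalized Steinberg block and induct" argument as in the implication $(2)\Rightarrow(1)$ of Proposition \ref{monodromy operator}. The only new point is that the equality of the sizes of the top monodromy blocks of $r_1$ and $r_2$, which there came for free from the hypothesis on monodromy types, must now be extracted from Lemma \ref{RG} using the weak temperedness of \emph{both} $r_1$ and $r_2$ (this is why both are assumed weakly tempered, even though at each step only one of them is fed to Lemma \ref{RG}).

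Concretely, I would induct on $n := \dim r_1 = \dim r_2$, the case $n=0$ being trivial. Write $r_1 \cong \mathrm{Sp}_{n_1}(s_1) \oplus \cdots \oplus \mathrm{Sp}_{n_i}(s_i)$ and $r_2 \cong \mathrm{Sp}_{l_1}(t_1) \oplus \cdots \oplus \mathrm{Sp}_{l_k}(t_k)$ with $s_a$, $t_b$ irreducible, $n_1 \ge \cdots \ge n_i \ge 1$ and $l_1 \ge \cdots \ge l_k \ge 1$. Since the claim is symmetric in $r_1$ and $r_2$, I may assume $n_1 \ge l_1$. The semisimplification $\mathrm{Sp}_{n_1}(s_1)^{ss}$ contains $s_1 || \ ||_K^{\frac{n_1-1}{2}}$ and $s_1 || \ ||_K^{\frac{1-n_1}{2}}$ as constituents, hence so does $r_1^{ss} \cong r_2^{ss}$. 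Applying Lemma \ref{RG} to the weakly tempered representation $r_2$, the irreducible $t := s_1$ and the integer $m := n_1 \ge l_1$, I obtain $n_1 = l_1$ and an index $b$ with $t_b \cong s_1$ and $l_b = n_1$; after reordering the blocks of $r_2$ I may take $b=1$, so $\mathrm{Sp}_{n_1}(s_1) \cong \mathrm{Sp}_{l_1}(t_1)$ occurs as a common direct summand. Cancelling it, the representations $r_1' := \mathrm{Sp}_{n_2}(s_2) \oplus \cdots \oplus \mathrm{Sp}_{n_i}(s_i)$ and $r_2' := \mathrm{Sp}_{l_2}(t_2) \oplus \cdots \oplus \mathrm{Sp}_{l_k}(t_k)$ satisfy $r_1'^{ss} \cong r_2'^{ss}$, since a semisimple Weil-Deligne representation is determined up to isomorphism by its constituents with multiplicity. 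They are Frobenius semisimple as subrepresentations of $r_1$, $r_2$, and weakly tempered via the same characters that witness the weak temperedness of $r_1$, $r_2$. By the inductive hypothesis $r_1' \cong r_2'$, and therefore $r_1 \cong r_2$.

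I do not expect any serious obstacle here: Lemma \ref{RG} carries essentially all the weight. The points that need a little care are the reduction to the case $n_1 \ge l_1$ via the symmetry of the statement, and the check that removing the top generalized Steinberg block preserves all three standing hypotheses — Frobenius semisimplicity, weak temperedness, and equality of semisimplifications — so that the induction closes.
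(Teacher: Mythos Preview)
Your proposal is correct and follows essentially the same approach as the paper's proof: decompose both representations into generalized Steinberg blocks, use symmetry to assume $n_1 \ge l_1$, apply Lemma~\ref{RG} to $r_2$ to peel off the common top block $\mathrm{Sp}_{n_1}(s_1)$, and induct. Your treatment is more explicit about verifying that the inductive hypotheses (Frobenius semisimplicity, weak temperedness, equality of semisimplifications) are preserved, which the paper leaves implicit.
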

            
        \begin{proof} We put $r_1 = \mathrm{Sp}_{n_1}(s_1) \oplus \cdots \oplus \mathrm{Sp}_{n_k}(s_k)$ and $r_2 = \mathrm{Sp}_{m_1}(t_1) \oplus \cdots \oplus \mathrm{Sp}_{m_l}(t_l)$ such that $n_1 \ge \cdots \ge n_k \ge 1$ and $m_1 \ge \cdots \ge m_l \ge 1$. We may assume $n_1 \ge m_1$. Since $s_1|| \ ||_K^{\frac{n_1-1}{2}}$ and $s_1|| \ ||_K^{\frac{1-n_1}{2}}$ are contained in $r_2^{ss}$, we may assume $m_1=n_1$ and $s_1 = t_1$ by Lemma \ref{RG}. We obtain the result by induction. \end{proof}

We recall the following important results.

\begin{thm}\label{Ila Varma}
    
    Let $F$ be a CM field, $\pi$ be a cohomological cuspidal automorphic representation of $\mathrm{GL}_n(\mathbb{A}_F)$, $l$ be a prime and $\iota: \overline{\mathbb{Q}}_l \stackrel{\sim}{\longrightarrow} \mathbb{C}$ be an isomorphism of fields.
    
    Then there exists a continuous semisimple representation $r_{\iota}(\pi) : G_F \rightarrow \mathrm{GL}_n(\overline{\mathbb{Q}}_l)$ satisfying $\iota\mathrm{WD}(r_{\iota}(\pi)|_{G_{F_v}})^{F-ss} \prec \mathrm{rec}_{F_v}(\pi_v|\mathrm{det}|_v^{\frac{1-n}{2}})$ for any $v \nmid l$.
    
    \end{thm}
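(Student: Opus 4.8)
The plan is to take for $r_\iota(\pi)$ the Galois representation constructed by Harris--Lan--Taylor--Thorne \cite{GH} (or, alternatively, by Scholze \cite{SG}). That construction already yields a continuous semisimple $r_\iota(\pi) : G_F \to \mathrm{GL}_n(\overline{\mathbb{Q}_l})$, unramified outside $l$ and the ramification set of $\pi$, with $\iota\mathrm{WD}(r_\iota(\pi)|_{G_{F_v}})^{F-ss} \cong \mathrm{rec}_{F_v}(\pi_v|\mathrm{det}|_v^{\frac{1-n}{2}})$ at every finite place $v \nmid l$ where $\pi_v$ is unramified; by Chebotarev this pins down $r_\iota(\pi)$ up to isomorphism. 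So the whole content still to be proved is the estimate $\iota\mathrm{WD}(r_\iota(\pi)|_{G_{F_v}})^{F-ss} \prec \mathrm{rec}_{F_v}(\pi_v|\mathrm{det}|_v^{\frac{1-n}{2}})$ at the finitely many places $v \nmid l$ where $\pi_v$ ramifies; unwinding Definition \ref{Weil-Deligne def}, this means that the semisimplifications agree and that the monodromy operator on the Galois side is, block by block, no larger than the one prescribed by $\mathrm{rec}_{F_v}$.

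First I would reduce to a favourable local situation by a solvable base change. Using Arthur--Clozel cyclic base change one produces a solvable CM extension $F'/F$ such that $\Pi := \mathrm{BC}_{F'/F}(\pi)$ is again cohomological cuspidal, $r_\iota(\Pi) \cong r_\iota(\pi)|_{G_{F'}}$, the place $v$ is inert of large residue degree in $F'/F$ (so $(F')_w/F_v$ is unramified and the monodromy type is unchanged under $\mathrm{WD}(-)|_{W_{(F')_w}}$ on both sides, whence the desired $\prec$ at $v$ follows from that at $w$), $\pi_v$ acquires an Iwahori-fixed vector — so that $\mathrm{rec}_{F_v}(\pi_v|\mathrm{det}|_v^{\frac{1-n}{2}})$ becomes a direct sum $\bigoplus_i \mathrm{Sp}_{m_i}(\chi_i)$ with the $\chi_i$ unramified — and, at an auxiliary place, $\Pi$ is square-integrable. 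This puts us in the setting treated by Varma \cite{ss}.

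The heart of the matter, and the step I expect to be the main obstacle, is the comparison with the conjugate self-dual case. When $\Pi$ is (essentially) conjugate self-dual the full local-global compatibility, including the monodromy operator, is available: a base change of $r_\iota(\Pi)$ occurs in the étale cohomology of a unitary Shimura variety of Harris--Taylor type, whose cohomology is pure, so $\mathrm{WD}(r_\iota(\Pi)|_{G_{F_v}})$ is pure and coincides with $\mathrm{rec}_{F_v}(\Pi_v|\mathrm{det}|_v^{\frac{1-n}{2}})$ (see \cite{GLLC}, \cite{CS}, \cite{CM}, \cite{pWM} and Proposition \ref{purity local-global}). For general $\pi$ one exploits the congruence structure of the HLTT construction: modulo $\varpi_v^m$, the representation $r_\iota(\pi)$ is realized as a subquotient of a direct sum of Galois representations $r_\iota(\Pi_{m,j})$ attached to regular algebraic conjugate self-dual cohomological cuspidal $\Pi_{m,j}$ congruent to $\pi$, obtained by $p$-adically interpolating the conjugate self-dual regular isobaric sums of the shape $\pi|\mathrm{det}|^{a} \boxplus \pi^{\vee}|\mathrm{det}|^{b} \boxplus(\text{algebraic characters})$. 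For those members the monodromy is the pure, $\mathrm{rec}_{F_v}$-prescribed one; passing to a subquotient and then letting $m \to \infty$ can only decrease the monodromy operator — concretely, the rank of each power of the nilpotent $N$ is lower semicontinuous under reduction modulo $\varpi_v^m$ and under passage to subquotients — so the monodromy type of $\iota\mathrm{WD}(r_\iota(\pi)|_{G_{F_v}})^{F-ss}$ is dominated, on each irreducible block $w \in \mathcal{W}$, by that of the pure answer, while the semisimplifications stay equal along the congruence. This is exactly $\iota\mathrm{WD}(r_\iota(\pi)|_{G_{F_v}})^{F-ss} \prec \mathrm{rec}_{F_v}(\pi_v|\mathrm{det}|_v^{\frac{1-n}{2}})$. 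The delicate points are keeping the congruent $\Pi_{m,j}$ regular and cuspidal so their Galois representations exist with pure local behaviour, and making the "subquotient modulo $\varpi_v^m$" statement sharp enough to control $N$ itself rather than only its semisimplification, working with $\mathcal{O}_{\overline{\mathbb{Q}_l}}$-lattices throughout.
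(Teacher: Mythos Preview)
The paper's own proof of this statement is simply a citation: ``See \cite[Theorem A]{GH}, \cite[Theorem 1.0.4]{SG} and \cite[Theorem 1]{ss}.'' So there is nothing to compare at the level of argument; your proposal goes further and sketches the actual content of those references. The broad strokes of your sketch are correct: the representation is the one built by Harris--Lan--Taylor--Thorne (or Scholze), and Varma's contribution is the $\prec$ estimate, obtained by realizing $r_\iota(\pi)$ modulo $\varpi^m$ inside Galois representations attached to polarized (conjugate self-dual) automorphic representations, for which full local--global compatibility including monodromy is known via purity, and then using that passage to subquotients and limits can only shrink the nilpotent $N$.

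One technical slip worth flagging in your base-change reduction: you cannot simultaneously arrange that $(F')_w/F_v$ is unramified (``$v$ inert of large residue degree'') and that $\Pi_w$ acquires an Iwahori-fixed vector. Unramified base change does not simplify the inertial type; to force Iwahori level one needs a \emph{ramified} extension at $v$ absorbing the ramification of the supercuspidal support. This is harmless for the argument, since the monodromy operator (hence the monodromy type on each side) is preserved under restriction to \emph{any} finite-index open subgroup of $W_{F_v}$, ramified or not, so the $\prec$ relation can still be checked after such a base change --- but the justification you wrote for the descent step (``unramified, so monodromy unchanged'') is the right fact applied with the wrong hypothesis. You should also be aware that the block decomposition in Definition~\ref{Weil-Deligne def} can coarsen under restriction, so deducing $\prec$ at $v$ from $\prec$ at $w$ requires a small extra argument beyond ``monodromy type unchanged''; this is handled in \cite{ss}, but it is not quite as immediate as your parenthetical suggests.
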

    
    \begin{proof}
    
    See \cite[Theorem A]{GH}, \cite[Theorem 1.0.4]{SG} and \cite[Theorem 1]{ss}. \end{proof}

\begin{prop} \label{purity}

Let $F$ be a CM field and $\pi$ be a cohomological cuspidal automorphic representation of $\mathrm{GL}_n(\mathbb{A}_F)$ of weight $\lambda$.

Then there exists an integer $w$ such that for all $\tau \in \mathrm{Hom}(F, \mathbb{C})$ and $i=1, \cdots, n$, we have $\lambda_{\tau, i} = w - \lambda_{\tau^c, n+1-i}$. (This implies $|\omega_{\pi}| = || \ ||_F^{-\frac{wn}{2}}$ and $\pi|| \mathrm{det} ||_{F}^{\frac{w}{2}}$ is unitary.)

Moreover, $\pi_v| \mathrm{det} |_v^{\frac{w}{2}}$ is generic unitary for all finite place $v$ of $F$.

\end{prop}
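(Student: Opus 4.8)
The plan is to split the statement into three parts: the purity of the weight $\lambda$, the consequences for $\omega_\pi$ and unitarity, and the genericity of $\pi_v|\det|_v^{w/2}$ at all finite $v$. For the first part, I would use the classification of the archimedean component. Since $\pi$ is cuspidal and cohomological of weight $\lambda$, its contragredient $\pi^\vee$ is again cohomological, and one checks directly that $\pi^\vee$ has weight $\lambda^\vee$ with $(\lambda^\vee)_{\tau,i} = -\lambda_{\tau, n+1-i}$. On the other hand, $\pi^c$ is cohomological of weight $\lambda^c$ with $(\lambda^c)_{\tau,i} = \lambda_{\tau^c, i}$. The key input is the twisted self-duality of the Galois representation (or, equivalently, the purity statement of Clozel's theory of cohomological representations): the infinitesimal character constraints force the existence of an integer $w$ (a ``weight'' of the motive) with $\lambda_{\tau,i} + \lambda_{\tau^c, n+1-i} = w$ for all $\tau, i$. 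I would deduce this either from the purity of $r_\iota(\pi)$ together with the Hodge--Tate weight computation of Theorem \ref{Ila Varma}, or more directly from the fact that a regular algebraic cuspidal $\pi$ on $\mathrm{GL}_n(\mathbb{A}_F)$ with $F$ CM satisfies $\pi^c \cong \pi^\vee \otimes (\text{character})$ up to a twist — but since that stronger statement is not available here without conjugate self-duality, the cleanest route is: the central character $\omega_\pi$ is an algebraic Hecke character, hence its archimedean type is determined by $\lambda$, and cuspidality forces $|\omega_\pi|$ to be a power of $\|\cdot\|_F$; combining this with the relation between the infinitesimal character of $\pi_\infty$ and that of $\pi_\infty^\vee$ pins down the single integer $w$ with $\lambda_{\tau,i} = w - \lambda_{\tau^c, n+1-i}$.

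For the second part, the formula $|\omega_\pi| = \|\cdot\|_F^{-wn/2}$ is then immediate: $\omega_\pi$ corresponds via class field theory to $\det r_\iota(\pi)$, whose Hodge--Tate weights at each $\tau$ sum to $\sum_i \lambda_{\tau,i} + \binom{n}{2}$-type shifts, and the purity relation forces the archimedean absolute value of $\omega_\pi$ to be $\|\cdot\|_F^{-wn/2}$; hence $\pi\|\det\|_F^{w/2}$ has unitary central character, and since a cuspidal automorphic representation with unitary central character is unitary (it embeds in $L^2$ of the relevant quotient), $\pi\|\det\|_F^{w/2}$ is unitary.

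For the third part — the genericity of the local components — I would argue: $\pi\|\det\|_F^{w/2}$ is a cuspidal, hence globally generic, automorphic representation of $\mathrm{GL}_n(\mathbb{A}_F)$ (every cuspidal representation of $\mathrm{GL}_n$ is globally generic, by Shalika's theory of Fourier--Whittaker expansions), so each local component $(\pi\|\det\|_F^{w/2})_v = \pi_v|\det|_v^{w/2}$ is generic; and being a local component of a unitary cuspidal representation, it is preunitary. The main obstacle I expect is the first part: without conjugate self-duality one cannot simply quote a known $\pi^c \cong \pi^\vee \otimes \chi$, so one must extract the single integer $w$ purely from the archimedean data of a regular algebraic cuspidal representation over a CM field — this is exactly the purity of the ``motivic weight'', which follows from Clozel's purity lemma for the infinitesimal character of cohomological cuspidal representations (the Hodge symmetry $\lambda_{\tau,i} + \lambda_{\tau^c,n+1-i} = w$) together with the fact that cuspidality rules out the ``degenerate'' infinitesimal characters; once $w$ is in hand, everything else is formal. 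I would present the archimedean computation in terms of the infinitesimal character of $\pi_\infty$ matching that of the algebraic representation $V^\vee$ of highest weight $\lambda$, and use that $\pi_\infty \cong \pi_\infty^\vee{}^c$ up to the twist recorded by $\omega_\pi$, which is forced because $r_\iota(\pi)$ and $r_\iota(\pi)^\vee{}^c$ have the same Hodge--Tate weights after the appropriate cyclotomic twist.
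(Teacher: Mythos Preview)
Your treatment of the second and third parts is essentially correct and matches the standard argument (the paper cites Shalika's multiplicity-one result for global genericity, which is exactly what you invoke).

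The gap is in the first part. Your two proposed direct routes do not work. Knowing only that $\omega_\pi$ is an algebraic Hecke character with $|\omega_\pi|$ a power of $\|\cdot\|_F$ constrains only the sums $\sum_i \lambda_{\tau,i}$, not the individual pairings $\lambda_{\tau,i} + \lambda_{\tau^c,n+1-i}$; so the $\omega_\pi$ argument alone cannot produce the single integer $w$ entry-by-entry. Your alternative route, through the Hodge--Tate weights of $r_\iota(\pi)$ and a relation $r_\iota(\pi)^c \cong r_\iota(\pi)^\vee \varepsilon_l^{-w}$, is circular in this paper: the Hodge--Tate weight computations (Corollary~\ref{Caraiani-Newton 2}, Theorem~\ref{Lambert 2}) are established \emph{after} Proposition~\ref{purity} and in fact rely on it, and the conjugate self-duality up to twist of $r_\iota(\pi)$ is not available in general --- it is precisely what one is trying to extract.

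The paper simply cites Clozel's purity lemma (the reference \cite[Lemma~4.9]{purity}). The actual mechanism there is not archimedean infinitesimal-character bookkeeping but the existence of a $\mathbb{Q}$-structure on cuspidal cohomology: $\pi$ contributes to $H^*(X_K, V_\lambda)$, and complex conjugation on the coefficient field combined with the real structure of the locally symmetric space forces ${}^c\pi^\infty \cong (\pi^\vee)^\infty \otimes \|\det\|^{-w}$ for some integer $w$, which at infinity gives the Hodge symmetry $\lambda_{\tau,i} + \lambda_{\tau^c,n+1-i} = w$. You correctly name this as ``Clozel's purity lemma'', so as a citation your proof is fine; but your sketch of \emph{why} it holds misidentifies the source of the constraint.
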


\begin{proof}

The first property follows by \cite[Lemma 4.9]{purity}. The second result follows by \cite[Corollary of Theorem 5.9]{mult}.\end{proof}

The following proposition is the main result in this section.

\begin{prop}\label{automorphy lifting and local-global compatibility}

Let $F$ be a CM field, $\pi$ be a cohomological cuspidal automorphic representation of $\mathrm{GL}_n(\mathbb{A}_F)$, $l$ be a prime, $\iota: \overline{\mathbb{Q}}_l \stackrel{\sim}{\longrightarrow} \mathbb{C}$ be an isomorphism of fields and $v$ be a non-$l$-adic place of $F$.

Then the following conditions are equivalent.

(1) \ $\iota\mathrm{WD}(r_{\iota}(\pi)|_{G_{F_v}})^{F-ss} \cong \mathrm{rec}_{F_v}(\pi_v|\mathrm{det}|_v^{\frac{1-n}{2}})$.

(2) \ $\iota\mathrm{WD}(r_{\iota}(\pi)|_{G_{F_v}})$ and $\mathrm{rec}_{F_v}(\pi_v|\mathrm{det}|_v^{\frac{1-n}{2}})$ have the same monodromy type.

If $\pi_v^{\mathrm{Iw}_v} \neq 0$, these conditions are equivalent to the following condition.

(3) \ $\pi_v^{K_v} \neq 0$, where $K_v$ is the parahoric subgroup corresponding to the dual partition of the monodromy type of $\mathrm{WD}(r_{\iota}(\pi)|_{G_{F_v}})$.

\end{prop}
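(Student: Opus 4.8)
The plan is to deduce everything from the two "pre-automorphic" inputs already recalled: the result of Ila Varma (Theorem~\ref{Ila Varma}), which gives $\iota\mathrm{WD}(r_{\iota}(\pi)|_{G_{F_v}})^{F-ss} \prec \mathrm{rec}_{F_v}(\pi_v|\mathrm{det}|_v^{\frac{1-n}{2}})$, and the genericity/preunitarity statement of Proposition~\ref{purity}, which (after the normalizing twist by $|\mathrm{det}|_v^{w/2}$) shows that $\pi_v|\mathrm{det}|_v^{\frac{w}{2}}$ is generic preunitary, hence that $r_2 := \mathrm{rec}_{F_v}(\pi_v|\mathrm{det}|_v^{\frac{1-n}{2}})$ is weakly tempered by part~1 of Remark~\ref{weakly tempered}. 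Writing $r_1 := \iota\mathrm{WD}(r_{\iota}(\pi)|_{G_{F_v}})^{F-ss}$, the relation $r_1 \prec r_2$ already forces $r_1^{ss} \cong r_2^{ss}$, so the whole question is reduced to comparing monodromy, and we are exactly in the situation of Proposition~\ref{monodromy operator}, applied with this $r_1$ and $r_2$.

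First I would record that $(1) \Leftrightarrow (2)$ is immediate from Proposition~\ref{monodromy operator}: the implication $(1) \Rightarrow (2)$ is trivial (isomorphic Weil--Deligne representations have the same monodromy type), and for $(2) \Rightarrow (1)$ one applies the equivalence $(2) \Rightarrow (1)$ of Proposition~\ref{monodromy operator}, whose hypotheses ($r_1^{ss} \cong r_2^{ss}$ and $r_2$ weakly tempered) are verified above. Note that passing from $r_1 = \iota\mathrm{WD}(r_{\iota}(\pi)|_{G_{F_v}})^{F-ss}$ to the (a priori non-Frobenius-semisimple) $\iota\mathrm{WD}(r_{\iota}(\pi)|_{G_{F_v}})$ in the statement (2) is harmless, since the Frobenius semisimplification does not change the underlying nilpotent operator $N$, hence not the monodromy type; I would state this explicitly so that condition (2) as written matches the conclusion of Proposition~\ref{monodromy operator}.

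Next, under the additional hypothesis $\pi_v^{\mathrm{Iw}_v} \neq 0$, I would invoke the last clause of Proposition~\ref{monodromy operator}: since $r_1 \prec r_2$ by Theorem~\ref{Ila Varma}, the conditions (1) and (2) are equivalent to $\pi_v^{\mathfrak p} \neq 0$, where $\mathfrak p$ is the paraholic subgroup corresponding to the dual partition of the monodromy type of $r_1 = \iota\mathrm{WD}(r_{\iota}(\pi)|_{G_{F_v}})$. This is precisely the $K_v$ of the statement — the monodromy type of $\iota\mathrm{WD}(r_{\iota}(\pi)|_{G_{F_v}})$ equals the monodromy type of its Frobenius semisimplification $r_1$ — so condition (3) is exactly condition (3) of Proposition~\ref{monodromy operator}. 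One small bookkeeping point: Proposition~\ref{monodromy operator} is phrased over $\mathbb{C}$ with $\mathrm{rec}_K$ and asks $\pi^{\mathrm{Iw}_K} \neq 0$; here everything transports along $\iota$, and the hypothesis $\pi_v^{\mathrm{Iw}_v}\neq0$ is the statement that the irreducible smooth representation $\pi_v|\mathrm{det}|_v^{\frac{1-n}{2}}$ (equivalently $\pi_v$) has an Iwahori fixed vector, so that $r_2$ has all its inertial types trivial and Proposition~\ref{monodromy operator}(3) applies verbatim.

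The only real content beyond citing the earlier results is checking weak temperedness of $r_2$, and I expect that to be the one step worth spelling out rather than the main obstacle: by Proposition~\ref{purity} there is an integer $w$ with $\pi_v|\mathrm{det}|_v^{w/2}$ generic preunitary, hence by Remark~\ref{weakly tempered}(1) the Weil--Deligne representation $\mathrm{rec}_{F_v}(\pi_v|\mathrm{det}|_v^{w/2})$ is weakly tempered, and twisting by the unramified character $|\mathrm{det}|_v^{(1-n)/2 - w/2}$ (which under $\mathrm{rec}$ corresponds to an unramified twist and hence preserves weak temperedness) gives that $r_2 = \mathrm{rec}_{F_v}(\pi_v|\mathrm{det}|_v^{\frac{1-n}{2}})$ is weakly tempered. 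After that, the proof is a direct application of Proposition~\ref{monodromy operator} and there is no further difficulty.
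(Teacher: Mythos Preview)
Your proposal is correct and follows exactly the same approach as the paper: set $r_1 = \iota\mathrm{WD}(r_{\iota}(\pi)|_{G_{F_v}})^{F-ss}$ and $r_2 = \mathrm{rec}_{F_v}(\pi_v|\mathrm{det}|_v^{\frac{1-n}{2}})$, use Theorem~\ref{Ila Varma}, Proposition~\ref{purity}, and Remark~\ref{weakly tempered}(1) to verify the hypotheses of Proposition~\ref{monodromy operator}, and conclude. The paper's proof is a single sentence citing exactly these inputs; your version simply spells out the verification in more detail.
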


\begin{proof} 

By Theorem \ref{Ila Varma}, Proposition \ref{purity} and 1 of Remark \ref{weakly tempered}, the representations $r_1:=\iota\mathrm{WD}(r_{\iota}(\pi)|_{G_{F_v}})^{F-ss}$ and $r_2:=\mathrm{rec}_{F_v}(\pi_v|\mathrm{det}|_v^{\frac{1-n}{2}})$ satisfy the conditions of Lemma \ref{monodromy operator}. Thus we obtain the result.  \end{proof}

\begin{prop}\label{potential automorphy and local-global compatibility}

Let $F$, $\pi$, $l$, $\iota$, $v$ as in Proposition \ref{automorphy lifting and local-global compatibility}. 
    
We assume that there exist a finite CM extension $F'$ of $F$ and a cohomological cuspidal automorphic representation $\Pi$ of $\mathrm{GL}_n(\mathbb{A}_{F'})$ such that $r_{\iota}(\pi)|_{G_{F'}} \cong r_{\iota}(\Pi)$ and $\mathrm{WD}(r_{\iota}(\Pi)|_{G_{F'_u}})^{F-ss} \cong \mathrm{rec}_{F'_u}(\Pi_u|\mathrm{det}|_u^{\frac{1-n}{2}})$ for some $u|v$.

Then we have $\iota \mathrm{WD}(r_{\iota}(\pi)|_{G_{F_v}})^{F-ss} \cong \mathrm{rec}_{F_v}(\pi_v|\mathrm{det}|_v^{\frac{1-n}{2}})$.

\end{prop}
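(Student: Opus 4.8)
The plan is to reduce the assertion, via Proposition \ref{automorphy lifting and local-global compatibility}, to an equality of monodromy types, and then to establish that equality after restricting to $F'$. First I would set $r_1 := \iota\mathrm{WD}(r_{\iota}(\pi)|_{G_{F_v}})^{F-ss}$ and $r_2 := \mathrm{rec}_{F_v}(\pi_v|\mathrm{det}|_v^{\frac{1-n}{2}})$; by Theorem \ref{Ila Varma} one has $r_1 \prec r_2$, so by $(2) \Rightarrow (1)$ of Proposition \ref{automorphy lifting and local-global compatibility} it suffices to show that $r_1$ and $r_2$ have the same monodromy type. I would then use that the nilpotent monodromy operator of a Weil--Deligne representation is left unchanged both by Frobenius semisimplification and by restriction along a finite extension of local fields: this gives that the monodromy type of $r_1$ equals that of $\iota\mathrm{WD}(r_{\iota}(\pi)|_{G_{F'_u}})$, hence, using $r_{\iota}(\pi)|_{G_{F'}} \cong r_{\iota}(\Pi)$, equals that of $\iota\mathrm{WD}(r_{\iota}(\Pi)|_{G_{F'_u}})$; likewise the monodromy type of $r_2$ equals that of $r_2|_{W_{F'_u}}$.

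The main step I would carry out is to identify $\mathrm{rec}_{F'_u}(\Pi_u|\mathrm{det}|_u^{\frac{1-n}{2}})$ with $r_2|_{W_{F'_u}}$ by means of Lemma \ref{semisimple}. Both representations are Frobenius semisimple, and both are weakly tempered: since $\Pi$ is a cohomological cuspidal automorphic representation of $\mathrm{GL}_n(\mathbb{A}_{F'})$, Proposition \ref{purity} shows that $\Pi_u|\mathrm{det}|_u^{\frac{1-n}{2}}$ is a character twist of a generic preunitary representation, so $\mathrm{rec}_{F'_u}(\Pi_u|\mathrm{det}|_u^{\frac{1-n}{2}})$ is weakly tempered by $1$ of Remark \ref{weakly tempered}; applying the same reasoning to $\pi_v$ and then $2$ of Remark \ref{weakly tempered} shows that $r_2|_{W_{F'_u}} = \mathrm{rec}_{F_v}(\pi_v|\mathrm{det}|_v^{\frac{1-n}{2}})|_{W_{F'_u}}$ is weakly tempered. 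To see that the two have isomorphic semisimplifications, I would combine the given local-global compatibility of $\Pi$ at $u$ (or Theorem \ref{Ila Varma} applied to $\Pi$), Theorem \ref{Ila Varma} applied to $\pi$, and $r_{\iota}(\pi)|_{G_{F'}} \cong r_{\iota}(\Pi)$ to obtain $\mathrm{rec}_{F'_u}(\Pi_u|\mathrm{det}|_u^{\frac{1-n}{2}})^{ss} \cong \iota\mathrm{WD}(r_{\iota}(\Pi)|_{G_{F'_u}})^{ss} \cong (\iota\mathrm{WD}(r_{\iota}(\pi)|_{G_{F_v}})^{ss})|_{W_{F'_u}} \cong r_2^{ss}|_{W_{F'_u}}$. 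Then Lemma \ref{semisimple} gives $\mathrm{rec}_{F'_u}(\Pi_u|\mathrm{det}|_u^{\frac{1-n}{2}}) \cong r_2|_{W_{F'_u}}$, and in particular these have the same monodromy type.

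Finally I would chain the identifications together: the monodromy type of $r_1$ equals that of $\iota\mathrm{WD}(r_{\iota}(\Pi)|_{G_{F'_u}})$, which by the hypothesis $\mathrm{WD}(r_{\iota}(\Pi)|_{G_{F'_u}})^{F-ss} \cong \mathrm{rec}_{F'_u}(\Pi_u|\mathrm{det}|_u^{\frac{1-n}{2}})$ equals that of $\mathrm{rec}_{F'_u}(\Pi_u|\mathrm{det}|_u^{\frac{1-n}{2}})$, which by the previous step equals that of $r_2|_{W_{F'_u}}$, which equals that of $r_2$. Hence $r_1$ and $r_2$ have the same monodromy type, and $(2)\Rightarrow(1)$ of Proposition \ref{automorphy lifting and local-global compatibility} yields $r_1 \cong r_2$, which is exactly the asserted local-global compatibility $\iota\mathrm{WD}(r_{\iota}(\pi)|_{G_{F_v}})^{F-ss} \cong \mathrm{rec}_{F_v}(\pi_v|\mathrm{det}|_v^{\frac{1-n}{2}})$ at $v$.

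I do not expect a genuine obstacle here, since all the real content is already packaged in Theorem \ref{Ila Varma}, Proposition \ref{purity}, Lemma \ref{semisimple} and Proposition \ref{automorphy lifting and local-global compatibility}. The only points requiring care are the standard facts that the nilpotent monodromy operator is literally the same operator in $\mathrm{WD}(\rho)$, in its Frobenius semisimplification, and in its restriction $\mathrm{WD}(\rho)|_{W_{F'_u}}$, together with the weak-temperedness bookkeeping needed to apply Lemma \ref{semisimple} to the restricted parameters.
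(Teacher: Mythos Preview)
Your proposal is correct and follows essentially the same route as the paper's proof: both use Theorem \ref{Ila Varma} together with Proposition \ref{purity}, Remark \ref{weakly tempered} and Lemma \ref{semisimple} to identify $\mathrm{rec}_{F'_u}(\Pi_u|\mathrm{det}|_u^{\frac{1-n}{2}})$ with $\mathrm{rec}_{F_v}(\pi_v|\mathrm{det}|_v^{\frac{1-n}{2}})|_{W_{F'_u}}$ (the paper phrases this as $\mathrm{BC}_{F'_u/F_v}(\pi_v)\cong\Pi_u$), and then conclude equality of monodromy types and apply $(2)\Rightarrow(1)$ of Proposition \ref{automorphy lifting and local-global compatibility}.
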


\begin{proof}

By Theorem \ref{Ila Varma}, we have $\mathrm{rec}_{F'_u}(\mathrm{BC}_{F'_u/F_v}(\pi_v)|\mathrm{det}|_u^{\frac{1-n}{2}})^{ss} \cong \iota \mathrm{WD}(r_{\iota}(\pi)|_{G_{F'_u}})^{ss} \cong \iota \mathrm{WD}(r_{\iota}(\Pi)|_{G_{F'_u}})^{ss} \cong \mathrm{rec}_{F'_u}(\Pi_u |\mathrm{det}|_u^{\frac{1-n}{2}})^{ss}$. By Lemma \ref{semisimple}, Proposition \ref{purity} and 1 of Remark \ref{weakly tempered}, we obtain $\mathrm{BC}_{F'_u/F_v}(\pi_v) = \Pi_u$ . This implies $\iota \mathrm{WD}(r_{\iota}(\pi)|_{G_{F_v}})^{F-ss}|_{W_{F'_u}} \cong \mathrm{rec}_{F_v}(\pi_v|\mathrm{det}|_v^{\frac{1-n}{2}})|_{W_{F'_u}}$. In particular, $\iota \mathrm{WD}(r_{\iota}(\pi)|_{G_{F_v}})$ and $\mathrm{rec}_{F_v}(\pi_v|\mathrm{det}|_v^{\frac{1-n}{2}})$ have the same monodromy type. Hence we obtain the result by $(2) \Rightarrow (1)$ of Proposition \ref{automorphy lifting and local-global compatibility}. \end{proof}

Next, we recall relations between the purity and the local-global compatibility.

\begin{dfn} \label{definition of purity}

Let $l$ be a prime.

1 \ For $q \in \mathbb{R}$, we say that $x \in \overline{\mathbb{Q}}_l$ is a Weil $q$-number if $|\iota(x)|^2 = q$ for all $\iota : \overline{\mathbb{Q}}_l \stackrel{\sim}{\rightarrow} \mathbb{C}$.

2 \ For a Weil-Deligne representation $r$ of $W_K$ over $\overline{\mathbb{Q}}_l$ and an integer $w$, we say that $r$ is pure of weight $w$ if there exists an increasing filtration $\{ \mathrm{Fil}_i \}_i$ on $r$ by subrepresentations of $r$ satisfying the following conditions.

(1) \ All eigenvalues of $r(\mathrm{Frob}_K)$ on $\mathrm{Fil}_i/\mathrm{Fil}_{i-1}$ are Weil $q_K^{i}$-numbers.

(2) \ For all $i$, we have $N^i : \mathrm{Fil}_{w+i}/\mathrm{Fil}_{w+i-1} \stackrel{\sim}{\rightarrow} \mathrm{Fil}_{w-i}/\mathrm{Fil}_{w-i-1}$, where $N$ denotes the monodromy operator of $r$. (Note that $N(\mathrm{Fil}_i) \subset \mathrm{Fil}_{i-2}$ by the condition (1).)

\end{dfn}

\begin{lem} \label{purity lemma}
    
Let $l$ be a prime. Then we have the following properties.

1 \ For a Weil-Deligne representation $r$ of $W_K$ over $\overline{\mathbb{Q}}_l$, $r$ is pure if and only if $r^{F-ss}$ is pure.

2 \ For a finite extension $L/K$ and a Weil-Deligne representation $r$ of $W_K$ over $\overline{\mathbb{Q}}_l$, $r$ is pure if and only if $r|_{W_L}$ is pure.

3 \ For pure Weil-Deligne representations $r_1$ and $r_2$ of $W_K$ over $\overline{\mathbb{Q}}_l$, $r_1^{ss} \cong r_2^{ss}$ if and only if $r_1^{F-ss} \cong r_2^{F-ss}$.

4 \ For an irreducible smooth representation $\pi$ of $\mathrm{GL}_n(K)$ over $\mathbb{C}$, $\iota: \overline{\mathbb{Q}}_l \stackrel{\sim}{\rightarrow} \mathbb{C}$ and an integer $w$, the representation $\iota^{-1}\mathrm{rec}_{K}(\pi)$ is pure of weight $w$ if and only if \ $^\sigma\pi|\mathrm{det}|_K^{\frac{w}{2}}:=\mathbb{C} \otimes_{\mathbb{\sigma, C}} \pi|\mathrm{det}|_K^{\frac{w}{2}}$ is unitary tempered for all $\sigma \in \mathrm{Aut}(\mathbb{C})$. 

5 \ For a Weil-Deligne representation $r_1$ and a nonzero pure Weil-Deligne representation $r_2$, the purity of $r_1$ is equivalent to the purity of $r_1 \otimes r_2$. 

\end{lem}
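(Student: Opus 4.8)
The plan is to reduce to Frobenius-semisimple representations and then argue with monodromy filtrations. By part 1 of the lemma, $r_1$ is pure if and only if $r_1^{F-ss}$ is, and since $(r_1 \otimes r_2)^{F-ss} \cong r_1^{F-ss} \otimes r_2^{F-ss}$ (as Weil-Deligne representations, the monodromy operator being $N_1 \otimes 1 + 1 \otimes N_2$ in both cases), I may assume throughout that $r_1$ and $r_2$ are Frobenius semisimple; note $r_2^{F-ss}$ is still pure and nonzero. For a Frobenius-semisimple Weil-Deligne representation $r$ write $M_\bullet(r)$ for the monodromy filtration of its monodromy operator $N$ (the unique increasing $W_K$-stable filtration with $N M_i \subset M_{i-2}$ and $N^k : \mathrm{gr}^M_k \xrightarrow{\sim} \mathrm{gr}^M_{-k}$ for all $k$). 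By uniqueness of the monodromy filtration, $r$ is pure of weight $w$ in the sense of Definition \ref{definition of purity} if and only if, for every $k$, all eigenvalues of $r(\mathrm{Frob}_K)$ on $\mathrm{gr}^M_k(r)$ are Weil $q_K^{w+k}$-numbers.

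For the implication ``$r_1,r_2$ pure $\Rightarrow r_1\otimes r_2$ pure'' I would use the standard fact that the monodromy filtration of $N_1\otimes 1 + 1\otimes N_2$ on $r_1\otimes r_2$ is the convolution $M_k(r_1\otimes r_2) = \sum_{i+j=k} M_i(r_1)\otimes M_j(r_2)$, so that $\mathrm{gr}^M_k(r_1\otimes r_2) \cong \bigoplus_{i+j=k}\mathrm{gr}^M_i(r_1)\otimes\mathrm{gr}^M_j(r_2)$. If $r_1$ is pure of weight $w_1$ and $r_2$ of weight $w_2$, the Frobenius eigenvalues on the $(i,j)$-summand are products of a Weil $q_K^{w_1+i}$-number and a Weil $q_K^{w_2+j}$-number, hence are Weil $q_K^{w_1+w_2+k}$-numbers; therefore $r_1\otimes r_2$ is pure of weight $w_1+w_2$. (Equivalently, one may decompose each indecomposable piece $\mathrm{Sp}_{n}(s)\otimes\mathrm{Sp}_{m}(t)$ via the Clebsch--Gordan decomposition $\mathrm{Sp}_n\otimes\mathrm{Sp}_m\cong\bigoplus_{k}\mathrm{Sp}_{n+m-1-2k}\otimes\chi_k$ with $\chi_k$ unramified of Frobenius eigenvalue $q_K^k$, and check that all the resulting pieces share the weight $w_1+w_2$.)

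For the converse, suppose $r_1\otimes r_2$ is pure of weight $w$ and $r_2\neq 0$ is pure of weight $w_2$. First, $r_2^{\vee}$ is pure of weight $-w_2$, since dualizing inverts the Frobenius eigenvalues and replaces the monodromy filtration by its dual. By the case already treated, $(r_1\otimes r_2)\otimes r_2^{\vee}$ is pure of weight $w-w_2$. Now the evaluation map $r_2\otimes r_2^{\vee}\to\overline{\mathbb{Q}_l}$ and the coevaluation $\overline{\mathbb{Q}_l}\to r_2\otimes r_2^{\vee}$ sending $1$ to $\mathrm{id}_{r_2}$ are morphisms of Weil-Deligne representations: Frobenius-equivariance is clear, and compatibility with the monodromy operator holds because $N$ acts on $r_2\otimes r_2^{\vee}\cong\mathrm{End}(r_2)$ by $\phi\mapsto N\phi-\phi N$, while $[N,\mathrm{id}]=0$ and $\mathrm{tr}(N\phi-\phi N)=0$. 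Their composite is multiplication by $\dim r_2\neq 0$, so the trivial $1$-dimensional Weil-Deligne representation is a direct summand of $r_2\otimes r_2^{\vee}$. Tensoring with $r_1$ exhibits $r_1$ as a direct summand of $r_1\otimes r_2\otimes r_2^{\vee}$, which is pure. Finally, any direct summand of a pure Weil-Deligne representation is pure of the same weight: reduce to the Frobenius-semisimple case by part 1, and then the monodromy filtration of the summand is a direct summand of that of the whole, so its graded pieces (being subobjects of graded pieces of a pure representation) carry Frobenius eigenvalues of the required Weil weight. Hence $r_1$ is pure (of weight $w-w_2$).

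The main technical input is the behaviour of the monodromy filtration under tensor products --- equivalently the $\mathfrak{sl}_2$/Clebsch--Gordan bookkeeping ensuring that the pieces of $\mathrm{Sp}_n(s)\otimes\mathrm{Sp}_m(t)$ all land in a single weight. Everything else is formal: the reduction to the Frobenius-semisimple case, the duality statement, the trace-splitting that realizes $r_1$ as a summand of $r_1\otimes r_2\otimes r_2^{\vee}$, and the stability of purity under passage to direct summands.
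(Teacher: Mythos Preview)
Your proof of part 5 is correct, but the converse takes a genuinely different route from the paper's. The paper first invokes parts 1 and 2 to reduce to the situation where $r_1=\bigoplus_i \mathrm{Sp}_{n_i}(\phi_i)$ and $r_2=\bigoplus_j \mathrm{Sp}_{m_j}(\psi_j)$ with all $\phi_i,\psi_j$ \emph{unramified characters} and each $\psi_j$ pure of weight $w_2$. It then writes down the Clebsch--Gordan decomposition $r_1\otimes r_2=\bigoplus_{i,j}\bigoplus_{k=1}^{\min(n_i,m_j)}\mathrm{Sp}_{n_i+m_j+1-2k}(\phi_i\psi_j)$ and observes that the biconditional collapses to the trivial statement ``$\phi_i\psi_j$ is a Weil $q_K^{w_1+w_2}$-number for all $i,j$ $\Leftrightarrow$ $\phi_i$ is a Weil $q_K^{w_1}$-number for all $i$''. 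So both directions are handled by a single explicit computation.

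Your forward direction is essentially the same (the monodromy-filtration convolution is exactly what underlies Clebsch--Gordan), but your converse is more categorical: you dualize $r_2$, tensor back, split off the trivial summand of $r_2\otimes r_2^{\vee}$ via trace/cotrace, and conclude by stability of purity under direct summands. This is a valid alternative and has the mild advantage of not needing the reduction via part~2 to unramified characters. The paper's argument is shorter and treats both implications symmetrically; yours trades the explicit decomposition for a duality trick that works in any rigid tensor category.
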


\begin{proof}

See \cite[Lemma 1.4]{GLLC} for the proof of 1 $\sim$ 4. (The property 3 also follows from Lemma \ref{semisimple}.)

5 \ By 1 and 2, we may assume that there exist unramified characters $\phi_1, \cdots, \phi_a, \psi_1, \cdots, \psi_b$, positive integers $n_1 \ge \cdots \ge n_a, m_1 \ge \cdots \ge m_b$ and an integer $w_2$ such that $r_1 = \mathrm{Sp}_{n_1}(\phi_1) \oplus \cdots \oplus \mathrm{Sp}_{n_a}(\phi_a)$, $r_2 = \mathrm{Sp}_{m_1}(\psi_1) \oplus \cdots \oplus \mathrm{Sp}_{m_b}(\psi_b)$ and $\psi_i$ are pure of weight $w_2$. Note that for an integer $w_1$, $r_1$ is pure of weight $w_1$ if and only if $\phi_i$ is pure of weight $w_1$ for all $i$. Note also that $r_1 \otimes r_2 = \oplus_{i,j} \oplus_{k = 1}^{\mathrm{min}(n_i,m_j)} \mathrm{Sp}_{n_i + m_j + 1 - 2k}(\phi_i \psi_j)$. (See \cite[(1.6.14.4)]{WC}.) This implies that $r_2$ is pure of weight $w_2$ if and only if $r_1 \otimes r_2$ is pure of weight $w_1 + w_2$. \end{proof}

\begin{lem} \label{purity and Weil number}

Let $r = \mathrm{Sp}_{n_1}(s_1) \oplus \cdots \oplus \mathrm{Sp}_{n_k}(s_k)$ be an $n$-dimensional Frobenius semisimple Weil-Deligne representation of $W_K$ over $\mathbb{C}$, $a$ be an integer and $\iota : \overline{\mathbb{Q}}_l \stackrel{\sim}{\rightarrow} \mathbb{C}$ be an isomorphism of fields. We assume that for any $i$ and any eigenvalue $\alpha$ of $s_i(\mathrm{Frob}_K)$, we have $q_K^{-\frac{1}{2}} < |\alpha q_K^{-\frac{a}{2}}| < q_K^{\frac{1}{2}}$ and any eigenvalue of $\iota^{-1}r(\mathrm{Frob}_K)$ is a Weil $q_K^{w}$-number for some integer $w$. Then $\iota^{-1}r$ is pure of weight $a$.

\end{lem}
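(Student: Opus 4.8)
The plan is to reduce to a single summand $\mathrm{Sp}_m(s)$ and then to exploit the integrality hidden in the notion of a Weil number to force the Frobenius eigenvalues of $s$ to have exactly the right absolute value. Since a finite direct sum of Weil--Deligne representations each pure of weight $a$ is again pure of weight $a$ (take the direct sum of the filtrations), and since the eigenvalues of $\iota^{-1}\mathrm{Sp}_{n_i}(s_i)(\mathrm{Frob}_K)$ form a subset of those of $\iota^{-1}r(\mathrm{Frob}_K)$, so that both hypotheses descend to each summand, it suffices to treat the case $r = \mathrm{Sp}_m(s)$ with $s$ irreducible.

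In that case I would use that, as Weil--Deligne representations, $\mathrm{Sp}_m(s) \cong s \otimes \mathrm{Sp}_m(\mathbf{1})$, where $\mathbf{1}$ is the trivial character of $\mathrm{GL}_1(K)$ and $\mathrm{Sp}_m(\mathbf{1}) = \mathrm{rec}_K(\mathrm{St}_m(\mathbf{1}))$; in particular the eigenvalues of $\mathrm{Sp}_m(s)(\mathrm{Frob}_K)$ are precisely the products $\alpha\, q_K^{c}$ with $\alpha$ an eigenvalue of $s(\mathrm{Frob}_K)$ and $c$ ranging over $\{\tfrac{m-1}{2}, \tfrac{m-3}{2}, \dots, \tfrac{1-m}{2}\}$ (here $q_K^c$ denotes the positive real power). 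By Lemma \ref{supercuspidal absolute values} all eigenvalues of $s(\mathrm{Frob}_K)$ have one common complex absolute value, say $q_K^{\gamma}$, and the hypothesis $q_K^{-\frac{1}{2}} < |\alpha q_K^{-\frac{a}{2}}| < q_K^{\frac{1}{2}}$ then reads $a - 1 < 2\gamma < a + 1$.

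The crux is to upgrade this to genuine purity. Fix an eigenvalue $\alpha$ of $s(\mathrm{Frob}_K)$ and a value $c$ as above. By hypothesis $\iota^{-1}(\alpha q_K^c) = \iota^{-1}(\alpha)\,\iota^{-1}(q_K^c)$ is a Weil $q_K^{w}$-number for some integer $w$ (a priori depending on $\alpha$ and $c$). Evaluating with the fixed $\iota$ gives $q_K^{2\gamma}q_K^{2c} = q_K^{w}$, so $2\gamma = w - 2c$ is an integer strictly between $a - 1$ and $a + 1$; hence $2\gamma = a$, i.e. $|\alpha| = q_K^{a/2}$, and $w = a + 2c$. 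Now I would push the Weil-number property through an arbitrary isomorphism $\iota' : \overline{\mathbb{Q}_l} \stackrel{\sim}{\longrightarrow} \mathbb{C}$: since $\bigl(\iota^{-1}(q_K^c)\bigr)^2 = q_K^{2c} \in \mathbb{Q}$, we have $|\iota'(\iota^{-1}(q_K^c))|^2 = q_K^{2c}$, and therefore $|\iota'(\iota^{-1}(\alpha))|^2\, q_K^{2c} = q_K^{w} = q_K^{a + 2c}$, so $|\iota'(\iota^{-1}(\alpha))|^2 = q_K^{a}$ for every $\iota'$. Thus each eigenvalue of $\iota^{-1}s(\mathrm{Frob}_K)$ is a Weil $q_K^{a}$-number; since $s$ is irreducible its monodromy operator vanishes, so $\iota^{-1}s$ is pure of weight $a$ via the filtration with $\mathrm{Fil}_{a-1} = 0$ and $\mathrm{Fil}_a = \iota^{-1}s$.

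To finish, $\mathrm{Sp}_m(\mathbf{1}) = \mathrm{rec}_K(\mathrm{St}_m(\mathbf{1}))$ is pure of weight $0$ by part 4 of Lemma \ref{purity lemma}, since the Steinberg representation of $\mathrm{GL}_m(K)$ and all its $\mathrm{Aut}(\mathbb{C})$-conjugates are preunitary tempered; applying part 5 of Lemma \ref{purity lemma} to $\iota^{-1}s$ and $\iota^{-1}\mathrm{Sp}_m(\mathbf{1})$ shows that $\iota^{-1}\mathrm{Sp}_m(s) = \iota^{-1}s \otimes \iota^{-1}\mathrm{Sp}_m(\mathbf{1})$ is pure of weight $a + 0 = a$, which settles the reduced case and hence the lemma. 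The one genuinely delicate point is the step in the previous paragraph: the fixed isomorphism $\iota$ only controls the ordinary complex absolute value (and, via the strict inequalities together with integrality, forces the weight to be exactly $a$), so one must feed the Weil-number hypothesis through every embedding $\iota'$ in order to conclude that $\iota^{-1}s$ is honestly pure of weight $a$; everything else is bookkeeping built from Lemmas \ref{supercuspidal absolute values} and \ref{purity lemma}.
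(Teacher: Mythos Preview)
Your proof is correct and follows the same route as the paper: show that every Frobenius eigenvalue of each $s_i$ is a Weil $q_K^a$-number (the strict inequalities together with integrality force the weight to equal $a$ exactly), then deduce purity of $\iota^{-1}r$. The paper compresses this into two sentences; your version supplies the details, and the appeal to Lemma~\ref{supercuspidal absolute values} is harmless but not needed, since the first hypothesis already bounds $|\alpha|$ for each eigenvalue individually.
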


\begin{rem} \label{quasi pure}

The first assumption of this lemma is a stronger assumption than that $r$ is weakly tempered. However, this assumption holds if $r$ corresponds to a local component of a cohomological cuspidal automorphic representation of $\mathrm{GL}_n(\mathbb{A}_F)$ by Proposition \ref{purity}, Theorem \ref{generic unitary} and Lemma \ref{supercuspidal absolute values}.

\end{rem}

\begin{proof}

By the assumption, for any $i$, eigenvalue $\alpha$ of $s_i(\mathrm{Frob}_K)$ and $\sigma \in \mathrm{Aut}(\mathbb{C})$, we have $|\sigma(\alpha)|^2 = q_K^{a}$. This implies that $\iota^{-1}r$ is pure of weight $a$. \end{proof}

\begin{prop} \label{purity local-global}

Let $F$, $\pi$, $l$, $\iota$, $v$ as in Proposition \ref{automorphy lifting and local-global compatibility}. Assume that for any $g \in W_{F_v}$ and any eigenvalue $\alpha \in \overline{\mathbb{Q}}_l$ of $r_{\iota}(\pi)(g)$, there exists an integer $w$ such that $\alpha$ is a Weil $q_v^w$-number. Then we have the following results.

1 \ For all $\sigma \in \mathrm{Aut}(\mathbb{C})$, $^\sigma \pi_v:=\mathbb{C} \otimes_{\sigma, \mathbb{C}} \pi_v$ is tempered and $\iota^{-1}\mathrm{rec}_{F_v}(\pi_v|\mathrm{det}|_v^{\frac{1-n}{2}})$ is pure. 

2 \ $\mathrm{WD}(r_{\iota}(\pi)|_{G_{F_v}})$ is pure if and only if $\iota \mathrm{WD}(r_{\iota}(\pi)|_{G_{F_v}})^{F-ss} \cong \mathrm{rec}_{F_v}(\pi_v|\mathrm{det}|_v^{\frac{1-n}{2}})$. 

\end{prop}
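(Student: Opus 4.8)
The plan is to reduce both assertions to Lemmas \ref{purity and Weil number} and \ref{purity lemma}; the substantive point is assertion 1, and assertion 2 will then follow formally. Throughout I will write $r := \mathrm{WD}(r_{\iota}(\pi)|_{G_{F_v}})$, a Weil--Deligne representation of $W_{F_v}$ over $\overline{\mathbb{Q}_l}$, and set $r_1 := \iota\, r^{F-ss}$ and $r_2 := \mathrm{rec}_{F_v}(\pi_v|\mathrm{det}|_v^{\frac{1-n}{2}})$, which are Frobenius semisimple Weil--Deligne representations of $W_{F_v}$ over $\mathbb{C}$. By Theorem \ref{Ila Varma} we have $r_1 \prec r_2$, and in particular $r_1^{ss} \cong r_2^{ss}$.

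To prove 1, I would apply Lemma \ref{purity and Weil number} to $r_2$. Its first hypothesis --- that the Frobenius eigenvalues $\alpha$ of every supercuspidal constituent of $r_2$ satisfy $q_v^{-\frac{1}{2}} < |\alpha q_v^{-\frac{a}{2}}| < q_v^{\frac{1}{2}}$ for a suitable integer $a$ --- is exactly the content of Remark \ref{quasi pure}, since $r_2$ is an unramified twist of the local Langlands parameter of a local component of a cohomological cuspidal automorphic representation; it follows from Proposition \ref{purity} together with Theorem \ref{generic preunitary} and Lemma \ref{supercuspidal absolute values}. Its second hypothesis --- that every eigenvalue of $\iota^{-1}r_2(\mathrm{Frob}_v)$ is a Weil number --- is where the standing assumption of the Proposition enters: the Frobenius eigenvalues of $\mathrm{WD}(r_{\iota}(\pi)|_{G_{F_v}})$ are eigenvalues of $r_{\iota}(\pi)$ on a Frobenius lift in $W_{F_v}$, hence Weil numbers by hypothesis, and since passing to $r^{F-ss}$ and to semisimplifications leaves the Frobenius eigenvalues unchanged, the relation $r_1^{ss} \cong r_2^{ss}$ shows that the eigenvalues of $\iota^{-1}r_2(\mathrm{Frob}_v)$ are precisely these Weil numbers. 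Lemma \ref{purity and Weil number} then yields that $\iota^{-1}r_2$ is pure, and Lemma \ref{purity lemma} (4) reinterprets this purity as the statement that ${}^{\sigma}\pi_v$ is tempered for every $\sigma \in \mathrm{Aut}(\mathbb{C})$. This proves 1.

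For assertion 2, the implication $\Leftarrow$ is immediate: if $r_1 \cong r_2$ then $\iota^{-1}r_1 = r^{F-ss}$ is pure by 1, hence $r$ is pure by Lemma \ref{purity lemma} (1). For the implication $\Rightarrow$, suppose $r$ is pure. Then $r^{F-ss} = \iota^{-1}r_1$ is pure by Lemma \ref{purity lemma} (1), and $\iota^{-1}r_2$ is pure by 1. Since $\iota^{-1}r_1$ and $\iota^{-1}r_2$ are pure Weil--Deligne representations over $\overline{\mathbb{Q}_l}$ with $\iota^{-1}r_1^{ss} \cong \iota^{-1}r_2^{ss}$, Lemma \ref{purity lemma} (3) gives $\iota^{-1}r_1^{F-ss} \cong \iota^{-1}r_2^{F-ss}$; as $r_1$ and $r_2$ are already Frobenius semisimple, this means $r_1 \cong r_2$, i.e. $\iota\mathrm{WD}(r_{\iota}(\pi)|_{G_{F_v}})^{F-ss} \cong \mathrm{rec}_{F_v}(\pi_v|\mathrm{det}|_v^{\frac{1-n}{2}})$.

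I do not expect a genuine obstacle here: the whole argument is a matter of feeding the correct inputs into the cited lemmas. The one place that deserves care is the verification of the hypotheses of Lemma \ref{purity and Weil number} for $r_2$ --- in particular the bookkeeping identifying the Frobenius eigenvalues of $\iota^{-1}r_2$ with eigenvalues of $r_{\iota}(\pi)$ on $W_{F_v}$, so that the standing Weil-number hypothesis applies --- which rests on $r_1^{ss} \cong r_2^{ss}$ from Theorem \ref{Ila Varma} and on the invariance of Frobenius eigenvalues under Frobenius semisimplification and under the passage from an $l$-adic representation to its associated Weil--Deligne representation.
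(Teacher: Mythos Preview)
Your proof is correct and follows essentially the same route as the paper's: use Theorem \ref{Ila Varma} to transfer the Weil-number hypothesis from $r_{\iota}(\pi)$ to the Frobenius eigenvalues of $\iota^{-1}\mathrm{rec}_{F_v}(\pi_v|\mathrm{det}|_v^{\frac{1-n}{2}})$, apply Lemma \ref{purity and Weil number} via Remark \ref{quasi pure} to get purity and then Lemma \ref{purity lemma} (4) for temperedness, and deduce part 2 from part 1 together with Lemma \ref{purity lemma} (1), (3) and Theorem \ref{Ila Varma}. The only difference is that you spell out the bookkeeping (matching eigenvalues, the two directions of part 2) in more detail than the paper does.
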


\begin{rem} \label{Scholze}

$(a)$ \ As stated in \cite[Lemma I.5.7]{LLC}, by using the weight spectral sequence, the assumption of this proposition is satisfied if $r_{\iota}(\pi)|_{G_{F_v}}$ is a subquotient of $H^*_{\acute{e}t}(X_{\overline{F_v}}, \overline{\mathbb{Q}}_l(i))$, where $X$ is a proper smooth variety over $F_v$ and $i \in \mathbb{Z}$.

$(b)$ \ The first condition of 2 is satisfied if $r_{\iota}(\pi)|_{G_{F_v}}$ is a direct summand of $H^*_{\acute{e}t}(X_{\overline{F_v}}, \overline{\mathbb{Q}}_l(i))$, where $X$ is a smooth hypersurface in a projective space over $F_v$ and $i \in \mathbb{Z}$. In fact, the weight monodromy conjecture for $X$ is already known by \cite[Theorem 1.15]{Per}.

\end{rem}

\begin{proof} 1 \ Note that $\iota\mathrm{WD}(r_{\iota}(\pi)|_{G_{F_v}})^{ss} \cong \mathrm{rec}_{F_v}(\pi_v|\mathrm{det}|_v^{\frac{1-n}{2}})^{ss}$ by Theorem \ref{Ila Varma} and therefore, any eigenvalue of $\iota^{-1}\mathrm{rec}_{F_v}(\pi_v|\mathrm{det}|_v^{\frac{1-n}{2}})(\mathrm{Frob}_v)$ is a Weil $q_v^w$-number for some $w$. By Lemma \ref{purity and Weil number} and Remark \ref{quasi pure}, $\iota^{-1}\mathrm{rec}_{F_v}(\pi_v|\mathrm{det}|_v^{\frac{1-n}{2}})$ is pure. Moreover, by 4 of Lemma \ref{purity lemma}, $^\sigma \pi_v$ is tempered for any $\sigma \in \mathrm{Aut}(\mathbb{C})$.

2 \ This follows from 1 of this proposition, 1 and 3 of Lemma \ref{purity lemma} and Theorem \ref{Ila Varma}. \end{proof}

Finally, we recall the following important result in polarizable cases.

Let $F$ be a CM field. For a cohomological cuspidal automorphic representation $\pi$ of $\mathrm{GL}_n(\mathbb{A}_F)$ and an algebraic Hecke character $\chi : \mathbb{A}_{F^+}^{\times}/(F^+)^{\times} \rightarrow \mathbb{C}^{\times}$, we say that $(\pi, \chi)$ is polarized if we have $\pi^c \cong \pi^{\vee} \otimes \chi \circ N_{F/F^+} \circ \mathrm{det}$ and $\chi_v(-1) = \chi_w(-1)$ for all $v, w|\infty$.

For a cohomological cuspidal automorphic representation $\pi$ of $\mathrm{GL}_n(\mathbb{A}_F)$, we say that $\pi$ is polarizable if there exists an algebraic Hecke character $\chi : \mathbb{A}_{F^+}^{\times}/F^{+ \times} \rightarrow \mathbb{C}^{\times}$ such that $(\pi, \chi)$ is polarized.

\begin{thm}\label{polarizable local-global compatibility}

Let $(\pi, \chi)$ be a polarized cohomological cuspidal automorphic representation of $\mathrm{GL}_n(\mathbb{A}_F)$ of weight $\lambda \in (\mathbb{Z}_+^{n})^{\mathrm{Hom}(F, \mathbb{C})}$, $l$ be a prime and $\iota : \overline{\mathbb{Q}}_l \stackrel{\sim}{\rightarrow} \mathbb{C}$ be an isomorphism of fields.

Then we have the following properties.

(1) \ There exists a perfect symmetric $G_F$-equivariant pairing $r_{\iota}(\pi) \times r_{\iota}(\pi)^c \rightarrow \varepsilon_l^{1-n}r_{\iota}(\chi)|_{G_F}$.

(2) \ For all $v|l$, $r_{\iota}(\pi)|_{G_{F_v}}$ is de Rham and $\mathrm{HT}_{\tau}(r_{\iota}(\pi)|_{G_{F_v}}) = \{ \lambda_{\iota \tau, 1} + n - 1, \lambda_{\iota \tau, 2} + n - 2, \cdots, \lambda_{\iota \tau, n} \}$ for any $\tau \in \mathrm{Hom}_{\mathbb{Q}_l}(F_v, \overline{\mathbb{Q}}_l)$.

(3) \ For all finite places $v$ of $F$, $\mathrm{WD}(r_{\iota}(\pi)|_{G_{F_v}})$ is pure and $\pi_v$ is tempered.

(4) \ For all finite place $v$ of $F$, we have $\iota\mathrm{WD}(r_{\iota}(\pi)|_{G_{F_v}})^{F-ss} \cong \mathrm{rec}_{F_v}(\pi_v|\mathrm{det}|_v^{\frac{1-n}{2}})$.

\end{thm}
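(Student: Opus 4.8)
The plan is to assemble this statement from the literature on Galois representations attached to polarizable cuspidal automorphic representations over CM fields, and then to feed the resulting geometric input into Proposition \ref{purity local-global}. First I would exploit the polarization hypothesis --- in particular the sign condition $\chi_v(-1) = \chi_w(-1)$ for all $v, w \mid \infty$, which fixes the sign of the polarization and is precisely what permits a descent to a unitary group --- together with solvable base change and automorphic induction to reduce to the conjugate self-dual cuspidal case and to realize $r_\iota(\pi)$, after a twist and a solvable base change, as a direct summand of the cohomology $H^*(X_{\overline{F'_w}}, \overline{\mathbb{Q}_l}(i))$ of a proper smooth variety $X$ over the completion $F'_w$ at a finite place $w$ of a suitable finite CM extension $F'/F$ (a Kottwitz-Harris-Taylor type unitary Shimura variety, or a closely related variety); see \cite{GH}, \cite{CM}, \cite{CS} and the references therein. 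This construction yields at once the perfect symmetric $G_F$-equivariant pairing of (1) and the Hodge-Tate weight formula of (2), and, by \cite[Lemma I.5.7]{LLC}, forces every eigenvalue of $r_\iota(\pi)(g)$, for $g \in W_{F_v}$, to be a Weil $q_v^w$-number for some integer $w$; since this last property is insensitive to solvable base change and twisting (and in any case also follows from the purity established in the next step, via Definition \ref{definition of purity}), the running hypothesis of Proposition \ref{purity local-global} is met at every finite place $v$ of $F$.

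The essential input --- and the main obstacle --- is then the weight-monodromy statement: for every finite place $v$ of $F$, $\mathrm{WD}(r_\iota(\pi)|_{G_{F_v}})$ is pure. This is the weight-monodromy conjecture for the Shimura varieties occurring in the first step, and it is exactly this that was established (at Iwahori level by nearby-cycle computations, and in general by perfectoid methods) in \cite{GLLC}, \cite{CS}, \cite{CM}, \cite{pWM}; it gives the first half of (3) directly.

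It remains to read off (3) and (4). Since the Weil-number hypothesis was verified in the first step, part 1 of Proposition \ref{purity local-global} shows that every $\mathrm{Aut}(\mathbb{C})$-conjugate of $\pi_v$ --- in particular $\pi_v$ itself --- is tempered, which completes (3), and that $\iota^{-1}\mathrm{rec}_{F_v}(\pi_v |\mathrm{det}|_v^{\frac{1-n}{2}})$ is pure. Combining the purity of $\mathrm{WD}(r_\iota(\pi)|_{G_{F_v}})$ from the middle step with part 2 of the same proposition gives $\iota\mathrm{WD}(r_\iota(\pi)|_{G_{F_v}})^{F-ss} \cong \mathrm{rec}_{F_v}(\pi_v |\mathrm{det}|_v^{\frac{1-n}{2}})$, which is (4) (and which refines the semisimplified local-global compatibility already recorded in Theorem \ref{Ila Varma}). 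Together with (1) and (2) from the first step, this establishes all four assertions; the only genuinely deep ingredient is the purity input of the middle step, the cohomological realization and the descent bookkeeping being by now standard.
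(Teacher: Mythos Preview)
Your proposal is essentially correct and aligns with the paper's approach: the paper's own proof is simply a citation to \cite[Theorem 2.1.1]{CW}, \cite[Theorem 1.2]{CM} and \cite[Theorem 1.1]{pWM}, and you have given an expanded sketch of what lies behind those citations, identifying the same key ingredients (Shimura variety realization, weight--monodromy from \cite{GLLC}, \cite{CS}, \cite{CM}, \cite{pWM}).

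There is, however, one technical oversight. You route the deduction of (3) and (4) through Proposition \ref{purity local-global}, but that proposition is stated only for non-$l$-adic places (it inherits the hypothesis $v \nmid l$ from Proposition \ref{automorphy lifting and local-global compatibility}, and its proof relies on Theorem \ref{Ila Varma}, which is likewise restricted to $v \nmid l$). So your final paragraph establishes (3) and (4) only at places $v \nmid l$. For $v \mid l$, both the purity of $\mathrm{WD}(r_\iota(\pi)|_{G_{F_v}})$ and the full local--global compatibility (4) must be taken directly from the cited literature --- this is precisely the content of \cite[Theorem 1.2]{CM} and \cite[Theorem 1.1]{pWM}, which handle the $l = p$ case. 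Once you note this, the argument is complete; indeed the route through Proposition \ref{purity local-global} is somewhat redundant, since the references you already invoke give (3) and (4) at all finite places outright.
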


\begin{proof} See \cite[Theorem 2.1.1]{CW}, \cite[Theorem 1.2]{CM} and \cite[Theorem 1.1]{pWM}.   \end{proof}

\section{Irreducible components of local lifting rings} 

In this section, we study irreducible components of local lifting rings. Proposition \ref{monodromy type irreducible component} and Proposition \ref{trace irreducible component} are main results in this section.

\subsection{Some commutative algebras}

In this subsection, we recall some elementary properties of lifting rings and commutative algebras. 

We fix a prime $l$ and a finite extension $E$ of $\mathbb{Q}_l$ in $\overline{\mathbb{Q}}_l$. Let $\mathcal{O}$ denote the ring of integers of $E$, $\mathbb{F}$ denote the residue field of $\mathcal{O}$ and $\varpi$ denote a prime of $\mathcal{O}$. Let $\mathrm{CNL}_{\mathcal{O}}$ denote the category of complete Noetherian local $\mathcal{O}$-algebras with residue field $\mathbb{F}$. 

We also fix a positive integer $n$ and a profinite group $\Gamma$ such that any open subgroup $\Gamma'$ of $\Gamma$ satisfies $\mathrm{Hom}_{\mathrm{cont}}(\Gamma', \mathbb{F}_l)$ is finite. This is equivalent to the pro-$l$-completion $\Gamma'(l)$ of $\Gamma'$ is topologically finitely generated for any open subgroup $\Gamma'$ of $\Gamma$ by \cite[Lemma 7.13]{Fer}.

Note that by class field theory, this property holds for $\Gamma = G_K$, where $K$ is a finite extension of $\mathbb{Q}_p$ for some prime $p$ or $\Gamma = \mathrm{Gal}(K_S/K)$, where $K$ is a finite extension of $\mathbb{Q}$, $S$ is a finite set of finite places of $K$ and $K_S$ is the maximal subextension of $\overline{K}/K$ which is unramified outside $S$. See the proof of \cite[Proposition 7.9]{Fer} for global cases. 

For a continuous representation $\overline{r}: \Gamma \rightarrow \mathrm{GL}_n(\mathbb{F})$ and an object $A$ of $\mathrm{CNL}_{\mathcal{O}}$, a lifting of $\overline{r}$ to $A$ means a continuous representation $r : \Gamma \rightarrow \mathrm{GL}_n(A)$ such that $r \mod \mathfrak{m}_A = \overline{r}$. By \cite[proof of Proposition 7.14]{Fer}, there exists an object $R_{\overline{r}, \mathcal{O}}$ of $\mathrm{CNL}_{\mathcal{O}}$ representing the functor $\mathrm{CNL}_{\mathcal{O}} \rightarrow \mathrm{Set},  \ A \mapsto \{ r : \Gamma \rightarrow \mathrm{GL}_n(A) \ \mid \ \mathrm{lifting \ of \ } \overline{r} \}$ and $\mathcal{O}[(r^{\mathrm{univ}}(g))_{i, j} \mid g \in \Gamma, i,j = 1, \cdots, n]$ is a dense subalgebra of $R_{\overline{r}}$, where $r^{\mathrm{univ}} : \Gamma \rightarrow \mathrm{GL}_n(R_{\overline{r}})$ is the universal lifting of $\overline{r}$. If $\mathcal{O}$ is clear, we simply write $R_{\overline{r}}$ for $R_{\overline{r}, \mathcal{O}}$.

\begin{lem} \label{homotopy}

Let $A$ be a Noetherian $\mathcal{O}$-algebra, $I$ be a proper ideal of $A$ containing $\varpi$ such that $A$ is $I$-adically complete and $r: \Gamma \rightarrow \mathrm{GL}_n(A)$ be a continuous representation such that $\mathrm{Im}(r \mod I) \subset \mathrm{GL}_n(\mathbb{F})$. 

Then there exists a unique $\mathcal{O}$-morphism $f: R_{\overline{r}} \rightarrow A$ such that $f(\mathfrak{m}_{R_{\overline{r}}}) \subset I$ and $f \circ r^{\mathrm{univ}} = r$.

\end{lem}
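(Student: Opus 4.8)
The plan is to dispose of uniqueness by a density argument and then construct $f$ by first building it on the dense subalgebra $T:=\mathcal{O}[(r^{\mathrm{univ}}(g))_{i,j}\mid g\in\Gamma,\ i,j]$ of $R_{\overline{r}}$ and extending it using the $I$-adic completeness of $A$. For uniqueness, suppose $f$ satisfies the two conditions. Then $f(\mathfrak{m}_{R_{\overline{r}}}^k)\subseteq I^k$ for all $k$, so $f$ is continuous for the $\mathfrak{m}_{R_{\overline{r}}}$-adic topology on the source and the $I$-adic topology on the target, and $A$ is $I$-adically separated since it is $I$-adically complete. The condition $f\circ r^{\mathrm{univ}}=r$ forces $f((r^{\mathrm{univ}}(g))_{i,j})=(r(g))_{i,j}$ for all $g,i,j$, hence determines $f$ on $T$; as $T$ is dense in $R_{\overline{r}}$ and $A$ is separated, $f$ is unique. (Note also that $\varpi\in I\neq A$ gives an embedding $\mathbb{F}=\mathcal{O}/\varpi\hookrightarrow A/I$, and the hypothesis $\mathrm{Im}(r\bmod I)\subseteq\mathrm{GL}_n(\mathbb{F})$ says precisely that $\overline{r}:=(r\bmod I)$ is a homomorphism $\Gamma\to\mathrm{GL}_n(\mathbb{F})$, so that $R_{\overline{r}}$ and $r^{\mathrm{univ}}$ make sense.)

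For existence I would first produce a ring homomorphism $\psi\colon T\to A$ with $\psi((r^{\mathrm{univ}}(g))_{i,j})=(r(g))_{i,j}$. Recall from the construction in \cite{Fer} that $T$ is the image of the $\mathcal{O}$-algebra $\mathcal{A}$ universal for homomorphisms $\Gamma\to\mathrm{GL}_n(-)$ (with tautological homomorphism $\tau$) under the map $\alpha\colon\mathcal{A}\twoheadrightarrow T$ induced by $r^{\mathrm{univ}}$, and that $R_{\overline{r}}$ is obtained by localizing $\mathcal{A}$ at the maximal ideal $\mathfrak{n}$ cut out by $\overline{r}$ and completing; in particular $s\in\ker\alpha$ implies $us=0$ in $\mathcal{A}$ for some $u\notin\mathfrak{n}$. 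Let $\beta\colon\mathcal{A}\to A$ be induced by $r$; to get $\psi$ I must show $\ker\alpha\subseteq\ker\beta$. If $\alpha(s)=0$ then $\beta(u)\beta(s)=0$. Now $\beta$ followed by $A\twoheadrightarrow A/I$ sends $\tau$ to $\overline{r}$, hence factors through $\mathcal{A}/\mathfrak{n}=\mathbb{F}\hookrightarrow A/I$, so $\beta(u)\bmod I$ is a nonzero element of $\mathbb{F}\subseteq A/I$; therefore $\beta(u)$ differs from a unit of $\mathcal{O}$ by an element of $I$, and thus $\beta(u)\in A^{\times}$ because $A$ is $I$-adically complete. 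Hence $\beta(s)=0$, and $\psi$ is well defined. Writing $\mathfrak{m}_{R_{\overline{r}}}=\ker\big(R_{\overline{r}}\to\mathbb{F},\ (r^{\mathrm{univ}}(g))_{i,j}\mapsto\overline{r}(g)_{i,j}\big)$, any $t=P((r^{\mathrm{univ}}(g))_{i,j})\in\mathfrak{m}_{R_{\overline{r}}}\cap T$ satisfies $P((\overline{r}(g))_{i,j})=0$, so $\psi(t)=P((r(g))_{i,j})$ reduces to $0$ modulo $I$; thus $\psi(\mathfrak{m}_{R_{\overline{r}}}\cap T)\subseteq I$.

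It remains to extend $\psi$ to $f\colon R_{\overline{r}}\to A$ with $f(\mathfrak{m}_{R_{\overline{r}}})\subseteq I$. Since $T$ is dense and $R_{\overline{r}}/\mathfrak{m}_{R_{\overline{r}}}^k$ is finite for each $k$, one has $R_{\overline{r}}=\varprojlim_k T/(\mathfrak{m}_{R_{\overline{r}}}^k\cap T)$, so it is enough to know that $\psi$ is uniformly continuous, i.e. that $\psi(\mathfrak{m}_{R_{\overline{r}}}^k\cap T)$ is contained in a power of $I$ growing with $k$. This is the crux and the step I expect to be the main obstacle: although $\psi(\mathfrak{m}_{R_{\overline{r}}}\cap T)\subseteq I$, the subspace filtration $\{\mathfrak{m}_{R_{\overline{r}}}^k\cap T\}_k$ need not agree with the $(\mathfrak{m}_{R_{\overline{r}}}\cap T)$-adic one, since $T$ need not be Noetherian. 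I would handle it level by level: for each $m$, $A/I^m$ is Noetherian with $I/I^m$ nilpotent, hence contained in every maximal ideal $\mathfrak{q}$ of $A/I^m$ (all of residue characteristic $l$), and the lifting $r\bmod I^m$ of $\overline{r}$ corresponds — applying the representability of $R_{\overline{r}}$ on $\mathrm{CNL}_{\mathcal{O}}$ together with the base change $R_{\overline{r}}\,\widehat{\otimes}_{\mathcal{O}}\,\mathcal{O}'\cong R_{\overline{r}\otimes_{\mathbb{F}}\mathbb{F}'}$ to absorb the residue-field extensions $\mathbb{F}\subseteq A/\mathfrak{q}$, after localizing and completing at each $\mathfrak{q}$ and using that $\mathrm{Hom}(R_{\overline{r}},-)$ turns $A/I^m\hookrightarrow\prod_{\mathfrak{q}}(A/I^m)^{\wedge}_{\mathfrak{q}}$ into the corresponding product — to a unique $\mathcal{O}$-homomorphism $f_m\colon R_{\overline{r}}\to A/I^m$ with $f_m\circ r^{\mathrm{univ}}=r\bmod I^m$ and $f_m(\mathfrak{m}_{R_{\overline{r}}})\subseteq I/I^m$. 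By the uniqueness already established, the $f_m$ are compatible, and $f:=\varprojlim_m f_m\colon R_{\overline{r}}\to\varprojlim_m A/I^m=A$ satisfies $f\circ r^{\mathrm{univ}}=r$ and $f(\mathfrak{m}_{R_{\overline{r}}})\subseteq I$, which finishes the proof.
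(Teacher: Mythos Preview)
Your uniqueness argument is fine. The gap is in existence, specifically in the level-by-level workaround you propose at the end. Nothing in the hypotheses forces the residue fields of $A$ at maximal ideals to be finite over $\mathbb{F}$: for instance $A=\mathbb{F}(t)[[X]]$ with $I=(X)$ is Noetherian, $I$-adically complete, has $\varpi\mapsto 0\in I$, and can certainly carry a continuous $r:\Gamma\to\mathrm{GL}_n(A)$ with $r\bmod I$ valued in $\mathrm{GL}_n(\mathbb{F})$; here the only maximal ideal has residue field $\mathbb{F}(t)$, so no finite extension $\mathcal{O}'/\mathcal{O}$ puts $(A/I^m)^\wedge_\mathfrak{q}$ into any $\mathrm{CNL}_{\mathcal{O}'}$. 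Even setting that aside, invoking the base change $R_{\overline{r}}\widehat{\otimes}_\mathcal{O}\mathcal{O}'\cong R_{\overline{r},\mathcal{O}'}$ is circular in this paper's development: that isomorphism is derived as a \emph{corollary} of the present lemma. You correctly diagnosed the obstruction to extending $\psi$ directly (the subspace filtration on $T$ need not be cofinal with the ideal-adic one), but the detour through $\prod_\mathfrak{q}(A/I^m)^\wedge_\mathfrak{q}$ does not close the gap.

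The paper instead stays inside $\mathrm{CNL}_\mathcal{O}$ from the start, and this is where the standing finiteness hypothesis on $\Gamma$ (which your argument never uses) finally enters. Since $\Gamma/\ker(r\bmod I)$ is finite and $\ker(r\bmod I)/\ker r$ embeds in the pro-$l$ group $I_n+I\,\mathrm{M}_n(A)$, the hypothesis gives that $\Gamma/\ker r$ is topologically finitely generated. Choosing generators $\gamma_1,\dots,\gamma_k$ and writing $r(\gamma_j)=a_j+b_j$ with $a_j\in\mathrm{GL}_n(\mathcal{O})$ and $b_j\in I\,\mathrm{M}_n(A)$, the closure $A_0$ of $\mathcal{O}[(b_j)_{s,t}:j,s,t]$ in $A$ is the image of a map from the power series ring $\mathcal{O}[[X_{j,s,t}]]$ and hence lies in $\mathrm{CNL}_\mathcal{O}$; since $r$ factors through $\mathrm{GL}_n(A_0)$, the universal property applies directly to give $f:R_{\overline{r}}\to A_0\subset A$.
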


\begin{proof}

We have an exact sequence $1 \rightarrow \mathrm{Ker}(r \mod I )/\mathrm{Ker}(r) \rightarrow \Gamma/\mathrm{Ker}(r) \rightarrow \Gamma/\mathrm{Ker}(r \mod I) \rightarrow 1$. Let $I_n$ denote the unit matrix. Since $\Gamma/\mathrm{Ker}(r \mod I ) \hookrightarrow \mathrm{GL}_n(\mathbb{F})$, $\Gamma/\mathrm{Ker}(r \mod I)$ is finite and $\mathrm{Ker}(r \mod I)$ is an open subgroup of $\Gamma$. Since $\mathrm{Im}(r) \cap (I_n + I \mathrm{M}_n(A))$ is a pro-$l$-group, $\mathrm{Ker}(r \mod I)/\mathrm{Ker}(r)$ is topologically finitely generated by the assumption. This implies that $\Gamma/\mathrm{Ker}(r)$ is topologically finitely generated. We write $\gamma_1, \cdots, \gamma_k$ for topological generators of $\Gamma/\mathrm{Ker}(r)$. By $\mathrm{Im}(r \mod I) \subset \mathrm{GL}_n(\mathbb{F})$, for any $1 \le l \le k$, there exists $a_l \in \mathrm{GL}_n(\mathcal{O})$ and $b_l \in I \mathrm{M}_n(A)$ such that $r(\gamma_l) = a_l + b_l$. Let $A_0$ be the closure of $\mathcal{O}[(b_l)_{i,j} \mid 1 \le l \le k, i,j = 1, \cdots, n]$ in $A$. The $\mathcal{O}$-morphism $\varphi : \mathcal{O}[[X_{l, i, j} \mid 1 \le l \le k, i,j = 1, \cdots n ]] \rightarrow A, X_{l,i,j} \mapsto (b_l)_{i,j}$ induces an isomorphism $\mathcal{O}[[X_{l, i, j}]]/\mathrm{Ker}\varphi \stackrel{\sim}{\rightarrow} A_0$ since $\mathcal{O}[[X_{l,i,j}]]$ is compact. This implies that $A_0$ is a complete Noetherian local $\mathcal{O}$-algebra with residue field $\mathbb{F}$ and $r$ is regarded as a lifting of $\overline{r}$ to $A_0$. Therefore, we obtain a local $\mathcal{O}$-morphism $f: R_{\overline{r}} \rightarrow A_0$ such that $f \circ r^{\mathrm{univ}} = r$. Since $\mathcal{O}[(r^{\mathrm{univ}}(g))_{i, j} \mid g \in G_K, i,j = 1, \cdots,n]$ is a dense subalgebra of $R_{\overline{r}}$, $f$ is unique.  \end{proof}

\begin{cor}\label{coefficient change}

Let $\overline{r} : \Gamma \rightarrow \mathrm{GL}_n(\mathbb{F})$ be a continuous representation, $A$ be a Noetherian $\mathcal{O}$-algebra and $I$ be a proper ideal of $A$ containing $\varpi$ such that $A$ is $I$-adically complete. Then the set $$\{ f : R_{\overline{r}, \mathcal{O}} \rightarrow A \ \mid \ \mathcal{O}\textrm{-}\mathrm{morphism} \ \mathrm{such \ that} \ f(\mathfrak{m}_{R_{\overline{r}}}) \subset I \}$$ is naturally isomorphic to the set $\{ r : \Gamma \rightarrow \mathrm{GL}_n(A) \ \mid \ \mathrm{continuous \ and} \ r \mod I = \overline{r} \}$.

In particular, we have a canonical $\mathcal{O}_{E'}$-isomorphism $R_{\overline{r}, \mathcal{O}} \otimes_{\mathcal{O}} \mathcal{O}_{E'} \cong R_{\overline{r}, \mathcal{O}_{E'}}$ for any finite extension $E'$ of $E$ contained in $\overline{\mathbb{Q}}_l$.

\end{cor}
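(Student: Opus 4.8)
The plan is to use Lemma~\ref{homotopy} for one direction of the claimed bijection and evaluation of the universal lifting $r^{\mathrm{univ}}$ for the other, and then to deduce the base-change statement by Yoneda. First I would check that the condition ``$r \bmod I = \overline{r}$'' is meaningful: since $\mathcal{O}$ is a discrete valuation ring, $\varpi\in I$ and $I\neq A$, the only ideals of $\mathcal{O}$ containing $\varpi$ are $(\varpi)$ and $\mathcal{O}$, and the latter would force $I=A$; hence $\mathcal{O}\cap I=(\varpi)$ and the structure map induces a canonical embedding $\mathbb{F}=\mathcal{O}/\varpi\hookrightarrow A/I$, through which $\overline{r}$ is viewed in $\mathrm{GL}_n(A/I)$. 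Then I define $\Phi$ on the set of continuous $r:\Gamma\to\mathrm{GL}_n(A)$ with $r\bmod I=\overline{r}$ by letting $\Phi(r)$ be the unique $\mathcal{O}$-morphism $f:R_{\overline{r}}\to A$ with $f(\mathfrak{m}_{R_{\overline{r}}})\subseteq I$ and $f\circ r^{\mathrm{univ}}=r$ provided by Lemma~\ref{homotopy}, and I define $\Psi$ on the set of $\mathcal{O}$-morphisms $f:R_{\overline{r}}\to A$ with $f(\mathfrak{m}_{R_{\overline{r}}})\subseteq I$ by $\Psi(f):=f\circ r^{\mathrm{univ}}$.

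Next I would verify that $\Psi(f)$ really lies in the target set. Continuity: from $f(\mathfrak{m}_{R_{\overline{r}}})\subseteq I$ one gets $f(\mathfrak{m}_{R_{\overline{r}}}^{k})\subseteq I^{k}$ for all $k$, so $f$ is continuous for the $\mathfrak{m}_{R_{\overline{r}}}$-adic topology on the source and the $I$-adic topology on the ($I$-adically complete) target, and precomposing the induced continuous map $\mathrm{GL}_n(R_{\overline{r}})\to\mathrm{GL}_n(A)$ with the continuous $r^{\mathrm{univ}}$ gives a continuous $\Psi(f)$. Reduction: modulo $I$ the map $f$ kills $\mathfrak{m}_{R_{\overline{r}}}$, so it factors as $R_{\overline{r}}\twoheadrightarrow R_{\overline{r}}/\mathfrak{m}_{R_{\overline{r}}}=\mathbb{F}\hookrightarrow A/I$ with the last arrow the canonical embedding above; since $r^{\mathrm{univ}}\bmod\mathfrak{m}_{R_{\overline{r}}}=\overline{r}$, this yields $\Psi(f)\bmod I=\overline{r}$. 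That $\Phi$ and $\Psi$ are mutually inverse is then immediate: $\Psi(\Phi(r))=\Phi(r)\circ r^{\mathrm{univ}}=r$ by construction, and $\Phi(\Psi(f))=f$ because $f$ is itself an $\mathcal{O}$-morphism with $f(\mathfrak{m}_{R_{\overline{r}}})\subseteq I$ and $f\circ r^{\mathrm{univ}}=\Psi(f)$, so it is the morphism produced (uniquely) by Lemma~\ref{homotopy} for $\Psi(f)$. Naturality is clear: an $\mathcal{O}$-algebra map $\phi:A\to A'$ with $\phi(I)\subseteq I'$ (and $A'$ as in the statement) carries $\Phi,\Psi$ for $(A,I)$ to those for $(A',I')$ by post-composition, compatibly.

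For the final assertion I would set $S:=R_{\overline{r},\mathcal{O}}\otimes_{\mathcal{O}}\mathcal{O}_{E'}$. Since $\mathcal{O}_{E'}$ is finite over $\mathcal{O}$, $S$ is a finite $R_{\overline{r},\mathcal{O}}$-module, hence Noetherian and $\mathfrak{m}_{R_{\overline{r},\mathcal{O}}}$-adically complete, so $\mathfrak{m}_{R_{\overline{r},\mathcal{O}}}S\subseteq\mathrm{Jac}(S)$; as $S/\mathfrak{m}_{R_{\overline{r},\mathcal{O}}}S\cong\mathbb{F}\otimes_{\mathcal{O}}\mathcal{O}_{E'}\cong\mathcal{O}_{E'}/\varpi\mathcal{O}_{E'}$ is a local Artinian ring with residue field $\mathbb{F}_{E'}$, the ring $S$ is local with residue field $\mathbb{F}_{E'}$, its maximal ideal $\mathfrak{M}$ contains $\varpi$, and $\mathfrak{M}^{N}\subseteq\mathfrak{m}_{R_{\overline{r},\mathcal{O}}}S\subseteq\mathfrak{M}$ for some $N$, so $S$ is $\mathfrak{M}$-adically complete; thus $S\in\mathrm{CNL}_{\mathcal{O}_{E'}}$, and there is a natural $\mathcal{O}_{E'}$-morphism $R_{\overline{r},\mathcal{O}}\to S$. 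Now for $B\in\mathrm{CNL}_{\mathcal{O}_{E'}}$ the tensor--hom adjunction identifies local $\mathcal{O}_{E'}$-algebra maps $S\to B$ with $\mathcal{O}$-algebra maps $f:R_{\overline{r},\mathcal{O}}\to B$ satisfying $f(\mathfrak{m}_{R_{\overline{r},\mathcal{O}}})\subseteq\mathfrak{m}_{B}$ (the two locality conditions agree using $\mathfrak{M}^{N}\subseteq\mathfrak{m}_{R_{\overline{r},\mathcal{O}}}S\subseteq\mathfrak{M}$), and by the bijection just established, with $A=B$ and $I=\mathfrak{m}_{B}$, such $f$ correspond to continuous $r:\Gamma\to\mathrm{GL}_n(B)$ with $r\bmod\mathfrak{m}_{B}=\overline{r}$, i.e.\ to liftings of $\overline{r}$ (viewed in $\mathrm{GL}_n(\mathbb{F}_{E'})$) to $B$; these form exactly $\mathrm{Hom}_{\mathrm{CNL}_{\mathcal{O}_{E'}}}(R_{\overline{r},\mathcal{O}_{E'}},B)$ by definition of $R_{\overline{r},\mathcal{O}_{E'}}$. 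All identifications being natural in $B$, Yoneda gives the desired canonical isomorphism $R_{\overline{r},\mathcal{O}}\otimes_{\mathcal{O}}\mathcal{O}_{E'}\cong R_{\overline{r},\mathcal{O}_{E'}}$. The deformation-theoretic content is entirely in Lemma~\ref{homotopy}; the remaining work is bookkeeping, and I expect the fiddliest point to be the last commutative-algebra step --- confirming $S\in\mathrm{CNL}_{\mathcal{O}_{E'}}$ and matching local $\mathcal{O}_{E'}$-morphisms out of $S$ with local $\mathcal{O}$-morphisms out of $R_{\overline{r},\mathcal{O}}$ while keeping track of the possible residue extension $\mathbb{F}_{E'}/\mathbb{F}$ and ramification in $\mathcal{O}_{E'}/\mathcal{O}$ --- though none of it is deep.
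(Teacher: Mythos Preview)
Your proposal is correct and follows essentially the same approach as the paper: the paper's proof simply says the first part follows from Lemma~\ref{homotopy} and the second from tensor--hom adjunction plus the first part, exactly the structure you spell out in detail. (One harmless slip: ``natural $\mathcal{O}_{E'}$-morphism $R_{\overline{r},\mathcal{O}}\to S$'' should read ``$\mathcal{O}$-morphism'', since $R_{\overline{r},\mathcal{O}}$ is not an $\mathcal{O}_{E'}$-algebra, but this sentence is unused in your argument anyway.)
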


\begin{proof}

The first result follows from Lemma \ref{homotopy}. For any object $A$ of $\mathrm{CNL}_{\mathcal{O}_{E'}}$, the set $\{ f : R_{\overline{r}, \mathcal{O}} \rightarrow A \ \mid \ \mathrm{local} \ \mathcal{O}\textrm{-}\mathrm{morphism} \}$ is naturally identified with $\mathrm{Hom}_{\mathrm{CNL}_{\mathcal{O}_{E'}}}(R_{\overline{r}, \mathcal{O}} \otimes_{\mathcal{O}} \mathcal{O}_{E'}, A)$. Thus, we obtain the second result by the first result. \end{proof}

\begin{lem} \label{density of closed point}

    Let $A$ be a Noetherian $\mathcal{O}$-algebra and $I$ be a proper ideal containing $\varpi$ such that $A$ is $I$-adically complete. We suppose that for all closed points $x$ of $\mathrm{Spec} \, A/\varpi$, the residue field $k(x)$ is a finite extension of $\mathbb{F}$. 
    
    Then we have the following properties.
    
    1 \ For any maximal ideal $\mathfrak{p}$ of $A[\frac{1}{l}]$, $A[\frac{1}{l}]/\mathfrak{p}$ is a finite extension of $E$ and $A/(\mathfrak{p} \cap A)$ is a finite free $\mathcal{O}$-module and hence an order of $A[\frac{1}{l}]/\mathfrak{p}$. (We simply write $\mathfrak{p} \cap A$ for the inverse image of $\mathfrak{p}$ by $A \rightarrow A[\frac{1}{l}]$. In the following, we use such notations.)
    
    2 \ Assume $A$ is local or equal to $\mathcal{O}\langle T_1, \cdots, T_k \rangle/I$ for some proper ideal $I$. Then for any proper ideal $J$ of $A[\frac{1}{l}]$, the radical of $J$ is equal to the intersection of all maximal ideals of $A[\frac{1}{l}]$ containing $J$. In other words, for any closed subscheme $X$ of $\mathrm{Spec} \, A[\frac{1}{l}]$, $X$ is equal to the closure of the set of all closed points in $X$.
    
    \end{lem}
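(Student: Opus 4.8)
The plan is to reduce Part 1 to the finiteness over $\mathcal{O}$ of a single quotient of $A$, and to prove that by a dimension count combined with a topological Nakayama argument. Fix a maximal ideal $\mathfrak{p}$ of $A[\frac{1}{l}]$, set $\mathfrak{q} := \mathfrak{p} \cap A$ and $B := A/\mathfrak{q}$. Since $\varpi$ is a unit in $A[\frac{1}{l}]$ we have $\varpi \notin \mathfrak{p}$, hence $\varpi \notin \mathfrak{q}$, so $B$ is a Noetherian domain in which $\varpi$ is a nonzero nonunit and $B[\frac{1}{l}] = A[\frac{1}{l}]/\mathfrak{p} = B[\frac{1}{\varpi}]$ is a field. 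It suffices to prove $B$ is finite over $\mathcal{O}$: then $B$ is a torsion-free finitely generated module over the principal ideal domain $\mathcal{O}$, hence free, and $B \otimes_{\mathcal{O}} E = A[\frac{1}{l}]/\mathfrak{p}$ is a finite extension of $E$ containing the order $B$.

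To prove $B$ is finite over $\mathcal{O}$, first note that $B$, being a quotient of the $I$-adically complete Noetherian ring $A$, is $IB$-adically complete, so $\varpi \in IB \subseteq \mathrm{Jac}(B)$ (for a ring complete with respect to an ideal, that ideal lies in the Jacobson radical). Next I would show $\dim B = 1$: since $B[\frac{1}{\varpi}]$ is a field, $\mathrm{Spec}\,B \setminus V(\varpi)$ is a single point, so every nonzero prime of $B$ contains $\varpi$, and in particular every height-one prime of $B$ is a minimal prime of $\varpi B$, of which there are only finitely many; but a Noetherian domain of dimension $\ge 2$ has infinitely many height-one primes (apply prime avoidance to the finitely many of them and a maximal ideal of height $\ge 2$, then use Krull's principal ideal theorem). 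Hence $B/\varpi B$ is a zero-dimensional Noetherian, i.e.\ Artinian, ring; its maximal ideals are closed points of $\mathrm{Spec}\,A/\varpi$, so by hypothesis its residue fields are finite over $\mathbb{F}$ and $\dim_{\mathbb{F}}(B/\varpi B) < \infty$. Moreover $V(\varpi B)$ consists of maximal ideals of $B$, all containing $IB$, so $V(\varpi B) = V(IB)$ and $(IB)^N \subseteq \varpi B$ for some $N$; hence the $\varpi$-adic and $IB$-adic topologies on $B$ coincide and $B$ is $\varpi$-adically complete. A lift of an $\mathbb{F}$-basis of $B/\varpi B$ then generates $B$ over $\mathcal{O}$ by the topological Nakayama lemma (the partial sums of the expansion $b = \sum_i \varpi^i m_i$ are $\varpi$-adically Cauchy in the finitely generated, hence complete, submodule they lie in), which completes Part 1.

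For Part 2 the plan is to deduce that $A[\frac{1}{l}]$ is a Jacobson ring, equivalently that $B[\frac{1}{l}] = A[\frac{1}{l}]/\mathfrak{P}$ has zero Jacobson radical for every prime $\mathfrak{P}$, where $B := A/(\mathfrak{P} \cap A)$ again satisfies the hypotheses of the lemma. By Part 1 applied to $B$, the maximal ideals of $B[\frac{1}{l}]$ are precisely the $\mathfrak{P}' B[\frac{1}{l}]$ with $B/\mathfrak{P}'$ finite over $\mathcal{O}$, so, localization being exact, $\mathrm{Jac}(B[\frac{1}{l}]) = (\bigcap_{\mathfrak{P}'} \mathfrak{P}')[\frac{1}{l}]$ and it suffices to see that these $\mathfrak{P}'$ are dense in $\mathrm{Spec}\,B$, i.e.\ $\bigcap_{\mathfrak{P}'} \mathfrak{P}' = 0$. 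In the local case $B$ is a complete Noetherian local domain with residue field $\mathbb{F}$, hence module-finite over a power series ring $\mathcal{O}[[x_1, \dots, x_d]]$ by Cohen's structure theorem; given $0 \neq f \in B$ with minimal polynomial $f^m + a_{m-1} f^{m-1} + \dots + a_0$ over that ring (so $a_0 \neq 0$), it is enough to choose $c_1, \dots, c_d$ in the maximal ideal of the ring of integers of some finite extension of $E$ with $a_0(c_1, \dots, c_d) \neq 0$ --- possible because a nonzero power series over $\mathcal{O}$ does not vanish identically on the open polydisc over $\overline{\mathbb{Q}_l}$ --- since a prime of $B$ lying over $(x_1 - c_1, \dots, x_d - c_d)$ then has quotient finite over $\mathcal{O}$ and does not contain $f$. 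In the Tate-algebra case $A[\frac{1}{l}]$ is a quotient of the affinoid algebra $E\langle T_1, \dots, T_k \rangle$, which is Jacobson by Noether normalization for affinoid algebras, and quotients of Jacobson rings are Jacobson.

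The hard part will be showing $B/\varpi B$ is Artinian in Part 1: $\mathrm{Spec}\,A/\varpi$ is in general very far from Artinian (for instance a polynomial ring over $\mathbb{F}$ in the Tate-algebra case), so one must genuinely use that $\mathfrak{p}$ is maximal --- collapsing the generic fibre of $\mathrm{Spec}\,B$ to a point forces its special fibre to be zero-dimensional --- and the ``finitely many height-one primes'' dimension argument is the clean way to extract this. A secondary subtlety is the comparison of the $\varpi$-adic and $I$-adic topologies on $B$, which relies on $\varpi \in IB \subseteq \mathrm{Jac}(B)$ together with the Artinian-ness of $B/\varpi B$.
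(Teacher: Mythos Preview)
Your argument is correct; the paper itself does not prove this lemma but simply cites \cite[Lemma 2.6]{IA} for Part 1 and \cite[Lemma 2.6]{IA} together with \cite[Appendix]{CW} for Part 2, and your dimension-count-plus-topological-Nakayama for Part 1 and Cohen-structure/affinoid-Noether-normalization for Part 2 are exactly the standard arguments those references contain. One small slip: in the local case of Part 2 the residue field of $B$ is only assumed finite over $\mathbb{F}$, so Cohen's theorem gives $B$ finite over $\mathcal{O}'[[x_1,\dots,x_d]]$ for a finite unramified extension $\mathcal{O}'/\mathcal{O}$ rather than over $\mathcal{O}[[x_1,\dots,x_d]]$, but this does not affect your argument.
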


    \begin{proof} 1 is proved by the same argument as \cite[Lemma 2.6]{IA} and 2 follows from \cite[Lemma 2.6]{IA} and \cite[Appendix]{CW}. \end{proof}

\begin{dfn}

For a field $k$ and $k$-scheme $X$, we say that $X$ is geometrically irreducible if $X \otimes_{k} k'$ is irreducible for any finite extension $k'/k$.

\end{dfn}

\begin{lem} \label{geometrically irreducibility}

For any object $A$ of $\mathrm{CNL}_{\mathcal{O}}$, there exists a finite extension $E'$ of $E$ such that all irreducible components of $\mathrm{Spec} \, A \otimes_{\mathcal{O}} E'$ (resp. $\mathrm{Spec} \, A \otimes_{\mathcal{O}} \mathbb{F}_{E'}$) are geometrically irreducible as $E'$-schemes (resp. $\mathbb{F}_{E'}$-schemes).

\end{lem}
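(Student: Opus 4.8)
The plan is to reduce to a single complete local domain and then control its field of constants via the Cohen structure theorem. For any finite extension $E'/E$ the base-change maps $\mathrm{Spec}(A\otimes_{\mathcal{O}}E')\to\mathrm{Spec}(A[1/l])$ and $\mathrm{Spec}(A\otimes_{\mathcal{O}}\mathbb{F}_{E'})\to\mathrm{Spec}(A/\varpi)$ — note $A\otimes_{\mathcal{O}}\mathbb{F}_{E'}=(A/\varpi)\otimes_{\mathbb{F}}\mathbb{F}_{E'}$ — are finite and flat, so by going-down every irreducible component of the source dominates a minimal prime of the target, and the components lying over a fixed minimal prime $\mathfrak{p}$ are exactly the irreducible components of $\mathrm{Spec}((A[1/l]/\mathfrak{p})\otimes_{E}E')$, resp.\ of $\mathrm{Spec}(((A/\varpi)/\mathfrak{p})\otimes_{\mathbb{F}}\mathbb{F}_{E'})$ (two components over distinct minimal primes cannot contain one another, again by flatness). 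Since $A[1/l]$ and $A/\varpi$ are Noetherian there are finitely many such $\mathfrak{p}$, so it suffices to show: for a Noetherian domain $D$ of the form $A[1/l]/\mathfrak{p}$ over $k=E$, or $(A/\varpi)/\mathfrak{p}$ over $k=\mathbb{F}$, there is a finite extension $E'/E$ such that every irreducible component of $\mathrm{Spec}(D\otimes_k k')$ is geometrically irreducible over $k'$, where $k'=E'$ resp.\ $k'=\mathbb{F}_{E'}$.

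The main point is that the algebraic closure $C$ of $k$ in $K:=\mathrm{Frac}(D)$ is finite over $k$. By the Cohen structure theorem, in the residual case $D$ is a complete Noetherian local domain of equal characteristic $l$, hence module-finite over a power-series subring $\cong\mathbb{F}[[t_1,\dots,t_d]]$; in the generic case $D=(A/(\mathfrak{p}\cap A))[1/l]$ with $A/(\mathfrak{p}\cap A)$ a complete Noetherian local domain in which $l$ is a nonzerodivisor, hence module-finite over a subring $\cong V[[t_1,\dots,t_m]]$ with $V$ a complete discrete valuation ring finite over $\mathbb{Z}_l$ with residue field $\mathbb{F}$, so $D$ is module-finite over $V[[t_1,\dots,t_m]][1/l]$. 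In either case $K$ is finite over $L:=\mathrm{Frac}(R_0)$ with $R_0=\mathbb{F}[[\underline t]]$ resp.\ $R_0=V[[\underline t]]$, and a short computation with the $(\underline t)$-adic order on the regular (hence normal) local ring $R_0$ shows that the constant field $k_0$ ($=\mathbb{F}$ resp.\ $=\mathrm{Frac}(V)$) is algebraically closed in $L$: an element of $L$ algebraic over $k_0$ lies, up to an element of $k_0^{\times}$, in the normal ring $R_0$, and then a minimal polynomial relation over $k_0$ forces its $(\underline t)$-adic order to be $0$, i.e.\ forces it into $R_0/(\underline t)=k_0$. Since $K/L$ and $k_0/k$ are finite, $C\cap L=k_0$ and $C\cdot L/L$ is finite, whence $[C:k]<\infty$.

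Finally I would choose $E'$. Collecting the finitely many fields $C$ arising from the finitely many components on the generic and residual sides, take a finite extension $E'/E$, Galois over $E$, whose underlying field contains every $C$ of the first kind and whose residue field $\mathbb{F}_{E'}$ contains every $C$ of the second kind; this is possible since $E$ admits unramified extensions with any prescribed finite residue field, and passing to a Galois closure over $E$ only enlarges $E'$ and $\mathbb{F}_{E'}$. Because $k$ is perfect (finite, in the residual case) or of characteristic $0$, $C/k$ is separable and $C$ is algebraically closed in $K$, so for $C\subseteq k'$ with $k'/k$ Galois one has $K\otimes_k k'=K\otimes_C(C\otimes_k k')=\prod_{\sigma\colon C\hookrightarrow k'}(K\otimes_{C,\sigma}k')$, each factor being a field (as $k'/C$ is finite separable and $C$ is separably algebraically closed in $K$), and each such factor is even geometrically irreducible over $k'$ (base change once more to $\overline{k'}$ and use that $K\otimes_C\overline{C}$ is a field). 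Since the minimal primes of $D\otimes_k k'$, i.e.\ the irreducible components of $\mathrm{Spec}(D\otimes_k k')$, correspond bijectively to these field factors with matching function fields, each component is geometrically irreducible over $k'$, which gives the lemma. The step I expect to be the main obstacle is the finiteness of $[C:k]$: it is exactly what guarantees that a single finite extension $E'$ suffices rather than an infinite tower, and it is where the Cohen structure theorem and the normality of the power-series rings are essential. (Alternatively, one may appeal to the commutative-algebra appendix of \cite{CW}.)
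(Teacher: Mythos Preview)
Your proof is correct and follows the same overall strategy as the paper's: reduce to the finitely many minimal primes, show that the algebraic closure $C$ of the base field in each fraction field is finite, then pass to a Galois $E'$ absorbing all the $C$'s and decompose the tensor product. The difference lies in how you establish $[C:k]<\infty$. The paper avoids Cohen structure entirely: it takes the normalization of each $A[1/l]/Q_i$ (resp.\ $(A/\varpi)/Q_i$), which is finite by excellence, observes that every maximal ideal of the normalization has residue field finite over $E$ (resp.\ $\mathbb{F}$) by Lemma~\ref{density of closed point}, and notes that $C$, being a field inside the normalization, injects into any such residue field. This is shorter and sidesteps the power-series computation.

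One slip in your argument deserves a fix: in the generic case $R_0=V[[\underline t]]$ you write $R_0/(\underline t)=k_0$, but in fact $R_0/(\underline t)=V\neq\mathrm{Frac}(V)=k_0$; and more generally, ``$(\underline t)$-adic order $0$'' only says the element has nonzero image in $R_0/(\underline t)$, not that it lies in that subring. The repair is immediate: once $\beta\in R_0$ is integral over $V$, subtract its constant term $c_0\in V$; then $\beta-c_0\in(\underline t)$ is still integral over $V$, and your order argument (applied to a monic relation over $V$ with nonzero constant term) forces $\beta-c_0=0$, so $\beta\in V$ and $\alpha=\beta/d\in k_0$.
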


\begin{proof}

Let $Q_1, \cdots, Q_k$ be all minimal primes of $A[\frac{1}{l}]$. We fix $1 \le i \le k$. Let $\widetilde{A[\frac{1}{l}]/Q_i}$ be the normalization of $A[\frac{1}{l}]/Q_i$. Since $A[\frac{1}{l}]/Q_i$ is excellent, $\widetilde{A[\frac{1}{l}]/Q_i}$ is finite over $A[\frac{1}{l}]/Q_i$ and consequently, for any maximal ideal $\mathfrak{p}$ of $\widetilde{A[\frac{1}{l}]/Q_i}$, $(\widetilde{A[\frac{1}{l}]/Q_i})/\mathfrak{p}$ is a finite extension of $E$ by 1 of Lemma \ref{density of closed point}. This implies that the algebraic closure $E_i$ of $E$ in $\mathrm{Frac}(A[\frac{1}{l}]/Q_i)$ is a finite extension of $E$. We fix an embedding $E_i \hookrightarrow \overline{\mathbb{Q}}_l$ over $E$. Let $E'$ be the Galois closure of $E_i$'s over $E$ in $\overline{\mathbb{Q}}_l$. Let $\eta_1, \cdots, \eta_k$ be the generic points of $\mathrm{Spec} \, A[\frac{1}{l}]$ corresponding to $Q_1, \cdots, Q_k$ respectively.

Since $\mathrm{Spec} \, A[\frac{1}{l}] \otimes_E E' \rightarrow \mathrm{Spec} \, A[\frac{1}{l}]$ is flat, any generic point of $\mathrm{Spec} \, A[\frac{1}{l}] \otimes_E E'$ is contained $\eta_i \otimes_E E'$ for some $i$. Since $\eta_i \otimes_{E} E' = \prod_{\tau \in \mathrm{Hom}_E(E_i, E')}\eta_i \otimes_{E_i, \tau} E'$ and each $\eta_i \otimes_{E_i, \tau} E'$ is geometrically irreducible as an $E'$-scheme, any irreducible component of $\mathrm{Spec} \, A[\frac{1}{l}] \otimes_{E} E'$ is geometrically irreducible as an $E'$-scheme.

By a similar argument for $A/\varpi$, after extending $E'$, we obtain the result. \end{proof}

\begin{lem}\label{completed tensor product irreducibility}

1 \ For any objects $A, B$ of $\mathrm{CNL}_{\mathcal{O}}$, if $A$ and $B$ are $\mathcal{O}$-flat and $\mathrm{Spec} \, A[\frac{1}{l}]$ and $\mathrm{Spec} \, B[\frac{1}{l}]$ are geometrically irreducible, then $A \widehat{\otimes}_{\mathcal{O}} B$ is $\mathcal{O}$-flat and $A \widehat{\otimes}_{\mathcal{O}} B[\frac{1}{l}]$ is geometrically irreducible of dimension $\mathrm{dim} \, A[\frac{1}{l}] + \mathrm{dim} \, B[\frac{1}{l}]$. ($A \widehat{\otimes}_{\mathcal{O}} B$ denotes the completion $A \otimes_{\mathcal{O}} B$ by the maximal ideal $(\mathfrak{m}_A, \mathfrak{m}_B)$.)

2 \ Let $A \ (resp. B)$ be an object of $\mathrm{CNL}_{\mathcal{O}}$, $\mathfrak{p}_1, \cdots, \mathfrak{p}_{s}$ (resp. $\mathfrak{q}_1, \cdots, \mathfrak{q}_{u}$) be the different minimal primes of $A$ (resp. $B$) such that $A/\mathfrak{p}_i$ (resp. $B/\mathfrak{q}_j$) is flat over $\mathcal{O}$ for any $i$ (resp. $j$). We assume that $\mathrm{Spec} \, A/\mathfrak{p}_i[\frac{1}{l}]$ (resp. $\mathrm{Spec} \, B/\mathfrak{q}_j[\frac{1}{l}]$) is geometrically irreducible for any $i$ (resp. $j$). Then the different minimal primes of $A \widehat{\otimes}_{\mathcal{O}} B$ whose residue fields have characteristic zero are given by $(\mathfrak{p}_i, \mathfrak{q}_j)$ for $i = 1, \cdots, s$ and $j = 1, \cdots, u$.

3 \ For any objects $A, B$ of $\mathrm{CNL}_{\mathbb{F}}$, if $A$ and $B$ are geometrically irreducible, then $A \widehat{\otimes}_{\mathbb{F}} B$ is geometrically irreducible. (Here, $\mathrm{CNL}_{\mathbb{F}}$ denotes the full subcategory of $\mathrm{CNL}_{\mathcal{O}}$ consisting of all complete Noetherian local $\mathbb{F}$-algebras with residue field $\mathbb{F}$.)

4 \ Let $A \ (resp. B)$ be an object of $\mathrm{CNL}_{\mathbb{F}}$, $\mathfrak{p}_1, \cdots, \mathfrak{p}_{s}$ (resp. $\mathfrak{q}_1, \cdots, \mathfrak{q}_{u}$) be the different minimal primes of $A$ (resp. $B$). We assume that $A/\mathfrak{p}_i$ (resp. $B/\mathfrak{q}_j$) is geometrically irreducible for any $i$ (resp. $j$). Then the different minimal prime of $A \widehat{\otimes}_{\mathbb{F}} B$ are given by $(\mathfrak{p}_i, \mathfrak{q}_j)$ for $i = 1, \cdots, s$ and $j = 1, \cdots, u$.

\end{lem}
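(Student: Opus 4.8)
The plan is to treat $(1)$ and $(3)$ as the two substantive statements — flatness and geometric irreducibility of the completed tensor product, over $\mathcal{O}$ and over $\mathbb{F}$ respectively — and then to deduce $(2)$ from $(1)$ and $(4)$ from $(3)$ by applying them to the quotients $A/\mathfrak{p}_i$ and $B/\mathfrak{q}_j$. Statement $(3)$ will be the ``$\varpi=0$'' shadow of $(1)$, so I would write $(1)$ out and indicate the simplifications for $(3)$.

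For $(1)$, fix presentations $A \cong \mathcal{O}[[X_1,\dots,X_a]]/I$ and $B \cong \mathcal{O}[[Y_1,\dots,Y_b]]/J$, so that $A\widehat{\otimes}_{\mathcal{O}}B \cong \mathcal{O}[[X,Y]]/\bigl(I\mathcal{O}[[X,Y]]+J\mathcal{O}[[X,Y]]\bigr) \cong A[[Y]]/JA[[Y]]$. For the $\mathcal{O}$-flatness I would first show that $A[[Y]]$ is flat over $\mathcal{O}[[Y]]$: the module $A[[Y]]$ is $(Y_1,\dots,Y_b)$-adically complete, hence ideally separated over the Noetherian local ring $\mathcal{O}[[Y]]$, so by the local flatness criterion it suffices that $A[[Y]]/(Y)^m \cong A\otimes_{\mathcal{O}}\bigl(\mathcal{O}[Y]/(Y)^m\bigr)$ be flat over $\mathcal{O}[Y]/(Y)^m$ for every $m$, which holds because $A$ is $\mathcal{O}$-flat and $\mathcal{O}[Y]/(Y)^m$ is finite free over $\mathcal{O}$. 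Then $A\widehat{\otimes}_{\mathcal{O}}B = A[[Y]]\otimes_{\mathcal{O}[[Y]]}B$ is flat over $B$, hence $\mathcal{O}$-flat since $B$ is. Inverting $l$, the same computation gives that $(A\widehat{\otimes}_{\mathcal{O}}B)[\tfrac{1}{l}]$ is flat over $B[\tfrac{1}{l}]$, which is irreducible (being geometrically irreducible) with generic point $\eta$; since flat maps satisfy going-down, every prime of $(A\widehat{\otimes}_{\mathcal{O}}B)[\tfrac{1}{l}]$ has a generization in the fibre over $\eta$, so that fibre is scheme-theoretically dense, and if it is irreducible then the whole space is. Thus geometric irreducibility of $(A\widehat{\otimes}_{\mathcal{O}}B)[\tfrac{1}{l}]$ over $E$ reduces to geometric irreducibility of the generic fibre; the latter is, up to localization, a completed tensor product $A\widehat{\otimes}_{\mathcal{O}}C$ with $C=\mathcal{O}[[Y]]/(\mathfrak q\cap\mathcal{O}[[Y]])$ an $\mathcal{O}$-flat domain whose generic fibre is $B[\tfrac1l]_{\mathrm{red}}$, and this I would match — using Lemma \ref{density of closed point} and Lemma \ref{geometrically irreducibility} to compare the formal completion $\mathcal{O}[[X,Y]][\tfrac1l]$ with the ordinary tensor product $A[\tfrac1l]\otimes_E B[\tfrac1l]$, and Corollary \ref{coefficient change} to commute finite base change with $\widehat{\otimes}$ — with the classical fact that a tensor product of two geometrically irreducible algebras over a field is geometrically irreducible. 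The dimension formula then follows from the flatness of $(A\widehat{\otimes}_{\mathcal{O}}B)[\tfrac1l]$ over $B[\tfrac1l]$ and equidimensionality of the fibres.

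Part $(3)$ is the same argument with $\mathcal{O}$ replaced by $\mathbb{F}$ and all flatness steps deleted: writing $A\cong\mathbb{F}[[X]]/I$, $B\cong\mathbb{F}[[Y]]/J$, one has $A\widehat{\otimes}_{\mathbb{F}}B\cong\mathbb{F}[[X,Y]]/(I,J)$ and, for every finite $\mathbb{F}'/\mathbb{F}$, $(A\widehat{\otimes}_{\mathbb{F}}B)\otimes_{\mathbb{F}}\mathbb{F}'\cong(A\otimes_{\mathbb{F}}\mathbb{F}')\widehat{\otimes}_{\mathbb{F}'}(B\otimes_{\mathbb{F}}\mathbb{F}')$, so geometric irreducibility again follows from the classical product statement together with the comparison of the formal completion with the ordinary tensor product via Lemma \ref{density of closed point}.

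For $(2)$, observe that $(A\widehat{\otimes}_{\mathcal{O}}B)\big/\bigl(\mathfrak{p}_i(A\widehat{\otimes}_{\mathcal{O}}B)+\mathfrak{q}_j(A\widehat{\otimes}_{\mathcal{O}}B)\bigr)\cong (A/\mathfrak{p}_i)\widehat{\otimes}_{\mathcal{O}}(B/\mathfrak{q}_j)$, which by $(1)$ is $\mathcal{O}$-flat with geometrically irreducible (in particular irreducible) generic fibre; being $\mathcal{O}$-flat it has no minimal prime containing $\varpi$, hence a unique minimal prime, so $\mathfrak{p}_i+\mathfrak{q}_j$ lies under a unique minimal prime $P_{ij}$ of $A\widehat{\otimes}_{\mathcal{O}}B$, whose residue field has characteristic zero. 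That the $P_{ij}$ are pairwise distinct and are all the characteristic-zero minimal primes is then read off from the generic-fibre picture above: these primes are exactly the minimal primes of $(A\widehat{\otimes}_{\mathcal{O}}B)[\tfrac1l]$, which the comparison map to $A[\tfrac1l]\otimes_E B[\tfrac1l]$ identifies with the minimal primes of the latter, and those are the $\mathfrak{p}_i+\mathfrak{q}_j$ by the classical product statement applied to the geometrically irreducible components $\mathrm{Spec}\,A/\mathfrak{p}_i[\tfrac1l]$ and $\mathrm{Spec}\,B/\mathfrak{q}_j[\tfrac1l]$. Part $(4)$ is this argument over $\mathbb{F}$ using $(3)$. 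The step I expect to be the main obstacle is exactly this comparison between the completed tensor product $\mathcal{O}[[X,Y]]/(I,J)$ (or $\mathbb{F}[[X,Y]]/(I,J)$) and the non-Noetherian ordinary tensor product $A\otimes_{\mathcal{O}}B$: once one shows that this passage changes neither the set of minimal primes nor geometric irreducibility, every irreducibility assertion in the lemma reduces to the classical behaviour of products of geometrically irreducible schemes over a field.
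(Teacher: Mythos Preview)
The paper does not prove this lemma at all: its entire proof is a citation ``See \cite[Lemma 3.3]{Calabi}.'' So there is no argument in the paper to compare yours against, and your outline is already more than the paper offers.

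Your overall architecture is sound: the flatness argument for $(1)$ via the local flatness criterion on $A[[Y]]\to\mathcal{O}[[Y]]$ is correct, the reduction of irreducibility to a statement about the generic fibre via flatness and going-down is the right move, and deducing $(2)$ and $(4)$ from $(1)$ and $(3)$ by passing to $A/\mathfrak{p}_i$, $B/\mathfrak{q}_j$ is exactly how one should proceed. You have also correctly put your finger on the one nontrivial step, namely transferring the classical field-theoretic fact ``a product of geometrically irreducible schemes over a field is geometrically irreducible'' through the completion.

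Where your sketch falls short is precisely there. The lemmas you invoke do not do what you claim: Corollary~\ref{coefficient change} is a statement about universal lifting rings $R_{\overline r,\mathcal{O}}$, not about arbitrary objects of $\mathrm{CNL}_{\mathcal{O}}$, so it cannot be used to ``commute finite base change with $\widehat\otimes$'' in this generality (that commutation is true, but it is a direct and easier computation). Lemma~\ref{density of closed point} and Lemma~\ref{geometrically irreducibility} give Jacobson-type density and the existence of a splitting extension for components; neither yields a comparison between $\bigl(A\widehat\otimes_{\mathcal{O}}B\bigr)[\tfrac1l]$ and the non-Noetherian ring $A[\tfrac1l]\otimes_E B[\tfrac1l]$, and your phrase ``compare the formal completion $\mathcal{O}[[X,Y]][\tfrac1l]$ with the ordinary tensor product $A[\tfrac1l]\otimes_E B[\tfrac1l]$'' conflates two different objects. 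What actually closes the gap (and is what the cited reference does) is to reduce, as you do, to $A$ and $B$ domains, then use that geometric irreducibility of $\mathrm{Spec}\,A[\tfrac1l]$ means $E$ is algebraically closed in $\mathrm{Frac}(A)$, and show directly that under this hypothesis the generic fibre $B[[X]]\otimes_{\mathcal{O}[[X]]}\mathrm{Frac}(A)$ is a domain; this is where the real work lies, and your sketch does not yet supply it.
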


\begin{proof}

See \cite[Lemma 3.3]{Calabi}. \end{proof}

\begin{lem}\label{completed tensor product regularity}

1 \ Let $A, B$ be Noetherian local rings and $f : A \rightarrow B$ be a flat local morphism. 

(a) \ If $B$ is regular, then $A$ is regular.

(b) \ If $A$ and $B/\mathfrak{m}_AB$ are regular, then $B$ is regular.

2 \ Let $k$ be a field and $A$ be a complete Noetherian local $k$-algebra with residue field $k$. Then the following conditions are equivalent.

(c) \ $A$ is regular.

(d) \ $A \cong k[[X_1, \cdots, X_m]]$ for some nonnegative integer $m$.

3 \ For objects $A, B$ of $\mathrm{CNL}_{\mathcal{O}}$, $\mathfrak{p} \in \mathrm{Spec} \, A(E)$ and $\mathfrak{q} \in \mathrm{Spec} \, B(E)$, we have $\widehat{A_{\mathfrak{p}}} \widehat{\otimes}_{E} \widehat{B_{\mathfrak{q}}} \cong \widehat{(A \widehat{\otimes}_{\mathcal{O}} B)}_{(\mathfrak{p}, \mathfrak{q})}$. In particular, if $\mathfrak{p}$, $\mathfrak{q}$ are regular points of $\mathrm{Spec} \, A$, $\mathrm{Spec} \, B$ respectively, then $(\mathfrak{p}, \mathfrak{q})$ is a regular point of $\mathrm{Spec} \, A \widehat{\otimes}_{\mathcal{O}} B$.

\end{lem}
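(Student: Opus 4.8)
The plan is to dispose of parts 1 and 2 with standard commutative algebra and to concentrate the work on part 3, whose only real content is the displayed isomorphism of completed local rings.

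For part 1, both assertions are classical facts about a flat local homomorphism $f:A\to B$. For (1): $f$ is faithfully flat, so tensoring a minimal free resolution of $A/\mathfrak{m}_A$ over $A$ by $B$ yields a minimal free resolution of $B/\mathfrak{m}_AB$ over $B$; hence $\mathrm{pd}_A(A/\mathfrak{m}_A)=\mathrm{pd}_B(B/\mathfrak{m}_AB)\le\mathrm{gl.dim}\,B<\infty$ when $B$ is regular, so $A$ is regular by Serre's criterion. For (2): flatness gives $\dim B=\dim A+\dim(B/\mathfrak{m}_AB)$, and lifting a regular system of parameters of $A$ together with one of $B/\mathfrak{m}_AB$ to elements of $\mathfrak{m}_B$ produces $\dim B$ generators of $\mathfrak{m}_B$, so $B$ is regular. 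For part 2, $(2)\Rightarrow(1)$ is clear, and $(1)\Rightarrow(2)$ is the Cohen structure theorem in the equicharacteristic case: the structure map realizes $k$ as a coefficient field, and sending the formal variables to a regular system of parameters $t_1,\dots,t_m$ (with $m=\dim A$) defines a continuous $k$-algebra map $k[[X_1,\dots,X_m]]\to A$ which is surjective because it is surjective on associated graded rings — both being polynomial rings over $k$ — and injective because $k[[X_1,\dots,X_m]]$ is a domain of dimension $m=\dim A$, so its kernel is a height-zero prime, hence zero.

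For part 3 I would first move into the ``$E$-linear'' setting. By part 1 of Lemma \ref{density of closed point}, $A/\mathfrak{p}$ and $B/\mathfrak{q}$ are finite free over $\mathcal{O}$, so in particular $\varpi\notin\mathfrak{p}$ and $\varpi\notin\mathfrak{q}$; hence $A_{\mathfrak{p}}$ and $B_{\mathfrak{q}}$ are $E$-algebras, their completions lie in $\mathrm{CNL}_E$ with $E$ as coefficient field, and, since $(A/\mathfrak{p}\otimes_{\mathcal{O}}B/\mathfrak{q})[1/l]=E\otimes_E E=E$, the ideal $\mathfrak{P}=(\mathfrak{p},\mathfrak{q})$ cuts out a prime of $C:=A\widehat{\otimes}_{\mathcal{O}}B$ with residue field $E$; moreover $A_{\mathfrak{p}}\otimes_{\mathcal{O}}B_{\mathfrak{q}}=A_{\mathfrak{p}}\otimes_E B_{\mathfrak{q}}$ because $E$-modules are insensitive to $\otimes_{\mathcal{O}}$ versus $\otimes_E$. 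The isomorphism of the lemma is then the chain
\[
\widehat{(A\widehat{\otimes}_{\mathcal{O}}B)_{\mathfrak{P}}}\cong\widehat{(A\otimes_{\mathcal{O}}B)_{\mathfrak{P}}}\cong\widehat{(A_{\mathfrak{p}}\otimes_E B_{\mathfrak{q}})_{\mathfrak{m}'}}\cong\widehat{(\widehat{A_{\mathfrak{p}}}\otimes_E\widehat{B_{\mathfrak{q}}})_{\mathfrak{m}''}}=\widehat{A_{\mathfrak{p}}}\widehat{\otimes}_E\widehat{B_{\mathfrak{q}}},
\]
where $\mathfrak{m}'$, $\mathfrak{m}''$ are the primes over $\mathfrak{P}$ in the successive rings: the second isomorphism holds because $(A\otimes_{\mathcal{O}}B)_{\mathfrak{P}}$ is a localization of $A_{\mathfrak{p}}\otimes_E B_{\mathfrak{q}}$, and the last equality because $\mathfrak{m}''$ is the maximal ideal $(\mathfrak{m}_{\widehat{A_{\mathfrak{p}}}},\mathfrak{m}_{\widehat{B_{\mathfrak{q}}}})$. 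The first and third isomorphisms are instances of one elementary lemma I would prove once: if $R\to R'$ is flat and $\mathfrak{P}\subset R$ is a prime such that, at some prime $\mathfrak{P}'$ of $R'$ over $\mathfrak{P}$, the fibre $R'\otimes_R\kappa(\mathfrak{P})$ localizes to $\kappa(\mathfrak{P})$ itself, then $\widehat{R_{\mathfrak{P}}}\cong\widehat{R'_{\mathfrak{P}'}}$ — proved by comparing the graded pieces $\mathfrak{P}^nR_{\mathfrak{P}}/\mathfrak{P}^{n+1}R_{\mathfrak{P}}$ and $\mathfrak{P}'^nR'_{\mathfrak{P}'}/\mathfrak{P}'^{n+1}R'_{\mathfrak{P}'}$ via flat base change. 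It applies to $A\otimes_{\mathcal{O}}B\to A\widehat{\otimes}_{\mathcal{O}}B$ (the fibre over $\mathfrak{P}$ involves the finite, hence already complete, $\mathcal{O}$-algebra $A/\mathfrak{p}\otimes_{\mathcal{O}}B/\mathfrak{q}$) and to $A_{\mathfrak{p}}\otimes_E B_{\mathfrak{q}}\to\widehat{A_{\mathfrak{p}}}\otimes_E\widehat{B_{\mathfrak{q}}}$ (the fibre is $(\widehat{A_{\mathfrak{p}}}\otimes_{A_{\mathfrak{p}}}E)\otimes_E(\widehat{B_{\mathfrak{q}}}\otimes_{B_{\mathfrak{q}}}E)=E$). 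Given the isomorphism, the ``in particular'' follows at once: if $\mathfrak{p}$, $\mathfrak{q}$ are regular points then $\widehat{A_{\mathfrak{p}}}$, $\widehat{B_{\mathfrak{q}}}$ are regular, hence power series rings over $E$ by part 2, hence so is $\widehat{A_{\mathfrak{p}}}\widehat{\otimes}_E\widehat{B_{\mathfrak{q}}}$, which is therefore regular; thus $\widehat{C_{\mathfrak{P}}}$ is regular, and then so is $C_{\mathfrak{P}}$ by part 1(1).

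The one genuinely delicate point is the bookkeeping inside part 3 — keeping track of which ring is being localized and completed at which prime, and above all the passage between $\widehat{\otimes}_{\mathcal{O}}$ and $\widehat{\otimes}_E$. This is exactly where part 1 of Lemma \ref{density of closed point} is used: it guarantees $l$ is invertible at $\mathfrak{p}$ and $\mathfrak{q}$, so that a coefficient field $E$ is available (making part 2 applicable) and the two completed tensor products agree after inverting $l$. Everything else is formal. (An alternative to the chain above is to identify $\widehat{A_{\mathfrak{p}}}\widehat{\otimes}_E\widehat{B_{\mathfrak{q}}}$ with $\widehat{C_{\mathfrak{P}}}$ directly through the universal property of the completed tensor product in $\mathrm{CNL}_E$, but then one must check carefully that the maps out of $C$ are continuous for the correct topologies.)
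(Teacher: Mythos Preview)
Your treatment of parts 1 and 2 is correct and matches the paper (which simply cites Matsumura for part 1 and gives the same dimension argument for part 2).

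For part 3 your argument is sound, but the paper's is shorter and more direct. Rather than passing through the chain
\[
\widehat{(A\widehat{\otimes}_{\mathcal{O}}B)_{\mathfrak{P}}}\cong\widehat{(A\otimes_{\mathcal{O}}B)_{\mathfrak{P}}}\cong\widehat{(A_{\mathfrak{p}}\otimes_E B_{\mathfrak{q}})_{\mathfrak{m}'}}\cong\widehat{(\widehat{A_{\mathfrak{p}}}\otimes_E\widehat{B_{\mathfrak{q}}})_{\mathfrak{m}''}},
\]
the paper simply writes down the canonical map $\widehat{A_{\mathfrak{p}}}\widehat{\otimes}_E\widehat{B_{\mathfrak{q}}}\to\widehat{(A\widehat{\otimes}_{\mathcal{O}}B)}_{(\mathfrak{p},\mathfrak{q})}$ (your ``alternative'' at the end) and checks it is an isomorphism by observing that both sides have the same quotient modulo $(\mathfrak{p}^s,\mathfrak{q}^s)$ for every $s$: indeed $A/\mathfrak{p}^s\otimes_{\mathcal{O}}B/\mathfrak{q}^s$ is finite over $\mathcal{O}$ (by Lemma \ref{density of closed point}), so it is already $\mathfrak{m}$-adically complete and equals $(A\widehat{\otimes}_{\mathcal{O}}B)/(\mathfrak{p}^s,\mathfrak{q}^s)$, while $A_{\mathfrak{p}}/\mathfrak{p}^s\otimes_E B_{\mathfrak{q}}/\mathfrak{q}^s$ is the same ring after inverting $l$. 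This is exactly the finiteness observation you use, but without the intermediate localizations.

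One small caveat about your packaging: your ``flat lemma'' is applied to $A\otimes_{\mathcal{O}}B\to A\widehat{\otimes}_{\mathcal{O}}B$, but $A\otimes_{\mathcal{O}}B$ need not be Noetherian (think $\mathcal{O}[[X]]\otimes_{\mathcal{O}}\mathcal{O}[[Y]]$), so flatness of the completion map is not automatic. Your parenthetical explanation shows you know the real reason the step works --- the quotients modulo $(\mathfrak{p}^n,\mathfrak{q}^n)$ are finite over $\mathcal{O}$, hence already complete --- but that argument stands on its own and does not need the flat wrapper. The paper's direct computation avoids this wrinkle entirely.
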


\begin{proof}

See \cite[Theorem 23.7]{Mat} for $(a)$ and $(b)$ of 1.

2 \ $(d) \Rightarrow (c)$ is trivial. If $(c)$ holds, we have a surjective $k$-morphism $k[[X_1, \cdots, X_{\mathrm{dim} \, A}]] \twoheadrightarrow A$ and the kernel of this morphism is zero since their dimensions are equal and $A$ is an integral domain.

3 \ We have canonical $E$-morphisms $\widehat{A_{\mathfrak{p}}} \rightarrow \widehat{(A \widehat{\otimes}_{\mathcal{O}} B)}_{(\mathfrak{p}, \mathfrak{q})}$, $\widehat{B_{\mathfrak{q}}} \rightarrow \widehat{(A \widehat{\otimes}_{\mathcal{O}} B)}_{(\mathfrak{p}, \mathfrak{q})}$. This induces $\widehat{A_{\mathfrak{p}}} \widehat{\otimes}_{E} \widehat{B_{\mathfrak{q}}} \rightarrow \widehat{(A \widehat{\otimes}_{\mathcal{O}} B)}_{(\mathfrak{p}, \mathfrak{q})}$. Note that for any positive integer $s$, we have $\widehat{A_{\mathfrak{p}}} \widehat{\otimes}_E \widehat{B_{\mathfrak{q}}}/(\mathfrak{p}^s, \mathfrak{q}^s) = A_{\mathfrak{p}}/\mathfrak{p}^s \otimes_{E} B_{\mathfrak{q}}/\mathfrak{q}^s$ and $A \widehat{\otimes}_{\mathcal{O}} B/(\mathfrak{p}^s, \mathfrak{q}^s) = A/\mathfrak{p}^s \otimes_{\mathcal{O}} B/\mathfrak{q}^s$ since $A/\mathfrak{p}^s \otimes_{\mathcal{O}} B/\mathfrak{q}^s$ is finite over $\mathcal{O}$. Thus, we obtain the inverse morphism $\widehat{(A \widehat{\otimes}_{\mathcal{O}} B)}_{(\mathfrak{p}, \mathfrak{q})} \rightarrow \widehat{A_{\mathfrak{p}}} \widehat{\otimes}_{E} \widehat{B_{\mathfrak{q}}}$ of the above morphism. \end{proof}

In the following subsections, we fix a prime $p$ and a finite extension $K$ of $\mathbb{Q}_p$, and we put $\Gamma := G_K$.

\subsection{$p \neq l$}

In this subsection, we assume $p \neq l$.

\begin{prop} \label{dimension of unristricted}

    For a continuous representation $\overline{r}: G_K \rightarrow \mathrm{GL}_n(\mathbb{F})$, all irreducible components of $\mathrm{Spec} \, R_{\overline{r}}[\frac{1}{l}]$ have dimension $n^2$.
                
    \end{prop}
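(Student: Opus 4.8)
The plan is to read off the dimension of $R_{\overline{r}}$ from the standard presentation of a framed (unrestricted) deformation ring, which gives the lower bound with essentially no work, and then to obtain the matching upper bound --- equivalently, the equidimensionality of the generic fibre --- by localising at closed points and stratifying by monodromy type. Write $\mathrm{ad}\,\overline{r}$ for $\mathrm{M}_n(\mathbb{F})$ with the adjoint $G_K$-action and put $h^i := \dim_{\mathbb{F}} H^i(G_K, \mathrm{ad}\,\overline{r})$. The set of liftings of $\overline{r}$ to the dual numbers $\mathbb{F}[\epsilon]$ is $Z^1(G_K, \mathrm{ad}\,\overline{r})$, of dimension $d_1 = h^1 + n^2 - h^0$, and the obstructions to lifting along small extensions lie in $H^2(G_K, \mathrm{ad}\,\overline{r})$; hence there is a presentation
\[
R_{\overline{r}} \cong \mathcal{O}[[x_1, \dots, x_{d_1}]]/(f_1, \dots, f_{d_2}), \qquad d_2 \le h^2.
\]
Since $p \neq l$, the module $\mathrm{ad}\,\overline{r}$ has order prime to $p$, so the local Euler characteristic formula reads $h^0 - h^1 + h^2 = 0$; thus $d_1 - d_2 \ge d_1 - h^2 = n^2$ and $\dim R_{\overline{r}} \ge 1 + n^2$.

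To bring this to bear on the proposition I would localise at the closed points of $\mathrm{Spec}\,R_{\overline{r}}[\frac{1}{l}]$. This scheme is Jacobson by Lemma \ref{density of closed point}(2), and it is finite-dimensional, being a closed subscheme of the spectrum of a power series ring over $E$; hence each irreducible component $Z$ contains a closed point $x$ with $\dim Z = \dim \mathcal{O}_{Z,x} \le \dim (R_{\overline{r}})_{\mathfrak{p}_x} = \dim \widehat{(R_{\overline{r}})_{\mathfrak{p}_x}}$, where $\mathfrak{p}_x := \mathfrak{p} \cap R_{\overline{r}}$ for the maximal ideal $\mathfrak{p}$ of $R_{\overline{r}}[\frac{1}{l}]$ cutting out $x$ (note $l \notin \mathfrak{p}_x$, so $l$ is already a unit in $(R_{\overline{r}})_{\mathfrak{p}_x}$). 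The complete local ring $\widehat{(R_{\overline{r}})_{\mathfrak{p}_x}}$ is canonically the framed deformation ring of the associated lift $\rho_x : G_K \to \mathrm{GL}_n(k(x))$, with $k(x)/E$ finite. So it suffices to show that for every continuous $\rho : G_K \to \mathrm{GL}_n(E')$ with $E'/\mathbb{Q}_l$ finite, the framed deformation ring $R^{\square}_{\rho}$ has dimension exactly $n^2$. The lower bound $\dim R^{\square}_{\rho} \ge n^2$ again comes from the presentation (the characteristic-zero local Euler characteristic for $p \neq l$ once more giving $\dim H^0 - \dim H^1 + \dim H^2 = 0$), so the whole content is the bound $\dim R^{\square}_{\rho} \le n^2$, which simultaneously forces every component of $\mathrm{Spec}\,R_{\overline{r}}[\frac{1}{l}]$ to have dimension exactly $n^2$.

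For this upper bound I would invoke the $l$-adic monodromy theorem: any lift of $\rho$ has finite image on the wild inertia subgroup (a pro-$p$ group maps trivially into $1 + \mathfrak{m}$, which is pro-$l$), and the tame quotient of $G_K$ is topologically generated by a Frobenius lift $\Phi$ and a tame generator $\tau$ with $\Phi \tau \Phi^{-1} = \tau^{q_K}$. Morally --- and this is the step that has to be made precise --- the framed deformation functor near such a point is, up to a smooth factor absorbed by the framing, the formal completion of the variety
\[
X := \{ (\Phi, N) \in \mathrm{GL}_n \times \mathcal{N} : \mathrm{Ad}(\Phi)\, N = q_K\, N \},
\]
where $\mathcal{N} \subset \mathrm{M}_n$ is the nilpotent cone and $N$ records the logarithm of the unipotent part of $\tau$. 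Stratifying $X$ by the adjoint orbit $\mathcal{O}_{\gamma}$ of $N$ (equivalently, by its monodromy type $\gamma$): each stratum fibres over $\mathcal{O}_{\gamma}$, of dimension $n^2 - \dim Z_{\mathrm{GL}_n}(N)$, with fibres $\{ \Phi : \mathrm{Ad}(\Phi)\,N = q_K\,N \}$ that are non-empty (since $N$ and $q_K N$ are conjugate) torsors under $Z_{\mathrm{GL}_n}(N)$, of dimension $\dim Z_{\mathrm{GL}_n}(N)$; the two contributions cancel, so every stratum, and hence every irreducible component of $X$, has dimension $n^2$. The main obstacle is turning this picture into an honest statement about $R^{\square}_{\rho}$: one must account for the finite wild-inertia image and the semisimple part of the tame generator (both essentially rigid, contributing only smooth factors), allow $\Phi$ to be non-semisimple, and verify that the dimension is uniform across all strata and that the formal completion really has the expected dimension. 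In the residually unipotently ramified case this is close to Proposition \ref{monodromy type irreducible component} of the present section; in general the computation can be imported from the known structure theory of local lifting rings at $p \neq l$.
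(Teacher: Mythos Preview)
Your sketch is correct and follows the standard argument that the paper defers to via the citation to \cite[Proposition~1.2.2]{Allen}; indeed, the paper itself later sets up exactly your $(\Phi,N)$-moduli space as the scheme $\mathcal{M}$ preceding Proposition~\ref{moduli of Weil-Deligne}, whose equidimensionality of dimension $n^2$ over each fibre is the heart of the upper bound. Two small remarks: your opening bound $\dim R_{\overline{r}} \ge n^2+1$ is not actually used and does not by itself control components of the generic fibre (there may be $\varpi$-torsion components of $\mathrm{Spec}\,R_{\overline{r}}$), the lower bound on each component coming instead, as you correctly say later, from the characteristic-zero presentation of $R^{\square}_{\rho_x}$; and the equality $\dim Z = \dim \mathcal{O}_{Z,x}$ for closed $x$ deserves a word of justification --- it holds because $R_{\overline{r}}[\tfrac{1}{l}]$ is Jacobson and excellent, so each irreducible component is equicodimensional at its closed points.
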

     
\begin{proof} See \cite[Proposition 1.2.2]{Allen}. \end{proof}

\begin{dfn}

For a Weil-Deligne representation $r$ of $W_K$, we say that $r$ is generic if $\mathrm{Hom}_{\mathrm{WD}}(r, r(1))= 0$.

\end{dfn}

\begin{rem}

For a Weil-Deligne representation $r$ of $W_K$, if $r^{F-ss}$ is generic, then $r$ is generic.

\end{rem}

\begin{lem} \label{generic equivalent}

For an irreducible smooth representation $\pi$ of $\mathrm{GL}_n(K)$ over $\mathbb{C}$, $\pi$ is generic if and only if $\mathrm{rec}_K(\pi)$ is generic.

\end{lem}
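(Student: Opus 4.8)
The plan is to translate both conditions into the combinatorics of Zelevinsky segments and check that they coincide.

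First I would record the segment data of $\pi$. Because $\mathrm{rec}_K$ is a bijection between irreducible smooth representations of $\mathrm{GL}_n(K)$ and $n$-dimensional Frobenius-semisimple Weil--Deligne representations of $W_K$, and every such Weil--Deligne representation decomposes uniquely as $\bigoplus_{i=1}^{k}\mathrm{Sp}_{m_i}(s_i)$ with each $s_i$ irreducible (the standard structure theorem, that the indecomposable Frobenius-semisimple Weil--Deligne representations are exactly the $\mathrm{Sp}_m(s)$), one can write $\pi\cong\mathrm{St}_{m_1}(\rho_1)\boxplus\cdots\boxplus\mathrm{St}_{m_k}(\rho_k)$ with $\rho_i=\mathrm{rec}_K^{-1}(s_i)$ supercuspidal, so that $\mathrm{rec}_K(\pi)\cong\bigoplus_{i=1}^{k}\mathrm{Sp}_{m_i}(s_i)$. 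By Zelevinsky's genericity criterion \cite{ZI}, $\pi$ is generic if and only if the segments $\Delta_i:=(\rho_i,m_i)$ are pairwise unlinked, where $\Delta_i$ and $\Delta_j$ are linked exactly when, after possibly exchanging the indices, there is an integer $c$ with $\rho_j\cong\rho_i|\mathrm{det}|_K^{c}$ and $\max(1,\,m_i-m_j+1)\le c\le m_i$.

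Next I would compute the Galois side. Since $\mathrm{rec}_K(\pi)$ is Frobenius-semisimple, $\mathrm{Hom}_{\mathrm{WD}}(\mathrm{rec}_K(\pi),\mathrm{rec}_K(\pi)(1))$ is the direct sum over all pairs $(i,j)$ of $\mathrm{Hom}_{\mathrm{WD}}(\mathrm{Sp}_{m_i}(s_i),\mathrm{Sp}_{m_j}(s_j)(1))$, so it is enough to decide when $\mathrm{Hom}_{\mathrm{WD}}(\mathrm{Sp}_{a}(s),\mathrm{Sp}_{b}(t)(1))\neq 0$ for irreducible $s,t$. Writing out $\mathrm{Sp}_a(s)$ with its standard basis, a Weil--Deligne morphism must be $W_K$-equivariant, which by Schur's lemma already forces $t\cong s\otimes|| \ ||_K^{c}$ for some integer $c$, and it must commute with the two monodromy operators; a short finite-dimensional linear-algebra computation then shows the Hom space is nonzero precisely when $t\cong s\otimes|| \ ||_K^{c}$ with $\max(1,\,a-b+1)\le c\le a$. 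Using that $s_j\cong s_i\otimes|| \ ||_K^{c}$ is equivalent to $\rho_j\cong\rho_i|\mathrm{det}|_K^{c}$ (compatibility of $\mathrm{rec}_K$ with unramified twists), this is exactly the linkage condition above. Chaining the equivalences gives $\mathrm{rec}_K(\pi)$ generic $\iff$ no two $\Delta_i$ linked $\iff \pi$ generic, as desired.

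The step I expect to be fiddliest is the bookkeeping: one has to fix compatible normalizations (which power of $|| \ ||_K$ is the Tate twist $(1)$ under $\mathrm{rec}_K$, and the internal grading of $\mathrm{Sp}_m$), and then check that the numerical condition falling out of the Weil--Deligne Hom computation is literally Zelevinsky's linkage inequality — a sanity check against, say, the Steinberg representation $\mathrm{St}_n(\psi)$ (which must come out generic) pins the conventions down. Everything else is formal. Alternatively, one may simply cite \cite[p.~36]{LLC}, as is done for the related statement in Remark \ref{weakly tempered}.
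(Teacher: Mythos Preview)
Your approach is correct and is the standard segment-combinatorics argument; the paper itself does not argue this directly but simply cites \cite[Lemma 1.1.3]{Allen}, so your sketch is essentially an unpacking of that citation. One small point: your explicit range $\max(1,a-b+1)\le c\le a$ for $\mathrm{Hom}_{\mathrm{WD}}(\mathrm{Sp}_a(s),\mathrm{Sp}_b(t)(1))\ne 0$ may have its sign flipped depending on the Tate-twist convention (for $a=b=1$ one finds $t\cong s\otimes|| \ ||_K^{-1}$ rather than $|| \ ||_K^{+1}$), but since you sum the Hom over both ordered pairs $(i,j)$ and $(j,i)$ while linkage is symmetric, this washes out --- and you already flag the normalization bookkeeping as the delicate step.
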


\begin{proof} See \cite[Lemma 1.1.3]{Allen}. \end{proof}

\begin{rem}

Note that if $r$ is a pure Weil-Deligne representation of $W_K$, then $r$ is generic. (This is easily proved by definition or by 1 and 4 of Lemma \ref{purity lemma}.)

\end{rem}

For a continuous representation $\overline{r} : G_K \rightarrow \mathrm{GL}_n(\mathbb{F})$ and a closed point $\mathfrak{p} \in \mathrm{Spec} \, R_{\overline{r}}[\frac{1}{l}]$, we write $r_{\mathfrak{p}}$ for the composition $G_K \rightarrow \mathrm{GL}_n(R_{\overline{r}}) \rightarrow \mathrm{GL}_n(R_{\overline{r}}/\mathfrak{p} \cap R_{\overline{r}}) \hookrightarrow \mathrm{GL}_n(\mathcal{O}_{k(\mathfrak{p})})$. Note that $k(\mathfrak{p})$ is finite extension of $E$ and $R_{\overline{r}}/\mathfrak{p} \cap R_{\overline{r}}$ is an order of $k(\mathfrak{p})$ by 1 of Lemma \ref{density of closed point}.

\begin{lem} \label{regular point}

Let $\mathfrak{p}$ be a closed point of $\mathrm{Spec} \, R_{\overline{r}}[\frac{1}{l}]$.

Then $\mathfrak{p}$ is a regular point of $\mathrm{Spec} \, R_{\overline{r}}[\frac{1}{l}]$ if and only if $\mathrm{WD}(r_{\mathfrak{p}} \otimes \overline{\mathbb{Q}}_l)$ is generic.

\end{lem}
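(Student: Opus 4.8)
The plan is to compute the completed local ring $\widehat{(R_{\overline{r}}[\frac{1}{l}])_{\mathfrak{p}}}$ by deformation theory and compare its Krull dimension with the dimension of its tangent space, using the local Euler characteristic formula for $p\neq l$, local Tate duality, and the monodromy dictionary between continuous $\overline{\mathbb{Q}_l}$-representations of $G_K$ and Weil--Deligne representations of $W_K$.

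First I would set up the deformation-theoretic input. By Lemma \ref{density of closed point}(1), $k(\mathfrak{p})$ is a finite extension of $E$ and $r_{\mathfrak{p}}$ is a continuous representation $G_K\to\mathrm{GL}_n(k(\mathfrak{p}))$. By the standard change-of-residue-field argument (together with Corollary \ref{coefficient change}), the complete local $k(\mathfrak{p})$-algebra $\widehat{(R_{\overline{r}}[\frac{1}{l}])_{\mathfrak{p}}}$ pro-represents the functor of liftings of $r_{\mathfrak{p}}$ to Artinian local $k(\mathfrak{p})$-algebras with residue field $k(\mathfrak{p})$. Writing $\mathrm{ad}\,r_{\mathfrak{p}}:=\mathrm{End}_{k(\mathfrak{p})}(r_{\mathfrak{p}})$ with its adjoint $G_K$-action and $h^i:=\dim_{k(\mathfrak{p})}H^i(G_K,\mathrm{ad}\,r_{\mathfrak{p}})$, standard obstruction theory gives that the tangent space of $\widehat{(R_{\overline{r}}[\frac{1}{l}])_{\mathfrak{p}}}$ is $Z^1(G_K,\mathrm{ad}\,r_{\mathfrak{p}})$, of dimension $h^1+(n^2-h^0)$, and that the ring is a quotient of a formal power series ring over $k(\mathfrak{p})$ in $h^1+n^2-h^0$ variables by an ideal generated by at most $h^2$ elements.

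Next I would pin down the dimension. The local Euler characteristic formula for $p\neq l$ gives $h^0-h^1+h^2=0$, so the tangent space has dimension $n^2+h^2$ and the obstruction bound yields $\dim\widehat{(R_{\overline{r}}[\frac{1}{l}])_{\mathfrak{p}}}\geq n^2$. Conversely, Proposition \ref{dimension of unristricted} says every irreducible component of $\mathrm{Spec}\,R_{\overline{r}}[\frac{1}{l}]$ has dimension $n^2$, hence the local dimension at $\mathfrak{p}$ is at most $n^2$. Therefore $\dim\widehat{(R_{\overline{r}}[\frac{1}{l}])_{\mathfrak{p}}}=n^2$ unconditionally, and $\mathfrak{p}$ is a regular point if and only if its tangent space also has dimension $n^2$, i.e. if and only if $h^2=0$. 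It remains to recognize the vanishing of $h^2$ as genericity: by local Tate duality and the self-duality of $\mathrm{ad}\,r_{\mathfrak{p}}$ under the trace pairing, $h^2=\dim_{k(\mathfrak{p})}H^0(G_K,\mathrm{ad}\,r_{\mathfrak{p}}(1))=\dim_{k(\mathfrak{p})}\mathrm{Hom}_{G_K}(r_{\mathfrak{p}},r_{\mathfrak{p}}(1))$. Since $p\neq l$, Grothendieck's monodromy theorem identifies the category of continuous $\overline{\mathbb{Q}_l}$-representations of $G_K$ with that of Weil--Deligne representations of $W_K$ compatibly with $\mathrm{Hom}$ and Tate twist, so after $\otimes_{k(\mathfrak{p})}\overline{\mathbb{Q}_l}$ this equals $\dim\mathrm{Hom}_{\mathrm{WD}}(\mathrm{WD}(r_{\mathfrak{p}}\otimes\overline{\mathbb{Q}_l}),\mathrm{WD}(r_{\mathfrak{p}}\otimes\overline{\mathbb{Q}_l})(1))$, which by definition is zero precisely when $\mathrm{WD}(r_{\mathfrak{p}}\otimes\overline{\mathbb{Q}_l})$ is generic.

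The step I expect to be the main obstacle is the very first one: carefully checking that inverting $l$ and completing at the closed point $\mathfrak{p}$ really produces the framed deformation ring of $r_{\mathfrak{p}}$ over $k(\mathfrak{p})$. The delicate point is that $R_{\overline{r}}/(\mathfrak{p}\cap R_{\overline{r}})$ is only an order in $k(\mathfrak{p})$, so the tangent-space identification and the obstruction theory must be organized over the field $k(\mathfrak{p})$ rather than over that order; once this is set up, the remaining steps are routine applications of the Euler characteristic formula, Tate duality, and the $p\neq l$ monodromy dictionary.
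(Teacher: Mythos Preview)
Your proposal is correct and follows exactly the standard argument: identify the completed local ring at $\mathfrak{p}$ with the framed deformation ring of $r_{\mathfrak{p}}$ over $k(\mathfrak{p})$, compute its tangent space as $Z^1(G_K,\mathrm{ad}\,r_{\mathfrak{p}})$ of dimension $n^2+h^2$ via the local Euler characteristic formula, pin the Krull dimension to $n^2$ using Proposition~\ref{dimension of unristricted}, and then translate $h^2=0$ into genericity of $\mathrm{WD}(r_{\mathfrak{p}}\otimes\overline{\mathbb{Q}_l})$ via local Tate duality and the monodromy equivalence. The paper does not reproduce this argument but simply cites \cite[Proposition~1.2.2]{Allen}, whose proof is precisely the one you have sketched; your caution about the identification of $\widehat{(R_{\overline{r}}[\frac{1}{l}])_{\mathfrak{p}}}$ with the characteristic-zero framed deformation ring is well placed, and this is exactly the technical point handled in that reference.
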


\begin{proof}

See \cite[Proposition 1.2.2]{Allen}.  \end{proof}

\begin{dfn} \label{robustly smooth}

Let $\overline{r} : G_K \rightarrow \mathrm{GL}_n(\mathbb{F})$ be a continuous representation and $\mathfrak{p}$ be a closed point of $\mathrm{Spec} \, R_{\overline{r}}[\frac{1}{l}]$.

We say that $\mathfrak{p}$ is robustly smooth if $\mathrm{WD}(r_{\mathfrak{p}}|_{G_L} \otimes \overline{\mathbb{Q}}_l)$ is generic for any finite extension $L/K$.

\end{dfn}

\begin{lem}

Let $\overline{r} : G_K \rightarrow \mathrm{GL}_n(\mathbb{F})$ be a continuous representation.

Then the set of robustly smooth points is Zariski dense in $\mathrm{Spec} \, R_{\overline{r}}[\frac{1}{l}]$.

\end{lem}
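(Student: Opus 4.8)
The plan is to work one irreducible component of $\mathrm{Spec}R_{\overline{r}}[\frac{1}{l}]$ at a time and to show that, on such a component, the non‑robustly‑smooth closed points are contained in a \emph{countable} union of \emph{proper} closed subsets. Since the coefficient field $E$ is uncountable and each component has positive dimension $n^2$ (Proposition \ref{dimension of unristricted}), such a union cannot be dense, and the density of the robustly smooth locus follows component by component.

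First I would introduce, for each finite extension $L/K$ inside a fixed $\overline{K}$, the $\mathcal{O}$‑morphism $R_{\overline{r}|_{G_L}} \to R_{\overline{r}}$ classifying the lifting $r^{\mathrm{univ}}|_{G_L}$ of $\overline{r}|_{G_L}$ (it exists and is unique by Lemma \ref{homotopy}), together with the induced morphism $\phi_L \colon \mathrm{Spec}R_{\overline{r}}[\frac{1}{l}] \to \mathrm{Spec}R_{\overline{r}|_{G_L}}[\frac{1}{l}]$. If $\mathfrak{p}$ is a closed point with associated representation $r_{\mathfrak{p}}$, then $\phi_L(\mathfrak{p})$ is again a closed point — its residue field embeds into $k(\mathfrak{p})$, hence is finite over $E$ by Lemma \ref{density of closed point} — and it corresponds to $r_{\mathfrak{p}}|_{G_L}$. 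Applying Lemma \ref{regular point} to $\overline{r}|_{G_L}$, the point $\phi_L(\mathfrak{p})$ is regular precisely when $\mathrm{WD}(r_{\mathfrak{p}}|_{G_L}\otimes\overline{\mathbb{Q}_l})$ is generic. Thus a closed point $\mathfrak{p}$ is robustly smooth if and only if $\mathfrak{p}\notin\bigcup_L C_L$, where the union runs over the countably many finite extensions $L/K$ in $\overline{K}$ and $C_L:=\phi_L^{-1}(\mathrm{Sing}(\mathrm{Spec}R_{\overline{r}|_{G_L}}[\frac{1}{l}]))$ is closed, $\mathrm{Sing}$ denoting the non‑regular locus (closed, by excellence).

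The key step, which I expect to be the main obstacle, is to check that for every $L$ and every irreducible component $Z$ of $\mathrm{Spec}R_{\overline{r}}[\frac{1}{l}]$ the closed set $C_L\cap Z$ is proper in $Z$; by Jacobsonness (Lemma \ref{density of closed point}) this amounts to exhibiting a single closed point $\mathfrak{p}\in Z$ with $\mathrm{WD}(r_{\mathfrak{p}}|_{G_L})$ generic. Here I would use that $\mathrm{Spec}R_{\overline{r}}[\frac{1}{l}]$ is reduced, so that its regular locus — equivalently, by Lemma \ref{regular point}, the locus where $\mathrm{WD}$ is generic — is dense in $Z$; fix a closed point $\mathfrak{p}_0$ in this locus. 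Along $Z$ the restriction of $r_{\mathfrak{p}}$ to inertia (up to conjugacy) and the monodromy type of $\mathrm{WD}(r_{\mathfrak{p}})$ are essentially rigid, while the Frobenius eigenvalues of $\mathrm{WD}(r_{\mathfrak{p}})^{ss}$ genuinely vary; a constituent of $\mathrm{WD}(r_{\mathfrak{p}})^{ss}|_{W_L}$ can become linked with another constituent only when a ratio of these Frobenius eigenvalues equals $q_L^{\pm 1}$ times a root of unity of order dividing $[L:K]$. There are only finitely many such ``bad'' equalities, each cutting out a proper closed subset of $Z$ — proper because it already fails at $\mathfrak{p}_0$, where $\mathrm{WD}$ is generic over $K$, after possibly adjusting by an unramified twist — so their complement still contains the required point $\mathfrak{p}$.

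Granting that each $C_L\cap Z$ is proper closed in $Z$, the robustly smooth locus of $Z$ is $Z\setminus\bigcup_L(C_L\cap Z)$. Writing $Z=\mathrm{Spec}(R_{\overline{r}}[\frac{1}{l}]/\mathfrak{q})$ for a minimal prime $\mathfrak{q}$, this is an irreducible excellent Jacobson scheme of dimension $n^2\ge 1$ (the case $n=0$ being trivial) whose closed points have residue fields finite over the uncountable field $E$; such a scheme is not covered, even at the level of closed points, by countably many proper closed subsets — one reduces to a one‑dimensional closed subscheme through any prescribed nonempty open and notes that it carries uncountably many closed points. Hence the robustly smooth closed points are Zariski dense in $Z$, and, $Z$ being an arbitrary component, in $\mathrm{Spec}R_{\overline{r}}[\frac{1}{l}]$. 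The delicate point throughout is that the density must survive an infinite intersection of dense conditions, one per finite $L/K$: the only leverage is that each individual $C_L$ is ``negligible'' and that $E$ is uncountable, and verifying negligibility of each $C_L$ is exactly where the structural input at $p\ne l$ — reducedness and equidimensionality of dimension $n^2$ together with Lemma \ref{regular point} — is indispensable.
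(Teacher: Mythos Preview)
The paper does not give a proof, citing \cite[Lemma~1.3.2,~(2)]{CW}. Your overall framework---writing the non-robustly-smooth locus as the countable union $\bigcup_L C_L$ with $C_L=\phi_L^{-1}(\mathrm{Sing})$, and then arguing that an irreducible component $Z$ of dimension $n^2\ge 1$ over the uncountable field $E$ cannot be covered by countably many proper closed subsets---is correct and is essentially how one proceeds. In this framework the whole difficulty is concentrated in the step you yourself flag as the obstacle: showing $C_L\cap Z\subsetneq Z$ for each $L$ (equivalently, producing one robustly smooth point on $Z$, which then witnesses properness for every $L$ simultaneously).

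Your argument for this step does not go through. You propose to use a smooth point $\mathfrak{p}_0$ (so $\mathrm{WD}(r_{\mathfrak{p}_0})$ is generic over $K$), ``after possibly adjusting by an unramified twist''. But a global unramified twist $r_{\mathfrak{p}_0}\mapsto r_{\mathfrak{p}_0}\otimes\mu$ multiplies every Frobenius eigenvalue by the same scalar $\mu(\mathrm{Frob}_K)$ and leaves all eigenvalue \emph{ratios} fixed; and genericity over $K$ only excludes ratios equal to $q_K^{\pm 1}$, while non-genericity over an extension of residue degree $f$ occurs exactly when some ratio equals $q_K^{\pm 1}\zeta$ with $\zeta^f=1$. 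Concretely, for $\overline{r}$ trivial and $n=2$, an unramified lift with Frobenius eigenvalues $1,\,q_K\zeta$ ($\zeta$ a primitive $f$-th root of unity) is generic over $K$ but lies in $C_L$ for $L/K$ unramified of degree $f$, and no unramified twist moves it out. Thus neither ingredient you invoke establishes that ``the Frobenius eigenvalues of $\mathrm{WD}(r_{\mathfrak{p}})^{ss}$ genuinely vary'' in the sense you need, namely that their \emph{ratios} vary along $Z$. That is precisely the missing input; once one knows the ratios can be moved within $Z$, your outline concludes correctly. The argument in \cite{CW} supplies this by varying the Frobenius parameters of the individual constituents of $\mathrm{WD}(r_{\mathfrak{p}_0})^{F\text{-}ss}$ \emph{independently} while staying on $Z$---essentially the block-wise version of your unramified twist (compare parts~2, 3 and~7 of Lemma~\ref{coincide monodromy type}), which does change the ratios between distinct blocks.
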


\begin{proof}

See \cite[Lemma 1.3.2, (2)]{CW}. \end{proof}

Next, we study irreducible components of $\mathrm{Spec} \, R_{\overline{r}}[\frac{1}{l}]$.

\begin{lem} \label{equivalence irreducible}

Let $\overline{r}: G_K \rightarrow \mathrm{GL}_n(\mathbb{F})$ be a continuous representation and $g \in \mathrm{GL}_n(\mathcal{O})$ such that $\overline{grg^{-1}} = \overline{r}$. 

Then the action $ \mathrm{Spec} \, R_{\overline{r}}[\frac{1}{l}] \rightarrow \mathrm{Spec} \, R_{\overline{r}}[\frac{1}{l}], \rho \rightarrow g \rho g^{-1}$ fixes all irreducible components. 

\end{lem}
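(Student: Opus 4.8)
The plan is to produce from the universal property the ring automorphism underlying the asserted action, and then to prove by an interpolation/connectedness argument that it fixes every minimal prime of $R_{\overline{r}}[\tfrac1l]$. First, the hypothesis $\overline{g\,r\,g^{-1}}=\overline{r}$ says precisely that the reduction $\overline{g}\in\mathrm{GL}_{n}(\mathbb{F})$ centralizes the image of $\overline{r}$, so $g\,r^{\mathrm{univ}}g^{-1}$ is again a lifting of $\overline{r}$, and the universal property gives a unique $\mathcal{O}$-algebra map $\phi_{g}\colon R_{\overline{r}}\to R_{\overline{r}}$ with $\phi_{g}\circ r^{\mathrm{univ}}=g\,r^{\mathrm{univ}}g^{-1}$; uniqueness forces $\phi_{g}\phi_{g^{-1}}=\mathrm{id}$, so $\phi_{g}$ is an automorphism inducing $\rho\mapsto g\rho g^{-1}$ on points. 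As a ring automorphism it permutes the finitely many minimal primes $\mathfrak{p}_{1},\dots,\mathfrak{p}_{k}$ of $R_{\overline{r}}[\tfrac1l]$, and one must show $\phi_{g}(\mathfrak{p}_{i})=\mathfrak{p}_{i}$ for all $i$. I would record that the set $Z$ of all admissible $g$ is a group, that $g\mapsto\phi_{g}$ is multiplicative up to order, and hence that it suffices to treat a spanning set of $Z$.

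The key mechanism is interpolation over a connected parameter space. Let $B$ be an $\mathcal{O}$-flat ring, complete for an ideal containing $\varpi$, with $\mathrm{Spec}B[\tfrac1l]$ geometrically irreducible; in practice $B=\mathcal{O}\langle T\rangle$ or $\mathcal{O}\langle T,T^{-1}\rangle$ (the rigid unit disc or annulus), or a polynomial ring $R_{\overline{r}}[T]$. Suppose $\mathbf{g}\in\mathrm{GL}_{n}(R_{\overline{r}}\widehat{\otimes}_{\mathcal{O}}B)$ is a family whose reduction modulo $(\mathfrak{m}_{R_{\overline{r}}},\varpi)$ commutes with $\overline{r}$; then $\mathbf{g}\,r^{\mathrm{univ}}\mathbf{g}^{-1}$ reduces to $\overline{r}$, so Lemma \ref{homotopy} yields $\Psi\colon R_{\overline{r}}\to R_{\overline{r}}\widehat{\otimes}_{\mathcal{O}}B$ with $\Psi\circ r^{\mathrm{univ}}=\mathbf{g}\,r^{\mathrm{univ}}\mathbf{g}^{-1}$. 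After enlarging $\mathcal{O}$ (harmless by Corollary \ref{coefficient change} and Lemma \ref{geometrically irreducibility}) I may assume each $V(\mathfrak{p}_{i})$ is geometrically irreducible, so that (by the analogue of Lemma \ref{completed tensor product irreducibility}) $(R_{\overline{r}}/\mathfrak{p}_{i})\widehat{\otimes}_{\mathcal{O}}B$ is a domain. Specializing $B$ at a point $b_{0}$ with $\mathbf{g}|_{b_{0}}=1$: the composite of $\Psi$ with the $b_{0}$-specialization is $\mathrm{id}_{R_{\overline{r}}}$, so comparing kernels shows $\ker\bigl(R_{\overline{r}}\xrightarrow{\Psi}(R_{\overline{r}}/\mathfrak{p}_{i})\widehat{\otimes}_{\mathcal{O}}B\bigr)$ is a prime contained in the minimal prime $\mathfrak{p}_{i}$, hence equal to it, i.e.\ $\Psi$ carries $\mathfrak{p}_{i}$ into $\mathfrak{p}_{i}(R_{\overline{r}}\widehat{\otimes}_{\mathcal{O}}B)$. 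Specializing instead at a point $b_{1}$ with $\mathbf{g}|_{b_{1}}=g$ then gives $\phi_{g}(\mathfrak{p}_{i})\subseteq\mathfrak{p}_{i}$, which forces $\phi_{g}(\mathfrak{p}_{i})=\mathfrak{p}_{i}$ since $\phi_{g}$ permutes the minimal primes.

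It then remains to exhibit, for each $g\in Z$, a factorization $g=g_{s}\,(1+N)\,(1+m)$ whose factors each lie in such a family through the identity. Take the multiplicative Jordan decomposition $\overline{g}=\overline{g}_{s}\overline{g}_{u}$; both factors still centralize $\overline{r}$, being polynomials in $\overline{g}$. After enlarging $\mathcal{O}$, lift $\overline{g}_{s}$ (of order prime to $l$) to a finite-order semisimple $g_{s}\in\mathrm{GL}_{n}(\mathcal{O})$, diagonalizable over $\mathcal{O}$ with eigenspaces $\mathcal{O}^{n}=\bigoplus_{j}V_{j}$; since $\overline{r}$ commutes with $\overline{g}_{s}$ it is block-diagonal for this decomposition, so the cocharacter $\mu$ acting by $z^{j}$ on $V_{j}$ has $\overline{\mu(z)}$ centralizing $\overline{r}$ for all $z$, with $\mu(1)=1$ and $\mu(\zeta)=g_{s}$ at a primitive root of unity $\zeta$: a family over the annulus $\mathrm{Spec}\,\mathcal{O}\langle T,T^{-1}\rangle$. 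Next $\overline{g}_{u}=1+\overline{N}$ with $\overline{N}$ nilpotent and commuting with $\overline{r}$, and $1+TN$ (a nilpotent lift) is a family with $T=0\mapsto 1$, $T=1\mapsto 1+N$. Finally $(1+m):=(g_{s}(1+N))^{-1}g\in 1+\varpi\mathrm{M}_{n}(\mathcal{O})$, and $1+Tm$ is a residually constant family. In each case the reduction of the family commutes with $\overline{r}$ (this is exactly where the centralizing hypothesis enters), so each of $\phi_{g_{s}},\phi_{1+N},\phi_{1+m}$ fixes every component, hence so does their composite $\phi_{g}$.

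The step needing the most care is the interpolation of the second paragraph: Lemma \ref{homotopy} and the completed-tensor-product statements of \S3.1 must be applied, or re-derived, when $B$ is a Tate algebra rather than an object of $\mathrm{CNL}_{\mathcal{O}}$ (equivalently, when one works over the rigid unit disc/annulus), and the flatness and geometric-irreducibility bookkeeping there must be checked. For the unipotent and $1+Tm$ families this can be sidestepped, since $\Psi$ then lands in $R_{\overline{r}}[T]$ and $(R_{\overline{r}}/\mathfrak{p}_{i})[T]$ is automatically a domain; but the semisimple cocharacter family genuinely requires a parameter space that is connected and contains both a point where $\mathbf{g}=1$ and a point where $\mathbf{g}=g_{s}$, for which the unit annulus, not a formal disc, is needed.
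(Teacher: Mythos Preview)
Your strategy is sound and would ultimately work, but it is considerably more elaborate than the paper's. The paper (following \cite[Lemma~1.2.2]{CW}) uses a single straight-line interpolation: for each closed point $\mathfrak{p}$ of $\mathrm{Spec}\,R_{\overline{r}}[\tfrac1l]$, set $h(T)=TI_{n}+(1-T)g$ and work over the Tate algebra $A=\mathcal{O}_{k(\mathfrak{p})}\langle T,S\rangle/\bigl(S\det h(T)-1\bigr)$, which is a domain. The observation you are missing is that the centralizer of $\overline{r}$ in $\mathrm{M}_{n}(\mathbb{F})$ is an $\mathbb{F}$-\emph{linear} subspace, so $\overline{h(T)}=\overline{T}I_{n}+(1-\overline{T})\overline{g}$ already centralizes $\overline{r}$; thus $h(T)\,r_{\mathfrak{p}}\,h(T)^{-1}$ reduces to $\overline{r}$ modulo $\varpi$, and Lemma~\ref{homotopy} applied to $A$ yields a morphism whose (irreducible) image in $\mathrm{Spec}\,R_{\overline{r}}$ contains both $r_{\mathfrak{p}}$ (at $T=1$) and $g\,r_{\mathfrak{p}}\,g^{-1}$ (at $T=0$). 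This handles every $g$ at once and dispenses with the Jordan decomposition entirely.

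Your route---treating the semisimple, unipotent, and residually trivial factors of $g$ by separate one-parameter families---would work once the Tate-algebra bookkeeping you flag is carried out, and has the minor structural advantage of acting on the automorphism $\phi_{g}$ globally rather than one closed point at a time. One small correction: your cocharacter should act on $V_{j}$ by $z^{a_{j}}$ where the eigenvalue of $g_{s}$ on $V_{j}$ is $\zeta^{a_{j}}$ for a fixed primitive root of unity $\zeta$, not by $z^{j}$; as written, $\mu(\zeta)=g_{s}$ need not hold. But the whole detour is unnecessary once one sees that the linear path from $I_{n}$ to $g$ already lies in the centralizer.
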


\begin{proof} This is \cite[Lemma 1.2.2]{CW}. \end{proof}

We recall the definition of the notion $\sim$.

For continuous representations $r_1, r_2 : G_{K} \rightarrow \mathrm{GL}_n(\mathcal{O})$, we say $r_1 \sim r_2$ if $r_1$ and $r_2$ satisfy the following conditions after replacing $E$ by a finite extension if necessary. 

1 \ $\overline{r}_1 \cong \overline{r}_2$.

2 \ $r_1$ and $r_2$ are contained in the same geometrically irreducible component of $\mathrm{Spec} \, R_{\overline{r}_1}[\frac{1}{l}]$ after fixing an identification $\overline{r}_1 = \overline{r}_2$. 

Note that the condition 2 is independent of the choice of an identification $\overline{r}_1 = \overline{r}_2$ by Lemma \ref{equivalence irreducible}.

For continuous representations $r_1, r_2 : G_{K} \rightarrow \mathrm{GL}_n(\mathcal{O}_{\overline{\mathbb{Q}}_l})$, we can take a sufficiently large finite extension $E'$ of $E$ contained in $\overline{\mathbb{Q}}_l$ such that $\mathrm{Im}(r_1), \mathrm{Im}(r_2) \subset \mathrm{GL}_n(\mathcal{O}_{E'})$ and consequently $r_1$, $r_2$ are regarded as representations $r_1^{E'}, r_2^{E'} : G_K \rightarrow \mathrm{GL}_n(\mathcal{O}_{E'})$ respectively. We say $r_1 \sim r_2$ if $r_1^{E'} \sim r_2^{E'}$. 

We recall the following fundamental properties of the notion $\sim$.

\begin{lem}\label{coincide monodromy type}

Let $r, r_1, r_2 : G_K \rightarrow \mathrm{GL}_n(\mathcal{O})$ and $r_3, r_4 : G_K \rightarrow \mathrm{GL}_m(\mathcal{O})$ be continuous representations.

1 \ If $r_1$ and $r_2$ are unramified and $\overline{r}_1 \cong \overline{r}_2$, then $r_1 \sim r_2$.

2 \ For an unramified character $\mu: G_K \rightarrow \mathcal{O}^{\times}$ such that $\overline{\mu}$ is trivial, we have $r \sim r \otimes \mu$.

3 \ Assume $\overline{r}$ is semisimple. Then for any $G_K$-stable increasing filtration $\{ \mathrm{Fil}_i \}$ on $r$ by $\mathcal{O}$-direct summands, we have $r \sim \oplus_i \mathrm{Fil}_i/\mathrm{Fil}_{i-1}$.

4 If $r_1 \sim r_2$, then $(r_1 \otimes \overline{\mathbb{Q}}_l)|^{ss}_{I_K} \cong (r_2 \otimes \overline{\mathbb{Q}}_l)|^{ss}_{I_K}$.

5 \ If $r_1 \sim r_2$ and for any finite extension $E'/E$, there exists a unique irreducible component $\mathcal{C}_1$ (resp. $\mathcal{C}_2$) of $\mathrm{Spec} \, R_{\overline{r}, \mathcal{O}_{E'}}[\frac{1}{l}]$ such that $r_1$ (resp. $r_2$) is contained in $\mathcal{C}_1$ (resp. $\mathcal{C}_2$), then $(r_1 \otimes \overline{\mathbb{Q}}_l)|_{I_K} \cong (r_2 \otimes \overline{\mathbb{Q}}_l)|_{I_K}$. In particular, $\mathrm{WD}(r_1 \otimes \overline{\mathbb{Q}}_l)$ and $\mathrm{WD}(r_2 \otimes \overline{\mathbb{Q}}_l)$ have the same monodromy type.

6 \ If $r_1 \sim r_2$, then $r_1|_{G_L} \sim r_2|_{G_L}$ for any finite extension $L/K$.

7 \ If $r_1 \sim r_2$ and $r_3 \sim r_4$, then $r_1 \oplus r_3 \sim r_2 \oplus r_4$ and $r_1 \otimes r_3 \sim r_2 \otimes r_4$.

\end{lem}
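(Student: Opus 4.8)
The plan is to treat the seven items in three groups. Items 1, 2, 3 are ``connectedness'' statements: they assert that two explicit liftings of a fixed $\overline r$ lie on a common geometrically irreducible component, and are proved by exhibiting one geometrically irreducible family of liftings running through both. The common device (already used for Lemma \ref{equivalence irreducible}) is: if $A$ is a Noetherian $\mathcal{O}$-algebra that is $(\varpi)$-adically complete, $\rho\colon G_K\to\mathrm{GL}_n(A)$ is continuous with $\mathrm{Im}(\rho\bmod\varpi)\subset\mathrm{GL}_n(\mathbb{F})$, and $\mathrm{Spec}A[\frac{1}{l}]$ is geometrically irreducible, then Lemma \ref{homotopy} produces an $\mathcal{O}$-morphism $R_{\overline r}\to A$ carrying $r^{\mathrm{univ}}$ to $\rho$, so the image of $\mathrm{Spec}A[\frac{1}{l}]\to\mathrm{Spec}R_{\overline r}[\frac{1}{l}]$ is irreducible and lies in a single geometrically irreducible component (after enlarging $E$ as in Lemma \ref{geometrically irreducibility}); hence any two closed points of $\mathrm{Spec}A[\frac{1}{l}]$ give $\sim$-equivalent liftings. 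For 1, after identifying $\overline{r_1}=\overline{r_2}$, the unramified lifting functor of $\overline{r_1}$ is represented by a power series ring $\mathcal{O}[[X_{ij}]]$ (an unramified lift is just a lift of $\overline{r_1}(\mathrm{Frob}_K)$), which is a quotient of $R_{\overline{r_1}}$; its $\mathrm{Spec}[\frac{1}{l}]$ is geometrically irreducible and $r_1,r_2$ both factor through it. For 2, twist $r$ by the universal unramified character lifting the trivial character, a family over $\mathcal{O}[[X]]$ with $X=0$ giving $r$ and $X=\mu(\mathrm{Frob}_K)-1$ (in the open disc, since $\overline\mu$ is trivial) giving $r\otimes\mu$.

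Item 3 is the crux. Using that $\overline r$ is semisimple and that the $\mathrm{Fil}_i$ are $\mathcal{O}$-direct summands, choose an $\mathcal{O}$-basis of the underlying lattice adapted to $\mathrm{Fil}_\bullet$ in which $\overline r$ is block-diagonal; then $r$ is block-upper-triangular with diagonal blocks $\mathrm{Fil}_i/\mathrm{Fil}_{i-1}$ and off-diagonal blocks valued in $\varpi M_n(\mathcal{O})$. Conjugating by the block cocharacter $D_t=\mathrm{diag}(t^{k-1}I,\dots,t^0I)$ gives $\rho_t=D_trD_t^{-1}$ over the Tate algebra $\mathcal{O}\langle t\rangle$ whose $(i,j)$-block is $t^{j-i}r_{ij}$; thus $\rho_t$ has coefficients in $\mathcal{O}\langle t\rangle$, is continuous, and reduces mod $\varpi$ to the constant $\overline r$ (because $D_t$ commutes with the block-diagonal $\overline r$), while it specializes to $r$ at $t=1$ and to $\oplus_i\mathrm{Fil}_i/\mathrm{Fil}_{i-1}$ at $t=0$. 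Since $\mathrm{Spec}E\langle t\rangle$ is geometrically irreducible and both $t=0,1$ are honest closed points, the device above gives $r\sim\oplus_i\mathrm{Fil}_i/\mathrm{Fil}_{i-1}$. The delicate point, and the reason one needs $\mathcal{O}\langle t\rangle$ rather than $\mathcal{O}[[t]]$, is that $r$ itself must occur as a specialization (forcing the value $t=1$) while the family must reduce mod $\varpi$ to a single residual representation --- which is exactly what semisimplicity of $\overline r$ buys.

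Items 6 and 7 are functoriality of lifting rings. Restriction of liftings is a morphism of deformation functors $\mathrm{Def}_{\overline r}\to\mathrm{Def}_{\overline r|_{G_L}}$, hence induces $\mathrm{Spec}R_{\overline r}\to\mathrm{Spec}R_{\overline r|_{G_L}}$; the image of the geometrically irreducible component through $r_1,r_2$ is irreducible, lies in one component, and contains $r_1|_{G_L},r_2|_{G_L}$, giving 6. For 7, direct sum and tensor product of liftings give morphisms $\mathrm{Def}_{\overline{r_1}}\times\mathrm{Def}_{\overline{r_3}}\to\mathrm{Def}_{\overline{r_1}\oplus\overline{r_3}}$ (resp.\ $\mathrm{Def}_{\overline{r_1}\otimes\overline{r_3}}$), hence $\mathcal{O}$-morphisms $R_{\overline{r_1}\oplus\overline{r_3}}\to R_{\overline{r_1}}\widehat{\otimes}_{\mathcal{O}}R_{\overline{r_3}}$ (resp.\ with $\otimes$); by Lemma \ref{completed tensor product irreducibility} the product of the geometrically irreducible components through $r_1,r_3$ (resp.\ $r_2,r_4$) is contained in one geometrically irreducible component of $\mathrm{Spec}(R_{\overline{r_1}}\widehat{\otimes}_{\mathcal{O}}R_{\overline{r_3}})[\frac{1}{l}]$, which maps into a single component downstairs; since $(r_1,r_3)$ and $(r_2,r_4)$ lie in the same such component, this gives 7.

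Finally, 4 and 5. For 4, since $p\neq l$, the semisimplified inertia action $(r_{\mathfrak p}\otimes\overline{\mathbb{Q}_l})|_{I_K}^{\mathrm{ss}}$ of any lift $r_{\mathfrak p}$ of $\overline r$ has image of order bounded only in terms of $\overline r$, $n$ and $K$ (the wild part has finite image, and a continuous semisimple representation of tame inertia that extends to $W_K$ is forced to be of finite order), so for each $\sigma\in I_K$ the regular function $\mathrm{tr}(r^{\mathrm{univ}}(\sigma))$ on $\mathrm{Spec}R_{\overline r}[\frac{1}{l}]$ takes only finitely many values and is therefore constant on the (geometrically irreducible, hence connected) component through $r_1$ and $r_2$; Brauer--Nesbitt then gives $(r_1\otimes\overline{\mathbb{Q}_l})|_{I_K}^{\mathrm{ss}}\cong(r_2\otimes\overline{\mathbb{Q}_l})|_{I_K}^{\mathrm{ss}}$. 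For 5, the hypothesis forces $r_1,r_2$ to lie on one common component, which is the unique component through each; combining 4 with the analysis of the components of $\mathrm{Spec}R_{\overline r}[\frac{1}{l}]$ in terms of monodromy types (robustly smooth points are dense, all components have dimension $n^2$ by Proposition \ref{dimension of unristricted}, and by Lemma \ref{regular point} a regular point has generic Weil-Deligne representation, whence its monodromy type is the generic monodromy type of its component) one deduces that the monodromy type, and then the full $(r_i\otimes\overline{\mathbb{Q}_l})|_{I_K}$, is common to $r_1$ and $r_2$; in particular $\mathrm{WD}(r_1\otimes\overline{\mathbb{Q}_l})$ and $\mathrm{WD}(r_2\otimes\overline{\mathbb{Q}_l})$ have the same monodromy type. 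The main obstacles are the family construction in 3 and the component-structure input in 5; the fine parts of 4 and 5 essentially reproduce the corresponding lemmas of \cite{CW}.
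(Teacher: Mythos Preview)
Your proof is correct and follows essentially the same approach as the paper. For items 1--3 and 6--7, your constructions coincide with the paper's (the paper uses $\mathcal{O}\langle T\rangle$ in item 2 where you use $\mathcal{O}[[X]]$, and in item 3 conjugates by $h$ with $he_{i,j}=T^{-i}e_{i,j}$, exactly your block cocharacter); for items 4 and 5 the paper simply cites \cite[Lemma 1.3.4]{CW}, whereas you sketch the underlying finiteness/constancy argument, which is precisely the content of that reference.
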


\begin{proof} See \cite[p 524]{CW}. \end{proof}

For an object $A$ of $\mathrm{CNL}_{\mathcal{O}}$ and a continuous representation $r : G_K \rightarrow \mathrm{GL}_n(A)$, we say that $r$ is unipotently ramified if $\mathrm{det}(TI_n - r(\sigma)) = (T-1)^n$ for all $\sigma \in I_K$. 

Next, we study relations between irreducible components of the unipotently ramified lifting ring and monodromy types.

We recall the construction and some basic properties of the unipotently ramified lifting ring studied in \cite{small} and \cite{IA}. 

\vspace{0.5 \baselineskip}

(*) \ In the following argument in this subsection, we fix a unipotently ramified representation $\overline{r}: G_K \rightarrow \mathrm{GL}_n(\mathbb{F})$.

\vspace{0.5 \baselineskip}

Let $f : R_{\overline{r}} \twoheadrightarrow R^{\mathrm{unip}}_{\overline{r}}$ denote the quotient of $R_{\overline{r}}$ defined by the local deformation problem consisting of all unipotently ramified liftings of $\overline{r}$. (See Definition \ref{local deformation problem} for the definition of the local deformation problem.)

Let $t_l: I_K \rightarrow \mathbb{Z}_l(1)$ be the $l$-adic tamely ramified character. We fix an identification $\mathbb{Z}_l(1) = \mathbb{Z}_l$, an element $\sigma \in t_l^{-1}(1)$ and a geometric Frobenius lift $\phi \in W_K$. 

For any unipotently ramified lifting $r$ of $\overline{r}$, we have $\mathrm{Ker}(t_l) \subset \mathrm{Ker}(r)$. If $r$ is a unipotently ramified lifting of $\overline{r}$ to $\mathcal{O}$, $\mathrm{log}(r(\sigma)) \in \mathrm{M}_n(E)$ is the monodromy operator of $\mathrm{WD}(r \otimes \overline{\mathbb{Q}}_l)$.

Let $\mathcal{M}/\mathcal{O}$ be the moduli space of pairs of $n \times n$-matrices $(\Phi, \Sigma)$ such that $\Phi$ is invertible, $\Sigma$ has characteristic polynomial $(X-1)^n$ and $\Phi \Sigma \Phi^{-1} = \Sigma^{q_K}$. Let $\mathcal{N}/\mathcal{O}$ denote the moduli space of $n \times n$-matrices $\Sigma$ such that $\Sigma$ has characteristic polynomial $X^n$. 

Then we have the canonical map $\mathcal{M} \rightarrow \mathcal{N}$, $(\Phi, \Sigma) \mapsto \Sigma-I_n$. ($I_n$ denotes the identity matrix.) Moreover, we have a morphism of affine $\mathcal{O}$-schemes $F: \mathrm{Spec} \, R_{\overline{r}}^{\mathrm{unip}} \rightarrow \mathcal{M}, \ \rho \mapsto (\rho(\phi), \rho(\sigma))$. We put $x:=F(\overline{r})$. Then $F$ induces $\widehat{\mathcal{O}_{\mathcal{M}, x}} \stackrel{\sim}{\rightarrow} R_{\overline{r}}^{\mathrm{unip}}$.

Let $\tau=[n_1, \cdots, n_s]$ be a partition of $n$ and $\rho=[m_1, \cdots, m_u]$ be the dual partition of $\tau$.

We write $\mathcal{N}(\tau)$ for the reduced closed subscheme of $\mathcal{N}$ defined by the condition that for all $i$, all $n + 1 - m_1 - \cdots - m_i$ minors of $N_0^i$ vanish, where $N_0$ is the universal nilpotent matrix over $\mathcal{N}$. We put $\mathcal{N}(\tau)^0 := \mathcal{N}(\tau) \setminus (\cup_{\tau' \prec \tau, \tau' \neq \tau} \mathcal{N}(\tau'))$. For a field $L$ which is an $\mathcal{O}$-algebra and $N \in \mathcal{N}(L)$, the condition $N \in \mathcal{N}(\tau)^0(L)$ is equivalent to that the Jordan canonical form of $N$ corresponds to $\tau$. This implies $\mathcal{N} = \cup_{\tau} \mathcal{N}(\tau) = \sqcup_{\tau} \mathcal{N}(\tau)^0$.

Let $\mathcal{M}(\tau)^0 := \mathcal{M} \times_{\mathcal{N}} \mathcal{N}(\tau)^0$ and $\mathcal{M}(\tau)$ be the reduced closed subscheme of $\mathcal{M}$ defined by the closure of $\mathcal{M}(\tau)^0$ in $\mathcal{M}$. Note that $\mathcal{M} = \cup_{\tau} \mathcal{M}(\tau) = \sqcup_{\tau} \mathcal{M}(\tau)^0$, $\mathcal{M}(\tau) \subset \mathcal{M} \times_{\mathcal{N}} \mathcal{N}(\tau)$ and consequently $\mathcal{M}(\tau)^0$ is an open subset of $\mathcal{M}(\tau)$.

\begin{prop} \label{moduli of Weil-Deligne} Let $\tau$ be a partition of $n$.

1 \ The scheme $\mathcal{M}(\tau)^0$ is smooth over $\mathrm{Spec} \, \mathcal{O}$ and all geometric fibers of $\mathcal{M}(\tau)^0 \rightarrow \mathrm{Spec} \, \mathcal{O}$ are irreducible schemes of dimension $n^2$.

2 \ The scheme $\mathcal{M}(\tau)$ is an irreducible component of $\mathcal{M}$ of dimension $n^2+1$ and flat over $\mathrm{Spec} \, \mathcal{O}$, and all geometric fibers of $\mathcal{M}(\tau) \rightarrow \mathrm{Spec} \, \mathcal{O}$ are irreducible schemes of dimension $n^2$.

3 \ By sending $\rho$ to $\mathcal{M}(\rho)$ (resp. $\rho$ to $\mathcal{M}(\rho) \otimes_{\mathcal{O}} \mathbb{F}$), we have a bijection between the set of partitions of $n$ and the set of irreducible components of $\mathcal{M}$ (resp. $\mathcal{M} \otimes_{\mathcal{O}} \mathbb{F}$).

\end{prop}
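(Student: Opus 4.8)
The plan is to reduce everything to the geometry of nilpotent conjugacy classes via the map $g\colon\mathcal{M}\to\mathcal{N}$, $(\Phi,\Sigma)\mapsto\Sigma-I_n$, exploiting the fact that the fibre dimensions of $g$ are controlled by centralizers of nilpotents, so that the total dimension of each stratum comes out \emph{independent} of $\tau$. First I would fix a partition $\tau$ with dual partition $\rho=[m_1,\dots,m_u]$ and recall that $\mathcal{N}(\tau)^0\to\mathrm{Spec}\,\mathcal{O}$ is smooth with geometrically irreducible fibres of dimension $n^2-\sum_i m_i^2$: fixing the standard nilpotent $N_\tau$ of type $\tau$ over $\mathbb{Z}$, the orbit map $\mathrm{GL}_n\to\mathcal{N}(\tau)^0$, $h\mapsto hN_\tau h^{-1}$, is an fppf cover (a torsor under the centralizer $Z_{\mathrm{GL}_n}(N_\tau)$, which is the unit locus inside the free $\mathcal{O}$-module $Z_{\mathrm{M}_n}(N_\tau)$ of rank $\sum_i m_i^2$), so descent of smoothness along it exhibits $\mathcal{N}(\tau)^0$ as smooth over $\mathcal{O}$ with geometrically connected fibres of the stated dimension.

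Next I would study $g$ over $\mathcal{N}(\tau)^0$. Writing $\Sigma=I_n+N_0$ for the universal unipotent, the scheme $\mathcal{V}(\tau)$ of matrices $\Phi\in\mathrm{M}_n$ with $\Phi\Sigma=\Sigma^{q_K}\Phi$ is the kernel of a morphism of vector bundles of rank $n^2$ over $\mathcal{N}(\tau)^0$, because the equation is \emph{linear} in $\Phi$. Over any geometric point $N$ one has $\Sigma^{q_K}=I_n+u(N)N$ with $u(N)$ a polynomial in $N$ that is invertible (here $q_K\in\mathcal{O}^\times$, since $l\neq p$) and commutes with $N$; hence $\Sigma^{q_K}$ has the same Jordan type as $\Sigma$, and the fibre of $\mathcal{V}(\tau)$ is a translate of $Z_{\mathrm{M}_n}(\Sigma)$, of dimension $\sum_i m_i^2$. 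By the constant-rank criterion (valid since $\mathcal{N}(\tau)^0$ is reduced), $\mathcal{V}(\tau)\to\mathcal{N}(\tau)^0$ is a vector bundle of rank $\sum_i m_i^2$, and $\mathcal{M}(\tau)^0=\mathcal{M}\times_{\mathcal{N}}\mathcal{N}(\tau)^0$ is the locus in $\mathcal{V}(\tau)$ where $\Phi$ is invertible, a dense open of every fibre (a conjugator exists). Composing $\mathcal{M}(\tau)^0\to\mathcal{N}(\tau)^0\to\mathrm{Spec}\,\mathcal{O}$ shows $\mathcal{M}(\tau)^0\to\mathrm{Spec}\,\mathcal{O}$ is smooth with geometrically irreducible fibres of dimension $(n^2-\sum_i m_i^2)+\sum_i m_i^2=n^2$; in particular $\mathcal{M}(\tau)^0$ is irreducible of dimension $n^2+1$. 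This proves part 1.

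For parts 2 and 3: $\mathcal{M}(\tau)$ is the closure of the irreducible $\mathcal{M}(\tau)^0$, hence integral of dimension $n^2+1$; since $\mathcal{M}(\tau)^0\to\mathrm{Spec}\,\mathcal{O}$ is smooth and surjective, $\mathcal{M}(\tau)$ dominates $\mathrm{Spec}\,\mathcal{O}$, so $\varpi$ is a nonzerodivisor on it, $\mathcal{M}(\tau)$ is $\mathcal{O}$-flat with every fibre of pure dimension $n^2$, and its generic fibre $\overline{\mathcal{M}(\tau)^0_E}$ is geometrically irreducible. The decisive point is \textbf{equidimensionality}: $\dim\mathcal{M}(\tau)^0=n^2+1$ for every $\tau$, the shrinking of the nilpotent orbit being exactly offset by the growth of its centralizer. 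Given the stratification $\mathcal{M}=\bigsqcup_\tau\mathcal{M}(\tau)^0$ with each $\mathcal{M}(\tau)^0$ locally closed and dense in $\mathcal{M}(\tau)$, every irreducible component of $\mathcal{M}$ contains some stratum densely, hence equals the corresponding $\mathcal{M}(\tau)$; conversely, as all $\mathcal{M}(\tau)$ are irreducible of the same dimension, none is properly contained in another, and distinct $\tau$ give distinct $\mathcal{M}(\tau)$ (the dense opens $\mathcal{M}(\tau)^0$ are pairwise disjoint). This gives the bijection in part 3 and completes part 2. The version over $\mathbb{F}$ follows from the same argument applied to the strata $\mathcal{M}(\tau)^0\otimes_{\mathcal{O}}\mathbb{F}$, which by part 1 are smooth and geometrically irreducible of dimension $n^2$, so $\mathcal{M}\otimes_{\mathcal{O}}\mathbb{F}$ is again equidimensional with components the closures $\overline{\mathcal{M}(\tau)^0\otimes\mathbb{F}}$, and $\mathcal{O}$-flatness of $\mathcal{M}(\tau)$ identifies $\mathcal{M}(\tau)\otimes_{\mathcal{O}}\mathbb{F}$, up to nilpotents, with this component.

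\textbf{Main obstacle.} The delicate step is the last one: knowing that $\mathcal{M}\otimes\mathbb{F}$ is stratified and equidimensional is not by itself enough to conclude that the \emph{individual} scheme $\mathcal{M}(\tau)\otimes_{\mathcal{O}}\mathbb{F}$ is irreducible — i.e. that no smaller stratum $\mathcal{M}(\tau')^0\otimes\mathbb{F}$ becomes dense in a component of $\mathcal{M}(\tau)\otimes\mathbb{F}$. The clean way to settle this, following \cite{small} and \cite{IA}, is to construct an explicit smooth surjective resolution of $\mathcal{M}(\tau)$ over $\mathcal{O}$ — a bundle over a partial flag variety attached to $\rho$ — whose structure morphism to $\mathrm{Spec}\,\mathcal{O}$ has geometrically connected fibres; smoothness together with $\mathcal{O}$-flatness then transports geometric irreducibility from the generic to the special fibre. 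A secondary technical point, already flagged, is the use of $q_K\in\mathcal{O}^\times$ to see that $\Sigma\mapsto\Sigma^{q_K}$ preserves Jordan type; the whole argument collapses without the hypothesis $p\neq l$.
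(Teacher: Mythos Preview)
Your proposal is correct and follows the approach of \cite[Lemma 3.15]{small}, which is exactly what the paper cites (the paper gives no argument beyond that reference). Your identification of the main obstacle --- that equidimensionality of the strata does not by itself force $\mathcal{M}(\tau)\otimes_{\mathcal{O}}\mathbb{F}$ to be irreducible, and that one settles this via the flag-variety resolution of \cite{small} and \cite{IA} --- is precisely the point where the argument in \cite{small} does real work, so your outline is faithful to the source.
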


\begin{proof} 
    
This follows from \cite[proof of Lemma 3.15]{small}. \end{proof}

\begin{cor} \label{moduli of Weil-Deligne 2}

1 \ All irreducible components of $\mathrm{Spec} \, R_{\overline{r}}^{\mathrm{unip}}$ are flat over $\mathcal{O}$ and have dimension $n^2+1$.

2 \ $\mathrm{Spec} \, R_{\overline{r}}^{\mathrm{unip}}[\frac{1}{l}]$ is a union of irreducible components of $\mathrm{Spec} \, R_{\overline{r}}[\frac{1}{l}]$.

\end{cor}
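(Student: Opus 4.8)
The plan is to pull everything back along the isomorphism $\widehat{\mathcal{O}_{\mathcal{M},x}}\xrightarrow{\sim}R_{\overline{r}}^{\mathrm{unip}}$ recorded above, where $x=F(\overline{r})\in\mathcal{M}(\mathbb{F})$, and then feed in the structure theorem for $\mathcal{M}$ (Proposition \ref{moduli of Weil-Deligne}) together with the dimension count for the unrestricted lifting ring (Proposition \ref{dimension of unristricted}).

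For part 1, I would first replace $\mathcal{M}$ by $\mathcal{M}_{\mathrm{red}}=\bigcup_{\tau}\mathcal{M}(\tau)$: this changes neither the irreducible components nor their dimensions, and the irreducible components of $\mathrm{Spec}R_{\overline{r}}^{\mathrm{unip}}$ coincide with those of $\mathrm{Spec}\widehat{\mathcal{O}_{\mathcal{M}_{\mathrm{red}},x}}$ (and $(R_{\overline{r}}^{\mathrm{unip}})_{\mathrm{red}}=\widehat{\mathcal{O}_{\mathcal{M}_{\mathrm{red}},x}}$ by excellence). By Proposition \ref{moduli of Weil-Deligne}, each $\mathcal{M}(\tau)$ is an integral scheme of finite type over $\mathcal{O}$, flat over $\mathcal{O}$, of dimension $n^2+1$, with geometrically irreducible special fibre of dimension $n^2$. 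Hence, for the closed point $x$ of the special fibre, the local ring at $x$ of the irreducible finite-type $\mathbb{F}$-scheme $\mathcal{M}(\tau)\otimes\mathbb{F}$ has dimension $n^2$, and since $l$ is a nonzerodivisor in the domain $\mathcal{O}_{\mathcal{M}(\tau),x}$ we get $\dim\mathcal{O}_{\mathcal{M}(\tau),x}=n^2+1$. Therefore $\mathcal{O}_{\mathcal{M}_{\mathrm{red}},x}$ is reduced, excellent and equidimensional of dimension $n^2+1$, so $\widehat{\mathcal{O}_{\mathcal{M}_{\mathrm{red}},x}}$ stays reduced and equidimensional of dimension $n^2+1$; for each minimal prime $\mathfrak{q}$ the quotient $\widehat{\mathcal{O}_{\mathcal{M}_{\mathrm{red}},x}}/\mathfrak{q}$ is a domain of dimension $n^2+1$ in which $l\neq 0$ (else it would be a quotient of $\widehat{\mathcal{O}_{\mathcal{M}_{\mathrm{red}},x}}/l$, which has dimension $n^2$), hence $l$ is a nonzerodivisor and the quotient is $\mathcal{O}$-flat. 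This gives part 1.

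For part 2, the closed immersion $\mathrm{Spec}R_{\overline{r}}^{\mathrm{unip}}[\tfrac{1}{l}]\hookrightarrow\mathrm{Spec}R_{\overline{r}}[\tfrac{1}{l}]$ coming from the surjection $R_{\overline{r}}\twoheadrightarrow R_{\overline{r}}^{\mathrm{unip}}$ allows a dimension comparison. By part 1 every irreducible component $C$ of $\mathrm{Spec}R_{\overline{r}}^{\mathrm{unip}}$ is $\mathcal{O}$-flat of dimension $n^2+1$, so $C\cap\mathrm{Spec}R_{\overline{r}}^{\mathrm{unip}}[\tfrac{1}{l}]$ is nonempty of dimension $n^2$ (removing the special fibre, cut out by the single nonzerodivisor $l$, lowers the dimension by exactly one), and these are precisely the irreducible components of $\mathrm{Spec}R_{\overline{r}}^{\mathrm{unip}}[\tfrac{1}{l}]$. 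By Proposition \ref{dimension of unristricted} every irreducible component of $\mathrm{Spec}R_{\overline{r}}[\tfrac{1}{l}]$ also has dimension $n^2$. Since $\mathrm{Spec}R_{\overline{r}}[\tfrac{1}{l}]$ is Noetherian of finite Krull dimension, a proper irreducible closed subset of an irreducible closed subset has strictly smaller dimension; hence an irreducible closed subset of dimension $n^2$ contained in an irreducible component (necessarily of dimension $n^2$) must equal it. Applying this to each irreducible component of $\mathrm{Spec}R_{\overline{r}}^{\mathrm{unip}}[\tfrac{1}{l}]$ shows that $\mathrm{Spec}R_{\overline{r}}^{\mathrm{unip}}[\tfrac{1}{l}]$ is a union of irreducible components of $\mathrm{Spec}R_{\overline{r}}[\tfrac{1}{l}]$.

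The step I expect to be the real point is the completion bookkeeping in part 1, namely checking that $\widehat{\mathcal{O}_{\mathcal{M}_{\mathrm{red}},x}}$ inherits reducedness, equidimensionality of dimension $n^2+1$, and the $\mathcal{O}$-flatness of its components from the geometry of the $\mathcal{M}(\tau)$. This is exactly where one uses that $\mathcal{O}$ is excellent (so completion preserves reducedness and equidimensionality) and that the fibres of $\mathcal{M}(\tau)\to\mathrm{Spec}\mathcal{O}$ are geometrically irreducible, both supplied by Proposition \ref{moduli of Weil-Deligne}; everything else is dimension counting and Proposition \ref{dimension of unristricted}.
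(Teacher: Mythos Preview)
Your proof is correct and follows exactly the route the paper indicates: part 1 is deduced from the structure of the $\mathcal{M}(\tau)$ in Proposition~\ref{moduli of Weil-Deligne} (you have simply unpacked the completion and flatness bookkeeping that the paper leaves implicit), and part 2 is the dimension comparison with Proposition~\ref{dimension of unristricted}. The paper also notes, as an alternative for part 2, that one can use 4 of Lemma~\ref{coincide monodromy type} (unipotent ramification is constant along irreducible components), which avoids the dimension argument entirely.
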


\begin{proof}

1 follows from Proposition \ref{moduli of Weil-Deligne}. This implies 2 by Proposition \ref{dimension of unristricted} (or we obtain 2 by 4 of Lemma \ref{coincide monodromy type}). \end{proof}

\begin{cor}  \label{regular morphism}

Let $\tau$ be a partition of $n$ and $\mathcal{X}(\tau):=\mathrm{Spec} \, R_{\overline{r}}^{\mathrm{unip}} \times_{F, \mathcal{M}} \mathcal{M}(\tau)^0$.

1 \ $\mathcal{X}(\tau)$ is a regular scheme.

2 \ For any $y \in \mathrm{Spec} \, R_{\overline{r}}^{\mathrm{unip}}[\frac{1}{l}]$, the condition $y \in \mathcal{X}(\tau)[\frac{1}{l}]$ is equivalent to that the Jordan canonical form of $\mathrm{log}(r_y(\sigma)) \in \mathrm{M}_n(k(y))$ corresponds to $\tau$, where $r_y:=r^{\mathrm{univ}} \mod y$.

In particular, for $r \in \mathrm{Spec} \, R_{\overline{r}}^{\mathrm{unip}}(E)$, the condition $r \in \mathcal{X}(\tau)(E)$ is equivalent to that the monodromy type of $\mathrm{WD}(r \otimes \overline{\mathbb{Q}}_l)$ is equal to $\tau$.

\end{cor}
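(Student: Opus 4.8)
The plan is to derive both assertions from Proposition~\ref{moduli of Weil-Deligne} via the isomorphism $\widehat{\mathcal{O}_{\mathcal{M},x}}\xrightarrow{\sim}R_{\overline{r}}^{\mathrm{unip}}$ induced by $F$ and a formal manipulation of the two fibre products. For part~1, first I would observe that $\mathcal{O}$, being a complete discrete valuation ring, is excellent, and since $\mathcal{M}$ is of finite type over $\mathcal{O}$ the local ring $\mathcal{O}_{\mathcal{M},x}$ is excellent; hence the completion map $\mathcal{O}_{\mathcal{M},x}\to\widehat{\mathcal{O}_{\mathcal{M},x}}$, and therefore also the composite $\mathrm{Spec}R_{\overline{r}}^{\mathrm{unip}}=\mathrm{Spec}\widehat{\mathcal{O}_{\mathcal{M},x}}\to\mathcal{M}$, is flat with geometrically regular fibres. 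Base change along $\mathcal{M}(\tau)^0\to\mathcal{M}$ preserves this, so $\mathcal{X}(\tau)=\mathrm{Spec}R_{\overline{r}}^{\mathrm{unip}}\times_{F,\mathcal{M}}\mathcal{M}(\tau)^0\to\mathcal{M}(\tau)^0$ is again flat with geometrically regular fibres. By part~1 of Proposition~\ref{moduli of Weil-Deligne}, $\mathcal{M}(\tau)^0$ is smooth over $\mathcal{O}$, hence regular (as $\mathcal{O}$ is regular). Then for every $z\in\mathcal{X}(\tau)$ lying over $y\in\mathcal{M}(\tau)^0$, the flat local morphism $\mathcal{O}_{\mathcal{M}(\tau)^0,y}\to\mathcal{O}_{\mathcal{X}(\tau),z}$ has regular source and has closed fibre a local ring of the geometrically regular fibre of $\mathcal{X}(\tau)\to\mathcal{M}(\tau)^0$ over $y$, so $\mathcal{O}_{\mathcal{X}(\tau),z}$ is regular by part~1 of Lemma~\ref{completed tensor product regularity}. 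As $z$ is arbitrary (and the empty case is vacuous), $\mathcal{X}(\tau)$ is regular.

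For part~2 I would simply unwind the fibre-product descriptions. Since $\mathcal{M}(\tau)^0=\mathcal{M}\times_{\mathcal{N}}\mathcal{N}(\tau)^0$ and $\mathcal{N}(\tau)^0\hookrightarrow\mathcal{N}$ is a locally closed immersion, a point $y$ of $\mathrm{Spec}R_{\overline{r}}^{\mathrm{unip}}[\frac{1}{l}]$ lies in $\mathcal{X}(\tau)[\frac{1}{l}]$ if and only if the image of $F(y)=(r_y(\phi),r_y(\sigma))$ under $\mathcal{M}\to\mathcal{N}$ lies in $\mathcal{N}(\tau)^0(k(y))$; this image is $r_y(\sigma)-I_n$, which is nilpotent because $r_y$ is unipotently ramified, so by the description of $\mathcal{N}(\tau)^0$ recalled before Proposition~\ref{moduli of Weil-Deligne} this amounts to the Jordan canonical form of $r_y(\sigma)-I_n$ corresponding to $\tau$. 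To finish I would note that $\log(r_y(\sigma))$ and $r_y(\sigma)-I_n$ have the same Jordan canonical form: $k(y)$ has characteristic $0$ and $(r_y(\sigma)-I_n)^n=0$, so the series defining $\log(r_y(\sigma))$ terminates and $\log(r_y(\sigma))=(r_y(\sigma)-I_n)\,u$ with $u=I_n-\frac{1}{2}(r_y(\sigma)-I_n)+\cdots$ a unit commuting with $r_y(\sigma)-I_n$, whence $\log(r_y(\sigma))^i$ and $(r_y(\sigma)-I_n)^i$ have the same rank for every $i$. For the last assertion, an $E$-point $r$ of $\mathrm{Spec}R_{\overline{r}}^{\mathrm{unip}}$ is a unipotently ramified lifting of $\overline{r}$ to $\mathcal{O}$, so $\log(r(\sigma))$ is, by the construction recalled above, the monodromy operator of $\mathrm{WD}(r\otimes\overline{\mathbb{Q}_l})$, whose Jordan canonical form is the monodromy type in the sense of Definition~\ref{Weil-Deligne def}; hence $r\in\mathcal{X}(\tau)(E)$ if and only if this monodromy type equals $\tau$.

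I do not expect a serious obstacle: the corollary is essentially a restatement of Proposition~\ref{moduli of Weil-Deligne} in terms of the lifting ring. The only point requiring genuine care is the step in part~1 asserting that regularity passes through the formal completion $\mathrm{Spec}\widehat{\mathcal{O}_{\mathcal{M},x}}\to\mathcal{M}$, i.e.\ that this morphism has geometrically regular fibres, which I would handle via excellence of $\mathcal{M}$. Everything else---the ``regular source over a regular base'' conclusion through Lemma~\ref{completed tensor product regularity}, and in part~2 the unwinding of the fibre products together with the elementary fact that $\exp$ and $\log$ are conjugation-equivariant, Jordan-type-preserving, mutually inverse bijections between nilpotent and unipotent matrices in characteristic $0$---is routine.
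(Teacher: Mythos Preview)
Your proof is correct and follows essentially the same approach as the paper's own proof: excellence of $\mathcal{M}$ gives that $\mathrm{Spec}\widehat{\mathcal{O}_{\mathcal{M},x}}\to\mathcal{M}$ is a regular morphism, then smoothness of $\mathcal{M}(\tau)^0$ over $\mathcal{O}$ plus Lemma~\ref{completed tensor product regularity} yields regularity of $\mathcal{X}(\tau)$; for part~2 both you and the paper unwind the fibre product over $\mathcal{N}(\tau)^0$ and use that $\log(r_y(\sigma))$ and $r_y(\sigma)-I_n$ have the same Jordan type. Your write-up simply supplies more detail (the excellence chain, the explicit unit argument for the Jordan-type equality) than the paper's terser version.
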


\begin{proof}

1 \ Since $\mathcal{M}$ is excellent, $\mathcal{X}(\tau)=\mathrm{Spec} \, \widehat{\mathcal{O}_{\mathcal{M}, x}} \times_{\mathcal{M}} \mathcal{M}(\tau)^0  \rightarrow \mathcal{M}(\tau)^0$ is a regular morphism. In particular, this is flat and all fibers of this morphism are regular schemes. Since $\mathcal{M}(\tau)^0$ is smooth over $\mathrm{Spec} \, \mathcal{O}$ by 1 of Proposition \ref{moduli of Weil-Deligne}, $\mathcal{X}(\tau)$ is a regular scheme by (2) of 1 of Lemma \ref{completed tensor product regularity}.

2 \ The condition $y \in \mathcal{X}(\tau)[\frac{1}{l}] = \mathrm{Spec} \, \widehat{\mathcal{O}_{\mathcal{M}, x}} \times_{\mathcal{N}} \mathcal{N}(\tau)^0[\frac{1}{l}]$ is equivalent to that the Jordan canonical form of $r_y(\sigma) - I_n$ corresponds to the partition $\tau$. This implies the result because the nilpotent matrices $\mathrm{log}(r_y(\sigma))$ and $r_y(\sigma) - I_n$ have the same Jordan canonical form.

Note that for $r \in \mathrm{Spec} \, R_{\overline{r}}^{\mathrm{unip}}(E)$, $\mathrm{log}(r(\sigma))$ is the monodromy operator of $\mathrm{WD}(r \otimes \overline{\mathbb{Q}}_l)$. \end{proof}

\begin{lem}\label{robustly smooth monodromy type}

Let $r \in \mathrm{Spec} \, R_{\overline{r}}^{\mathrm{unip}}(E)$. Then after replacing $E$ by a finite extension, there exists a robustly smooth unipotently ramified lifting $r' : G_K \rightarrow \mathrm{GL}_n(\mathcal{O})$ of $\overline{r}$ such that $\mathrm{WD}(r \otimes \overline{\mathbb{Q}}_l)$ and $\mathrm{WD}(r' \otimes \overline{\mathbb{Q}}_l)$ have the same monodromy type and $r \sim r'$.
        
\end{lem}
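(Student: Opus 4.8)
The plan is to read the unipotently ramified lifting ring off the moduli space $\mathcal{M}$, and then move $r$ within one of its irreducible components to a robustly smooth point, exploiting that the monodromy type is locally constant on the strata $\mathcal{X}(\tau)$ and that robustly smooth points are Zariski dense. I would first set $\tau$ to be the monodromy type of $\mathrm{WD}(r \otimes \overline{\mathbb{Q}_l})$, so that $r \in \mathcal{X}(\tau)(E)$ by Corollary \ref{regular morphism}(2), and put $Z := \mathrm{Spec}\, R_{\overline{r}}^{\mathrm{unip}} \times_{F, \mathcal{M}} \mathcal{M}(\tau)$. Since $F(r) \in \mathcal{M}(\tau)^0(E)$ and $x = F(\overline{r})$ is the reduction of $F(r)$, we have $x \in \mathcal{M}(\tau)$, and the isomorphism $\widehat{\mathcal{O}_{\mathcal{M}, x}} \stackrel{\sim}{\rightarrow} R_{\overline{r}}^{\mathrm{unip}}$ identifies $Z$ with $\mathrm{Spec}\, \widehat{\mathcal{O}_{\mathcal{M}(\tau), x}}$, a closed subscheme of $\mathrm{Spec}\, R_{\overline{r}}^{\mathrm{unip}}$ in which $\mathcal{X}(\tau)$ is an open subscheme (as $\mathcal{M}(\tau)^0$ is open in $\mathcal{M}(\tau)$ while $\mathcal{M}(\tau)$ is closed in $\mathcal{M}$).

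I would next check that every irreducible component of $Z[\frac{1}{l}]$ is an irreducible component of $\mathrm{Spec}\, R_{\overline{r}}[\frac{1}{l}]$. By Proposition \ref{moduli of Weil-Deligne}, $\mathcal{M}(\tau)$ is an irreducible component of $\mathcal{M}$, flat over $\mathcal{O}$, of dimension $n^2 + 1$; hence $Z = \mathrm{Spec}\, \widehat{\mathcal{O}_{\mathcal{M}(\tau), x}}$ is flat over $\mathcal{O}$ and equidimensional of dimension $n^2 + 1$, so each irreducible component of $Z[\frac{1}{l}]$ has dimension $n^2$. Since every irreducible component of $\mathrm{Spec}\, R_{\overline{r}}^{\mathrm{unip}}$ is flat over $\mathcal{O}$ of dimension $n^2 + 1$ by Corollary \ref{moduli of Weil-Deligne 2}(1), a closed irreducible subset of $\mathrm{Spec}\, R_{\overline{r}}^{\mathrm{unip}}[\frac{1}{l}]$ of dimension $n^2$ must itself be a component, so the components of $Z[\frac{1}{l}]$ are components of $\mathrm{Spec}\, R_{\overline{r}}^{\mathrm{unip}}[\frac{1}{l}]$, hence of $\mathrm{Spec}\, R_{\overline{r}}[\frac{1}{l}]$ by Corollary \ref{moduli of Weil-Deligne 2}(2).

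Then I would replace $E$ by a finite extension so that all irreducible components of $\mathrm{Spec}\, R_{\overline{r}}[\frac{1}{l}]$ are geometrically irreducible (Lemma \ref{geometrically irreducibility}), choose an irreducible component $\mathcal{C}$ of $Z[\frac{1}{l}]$ containing $r$, and observe that $\mathcal{C} \cap \mathcal{X}(\tau)[\frac{1}{l}]$ is a nonempty (it contains $r$) open, hence dense, subset of the irreducible scheme $\mathcal{C}$. Since the robustly smooth points are Zariski dense in $\mathrm{Spec}\, R_{\overline{r}}[\frac{1}{l}]$, they are dense in its component $\mathcal{C}$, hence dense in $\mathcal{C} \cap \mathcal{X}(\tau)[\frac{1}{l}]$; I would pick a robustly smooth closed point $\mathfrak{p}$ there, whose residue field $k(\mathfrak{p})$ is a finite extension of $E$ by Lemma \ref{density of closed point}(1), replace $E$ by $k(\mathfrak{p})$ (which preserves geometric irreducibility of $\mathcal{C}$), and let $r' : G_K \to \mathrm{GL}_n(\mathcal{O})$ be the resulting lifting of $\overline{r}$. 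It is robustly smooth by construction (robust smoothness depends only on the associated Weil--Deligne representations over $\overline{\mathbb{Q}_l}$ and is insensitive to the coefficient field), unipotently ramified since it factors through $R_{\overline{r}}^{\mathrm{unip}}$, and of monodromy type $\tau$ by Corollary \ref{regular morphism}(2), the same monodromy type as $r$; and $r, r'$ both lie on the geometrically irreducible component $\mathcal{C}$ of $\mathrm{Spec}\, R_{\overline{r}}[\frac{1}{l}]$ and reduce to $\overline{r}$, so $r \sim r'$.

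The step I expect to be the main obstacle — or at least the one requiring the most care — is the identification $Z \cong \mathrm{Spec}\, \widehat{\mathcal{O}_{\mathcal{M}(\tau), x}}$ together with the verification that its components over $\overline{\mathbb{Q}_l}$ are genuine irreducible components of $\mathrm{Spec}\, R_{\overline{r}}[\frac{1}{l}]$: this rests on $\mathcal{M}(\tau)$ being a full-dimensional irreducible component of $\mathcal{M}$ that is flat over $\mathcal{O}$ (Proposition \ref{moduli of Weil-Deligne}), on the equidimensionality of completions of excellent equidimensional local rings, and on Corollary \ref{moduli of Weil-Deligne 2}. Once this structural input is granted, the rest is a routine combination of the density of robustly smooth points with the stratification statement of Corollary \ref{regular morphism}.
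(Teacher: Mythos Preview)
Your proof is correct and follows essentially the same route as the paper: fix the stratum $\mathcal{X}(\tau)$, pick an irreducible component $\mathcal{C}$ of $\mathrm{Spec}\,R_{\overline{r}}^{\mathrm{unip}}[\tfrac{1}{l}]$ through $r$ that is compatible with $\tau$, and intersect with the dense set of robustly smooth points. The only difference is in how the component $\mathcal{C}$ is produced: you argue via a dimension count on $Z=\mathrm{Spec}\,\widehat{\mathcal{O}_{\mathcal{M}(\tau),x}}$, while the paper simply invokes flatness of $F$ (going-down for the regular morphism $\mathrm{Spec}\,\widehat{\mathcal{O}_{\mathcal{M},x}}\to\mathcal{M}$) to lift the generic point of $\mathcal{M}(\tau)[\tfrac{1}{l}]$ to a generic point of $\mathrm{Spec}\,R_{\overline{r}}^{\mathrm{unip}}[\tfrac{1}{l}]$ specializing to $r$.
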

        
\begin{proof} 
After extending $E$, we may assume that all irreducible component of $\mathrm{Spec} \, R_{\overline{r}}^{\mathrm{unip}}[\frac{1}{l}]$ are geometrically irreducible. Let $\tau$ be the monodromy type of $\mathrm{WD}(r \otimes \overline{\mathbb{Q}}_l)$. By 2 of Proposition \ref{moduli of Weil-Deligne}, $\mathcal{M}(\tau)$ is an irreducible component of $\mathcal{M}$. We take an irreducible component $\mathcal{C}$ of $\mathrm{Spec} \, \widehat{\mathcal{O}_{\mathcal{M}, x}}[\frac{1}{l}] = \mathrm{Spec} \, R_{\overline{r}}^{\mathrm{unip}}[\frac{1}{l}]$ containing $r$ whose generic point is sent to the generic point of $\mathcal{M}(\tau)[\frac{1}{l}]$ by $F : \mathrm{Spec} \, R_{\overline{r}}^{\mathrm{unip}}[\frac{1}{l}] \rightarrow \mathcal{M}[\frac{1}{l}]$. This is possible because $F$ is flat. Since $\mathcal{M}(\tau)^0 = \mathcal{M} \times_{\mathcal{N}}\mathcal{N}(\tau)^0$ is a nonempty open subscheme of $\mathcal{M}(\tau) \subset \mathcal{M} \times_{\mathcal{N}} \mathcal{N}(\tau)$, we can take a nonempty open subset $U$ of $\mathrm{Spec} \, R_{\overline{r}}[\frac{1}{l}]$ contained in $\mathcal{C}$ such that $F(U) \subset \mathcal{M}(\tau)^0[\frac{1}{l}]$. The set of all robustly smooth points is Zariski dense in $\mathrm{Spec} \, R_{\overline{r}}[\frac{1}{l}]$ by Lemma \ref{robustly smooth} and therefore in $U$. This implies the lemma. \end{proof}

\begin{prop}\label{monodromy type irreducible component}

We assume that $\overline{r}$ is trivial. Let $r_1, r_2 \in \mathrm{Spec} \, R_{\overline{r}}^{\mathrm{unip}}(E)$.
    
If $\mathrm{WD}(r_1 \otimes \overline{\mathbb{Q}}_l)$ and $\mathrm{WD}(r_2 \otimes \overline{\mathbb{Q}}_l)$ have the same monodromy type, then $r_1 \sim r_2$.
    
\end{prop}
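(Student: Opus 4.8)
The plan is to reduce the statement to a single connectedness assertion about the scheme $\mathcal{X}(\tau):=\mathrm{Spec}R_{\overline{r}}^{\mathrm{unip}} \times_{F, \mathcal{M}} \mathcal{M}(\tau)^0$ from Corollary \ref{regular morphism}, where $\tau$ denotes the common monodromy type of $\mathrm{WD}(r_1 \otimes \overline{\mathbb{Q}_l})$ and $\mathrm{WD}(r_2 \otimes \overline{\mathbb{Q}_l})$. First I would enlarge $E$ so that every irreducible component of $\mathrm{Spec}R_{\overline{r}}^{\mathrm{unip}}[\frac{1}{l}]$ is geometrically irreducible (Lemma \ref{geometrically irreducibility}). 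If $\overline{r}$ admits no unipotently ramified lifting with monodromy type $\tau$ there is nothing to prove, so assume $\mathcal{X}(\tau) \neq \emptyset$; then by the last assertion of Corollary \ref{regular morphism} we have $r_1, r_2 \in \mathcal{X}(\tau)(E)$, and $\mathcal{X}(\tau)$ is a regular scheme by the first assertion of that corollary. Hence it suffices to prove that $\mathcal{X}(\tau)$ is connected, for then, being regular, it is irreducible, and $r_1$ and $r_2$ lie on its unique irreducible component.

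For the deduction, recall $R_{\overline{r}}^{\mathrm{unip}} = \widehat{\mathcal{O}_{\mathcal{M}, x}}$ with $x = F(\overline{r})$, which equals $(I_n, I_n)$ precisely because $\overline{r}$ is trivial. Since $\mathcal{X}(\tau)\neq\emptyset$, the point $x$ lies on the closed subscheme $\mathcal{M}(\tau)$, so $\mathcal{X}(\tau)$ is an open subscheme of $\mathrm{Spec}\widehat{\mathcal{O}_{\mathcal{M}(\tau), x}}$, namely the complement of the strata $\mathcal{M}(\tau')$ with $\tau' \prec \tau$, $\tau' \neq \tau$ (in particular, for $\tau\neq[1,\cdots,1]$ this complement omits the closed point $x$, which is why connectedness is not automatic). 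Each branch of $\mathcal{M}(\tau)$ at $x$ has dimension $n^2+1$ (by flatness over $\mathcal{O}$ and Proposition \ref{moduli of Weil-Deligne}, since $\mathcal{M}(\tau)$ and its special fibre are equidimensional) and its generic point maps to the generic point of $\mathcal{M}(\tau)$, which lies in $\mathcal{M}(\tau)^0$; so if $\mathcal{X}(\tau)$ is irreducible it is dense in a single branch, whence its closure $\overline{\mathcal{X}(\tau)}$ in $\mathrm{Spec}R_{\overline{r}}^{\mathrm{unip}}$ is irreducible of dimension $n^2+1$. By Corollary \ref{moduli of Weil-Deligne 2}, $\mathrm{Spec}R_{\overline{r}}^{\mathrm{unip}}[\frac{1}{l}]$ is equidimensional of dimension $n^2$ and is a union of irreducible components of $\mathrm{Spec}R_{\overline{r}}[\frac{1}{l}]$, so $\overline{\mathcal{X}(\tau)}[\frac{1}{l}]$ is an irreducible component of $\mathrm{Spec}R_{\overline{r}}[\frac{1}{l}]$, geometrically irreducible after our enlargement of $E$. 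As $r_1, r_2 \in \mathcal{X}(\tau)(E) \subseteq \overline{\mathcal{X}(\tau)}[\frac{1}{l}]$ and $\overline{r_1} \cong \overline{r_2}$, this yields $r_1 \sim r_2$.

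The main obstacle is therefore to prove that $\mathcal{X}(\tau)$ is connected, which I would derive from the claim that $\mathcal{M}(\tau)$ is geometrically unibranch at the maximally degenerate point $x = (I_n, I_n)$, i.e. that $\widehat{\mathcal{O}_{\mathcal{M}(\tau), x}}$ is a domain: then $\mathcal{X}(\tau)$, being a nonempty open subscheme of an irreducible scheme whose complement has codimension $\geq 1$ and whose generic point it contains, is irreducible. This is exactly where residual triviality is indispensable (for a general residual representation $x$ need not be unibranch on $\mathcal{M}(\tau)$), and I expect it to be the hard step. To prove the claim I would use the explicit description of $\mathcal{M}(\tau)$ from \cite{small} underlying Proposition \ref{moduli of Weil-Deligne}, together with the $\mathbb{G}_m$-action $(\Phi, \Sigma) \mapsto (\Phi, \Sigma^t)$ on $\mathcal{M}$: it preserves $\mathcal{M}$ and each $\mathcal{M}(\tau)$ (since $\Phi \Sigma^t \Phi^{-1} = (\Sigma^{q_K})^t = (\Sigma^t)^{q_K}$) and its $t \to 0$ limit contracts $\mathcal{M}(\tau)$ into the fixed locus $\{\Sigma = I_n\}$; this exhibits $\mathcal{M}(\tau)$ as graded in the direction of the nilpotent part near $x$ and reduces the count of branches at $(I_n, I_n)$ to a direct analysis of the defining equations $\Phi \Sigma \Phi^{-1} = \Sigma^{q_K}$.

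Finally, I would note two alternative ingredients that I would keep available. Lemma \ref{robustly smooth monodromy type} gives a reduction to robustly smooth $r_1, r_2$, which are regular and hence unibranch points of $\mathrm{Spec}R_{\overline{r}}[\frac{1}{l}]$ by Lemma \ref{regular point}; and Lemma \ref{coincide monodromy type}(3) — applicable here because with $\overline{r}$ trivial every subspace of $\mathbb{F}^n$ is $G_K$-stable — applies to the saturation of the monodromy filtration of $\log r_i(\mathrm{Frob}_{K})$-conjugation on $\log r_i(\sigma)$, which is $G_K$-stable since $r_i(\mathrm{Frob}_K)$ scales $\log r_i(\sigma)$ by $q_K$, giving a more hands-on way to move $r_i$ within its component; but I expect the scheme-theoretic argument above, keyed on the single point $x = (I_n, I_n)$, to be the cleanest.
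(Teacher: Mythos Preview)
Your reduction is sound: showing that $\mathcal{X}(\tau)$ is irreducible would indeed finish the proof, and you correctly identify this with the unibranchness of $\mathcal{M}(\tau)$ at $x=(I_n,I_n)$. But the argument you sketch for unibranchness has a genuine gap. The $\mathbb{G}_m$-action $(\Phi,\Sigma)\mapsto(\Phi,\Sigma^t)$ (equivalently $(\Phi,N')\mapsto(\Phi,tN')$ with $N'=\log\Sigma$) contracts only the nilpotent coordinate; its fixed locus is the whole copy of $\mathrm{GL}_n$ given by $\{\Sigma=I_n\}$, not the single point $x$. So the induced grading separates the $N'$-direction from the $\Phi$-direction, but it does not by itself bound the number of formal branches at $x$: you still need to analyse the $\Phi$-direction, and over $E$ the fibre $\{\Phi=I_n\}$ collapses to $\{N'=0\}$ (since $(q_K-1)N'=0$), so the branches are entirely governed by the interaction of $\Phi$ and $N'$ in the relation $(I+A)N'=q_K N'(I+A)$. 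You acknowledge this as ``a direct analysis of the defining equations'', but that analysis is the whole content of the claim and is not carried out.

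The paper avoids the unibranch question altogether by a constructive route that is closer to the alternatives you list at the end. Using \cite[Lemma~1.3.1]{CW} it chooses a filtration on $r_1$ whose graded pieces have $\mathrm{WD}\cong\mathrm{Sp}_{n_i}(\psi_i)$, and then runs two explicit one-parameter families inside $\mathcal{X}(\tau)$ (the conjugation $hr_1h^{-1}$ as in Lemma~\ref{coincide monodromy type}(3), which requires $\overline{r}$ trivial, followed by twisting the graded pieces by $\tilde\psi_i^{-1}$) to connect $r_1$, within a single irreducible component of $\mathcal{X}(\tau)[\tfrac{1}{l}]$, to a point $p=\bigoplus s_i$ with $\mathrm{WD}(s_i)\cong\mathrm{Sp}_{n_i}(1)$. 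Since that $\mathrm{WD}$ is pure, $p$ is a regular point of $\mathrm{Spec}R_{\overline r}[\tfrac{1}{l}]$ (Lemma~\ref{regular point}), hence $r_1\sim p$. Doing the same for $r_2$ gives $r_2\sim p'=\bigoplus t_i$ with $\mathrm{WD}(t_i)\cong\mathrm{Sp}_{n_i}(1)$. The endgame is then block-by-block: \cite[Proposition~3.1(4)]{IA} gives $s_i\sim t_i$ for each Steinberg block, and Lemma~\ref{coincide monodromy type}(7) assembles these into $p\sim p'$. Thus the paper replaces your global irreducibility statement by a reduction to the single-block case, where it imports a known result, whereas your approach would have to prove the full geometric statement from scratch.
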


\begin{proof} We may assume that all irreducible components of $\mathrm{Spec} \, R_{\overline{r}}[\frac{1}{l}]$ are geometrically irreducible. 
    
Let $\tau:=[n_1, \cdots, n_k]$ be the monodromy type of $\mathrm{WD}(r_1 \otimes_{\mathcal{O}} \overline{\mathbb{Q}}_l)$. Then $r_1$ and $r_2$ define $E$-valued points of $\mathcal{X}(\tau)[\frac{1}{l}]$ by 2 of Corollary \ref{regular morphism}. Let $\psi_1, \cdots, \psi_k : W_K \rightarrow \overline{\mathbb{Q}}_l^{\times}$ be unramified characters such that $\mathrm{WD}(r_1 \otimes \overline{\mathbb{Q}}_l)^{F-ss} \cong \mathrm{Sp}_{n_1}(\psi_1) \oplus \cdots \oplus \mathrm{Sp}_{n_k}(\psi_k)$. After extending $E$, we may assume $\psi_i(\mathrm{Frob_K}) \subset 1 + \varpi \mathcal{O}$ for any $i$ since $\psi_i(\mathrm{Frob}_K)$ is an eigenvalue of $r_1(\mathrm{Frob}_K)$.

By \cite[Lemma 1.3.1]{CW}, after extending $E$ and changing the order of $(n_1, \cdots, n_k)$ if necessary, there exists an increasing $G_K$-stable filtration $\{ \mathrm{Fil}_i \}$ on $r_1$ by $\mathcal{O}$-direct summands such that $\mathrm{WD}(\mathrm{Fil}_i/\mathrm{Fil}_{i-1} \otimes \overline{\mathbb{Q}}_l) \cong \mathrm{Sp}_{n_i}(\psi_i)$ for all $i$.  Then $r_1':=\oplus_i \mathrm{Fil}_i/\mathrm{Fil}_{i-1}$ is contained in $\mathcal{X}(\tau)[\frac{1}{l}]$ by Corollary \ref{regular morphism}. 

We can take a basis $\{ e_{i, j} \}_{1 \le i \le k, 1 \le j \le t_i}$ of $\mathcal{O}^{\oplus n}$ such that $\{ e_{i',j} \mid 1 \le i' \le i, 1 \le j \le t_{i'} \}$ is a basis of $\mathrm{Fil}_i$ for any $i$. Let $h$ denotes the element of $\mathrm{GL}_n(\mathcal{O} \langle T \rangle [$$\frac{1}{T}$$])$ such that $he_{i,j} = T^{-i}e_{i,j}$ for all $i, j$. Then $h r_1 h^{-1} : G_K \rightarrow \mathrm{GL}_n(\mathcal{O}\langle T \rangle[\frac{1}{T}])$ satisfies $\mathrm{Im}(h r h^{-1}) \subset \mathrm{GL}_n(\mathcal{O}\langle T \rangle)$ and $h r_1 h^{-1} \mod \varpi$ is trivial. By Lemma \ref{homotopy}, we obtain $\varphi : R_{\overline{r}}[\frac{1}{l}] \rightarrow \mathcal{O}\langle T \rangle[\frac{1}{l}]$ such that $\varphi \circ r^{\mathrm{univ}} = h r_1 h^{-1}$. Let $\varphi^* : \mathrm{Spec} \, \mathcal{O}\langle T \rangle[\frac{1}{l}] \rightarrow \mathrm{Spec} \, R_{\overline{r}}[\frac{1}{l}]$ denote the $\mathcal{O}$-morphism corresponding to $\varphi$. 

Note that for any point $\mathfrak{p} \in \mathrm{Spec} \, \mathcal{O}\langle T \rangle[\frac{1}{l}]$, we have $\varphi^*(\mathfrak{p}) \in \mathcal{X}(\tau)[\frac{1}{l}]$. In fact, if $T$ is not zero in the residue field $k(\mathfrak{p})$, then $h \mod \mathfrak{p} \in \mathrm{GL}_n(k(\mathfrak{p}))$ and we obtain $\varphi^*(\mathfrak{p}) \in \mathcal{X}(\tau)[\frac{1}{l}]$ by 2 of Corollary \ref{regular morphism}. If $\mathfrak{p}=(T)$, then we have $\varphi^*((T)) = r_1'$. Since $\mathcal{X}(\tau)[\frac{1}{l}]$ is a subscheme of $\mathrm{Spec} \, R_{\overline{r}}[\frac{1}{l}]$ and $\mathcal{O}\langle T \rangle[\frac{1}{l}]$ is reduced, $\varphi^*$ factors through $\mathcal{X}(\tau)[\frac{1}{l}] \rightarrow \mathrm{Spec} \, R_{\overline{r}}[\frac{1}{l}]$. This implies that $r_1$ and $r_1'$ are contained in the same irreducible component of $\mathcal{X}(\tau)[\frac{1}{l}]$.

Let $\tilde{\psi_i}: G_K \rightarrow \mathcal{O}\langle T \rangle^{\times}$ be an unramified character such that $\tilde{\psi_i}(\mathrm{Frob}_K) = T + (1-T)\psi_i(\mathrm{Frob}_K)$. Then there exists $\varphi': R_{\overline{r}}[\frac{1}{l}] \rightarrow \mathcal{O}\langle T \rangle[\frac{1}{l}]$ such that $\varphi' \circ r^{\mathrm{univ}} = \oplus_i (\mathrm{Fil}_i/\mathrm{Fil}_{i-1} \otimes \tilde{\psi_i}^{-1})$ by Lemma \ref{homotopy}. From 2 of Corollary \ref{regular morphism}, $\varphi'^*$ factors through $\mathcal{X}(\tau)[\frac{1}{l}] \rightarrow \mathrm{Spec} \, R_{\overline{r}}[\frac{1}{l}]$. Therefore, $r_1' = \oplus_i \mathrm{Fil}_i/\mathrm{Fil}_{i-1}$ and $\oplus_i (\mathrm{Fil}_i/\mathrm{Fil}_{i-1} \otimes \psi_i^{-1})$ are contained in the same irreducible component of $\mathcal{X}(\tau)[\frac{1}{l}]$. Since $\mathcal{X}(\tau)[\frac{1}{l}]$ is regular by 1 of Corollary \ref{regular morphism}, $r_1$ and $\oplus_i (\mathrm{Fil}_i/\mathrm{Fil}_{i-1} \otimes \psi_i^{-1})$ are contained in the same irreducible component of $\mathcal{X}(\tau)[\frac{1}{l}]$. Since $\mathrm{WD}(\oplus_i (\mathrm{Fil}_i/\mathrm{Fil}_{i-1} \otimes \psi_i^{-1}) \otimes \overline{\mathbb{Q}}_l) \cong \mathrm{Sp}_{n_1}(1) \oplus \cdots \oplus \mathrm{Sp}_{n_k}(1)$ is pure of weight $0$, $\oplus_i (\mathrm{Fil}_i/\mathrm{Fil}_{i-1} \otimes \psi_i^{-1})$ is a regular point of $\mathrm{Spec} \, R_{\overline{r}}[\frac{1}{l}]$ by Lemma \ref{regular point}.

Thus, we may assume that there exist subrepresentations of $s_1, \cdots, s_k$ (resp. $t_1, \cdots, t_k$) of $r_1$ (resp. $r_2$) such that $r_1 = \oplus_i s_i$ (resp. $r_2 = \oplus_i t_i$) and $\mathrm{WD}(s_i \otimes \overline{\mathbb{Q}}_l) \cong \mathrm{Sp}_{n_i}(1)$ (resp. $\mathrm{WD}(t_i \otimes \overline{\mathbb{Q}}_l) \cong \mathrm{Sp}_{n_i}(1)$) for all $i$.

We have $s_i \sim t_i$ for all $i$ by the irreducibility of the Steinberg lifting ring (see \cite[4 of Proposition 3.1]{IA}). Thus, we obtain $r_1 \sim r_2$ by 7 of Lemma \ref{coincide monodromy type}. \end{proof}

\subsection{$p = l$}

In this subsection, we assume $p=l$ and $\tau(K) \subset E$ for all $\tau \in \mathrm{Hom}_{\mathbb{Q}_l}(K, \overline{\mathbb{Q}}_l)$.

\begin{dfn} Let $\lambda$ be an element of $(\mathbb{Z}^n_+)^{\mathrm{Hom}_{\mathbb{Q}_l}(K, E)}$.

(a) \ Let $B$ be a finite $E$-algebra and $r : G_K \rightarrow \mathrm{GL}_n(B)$ be a continuous representation.

We say that $r$ is de Rham of $l$-adic Hodge type $\mathbf{v}_{\lambda}$ if $r$ satisfies the following conditions.

1 \ $r$ is de Rham as a continuous representation over $\mathbb{Q}_l$.

2 \ For all $\tau \in \mathrm{Hom}_{\mathbb{Q}_l}(K, E)$, $\mathrm{gr}^{\lambda_{\tau, j}+n-j}(D_{\mathrm{dR}}(r)) \otimes_{B \otimes_{\mathbb{Q}_l} K, 1 \otimes \tau} B$ is a finite free $B$-module of rank $1$ for $j = 1, \cdots, n$ and $\mathrm{gr}^{i}(D_{\mathrm{dR}}(r)) \otimes_{B \otimes_{\mathbb{Q}_l} K, 1 \otimes \tau} B = 0$ for other $i$. 

(b) \ We say that $r$ is semistable (resp. crystalline) of $l$-adic Hodge type $\mathbf{v}_{\lambda}$ if $r$ is de Rham of $l$-adic Hodge type $\mathbf{v}_{\lambda}$ and $r$ is semistable (resp. crystalline) as a continuous representation over $\mathbb{Q}_l$.

(c) \ For a continuous representation $r : G_K \rightarrow \mathrm{GL}_n(\overline{\mathbb{Q}}_l)$, we can take a finite extension $E'/E$ contained in $\overline{\mathbb{Q}}_l$ such that $\mathrm{Im}(r) \subset \mathrm{GL}_n(E')$ and consequently $r$ is regarded as a representation $r^{E'} : G_{K} \rightarrow \mathrm{GL}_n(E')$. We say that $r$ is de Rham (resp. semistable, crystalline) of $l$-adic Hodge type $\mathbf{v}_{\lambda}$ if $r^{E'}$ is de Rham (resp. semistable, crystalline) of $l$-adic Hodge type $\mathbf{v}_{\lambda}$. This is independent of the choice of $E'$.

\end{dfn}

\begin{prop} \label{psd}

Let $\overline{r}: G_K \rightarrow \mathrm{GL}_n(\mathbb{F})$ be a continuous representation and $\lambda \in (\mathbb{Z}_+^n)^{\mathrm{Hom}_{\mathbb{Q}_l}(K, E)}$.

1 \ There exists a unique $\mathcal{O}$-flat quotient $R_{\overline{r}}^{\mathrm{cris}, \lambda}$ of $R_{\overline{r}}$ satisfying the following property.

$\cdot$ For any finite $E$-algebra $B$ and $\mathcal{O}$-morphism $f : R_{\overline{r}} \rightarrow B$, $f$ factors through $R_{\overline{r}} \twoheadrightarrow R_{\overline{r}}^{\mathrm{cris}, \lambda}$ if and only if $f \circ r^{\mathrm{univ}}$ is crystalline of $l$-adic Hodge type $\mathbf{v}_{\lambda}$.

2 \ $R_{\overline{r}}^{\mathrm{cris}, \lambda}[\frac{1}{l}]$ is regular (in particular, $R_{\overline{r}}^{\mathrm{cris}, \lambda}$ is reduced) and $\mathrm{Spec} \, R_{\overline{r}}^{\mathrm{cris}, \lambda}$ is equidimensional of dimension $1 + n^2 + [K:\mathbb{Q}_l]\frac{n(n-1)}{2}$.

\end{prop}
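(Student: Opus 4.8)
The plan is to deduce Proposition \ref{psd} from Kisin's theory of potentially semistable (here crystalline) deformation rings, after which the commutative-algebra results of \S3.1 handle the remaining bookkeeping. Since $E$ has been enlarged so that $\tau(K)\subset E$ for every $\tau\in\mathrm{Hom}_{\mathbb{Q}_l}(K,\overline{\mathbb{Q}_l})$, the Hodge type $\mathbf{v}_\lambda$ is prescribed $\tau$-by-$\tau$; observe at the outset that, as $\lambda_{\tau,\bullet}$ is dominant, the prescribed $\tau$-Hodge--Tate weights $\lambda_{\tau,1}+n-1>\lambda_{\tau,2}+n-2>\cdots>\lambda_{\tau,n}$ are strictly decreasing, so $\mathbf{v}_\lambda$ is a regular Hodge type for each $\tau$ and every Hodge filtration occurring below is a full flag.

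For part 1, I would follow Kisin and form the projective $R_{\overline{r}}[\tfrac1l]$-scheme parametrizing Breuil--Kisin modules (equivalently, weakly admissible filtered $\varphi$-modules) of Hodge type $\mathbf{v}_\lambda$ satisfying the crystalline condition, together with a trivialization matching the universal lifting. This scheme is proper over $\mathrm{Spec}R_{\overline{r}}[\tfrac1l]$; one takes $R_{\overline{r}}^{\mathrm{cris},\lambda}$ to be the quotient of $R_{\overline{r}}$ whose generic fiber is the scheme-theoretic image, equipped with its unique $\mathcal{O}$-flat structure (quotient by the ideal of $\varpi$-power torsion). That this quotient has the stated universal property on finite $E$-algebras $B$ — i.e. that the $B$-points of the generic fiber of $R_{\overline{r}}^{\mathrm{cris},\lambda}$ are exactly the crystalline liftings of type $\mathbf{v}_\lambda$ — is the content of Kisin's construction and uses $p$-adic Hodge theory essentially. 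Uniqueness then follows, since any $\mathcal{O}$-flat, reduced quotient is determined by the reduced closure of its closed points in characteristic zero: by Lemma \ref{density of closed point}(2) (applicable as $R_{\overline{r}}$ is local) the closed points are Zariski dense in every closed subscheme of $\mathrm{Spec}R_{\overline{r}}[\tfrac1l]$, so two such quotients with the same finite-$E$-algebra points have the same underlying reduced generic fiber, hence coincide.

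For part 2, the essential input — again Kisin's — is that $R_{\overline{r}}^{\mathrm{cris},\lambda}[\tfrac1l]$ is formally smooth over $E$. I would recall this through the local picture at a closed point $x$ with $k(x)$ finite over $E$: the completed local ring of $R_{\overline{r}}^{\mathrm{cris},\lambda}[\tfrac1l]$ at $x$ is the framed deformation ring of the weakly admissible filtered $\varphi$-module $D_{\mathrm{cris}}(r_x)$ with fixed Hodge type $\mathbf{v}_\lambda$, and this deformation problem splits into the (unobstructed) framed deformations of the underlying $\varphi$-module and the deformations of the Hodge filtration inside it, the latter governed by a product, over the embeddings $\tau\in\mathrm{Hom}_{\mathbb{Q}_l}(K,E)$, of full flag varieties for $\mathrm{GL}_n$, which are smooth. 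By Lemma \ref{completed tensor product regularity} the completed local ring is thus a power series ring over $k(x)$, so $R_{\overline{r}}^{\mathrm{cris},\lambda}[\tfrac1l]$ is regular at every closed point; since it is a Jacobson scheme (Lemma \ref{density of closed point}(2)) every point is a generization of a closed point, whence $R_{\overline{r}}^{\mathrm{cris},\lambda}[\tfrac1l]$ is regular. Being $\mathcal{O}$-flat, $R_{\overline{r}}^{\mathrm{cris},\lambda}$ embeds into its (reduced) generic fiber, so it is reduced. For the dimension, the same filtered $\varphi$-module description shows the completed local ring at $x$ is a power series ring of relative dimension $n^2+[K:\mathbb{Q}_l]\tfrac{n(n-1)}{2}$ over $k(x)$ — the $n^2$ from the framing together with the $\varphi$-module deformations, and $\dim(\mathrm{GL}_n/B_n)=\tfrac{n(n-1)}{2}$ from each embedding — so the generic fiber is equidimensional of that dimension. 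Finally, $\mathcal{O}$-flatness means no minimal prime of $R_{\overline{r}}^{\mathrm{cris},\lambda}$ contains $\varpi$, so every irreducible component of $\mathrm{Spec}R_{\overline{r}}^{\mathrm{cris},\lambda}$ dominates $\mathrm{Spec}\mathcal{O}$ and meets the generic fiber in a component of dimension $n^2+[K:\mathbb{Q}_l]\tfrac{n(n-1)}{2}$; adding the one-dimensional base $\mathrm{Spec}\mathcal{O}$ gives equidimension $1+n^2+[K:\mathbb{Q}_l]\tfrac{n(n-1)}{2}$.

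The hard part is not anything in the argument above but the two deep facts quoted: that the crystalline-of-fixed-Hodge-type locus is cut out by a closed subscheme of the generic fiber of the lifting ring (Kisin's construction via Breuil--Kisin modules and properness of the associated period-type scheme), and that the resulting generic fiber is formally smooth of the stated dimension (Kisin's analysis of deformations of weakly admissible filtered $\varphi$-modules). Accordingly I would write the proof as a reduction to these results of Kisin, with the descent-to-closed-points, Jacobson, and $\mathcal{O}$-flatness steps above — which use only Lemmas \ref{density of closed point} and \ref{completed tensor product regularity} — making up the rest.
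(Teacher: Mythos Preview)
Your proposal is correct and matches the paper's approach: the paper's proof is simply a citation to Kisin's construction of potentially semistable deformation rings (\cite[Theorem 3.3.8]{psd}) together with \cite[Theorem 3.3.3]{MEI}, which package exactly the two ``hard facts'' you isolate. Your additional bookkeeping via Lemmas \ref{density of closed point} and \ref{completed tensor product regularity} is fine, though strictly speaking unnecessary once you invoke those references directly.
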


\begin{proof} See \cite[Theorem 3.3.3]{MEI} and \cite[Theorem 3.3.8]{psd}. \end{proof}

\begin{lem} \label{p-adic equivalence irreducible}

Let $\lambda \in (\mathbb{Z}_+^n)^{\mathrm{Hom}_{\mathbb{Q}_l}(K, E)}$, $\overline{r} : G_K \rightarrow \mathrm{GL}_n(\mathbb{F})$ be a continuous representation and $g \in \mathrm{GL}_n(\mathcal{O})$ such that $\overline{grg^{-1}} = \overline{r}$. 

Then the action $\mathrm{Spec} \, R_{\overline{r}}[\frac{1}{l}] \rightarrow \mathrm{Spec} \, R_{\overline{r}}[\frac{1}{l}], \rho \mapsto g \rho g^{-1}$ induces the action $\mathrm{Spec} \, R_{\overline{r}}^{\mathrm{cris}, \lambda}[\frac{1}{l}] \rightarrow \mathrm{Spec} \, R_{\overline{r}}^{\mathrm{cris}, \lambda} [\frac{1}{l}]$ and fixes all irreducible components of $\mathrm{Spec} \, R_{\overline{r}}^{\mathrm{cris}, \lambda} [\frac{1}{l}]$.

\end{lem}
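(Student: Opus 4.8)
The plan is to adapt the proof of Lemma~\ref{equivalence irreducible}, carrying the crystalline condition along the one–parameter family of conjugates used there. Recall that for a closed point $\mathfrak{p}$ of $\mathrm{Spec}R_{\overline{r}}[\frac{1}{l}]$ that argument produces a complete topological domain $A := \mathcal{O}_{k(\mathfrak{p})}\langle T, S \rangle/(S\det(TI_n + (1-T)g) - 1)$ and, via Lemma~\ref{homotopy} applied to $(TI_n + (1-T)g)\,r_{\mathfrak{p}}\,(TI_n + (1-T)g)^{-1} : G_K \to \mathrm{GL}_n(A)$, an $\mathcal{O}$-morphism $\varphi : \mathrm{Spec}A \to \mathrm{Spec}R_{\overline{r}}$ with $\varphi(T - 1) = r_{\mathfrak{p}}$ and $\varphi(T) = g r_{\mathfrak{p}} g^{-1}$. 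The additional observation I would exploit is that every maximal ideal $\mathfrak{q}$ of $A[\frac{1}{l}]$ has residue field $E'$ finite over $E$ (part~1 of Lemma~\ref{density of closed point}), that $A[\frac{1}{l}]$ is a domain (hence reduced) with Zariski-dense closed points (part~2 of Lemma~\ref{density of closed point}), and that the representation classified by the composite $R_{\overline{r}} \to A[\frac{1}{l}] \to E'$ is $h\,(r_{\mathfrak{p}} \otimes_{k(\mathfrak{p})} E')\,h^{-1}$ for the invertible matrix $h \in \mathrm{GL}_n(E')$ obtained by specializing $TI_n+(1-T)g$, hence is isomorphic to $r_{\mathfrak{p}} \otimes_{k(\mathfrak{p})} E'$ as a $G_K$-representation.

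First I would check that $\varphi$ lands in the crystalline locus whenever $\mathfrak{p}$ does. Suppose $\mathfrak{p} \in \mathrm{Spec}R_{\overline{r}}^{\mathrm{cris}, \lambda}[\frac{1}{l}]$, so $r_{\mathfrak{p}}$ is crystalline of $l$-adic Hodge type $\mathbf{v}_{\lambda}$. Being crystalline of $l$-adic Hodge type $\mathbf{v}_{\lambda}$ depends only on the isomorphism class of the $G_K$-representation (by functoriality of $D_{\mathrm{cris}}$ and $D_{\mathrm{dR}}$) and is preserved under finite extension of the coefficient field (using the standing assumption $\tau(K)\subseteq E$), so for each maximal $\mathfrak{q}$ the representation above is still crystalline of type $\mathbf{v}_{\lambda}$; by the universal property in part~1 of Proposition~\ref{psd}, the map $R_{\overline{r}}\to E'$ factors through $R_{\overline{r}}^{\mathrm{cris},\lambda}$. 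Hence the ideal of $R_{\overline{r}}[\frac{1}{l}]$ cutting out $\mathrm{Spec}R_{\overline{r}}^{\mathrm{cris},\lambda}[\frac{1}{l}]$ maps into every maximal ideal of $A[\frac{1}{l}]$, and since $A[\frac{1}{l}]$ is reduced this ideal maps to zero; thus $\varphi$ restricts to a morphism $\mathrm{Spec}A[\frac{1}{l}] \to \mathrm{Spec}R_{\overline{r}}^{\mathrm{cris},\lambda}[\frac{1}{l}]$ (the latter being reduced by Proposition~\ref{psd}). In particular $g r_{\mathfrak{p}} g^{-1} \in \mathrm{Spec}R_{\overline{r}}^{\mathrm{cris},\lambda}[\frac{1}{l}]$; as closed points are dense there and the same reasoning applies with $g^{-1}$ in place of $g$, the automorphism $\rho \mapsto g\rho g^{-1}$ of $\mathrm{Spec}R_{\overline{r}}[\frac{1}{l}]$ restricts to an automorphism $\sigma$ of $\mathrm{Spec}R_{\overline{r}}^{\mathrm{cris},\lambda}[\frac{1}{l}]$.

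Finally I would deduce that $\sigma$ fixes every irreducible component. By part~2 of Proposition~\ref{psd}, $\mathrm{Spec}R_{\overline{r}}^{\mathrm{cris},\lambda}[\frac{1}{l}]$ is regular, hence the disjoint union of its finitely many irreducible components, so each closed point lies on exactly one component. Given a closed point $\mathfrak{p}$ on a component $\mathcal{C}$, irreducibility of $\mathrm{Spec}A[\frac{1}{l}]$ forces its image under $\varphi$ to lie in a single component of $\mathrm{Spec}R_{\overline{r}}^{\mathrm{cris},\lambda}[\frac{1}{l}]$; that component contains $\mathfrak{p} = \varphi(T-1)$, so it is $\mathcal{C}$, and it also contains $\sigma(\mathfrak{p}) = \varphi(T)$. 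Since $\sigma$ is a homeomorphism, $\sigma(\mathfrak{p})$ lies on the single component $\sigma(\mathcal{C})$, whence $\sigma(\mathcal{C}) = \mathcal{C}$; as every component contains a closed point, $\sigma$ fixes them all.

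The step I expect to require the most care is not conceptually difficult but is the crux: showing that the path of conjugates $\mathrm{Spec}A[\frac{1}{l}]$ stays inside $\mathrm{Spec}R_{\overline{r}}^{\mathrm{cris},\lambda}[\frac{1}{l}]$. The conceptual content there is merely that conjugation by an invertible matrix over a coefficient field does not affect the property ``crystalline of $l$-adic Hodge type $\mathbf{v}_{\lambda}$''; the only thing to watch is the bookkeeping of coefficient-field extensions, which is harmless because $\tau(K)\subseteq E$ and because Proposition~\ref{psd}(1) is phrased for arbitrary finite $E$-algebras. Once this is in place, everything else is the formal argument of Lemma~\ref{equivalence irreducible} together with reducedness and regularity of the crystalline lifting ring.
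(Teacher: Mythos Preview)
Your proposal is correct and follows essentially the same approach as the paper: build the one-parameter family $A=\mathcal{O}_{k(\mathfrak p)}\langle T,S\rangle/(S\det(TI_n+(1-T)g)-1)$, use Lemma~\ref{homotopy} to get $\varphi:\mathrm{Spec}A\to\mathrm{Spec}R_{\overline r}$, check via Proposition~\ref{psd} and Lemma~\ref{density of closed point} that closed points land in the crystalline locus, then use that $A[\frac1l]$ is a reduced domain to conclude. The only cosmetic difference is that the paper dispatches the first assertion in one line (``universal property'') and does not explicitly invoke regularity for the disjointness of components, but your added detail there is harmless and correct.
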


\begin{proof} See \cite[(1) of p.530]{CW}. \end{proof}

\begin{lem} \label{p-adic coefficient}

For any continuous representation $\overline{r} : G_K \rightarrow \mathrm{GL}_n(\mathbb{F})$, any finite extension $E'/E$ and $\lambda \in (\mathbb{Z}_+^n)^{\mathrm{Hom}_{\mathbb{Q}_l}(K, E)}$, we have $R^{\mathrm{cris}, \lambda}_{\overline{r}, \mathcal{O}} \otimes_{\mathcal{O}} \mathcal{O}_{E'} \cong R^{\mathrm{cris}, \lambda}_{\overline{r}, \mathcal{O}_{E'}}$. 
        
\end{lem}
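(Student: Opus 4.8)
The plan is to show directly that $R^{\mathrm{cris},\lambda}_{\overline{r},\mathcal{O}} \otimes_{\mathcal{O}} \mathcal{O}_{E'}$ satisfies the defining universal property of $R^{\mathrm{cris},\lambda}_{\overline{r},\mathcal{O}_{E'}}$ recorded in Proposition \ref{psd} (with $\mathcal{O}$ replaced by $\mathcal{O}_{E'}$ throughout), and then invoke the uniqueness part of that proposition. Throughout I identify $R_{\overline{r},\mathcal{O}_{E'}}$ with $R_{\overline{r},\mathcal{O}} \otimes_{\mathcal{O}} \mathcal{O}_{E'}$ via Corollary \ref{coefficient change}, under which the universal lifting $r^{\mathrm{univ}}_{\mathcal{O}_{E'}}$ corresponds to the image of $r^{\mathrm{univ}}_{\mathcal{O}}$ in $\mathrm{GL}_n(R_{\overline{r},\mathcal{O}} \otimes_{\mathcal{O}} \mathcal{O}_{E'})$. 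Note also that, by the blanket assumption of this subsection that $\tau(K) \subseteq E$ for all $\tau \in \mathrm{Hom}_{\mathbb{Q}_l}(K, \overline{\mathbb{Q}_l})$, the natural inclusion induces a bijection $\mathrm{Hom}_{\mathbb{Q}_l}(K, E) \to \mathrm{Hom}_{\mathbb{Q}_l}(K, E')$, so $\lambda$ is legitimately regarded as an element of $(\mathbb{Z}^n_+)^{\mathrm{Hom}_{\mathbb{Q}_l}(K, E')}$ and the notion of $l$-adic Hodge type $\mathbf{v}_{\lambda}$ is unambiguous over $E'$.

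First I would record the flatness. Since $E'/E$ is a finite extension of $l$-adic fields, $\mathcal{O}_{E'}$ is a finite torsion-free module over the discrete valuation ring $\mathcal{O}$, hence free, hence flat; as $R^{\mathrm{cris},\lambda}_{\overline{r},\mathcal{O}}$ is $\mathcal{O}$-flat by Proposition \ref{psd}, the base change $R^{\mathrm{cris},\lambda}_{\overline{r},\mathcal{O}} \otimes_{\mathcal{O}} \mathcal{O}_{E'}$ is $\mathcal{O}_{E'}$-flat. Applying $-\otimes_{\mathcal{O}} \mathcal{O}_{E'}$ to the surjection $R_{\overline{r},\mathcal{O}} \twoheadrightarrow R^{\mathrm{cris},\lambda}_{\overline{r},\mathcal{O}}$ exhibits $R^{\mathrm{cris},\lambda}_{\overline{r},\mathcal{O}} \otimes_{\mathcal{O}} \mathcal{O}_{E'}$ as an $\mathcal{O}_{E'}$-flat quotient of $R_{\overline{r},\mathcal{O}_{E'}}$; explicitly it is the quotient by the ideal generated by the image of $J := \ker(R_{\overline{r},\mathcal{O}} \twoheadrightarrow R^{\mathrm{cris},\lambda}_{\overline{r},\mathcal{O}})$.

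Next I would verify the universal property. Let $B$ be a finite $E'$-algebra (hence also a finite $E$-algebra) and let $f : R_{\overline{r},\mathcal{O}_{E'}} \to B$ be an $\mathcal{O}_{E'}$-morphism; write $\tilde f : R_{\overline{r},\mathcal{O}} \to B$ for its composite with the structural map $R_{\overline{r},\mathcal{O}} \to R_{\overline{r},\mathcal{O}} \otimes_{\mathcal{O}} \mathcal{O}_{E'} = R_{\overline{r},\mathcal{O}_{E'}}$, an $\mathcal{O}$-morphism. Then $f$ factors through $R^{\mathrm{cris},\lambda}_{\overline{r},\mathcal{O}} \otimes_{\mathcal{O}} \mathcal{O}_{E'}$ if and only if $\tilde f(J) = 0$, i.e. if and only if $\tilde f$ factors through $R^{\mathrm{cris},\lambda}_{\overline{r},\mathcal{O}} = R_{\overline{r},\mathcal{O}}/J$, which by Proposition \ref{psd} over $\mathcal{O}$ holds if and only if $\tilde f \circ r^{\mathrm{univ}}_{\mathcal{O}}$ is crystalline of $l$-adic Hodge type $\mathbf{v}_{\lambda}$. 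Since $\tilde f \circ r^{\mathrm{univ}}_{\mathcal{O}} = f \circ r^{\mathrm{univ}}_{\mathcal{O}_{E'}}$, this is precisely the condition that $f \circ r^{\mathrm{univ}}_{\mathcal{O}_{E'}}$ be crystalline of $l$-adic Hodge type $\mathbf{v}_{\lambda}$. Thus $R^{\mathrm{cris},\lambda}_{\overline{r},\mathcal{O}} \otimes_{\mathcal{O}} \mathcal{O}_{E'}$ is an $\mathcal{O}_{E'}$-flat quotient of $R_{\overline{r},\mathcal{O}_{E'}}$ enjoying the property that characterizes $R^{\mathrm{cris},\lambda}_{\overline{r},\mathcal{O}_{E'}}$, and the uniqueness clause of Proposition \ref{psd} then yields the asserted isomorphism $R^{\mathrm{cris},\lambda}_{\overline{r},\mathcal{O}} \otimes_{\mathcal{O}} \mathcal{O}_{E'} \cong R^{\mathrm{cris},\lambda}_{\overline{r},\mathcal{O}_{E'}}$.

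The argument is essentially formal, so there is no serious obstacle; the one point requiring care is the identification $\tilde f \circ r^{\mathrm{univ}}_{\mathcal{O}} = f \circ r^{\mathrm{univ}}_{\mathcal{O}_{E'}}$, which is exactly the compatibility built into Corollary \ref{coefficient change} (the universal lifting over $\mathcal{O}_{E'}$ is the base change of the one over $\mathcal{O}$). I do not expect to need the closed-point density results of Lemma \ref{density of closed point}, although an alternative route would be to compare the two reduced closed subschemes of $\mathrm{Spec}\,R_{\overline{r},\mathcal{O}_{E'}}[\frac{1}{l}]$ by checking they have the same closed points, using the invariance of "crystalline of Hodge type $\mathbf{v}_{\lambda}$" under finite extension of the coefficient field together with Lemma \ref{density of closed point}(2).
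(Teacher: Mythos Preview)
Your proposal is correct and follows essentially the same approach as the paper: show that $R^{\mathrm{cris},\lambda}_{\overline{r},\mathcal{O}} \otimes_{\mathcal{O}} \mathcal{O}_{E'}$ satisfies the universal property characterizing $R^{\mathrm{cris},\lambda}_{\overline{r},\mathcal{O}_{E'}}$ (via Corollary \ref{coefficient change} and Proposition \ref{psd}), then invoke uniqueness. The paper's proof is a one-line version of exactly this argument, while you have carefully spelled out the flatness check and the factorization equivalence.
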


\begin{proof}

By Corollary \ref{coefficient change}, $R_{\overline{r}, \mathcal{O}}^{\mathrm{cris}, \lambda}\otimes_{\mathcal{O}} \mathcal{O}_{E'}$ has the same universal property as $R_{\overline{r}, \mathcal{O}_{E'}}^{\mathrm{cris}, \lambda}$. This implies the result. \end{proof}

We recall the definition of the notion $\sim$.

For continuous representations $r_1, r_2 : G_{K} \rightarrow \mathrm{GL}_n(\mathcal{O})$, we say $r_1 \sim r_2$ if $r_1$ and $r_2$ satisfy the following conditions after extending $E$. 

1 \ $\overline{r}_1 \cong \overline{r}_2$.

2 \ There exists $\lambda \in (\mathbb{Z}^n_+)^{\mathrm{Hom}_{\mathbb{Q}_l}(K,\overline{\mathbb{Q}}_l)}$ such that $r_1 \otimes \overline{\mathbb{Q}}_l$ and $r_2 \otimes \overline{\mathbb{Q}}_l$ are crystalline of $l$-adic Hodge-type $\mathbf{v}_{\lambda}$.

3 \ $r_1$ and $r_2$ are contained in the same geometrically irreducible component of $\mathrm{Spec} \, R^{\mathrm{cris}, \lambda}_{\overline{r}_1}[\frac{1}{l}]$ after fixing an identification $\overline{r}_1 = \overline{r}_2$. 

Note that the condition 2 is independent of an identification $\overline{r}_1 = \overline{r}_2$ by Lemma \ref{p-adic equivalence irreducible}.

\vspace{0.5 \baselineskip}

For continuous representations $r_1, r_2 : G_{K} \rightarrow \mathrm{GL}_n(\mathcal{O}_{\overline{\mathbb{Q}}_l})$, we can take a sufficiently large finite extension $E'$ of $E$ contained in $\overline{\mathbb{Q}}_l$ such that $\mathrm{Im}(r_1), \mathrm{Im}(r_2) \subset \mathrm{GL}_n(\mathcal{O}_{E'})$ and consequently $r_1$, $r_2$ are regarded as representations $r_1^{E'}$, $r_2^{E'} : G_K \rightarrow \mathrm{GL}_n(\mathcal{O}_{E'})$ respectively. We say $r_1 \sim r_2$ if $r_1^{E'} \sim r_2^{E'}$.

Note the following elementary properties of the notion $\sim$.

\begin{lem} \label{p=l}

Let $r, r_1, r_2, r_3 : G_K \rightarrow \mathrm{GL}_n(\mathcal{O}_{\overline{\mathbb{Q}}_l})$ and $r_4, r_5 : G_K \rightarrow \mathrm{GL}_m(\mathcal{O}_{\overline{\mathbb{Q}}_l})$ be continuous representations such that $r \otimes \overline{\mathbb{Q}}_l$, $r_1 \otimes \overline{\mathbb{Q}}_l$, $r_2 \otimes \overline{\mathbb{Q}}_l$ and $r_3 \otimes \overline{\mathbb{Q}}_l$ are crystalline of $l$-adic Hodge type $\mathbf{v}_{\lambda}$ for some $\lambda \in (\mathbb{Z}^n_+)^{\mathrm{Hom}_{\mathbb{Q}_l}(K, \overline{\mathbb{Q}}_l)}$.

1 \ If $r_1 \sim r_2$ and $r_2 \sim r_3$, then $r_1 \sim r_3$.

2 \ For any unramified character $\mu: G_K \rightarrow \mathcal{O}^{\times}_{\overline{\mathbb{Q}}_l}$ such that $\overline{\mu}$ is trivial, we have $r \sim r \otimes \mu$.

3 \ Assume $\overline{r}$ is semisimple. Then any $G_K$-stable increasing filtration $\{ \mathrm{Fil}_i \}$ on $r$ by $\mathcal{O}_{\overline{\mathbb{Q}}_l}$-direct summands, we have $r \sim \oplus_i \mathrm{Fil}_i/\mathrm{Fil}_{i-1}$.

4 \ Assume that $K/\mathbb{Q}_l$ is unramified, $\lambda_{\tau, 1} - \lambda_{\tau, n} \le l - 2 - n + 1$ for any $\tau \in \mathrm{Hom}_{\mathbb{Q}_l}(K, E)$ and $\overline{r}_1 \cong \overline{r}_2$. Then $r_1 \sim r_2$.

5 \ If $r_1 \sim r_2$, then $r_1|_{G_L} \sim r_2|_{G_L}$ for any finite extension $L/K$.

6 \ If $r_1 \sim r_2$ and $r_4 \sim r_5$, then $r_1 \otimes r_4 \sim r_2 \otimes r_5$ and $r_1 \oplus r_4 \sim r_2 \oplus r_5$.

\end{lem}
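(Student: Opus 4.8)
The plan is to mimic the proof of Lemma~\ref{coincide monodromy type} for the case $p\neq l$, replacing the universal lifting ring $R_{\overline r}$ by the fixed-Hodge-type crystalline lifting ring $R^{\mathrm{cris},\lambda}_{\overline r}$. The one extra ingredient is Proposition~\ref{psd}: since $\mathrm{Spec}R^{\mathrm{cris},\lambda}_{\overline r}[\frac{1}{l}]$ is regular, every $E$-point lies on a \emph{unique} irreducible component, which is exactly what makes the formalism of $\sim$ behave well and is also what proves Part~1. Throughout, after enlarging $E$ with the help of Lemma~\ref{geometrically irreducibility} and Lemma~\ref{p-adic coefficient}, I may assume that all the crystalline lifting rings in play have geometrically irreducible components.

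Part~1 is immediate from regularity: the Hodge type is $\mathbf{v}_{\lambda}$ in all cases by hypothesis, $r_2$ is a regular point of $\mathrm{Spec}R^{\mathrm{cris},\lambda}_{\overline{r_2}}[\frac{1}{l}]$ by Proposition~\ref{psd}, hence lies on a single component, so the component through $(r_1,r_2)$ coincides with the component through $(r_2,r_3)$. For Parts~2 and~3 I would run the constructions in the proof of Parts~2 and~3 of Lemma~\ref{coincide monodromy type}: for Part~2, twist by the unramified character $\widetilde\mu:G_K\to\mathcal O\langle T\rangle^\times$ with $\widetilde\mu(\mathrm{Frob}_K)=T+(1-T)\mu(\mathrm{Frob}_K)$; for Part~3, choose (using $\overline r$ semisimple) a basis refining $\{\mathrm{Fil}_i\}$ along which $\overline r$ is block diagonal and conjugate $r$ by $h=\mathrm{diag}(T^{-i})\in\mathrm{GL}_n(\mathcal O\langle T\rangle[\frac{1}{T}])$. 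In each case Lemma~\ref{homotopy} yields a morphism $\mathrm{Spec}\mathcal O\langle T\rangle\to\mathrm{Spec}R_{\overline r}$ carrying the ideal $(T-1)$ to one of the two representations and $(T)$ to the other. The key point is that \emph{every} $\overline{\mathbb{Q}_l}$-specialization along this family is crystalline of $l$-adic Hodge type $\mathbf{v}_{\lambda}$ --- an unramified twist of such a representation, resp.\ a direct sum of its subquotients with the same multiset of Hodge--Tate weights, is again crystalline of type $\mathbf{v}_{\lambda}$ --- so by Proposition~\ref{psd} and Lemma~\ref{density of closed point} the morphism factors through $\mathrm{Spec}R^{\mathrm{cris},\lambda}_{\overline r}[\frac{1}{l}]$, and since $\mathcal O\langle T\rangle[\frac{1}{l}]$ is a domain the two representations lie on a common component.

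For Part~4, in the stated weight range the Hodge--Tate weights $\lambda_{\tau,j}+n-j$ lie in an interval of length $\le l-2$, so Fontaine--Laffaille theory applies to every crystalline lift of $\overline{r_1}$ of type $\mathbf{v}_{\lambda}$ and $R^{\mathrm{cris},\lambda}_{\overline{r_1}}$ is formally smooth over $\mathcal O$; hence $\mathrm{Spec}R^{\mathrm{cris},\lambda}_{\overline{r_1}}[\frac{1}{l}]$ is irreducible and $r_1,r_2$, which have isomorphic reductions, lie on its unique component. For Part~5, after enlarging $E$ so that $\tau(L)\subset E$ for all $\tau\in\mathrm{Hom}_{\mathbb{Q}_l}(L,\overline{\mathbb{Q}_l})$, restriction $\rho\mapsto\rho|_{G_L}$ gives, via Lemma~\ref{homotopy} applied to $r^{\mathrm{univ}}|_{G_L}$ and the universal property in Proposition~\ref{psd}, a morphism $\mathrm{Spec}R^{\mathrm{cris},\lambda}_{\overline{r_1}}[\frac{1}{l}]\to\mathrm{Spec}R^{\mathrm{cris},\lambda'}_{\overline{r_1}|_{G_L}}[\frac{1}{l}]$ for the induced weight $\lambda'$ (restriction of a crystalline representation is crystalline); the image of the common component of $r_1,r_2$ is irreducible, hence contained in a single component of the target by regularity, which therefore contains both $r_1|_{G_L}$ and $r_2|_{G_L}$. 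For Part~6, I would use the morphisms $\mathrm{Spec}\big(R^{\mathrm{cris},\lambda}_{\overline{r_1}}\widehat{\otimes}_{\mathcal O}R^{\mathrm{cris},\mu}_{\overline{r_4}}\big)\to\mathrm{Spec}R_{\overline{r_1}\oplus\overline{r_4}}$, resp.\ $\to\mathrm{Spec}R_{\overline{r_1}\otimes\overline{r_4}}$, sending $(\rho_1,\rho_4)$ to $\rho_1\oplus\rho_4$, resp.\ $\rho_1\otimes\rho_4$ (here $\mathbf{v}_\mu$ is the common Hodge type of $r_4,r_5$, which exists since $r_4\sim r_5$); by Parts~1 and~2 of Lemma~\ref{completed tensor product irreducibility} the source is $\mathcal O$-flat and local with dense closed points and its characteristic-zero minimal primes are the $(\mathfrak p_i,\mathfrak q_j)$, so, exactly as in the proof of Lemma~\ref{p-adic equivalence irreducible}, the morphism factors through the crystalline quotient $\mathrm{Spec}R^{\mathrm{cris},\nu}[\frac{1}{l}]$ for the appropriate Hodge type $\nu$, and the component through $(r_1,r_4)$ and $(r_2,r_5)$ maps into a single component of the target.

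The main obstacle is essentially bookkeeping with lifting rings; the only place an input beyond this formalism enters is Part~4, where formal smoothness of the Fontaine--Laffaille crystalline lifting ring in the given range is used. The point requiring care in Parts~5 and~6 --- again as in the proof of Lemma~\ref{p-adic equivalence irreducible} --- is to check that the relevant morphisms really do factor through the crystalline quotient after inverting $l$; this follows from the density of closed points (Lemma~\ref{density of closed point}) together with the fact that $\mathrm{Spec}R^{\mathrm{cris},\nu}[\frac{1}{l}]$ is a reduced (indeed regular) closed subscheme of $\mathrm{Spec}R[\frac{1}{l}]$.
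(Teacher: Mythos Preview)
Your proposal is correct and follows essentially the same approach as the paper: the paper's proof simply states that Part~1 follows from the regularity of $R^{\mathrm{cris},\lambda}_{\overline{r_1}}[\frac{1}{l}]$, that Parts~2, 3, 5, 6 are proved in the same way as Parts~2, 3, 6, 7 of Lemma~\ref{coincide monodromy type}, and that Part~4 is \cite[Lemma~2.4.1]{CD}. You have filled in exactly these details, including the key point (handled as in Lemma~\ref{p-adic equivalence irreducible}) that the interpolating families factor through the crystalline quotient.
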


\begin{proof} See \cite[p530]{CW} except the property 4. 4 is \cite[Lemma 2.4.1]{CD}. \end{proof}

Let $\tau \in \mathrm{Aut}_{\mathbb{Q}_l}(K)$. We take $\tilde{\tau} \in \mathrm{Aut}_{\mathbb{Q}_l}(\overline{K})$ such that $\tilde{\tau}|_{K} = \tau$.

For a continuous representation $r: G_K \rightarrow \mathrm{GL}_n(\overline{\mathbb{Q}}_l)$, we define $r^{\tau} : G_{K} \rightarrow \mathrm{GL}_n(\overline{\mathbb{Q}}_l)$ by $r^{\tau}(g):=r(\tilde{\tau} g \tilde{\tau}^{-1})$ for any $g \in G_{K}$. The isomorphism class of $r^{\tau}$ depends only on $\tau$.

For $\sigma \in G_{\mathbb{Q}_l}$ and a continuous representation $r: G_K \rightarrow \mathrm{GL}_n(\overline{\mathbb{Q}}_l)$, we define $^{\sigma}r : G_{K} \rightarrow \mathrm{GL}_n(\overline{\mathbb{Q}}_l)$ by $^{\sigma}r(g):=\sigma(r(g))$ for any $g \in G_K$. 

Note that the following properties.

\begin{lem}\label{coefficient} Let $r : G_K \rightarrow \mathrm{GL}_n(\overline{\mathbb{Q}}_l)$ be a continuous representation, $\sigma \in G_{\mathbb{Q}_l}$, $\tau \in \mathrm{Aut}_{\mathbb{Q}_l}(K)$. Then, we obtain the following results.

1 \ If $r$ is crystalline (resp. semistable, de Rham, Hodge-Tate), then $r^{\tau}$ and $^{\sigma}r$ are crystalline (resp. semistable, de Rham, Hodge-Tate).

2 \ If $r$ is Hodge-Tate, then $\mathrm{HT}_{\delta \tau}(r^{\tau}) = \mathrm{HT}_{\delta}(r)$ and $\mathrm{HT}_{\sigma \delta}(^{\sigma}r) = \mathrm{HT}_{\delta}( r )$ for any $\delta \in \mathrm{Hom}_{\mathbb{Q}_l}(K, \overline{\mathbb{Q}}_l)$. 

\end{lem}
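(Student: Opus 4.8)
The plan is to deduce every assertion from the functoriality of Fontaine's period rings, treating the two operations separately and keeping careful track of the two kinds of semilinearity that occur. First I would handle ${}^{\sigma}r$. Since $\sigma\in G_{\mathbb{Q}_l}$ fixes $\mathbb{Q}_l$ pointwise, the coordinatewise map $\sigma^{\oplus n}\colon\overline{\mathbb{Q}_l}^{\,n}\to\overline{\mathbb{Q}_l}^{\,n}$ is $\mathbb{Q}_l$-linear, and a one-line check gives $\sigma^{\oplus n}(r(g)x)={}^{\sigma}r(g)(\sigma^{\oplus n}x)$ for all $g\in G_K$; hence ${}^{\sigma}r$ and $r$ have the same underlying $\mathbb{Q}_l[G_K]$-module. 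As being crystalline (resp. semistable, de Rham, Hodge--Tate) for a representation over $\overline{\mathbb{Q}_l}$ is, by definition, the corresponding property of the underlying $\mathbb{Q}_l$-representation, part 1 for ${}^{\sigma}r$ is immediate. For the Hodge--Tate weights I would tensor $\sigma^{\oplus n}$ with $\mathrm{id}_{B_{\mathrm{HT}}}$: the induced $G_K$-equivariant $\mathbb{Q}_l$-isomorphism $D_{\mathrm{HT}}(r)\stackrel{\sim}{\longrightarrow}D_{\mathrm{HT}}({}^{\sigma}r)$ respects the $B_{\mathrm{HT}}$-grading but is $\sigma$-semilinear on the coefficient factor $\overline{\mathbb{Q}_l}$, so it carries the $\delta$-isotypic component of the $\overline{\mathbb{Q}_l}\otimes_{\mathbb{Q}_l}K$-module $D_{\mathrm{HT}}(r)$ onto its $\sigma\delta$-isotypic component; reading off the grading in each component yields $\mathrm{HT}_{\sigma\delta}({}^{\sigma}r)=\mathrm{HT}_{\delta}(r)$.

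Next I would handle $r^{\tau}$. The key input is that, since $\overline{K}=\overline{\mathbb{Q}_l}$ and every $\mathbb{Q}_l$-automorphism of $\overline{\mathbb{Q}_l}$ preserves $\mathcal{O}_{\overline{\mathbb{Q}_l}}$ (the integral closure of $\mathbb{Z}_l$, which is characterised algebraically), such an automorphism is continuous; hence $\tilde{\tau}$ extends to $\mathbb{C}_l$ and functorially to an automorphism $\tilde{\tau}_B$ of each of $B_{\mathrm{dR}},B_{\mathrm{st}},B_{\mathrm{cris}}$ compatible with $\varphi$, $N$ and the filtration, sending $t$ to a unit multiple of $t$, and restricting to $\tau$ on $K$; moreover $\tilde{\tau}$ normalises $G_K$ (because $\tau(K)=K$), with $\tilde{\tau}_B\circ g=(\tilde{\tau}g\tilde{\tau}^{-1})\circ\tilde{\tau}_B$ on $B$. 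I would then consider $\theta:=\mathrm{id}_V\otimes\tilde{\tau}_B$ on $V\otimes_{\mathbb{Q}_l}B$ (with $V=\overline{\mathbb{Q}_l}^{\,n}$): it is a $\mathbb{Q}_l$-linear bijection with $\theta(g\cdot_{r^{\tau}}z)=(\tilde{\tau}g\tilde{\tau}^{-1})\cdot_r\theta(z)$ for the diagonal $G_K$-actions attached to $r^{\tau}$ and to $r$, and since $g\mapsto\tilde{\tau}g\tilde{\tau}^{-1}$ is an automorphism of $G_K$ the two sets of $G_K$-invariants coincide, so $\theta$ induces a $\mathbb{Q}_l$-isomorphism $D_B(r^{\tau})\stackrel{\sim}{\longrightarrow}D_B(r)$; comparing $\mathbb{Q}_l$-dimensions shows $r^{\tau}$ inherits each of the four properties. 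Finally, $\theta$ is $\overline{\mathbb{Q}_l}$-linear on the factor $V$ and $\tau$-semilinear on the factor $K\subset B$, so it sends the $\delta$-isotypic component of $D_{\mathrm{HT}}(r^{\tau})$ to the $\delta\tau^{-1}$-isotypic component of $D_{\mathrm{HT}}(r)$ while preserving the grading; this gives $\mathrm{HT}_{\delta}(r^{\tau})=\mathrm{HT}_{\delta\tau^{-1}}(r)$, equivalently $\mathrm{HT}_{\delta\tau}(r^{\tau})=\mathrm{HT}_{\delta}(r)$.

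I expect the main obstacle to be purely organisational rather than conceptual: making sure that $\tilde{\tau}$ genuinely acts on the period rings with all the structure intact (this is precisely where continuity of $\tilde{\tau}$ is needed), and, above all, getting the index bookkeeping right so that the Hodge--Tate labels are permuted by $\delta\mapsto\delta\tau$ and $\delta\mapsto\sigma\delta$ rather than by their inverses. The representation-theoretic content is otherwise minimal, and the manipulations are parallel to those used with the relation $\sim$ and with de Rham data elsewhere in this section (compare \cite{CW}).
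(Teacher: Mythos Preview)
Your proposal is correct and follows essentially the same strategy as the paper's proof: both rest on the single observation that the $G_K$-action on each period ring $B\in\{B_{\mathrm{cris}},B_{\mathrm{st}},B_{\mathrm{dR}},B_{\mathrm{HT}}\}$ extends to a $G_{\mathbb{Q}_l}$-action, after which one transports invariants via the maps $\sigma\otimes 1$ and $1\otimes\tilde{\tau}$ (your $\sigma^{\oplus n}\otimes\mathrm{id}_B$ and $\mathrm{id}_V\otimes\tilde{\tau}_B$). The paper computes the Hodge--Tate weights directly from the definition $(r(i)\otimes_{\delta,K}\widehat{\overline{K}})^{G_K}$, whereas you pass through $D_{\mathrm{HT}}$ and its isotypic decomposition over $\overline{\mathbb{Q}_l}\otimes_{\mathbb{Q}_l}K$; these are equivalent formulations, and your index bookkeeping is consistent with the paper's.
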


\begin{proof}  Let $B$ be one of the rings $B_{\mathrm{cris}}, B_{\mathrm{st}}$, $B_{\mathrm{dR}}$, $B_{\mathrm{HT}}$. We fix $\tilde{\tau} \in \mathrm{Aut}_{\mathbb{Q}_l}(\overline{K})$ such that $\tilde{\tau}|_{K} = \tau$. Note that the action of $G_K$ on $B$ is extended to the action of $G_{\mathbb{Q}_l}$. Then we obtain $(r^{\tau} \otimes_{\mathbb{Q}_l} B)^{G_K} = (r \otimes_{\mathbb{Q}_l} B^{\tau^{-1}})^{G_K} \xrightarrow[1 \otimes \tilde{\tau}]{\sim} (r \otimes_{\mathbb{Q}_l} B)^{G_K}$ and $(r \otimes_{\mathbb{Q}_l} B)^{G_{K}} \xrightarrow[\sigma \otimes 1]{\sim} (\ ^{\sigma}r \otimes_{\mathbb{Q}_l} B)^{G_{K}}$. This implies the result 1. Moreover, we have $(r^{\tau}(i) \otimes_{\delta \tau, K} \widehat{\overline{K}})^{G_K} = (r(i) \otimes_{\delta \tau, K} \widehat{\overline{K}}^{\tau^{-1}})^{G_K} \xrightarrow[1 \otimes \tilde{\tau}]{\sim} (r(i) \otimes_{\delta, K} \widehat{\overline{K}})^{G_K}$ and  $(r(i) \otimes_{\delta, K} \widehat{\overline{K}})^{G_{K}} \xrightarrow[\sigma \otimes 1]{\sim} (\ ^{\sigma}r \otimes_{\sigma \delta, K} \widehat{\overline{K}})^{G_{K}}$. Here, $\widehat{\overline{K}}$ denotes the completion of $\overline{K}$. This implies the result 2.  \end{proof}

\begin{dfn} \label{potentially diagonalizable definition}
    
For a continuous representation $r: G_K \rightarrow \mathrm{GL}_n(\mathcal{O}_{\overline{\mathbb{Q}}_l})$, we say that $r$ is diagonalizable if $r \otimes \overline{\mathbb{Q}}_l$ is crystalline and there exist crystalline characters $\chi_1, \cdots, \chi_n : G_K \rightarrow \mathcal{O}_{\overline{\mathbb{Q}}_l}^{\times}$ such that $r \sim \chi_1 \oplus \cdots \oplus \chi_n$. 

We say that $r$ is potentially diagonalizable if there exists a finite extension $L/K$ such that $r|_{G_L}$ is diagonalizable. This implies that $r$ is potentially crystalline.

\end{dfn}

\begin{prop} \label{trace irreducible component}
    
    Let $r: G_K \rightarrow \mathrm{GL}_n(\mathcal{O}_{\overline{\mathbb{Q}}_l})$ be a continuous representation.

    We assume that $K/\mathbb{Q}_l$ is unramified, $r \otimes \overline{\mathbb{Q}}_l$ is crystalline of $l$-adic Hodge type $\mathbf{v}_{\lambda}$ satisfying $\lambda_{\tau, 1} - \lambda_{\tau, n} \le l - 2 - n + 1$ for any $\tau \in \mathrm{Hom}_{\mathbb{Q}_l}(K, \overline{\mathbb{Q}}_l)$.

    Then $r$ is potentially diagonalizable and there exists a finite extension $L$ of $K$ such that \ $^{\sigma}r|_{G_L} \sim r^{\tau^{-1}\sigma\tau}|_{G_L}$ for all $\sigma \in G_{\mathbb{Q}_l}$ and $\tau \in \mathrm{Hom}_{\mathbb{Q}_l}(K, \overline{\mathbb{Q}}_l)$.

\end{prop}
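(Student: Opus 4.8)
The plan is to reduce the statement, via potential diagonalizability, to a comparison of \emph{residual} representations and then invoke Lemma \ref{p=l} (4). First observe that since $K/\mathbb{Q}_l$ is unramified it is Galois with cyclic (hence abelian) Galois group, so every $\tau\in\mathrm{Hom}_{\mathbb{Q}_l}(K,\overline{\mathbb{Q}_l})$ has image $K$ and $\tau^{-1}\sigma\tau=\sigma|_K$ in $\mathrm{Aut}_{\mathbb{Q}_l}(K)$; thus it suffices to find a finite extension $L/K$ with ${}^{\sigma}r|_{G_L}\sim r^{\sigma|_K}|_{G_L}$ for all $\sigma\in G_{\mathbb{Q}_l}$. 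The hypotheses place $r\otimes\overline{\mathbb{Q}_l}$ in the Fontaine-Laffaille range (unramified base field, Hodge--Tate weights lying in an interval of length $\le l-2$). Its potential diagonalizability is \cite[Lemma 1.4.3]{CW}; its proof produces a finite \emph{unramified} extension $L_0/K$ (so $L_0/\mathbb{Q}_l$ is unramified) and crystalline characters $\chi_1,\dots,\chi_n$ of $G_{L_0}$, with Hodge--Tate weights dictated by $\lambda$, such that $r|_{G_{L_0}}\sim\chi_1\oplus\cdots\oplus\chi_n$. In particular $\overline{r|_{G_{L_0}}}\cong\overline{\chi_1}\oplus\cdots\oplus\overline{\chi_n}$ is semisimple. (Concretely, one passes to $L_0$ large enough unramified so that $\overline{r}|_{G_{L_0}}$ becomes semisimple, writes it as a sum of characters, lifts each to a crystalline character of the Hodge--Tate weights prescribed by Fontaine--Laffaille theory, and applies Lemma \ref{p=l} (4).)

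Next I record two compatibilities. By Lemma \ref{coefficient} (1), both ${}^{\sigma}r$ and $r^{\sigma|_K}$ are crystalline; using Lemma \ref{coefficient} (2) together with the abelianness of $\mathrm{Gal}(K/\mathbb{Q}_l)$ one checks that $\sigma^{-1}\circ\delta=\delta\circ(\sigma|_K)^{-1}$ for every $\delta\in\mathrm{Hom}_{\mathbb{Q}_l}(K,\overline{\mathbb{Q}_l})$, so ${}^{\sigma}r$ and $r^{\sigma|_K}$ have the \emph{same} $l$-adic Hodge type $\mathbf{v}_{\mu}$, where $\mu$ is obtained from $\lambda$ by reindexing $\mathrm{Hom}_{\mathbb{Q}_l}(K,\overline{\mathbb{Q}_l})$ through $\sigma$; in particular $\mu_{\tau,1}-\mu_{\tau,n}\le l-2-n+1$ for all $\tau$, and the same holds after restriction to any extension of $K$ unramified over $\mathbb{Q}_l$.

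The heart of the argument is the residual comparison, for each $i$ and each $\sigma$ (with $\bar\sigma$ the image of $\sigma$ in $\mathrm{Gal}(\overline{\mathbb{F}_l}/\mathbb{F}_l)$):
\[
{}^{\bar\sigma}\overline{\chi_i}|_{I_{L_0}}=\overline{\chi_i}^{\,\sigma|_{L_0}}|_{I_{L_0}}.
\]
Indeed, since $\chi_i$ is crystalline over the unramified field $L_0$, Fontaine--Laffaille theory gives $\overline{\chi_i}|_{I_{L_0}}=\prod_{\psi}\omega_{\psi}^{\,k_{i,\psi}}$, a product over $\psi\in\mathrm{Hom}(\mathbb{F}_{L_0},\overline{\mathbb{F}_l})$ of fundamental characters raised to the Hodge--Tate weights of $\chi_i$. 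On one hand ${}^{\bar\sigma}(\prod_\psi\omega_\psi^{k_{i,\psi}})=\prod_\psi\omega_{\bar\sigma\circ\psi}^{k_{i,\psi}}$. On the other hand, as $L_0/\mathbb{Q}_l$ is unramified we have $I_{L_0}=I_{\mathbb{Q}_l}$, and conjugation by $\sigma$ acts on tame inertia by exactly the power of Frobenius by which $\bar\sigma$ acts on $\overline{\mathbb{F}_l}$; hence $\omega_\psi\circ(\mathrm{conj}\ \sigma)=\omega_{\bar\sigma\circ\psi}$, and the two sides coincide. Therefore $\nu_{i,\sigma}:={}^{\bar\sigma}\overline{\chi_i}\cdot(\overline{\chi_i}^{\,\sigma|_{L_0}})^{-1}$ is an \emph{unramified} character of $G_{L_0}$ valued in $\overline{\mathbb{F}_l}^{\times}$, of finite order dividing $d_i:=\#\,\overline{\chi_i}(G_{L_0})$, uniformly in $\sigma$. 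Put $D=\mathrm{lcm}(d_1,\dots,d_n)$ and let $L/L_0$ be the unramified extension of degree $D$; then $\nu_{i,\sigma}|_{G_L}=1$ for all $i$ and all $\sigma$, i.e. ${}^{\bar\sigma}(\overline{\chi_i}|_{G_L})=(\overline{\chi_i}|_{G_L})^{\sigma|_L}$.

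Now restrict $r|_{G_{L_0}}\sim\bigoplus_i\chi_i$ to $G_L$ by Lemma \ref{p=l} (5): then $\overline{r|_{G_L}}\cong\bigoplus_i\overline{\chi_i}|_{G_L}$, whence
\[
\overline{{}^{\sigma}r|_{G_L}}={}^{\bar\sigma}\big(\overline{r|_{G_L}}\big)\cong\bigoplus_i{}^{\bar\sigma}\big(\overline{\chi_i}|_{G_L}\big)\cong\bigoplus_i\big(\overline{\chi_i}|_{G_L}\big)^{\sigma|_L}=\big(\overline{r|_{G_L}}\big)^{\sigma|_L}=\overline{r^{\sigma|_K}|_{G_L}}.
\]
Since $L/\mathbb{Q}_l$ is unramified and ${}^{\sigma}r|_{G_L}$, $r^{\tau^{-1}\sigma\tau}|_{G_L}=r^{\sigma|_K}|_{G_L}$ are crystalline of the common Hodge type $\mathbf{v}_{\mu}$ in the Fontaine--Laffaille range, Lemma \ref{p=l} (4) yields ${}^{\sigma}r|_{G_L}\sim r^{\tau^{-1}\sigma\tau}|_{G_L}$; as $L$ does not depend on $\sigma$ or $\tau$, this is the assertion, and potential diagonalizability of $r$ was already noted. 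The main obstacle is the displayed Fontaine--Laffaille identity together with the observation that the coefficient Frobenius $\bar\sigma$ and conjugation by $\sigma$ induce the \emph{same} permutation of fundamental characters — this is precisely where the unramifiedness of $K/\mathbb{Q}_l$ and the Fontaine--Laffaille bound on the weights are used essentially; the remaining points (matching Hodge types, bounding $\nu_{i,\sigma}$ uniformly, the invocation of Lemma \ref{p=l} (4)) are routine.
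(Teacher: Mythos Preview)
Your argument has a genuine gap at the step where you claim that the proof of \cite[Lemma 1.4.3]{CW} produces a finite \emph{unramified} extension $L_0/K$ over which $r|_{G_{L_0}}\sim\chi_1\oplus\cdots\oplus\chi_n$, equivalently that $\overline{r}|_{G_{L_0}}$ becomes semisimple. This fails in general. Take $K=\mathbb{Q}_l$ with $l\ge5$, $n=2$, $\lambda=(1,0)$, and let $r$ be a crystalline lattice whose reduction $\overline{r}$ is a nontrivial \emph{peu ramifi\'e} extension of the trivial character by $\overline{\varepsilon_l}$; such $r$ exist since peu ramifi\'e is exactly the condition for $\overline{r}$ to arise from a finite flat group scheme, i.e.\ from a crystalline lattice with Hodge--Tate weights $\{0,1\}$. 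The Kummer class of such an extension lies in $(1+l\mathbb{Z}_l)/(1+l^2\mathbb{Z}_l)\subset\mathbb{Q}_l^\times/(\mathbb{Q}_l^\times)^l$ and restricts nontrivially to $I_{\mathbb{Q}_l}$ (indeed to $P_{\mathbb{Q}_l}$, as $H^1(I_K/P_K,\overline{\varepsilon_l})=0$). Thus $\overline{r}|_{I_K}$ is nonsplit, and since $I_{L_0}=I_K$ for any unramified $L_0$, no unramified base change semisimplifies $\overline{r}$. Your final invocation of part (4) of Lemma \ref{p=l} over $L$ requires $L/\mathbb{Q}_l$ unramified, but the extension over which the direct-sum decomposition holds must in general be ramified.

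The paper's proof circumvents this by working over the unramified extension $K'$ only with a \emph{filtration} on a lift $r'$ of $\overline{r}|_{G_{K'}}$ (via \cite[Lemma 1.4.2]{CW}), with character subquotients $\phi_i$. The key identity ${}^{\sigma}\phi_i|_{I_{K'}}=\phi_i^{\tau^{-1}\sigma\tau}|_{I_{K'}}$ is proved at the level of $\overline{\mathbb{Q}_l}^\times$-valued characters using local class field theory and the commutativity of $\mathrm{Gal}(\tau(K')/\mathbb{Q}_l)$; combined with $\overline{\phi_i}(G_{K'})=\overline{\phi_i}(I_K)$ this gives ${}^{\sigma}\phi_i\sim\phi_i^{\tau^{-1}\sigma\tau}$ by part (2) of Lemma \ref{p=l}, \emph{not} by Fontaine--Laffaille. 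Only afterwards does one pass to a (possibly ramified) $L$ with $\overline{r}|_{G_L}$ trivial, where the filtration splits by part (3); the conclusion then follows from restriction (part (5)) and transitivity (part (1)), with no need for Fontaine--Laffaille over $L$. Your fundamental-character computation is correct and is the mod-$l$ shadow of the paper's class-field-theory step, but the surrounding reduction to a direct sum over an unramified base does not go through.
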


\begin{proof} Note that almost all part of the following proof is contained in the proof of \cite[Lemma 1.4.3]{CW}. 

There exists a finite unramified extension $K'/K$ such that $\overline{r}(G_{K'}) = \overline{r}(I_K)$. Then there exists a $G_{K'}$-stable increasing filtration $\{ \mathrm{\overline{Fil}}_i \}_{i=1}^n$ on $\overline{r}|_{G_{K'}}$ such that $\mathrm{dim}_{\overline{\mathbb{F}_l}} \overline{\mathrm{Fil}_i}/\overline{\mathrm{Fil}_{i-1}} = 1$. By \cite[Lemma 1.4.2]{CW}, there exists a lifting $r': G_{K'} \rightarrow \mathrm{GL}_n(\mathcal{O}_{\overline{\mathbb{Q}}_l})$ of $\overline{r}|_{G_{K'}}$ such that $r' \otimes \overline{\mathbb{Q}}_l$ is crystalline of $l$-adic Hodge type $\mathbf{v}_{\lambda}$ and $r'$ has a $G_{K'}$-stable increasing filtration $\{ \mathrm{Fil}_i \}_{i=1}^n$ by $\mathcal{O}_{\overline{\mathbb{Q}}_l}$-direct summands which is a lifting of $\{ \overline{\mathrm{Fil}}_i \}_{i=1}^n$. By 4 of Lemma \ref{p=l}, we have $r|_{G_{K'}} \sim r'$. We take $\tau \in \mathrm{Hom}_{\mathbb{Q}_l}(K', \overline{\mathbb{Q}}_l)$ and $\sigma \in G_{\mathbb{Q}_l}$. By 4 of Lemma \ref{p=l} and Lemma \ref{coefficient}, we have \ $^{\sigma}r|_{G_{K'}} \sim \ ^{\sigma}r'$ and $r^{\tau^{-1} \sigma \tau}|_{G_{K'}} \sim (r')^{\tau^{-1} \sigma \tau}$.

For $1 \le i \le n$, we put $\phi_i := \mathrm{Fil}_i/\mathrm{Fil}_{i-1}$. Since $\phi_i \otimes \overline{\mathbb{Q}}_l$ is crystalline, there exists $\lambda_i \in \mathbb{Z}^{\mathrm{Gal}(\tau(K')/\mathbb{Q}_l)}$ such that $\phi_i \circ \mathrm{Art}_{K'}(u) = \prod_{\rho \in \mathrm{Gal}(\tau(K')/\mathbb{Q}_l)} \rho\tau (u)^{\lambda_{i, \rho}}$ for all $u \in \mathcal{O}_{K'}^{\times}$. By the commutativity of $\mathrm{Gal}(\tau(K')/\mathbb{Q}_l)$, we have $\sigma (\phi_i \circ \mathrm{Art}_{K'}(u)) = \prod_{\rho \in \mathrm{Gal}(\tau(K')/\mathbb{Q}_l)} \sigma\rho\tau(u)^{\lambda_{i, \rho}} = \prod_{\rho \in \mathrm{Gal}(\tau(K')/\mathbb{Q}_l)} \rho\sigma\tau(u)^{\lambda_{i, \rho}}$. Thus we obtain $^{\sigma}\phi_i|_{I_{K'}} = \phi_i^{\tau^{-1}\sigma \tau}|_{I_{K'}}$ and $\overline{\phi_i}(I_K) = \overline{\phi_i}(G_{K'})$. This implies \ $^{\sigma}\phi_i \sim \phi_i^{\tau^{-1} \sigma \tau}$ by 2 of Lemma \ref{p=l}.  By 6 of Lemma \ref{p=l}, we obtain $^{\sigma}(\phi_1 \oplus \cdots \oplus \phi_n) \sim (\phi_1 \oplus \cdots \oplus \phi_n)^{\tau^{-1} \sigma \tau}$.

There exists a finite extension $L$ of $K'$ such that $\overline{r}|_{G_L}$ is trivial. Then $r'|_{G_L} \sim (\phi_1 \oplus \cdots \oplus \phi_n)|_{G_L}$, \ $^{\sigma}r'|_{G_L} \sim \ ^{\sigma}(\phi_1 \oplus \cdots \oplus \ \phi_n)|_{G_L}$ and $r'^{\tau^{-1}\sigma \tau}|_{G_L} \sim (\phi_1 \oplus \cdots \oplus \ \phi_n)^{\tau^{-1}\sigma \tau}|_{G_L}$ by 3 of Lemma \ref{p=l}. This implies that $r$ is potentially diagonalizable by 1 and 5 of Lemma \ref{p=l}. Moreover, we obtain $^{\sigma}r|_{G_L} \sim \ ^{\sigma}r'|_{G_L} \sim \ ^{\sigma}(\phi_1 \oplus \cdots \oplus \phi_n)|_{G_{L}} \sim (\phi_1 \oplus \cdots \oplus \phi_n)^{\tau^{-1}\sigma \tau}|_{G_{L}} \sim (r')^{\tau^{-1} \sigma \tau}|_{G_{L}} \sim r^{\tau^{-1} \sigma \tau}|_{G_L}$ by 5 of Lemma \ref{p=l}. Thus, we obtain the proposition by 1 of Lemma \ref{p=l}. \end{proof}

\vspace{0.5 \baselineskip}

Finally, we recall some basic properties of ordinary representations.

\begin{lem} \label{ordinary Galois representation}

Let $r : G_K \rightarrow \mathrm{GL}_n(\overline{\mathbb{Q}}_l)$ be a continuous representation and $\lambda \in (\mathbb{Z}_+^{n})^{\mathrm{Hom}_{\mathbb{Q}_l}(K, \overline{\mathbb{Q}}_l)}$.

The following conditions are equivalent.

(1) \ There exists an increasing filtration $\{ \mathrm{Fil}_i \}$ on $r$ and an open subgroup $U$ of $\mathcal{O}_K^{\times}$ such that for all $i$, $\chi_i := \mathrm{Fil}_i/\mathrm{Fil}_{i-1}$ is one-dimensional and $\chi_i \circ \mathrm{Art}_K(u) = \prod_{\tau \in \mathrm{Hom}_{\mathbb{Q}_l}(K, \overline{\mathbb{Q}}_l)} \tau(u)^{-\lambda_{\tau, n - i + 1} - i + 1}$ for all $u \in U$.

(2) \ $r$ is de Rham of $l$-adic Hodge type $\mathbf{v}_{\lambda}$ and the eigenvalues $\alpha_1, \cdots, \alpha_n$ of $\mathrm{WD}(r)(\mathrm{Frob}_K)$ satisfies $\mathrm{val}_l(\alpha_i) = \frac{1}{e_K} \sum_{\tau \in \mathrm{Hom}_{\mathbb{Q}_l}(K, \overline{\mathbb{Q}}_l)} ( \lambda_{\tau, n + 1 - i} + i - 1)$ for all $i = 1, \cdots, n$ after arranging $\alpha_i$'s so that $\mathrm{val}_l(\alpha_1) \le \mathrm{val}_l(\alpha_2) \le \cdots \le \mathrm{val}_l(\alpha_n)$. (Here, $\mathrm{val}_l$ denotes the $l$-adic valuation on $\overline{\mathbb{Q}}_l$ normalized to $\mathrm{val}_l(l) = 1$ and $e_K$ denotes the ramification index of $K/\mathbb{Q}_l$.)

\end{lem}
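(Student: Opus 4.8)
The plan is to prove the two implications separately, relying only on standard $p$-adic Hodge theory: the exactness of $D_{\mathrm{pst}}$ (and hence of $r \mapsto \mathrm{WD}(r)$) on de Rham representations, the Colmez--Fontaine equivalence between de Rham representations and weakly admissible filtered $(\varphi, N, G_K)$-modules, and the elementary computation of the Weil--Deligne representation and Hodge--Tate weights of a de Rham character. The key auxiliary fact I will isolate is that for any de Rham character $\chi$ of $G_K$ one has $\mathrm{val}_l(\mathrm{WD}(\chi)(\mathrm{Frob}_K)) = -\tfrac{1}{e_K}\sum_{\tau} \mathrm{HT}_{\tau}(\chi)$: writing $\chi$ as the product of the algebraic (crystalline) character with prescribed Hodge--Tate weights and a character with finite image on $I_K$, the latter becomes unramified over a finite extension, where a de Rham unramified character must have unit Frobenius eigenvalue (otherwise $D_{\mathrm{dR}}$ vanishes), so it contributes nothing to the valuation.

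For $(1) \Rightarrow (2)$: each graded piece $\chi_i$ is de Rham (indeed locally algebraic, since it agrees with an algebraic character on the open subgroup $U$ of $\mathcal{O}_K^\times$), and its Hodge--Tate weight $\mathrm{HT}_{\tau}(\chi_i)$ is read off directly from the formula on $U$. As $i$ runs over $1,\dots,n$ the multiset $\{\mathrm{HT}_{\tau}(\chi_i)\}_i$ is precisely the one prescribed by $\mathbf{v}_{\lambda}$, and since $r$ is a successive extension of the $\chi_i$ it is de Rham of $l$-adic Hodge type $\mathbf{v}_{\lambda}$. By exactness of $\mathrm{WD}$ on de Rham representations, $\mathrm{WD}(r)$ has a filtration with one-dimensional graded pieces $\mathrm{WD}(\chi_i)$, so the eigenvalues of $\mathrm{WD}(r)(\mathrm{Frob}_K)$, counted with multiplicity, are the scalars $\beta_i := \mathrm{WD}(\chi_i)(\mathrm{Frob}_K)$. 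Applying the auxiliary fact to $\chi_i$ gives $\mathrm{val}_l(\beta_i) = \tfrac{1}{e_K}\sum_{\tau}(\lambda_{\tau, n+1-i} + i - 1)$, which is strictly increasing in $i$ because $\lambda_{\tau,n+1-i}$ is non-decreasing and $i-1$ is increasing; reindexing the $\alpha_i$ by increasing valuation then yields $(2)$.

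For $(2) \Rightarrow (1)$: the valuations $\mathrm{val}_l(\alpha_i)$ are pairwise distinct, so $\mathrm{WD}(r)(\mathrm{Frob}_K)$ is regular semisimple, and $(2)$ says exactly that the Newton polygon of $\mathrm{WD}(r)$ equals the Hodge polygon attached to $\mathbf{v}_{\lambda}$. I would pass to the weakly admissible filtered $(\varphi, N, G_K)$-module $D = D_{\mathrm{pst}}(r)$ and peel off slopes from the bottom: the Frobenius eigenspace of smallest slope is stable under $N$ (which strictly lowers the slope, hence kills it there) and under $G_K$, and it is itself weakly admissible, since weak admissibility of this submodule and of the quotient, combined with the global equality Newton $=$ Hodge, forces equality of the partial Newton and Hodge numbers on it; comparing its Hodge number with the sum of the smallest Hodge jumps then forces its Hodge--Tate weight at each $\tau$ to be the smallest available one. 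Iterating (the quotient still has Newton $=$ Hodge in the remaining slots) produces a full flag of weakly admissible sub-$(\varphi,N,G_K)$-modules, which by Colmez--Fontaine is the image under $D_{\mathrm{pst}}$ of a $G_K$-stable filtration $\{\mathrm{Fil}_i\}$ on $r$; the slope-and-Hodge bookkeeping identifies the inertial behaviour of each $\chi_i = \mathrm{Fil}_i/\mathrm{Fil}_{i-1}$ with the one demanded in $(1)$.

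The main obstacle will be in $(2) \Rightarrow (1)$: obtaining an honest $G_K$-stable filtration on $r$ rather than merely a $(\varphi,N)$-stable flag on $\mathrm{WD}(r)$ — this is where the Colmez--Fontaine equivalence and some care with potentially semistable (as opposed to semistable) reduction and ramified base change are needed — together with upgrading the equality of total Newton and Hodge numbers to the termwise-in-$\tau$ determination of the Hodge--Tate weights of the graded pieces, so that the resulting $\chi_i$ really satisfy the explicit formula of $(1)$ at every embedding.
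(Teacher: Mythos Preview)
Your proposal is correct and matches the paper's approach: for $(1)\Rightarrow(2)$ the paper likewise computes $\mathrm{WD}(\chi_i)$ directly (citing \cite[Lemma~3.1.4]{GG} for the de Rham property of such extensions), and for $(2)\Rightarrow(1)$ it simply cites \cite[proof of Theorem~2.4]{red}, whose content is the Newton-equals-Hodge slope-peeling argument via weak admissibility and Colmez--Fontaine that you sketch. One minor point to clean up: under the paper's convention $\mathrm{HT}_\tau(\varepsilon_l)=-1$ your auxiliary fact should read $\mathrm{val}_l(\mathrm{WD}(\chi)(\mathrm{Frob}_K))=+\tfrac{1}{e_K}\sum_\tau \mathrm{HT}_\tau(\chi)$ rather than with a minus sign, though your final formulas for $\mathrm{val}_l(\beta_i)$ already agree with the statement.
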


\begin{proof}
    
($(1) \Rightarrow (2)$) By \cite[Lemma 3.1.4]{GG}, $r$ is de Rham. Since $$\mathrm{WD}(\chi_i)(\mathrm{Art}_K(x)) = \chi_i(\mathrm{Art}_K(x)) (\prod_{\tau \in \mathrm{Hom}_{\mathbb{Q}_l}(K, \overline{\mathbb{Q}}_l)}\tau(x)^{\lambda_{\tau, n-i+1}+i-1})$$ for all $x \in K^{\times}$ and $\chi_i (\mathrm{Frob}_K)$ is an $l$-adic unit for all $i$, we obtain the result.
    
$(2) \Rightarrow (1)$ follows by \cite[proof of Theorem 2.4]{red}. \end{proof}
    
\begin{dfn}

Let $r : G_K \rightarrow \mathrm{GL}_n(\overline{\mathbb{Q}}_l)$ be a continuous representation and $\lambda \in (\mathbb{Z}_+^{n})^{\mathrm{Hom}_{\mathbb{Q}_l}(K, \overline{\mathbb{Q}}_l)}$. We say that $r$ is ordinary of weight $\lambda$ if $r$ and $\lambda$ satisfy the equivalent conditions in Lemma \ref{ordinary Galois representation}. 

\end{dfn}

\begin{rem}

If $r : G_K \rightarrow \mathrm{GL}_n(\overline{\mathbb{Q}}_l)$ is ordinary, then the $l$-adic valuations of the eigenvalues $\alpha_1, \cdots, \alpha_n$ of $\mathrm{WD}(r)(\mathrm{Frob}_K)$ are distinct by (2) of Lemma \ref{ordinary Galois representation}. Thus, $\mathrm{WD}(r)$ is Frobenius semisimple.

\end{rem}
        
\begin{prop} \label{semistable ordinary deformation ring}
        
Let $r_1, r_2 : G_K \rightarrow \mathrm{GL}_n(\mathcal{O})$ be residually trivial continuous representations and $\lambda \in (\mathbb{Z}^n_+)^{\mathrm{Hom}_{\mathbb{Q}_l}(K, \overline{\mathbb{Q}})}$. We assume that $r_1 \otimes \overline{\mathbb{Q}}_l$ and $r_2 \otimes \overline{\mathbb{Q}}_l$ are crystalline of $l$-adic Hodge type $\lambda$ and $r_1 \otimes \overline{\mathbb{Q}}_l$ is ordinary. Then $r_1 \sim r_2$ if and only if $r_2 \otimes \overline{\mathbb{Q}}_l$ is ordinary.

\end{prop}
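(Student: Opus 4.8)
\emph{Plan.} The two implications will be handled by different means. For ``$r_2\otimes\overline{\mathbb{Q}_l}$ ordinary $\Rightarrow r_1\sim r_2$'' the idea is to use the (necessarily unique) ordinary filtrations to write $r_1$ and $r_2$ as $\sim$-equivalent to a direct sum of crystalline characters, and then compare these characters using the formal properties of $\sim$ collected in Lemma \ref{p=l}. For ``$r_1\sim r_2 \Rightarrow r_2\otimes\overline{\mathbb{Q}_l}$ ordinary'' the idea is that the crystalline ordinary lifts of $\overline{r_1}$ of Hodge type $\mathbf{v}_\lambda$ form a union of geometrically irreducible components of $\mathrm{Spec}R^{\mathrm{cris},\lambda}_{\overline{r_1}}[\frac{1}{l}]$, so that $\sim$-equivalence cannot move $r_1$ off it.

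\emph{Ordinary $r_2$ implies $r_1\sim r_2$.} Suppose both $r_1\otimes\overline{\mathbb{Q}_l}$ and $r_2\otimes\overline{\mathbb{Q}_l}$ are ordinary; since they are de Rham of Hodge type $\mathbf{v}_\lambda$ their common weight is $\lambda$, and for each $\tau$ the map $j\mapsto\lambda_{\tau,j}+n-j$ is strictly decreasing, so the graded pieces of the ordinary filtration of $r_i\otimes\overline{\mathbb{Q}_l}$ (condition (1) of Lemma \ref{ordinary Galois representation}) have pairwise distinct Hodge--Tate weights and the filtration is therefore unique. Hence, after enlarging $E$, it is defined over $E$, and intersecting it with the lattice $\mathcal{O}^n$ gives a $G_K$-stable increasing filtration on $r_i$ by $\mathcal{O}$-direct summands whose graded pieces $\chi^{(i)}_1,\dots,\chi^{(i)}_n$ are crystalline characters (subquotients of the crystalline $r_i\otimes\overline{\mathbb{Q}_l}$) valued in $\mathcal{O}^\times$, residually trivial since $\overline{r_i}$ is trivial, with $\chi^{(i)}_j$ of an $l$-adic Hodge type depending only on $\lambda$ and $j$ by the explicit formula in condition (1) of Lemma \ref{ordinary Galois representation}. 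By 3 of Lemma \ref{p=l}, $r_i\sim\bigoplus_j\chi^{(i)}_j$. For each $j$ the character $\chi^{(1)}_j(\chi^{(2)}_j)^{-1}$ is crystalline with all Hodge--Tate weights $0$, hence unramified, and residually trivial, so $\chi^{(1)}_j\sim\chi^{(2)}_j$ by 2 of Lemma \ref{p=l}. Iterating 6 of Lemma \ref{p=l} gives $\bigoplus_j\chi^{(1)}_j\sim\bigoplus_j\chi^{(2)}_j$, and 1 of Lemma \ref{p=l} then yields $r_1\sim\bigoplus_j\chi^{(1)}_j\sim\bigoplus_j\chi^{(2)}_j\sim r_2$.

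\emph{$r_1\sim r_2$ implies ordinary $r_2$.} From the theory of ordinary deformation rings (\cite{GG}) there is an $\mathcal{O}$-flat quotient $R^{\mathrm{cris},\lambda,\mathrm{ord}}_{\overline{r_1}}$ of $R^{\mathrm{cris},\lambda}_{\overline{r_1}}$ whose $E$-points are exactly the crystalline ordinary lifts of $\overline{r_1}$ of Hodge type $\mathbf{v}_\lambda$, and $\mathrm{Spec}R^{\mathrm{cris},\lambda,\mathrm{ord}}_{\overline{r_1}}[\frac{1}{l}]$ is equidimensional of the same dimension $1+n^2+[K:\mathbb{Q}_l]\frac{n(n-1)}{2}$ as $\mathrm{Spec}R^{\mathrm{cris},\lambda}_{\overline{r_1}}[\frac{1}{l}]$; being a closed subscheme of this dimension inside an equidimensional scheme of this dimension (2 of Proposition \ref{psd}), it is a union of geometrically irreducible components of $\mathrm{Spec}R^{\mathrm{cris},\lambda}_{\overline{r_1}}[\frac{1}{l}]$. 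Since $r_1\otimes\overline{\mathbb{Q}_l}$ is ordinary, $r_1$ is an $E$-point of $\mathrm{Spec}R^{\mathrm{cris},\lambda,\mathrm{ord}}_{\overline{r_1}}[\frac{1}{l}]$; and $r_1\sim r_2$ means $r_1$ and $r_2$ lie on a common geometrically irreducible component $\mathcal{C}$ of $\mathrm{Spec}R^{\mathrm{cris},\lambda}_{\overline{r_1}}[\frac{1}{l}]$, which, meeting the ordinary locus at $r_1$, is contained in it; hence $r_2$ is an $E$-point of $\mathrm{Spec}R^{\mathrm{cris},\lambda,\mathrm{ord}}_{\overline{r_1}}[\frac{1}{l}]$, i.e.\ $r_2\otimes\overline{\mathbb{Q}_l}$ is ordinary.

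\emph{Main obstacle.} The only non-formal input is the claim that the ordinary locus is a union of irreducible components of $\mathrm{Spec}R^{\mathrm{cris},\lambda}_{\overline{r_1}}[\frac{1}{l}]$, equivalently that the ordinary crystalline deformation ring of a fixed regular weight has the same dimension as the full crystalline one; this is where genuine $p$-adic Hodge theory (as opposed to the formal calculus of $\sim$) enters. One may instead package this input through the upper semicontinuity of the Newton polygon of crystalline Frobenius on $\mathrm{Spec}R^{\mathrm{cris},\lambda}_{\overline{r_1}}[\frac{1}{l}]$, using that for the regular Hodge type $\mathbf{v}_\lambda$ the ordinary locus is precisely where the Newton polygon meets the Hodge polygon (condition (2) of Lemma \ref{ordinary Galois representation}), but the substantive content is the same. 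All remaining steps are routine, the fact about crystalline characters used above being the standard statement that a crystalline character with vanishing Hodge--Tate weights is unramified.
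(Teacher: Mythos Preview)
Your proof is correct. The paper itself gives no argument here, simply citing \cite[Lemma 3.14]{OG}, so you have supplied the details the paper omits. Both directions of your argument are sound: the forward direction (ordinary $r_2 \Rightarrow r_1 \sim r_2$) via explicit ordinary filtrations and the formal properties of $\sim$ in Lemma~\ref{p=l} is a clean hands-on argument, and the reverse direction via the dimension of the ordinary crystalline lifting ring is exactly the standard route.

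A small remark on efficiency: once you have the substantive input for the reverse direction (the crystalline ordinary locus is a union of components of $\mathrm{Spec}\,R^{\mathrm{cris},\lambda}_{\overline{r_1}}[\tfrac{1}{l}]$), the same input typically also yields that this locus is a \emph{single} geometrically irreducible component when $\overline{r_1}$ is trivial, since one can connect any crystalline ordinary lift to a fixed diagonal one by the same filtration argument you give. This would collapse both directions into one statement. Your explicit computation in the forward direction is nonetheless valid and arguably more transparent, as it avoids invoking irreducibility of the ordinary component and uses only the elementary parts~1--3 and~6 of Lemma~\ref{p=l}. One tiny point of care: the claim that the ordinary filtration is unique (and hence descends to $E$) is better justified by the distinctness of the Frobenius valuations (condition~(2) of Lemma~\ref{ordinary Galois representation}) than by distinctness of Hodge--Tate weights alone, since distinct graded characters do not by themselves force a unique flag; but this does not affect the argument, as any choice of filtration suffices for invoking part~3 of Lemma~\ref{p=l}.
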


\begin{proof} See \cite[Lemma 3.14]{OG}. \end{proof}

\section{Automorphy lifting and local-global compatibility}

The goal of this subsection is to prove the automorphy lifting theorem in some crystalline cases and ordinary cases and also prove the local-global compatibility for the cohomological cuspidal automorphic representations corresponding to given Galois representations (See Theorems \ref{automorphy lifting theorem in crystalline cases} and \ref{ordinary automorphy lifting}.) The proofs of Theorems \ref{automorphy lifting theorem in crystalline cases} and \ref{ordinary automorphy lifting} are very similar to the proof of the automorphy lifting theorems \cite[Theorems 6.1.1 and 6.1.2]{10}, \cite[Theorem 3.2]{pw}, \cite[Theorem 5.2]{MEI} and \cite[{\S} 2 and 5]{RG}. The main differences and features are the following points.

\begin{itemize}
\item In order to prove the local-global compatibility of the automorphic representation at a non-$l$-adic places $v \nmid l$ corresponding to the given Galois representation $r$, we use the parahoric subgroup corresponding to the dual partition of the monodromy type of $\mathrm{WD}(r|_{G_{F_v}})$ as the level structure at $v$ when we define our Hecke algebra.
\item In this paper, we use two patching arguments : \cite[\S 2]{RG} in crystalline cases and \cite[\S 5.4]{MEI} in ordinary cases. If we are only interested in automorphy lifting theorems, then the argument \cite[\S 5.4]{MEI} suffices and this argument works under weaker assumptions. On the other hand, we can also prove the vanishing of Bloch-Kato Selmer groups of adjoint representations by using the argument \cite[\S 2]{RG}. (See Theorem \ref{vanishing of selmer group}.)
\item In the proof of Theorem \ref{automorphy lifting theorem in crystalline cases}, we don't use the (derived) Ihara avoidance, which was initiated by \cite{IA} (and \cite{10}), was used in \cite[Theorems 6.1.1 and 6.1.2]{10}, \cite[Theorem 3.2]{pw} and \cite[Theorem 5.2]{MEI} and gives us ``non-minimal'' automorphy lifting theorems.\footnote{``non-minimal'' roughly means that we don't assume that a given Galois representation $r$ and a given automorphic Galois representation $r_{\iota}(\pi)$ are contained in the same irreducible component of the universal lifting ring at all non-$l$-adic places.} The reasons for this are the following two points. The first point is that in order to prove the local-global compatibility at $v \nmid l$, clearly we need a certain condition at $v$. In fact, if we want to remove the condition at $v$ by the Ihara avoidance argument, we need to use the Iwahori subgroup as the level structure at $v$ while as stated above, we need to use a certain parahoric subgroup to prove the local-global compatibility. (We will prove Theorem \ref{ordinary automorphy lifting} by combining the Ihara avoidance argument and the local-global compatibility argument.) The second point is that in order to use the Ihara avoidance argument, we require that our local lifting rings at $p$-adic places has the unique generalization property : any generic point of the special fiber has a unique generalization to the generic fiber. (For example, this property holds when the ring is irreducible.) This property was classically known in the Fontaine-Laffaille case and the ordinary case and one of the main innovations of \cite{pw} is that this property also holds for the consecutive weights crystalline lifting rings even when $p$ is ramified in our local field. (See \cite[Theorem D]{pw}.) However, this property is not known beyond these cases \footnote{Actually, \cite[Remark 2.5.6]{pw} says that we can't expect a variant of \cite[Theorem D]{pw} for the non-consecutive weights crystalline lifting ring if we don't assume any condition on the absolute ramification index of our local field.} and for our potential automorphy argument, we need the automorphy lifting theorem in the crystalline case when $p$ is ramified in our CM field because we only have a nice property as in Proposition \ref{conjugate self-duality} after taking a CM extension of $F$ in which $p$ is ramified. 
\item In the proof of Theorem \ref{ordinary automorphy lifting}, we use a similar argument as \cite[Proposition 5.4.2]{MEI} and \cite[{\S} 8]{small} to remove the assumption $\zeta_p \notin \overline{F}^{\mathrm{Ker}{\mathrm{ad} \, \overline{r}}}$\footnote{$\overline{r}$ is a residual representation.} for the neatness of the level structure, which was assumed in \cite[Theorem 6.1.2]{10}.
\end{itemize}

\subsection{Preliminaries on automorphy lifting theorems}

\subsubsection{Global deformation theory}

We recall some notations and basic facts about the global deformation theory.

Let $F$ be a CM field, $n$ be a positive integer, $l$ be a prime such that $l \nmid 2n$, $E$ be a finite extension of $\mathbb{Q}_l$ contained in $\overline{\mathbb{Q}}_l$, $\mathcal{O}$ be the ring of integers of $E$ and $\mathbb{F}$ be the residue field of $\mathcal{O}$. We assume that $\tau(F) \subset E$ for any $\tau \in \mathrm{Hom}(F, \overline{\mathbb{Q}}_l)$. For an object $\Lambda$ of $\mathrm{CNL}_{\mathcal{O}}$, let $\mathrm{CNL}_{\Lambda}$ denote the category of complete Noetherian local $\Lambda$-algebras with residue field $\mathbb{F}$. 

Let $\overline{r} : G_F \rightarrow \mathrm{GL}_n(\mathbb{F})$ be an absolutely irreducible continuous representation and $S$ be a finite set of finite places of $F$ containing all $l$-adic places and ramified places of $\overline{r}$.

Let $F_S$ denote the maximal subextension of $\overline{F}/F$ such that $F_S/F$ is unramified outside $S$. We put $G_{F,S}:=\mathrm{Gal}(F_S/F)$. Then $\overline{r}$ is regarded as a representation of $G_{F, S}$.

For $v \in S$, we fix an object $\Lambda_v$ of $\mathrm{CNL}_{\mathcal{O}}$. Then $R_{\overline{r}|_{G_{F_v}}, \Lambda_v} := R_{\overline{r}|_{G_{F_v}}, \mathcal{O}} \widehat{\otimes}_{\mathcal{O}} \Lambda_v$ represents the functor $\mathcal{D}_v^{\Box} : \mathrm{CNL}_{\Lambda_v} \rightarrow \mathrm{Set}, A \mapsto \{ \rho_v : G_{F_v} \rightarrow \mathrm{GL}_n(A) \ \mid \ \mathrm{lifting \ of \ } \overline{r}|_{G_{F_v}} \}$.  We put $\Lambda:=\widehat{\otimes}_{v \in S} \Lambda_v$.

\begin{dfn}\label{local deformation problem}

    Let $\mathrm{CNL}_{\Lambda_v}^f$ denotes the full subcategory of $\mathrm{CNL}_{\Lambda_v}$ consisting of all Artin local $\Lambda_v$-algebras.

A local deformation problem $\mathcal{D}_v$ of $\overline{r}|_{G_{F_v}}$ over $\Lambda_v$ means a subfunctor $\mathcal{D}_v$ of $\mathcal{D}_v^{\Box}$ satisfying the following conditions. 

$1$ \ $\mathcal{D}_v$ is represented by a nonzero quotient $R_{\overline{r}|_{G_{F_v}}, \mathcal{D}_{v}}$ of $R_{\overline{r}|_{G_{F_v}}, \Lambda_v}$.

$2$ \ For all $A \in \mathrm{CNL}_{\Lambda_v}$, $r \in \mathcal{D}_v(A)$ and $a \in I_n + \mathfrak{m}_A\mathrm{M}_n(A)$, we have $a r a^{-1} \in \mathcal{D}_v(A)$.

Note that the condition 1 is equivalent to the following conditions.
    
$(a)$ \ $\overline{r}|_{G_{F_v}} \in \mathcal{D}_v(\mathbb{F})$.

$(b)$ \ For an object $A$ of $\mathrm{CNL}_{\Lambda}^f$, ideals $I_1, I_2$ of $A$ satisfying $I_1 \cap I_2 = 0$ and $r \in \mathcal{D}^{\Box}_v(A)$, if we have $r \mod I_{1} \in \mathcal{D}_v(A/I_1)$ and $r \mod I_{2} \in \mathcal{D}_v(A/I_2)$, then $r \in \mathcal{D}_v(A)$.
    
$(c)$ \ For an injective morphism $f : A \hookrightarrow B$ in $\mathrm{CNL}^f_{\mathcal{O}}$ and $r \in \mathcal{D}_{v}^{\Box}(A)$, if we have $f \circ r \in \mathcal{D}_{v}(B)$, then $r \in \mathcal{D}_{v}(A)$. 
    
$(d)$ \ For an object $A \in \mathrm{CNL}_{\Lambda_v}$, $\mathcal{D}_v(A) \cong \varprojlim_n \mathcal{D}_v(A/\mathfrak{m}_A^n)$.

In fact, if $\mathcal{D}_v$ satisfies the conditions $(a) \sim (d)$, $R_{\overline{r}, \Lambda_v}/(\cap_{I \in \mathcal{I}}I) \cong \varprojlim_{I \in \mathcal{I}} R_{\overline{r}, \Lambda_{v}}/I$ represents $\mathcal{D}_v$, where $\mathcal{I}$ is the set of all proper open ideals $I$ of $R_{\overline{r}, \Lambda_v}$ satisfying $r^{\mathrm{univ}} \mod I \in \mathcal{D}_{v}(R_{\overline{r}, \Lambda_v}/I)$.
    
\end{dfn}   

\begin{dfn} \label{global deformation}

1 \ A global deformation problem means a tuple $( \overline{r}, S, \{ \Lambda_v \}_{v \in S}, \{ \mathcal{D}_v \}_{v \in S})$, where

$\cdot$ \ $\overline{r}$, $S$ and $\Lambda_v$'s as above.

$\cdot$ \ For any $v \in S$, $\mathcal{D}_v$ is a local deformation problem of $\overline{r}|_{G_{F_v}}$ over $\Lambda_v$.

We fix a global deformation problem $\mathcal{S}=( \overline{r}, S, \{ \Lambda_v \}_{v \in S}, \{ \mathcal{D}_v \}_{v \in S})$.

2 \ For an object $A$ of $\mathrm{CNL}_{\Lambda}$, we say that a lifting $r : G_{F,S} \rightarrow \mathrm{GL}_n(A)$ of $\overline{r}$ is of type $\mathcal{S}$ if $r|_{G_{F_v}} \in \mathcal{D}_v(A)$ for all $v \in S$.

3 \ For an object $A \in \mathrm{CNL}_{\Lambda}$ and liftings $r_1, r_2 : G_{F, S} \rightarrow \mathrm{GL}_n(A)$ of $\overline{r}$, we say that $r_1$ and $r_2$ are equivalent if there exists $a \in I_n + \mathfrak{m}_A \mathrm{M}_n(A)$ such that $a r_1 a^{-1} = r_2$. Since $\overline{r}$ is absolutely irreducible, this is equivalent to the condition that there exists $a \in \mathrm{GL}_n(A)$ such that $a r_1 a^{-1} = r_2$. A deformation of type $\mathcal{S}$ to $A$ means an equivalence class of a lifting of type $\mathcal{S}$ to $A$.

4 \ For an object $A$ of $\mathrm{CNL}_{\Lambda}$, let $\mathcal{D}_{\mathcal{S}}(A)$ be the set of all deformations of type $\mathcal{S}$ to $A$.

5 \ We fix a subset $T$ of $S$. A $T$-framed lifting of type $\mathcal{S}$ to $A$ means a tuple $(r, \{ \alpha_v \}_{v \in T})$, where $r$ is a lifting of type $\mathcal{S}$ to $A$ and $\alpha_v \in I_n + \mathfrak{m}_A \mathrm{M}_n(A)$ for any $v \in T$.

6 \ For an object $A \in \mathrm{CNL}_{\mathcal{O}}$ and $T$-framed liftings $(r_1, \{ \alpha_v \}_{v \in T}), (r_2, \{ \beta_v \}_{v \in T})$ of $\mathcal{S}$ to $A$, we say that $(r_1, \{ \alpha_v \}_{v \in T})$ and $(r_2, \{ \beta_v \}_{v \in T})$ are equivalent if there exists $a \in I_n + \mathfrak{m}_A \mathrm{M}_n(A)$ such that $a r_1 a^{-1} = r_2$ and $a\alpha_v = \beta_v$ for all $v \in T$. A $T$-framed deformation of type $\mathcal{S}$ to $A$ means an equivalence class of a $T$-framed lifting of type $\mathcal{S}$ to $A$.

7 \ For an object $A$ of $\mathrm{CNL}_{\Lambda}$, let $\mathcal{D}^T_{\mathcal{S}}(A)$ be the set of all $T$-framed deformations of type $\mathcal{S}$ to $A$.

\end{dfn}

\begin{lem} \label{framed deformation}

Let $\mathcal{S}=( \overline{r}, S, \{ \Lambda_v \}_{v \in S}, \{ \mathcal{D}_v \}_{v \in S})$ be a global deformation problem. Then we have the following results.

1 \ The functors $\mathcal{D}_{\mathcal{S}}$ and $\mathcal{D}_{\mathcal{S}}^T$ are representable by objects $R_{\mathcal{S}}$ and $R^T_{\mathcal{S}}$ of $\mathrm{CNL}_{\Lambda}$ respectively.

2 \ For any place $v_0 \in T$ and any representative $r_{\mathcal{S}}$ of the universal deformation of type $\mathcal{S}$, we have a canonical isomorphism $R_{\mathcal{S}}^T \cong R_{\mathcal{S}}[[X_{v,i,j} \mid v \in T, i,j = 1, \cdots, n]]/(X_{v_0, 1, 1})$.

\end{lem}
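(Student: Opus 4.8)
The plan is to deduce the whole statement from two ingredients: the classical representability of the unframed deformation functor, and an explicit ``rigidification'' of the framing data. \textbf{Step 1: representability of $\mathcal{D}_{\mathcal{S}}$.} Since $\overline{r}$ is absolutely irreducible we have $\mathrm{End}_{G_{F,S}}(\overline{r})=\mathbb{F}$, and by a Nakayama/dévissage argument any lifting of $\overline{r}$ to an object $A$ of $\mathrm{CNL}_{\Lambda}$ has centraliser in $\mathrm{GL}_n(A)$ equal to the scalars; moreover $G_{F,S}$ satisfies the finiteness condition imposed on $\Gamma$ in {\S}3 (by class field theory, as recalled there), so $H^1(G_{F,S},\mathrm{ad}\,\overline{r})$ is finite and the tangent space of $\mathcal{D}_{\mathcal{S}}$ is finite-dimensional. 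Each $\mathcal{D}_v$ is a local deformation problem in the sense of Definition \ref{local deformation problem}, so the Schlessinger criteria hold for $\mathcal{D}_{\mathcal{S}}$, and $\mathcal{D}_{\mathcal{S}}$ is pro-represented by an object $R_{\mathcal{S}}$ of $\mathrm{CNL}_{\Lambda}$. I would simply invoke the standard Mazur--Ramakrishna argument here.

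\textbf{Step 2: the $T$-framed functor.} Fix $v_0\in T$ and a representative $r_{\mathcal{S}}\colon G_{F,S}\to\mathrm{GL}_n(R_{\mathcal{S}})$ of the universal deformation. Given $A\in\mathrm{CNL}_{\Lambda}$ and a $T$-framed lifting $(r,\{\alpha_v\}_{v\in T})$ of type $\mathcal{S}$, its underlying deformation corresponds to a unique $\varphi\colon R_{\mathcal{S}}\to A$ together with some $a\in I_n+\mathfrak{m}_A\mathrm{M}_n(A)$ with $a(\varphi\circ r_{\mathcal{S}})a^{-1}=r$; replacing the tuple by the equivalent one $(\varphi\circ r_{\mathcal{S}},\{a^{-1}\alpha_v\}_{v\in T})$ we may assume $r=\varphi\circ r_{\mathcal{S}}$, after which the residual equivalence freedom is exactly multiplication of all $\alpha_v$ by a scalar $a\in 1+\mathfrak{m}_A$ (here one uses that the centraliser of $\varphi\circ r_{\mathcal{S}}$ is the scalars). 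Since $(\alpha_{v_0})_{1,1}\in 1+\mathfrak{m}_A$ is a unit, there is a unique such $a$ making $(\alpha_{v_0})_{1,1}=1$, and this normalisation is then preserved only by $a=I_n$. Therefore, functorially in $A$,
\[
\mathcal{D}^T_{\mathcal{S}}(A)\ \xrightarrow{\ \sim\ }\ \mathrm{Hom}_{\mathrm{CNL}_{\Lambda}}(R_{\mathcal{S}},A)\times\bigl\{(\alpha_v)_{v\in T}\ \bigm|\ \alpha_v\in I_n+\mathfrak{m}_A\mathrm{M}_n(A),\ (\alpha_{v_0})_{1,1}=1\bigr\},
\]
and the second factor is represented by the power series ring on the variables $X_{v,i,j}=(\alpha_v)_{i,j}-\delta_{i,j}$ (for $v\in T$, $i,j=1,\dots,n$) subject only to the single relation $X_{v_0,1,1}=0$. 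This simultaneously shows that $\mathcal{D}^T_{\mathcal{S}}$ is representable and yields the canonical isomorphism $R^T_{\mathcal{S}}\cong R_{\mathcal{S}}[[X_{v,i,j}\mid v\in T,\ i,j=1,\dots,n]]/(X_{v_0,1,1})$, the dependence being only on the chosen $r_{\mathcal{S}}$ and $v_0$.

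The only non-formal input is Step 1 --- the representability of $\mathcal{D}_{\mathcal{S}}$ --- which rests on the finiteness of $H^1(G_{F,S},\mathrm{ad}\,\overline{r})$ (a consequence of the hypothesis on $\Gamma$ recalled in {\S}3) and on the Schur property coming from absolute irreducibility. Step 2 is bookkeeping with the conjugation action; the one point that needs genuine care is the uniqueness of the normalisation $(\alpha_{v_0})_{1,1}=1$, which is precisely what forces the single relation $X_{v_0,1,1}=0$ rather than a free power series ring in $n^2|T|$ variables.
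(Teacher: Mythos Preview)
Your proof is correct and follows essentially the same approach as the paper. The paper cites \cite[Theorem 7.7]{Fer} for the representability of $\mathcal{D}_{\mathcal{S}}$ (your Step 1), and for Step 2 writes down the map $\mathrm{Hom}_{\mathrm{CNL}_{\Lambda}}(R_{\mathcal{S}}[[X_{v,i,j}]]/(X_{v_0,1,1}),A)\to\mathcal{D}^T_{\mathcal{S}}(A)$, $f\mapsto[(f\circ r_{\mathcal{S}},(I_n+(f(X_{v,i,j}))))]$, and checks bijectivity using exactly your centraliser-equals-scalars observation; you organize the same bijection in the opposite direction by choosing the normalized representative $(\alpha_{v_0})_{1,1}=1$, but the content is identical.
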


\begin{proof} See \cite[Theorem 7.7]{Fer} for the existence of $R_{\mathcal{S}}$. 

We assume $T$ is not empty and fix $v_0 \in T$ and a representative $r_{\mathcal{S}}$ of the universal deformation of type $\mathcal{S}$.

For an object $A$ of $\mathrm{CNL}_{\Lambda}$, we have a natural surjective map $$\mathrm{Hom}_{\mathrm{CNL}_{\Lambda}}(R_{\mathcal{S}}[[X_{v,i,j}]]/(X_{v_0, 1, 1}), A) \rightarrow \mathcal{D}^T_{\mathcal{S}}(A), f \mapsto [(f(r_{\mathcal{S}}), (I_n + (f(X_{v, i, j}))))].$$

For any representative $\rho$ of an element of $\mathcal{D}_{\mathcal{S}}(A)$, the centralizer $Z_{\mathrm{M}_n(A)}(\mathrm{Im}(\rho))$ of $\rho$ is $AI_n$ since $\overline{r}$ is absolutely irreducible. This implies that the above map is injective. \end{proof}

Note that by sending $[((r, \{ \alpha_v \}_{v \in T}))]$ to $(\alpha_v^{-1} r|_{G_{F_v}} \alpha_v)_{v \in T}$, we get a local $\widehat{\otimes}_{v \in T}\Lambda_v$-morphism $R^{T, \mathrm{loc}}_{\mathcal{S}}:= \widehat{\otimes}_{v \in T} R_{\overline{r}|_{G_{F_v}}, \mathcal{D}_v} \rightarrow R_{\mathcal{S}}^T$.

\begin{lem}\label{trace density}

Let $\mathcal{S}=( \overline{r}, S, \{ \Lambda_v \}_{v \in S}, \{ \mathcal{D}_v \}_{v \in S})$ be a global deformation problem.

Then $\Lambda[\mathrm{tr}(r_{\mathcal{S}}(\mathrm{Frob}_v)) \mid v \notin S ]$ is a dense subalgebra of $R_{\mathcal{S}}$, where $r_{\mathcal{S}}$ is a representative of the universal deformation of type $\mathcal{S}$.

\end{lem}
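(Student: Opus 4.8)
The plan is to let $R_0 \subseteq R_{\mathcal{S}}$ be the closure of $\Lambda[\mathrm{tr}(r_{\mathcal{S}}(\mathrm{Frob}_v)) \mid v \notin S]$, and to show that the inclusion $\iota : R_0 \hookrightarrow R_{\mathcal{S}}$ admits a section, forcing $\iota$ to be surjective and hence $R_0 = R_{\mathcal{S}}$. First I would invoke the Chebotarev density theorem: the Frobenius elements $\mathrm{Frob}_v$ for $v \notin S$ are dense in $G_{F,S} = \mathrm{Gal}(F_S/F)$, and since $g \mapsto \mathrm{tr}(r_{\mathcal{S}}(g))$ is a continuous map $G_{F,S} \to R_{\mathcal{S}}$ and $R_0$ is closed, $R_0$ contains $\mathrm{tr}(r_{\mathcal{S}}(g))$ for every $g \in G_{F,S}$; thus $R_0$ is the closed $\Lambda$-subalgebra generated by all traces of $r_{\mathcal{S}}$.

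The second step is the key input: the theorem of Carayol (proved independently also by Nyssen and by Rouquier) on representations valued in the subring generated by traces. Since the residual representation $\overline{r}$ is absolutely irreducible with finite residue field $\mathbb{F}$, this theorem gives that $R_0$ is a complete Noetherian local ring with residue field $\mathbb{F}$, that $R_{\mathcal{S}}$ is a finite $R_0$-module, and that $r_{\mathcal{S}}$ is conjugate over $R_{\mathcal{S}}$ — and, after adjusting by a residual scalar, by an element of $I_n + \mathfrak{m}_{R_{\mathcal{S}}}\mathrm{M}_n(R_{\mathcal{S}})$ — to a representation $\rho : G_{F,S} \to \mathrm{GL}_n(R_0)$. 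Because $R_0$ receives the structure morphism from $\Lambda$, it is an object of $\mathrm{CNL}_{\Lambda}$, and the finiteness of $R_{\mathcal{S}}$ over $R_0$ (Artin--Rees) ensures that the $\mathfrak{m}_{R_0}$-adic topology on $R_0$ agrees with the topology induced from $R_{\mathcal{S}}$.

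Next I would check that $\rho$ is a lifting of $\overline{r}$ of type $\mathcal{S}$ over $R_0$. It reduces to $\overline{r}$ by construction. For each $v \in S$, the condition $\mathcal{D}_v$ is stable under conjugation by $I_n + \mathfrak{m}\mathrm{M}_n$, so $\rho|_{G_{F_v}} \in \mathcal{D}_v(R_{\mathcal{S}})$; to descend this to $\mathcal{D}_v(R_0)$ I would pass to Artinian quotients, applying axiom $(c)$ of Definition \ref{local deformation problem} to the injections $R_0/(R_0 \cap \mathfrak{m}_{R_{\mathcal{S}}}^m) \hookrightarrow R_{\mathcal{S}}/\mathfrak{m}_{R_{\mathcal{S}}}^m$, and then axiom $(d)$ together with the topology comparison from the previous step. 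Thus $\rho$ is of type $\mathcal{S}$, and being equivalent to $r_{\mathcal{S}}$ it represents the same, i.e. the universal, deformation. By the representability in part 1 of Lemma \ref{framed deformation}, $[\rho]$ is classified by a $\mathrm{CNL}_{\Lambda}$-morphism $\varphi : R_{\mathcal{S}} \to R_0$, and then $\iota \circ \varphi : R_{\mathcal{S}} \to R_{\mathcal{S}}$ classifies $[\rho] = [r_{\mathcal{S}}]$, the universal deformation; by uniqueness $\iota \circ \varphi = \mathrm{id}_{R_{\mathcal{S}}}$. Hence $\iota$ is surjective and $R_0 = R_{\mathcal{S}}$, which is the assertion.

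The routine ingredients are Chebotarev density and the Yoneda-style bootstrap at the end. The main obstacle is the precise form of Carayol's theorem used in the second step: it is exactly the finiteness of $R_{\mathcal{S}}$ over the trace subring $R_0$ that makes $R_0$ an honest object of $\mathrm{CNL}_{\Lambda}$ and that makes the descent of the local conditions $\mathcal{D}_v$ along the closed inclusion $R_0 \hookrightarrow R_{\mathcal{S}}$ legitimate; without it neither the Noetherianity of $R_0$ nor the applicability of axioms $(c)$ and $(d)$ is evident. (An alternative is to first establish the statement for the unrestricted $G_{F,S}$-deformation ring, where no local conditions intervene, and then push it to the quotient $R_{\mathcal{S}}$; but that route instead requires separately verifying that $R_{\mathcal{S}}$ is topologically generated over $\Lambda$ by the matrix entries of $r_{\mathcal{S}}$.)
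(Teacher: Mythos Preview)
Your approach is correct and is exactly the standard argument behind the paper's one-line proof (Carayol plus Chebotarev, via the cited Proposition 7.7). The point you flag as the main obstacle --- getting $R_0$ into $\mathrm{CNL}_\Lambda$ via finiteness of $R_{\mathcal S}$ over $R_0$ --- is indeed the one delicate step, and it is most cleanly handled by running Carayol and the universal property at each Artinian quotient $R_{\mathcal S}/\mathfrak{m}_{R_{\mathcal S}}^m$ separately (concluding that $R_0$ surjects onto every such quotient and hence equals $R_{\mathcal S}$), rather than globalizing first; this sidesteps the Noetherianity bootstrap entirely and is what the cited reference does.
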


\begin{proof} This follows from \cite[Proposition 7.7]{Fer} and Chebotarev's density theorem. \end{proof}

We recall some local deformation problems. We fix a place $v \in S$.

\begin{prop}\label{Level raising} We assume $v \nmid l$ and $\overline{r}|_{G_{F_v}}$ is unipotently ramified.

Let $\chi_1, \cdots, \chi_n : \mathbb{F}_{v}^{\times} \rightarrow 1 + \varpi \mathcal{O}$ be characters.

1 \ For an object $A$ of $\mathrm{CNL}_{\mathcal{O}}$, let $\mathcal{D}_v^{\chi}(A)$ be the set of all liftings $r : G_{F_v} \rightarrow \mathrm{GL}_n(A)$ of $\overline{r}|_{G_{F_v}}$ satisfying $\mathrm{det}(TI_n - r(\sigma)) = \prod_{i=1}^n(T - \chi_i (\mathrm{Art}_{F_v}^{-1}(\sigma)))$ for all $\sigma \in I_{F_v}$. Then the functor $\mathcal{D}_{v}^{\chi} : \mathrm{CNL}_{\mathcal{O}} \rightarrow \mathrm{Set}$ defines a local deformation problem for $\overline{r}|_{G_{F_v}}$ over $\mathcal{O}$.

2 \ If $\chi_i$ is trivial for all $i$, then any irreducible component $\mathcal{C}$ of $\mathrm{Spec} \, R_{\overline{r}|_{G_{F_v}}, \mathcal{D}_v^1}$ has dimension $n^2+1$ and is flat over $\mathcal{O}$. Moreover, any generic point $\overline{\eta}$ of $\mathcal{C}/\varpi$ is a generic point of $\mathrm{Spec} \, R_{\overline{r}|_{G_{F_v}}, \mathcal{D}_v^1}/\varpi$ and the generic point of $\mathcal{C}$ is the unique generic point of $\mathrm{Spec} \, R_{\overline{r}|_{G_{F_v}}, \mathcal{D}_v^1}$ specializing to $\overline{\eta}$.

3 \ If $q_v \equiv 1 \mod l$, $\chi_i \neq \chi_j$ for any $i \neq j$ and $\overline{r}|_{G_{F_v}}$ is trivial, then $\mathrm{Spec} \, R_{\overline{r}|_{G_{F_v}}, \mathcal{D}_v^{\chi}}$ is an irreducible scheme of dimension $n^2+1$. Moreover, the generic point of $\mathrm{Spec} \, R_{\overline{r}|_{G_{F_v}}, \mathcal{D}_v^{\chi}}$ has characteristic zero.

\end{prop}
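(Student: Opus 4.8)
The plan is to treat the three parts in turn, reducing (1) to the formal axioms of a local deformation problem, (2) to the moduli-space analysis already carried out in \S3.2, and (3) to the structure of the tamely ramified deformation ring.

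For part 1, I would check conditions $(a)$--$(d)$ of Definition \ref{local deformation problem} together with the conjugation-stability of condition 2. Conjugation-stability is immediate, since the defining condition only constrains the characteristic polynomials of $r(\sigma)$, $\sigma\in I_{F_v}$, which are unchanged under conjugating $r$. Condition $(a)$ holds because each $\chi_i$ takes values in $1+\varpi\mathcal{O}$, so $\prod_i(T-\chi_i(\mathrm{Art}_{F_v}^{-1}(\sigma)))\equiv (T-1)^n\bmod\varpi$, and $\overline{r}|_{G_{F_v}}$ is unipotently ramified by hypothesis; hence $\overline{r}|_{G_{F_v}}\in\mathcal{D}_v^{\chi}(\mathbb{F})$. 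For $(b)$--$(d)$ the point is that ``the characteristic polynomial of $r(\sigma)$ equals a prescribed polynomial for all $\sigma\in I_{F_v}$'' is a Zariski-closed condition: after noting that $r$ factors through the tame quotient of $G_{F_v}$ once it reduces to a unipotently ramified residual representation, it is cut out by finitely many polynomial identities in the matrix entries of $r$ evaluated on a topological generator of the (pro-$l$) tame inertia, and closed conditions of this shape are automatically stable under the fibre products, injections and projective limits in $(b)$--$(d)$ (this is exactly the bookkeeping for fixed-inertial-type conditions in \cite{small} and \cite{IA}). Equivalently, $\mathcal{D}_v^{\chi}$ is represented by the quotient of $R_{\overline{r}|_{G_{F_v}},\mathcal{O}}$ obtained by imposing these relations on $r^{\mathrm{univ}}$.

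For part 2, when all $\chi_i$ are trivial the condition defining $\mathcal{D}_v^{1}$ is precisely $\det(TI_n-r(\sigma))=(T-1)^n$ for all $\sigma\in I_{F_v}$, i.e. $r$ is unipotently ramified; thus $R_{\overline{r}|_{G_{F_v}},\mathcal{D}_v^{1}}$ is the unipotently ramified lifting ring $R^{\mathrm{unip}}_{\overline{r}|_{G_{F_v}}}$ of \S3.2, and $\mathrm{Spec}R^{\mathrm{unip}}_{\overline{r}|_{G_{F_v}}}=\widehat{\mathcal{O}_{\mathcal{M},x}}$ with $x=F(\overline{r}|_{G_{F_v}})$. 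Flatness over $\mathcal{O}$ and the dimension $n^2+1$ of every irreducible component are Corollary \ref{moduli of Weil-Deligne 2}. For the assertion on generic points I would, after enlarging $E$ (Lemma \ref{geometrically irreducibility}), assume all components of $\mathrm{Spec}R^{\mathrm{unip}}_{\overline{r}|_{G_{F_v}}}$ and of its special fibre are geometrically irreducible; then by Proposition \ref{moduli of Weil-Deligne}(3) the components of $\mathcal{M}$ and of $\mathcal{M}\otimes_{\mathcal{O}}\mathbb{F}$ are both indexed by the partitions $\tau$ of $n$, via $\tau\mapsto\mathcal{M}(\tau)$ and $\tau\mapsto\mathcal{M}(\tau)\otimes\mathbb{F}$, and by Proposition \ref{moduli of Weil-Deligne}(2) each $\mathcal{M}(\tau)\to\mathrm{Spec}\mathcal{O}$ is flat with geometrically irreducible special fibre of dimension $n^2$. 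Completing at $x$, the components of $\mathrm{Spec}R^{\mathrm{unip}}_{\overline{r}|_{G_{F_v}}}$ through $x$ are the $\widehat{\mathcal{O}_{\mathcal{M}(\tau),x}}$ with $x\in\mathcal{M}(\tau)$, and likewise for the special fibre; since each $\widehat{\mathcal{O}_{\mathcal{M}(\tau),x}}$ is $\mathcal{O}$-flat with irreducible closed fibre of dimension one less, its closed fibre is the unique minimal prime of $R^{\mathrm{unip}}_{\overline{r}|_{G_{F_v}}}/\varpi$ specialising from that component, and every minimal prime of $R^{\mathrm{unip}}_{\overline{r}|_{G_{F_v}}}/\varpi$ arises from a unique $\tau$. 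This is exactly the claimed bijection between the components of $\mathcal{C}$ and of $\mathcal{C}/\varpi$.

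Part 3 is the step I expect to be the real obstacle. First I would use that $\overline{r}|_{G_{F_v}}$ is trivial: any lift $r:G_{F_v}\to\mathrm{GL}_n(A)$ sends $I_{F_v}$ into the pro-$l$ group $I_n+\mathfrak{m}_A\mathrm{M}_n(A)$, so wild inertia (pro-$p$, $p\neq l$) dies and $r$ factors through the tame quotient, topologically generated by a Frobenius $\phi$ and a pro-$l$ generator $\sigma$ of tame inertia with $\phi\sigma\phi^{-1}=\sigma^{q_v}$. As in \S3.2 (and \cite{small}), $R_{\overline{r}|_{G_{F_v}},\mathcal{D}_v^{\chi}}$ is then the completion at $(I_n,I_n)$ of the affine $\mathcal{O}$-scheme of pairs $(\Phi,\Sigma)$ with $\Phi\in\mathrm{GL}_n$, $\Phi\Sigma\Phi^{-1}=\Sigma^{q_v}$, and $\det(TI_n-\Sigma)=\prod_i(T-\chi_i(\sigma))$; the subtle passage from ``for all $\sigma'\in I_{F_v}$'' to this single constraint on $\Sigma$ is part of what \cite{small} provides. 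Now $q_v\equiv 1\bmod l$ forces each $\chi_i$, as a character of $\mathbb{F}_v^{\times}$, to be Frobenius-invariant, so $\Sigma$ and $\Sigma^{q_v}$ always have the same characteristic polynomial, and $\chi_i\neq\chi_j$ makes the roots $\chi_i(\sigma)\in1+\varpi\mathcal{O}$ pairwise distinct, hence that polynomial is separable after inverting $l$ (and enlarging $E$). Over $E[\tfrac{1}{l}]$ the matrix $\Sigma$ is regular semisimple, $\Phi$ conjugates it to the (conjugate) matrix $\Sigma^{q_v}$, and $\Phi$ is pinned down up to the centraliser torus of $\Sigma$; this realises $\mathrm{Spec}R_{\overline{r}|_{G_{F_v}},\mathcal{D}_v^{\chi}}[\tfrac{1}{l}]$, after enlarging $E$, as an open subscheme of a smooth geometrically integral variety of dimension $n^2$, fibred over the $\mathrm{GL}_n/T$-space of such $\Sigma$ (dimension $n^2-n$) with $T$-torsor fibres. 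It therefore is regular and geometrically integral, so $\mathrm{Spec}R_{\overline{r}|_{G_{F_v}},\mathcal{D}_v^{\chi}}[\tfrac{1}{l}]$ is irreducible of dimension $n^2$. The remaining, genuinely delicate point is that no new component appears in the special fibre, so that $\mathrm{Spec}R_{\overline{r}|_{G_{F_v}},\mathcal{D}_v^{\chi}}$ is $\mathcal{O}$-flat and irreducible of dimension $1+n^2$ with generic point of characteristic zero; this requires analysing the mod-$\varpi$ geometry of the pair-scheme directly, and I would invoke \cite{small} (together with \cite{IA}) for it rather than reproduce Taylor's argument.
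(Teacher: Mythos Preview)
The paper's own proof is simply the citation ``See \cite[Proposition 3.16 and 3.17]{small}'', so your sketch is inevitably more detailed than what appears here; your overall strategy is exactly the one behind that citation, namely (1) closed, conjugation-invariant conditions on characteristic polynomials, (2) the identification $R_{\overline{r}|_{G_{F_v}},\mathcal{D}_v^{1}}=R^{\mathrm{unip}}_{\overline{r}|_{G_{F_v}}}$ together with the moduli-space analysis of \S3.2, and (3) the tame presentation plus regular-semisimple geometry on the generic fibre, with $\mathcal{O}$-flatness deferred to \cite{small}.

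One point in part 2 deserves care. You assert that ``the components of $\mathrm{Spec}R^{\mathrm{unip}}$ through $x$ are the $\widehat{\mathcal{O}_{\mathcal{M}(\tau),x}}$'' and that each has ``irreducible closed fibre''. Completion of an excellent local ring can split irreducible components, so neither the irreducibility of $\mathrm{Spec}\widehat{\mathcal{O}_{\mathcal{M}(\tau),x}}$ nor of its special fibre is automatic from the irreducibility of $\mathcal{M}(\tau)$ and $\mathcal{M}(\tau)\otimes\mathbb{F}$; and the proposition does not assert a bijection of components in any case. What the proposition actually needs is only that (i) each component $\mathcal{C}$ is $\mathcal{O}$-flat of dimension $n^2+1$ (your appeal to Corollary \ref{moduli of Weil-Deligne 2} is correct), (ii) generic points of $\mathcal{C}/\varpi$ are generic in $\mathrm{Spec}R^{\mathrm{unip}}/\varpi$ (a dimension count: $\mathcal{C}/\varpi$ has dimension $n^2$ by flatness, and every component of $\mathrm{Spec}R^{\mathrm{unip}}/\varpi$ has dimension $\le n^2$ via the regular morphism to $\mathcal{M}/\varpi$), and (iii) uniqueness of the characteristic-zero generic point over $\overline{\eta}$. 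For (iii), the argument in \cite{small} works on $\mathcal{M}$ itself: the flat map $\mathrm{Spec}\widehat{\mathcal{O}_{\mathcal{M},x}}\to\mathcal{M}$ sends generic points of $R^{\mathrm{unip}}$ and of $R^{\mathrm{unip}}/\varpi$ to generic points of $\mathcal{M}$ and $\mathcal{M}/\varpi$ respectively, and then one uses Proposition \ref{moduli of Weil-Deligne}(2),(3) that each $\mathcal{M}(\tau)$ is $\mathcal{O}$-flat with \emph{irreducible} special fibre $\mathcal{M}(\tau)\otimes\mathbb{F}$, these special fibres being the distinct components of $\mathcal{M}\otimes\mathbb{F}$. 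This pins down the partition $\tau$ from $\overline{\eta}$ and hence the generic point of $\mathcal{M}$ above it, without ever claiming the completions $\widehat{\mathcal{O}_{\mathcal{M}(\tau),x}}$ are themselves irreducible.
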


\begin{proof} See \cite[Proposition 3.16 and 3.17]{small}. \end{proof}

\begin{prop}We assume $v | l$. Let $\lambda_v \in (\mathbb{Z}_+^n)^{\mathrm{Hom}_{\mathbb{Q}_l}(F_v, E)}$.

If $R_{\overline{r}|_{G_{F_v}}}^{\mathrm{cris}, \lambda_v}$ is not zero, then the subfunctor $\mathcal{D}_v^{\mathrm{cris}, \lambda_v} := \mathrm{Hom}_{\mathrm{CNL}_{\mathcal{O}}}( R_{\overline{r}|_{G_{F_v}}}^{\mathrm{cris}, \lambda_v}, \ \ )$ of $\mathcal{D}_v^{\Box} = \mathrm{Hom}_{\mathrm{CNL}_{\mathcal{O}}}(R_{\overline{r}|_{G_{F_v}}}, \ \ )$ is a local deformation problem of $\overline{r}|_{G_{F_v}}$ over $\mathcal{O}$. 

\end{prop}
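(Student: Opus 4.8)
The plan is to verify the two conditions in Definition \ref{local deformation problem}. By Proposition \ref{psd}, the ring $R_{\overline{r}|_{G_{F_v}}}^{\mathrm{cris}, \lambda_v}$ is an $\mathcal{O}$-flat quotient of $R_{\overline{r}|_{G_{F_v}}} = R_{\overline{r}|_{G_{F_v}}, \Lambda_v}$ (here $\Lambda_v = \mathcal{O}$), it is nonzero by hypothesis, and $R_{\overline{r}|_{G_{F_v}}}^{\mathrm{cris}, \lambda_v}[\frac{1}{l}]$ is regular, so $R_{\overline{r}|_{G_{F_v}}}^{\mathrm{cris}, \lambda_v}$ is reduced and embeds in $R_{\overline{r}|_{G_{F_v}}}^{\mathrm{cris}, \lambda_v}[\frac{1}{l}]$. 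The quotient map $q : R_{\overline{r}|_{G_{F_v}}} \twoheadrightarrow R_{\overline{r}|_{G_{F_v}}}^{\mathrm{cris}, \lambda_v}$ induces an inclusion of functors $\mathcal{D}_v^{\mathrm{cris}, \lambda_v} \hookrightarrow \mathcal{D}_v^{\Box}$, so $\mathcal{D}_v^{\mathrm{cris}, \lambda_v}$ is a subfunctor of $\mathcal{D}_v^{\Box}$ represented by a nonzero quotient of $R_{\overline{r}|_{G_{F_v}}, \Lambda_v}$; this is exactly condition $1$ of Definition \ref{local deformation problem} (equivalently, conditions $(a)$--$(d)$). Hence the only remaining point is condition $2$: that $\mathcal{D}_v^{\mathrm{cris}, \lambda_v}(A)$ is stable under conjugation by $I_n + \mathfrak{m}_A \mathrm{M}_n(A)$ for every $A$ in $\mathrm{CNL}_{\mathcal{O}}$.

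For this I would use a universal conjugation argument, parallel to Lemma \ref{p-adic equivalence irreducible} and \cite[Lemma 1.3.4]{CW}. Write $R^{\mathrm{cris}} := R_{\overline{r}|_{G_{F_v}}}^{\mathrm{cris}, \lambda_v}$ and put $\tilde{R} := R^{\mathrm{cris}}[[\,y_{i,j} \mid 1 \le i, j \le n\,]]$, a complete Noetherian local $\mathcal{O}$-algebra with residue field $\mathbb{F}$; since $R^{\mathrm{cris}}$ is $\mathcal{O}$-flat and reduced, so is $\tilde{R}$. Let $\tilde{a} := I_n + (y_{i,j}) \in \mathrm{GL}_n(\tilde{R})$ and let $\tilde{r} : G_{F_v} \rightarrow \mathrm{GL}_n(\tilde{R})$ be the lifting of $\overline{r}|_{G_{F_v}}$ obtained by conjugating the image in $\mathrm{GL}_n(\tilde{R})$ of $q \circ r^{\mathrm{univ}}$ by $\tilde{a}$; by the universal property of $R_{\overline{r}|_{G_{F_v}}}$ there is a unique $\mathcal{O}$-morphism $\phi : R_{\overline{r}|_{G_{F_v}}} \rightarrow \tilde{R}$ with $\phi \circ r^{\mathrm{univ}} = \tilde{r}$. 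The key claim is that $\phi$ factors through $q$. To see this, note that $\tilde{R}$ is local, so Lemma \ref{density of closed point} gives that $\tilde{R}[\frac{1}{l}]$ is reduced, its closed points have residue fields finite over $E$, and the intersection of its maximal ideals is $0$; since $\tilde{R} \hookrightarrow \tilde{R}[\frac{1}{l}]$, we get $\phi(\ker q) = 0$ as soon as $\phi(\ker q)$ vanishes in $B := \tilde{R}[\frac{1}{l}]/\mathfrak{q}$ for every maximal ideal $\mathfrak{q}$. But the composite $R_{\overline{r}|_{G_{F_v}}} \xrightarrow{\phi} \tilde{R} \rightarrow B$ yields the lifting $g_B\,(r_B)\,g_B^{-1}$, where $r_B$ is attached to $R_{\overline{r}|_{G_{F_v}}} \xrightarrow{q} R^{\mathrm{cris}} \rightarrow \tilde{R} \rightarrow B$ and $g_B \in \mathrm{GL}_n(B)$ is the image of $\tilde{a}$; since $B$ is finite over $E$, $r_B$ is crystalline of $l$-adic Hodge type $\mathbf{v}_{\lambda_v}$ by Proposition \ref{psd}, hence so is the isomorphic $B[G_{F_v}]$-module $g_B\,(r_B)\,g_B^{-1}$, and therefore this composite factors through $q$ by Proposition \ref{psd} again, i.e.\ $\phi(\ker q)$ maps to $0$ in $B$. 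This proves the claim.

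Finally, given $A$ in $\mathrm{CNL}_{\mathcal{O}}$, $r \in \mathcal{D}_v^{\mathrm{cris}, \lambda_v}(A)$ with corresponding morphism $f_r : R^{\mathrm{cris}} \rightarrow A$, and $a \in I_n + \mathfrak{m}_A \mathrm{M}_n(A)$, define $h : \tilde{R} \rightarrow A$ by $h|_{R^{\mathrm{cris}}} = f_r$ and $h(y_{i,j}) = (a - I_n)_{i,j} \in \mathfrak{m}_A$; then $h(\tilde{a}) = a$, so $h \circ \tilde{r} = a r a^{-1}$, and consequently the morphism $R_{\overline{r}|_{G_{F_v}}} \rightarrow A$ attached to the lifting $a r a^{-1}$ equals $h \circ \phi$, which factors through $q$ by the claim. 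Thus $a r a^{-1} \in \mathcal{D}_v^{\mathrm{cris}, \lambda_v}(A)$, which is condition $2$. The only real work is this conjugation-invariance; everything else is formal once Proposition \ref{psd} is available, the argument being the $p = l$ analogue of the treatment of the unrestricted and unipotently ramified deformation conditions earlier in this section. The point that requires care is that $\tilde{R}$ is reduced and $\mathcal{O}$-flat, so that it is enough to test the factorization of $\phi$ on the characteristic-zero closed points of $\mathrm{Spec}\,\tilde{R}$, where Proposition \ref{psd} applies.
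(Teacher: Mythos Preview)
Your proof is correct and follows essentially the same approach as the paper: both reduce condition~2 to showing that the universal conjugation morphism $\phi : R_{\overline{r}|_{G_{F_v}}} \to R^{\mathrm{cris}}[[X_{i,j}]]$ factors through $R^{\mathrm{cris}}$, and both verify this factorization by using that $R^{\mathrm{cris}}[[X_{i,j}]]$ is $\mathcal{O}$-flat and reduced, so it suffices to check on closed points of $\mathrm{Spec}\,R^{\mathrm{cris}}[[X_{i,j}]][\tfrac{1}{l}]$, where Proposition~\ref{psd} applies. Your write-up is more detailed (you make explicit the specialization step to an arbitrary $A$ at the end, and you separately note condition~1), but the substance is identical; one small imprecision is that Lemma~\ref{density of closed point} does not itself assert reducedness of $\tilde{R}[\tfrac{1}{l}]$---you already obtained that from the $\mathcal{O}$-flatness and reducedness of $R^{\mathrm{cris}}$, so the argument still goes through.
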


\begin{proof}  Let $r$ be the push-forward of $r^{\mathrm{univ}} : G_{F_v} \rightarrow \mathrm{GL}_n(R_{\overline{r}|_{G_{F_v}}})$ to $\mathrm{GL}_n(R^{\mathrm{cris}, \lambda}_{\overline{r}|_{G_{F_v}}})$. 

It suffices to prove that the morphism $\phi : R_{\overline{r}|_{G_{F_v}}} \rightarrow R_{\overline{r}|_{G_{F_v}}}^{\mathrm{cris}, \lambda_v}[[X_{i,j} \mid i ,j = 1, \cdots, n]]$ corresponding to $(I_n + (X_{i,j})) r (I_n + (X_{i, j}))^{-1}$ factors through $R_{\overline{r}|_{G_{F_v}}} \twoheadrightarrow R_{\overline{r}|_{G_{F_v}}}^{\mathrm{cris}, \lambda_v}$. 

By Proposition \ref{psd}, $R_{\overline{r}|_{G_{F_v}}}^{\mathrm{cris}, \lambda_v}[[X_{i,j} \mid i ,j = 1, \cdots, n]]$ is flat over $\mathcal{O}$ and reduced.

Thus, the result follows from the fact that any closed point of $\mathrm{Spec} \, R_{\overline{r}|_{G_{F_v}}}^{\mathrm{cris}, \lambda_v}[[X_{i,j} \mid i ,j = 1, \cdots, n]][\frac{1}{l}]$ is sent in $\mathrm{Spec} \, R_{\overline{r}|_{G_{F_v}}}^{\mathrm{cris}, \lambda_v}[\frac{1}{l}]$ by the morphism $\mathrm{Spec} \, R_{\overline{r}|_{G_{F_v}}}^{\mathrm{cris}, \lambda_v}[[X_{i,j} \mid i ,j = 1, \cdots, n]][\frac{1}{l}] \rightarrow \mathrm{Spec} \, R_{\overline{r}|_{G_{F_v}}}[\frac{1}{l}]$ corresponding to $\phi$. \end{proof}

\begin{prop}\label{ordinary deformation ring}

We assume $v|l$, $\overline{r}|_{G_{F_v}}$ is trivial and $[F_v:\mathbb{Q}_l] > 1 + \frac{n(n+1)}{2}$. Let $P$ be a minimal prime of $\mathcal{O}[[(\mathcal{O}_{F_v}^{\times}(l))^{\oplus n}]]$, $\Lambda_v:=\mathcal{O}[[(\mathcal{O}_{F_v}^{\times}(l))^{\oplus n}]]/P$ and $\widetilde{\Lambda_v} := \Lambda_v \widehat{\otimes}_{\mathcal{O}[[(\mathcal{O}_{F_v}^{\times}(l))^{\oplus n}]]} \mathcal{O}[[(F_v^{\times}(l))^{\oplus n}]]$. (Here, $(l)$ denotes the pro-$l$-completion.) 

1 \ $\mathcal{O}[[(F_v^{\times}(l))^{\oplus n}]]$ represents the functor of $\mathcal{F} : \mathrm{CNL}_{\mathcal{O}} \rightarrow \mathrm{Set} \ , A \mapsto \{ (\chi_1, \cdots, \chi_n) \mid \ For \ any \  i, \chi_i : G_{F_v} \rightarrow 1+\mathfrak{m}_A \ \mathrm{is \ a \ continuous \ character} \}$.

2 \ For an object $A$ of $\mathrm{CNL}_{\widetilde{\Lambda_v}}$, let $\widetilde{\mathcal{D}}^{\mathrm{det, ord}}_v(A)$ be the set of all lifts $r : G_{F_v} \rightarrow \mathrm{GL}_n(A)$ of $\overline{r}|_{G_{F_v}}$ to $A$ satisfying the following conditions.

$\cdot$ \ $\mathrm{det}(TI_n - r(g)) = \prod_{i=1}^n(T - f \circ \chi_i^{\mathrm{univ}} (g))$ for all $g \in G_{F_v}$.

$\cdot$ \ $(r(g_1)-f \circ \chi_1^{\mathrm{\mathrm{univ}}}(g_1)) \cdots (r(g_n) - f \circ \chi_n^{\mathrm{univ}}(g_n)) = 0$ for all $g_1, \cdots, g_n \in G_{F_v}$. 

(Here, $f : \widetilde{\Lambda_v} \rightarrow A$ denotes the structure morphism and $(\chi_1^{\mathrm{univ}}, \cdots, \chi_n^{\mathrm{univ}}) \in \mathcal{F}(\widetilde{\Lambda_v})$ denotes the image of the universal element by the quotient map $\mathcal{O}[[(F_v^{\times}(l))^{\oplus n}]] \twoheadrightarrow \widetilde{\Lambda_v}$.)

Then the functor $\widetilde{\mathcal{D}}_{v}^{det,ord} : \mathrm{CNL}_{\widetilde{\Lambda_v}} \rightarrow \mathrm{Set}$ defines a local deformation problem for $\overline{r}|_{G_{F_v}}$ over $\widetilde{\Lambda_v}$.

3 \ We put $R_{\overline{r}|_{G_{F_v}}, \mathcal{D}_v^{\mathrm{det, ord}}} := \mathrm{Im}(R_{\overline{r}|_{G_{F_v}}, \Lambda_v} \rightarrow R_{\overline{r}|_{G_{F_v}}, \widetilde{\mathcal{D}}_v^{\mathrm{det, ord}}})$. Then the functor $\mathcal{D}_v^{\mathrm{det, ord}} := \mathrm{Hom}_{\mathrm{CNL}_{\Lambda_v}}(R_{\overline{r}|_{G_{F_v}}, \mathcal{D}_v^{\mathrm{det, ord}}}, \ \ )$ defines a local deformation problem of $\overline{r}|_{G_{F_v}}$ over $\Lambda_v$.

4 \ $\mathrm{Spec} \, R_{\overline{r}|_{G_{F_v}}, \mathcal{D}_v^{\mathrm{det, ord}}}$ has a unique irreducible component $\mathcal{C}$ having the maximum dimension $1 + n^2 + [F_v:\mathbb{Q}_l]\frac{n(n+1)}{2}$.

5 \ $\mathcal{C}$ contains $\mathrm{Spec} \, R_{\overline{r}|_{G_{F_v}}, \mathcal{D}_v^{\mathrm{det, ord}}} \times_{\mathrm{Spec} \, \Lambda_v} U$, where $U$ is the open subscheme of $\mathrm{Spec} \, \Lambda_v$ defined by the condition $\chi_i^{\mathrm{univ}} \neq \chi_j^{\mathrm{univ}}$ for any $i \neq j$.

6 \ Any other irreducible component of $\mathrm{Spec} \, R_{\overline{r}|_{G_{F_v}}, \mathcal{D}_v^{\mathrm{det, ord}}}$ than $\mathcal{C}$ has dimension $ \le - 1 + n^2 + [F_v:\mathbb{Q}_l]\frac{n(n+1)}{2}$.

\end{prop}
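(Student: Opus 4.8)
The plan is to treat the six assertions in turn, reducing 1--3 to formal arguments and concentrating the work on 4--6, for which I would follow the analysis of ordinary local lifting rings in \cite{OG} (cf.\ \cite{GG}).

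Assertion 1 is a standard consequence of local class field theory. Any continuous character $\chi \colon G_{F_v} \to 1 + \mathfrak{m}_A$ with $A$ an object of $\mathrm{CNL}_{\mathcal{O}}$ has pro-$l$ image, hence factors through $G_{F_v}^{\mathrm{ab}}(l)$, which via $\mathrm{Art}_{F_v}$ is identified with $F_v^{\times}(l)$; since $F_v^{\times}(l) \cong \mathbb{Z}_l^{1 + [F_v:\mathbb{Q}_l]} \times (\text{finite }l\text{-group})$ is topologically finitely generated, $\mathcal{O}[[F_v^{\times}(l)]]$ is Noetherian and co-represents $A \mapsto \mathrm{Hom}_{\mathrm{cont}}(F_v^{\times}(l), 1+\mathfrak{m}_A)$, and the $n$-fold completed tensor product over $\mathcal{O}$ gives 1. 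I would simultaneously record the facts used below: the splitting $F_v^{\times}(l) \cong \mathcal{O}_{F_v}^{\times}(l) \times \mathbb{Z}_l$, with the second factor generated by a uniformizer, shows that $\widetilde{\Lambda_v} \cong \Lambda_v[[Y_1, \dots, Y_n]]$ is $\Lambda_v$-flat of relative dimension $n$ and that $\dim \Lambda_v = 1 + n[F_v:\mathbb{Q}_l]$.

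For assertions 2 and 3 I would verify the conditions $(a)$--$(d)$ of Definition \ref{local deformation problem} together with invariance under conjugation by $I_n + \mathfrak{m}_A \mathrm{M}_n(A)$. The two relations defining $\widetilde{\mathcal{D}}_v^{\mathrm{det, ord}}(A)$ --- the characteristic-polynomial identity and the vanishing $(r(g_1) - f\circ\chi_1^{\mathrm{univ}}(g_1))\cdots(r(g_n) - f\circ\chi_n^{\mathrm{univ}}(g_n)) = 0$ --- are polynomial equations in the matrix entries of the $r(g)$ and in the images of the $\chi_i^{\mathrm{univ}}$, hence are stable under passage to sub-objects and to fibre products $A/I_1 \times_A A/I_2$ with $I_1 \cap I_2 = 0$; this gives $(b)$ and $(c)$. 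Condition $(a)$ holds because $\overline{r}|_{G_{F_v}}$ is trivial and the augmentation $\widetilde{\Lambda_v} \to \mathbb{F}$ makes each $\chi_i^{\mathrm{univ}}$ trivial, so both relations degenerate to $(T-1)^n = (T-1)^n$ and $0 = 0$; $(d)$ is the usual continuity statement; and conjugation-invariance is immediate. Assertion 3 then follows: $\mathcal{D}_v^{\mathrm{det, ord}}$ is by construction the subfunctor of $\mathcal{D}_v^{\Box}$ represented by $R_{\overline{r}|_{G_{F_v}}, \mathcal{D}_v^{\mathrm{det, ord}}} = \mathrm{Im}(R_{\overline{r}|_{G_{F_v}}, \Lambda_v} \to R_{\overline{r}|_{G_{F_v}}, \widetilde{\mathcal{D}}_v^{\mathrm{det, ord}}})$, and it inherits the closedness and conjugation-invariance properties from $\widetilde{\mathcal{D}}_v^{\mathrm{det, ord}}$.

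The substance is in assertions 4--6. The key step is to describe $\mathrm{Spec}\, R_{\overline{r}|_{G_{F_v}}, \mathcal{D}_v^{\mathrm{det, ord}}}$ over the open locus $U \subset \mathrm{Spec}\,\Lambda_v$ cut out by $\chi_i^{\mathrm{univ}} \neq \chi_j^{\mathrm{univ}}$ for $i \neq j$: over $U$ an ordinary lift carries a unique $G_{F_v}$-stable complete flag with the prescribed ordered graded characters, so the deformation problem over $U$ is, up to framing, an iterated extension problem with graded pieces $\chi_1^{\mathrm{univ}}, \dots, \chi_n^{\mathrm{univ}}$; a local Euler characteristic computation --- with $h^0 = h^2 = 0$ for the relevant twists after shrinking $U$, by the distinctness of the characters --- shows that $\mathrm{Spec}\, R_{\overline{r}|_{G_{F_v}}, \mathcal{D}_v^{\mathrm{det, ord}}} \times_{\Lambda_v} U$ is irreducible of relative dimension $n^2 + [F_v:\mathbb{Q}_l]\frac{n(n-1)}{2}$ over $\Lambda_v$, hence of dimension $1 + n^2 + [F_v:\mathbb{Q}_l]\frac{n(n+1)}{2}$. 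Its closure is the component $\mathcal{C}$, which by construction contains the whole preimage of $U$; this gives assertion 5 and the existence half of assertion 4. Since the preimage of $U$ is irreducible, any component $\mathcal{C}' \neq \mathcal{C}$ is disjoint from it and hence supported over the complement $Z := \mathrm{Spec}\,\Lambda_v \setminus U$ or over the special fibre $\mathrm{Spec}\,\Lambda_v/\varpi$. One then estimates $\dim\mathcal{C}' \leq \dim Z + d$, where $d$ is the dimension of a generic fibre of $\mathrm{Spec}\,R_{\overline{r}|_{G_{F_v}}, \mathcal{D}_v^{\mathrm{det, ord}}} \to \mathrm{Spec}\,\Lambda_v$ over the relevant stratum of $Z$; computing $d$ again via local Galois cohomology, now keeping track of the jumps of $H^1$ along $Z$, one obtains $\dim\mathcal{C}' < 1 + n^2 + [F_v:\mathbb{Q}_l]\frac{n(n+1)}{2}$, which yields assertion 6 and the uniqueness half of assertion 4. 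I expect this last dimension bookkeeping to be the main obstacle, and it is precisely here that the hypothesis $[F_v:\mathbb{Q}_l] > 1 + \frac{n(n+1)}{2}$ is used, guaranteeing that every degenerate stratum of $\mathrm{Spec}\,\Lambda_v$ contributes only components of dimension strictly below $\dim\mathcal{C}$. For the details of this estimate I would refer to \cite[\S 3]{OG}, whose arguments carry over with only notational changes.
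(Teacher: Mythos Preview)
Your proposal is correct and follows essentially the same approach as the paper. The paper is much terser: it declares 1 and 2 trivial, proves 3 by the commutative diagram making precise your ``inherits conjugation-invariance from $\widetilde{\mathcal{D}}_v^{\mathrm{det,ord}}$'' (via the universal conjugation $(I_n+(X_{i,j}))r(I_n+(X_{i,j}))^{-1}$ and compatibility with the base change $\Lambda_v \to \widetilde{\Lambda_v}$), and for 4--6 simply cites \cite[Proposition 6.2.12]{10}, whose proof is the Euler-characteristic/flag-uniqueness analysis over $U$ that you have sketched.
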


\begin{proof}

1 and 2 are trivial.

3 \ It suffices to prove that the morphism $\phi : R_{\overline{r}|_{G_{F_v}}, \Lambda_v} \rightarrow R_{\overline{r}|_{G_{F_v}, \mathcal{D}_v^{\mathrm{det, ord}}}}[[X_{i,j} \mid i ,j = 1, \cdots, n]]$ corresponding to $(I_n + (X_{i,j})) r_{\mathcal{D}^{\mathrm{det, ord}}_v} (I_n + (X_{i, j}))^{-1}$ factors through $R_{\overline{r}|_{G_{F_v}}, \Lambda_v} \rightarrow R_{\overline{r}|_{G_{F_v}}, \mathcal{D}_v^{\mathrm{det, ord}}}$. ($r_{\mathcal{D}_v^{\mathrm{det, ord}}}$ denotes the push-forward of the universal lifting $r^{\mathrm{univ}} : G_{F_v} \rightarrow \mathrm{GL}_n(R_{\overline{r}|_{G_{F_v}}, \Lambda_v})$ of $\overline{r}|_{G_{F_v}}$ to $R_{\overline{r}|_{G_{F_v}, \mathcal{D}_v^{\mathrm{det, ord}}}}$.)

By the property 2 of this Proposition, the morphism $\psi : R_{\overline{r}|_{G_{F_v}}, \widetilde{\Lambda_v}} \rightarrow R_{\overline{r}|_{G_{F_v}, \widetilde{\mathcal{D}}_v^{\mathrm{det, ord}}}}[[X_{i,j}]]$ corresponding to $(I_n + (X_{i,j})) r_{\widetilde{\mathcal{D}}^{\mathrm{det, ord}}_v} (I_n + (X_{i, j}))^{-1}$ factors through $R_{\overline{r}|_{G_{F_v}}, \widetilde{\Lambda_v}} \rightarrow R_{\overline{r}|_{G_{F_v}}, \widetilde{\mathcal{D}}_v^{\mathrm{det, ord}}}$. By the following commutative diagram, we obtain the property 3.

\[\xymatrix{
R_{\overline{r}|_{G_{F_v}}, \Lambda_v} \ar[d] \ar[r]^-{\phi} & R_{\overline{r}|_{G_{F_v}, \mathcal{D}_v^{\mathrm{det, ord}}}}[[X_{i,j}]] \ar@{^{(}-_>}[d] \\
R_{\overline{r}|_{G_{F_v}}, \widetilde{\Lambda_v}} \ar[r]^-{\psi} & R_{\overline{r}|_{G_{F_v}, \widetilde{\mathcal{D}}_v^{\mathrm{det, ord}}}}[[X_{i,j}]]. \\
}\]

See \cite[Proposition 6.2.12]{10} for the proofs of 4, 5 and 6. \end{proof}

\begin{prop}\label{Taylor-Wiles deformation}

We assume $v \nmid l$, $q_v \equiv 1 \mod l$ and $\overline{r}|_{G_{F_v}}$ is unramified. We also assume that $\overline{r}|_{G_{F_v}} = \overline{\psi_v} \oplus \overline{s_v}$, where $\overline{r}(\mathrm{Frob}_v)$ acts on $\overline{\psi_v}$ as a scalar $\alpha_v \in \mathbb{F}^{\times}$ and $\mathrm{det}(\overline{s_v}(\mathrm{Frob}_v) - \alpha_v) \neq 0$. 

For an object $A$ of $\mathrm{CNL}_{\mathcal{O}}$, let $\mathcal{D}_v^{TW, \alpha_v}(A)$ be the set of all liftings of $\overline{r}|_{G_{F_v}}$ to $A$ which are equivalent to $\psi_v \oplus s_v$, where $s_v$ is an unramified lifting of $\overline{s}_v$ and $\psi_v$ is a lifting of $\overline{\psi_v}$ such that $I_{F_v}$ acts on $\psi_v$ as a scalar. 

Then the following properties hold.

1 \ $\mathcal{D}^{TW, \alpha_v}_v$ is a local deformation problem of $\overline{r}|_{G_{F_v}}$ over $\mathcal{O}$.

2 \ For any object $A$ of $\mathrm{CNL}_{\mathcal{O}}$ and any unramified lifting $\rho : G_{F_v} \rightarrow \mathrm{GL}_n(A)$, we have $\rho \in \mathcal{D}^{TW, \alpha_v}(A)$.

\end{prop}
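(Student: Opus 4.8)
The plan is to view $\mathcal{D}_v^{TW,\alpha_v}$ as a close relative of the deformation problems in Propositions \ref{Level raising} and \ref{ordinary deformation ring} and to reduce everything to the canonical idempotent decomposition attached to the Frobenius. First I would record the structural input coming from $v \nmid l$: for any Artinian object $A$ of $\mathrm{CNL}_{\mathcal{O}}$ a lifting $r : G_{F_v} \to \mathrm{GL}_n(A)$ of $\overline{r}|_{G_{F_v}}$ kills the wild inertia (a pro-$p$ group mapping into the pro-$l$ group $I_n + \mathfrak{m}_A\mathrm{M}_n(A)$), so $r$ factors through the tame quotient; hence $r$ is determined by $\Phi := r(\mathrm{Frob}_v)$ and $r(\sigma)$, with $\sigma$ a topological generator of the pro-$l$ part of tame inertia, subject to $\Phi\, r(\sigma)\,\Phi^{-1} = r(\sigma)^{q_v}$, and $r(\sigma)$ is unipotent since $\overline{r}|_{G_{F_v}}$ is unramified.

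Next I would exploit the hypothesis $\det(\overline{s_v}(\mathrm{Frob}_v) - \alpha_v) \neq 0$: the characteristic polynomial of $\overline{r}(\mathrm{Frob}_v)$ is the product of the coprime polynomials $(X-\alpha_v)^{d}$ and $g(X)$ with $g(\alpha_v) \neq 0$, where $d = \dim_{\mathbb{F}}\overline{\psi_v}$. By Hensel's lemma this factors the characteristic polynomial of $\Phi$ over any such $A$, giving a $\Phi$-stable decomposition $A^n = V_1 \oplus V_2$ reducing to $\overline{\psi_v}\oplus\overline{s_v}$; the associated idempotents are polynomials in $\Phi$, so the decomposition is canonical (independent of the choice of $r$ in its equivalence class) and compatible with ring maps and inverse limits. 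I would then check that $r \in \mathcal{D}_v^{TW,\alpha_v}(A)$ if and only if $V_1,V_2$ are $r(G_{F_v})$-stable, $I_{F_v}$ acts on $V_1$ through a scalar character, and $I_{F_v}$ acts trivially on $V_2$ — equivalently, $r(\sigma)$ preserves $V_1$ and $V_2$, acts as a scalar on $V_1$ and trivially on $V_2$. Since this is cut out by the vanishing of finitely many entries of $r(\sigma)$ in a basis adapted to $V_1\oplus V_2$, the subfunctor $\mathcal{D}_v^{TW,\alpha_v}$ satisfies conditions $(a)$–$(d)$ of Definition \ref{local deformation problem}: $(a)$ is witnessed by $\overline{\psi_v}\oplus\overline{s_v}$ itself, while $(b)$, $(c)$, $(d)$ follow from the functoriality of the Hensel decomposition together with the facts that matrix entries vanishing in $A/I_1$ and in $A/I_2$ vanish in $A$ when $I_1\cap I_2 = 0$, and that matrix entries vanishing in an overring $B \supseteq A$ vanish in $A$. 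The conjugation-invariance required by condition $2$ is immediate, since $\mathcal{D}_v^{TW,\alpha_v}(A)$ is by construction a union of equivalence classes. This gives part $1$.

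For part $2$, let $\rho : G_{F_v} \to \mathrm{GL}_n(A)$ be an unramified lifting of $\overline{r}|_{G_{F_v}}$ to any object $A$ of $\mathrm{CNL}_{\mathcal{O}}$. Then $G_{F_v}$ acts through $\Phi = \rho(\mathrm{Frob}_v)$, so the canonical decomposition $A^n = V_1\oplus V_2$ is automatically $G_{F_v}$-stable, $I_{F_v}$ acts trivially on both $V_1$ and $V_2$, and $\rho$ is therefore equivalent to $\psi_v\oplus s_v$ with $\psi_v$ and $s_v$ unramified; in particular $I_{F_v}$ acts on $\psi_v$ as the scalar $1$, so $\rho \in \mathcal{D}_v^{TW,\alpha_v}(A)$.

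The part I expect to need the most care is the verification of conditions $(b)$ and $(c)$, i.e. that $\mathcal{D}_v^{TW,\alpha_v}$ is pro-represented by a quotient of $R_{\overline{r}|_{G_{F_v}},\mathcal{O}}$; this hinges on the canonicity of the idempotent decomposition and on the stability of the two defining conditions ("$I_{F_v}$ acts as a scalar", "$I_{F_v}$ acts trivially") under passage to subrings and to fibre products of Artinian quotients. These verifications are routine but somewhat tedious, and I would either carry them out in full or invoke the analogous arguments in \cite{small} and \cite[Section 6.2]{10}.
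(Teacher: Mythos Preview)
Your proposal is correct and follows essentially the same approach as the paper, which simply records that the result ``easily follows from Hensel's Lemma'' and refers to \cite[Lemma 4.2]{small}. You have spelled out in detail exactly the argument the paper has in mind: the canonical Hensel idempotent decomposition attached to $\Phi = r(\mathrm{Frob}_v)$, its functoriality, and the verification of conditions $(a)$--$(d)$ of Definition~\ref{local deformation problem}.
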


\begin{proof} This easily follows from Hensel's Lemma. See \cite[Lemma 4.2]{small}. \end{proof}

\begin{dfn}

Let $\mathcal{S} = (\overline{r}, S, \{ \Lambda_v \}_{v \in S}, \{ \mathcal{D}_v \})$ be a global deformation problem. 

A Taylor-Wiles datum $(Q, \{ \alpha_v \}_{v \in Q})$ for $(\overline{r}, S)$ means a pair of a finite set $Q$ of finite places of $F$ and a family $\{ \alpha_v \}_{v \in Q}$ of elements of $\mathbb{F}^{\times}$ satisfying the following conditions.

1 \ $S \cap (Q \cup Q^c)$ and $Q \cap Q^c$ are empty.

2 \ For any $v \in Q$, the prime lying below $v$ splits in an imaginary quadratic field contained in $F$.

3 \ For any $v \in Q$, we have $q_v \equiv 1$ \ mod $l$.
   
4 \ For all $v \in Q$, $\alpha_v$ is an eigenvalue of $\overline{r}(\mathrm{Frob}_v)$ and $\overline{r}(\mathrm{Frob}_v)$ acts as a scalar $\alpha_v$ on the $\alpha_v$-generalized eigenspace of $\overline{r}(\mathrm{Frob}_v)$.
    
We put $\mathcal{S}(Q):=( \overline{r}, S \cup Q, \{ \Lambda_v \}_{v \in S} \cup \{ \mathcal{O} \}_{v \in Q}, \{ \mathcal{D}_v \}_{v \in S} \cup \{ \mathcal{D}_v^{TW, \alpha_v} \}_{v \in Q})$.

\end{dfn}

We fix a representative $r_{\mathcal{S}(Q)}$ of the universal deformation of type $\mathcal{S}(Q)$. Then for any $v \in Q$, $r_{\mathcal{S}(Q)}|_{G_{F_v}}$ is equivalent to $\psi_v \oplus s_v$ and the action of $I_{F_v}$ on $\psi_v$ induces a character $\mathbb{F}_v^{\times} \rightarrow 1 + \mathfrak{m}_{R_{\mathcal{S}(Q)}}$. (Note that the action of $G_{F_v}$ on $\psi_v$ is abelian.) Let $\Delta:=\prod_{v \in Q}\mathbb{F}_v^{\times}(l)$ and $\mathfrak{a}$ be the augmentation ideal of $\mathcal{O}[\Delta]$. Then, we obtain the local $\mathcal{O}$-morphism $\mathcal{O}[\Delta] \rightarrow R_{\mathcal{S}(Q)}$ and $R_{\mathcal{S}(Q)}/\mathfrak{a} \cong R_{\mathcal{S}}$ by 2 of Proposition \ref{Taylor-Wiles deformation}.

\begin{dfn}

    For a finite subgroup $H$ of $\mathrm{GL}_n(\overline{\mathbb{F}_l})$, we say that $H$ is adequate if $H$ satisfies the following conditions.
    
    1 \ $H^0(H, \mathfrak{sl}_n(\overline{\mathbb{F}_l})) = 0$.

    2 \ $H^1(H, \overline{\mathbb{F}_l}) = 0$.

    3 \ $H^1(H, \mathfrak{sl}_n(\overline{\mathbb{F}_l}))=0$.
     
4 \ For any nonzero $\mathbb{F}_l[H]$-submodule $W$ of $\mathfrak{sl}_n(\overline{\mathbb{F}_l})$, there exist a semisimple element $g \in H$ and an eigenvalue $\alpha \in \overline{\mathbb{F}_l}$ of $g$ such that $\mathrm{tr} \ e_{g, \alpha} W \neq 0$, where $e_{g, \alpha}$ is the projector of $\overline{\mathbb{F}_l}^{\oplus n}$ to the $\alpha$-eigenspace.

    \end{dfn}

\begin{prop}\label{adequate}

Let $H$ be a finite subgroup of $\mathrm{GL}_n(\overline{\mathbb{F}_l})$ which acts on $\overline{\mathbb{F}_l}^{\oplus n}$ absolutely irreducibly. Let $H^0$ denote the subgroup of $H$ generated by all $l$-power order elements of $H$ and $d$ denote the maximal dimension of irreducible $H^0$-subrepresentations.

If $l \ge 2(d + 1)$, then $H$ is adequate.

\end{prop}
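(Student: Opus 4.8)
The plan is to verify the four defining conditions of adequacy in turn, reducing everything to the normal subgroup $H^0$. First observe that $H^0$ is normal in $H$ (it is generated by a union of conjugacy classes) and that every Sylow $l$-subgroup of $H$ lies in $H^0$, since all of its elements have $l$-power order; hence $l \nmid [H:H^0]$. Consequently passage to $H/H^0$-invariants is exact on $\overline{\mathbb{F}_l}[H/H^0]$-modules, and the Hochschild--Serre spectral sequence degenerates to give $H^i(H,M) = H^i(H^0,M)^{H/H^0}$ for every $\overline{\mathbb{F}_l}[H]$-module $M$ and every $i$. Thus conditions $1$--$3$ become assertions about the cohomology of $H^0$ followed by taking $H/H^0$-invariants, and condition $4$ can likewise be controlled through the $H^0$-module structure of $\mathfrak{sl}_n$.

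Next I would invoke Clifford theory: since $V := \overline{\mathbb{F}_l}^{\oplus n}$ is an absolutely simple $\overline{\mathbb{F}_l}[H]$-module and $H^0 \trianglelefteq H$, the restriction $V|_{H^0}$ is semisimple, say $V|_{H^0} \cong \bigoplus_i W_i$ with each $W_i$ irreducible of dimension $\le d$. The role of the hypothesis $l \ge 2(d+1)$ is that $\dim W_i + \dim W_j^* \le 2d < l$, comfortably within the range where Serre's theorem on the semisimplicity of tensor products of small modular representations applies; it gives that $\mathrm{ad}\,V|_{H^0} \cong \bigoplus_{i,j} W_i \otimes W_j^*$ is semisimple, and hence, since $l \nmid [H:H^0]$, that $\mathfrak{gl}_n|_H$ and $\mathfrak{sl}_n|_H$ are semisimple as well. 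Condition $1$ then follows from Schur's lemma: $\mathfrak{gl}_n^H = \mathrm{End}_{\overline{\mathbb{F}_l}[H]}(V) = \overline{\mathbb{F}_l}\cdot I$, which meets $\mathfrak{sl}_n$ trivially because $l \nmid n$ in our setting (in all applications $l$ is huge compared with $n$). For conditions $2$ and $3$ I would establish $H^1(H^0,\overline{\mathbb{F}_l}) = 0$ and $H^1(H^0,\mathrm{ad}\,V) = \bigoplus_{i,j} H^1(H^0, W_i\otimes W_j^*) = 0$ and then pass to $H/H^0$-invariants; every irreducible constituent occurring here has dimension $\le 2d-1 < l$, and the vanishing of these $\mathrm{Ext}^1$-groups for a finite group generated by order-$l$ elements is precisely the technical input from the appendix to Thorne's paper, which reduces, via the structure of such groups and their small modular representations, to the representation theory of $\mathrm{SL}_2$ in characteristic $l$ below the Steinberg weight, where the relevant self-extensions vanish. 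The exact threshold $l \ge 2(d+1)$ is dictated by this step.

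For condition $4$ I would reduce to irreducible $H$-submodules $W \subseteq \mathfrak{sl}_n$ (legitimate since $\mathfrak{sl}_n|_H$ is semisimple) and argue by duality for the trace form. If $\mathrm{tr}(e_{g,\alpha}X) = 0$ for all semisimple $g \in H$, all eigenvalues $\alpha$, and all $X \in W$, then weighting the projectors $e_{g,\alpha}$ by the corresponding eigenvalues yields $\mathrm{tr}(gX) = 0$ for every semisimple $g \in H$. One then checks that the $\overline{\mathbb{F}_l}$-span of the semisimple elements of $H$ is all of $\mathrm{End}(V) = \mathfrak{gl}_n$ — from absolute irreducibility (Burnside) together with the large-$l$ hypothesis, which forces each $g \in H$ to lie in the span of its semisimple powers — so $W$ would be isotropic for the nondegenerate trace form on $\mathfrak{gl}_n$, hence $W = 0$, a contradiction.

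The main obstacle is the cohomological vanishing in conditions $2$ and $3$, together with the spanning assertion used in condition $4$. Both rest on a genuine structural analysis of finite groups generated by order-$l$ elements acting in small dimension, ultimately on modular $\mathrm{SL}_2$-theory in the range below $l-1$, and this is exactly where the bound $l \ge 2(d+1)$ enters; carrying this out in detail — rather than quoting the work of Guralnick--Herzig--Taylor--Thorne — would be the substantial part of a self-contained argument.
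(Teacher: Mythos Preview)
The paper does not argue this proposition; it simply cites Theorems~A.1 and~A.9 of the Guralnick--Herzig--Taylor--Thorne appendix to \cite{small}. Your sketch is a correct outline of the strategy behind that citation --- the reduction to $H^0$ via Hochschild--Serre, Clifford theory together with Serre's tensor-semisimplicity theorem for the adjoint, and the trace-pairing argument for condition~4 --- and you rightly identify that the cohomological vanishing and spanning statements for groups generated by order-$l$ elements are the genuine technical core, which is precisely what that appendix supplies.
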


\begin{proof}

See \cite[Theorems A.1 and A.9]{small}. \end{proof}

\begin{prop} \label{chebotarev} We assume that $F$ is an imaginary CM field.

Let $\mathcal{S} = (\overline{r}, S, \{ \Lambda_v \}_{v \in S}, \{ \mathcal{D}_v \}_{v \in S})$ be a global deformation problem. We assume that $\zeta_l \notin F$, $\overline{r}(G_{F(\zeta_l)})$ is adequate and all eigenvalues of $\overline{r}(G_{F(\zeta_l)})$ are contained in $\mathbb{F}$. We fix a positive integer $q \ge \mathrm{dim}_{\mathbb{F}}H^1(G_{F,S}, \mathrm{ad} \, \overline{r}(1))$. 

Then for any positive integer $N$, there exists a Taylor-Wiles datum $(Q_N, \{ \alpha_v \}_{v \in Q_N})$ for $(\overline{r}, S)$ satisfying the following conditions.

1 \ $|Q_N| = q$.

2 \ For any $v \in Q_N$, $q_v \equiv 1 \mod l^N$.

3 \ $\mathrm{dim}_{\mathbb{F}} \mathfrak{m}_{R_{\mathcal{S}(Q_N)}^S}/(\mathfrak{m}_{R_{\mathcal{S}(Q_N)}^S}^2, \mathfrak{m}_{R_{\overline{r}|_{G_{F_v}}, \mathcal{D}_v}})_{v \in S} = q - n^2[F^+:\mathbb{Q}]$.

\end{prop}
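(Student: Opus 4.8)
This is the standard existence theorem for Taylor--Wiles primes, and the plan is to follow the classical argument in its ``adequate image'' form, as in \cite[\S4]{small} and \cite[\S\S3,\,6]{10}, adapted to the deformation problems at hand. The first step is a numerical reduction: combining the presentation of $R^{S}_{\mathcal{S}(Q)}$ as a quotient of $R^{S,\mathrm{loc}}_{\mathcal{S}(Q)}:=\widehat{\otimes}_{v\in S}R_{\overline{r}|_{G_{F_v}},\mathcal{D}_v}$ with finitely many variables adjoined (Lemma \ref{framed deformation} and \cite{small}) with the Greenberg--Wiles formula applied to $\mathrm{ad}\overline{r}(1)$, one obtains, for every Taylor--Wiles datum $(Q,\{\alpha_v\})$ for $(\overline{r},S)$,
\[
\dim_{\mathbb{F}}\mathfrak{m}_{R^{S}_{\mathcal{S}(Q)}}\big/\big(\mathfrak{m}_{R^{S}_{\mathcal{S}(Q)}}^{2},\mathfrak{m}_{R_{\overline{r}|_{G_{F_v}},\mathcal{D}_v}}\big)_{v\in S}=|Q|-n^2[F^+:\mathbb{Q}]+\dim_{\mathbb{F}}H^1_{\mathcal{S}(Q)^{\perp}}(G_{F,S\cup Q},\mathrm{ad}\overline{r}(1)),
\]
where $H^1_{\mathcal{S}(Q)^{\perp}}$ is the dual Selmer group with the framed (unrestricted) condition at the places of $S$ and the dual of the Taylor--Wiles condition at the places of $Q$; here one uses $H^0(G_F,\mathrm{ad}\overline{r})=\mathbb{F}$ (absolute irreducibility), $H^0(G_F,\mathrm{ad}\overline{r}(1))=0$ (which follows from $\zeta_l\notin F$, $l\nmid n$ and the first adequacy condition), and that $F$ has exactly $[F^+:\mathbb{Q}]$ complex places. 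Since $q\ge\dim_{\mathbb{F}}H^1(G_{F,S},\mathrm{ad}\overline{r}(1))$ and $H^1_{\mathcal{S}^{\perp}}(G_{F,S},\mathrm{ad}\overline{r}(1))$ is a subspace of the latter, it therefore suffices to produce, for each $N$, a Taylor--Wiles datum $(Q_N,\{\alpha_v\}_{v\in Q_N})$ with $|Q_N|=q$, with $q_v\equiv 1\pmod{l^N}$ for all $v\in Q_N$, and with $H^1_{\mathcal{S}(Q_N)^{\perp}}(G_{F,S\cup Q_N},\mathrm{ad}\overline{r}(1))=0$.

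The set $Q_N$ is built up one prime at a time. Write $q_0:=\dim_{\mathbb{F}}H^1_{\mathcal{S}^{\perp}}(G_{F,S},\mathrm{ad}\overline{r}(1))\le q$; the key claim is that whenever $Q'$ is a Taylor--Wiles datum for $(\overline{r},S)$ with $H^1_{\mathcal{S}(Q')^{\perp}}\neq 0$, there exist a finite place $v$ of $F$ with $v,v^{c}\notin S\cup Q'\cup Q'^{c}$, $v\neq v^{c}$, lying over a rational prime split in the fixed imaginary quadratic subfield of $F$, with $q_v\equiv 1\pmod{l^N}$, together with $\alpha_v\in\mathbb{F}^{\times}$, such that $(Q'\cup\{v\},\{\alpha_w\}_{w\in Q'}\cup\{\alpha_v\})$ is a Taylor--Wiles datum and $\dim_{\mathbb{F}}H^1_{\mathcal{S}(Q'\cup\{v\})^{\perp}}=\dim_{\mathbb{F}}H^1_{\mathcal{S}(Q')^{\perp}}-1$. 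Granting this, I apply it $q_0$ times to reach a Taylor--Wiles datum with trivial dual Selmer group; since the dual Taylor--Wiles condition at a new prime $v$ forces classes to be unramified at $v$, adjoining $v$ can only shrink the dual Selmer group, so I may then add $q-q_0$ further admissible primes (of the same type, with any allowed choice of $\alpha_v$) to reach $|Q_N|=q$ without reintroducing any dual Selmer classes. That each step lowers the dimension by exactly one follows because the new condition at $v$ cuts $H^1_{\mathcal{S}(Q')^{\perp}}$ down in codimension at most $1$ (it is the kernel of a single linear functional built from $e_{g,\alpha}$) and we choose $v$ so that the given nonzero class fails it; in the notation of Proposition \ref{Taylor-Wiles deformation}, $\mathcal{D}_v^{TW,\alpha_v}$ has tangent space of dimension $\dim_{\mathbb{F}}H^0(G_{F_v},\mathrm{ad}\overline{r})+1$ and contains all unramified liftings, which is what feeds into the Greenberg--Wiles bookkeeping.

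The heart of the matter is the one-prime step, and this is the step I expect to be the main obstacle. Fix $0\neq[\phi]\in H^1_{\mathcal{S}(Q')^{\perp}}(G_{F,S\cup Q'},\mathrm{ad}\overline{r}(1))$ and set $L_N:=\overline{F}^{\ker\overline{r}}(\zeta_{l^N})$. Since $\overline{r}|_{G_{L_N}}$ is trivial and $\zeta_{l^N}\in L_N$, the group $G_{L_N}$ acts trivially on $\mathrm{ad}\overline{r}(1)$, so $\phi|_{G_{L_N}}$ is a $\mathrm{Gal}(L_N/F)$-equivariant homomorphism $G_{L_N}\to\mathrm{ad}\overline{r}(1)$; a Hochschild--Serre computation --- using that $\overline{r}(G_{F(\zeta_{l^N})})=\overline{r}(G_{F(\zeta_l)})$ (forced by the second adequacy condition, since the cyclotomic layers above $F(\zeta_l)$ are pro-$l$), that $\mathrm{Gal}(F(\zeta_l)/F)$ has order prime to $l$, that the second and third adequacy conditions kill the remaining inflation terms, and that the framed condition at the $l$-adic places of $S$ excludes the only potentially surviving (cyclotomic, scalar-valued) class --- shows that restriction to $G_{L_N}$ is injective on $H^1_{\mathcal{S}(Q')^{\perp}}$, so $\phi|_{G_{L_N}}\neq 0$ and its image $W$ is a nonzero $\mathbb{F}_l[\overline{r}(G_{F(\zeta_l)})]$-submodule of $\mathrm{ad}\overline{r}\cong\mathfrak{sl}_n\oplus\mathbb{F}\,I_n$ (using $l\nmid n$). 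Applying the fourth adequacy condition to $W\cap\mathfrak{sl}_n$ when this is nonzero, and using $g=I_n$ to detect the trace otherwise, I obtain a semisimple element $g\in\overline{r}(G_{F(\zeta_l)})$ and an eigenvalue $\alpha$ of $g$ --- in $\mathbb{F}^{\times}$ by hypothesis --- together with $w\in W$ such that $\mathrm{tr}(e_{g,\alpha}w)\neq 0$, where $e_{g,\alpha}$ is the projector onto the $\alpha$-eigenspace. A standard local computation with local Tate duality then shows that, for a finite place $v$ with $\overline{r}(\mathrm{Frob}_v)$ conjugate to $g$, with $\mathrm{Frob}_v$ chosen via the cocycle relation for $\phi$ so that the image of $\phi(\mathrm{Frob}_v)$ in $(\mathrm{ad}\overline{r})_{\mathrm{Frob}_v}$ pairs nontrivially with $e_{g,\alpha}$, and with $q_v\equiv 1\pmod{l^N}$, the class $[\phi]$ fails the dual Taylor--Wiles condition at $v$ for $\alpha_v:=\alpha$; Chebotarev's density theorem, applied over the compositum of $L_N$, the field cut out by $(\overline{r},\phi)$, and the imaginary quadratic subfield of $F$, then yields infinitely many such $v$ with all the required splitting and ramification properties, and condition $4$ of the Taylor--Wiles datum holds automatically because $\overline{r}(\mathrm{Frob}_v)$, being conjugate to the semisimple $g$, acts on its $\alpha_v$-generalized eigenspace as the scalar $\alpha_v$. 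The delicate points --- the injectivity of restriction to $G_{L_N}$ on the dual Selmer group, the use of adequacy to descend to $\mathfrak{sl}_n$ and detect $\phi$ by a trace functional, and the compatibility of the adequacy element with the constraint $q_v\equiv 1\pmod{l^N}$ --- are precisely where the hypotheses $\zeta_l\notin F$, $l\nmid n$, and the adequacy of $\overline{r}(G_{F(\zeta_l)})$ enter, and constitute the crux of the proof.
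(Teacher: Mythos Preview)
The paper does not supply its own proof of this proposition; it simply records ``See \cite[Proposition 6.7]{ade}.'' Your outline is the standard Taylor--Wiles primes argument in the adequate-image setting (as in \cite{small}, \cite{10}, \cite{ade}), so it is correct and is precisely the argument the citation points to; there is nothing to compare.
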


\begin{proof} See \cite[Proposition 6.7]{ade}. \end{proof}

\begin{rem}(Convention) \label{convension}

We use the following same convention as \cite{CW}.

For a residually absolutely irreducible continuous representation $r : G_{F} \rightarrow \mathrm{GL}_n(\overline{\mathbb{Q}}_l)$, there exists a continuous representation $r^{\circ} : G_{F} \rightarrow \mathrm{GL}_n(\mathcal{O}_{\overline{\mathbb{Q}}_l})$ such that $r^{\circ} \otimes \overline{\mathbb{Q}}_l \cong r$. This is unique up to conjugations of $\mathrm{GL}_n(\mathcal{O}_{\overline{\mathbb{Q}}_l})$. For a finite place $v$ of $F$ and a continuous representation $\rho_v : G_{F_v} \rightarrow \mathrm{GL}_n(\mathcal{O}_{\overline{\mathbb{Q}}_l})$, we say $r|_{G_{F_v}} \sim \rho_v$ if $r^{\circ}|_{G_{F_v}} \sim \rho_v$. This is independent of the choice of $r^{\circ}$ by Lemma \ref{equivalence irreducible}.

Similarly, for residually absolutely irreducible continuous representations $r_1, r_2 : G_{F} \rightarrow \mathrm{GL}_n(\overline{\mathbb{Q}}_l)$ and a finite place $v$ of $F$, we say $r_1|_{G_{F_v}} \sim r_2|_{G_{F_v}}$ if $r_1^{\circ}|_{G_{F_v}} \sim r_2^{\circ}|_{G_{F_v}}$.

For a residually absolutely irreducible continuous representation $r : G_{F} \rightarrow \mathrm{GL}_n(\overline{\mathbb{Q}}_l)$ and $v \mid l$, we say that $r|_{G_{F_v}}$ is potentially diagonalizable if $r^{\circ}|_{G_{F_v}}$ is potentially diagonalizable.

\end{rem}

Finally, we recall some properties about Bloch-Kato Selmer groups of adjoint representations. 

Let $r : G_F \rightarrow \mathrm{GL}_n(E)$ be a continuous representation which is unramified at almost all finite places. We take a finite set $S$ of finite places of $F$ containing all $l$-adic places and ramified places of $r$. 

We put $$H^1_f(F, \mathrm{ad} \, r) := \mathrm{Ker}( H^1(G_{F,S}, \mathrm{ad} \, r) \rightarrow \prod_{v \in S} (H^1(G_{F_v}, \mathrm{ad} \, r|_{G_{F_v}})/H^1_f(G_{F_v}, \mathrm{ad} \, r|_{G_{F_v}}) )),$$ where $H^1_f(G_{F_v}, \mathrm{ad} \, r|_{G_{F_v}}) := \mathrm{Ker} (H^1(G_{F_v}, \mathrm{ad} \, r|_{G_{F_v}}) \rightarrow H^1(I_{F_v}, \mathrm{ad} \, r|_{G_{F_v}}))$ if $v \nmid l$ and $H_f^1(G_{F_v}, \mathrm{ad} \, r|_{G_{F_v}}) := \mathrm{Ker}( H^1(G_{F_v}, \mathrm{ad} \, r|_{G_{F_v}}) \rightarrow H^1(G_{F_v}, B_{\mathrm{cris}} \otimes_{\mathbb{Q}_l} \mathrm{ad} \, r|_{G_{F_v}}) )$ if $v | l$. Then $H^1_f(F, \mathrm{ad} \, r)$ is independent of the choice of $S$.

\begin{lem} \label{vanishing of selmer group}
    
In the above situation, for any finite Galois extension $F'/F$, there exists a canonical injection $H^1_f(F, \mathrm{ad} \, r) \hookrightarrow H^1_f(F', \mathrm{ad} \, r|_{G_{F'}})$.

\end{lem}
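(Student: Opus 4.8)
The plan is to compute both Bloch--Kato Selmer groups relative to one common finite set of places and then pass to $F'$ by restriction. First I would enlarge $S$ so that, in addition to all $l$-adic places and all ramified places of $r$, it contains every finite place of $F$ that ramifies in $F'/F$; this affects neither side, since $H^1_f(F,\mathrm{ad}\, r)$ and $H^1_f(F',\mathrm{ad}\, r|_{G_{F'}})$ are independent of the auxiliary set. Writing $S'$ for the set of places of $F'$ lying above $S$, multiplicativity of ramification indices in towers shows that the maximal extension of $F'$ unramified outside $S'$ coincides with $F_S$; hence $G_{F',S'}=\mathrm{Gal}(F_S/F')$ is an open normal subgroup of $G_{F,S}$ of index $[F':F]$, and $\mathrm{ad}\,(r|_{G_{F'}})$ is simply $\mathrm{ad}\, r$ viewed as a $G_{F',S'}$-module. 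I would then use the restriction map $\mathrm{res}\colon H^1(G_{F,S},\mathrm{ad}\, r)\to H^1(G_{F',S'},\mathrm{ad}\, r)$, which is injective because its composite with corestriction is multiplication by $[F':F]$, and $[F':F]$ is invertible on the $E$-vector space $H^1(G_{F,S},\mathrm{ad}\, r)$ (recall $\mathrm{char}\, E=0$). Equivalently, this follows from inflation--restriction together with the vanishing of $H^1$ of the finite group $\mathrm{Gal}(F'/F)$ with coefficients in the $E$-vector space $(\mathrm{ad}\, r)^{G_{F',S'}}$.

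Next I would verify that $\mathrm{res}$ carries the Selmer conditions into one another. Concretely, for each $w\in S'$ lying over $v\in S$ one must check that the local restriction map $H^1(G_{F_v},\mathrm{ad}\, r|_{G_{F_v}})\to H^1(G_{F'_w},\mathrm{ad}\, r|_{G_{F'_w}})$ sends $H^1_f(G_{F_v},\mathrm{ad}\, r|_{G_{F_v}})$ into $H^1_f(G_{F'_w},\mathrm{ad}\, r|_{G_{F'_w}})$; this matches the global statement via the commuting square relating the maps $G_{F,S}\to G_{F_v}$ and $G_{F',S'}\to G_{F'_w}$. For $v\nmid l$ one uses the inclusion $I_{F'_w}\subseteq I_{F_v}$: a class trivial on $I_{F_v}$ restricts to a class trivial on $I_{F'_w}$, so the unramified condition is preserved. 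For $v\mid l$ one uses that the crystalline period ring attached to $F'_w$ is the period ring attached to $F_v$ with $G_{F'_w}$ acting by restriction of the $G_{F_v}$-action; hence the square comparing $H^1(-,\mathrm{ad}\, r)$ with $H^1(-,B_{\mathrm{cris}}\otimes_{\mathbb{Q}_l}\mathrm{ad}\, r)$ for $F_v$ and for $F'_w$ commutes, and the Bloch--Kato crystalline condition is preserved. Combining this with the injectivity from the previous step yields the asserted canonical injection, and its independence of the enlarged $S$ follows from the independence of $H^1_f$ from the auxiliary set of places.

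I do not expect a serious obstacle here: the argument is essentially formal. The only points requiring genuine care are the identification of the maximal extension of $F'$ unramified outside $S'$ with $F_S$ — which is what allows one to work inside a single Galois group $G_{F,S}$ and its open subgroup, rather than with two a priori unrelated deformation set-ups — and the two local stability checks, each of which reduces to the functoriality of Galois cohomology in the group together with the inclusion $I_{F'_w}\subseteq I_{F_v}$ (at $v\nmid l$) or the compatibility of the crystalline period rings of $F'_w$ and $F_v$ (at $v\mid l$).
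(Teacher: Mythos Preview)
Your argument is correct and follows essentially the same approach as the paper: enlarge $S$ to contain the places ramified in $F'/F$, use the commutative square comparing the global Selmer conditions over $F$ and $F'$, and obtain injectivity on global $H^1$ via inflation--restriction (the paper phrases it as the vanishing of $H^1(\mathrm{Gal}(F'/F),(\mathrm{ad}\,r)^{G_{F'}})$ because $(\mathrm{ad}\,r)^{G_{F'}}$ is divisible, which is your alternative formulation). You are simply more explicit than the paper about why the local conditions are preserved under restriction, which the paper leaves implicit in its commutative diagram.
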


\begin{proof} We take a set $S$ of finite places of $F$ containing all $l$-adic places, ramified places of $r$ and ramified places of $F'/F$. Let $S_{F'}$ denote the set of all finite places of $F'$ lying above places in $S$. Then we have the following commutative diagram.

\small

\[\xymatrix{
 H^1(G_{F, S}, \mathrm{ad} \, r) \ar[d] \ar[r] & H^1(G_{F', S_{F'}}, \mathrm{ad} \, r|_{G_{F'}} ) \ar[d] \\
 \prod_{v \in S}H^1(G_{F_v}, \mathrm{ad} \, r|_{G_{F_v}})/H^1_f(G_{F_v}, \mathrm{ad} \, r|_{G_{F_v}}) \ar[r] & \prod_{w \in S_{F'}}H^1(G_{F'_w}, \mathrm{ad} \, r|_{G_{F'_w}})/H^1_f(G_{F'_w}, \mathrm{ad} \, r|_{G_{F'_w}}). \\
}\]

\normalsize

The kernel of the first row is $H^1(\mathrm{Gal}(F'/F), (\mathrm{ad} \, r)^{G_{F'}})$ and this is zero since $(\mathrm{ad} \, r)^{G_{F'}}$ is divisible. Thus, we obtain an injection $H^1_f(F, \mathrm{ad} \, r) \hookrightarrow H^1_f(F', \mathrm{ad} \, r|_{G_{F'}})$. \end{proof}

\begin{lem} \label{definition of selmer group}

Let $\lambda \in (\mathbb{Z}^n_+)^{\mathrm{Hom}(F,E)}$ and $r : G_{F} \rightarrow \mathrm{GL}_n(E)$ be a residually absolutely irreducible algebraic representation such that $\mathrm{WD}(r|_{G_{F_v}} \otimes \overline{\mathbb{Q}}_l)$ is generic for all $v \nmid l$ and $r|_{G_{F_v}}$ is crystalline of $l$-adic Hodge type $\mathbf{v}_{\lambda_v}$ for all $v \mid l$. (We put $(\lambda_v)_{\tau, i} := \lambda_{\tau, i}$ for any $\tau \in \mathrm{Hom}_{\mathbb{Q}_l}(F_v, E) \subset \mathrm{Hom}(F, E)$ and $i = 1, \cdots, n$.)

We take a continuous representation $r^{\circ} : G_{F} \rightarrow \mathrm{GL}_n(\mathcal{O})$ such that $r^o \otimes_{\mathcal{O}} E \cong r$ and a finite set $S$ of finite places of $F$ containing all $l$-adic places and ramified places of $r$. Let $\mathcal{S}:=(\overline{r^{\circ}}, S, \{ \mathcal{O} \}_{v \in S}, \{ \mathcal{D}_v^{\Box} \}_{v \in S \setminus \{ u|l \}} \cup \{ \mathcal{D}_v^{\mathrm{cris}, \lambda_v} \}_{v | l})$ and $\mathfrak{p}$ denote the point of $\mathrm{Spec} \, R_{\mathcal{S}}$ corresponding to $r^{\circ}$.

Then we have $H^1_f(F, \mathrm{ad} \, r) \cong \mathrm{Hom}_E(\mathfrak{p}R_{\mathcal{S}, \mathfrak{p}}/\mathfrak{p}^2R_{\mathcal{S}, \mathfrak{p}}, E)$ and they are regarded as the set $\mathcal{D}$ of all deformations [$\tilde{r}$] of $r$ to $E[T]/(T^2)$ \footnote{A deformation [$\tilde{r}$] of $r$ to $E[T]/(T^2)$ means a equivalence class by the action of $I_n + T\mathrm{M}_n(E[T]/(T^2))$ of a continuous representation $\tilde{r} : G_{F} \rightarrow \mathrm{GL}_n(E[T]/(T^2))$ satisfying $\widetilde{r} \mod T = r$.} such that $\tilde{r}$ is unramified outside $S$ and $\tilde{r}|_{G_{F_v}}$ is crystalline of $l$-adic Hodge type $\mathbf{v}_{\lambda_v}$ for all $v | l$. 

\end{lem}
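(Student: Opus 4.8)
The plan is to identify both $H^1_f(F,\mathrm{ad}\,r)$ and $\mathrm{Hom}_E(\mathfrak{p}R_{\mathcal{S},\mathfrak{p}}/\mathfrak{p}^2R_{\mathcal{S},\mathfrak{p}},E)$ with the first-order deformation set $\mathcal{D}$, and then to reconcile the local conditions.

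First I would treat the tangent space. By Lemma \ref{trace density} the ring $R_{\mathcal{S}}/(\mathfrak{p}\cap R_{\mathcal{S}})$ is an order in $E$ (here $r$ takes values in $\mathrm{GL}_n(E)$ and $\overline{r}$ is absolutely irreducible), so the completion $\widehat{R_{\mathcal{S},\mathfrak{p}}}$ is a complete Noetherian local $E$-algebra with residue field $E$ and $\mathrm{Hom}_E(\mathfrak{p}R_{\mathcal{S},\mathfrak{p}}/\mathfrak{p}^2R_{\mathcal{S},\mathfrak{p}},E)$ is the set of local $E$-algebra maps $\widehat{R_{\mathcal{S},\mathfrak{p}}}\to E[T]/(T^2)$, equivalently of $\mathcal{O}$-algebra maps $R_{\mathcal{S}}\to E[T]/(T^2)$ reducing modulo $T$ to the map attached to $\mathfrak{p}$. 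Using the universal property of $R_{\mathcal{S}}$ (Lemma \ref{framed deformation}) and the characteristic-zero analogue of Lemma \ref{homotopy} for finite $E$-algebras, such a map is the same datum as an equivalence class $[\tilde r]$ of a lifting $\tilde r\colon G_{F,S}\to\mathrm{GL}_n(E[T]/(T^2))$ of $r$ that is of type $\mathcal{S}$. Since $\mathcal{D}_v=\mathcal{D}_v^{\Box}$ imposes no condition for $v\in S$ with $v\nmid l$, and since, by the universal property in Proposition \ref{psd}, the condition $\tilde r|_{G_{F_v}}\in\mathcal{D}_v^{\mathrm{cris},\lambda_v}(E[T]/(T^2))$ is precisely that $\tilde r|_{G_{F_v}}$ be crystalline of $l$-adic Hodge type $\mathbf{v}_{\lambda_v}$, this set is exactly $\mathcal{D}$; thus $\mathrm{Hom}_E(\mathfrak{p}R_{\mathcal{S},\mathfrak{p}}/\mathfrak{p}^2R_{\mathcal{S},\mathfrak{p}},E)\cong\mathcal{D}$.

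Next I would pass to Galois cohomology. As $\overline{r}$ is absolutely irreducible, the usual dictionary identifies the set of equivalence classes of liftings of $r$ to $E[T]/(T^2)$ that factor through $G_{F,S}$ with $H^1(G_{F,S},\mathrm{ad}\,r)$, compatibly with restriction to decomposition groups. Under this, $\mathcal{D}$ becomes the subspace of classes whose restriction to $G_{F_v}$ lies, for each $v\mid l$, in the subspace $L_v\subseteq H^1(G_{F_v},\mathrm{ad}\,r|_{G_{F_v}})$ of classes represented by crystalline liftings of $l$-adic Hodge type $\mathbf{v}_{\lambda_v}$. Since the Hodge--Tate weights of a de Rham deformation over $E[T]/(T^2)$ necessarily coincide with those of $r$, the space $L_v$ equals the space of classes of crystalline liftings, which by Bloch--Kato is exactly $H^1_f(G_{F_v},\mathrm{ad}\,r|_{G_{F_v}})$ in the sense used above. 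Hence $\mathcal{D}\cong\mathrm{Ker}\bigl(H^1(G_{F,S},\mathrm{ad}\,r)\to\prod_{v\mid l}H^1(G_{F_v},\mathrm{ad}\,r|_{G_{F_v}})/H^1_f(G_{F_v},\mathrm{ad}\,r|_{G_{F_v}})\bigr)$, with no condition at the places $v\in S$ with $v\nmid l$.

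Finally I would reconcile this with the definition of $H^1_f(F,\mathrm{ad}\,r)$, whose only further requirement is the unramified one, $H^1_f(G_{F_v},\mathrm{ad}\,r|_{G_{F_v}})=\mathrm{Ker}(H^1(G_{F_v},\mathrm{ad}\,r|_{G_{F_v}})\to H^1(I_{F_v},\mathrm{ad}\,r|_{G_{F_v}}))$ for $v\in S$ with $v\nmid l$. Here I would use the genericity of $\mathrm{WD}(r|_{G_{F_v}}\otimes\overline{\mathbb{Q}_l})$: it gives $H^0(G_{F_v},\mathrm{ad}\,r|_{G_{F_v}}(1))=\mathrm{Hom}_{\mathrm{WD}}(\mathrm{WD}(r|_{G_{F_v}}\otimes\overline{\mathbb{Q}_l}),\mathrm{WD}(r|_{G_{F_v}}\otimes\overline{\mathbb{Q}_l})(1))=0$, hence $H^2(G_{F_v},\mathrm{ad}\,r|_{G_{F_v}})=0$ by local Tate duality and self-duality of $\mathrm{ad}\,r$; the local Euler characteristic formula then forces $\dim_E H^1(G_{F_v},\mathrm{ad}\,r|_{G_{F_v}})=\dim_E H^0(G_{F_v},\mathrm{ad}\,r|_{G_{F_v}})$, which equals the dimension of the unramified subspace, so the unramified condition at $v\nmid l$ is vacuous. (Alternatively one deduces this from Lemma \ref{regular point} and Proposition \ref{dimension of unristricted}, $\mathfrak{p}$ restricting to a regular point of the equidimensional ring $\mathrm{Spec}R_{\overline{r}|_{G_{F_v}}}[\frac{1}{l}]$ of dimension $n^2$.) Combining the three steps yields $H^1_f(F,\mathrm{ad}\,r)\cong\mathrm{Hom}_E(\mathfrak{p}R_{\mathcal{S},\mathfrak{p}}/\mathfrak{p}^2R_{\mathcal{S},\mathfrak{p}},E)\cong\mathcal{D}$. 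The main obstacle is the first step: establishing that $\widehat{R_{\mathcal{S},\mathfrak{p}}}$ pro-represents the characteristic-zero deformation functor of type $\mathcal{S}$ (the characteristic-zero analogue of Lemma \ref{homotopy}), together with the standard but slightly delicate fact that first-order crystalline deformations automatically preserve the $l$-adic Hodge type; the cohomological reconciliation in the last step is then routine once genericity is in hand.
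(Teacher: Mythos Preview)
Your proposal is correct and follows essentially the same approach as the paper: identify both sides with $\mathcal{D}$, use genericity at $v\nmid l$ together with local duality and the Euler characteristic formula to show $H^1_f(G_{F_v},\mathrm{ad}\,r|_{G_{F_v}})=H^1(G_{F_v},\mathrm{ad}\,r|_{G_{F_v}})$, and use the Bloch--Kato description at $v\mid l$. The only minor difference is in the tangent-space step you flag as ``the main obstacle'': rather than arguing that $\widehat{R_{\mathcal{S},\mathfrak{p}}}$ pro-represents the characteristic-zero deformation functor, the paper simply writes down the map $\mathrm{Hom}_E^{\mathrm{loc}}(R_{\mathcal{S},\mathfrak{p}},E[T]/(T^2))\to\mathcal{D}$, $f\mapsto f\circ r_{\mathcal{S}}^{\circ}$, shows surjectivity by a reference (the argument of \cite[Lemma~3.5]{OG}), and deduces injectivity from Lemma~\ref{trace density}.
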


\begin{proof} By the same argument as \cite[Lemma 1.2.5]{Allen}, for any $v \mid l$, $H^1_f(G_{F_v}, \mathrm{ad} \, (r|_{G_{F_v}}))$ is regarded as the set of all deformations [$\tilde{\rho}_v$] of $r|_{G_{F_v}}$ to $E[T]/(T^2)$ such that $\tilde{\rho}_v$ is crystalline of $l$-adic Hodge type $\mathbf{v}_{\lambda_v}$ for all $v | l$. 
    
On the other hand, for $v \nmid l$, we have $H^0(G_{F_v}, \mathrm{ad} \, \rho_v(1)) = 0$ since $\mathrm{WD}(\rho_v \otimes \overline{\mathbb{Q}}_l)$ is generic. By local duality, we have $H^2(G_{F_v}, \mathrm{ad} \, \rho_v) = 0$. This implies $\mathrm{dim}_EH^1(G_{F_v}, \mathrm{ad} \, \rho_v) = \mathrm{dim}_EH^0(G_{F_v}, \mathrm{ad} \, \rho_v)$ by local Euler-Poincare characteristic formula. Since this is equal to $\mathrm{dim}_EH^1_f(G_{F_v}, \mathrm{ad} \, \rho_v)$, we obtain $H^1(G_{F_v}, \mathrm{ad} \, \rho_v) = H^1_{f}(G_{F_v}, \mathrm{ad} \, \rho_v)$.

Consequently, we obtain $H^1_f(F, \mathrm{ad} \, r) = \mathrm{Ker}(H^1(G_{F, S}, \mathrm{ad} \, r) \rightarrow \prod_{v|l} H^1(G_{F_v}, B_{\mathrm{cris}}\otimes_{\mathbb{Q}_l} \mathrm{ad} \, r|_{G_{F_v}}))$ and $H^1_f(F, \mathrm{ad} \, r ) \cong \mathcal{D}$.

By the same argument as \cite[Lemma 3.5]{OG}, we obtain the surjection $$\mathrm{Hom}_E(\mathfrak{p}R_{\mathcal{S}, \mathfrak{p}}/\mathfrak{p}^2R_{\mathcal{S}, \mathfrak{p}}, E) = \mathrm{Hom}_E^{\mathrm{loc}}(R_{\mathcal{S}, \mathfrak{p}}, E[T]/(T^2)) \rightarrow \mathcal{D}, f \mapsto f \circ r^{\circ}_{\mathcal{S}}.$$ ($r^{\circ}_{\mathcal{S}}$ denotes a representative of the universal deformation of $\overline{r^{\circ}}$ of type $\mathcal{S}$. ) This map is injective by Lemma \ref{trace density}. Therefore, we obtain $H^1_f(F, \mathrm{ad} \, r) \cong \mathrm{Hom}_E(\mathfrak{p}R_{\mathcal{S}, \mathfrak{p}}/\mathfrak{p}^2R_{\mathcal{S}, \mathfrak{p}}, E)$. \end{proof}

\subsubsection{Locally symmetric spaces and automorphic Galois representations}

We recall some notations and important results about locally symmetric spaces and automorphic Galois representations. Let $F$ be a CM field and $n$ be a positive integer.

For simplicity, we assume that $F$ contains an imaginary quadratic field. Note that Corollary \ref{Caraiani-Newton 2}, Theorem \ref{Lambert 2}, Lemma \ref{iota-ordinary} and Theorem \ref{ordinary semisimple} hold without this assumption.

For $(k_v) \in \mathrm{GL}_n(\mathbb{A}_F^{\infty})$, we write $\Gamma_{k_v}$ for the subgroup of $\overline{F_v}^{\times}$ generated by the eigenvalues of $k_v$. Note that the torsion part $\Gamma_{k_v}^{\mathrm{tor}}$ of $\Gamma_v$ is generated by a root of unity and consequently $\Gamma_{k_v}^{\mathrm{tor}}$ is naturally regarded as a subgroup of $\overline{\mathbb{Q}}^{\times}$.

For an open compact subgroup $K = \prod_v K_v$ of $\mathrm{GL}_n(\mathbb{A}_F^{\infty})$, we say that $K$ is good if $\cap_v \Gamma_{k_v}^{tor} = 1$ for any $(k_v) \in K$. 

For example, an open compact subgroup $K = \prod_vK_v$ of $\mathrm{GL}_n(\mathbb{A}_F^{\infty})$ is good if there exist finite places $v, w$ of $F$ such that $\mathrm{char} \, \mathbb{F}_v \neq \mathrm{char} \, \mathbb{F}_w$, $K_v \subset \mathrm{Iw}_{v,1}:=\mathrm{Ker}(\mathrm{Iw}_{v} \rightarrow (\mathbb{F}_v^{\times})^{ \oplus n})$ and $K_w \subset \mathrm{Iw}_{w,1}$.

We put $X := \mathrm{GL}_n(F \otimes_{\mathbb{Q}} \mathbb{R})/\mathbb{R}_{>0}\prod_{v|\infty}U(n)$. This is diffeomorphic to $\mathbb{R}^{[F^+:\mathbb{Q}]n^2-1}$. We also put $\mathfrak{X}:=\mathrm{GL}_n(F) \setminus X \times \mathrm{GL}_n(\mathbb{A}_{F}^{\infty})$. Here, we consider the discrete topology on $\mathrm{GL}_n(\mathbb{A}_F^{\infty})$. Any good subgroup $K$ acts on $\mathfrak{X}$ freely and properly discontinuously. This implies that $X_K := \mathfrak{X}/K$ is a smooth manifold of dimension $[F^+:\mathbb{Q}]n^2-1$. (See \cite{BS} for more detailed properties.)

Let $l$ be a prime, $E$ be a finite extension of $\mathbb{Q}_l$ contained in $\overline{\mathbb{Q}}_l$ such that $\tau(F) \subset E$ for any $\tau \in \mathrm{Hom}(F, \overline{\mathbb{Q}}_l)$, $\mathcal{O}$ be the ring of integers of $E$ and $\mathbb{F}$ be the residue field of $\mathcal{O}$.

We fix a finite set $S$ of finite places of $F$ satisfying the following conditions.
 
1 \ $S$ contains all $l$-adic places of $F$.

2 \ Any prime $p$ satisfies one of the following conditions.

(1) \ All $v|p$ are contained in $S$.

(2) \ $p$ is unramified in $F$ and any $v|p$ isn't contained in $S$ 

(3) \ $p$ splits in an imaginary quadratic field contained in $F$.

3 \ $S = S^c$.

We fix good subgroups $K, K'$ of $\mathrm{GL}_n(\mathbb{A}_F^{\infty})$ such that $K'$ is a normal subgroup of $K$ and $K_v = K_v' =\mathrm{GL}_n(\mathcal{O}_{F_v})$ for any $v \notin S$.

Let $R$ be a Noetherian $\mathcal{O}$-algebra and $\mathcal{V}$ be a finite free $R$-module with a $K_S$-action. We regard $\mathcal{V}$ as a $K$-module by projection $K \twoheadrightarrow K_S$.  Then $\mathcal{V}$ is regarded as a constant $K$-equivariant sheaf of $R$-modules on $\mathfrak{X}$ and $(\pi_*\mathcal{V})^{K'}$ is a $K/K'$-equivariant locally constant sheaf of $R$-modules on $X_{K'}$. (Here, we write $\pi : \mathfrak{X} \twoheadrightarrow X_K$ for the natural quotient map.) We simply write $\mathcal{V}$ for $(\pi_*\mathcal{V})^{K'}$.

We write $R\Gamma_{K/K'}(X_{K'}, \ \ )$ for the right derived functor of $\Gamma : \mathrm{Sh}_{K/K'}(X_{K'}, R) \rightarrow D(R[K/K'])$, where $\mathrm{Sh}_{K/K'}(X_{K'}, R)$ is the category of $K/K'$-equivariant sheaves of $R$-modules on $X_{K'}$.

If $K = K'$, we have $R\Gamma(X_{K'}, \mathcal{V}) = R\Gamma_{K/K'}(X_{K'}, \mathcal{V})$.

Moreover, since $\mathcal{V}$ is also regarded as a $\mathrm{GL}_n(\mathbb{A}_F^{\infty, S}) \times K_S$-module by projection to $K_S$ and a constant $\mathrm{GL}_n(\mathbb{A}_F^{\infty, S}) \times K_S$-equivariant sheaf of $R$-modules on $\mathfrak{X}$, we obtain an $\mathcal{O}$-morphism $\mathbb{T}^S:=\mathcal{H}(\mathrm{GL}_n(\mathbb{A}_F^{\infty, S}), \mathrm{GL}_n(\widehat{\mathcal{O}}_F^{S}))_{\mathcal{O}} \rightarrow \mathrm{End}_{D(R[K/K'])}(R\Gamma_{K/K'}(X_{K'}, \mathcal{V}))$ by the result 2 of the following proposition.

\begin{prop} \label{perfect complex} In the above situation, we have the following results.
    
1 \ The complex $R\Gamma_{K/K'}(X_{K'}, \mathcal{V})$ is a perfect complex of $R[K/K']$-modules.

2 \ We have the following commutative diagram.

\[\xymatrix{
D^+(\mathrm{Sh}_{\mathrm{GL}_n(\mathbb{A}_F^{\infty, S}) \times K_S}(\mathfrak{X}, R)) \ar[d]^{R\Gamma(\mathfrak{X}, \ )} \ar[r]^-{\mathrm{forget}} &  D^+(\mathrm{Sh}_{K}(\mathfrak{X})) \ar[d]^{\pi_*^{K'}} \\
D^+(R[\mathrm{GL}_n(\mathbb{A}_F^{\infty, S}) \times K_S]) \ar[d]^{R\Gamma(K', \ \ )} & D^+(\mathrm{Sh}_{K/K'}(X_{K'}, R)) \ar[d]^{R\Gamma_{K/K'}(X_{K'}, \ \ )} \\
D^+(\mathcal{H}(\mathrm{GL}_n(\mathbb{A}_F^{\infty, S}) \times K_S, K')_{R}) \ar[r]^-{\mathrm{forget}} & D^+(R[K/K']) \\
}\]

3 \ $R\Gamma(\mathfrak{X}, \mathcal{V}) \cong \Gamma(\mathfrak{X}, R) \otimes_{R} \mathcal{V}$ in $D(R[\mathrm{GL}_n(\mathbb{A}_F^{\infty, S}) \times K])$.

\end{prop}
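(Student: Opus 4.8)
The three assertions are formal consequences of the construction of $\mathfrak{X}$ and $X_{K'}$ together with standard facts about arithmetic locally symmetric spaces, and the plan is to follow the arguments of \cite{BS} and \cite{10}, treating the parts in the order 3, 1, 2. For 3: since $\mathrm{GL}_n(\mathbb{A}_F^{\infty})$ carries the discrete topology, $X \times \mathrm{GL}_n(\mathbb{A}_F^{\infty})$ is a disjoint union of copies of the contractible space $X$, and for each $g$ the sheet $X \times \{g\}$ has trivial stabiliser in $\mathrm{GL}_n(F)$; hence every connected component of $\mathfrak{X}$ is a copy of $X$ and is contractible. Therefore $R^i\Gamma(\mathfrak{X}, -)$ vanishes for $i>0$, so $R\Gamma(\mathfrak{X}, -)$ is computed by global sections, and since $\mathcal{V}$ is the constant sheaf attached to a finite free $R$-module, $\Gamma(\mathfrak{X}, \mathcal{V}) = \Gamma(\mathfrak{X}, R) \otimes_R \mathcal{V}$. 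As the $\mathrm{GL}_n(\mathbb{A}_F^{\infty,S}) \times K_S$-action, and hence the $K$-action, is induced factorwise, this identification takes place in $D(R[\mathrm{GL}_n(\mathbb{A}_F^{\infty,S}) \times K])$.

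For 1, I would invoke the Borel--Serre compactification $\overline{X}_{K'}$, a compact manifold with corners containing $X_{K'}$ as a deformation retract and carrying a functorial action of $K/K'$. Goodness of $K'$ makes the action of $K$ on $\mathfrak{X}$, and hence of $K/K'$ on $X_{K'} = \mathfrak{X}/K'$ and on $\overline{X}_{K'}$, free. Pulling back a finite CW structure (or smooth triangulation) of $\overline{X}_K = \overline{X}_{K'}/(K/K')$ to $\overline{X}_{K'}$ gives a $K/K'$-equivariant CW structure with finitely many cells per orbit and with $K/K'$ acting freely on cells; the associated cellular cochain complex with coefficients in the local system $\mathcal{V}$ is then a bounded complex of finite free $R[K/K']$-modules computing $R\Gamma_{K/K'}(X_{K'}, \mathcal{V})$, which is therefore a perfect complex of $R[K/K']$-modules.

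For 2, the key structural input is that $\pi : \mathfrak{X} \to X_{K'}$ is the quotient map for the free, properly discontinuous action of the good subgroup $K'$, i.e. a $K'$-covering; consequently $\pi_*^{K'}$ is exact on $K'$-equivariant sheaves of $R$-modules and there is a natural isomorphism $R\Gamma(X_{K'}, \pi_*^{K'}\mathcal{F}) \cong R\Gamma(K', R\Gamma(\mathfrak{X}, \mathcal{F}))$, the Cartan--Leray/Grothendieck spectral sequence of the covering, functorial in $\mathcal{F}$. Applying this to a $\mathrm{GL}_n(\mathbb{A}_F^{\infty,S}) \times K_S$-equivariant, $\Gamma(\mathfrak{X}, -)$-acyclic resolution $\mathcal{V} \to \mathcal{I}^{\bullet}$ produces both composites in the diagram, and it remains to check that the Hecke algebra $\mathcal{H}(\mathrm{GL}_n(\mathbb{A}_F^{\infty,S}) \times K_S, K')_R$, and hence $\mathbb{T}^S$, acts on every object through the usual double-coset correspondences compatibly with all the forgetful and pushforward functors.

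The \emph{main obstacle} is part 2: the underlying homological algebra (equivariant acyclic resolutions, the spectral sequence of a covering) is routine, but making the identification of the two towers of derived functors simultaneously compatible with the $K/K'$-equivariance and with the $\mathbb{T}^S$-action requires careful bookkeeping, and this is where I would follow \cite{10} in detail. Parts 1 and 3 rest only on the Borel--Serre theory and on the contractibility of the components of $\mathfrak{X}$, both of which are standard.
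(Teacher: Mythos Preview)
Your proposal is correct and follows essentially the same route as the paper: part 3 via the decomposition $\mathfrak{X}\cong\sqcup_{g}\mathbb{R}^{[F^+:\mathbb{Q}]n^2-1}$, part 1 via the Borel--Serre compactification exactly as in \cite[Lemma 2.1.7]{10}, and part 2 by the Cartan--Leray identification for the free $K'$-covering $\pi:\mathfrak{X}\to X_{K'}$ as in \cite[Proposition 2.18]{NTT}. One small slip: in part 1 you write ``Goodness of $K'$ makes the action of $K$ on $\mathfrak{X}$ free'', but it is goodness of $K$ (not $K'$) that is needed for the $K$-action on $\mathfrak{X}$, and hence the $K/K'$-action on $X_{K'}$, to be free; since both $K$ and $K'$ are assumed good in the setup this does not affect the argument.
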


\begin{proof} 1 \ See \cite[Lemma 2.1.7]{10}. 

2 \ See \cite[Proposition 2.18]{NTT}.

3 \ We have a canonical morphism $\Gamma(\mathfrak{X}, R) \otimes_{R} \mathcal{V} \cong \Gamma(\mathfrak{X}, \mathcal{V}) \rightarrow R\Gamma(\mathfrak{X}, \mathcal{V})$ in $D(R[\mathrm{GL}_n(\mathbb{A}_F^{\infty, S}) \times K])$. Thus it suffices to show that $H^i(\mathfrak{X}, \mathcal{V}) = 0$ for $i > 0$ in $D(R)$. This follows from $\mathfrak{X} \cong \sqcup_{g \in \mathrm{GL}_n(F) \setminus \mathrm{GL}_n(\mathbb{A}_F^{\infty})} \mathbb{R}^{[F^+:\mathbb{Q}]n^2-1}$. (Note that we now consider the discrete topology on $\mathrm{GL}_n(\mathbb{A}_F^{\infty})$.) \end{proof}

For a (not necessary commutative) ring $R$, a commutative ring $T$ and a bounded complex $C$ of $R$-modules with a ring morphism $T \rightarrow \mathrm{End}_{D(R)}(C)$, we write $T(C):=\mathrm{Im}(T \rightarrow \mathrm{End}_{D(R)}(C))$. (We often put $T =\mathbb{T}^S$ in the following.) Note that we have a canonical ring morphism $T(C) \rightarrow T(H^*(C))$ whose kernel is nilpotent by the following lemma. 

\begin{lem} \label{nilpotent} In the above situation, let $d$ be a positive integer and $q$ be an integer such that $H^i(C)=0$ for any $i \notin [q, d+q]$ .

Then the kernel $I$ of $S(C) \rightarrow \mathrm{End}_{R}(H^*(C))$ satisfies $I^{d+1}=0$.

\end{lem}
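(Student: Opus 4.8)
The plan is to work entirely in the derived category $D(R)$ and exploit the canonical truncation functors $\tau_{\le j}$ for the standard $t$-structure. First I would record the elementary point that the map in the statement is the ring homomorphism $T(C) \to \mathrm{End}_R(H^*(C))$ sending $f$ to $\bigoplus_i H^i(f)$ (cohomology being functorial on $D(R)$), so that $I$ consists exactly of those $x \in T(C) \subset \mathrm{End}_{D(R)}(C)$ which induce the zero map on every cohomology module $H^i(C)$; in particular $I$ is a genuine ideal of the commutative ring $T(C)$, and it suffices to show that the product of any $d+1$ elements of $I$ vanishes in $\mathrm{End}_{D(R)}(C)$.

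The key lemma is: if $f \in \mathrm{End}_{D(R)}(C)$ induces $0$ on all $H^i$, then for each $j$ the truncated endomorphism $\tau_{\le j}(f) : \tau_{\le j}C \to \tau_{\le j}C$ factors through the canonical morphism $\iota^j_{j-1} : \tau_{\le j-1}C \to \tau_{\le j}C$. To prove this I would use the distinguished triangle $\tau_{\le j-1}C \to \tau_{\le j}C \to H^j(C)[-j] \xrightarrow{+1}$ together with the identification $\mathrm{Hom}_{D(R)}(\tau_{\le j}C, H^j(C)[-j]) \cong \mathrm{End}_R(H^j(C))$ induced by $H^j$; this identification follows from the $t$-structure axioms, since applying $\mathrm{Hom}_{D(R)}(-, H^j(C)[-j])$ to the triangle kills the two terms coming from $\tau_{\le j-1}C \in D^{\le j-1}$ (as $H^j(C)[-j] \in D^{\ge j}$ and $H^j(C)[-j+1] \in D^{\ge j-1}$ while $\tau_{\le j-1}C[1]\in D^{\le j-2}$). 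Under this identification the composite $\tau_{\le j}C \xrightarrow{\tau_{\le j}(f)} \tau_{\le j}C \to H^j(C)[-j]$ corresponds to $H^j(f) = 0$, hence is zero, and the triangle yields $g : \tau_{\le j}C \to \tau_{\le j-1}C$ with $\tau_{\le j}(f) = \iota^j_{j-1}\circ g$. Writing $\iota_k : \tau_{\le k}C \to C$ for the canonical maps and using $\iota_j\circ\iota^j_{j-1} = \iota_{j-1}$ and $f\circ\iota_j = \iota_j\circ\tau_{\le j}(f)$, one obtains $f\circ\iota_j = \iota_{j-1}\circ g$.

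Then I would iterate. Since $H^i(C) = 0$ for $i > q+d$, the map $\iota_{q+d}$ is an isomorphism in $D(R)$; since $H^i(C) = 0$ for $i < q$, the object $\tau_{\le q-1}C$ is zero in $D(R)$. Given $x_1,\dots,x_{d+1} \in I$ represented by $f_1,\dots,f_{d+1}$, applying the factorization step $d+1$ times, starting from $f_{d+1}\circ\iota_{q+d} = \iota_{q+d-1}\circ g_{d+1}$ and lowering the level by one at each stage, gives $f_1\circ\cdots\circ f_{d+1}\circ\iota_{q+d} = \iota_{q-1}\circ g$ for some $g : \tau_{\le q+d}C \to \tau_{\le q-1}C$. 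As $\tau_{\le q-1}C \simeq 0$ this composite is $0$, and since $\iota_{q+d}$ is invertible we conclude $f_1\circ\cdots\circ f_{d+1} = 0$, hence $x_1\cdots x_{d+1} = 0$ in $T(C)$; therefore $I^{d+1} = 0$.

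The main obstacle is purely bookkeeping: justifying the $\mathrm{Hom}$-identification in the key lemma from the $t$-structure axioms, and then checking that the (non-canonical) factorizations $g_k$ compose compatibly with the natural truncation maps $\iota_j$, $\iota^j_{j-1}$, so that each of the $d+1$ factors strictly lowers the truncation level by exactly one. No deeper difficulty is expected.
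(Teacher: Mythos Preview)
Your argument is correct and is precisely the standard truncation-filtration proof of this fact; the paper itself gives no argument and simply cites \cite[Lemma 2.2.4]{10}, whose proof is exactly this one.
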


\begin{proof} See \cite[Lemma 2.2.4]{10}. \end{proof}

We fix $\lambda \in (\mathbb{Z}_+^{n})^{\mathrm{Hom}(F,E)}$.

For $\tau \in \mathrm{Hom}(F, E)$, we write $\mathcal{V}_{\lambda_{\tau}}$ for the set of all $\mathcal{O}$-valued points of the algebraic induction $(\mathrm{Ind}^{\mathrm{GL}_n}_{B_n}w_0\lambda_{\tau})/_{\mathcal{O}}$. (Here, $w_0\lambda_{\tau}:=(\lambda_{\tau, n}, \cdots, \lambda_{\tau, 1})$.) This is a finite free $\mathcal{O}$-module with a continuous action of $\mathrm{GL}_n(\mathcal{O})$ because $\mathcal{V}_{\lambda_{\tau}}$ is regarded as the set of all global sections of a line bundle on a projective smooth $\mathcal{O}$-scheme by \cite[Proposition 5.12 of I]{JZ}. 

In the following, we assume that $K_v \subset \mathrm{GL}_n(\mathcal{O}_{F_v})$ for any $v|l$. 

Thus, $\mathcal{V}_{\lambda} := \otimes_{\tau \in \mathrm{Hom}(F,E)} \mathcal{V}_{\lambda_{\tau}}$ is a finite free $\mathcal{O}$-module with a continuous $K_S$-action by projection to $K_l$.

\vspace{0.5 \baselineskip}

For any finite place $v$ of $F$, we put $$T_{v,i}:=[\mathrm{GL}_n(\mathcal{O}_{F_v})\mathrm{diag}(\underbrace{\varpi_v, \cdots, \varpi_v}_{i}, 1, \cdots, 1)\mathrm{GL}_n(\mathcal{O}_{F_v})] \in \mathcal{H}(\mathrm{GL}_n(F_v), \mathrm{GL}_n(\mathcal{O}_{F_v}))_{\mathbb{Z}}$$ and $$P_v(X) := \sum_{i=0}^n (-1)^iq_v^{\frac{i(i-1)}{2}}T_{v,i}X^{n-i} \in \mathcal{H}(\mathrm{GL}_n(F_v), \mathrm{GL}_n(\mathcal{O}_{F_v}))_{\mathbb{Z}}[X].$$ For any unramified irreducible smooth representation $\pi$ of $\mathrm{GL}_n(F_v)$ over $\mathbb{C}$, let $\varphi_{\pi} : \mathcal{H}(\mathrm{GL}_n(F_v), \mathrm{GL}_n(\mathcal{O}_{F_v}))_{\mathbb{Z}} \rightarrow \mathbb{C}$ denote the eigensystem corresponding to $\pi^{\mathrm{GL}_n(\mathcal{O}_{F_v})}$. Then we obtain $\mathrm{det}(XI_n - \mathrm{rec}_{F_v}(\pi|\mathrm{det}|_v^{\frac{1-n}{2}})(\mathrm{Frob}_v)) = \varphi_{\pi}(P_v)(X)$ by \cite[Corollary 3.1.2]{CD}.

\begin{thm}\label{torsion Galois} Let $\mathfrak{m}$ be a maximal ideal of $\mathbb{T}^S(R\Gamma(X_K, \mathcal{V}_{\lambda}))$.
    
Then there exists a continuous semisimple representation $\overline{\rho_{\mathfrak{m}}}: G_{F,S} \rightarrow \mathrm{GL}_n(\overline{\mathbb{T}^S(R\Gamma(X_K, \mathcal{V}_{\lambda}))/\mathfrak{m}})$ such that for any $v \notin S$, $\mathrm{det}(TI_n - \overline{\rho_{\mathfrak{m}}}(\mathrm{Frob}_v)) = P_v(T)$. (We simply write $P_v(T)$ for $P_v(T) \mod \mathfrak{m}$. We use similar notations in the following.) 

\end{thm}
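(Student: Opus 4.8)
The plan is to reduce the statement, by formal commutative algebra, to the existence of Galois representations attached to mod $l$ systems of Hecke eigenvalues occurring in the cohomology of the locally symmetric spaces $X_K$, and then to invoke that existence result. First I would record the relevant finiteness: by part 1 of Proposition \ref{perfect complex}, $R\Gamma(X_K,\mathcal{V}_\lambda)$ is a perfect complex of $\mathcal{O}$-modules, so each $H^i(X_K,\mathcal{V}_\lambda)$ is a finitely generated $\mathcal{O}$-module and these groups vanish outside a finite range $[q,q+d]$; hence $\mathbb{T}^S(R\Gamma(X_K,\mathcal{V}_\lambda))$ is a module-finite $\mathcal{O}$-algebra. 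In particular $\mathfrak{m}$ has residue characteristic $l$ and $k(\mathfrak{m}):=\mathbb{T}^S(R\Gamma(X_K,\mathcal{V}_\lambda))/\mathfrak{m}$ is a finite extension of $\mathbb{F}$, so that $\overline{\mathbb{T}^S(R\Gamma(X_K,\mathcal{V}_\lambda))/\mathfrak{m}}=\overline{\mathbb{F}_l}$. By Lemma \ref{nilpotent} the surjection $\mathbb{T}^S(R\Gamma(X_K,\mathcal{V}_\lambda))\twoheadrightarrow\mathbb{T}^S(H^*(X_K,\mathcal{V}_\lambda))$ has nilpotent kernel, hence induces a bijection on maximal ideals preserving residue fields; and using the universal coefficient exact sequence to pass from $\mathcal{O}/\varpi=\mathbb{F}$-coefficients to $\overline{\mathbb{F}_l}$-coefficients, the image of $\mathfrak{m}$ corresponds to a system of Hecke eigenvalues $\varphi_\mathfrak{m}:\mathbb{T}^S\to k(\mathfrak{m})\hookrightarrow\overline{\mathbb{F}_l}$, $T_{v,i}\mapsto T_{v,i}\bmod\mathfrak{m}$ for $v\notin S$, occurring in $H^*(X_K,\mathcal{V}_\lambda\otimes_{\mathcal{O}}\overline{\mathbb{F}_l})$.

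Next I would apply the construction of Galois representations attached to (possibly non-classical) torsion Hecke eigensystems in the cohomology of locally symmetric spaces for $\mathrm{GL}_n$ over CM fields. By realizing $\varphi_\mathfrak{m}$ in the boundary cohomology of the Borel--Serre compactification of a suitable Shimura variety for a unitary similitude group over $\mathbb{Q}$, the theorem of Scholze \cite{SG} --- in the refined form collected in \cite{10} --- produces a continuous semisimple representation $\overline{\rho_\mathfrak{m}}:G_{F,S}\to\mathrm{GL}_n(\overline{\mathbb{F}_l})$ with $\det(TI_n-\overline{\rho_\mathfrak{m}}(\mathrm{Frob}_v))=P_v(T)\bmod\mathfrak{m}$ for all $v\notin S$. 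Since $\overline{\mathbb{T}^S(R\Gamma(X_K,\mathcal{V}_\lambda))/\mathfrak{m}}=\overline{\mathbb{F}_l}$, this is exactly the asserted representation; it is unique up to isomorphism by Chebotarev density together with the Brauer--Nesbitt theorem (a semisimple representation over $\overline{\mathbb{F}_l}$ is determined by the characteristic polynomials of the $\mathrm{Frob}_v$, $v\notin S$), and its continuity is inherited from that of the pseudocharacter $g\mapsto\mathrm{tr}\,\overline{\rho_\mathfrak{m}}(g)$.

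The one genuinely hard input is the construction of $\overline{\rho_\mathfrak{m}}$ itself, i.e.\ attaching a Galois representation to a Hecke eigensystem on the non-compact, non-algebraic spaces $X_K$ that need not lift to characteristic zero; this is precisely where one must appeal to the theory of perfectoid Shimura varieties and the Hodge--Tate period map, as in \cite{SG} and its subsequent refinements, whereas Theorem \ref{Ila Varma} by itself would cover only the classical, characteristic-zero eigensystems. Everything else --- finiteness of the Hecke algebra, the nilpotent-ideal reduction of Lemma \ref{nilpotent}, the universal coefficient argument, and Chebotarev plus Brauer--Nesbitt --- is formal, so I would present the proof as this reduction, citing \cite{SG} and \cite{10} for the decisive step.
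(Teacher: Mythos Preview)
Your proposal is correct and aligns with the paper's approach: the paper's proof is simply ``See \cite[Theorem 2.3.5]{10}'', and the argument you sketch (finiteness of the Hecke algebra, passage to mod-$l$ eigensystems via Lemma \ref{nilpotent}, and then the construction of Galois representations for torsion classes via Scholze's perfectoid methods as refined in \cite{10}) is precisely what underlies that cited result. Your additional remarks on uniqueness via Chebotarev and Brauer--Nesbitt are standard and correct, though not strictly needed for the existence statement.
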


\begin{proof} See \cite[Theorem 2.3.5]{10}. \end{proof}

In the following, we often regard a maximal ideal of $\mathbb{T}^S(R\Gamma(X_K, \mathcal{V}_{\lambda}))$ as a maximal ideal of $\mathbb{T}^S$. For a maximal ideal $\mathfrak{m}$ of $\mathbb{T}^S(R\Gamma(X_K, \mathcal{V}_{\lambda}))$, we say that $\mathfrak{m}$ is non-Eisenstein if $\overline{\rho_{\mathfrak{m}}}$ is absolutely irreducible. For a maximal ideal $\mathfrak{m}$ of $\mathbb{T}^S$, let $R\Gamma(X_K, \mathcal{V}_{\lambda})_{\mathfrak{m}}$ denote the image in $D(\mathcal{O})$ of the localization of $R\Gamma(K, R\Gamma(\mathfrak{X}, \mathcal{V}_{\lambda})) \in \mathrm{Ob} D(\mathbb{T}^S)$ at $\mathfrak{m}$. This is a $\mathbb{T}^S$-equivariant direct summand of $R\Gamma(X_K, \mathcal{V}_{\lambda})$ by Proposition \ref{perfect complex}.

\begin{thm}\label{Hecke algebra valued Galois} Let $\mathfrak{m}$ be a non-Eisenstein ideal of $\mathbb{T}^S(R\Gamma(X_K, \mathcal{V}_{\lambda}))$.

Then there exist a positive integer $\delta$ depending only on $n$ and $[F:\mathbb{Q}]$, an ideal $I$ of $\mathbb{T}^{S}(R\Gamma(X_K, \mathcal{V}_{\lambda})_{\mathfrak{m}})$ satisfying $I^{\delta} = 0$ and a continuous representation $\rho_{\mathfrak{m}}: G_{F,S} \rightarrow \mathrm{GL}_n(\mathbb{T}^S(R\Gamma(X_K, \mathcal{V}_{\lambda})_{\mathfrak{m}})/I)$ such that for any $v \notin S$, $\mathrm{det}(TI_n - \rho_{\mathfrak{m}}(\mathrm{Frob}_v)) = P_v(T)$.

\end{thm}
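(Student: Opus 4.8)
The plan is to deduce Theorem~\ref{Hecke algebra valued Galois} from the Galois-representation-valued-in-the-residue-field statement of Theorem~\ref{torsion Galois} by a standard ``interpolation over nilpotent thickenings'' argument, of the type worked out in \cite[\S2.1.3]{10}. The key point is that $\mathbb{T}^S(R\Gamma(X_K,\mathcal{V}_{\lambda})_{\mathfrak{m}})$ is a finite $\mathcal{O}$-algebra which, modulo its Jacobson radical (equivalently, modulo $\mathfrak{m}$), supports the absolutely irreducible representation $\overline{\rho_{\mathfrak{m}}}$, and we want to lift the induced pseudorepresentation (determinant/characteristic polynomial data) $v\mapsto P_v(T)$ up the tower of quotients by powers of $\mathfrak{m}$, then recover an honest representation because the residual one is absolutely irreducible.

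First I would record that $A:=\mathbb{T}^S(R\Gamma(X_K,\mathcal{V}_{\lambda})_{\mathfrak{m}})$ is a complete Noetherian local $\mathcal{O}$-algebra with residue field $\mathbb{F}':=A/\mathfrak{m}A$ finite over $\mathbb{F}$ (it is a quotient of $\mathbb{T}^S$ acting faithfully on the perfect complex $R\Gamma(X_K,\mathcal{V}_{\lambda})_{\mathfrak{m}}$ over $\mathcal{O}$, hence $\mathcal{O}$-finite by Proposition~\ref{perfect complex}). Next I would invoke the Carayol--Serre style ``boundedness of the defect'' input: there is a positive integer $\delta$, depending only on $n$ and $[F:\mathbb{Q}]$, such that the cohomology $H^*(X_K,\mathcal{V}_{\lambda})_{\mathfrak{m}}$ is concentrated in an interval of length $\delta-1$ — concretely $\delta = [F^+:\mathbb{Q}]n^2$ works, using the dimension of $X_K$ and that middle-degree-ish vanishing is not needed, only the length of the support interval — and then by Lemma~\ref{nilpotent} the kernel $I$ of $A = \mathbb{T}^S(R\Gamma(X_K,\mathcal{V}_{\lambda})_{\mathfrak{m}}) \to \mathbb{T}^S(H^*(X_K,\mathcal{V}_{\lambda})_{\mathfrak{m}})$ satisfies $I^{\delta}=0$. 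So it suffices to construct $\rho_{\mathfrak{m}}$ valued in $B:=A/I = \mathbb{T}^S(H^*(X_K,\mathcal{V}_{\lambda})_{\mathfrak{m}})$. Actually, to make the induction clean I would instead work directly: for each $N$, the ring $B/\mathfrak{m}^N B$ is an Artinian local $\mathcal{O}$-algebra with residue field $\mathbb{F}'$, and the system $\{\det(TI_n - \rho(\mathrm{Frob}_v))\bmod \mathfrak m^N := P_v(T) \bmod \mathfrak{m}^N B\}_{v\notin S}$ defines a continuous determinant (Chenevier pseudorepresentation) of $G_{F,S}$ valued in $B/\mathfrak{m}^N B$ lifting the pseudorepresentation attached to $\overline{\rho_{\mathfrak{m}}}$, which exists because the characteristic polynomial coefficients $T_{v,i}$ literally live in $\mathbb{T}^S$; continuity follows from Chebotarev as in Lemma~\ref{trace density}. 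Because $\overline{\rho_{\mathfrak{m}}}$ is absolutely irreducible over $\mathbb{F}'$ (that is the non-Eisenstein hypothesis), the pseudorepresentation theory (Chenevier, or Carayol) gives a unique continuous representation $\rho_{\mathfrak{m},N}: G_{F,S}\to \mathrm{GL}_n(B/\mathfrak{m}^N B)$ realizing this determinant, the uniqueness forcing compatibility as $N$ varies; passing to the limit over $N$ (using $B = \varprojlim_N B/\mathfrak{m}^N B$, valid since $B$ is a finite $\mathcal{O}$-algebra, hence $\mathfrak m$-adically complete) yields $\rho_{\mathfrak{m}}: G_{F,S}\to \mathrm{GL}_n(B)$ with $\det(TI_n - \rho_{\mathfrak{m}}(\mathrm{Frob}_v)) = P_v(T)$ for $v\notin S$. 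Composing with $B = A/I$ gives the desired statement with the advertised $I$, $I^{\delta}=0$.

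The main obstacle I expect is twofold. (1) Getting the degree bound $\delta$ to depend only on $n$ and $[F:\mathbb{Q}]$ and not on $K$, $\lambda$, or $\mathfrak{m}$: this is exactly why one localizes at $\mathfrak{m}$ before taking the cohomological realization, and one must cite the concentration result for $R\Gamma(X_K,\mathcal{V}_{\lambda})_{\mathfrak m}$ — here I would appeal to \cite[Theorem 2.3.5 / \S2.3]{10} together with the fact that $X_K$ has dimension $[F^+:\mathbb{Q}]n^2-1$, so cohomology trivially vanishes outside $[0,[F^+:\mathbb{Q}]n^2-1]$, which already gives an admissible $\delta$; no finer vanishing is needed for this particular statement. (2) Handling the fact that $\mathbb{F}' = A/\mathfrak{m}A$ may be a proper extension of $\mathbb{F}$ and that $B/\mathfrak m^N B$ need not be an $\mathcal{O}$-algebra with residue field $\mathbb{F}$; the pseudorepresentation-to-representation passage (Carayol's lemma / Nyssen) works over any Henselian (here Artinian) local ring with the absolute irreducibility of the residual representation, so this is not a real difficulty but should be stated carefully. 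A minor point is checking that $I$ as defined (kernel of $A \to B$) indeed satisfies $I^{\delta+1}=0$ versus $I^{\delta}=0$ depending on whether one counts the interval length or the number of nonzero cohomology groups — I would simply define $\delta$ to be the number of integers $i$ with $H^i(X_K,\mathcal V_\lambda)_{\mathfrak m}$ possibly nonzero, i.e. at most $[F^+:\mathbb Q]n^2$, and apply Lemma~\ref{nilpotent} with that $\delta$. This completes the proof modulo the cited inputs, so I would present it as: reduce to $B=\mathbb T^S(H^*(\cdots)_{\mathfrak m})$ via Lemma~\ref{nilpotent}; build the pseudorepresentation from $\{P_v\}$; promote to a genuine representation using absolute irreducibility of $\overline{\rho_{\mathfrak m}}$; take the limit.
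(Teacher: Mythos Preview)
The paper's own proof is simply a citation to \cite[Theorem 2.3.7]{10}, so the comparison is really with the argument carried out there. Your sketch correctly identifies the final step (promote a determinant to a genuine representation via absolute irreducibility of $\overline{\rho_{\mathfrak m}}$, using Carayol/Chenevier) and correctly uses Lemma~\ref{nilpotent} to pass between the Hecke algebra on the complex and on cohomology. But there is a genuine gap at the heart of the argument.

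The problematic step is your assertion that ``the system $\{P_v(T)\bmod \mathfrak m^N B\}_{v\notin S}$ defines a continuous determinant of $G_{F,S}$ valued in $B/\mathfrak m^N B$ \ldots\ which exists because the characteristic polynomial coefficients $T_{v,i}$ literally live in $\mathbb T^S$.'' Having elements $T_{v,i}\in\mathbb T^S$ gives you candidate characteristic polynomials on Frobenii, but it does \emph{not} give you a determinant: a Chenevier determinant is a multiplicative polynomial law on all of $\mathcal O[G_{F,S}]$, and the required identities (the pseudocharacter relations) are nontrivial polynomial constraints among the $T_{v,i}$ that are not implied by knowing they hold modulo $\mathfrak m$ (which is all Theorem~\ref{torsion Galois} gives). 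Over $B[\tfrac1l]$ one can verify the identities by reducing to characteristic-zero automorphic eigensystems via Theorem~\ref{cohomology of locally symmetric space}, but $B$ may have $l$-torsion, and this is exactly where the difficulty lies. In \cite{10} the determinant over (a nilpotent thickening of) the integral Hecke algebra is produced by a genuine geometric construction: one transports the problem to the cohomology of a unitary Shimura variety via boundary degeneration, applies Scholze's construction there, and the nilpotent ideal $I$ and the exponent $\delta$ arise from that comparison (degree-shifting in the boundary spectral sequence and in Scholze's argument), not merely from Lemma~\ref{nilpotent} applied to $X_K$. So the ``existence of the determinant'' that you treat as formal bookkeeping is in fact the entire content of the theorem; your outline would need to import that construction rather than assume its output.
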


\begin{proof} See \cite[Theorem 2.3.7]{10}. \end{proof}

\begin{thm} \label{cohomology of locally symmetric space}

Let $\iota: \overline{\mathbb{Q}}_l \stackrel{\sim}{\rightarrow} \mathbb{C}$ be an isomorphism of fields and $\iota\lambda$ be the element of $(\mathbb{Z}^{n}_{+})^{\mathrm{Hom}(F,\mathbb{C})}$ satisfying $(\iota\lambda)_{\tau, i} := \lambda_{\iota^{-1} \tau, i}$ for $\tau \in \mathrm{Hom}(F, \mathbb{C})$.

1 \ Let $\pi$ be a cohomological cuspidal automorphic representation of weight $\iota \lambda$ satisfying $(\pi^{\infty})^K \neq 0$. Then we obtain the following results.

$\cdot$ \ $\iota^{-1}(\pi^{\infty})^K:=\overline{\mathbb{Q}}_l \otimes_{\iota^{-1}, \mathbb{C}} (\pi^{\infty})^K$ is a $\mathcal{H}(\mathrm{GL}_n(\mathbb{A}_F^{\infty}), K)_{\overline{\mathbb{Q}}_l}$-equivariant direct summand of $H^*(X_K, \mathcal{V}_{\lambda}) \otimes_{\mathcal{O}} \overline{\mathbb{Q}}_l$.

$\cdot$ \ The Hecke eigensystem $\varphi_{\pi, \iota}: \mathbb{T}^S \rightarrow \overline{\mathbb{Q}}_l$ corresponding to $\iota^{-1}(\pi^{\infty})^K$ factors through $\mathbb{T}^S \twoheadrightarrow \mathbb{T}^S(R\Gamma(X_K, \mathcal{V}_{\lambda}))$.

$\cdot$ \ The maximal ideal $\mathfrak{m}:=\mathrm{Ker}(\mathbb{T}^S(R\Gamma(X_K, \mathcal{V}_{\lambda})) \xrightarrow{\varphi_{\iota, \pi}} \mathcal{O}_{\overline{\mathbb{Q}}_l} \twoheadrightarrow \overline{\mathbb{F}_l})$ induces $\overline{\rho_{\mathfrak{m}}} \cong \overline{r_{\iota}(\pi)}$.

$\cdot$ \ If $\overline{r_{\iota}(\pi)}$ is absolutely irreducible, then $\mathfrak{m}$ is a non-Eisenstein ideal of $\mathbb{T}^S(R\Gamma(X_K, \mathcal{V}_{\lambda}))$.

2 \ Let $\mathfrak{m}$ be a non-Eisenstein ideal of $\mathbb{T}^S(R\Gamma(X_K, \mathcal{V}_{\lambda}))$ and $\mathcal{T}_{\mathfrak{m}}$ be the set of all cohomological cuspidal automorphic representations of $\mathrm{GL}_n(\mathbb{A}_F)$ of weight $\iota\lambda$ such that $(\pi^{\infty})^K \neq 0$ and $\overline{r_{\iota}(\pi)} \cong \sigma \circ \overline{\rho_{\mathfrak{m}}}$ for some isomorphism $\sigma : \overline{\mathbb{T}^S(R\Gamma(X_K, \mathcal{V}_{\lambda}))/\mathfrak{m}} \stackrel{\sim}{\rightarrow} \overline{\mathbb{F}_l}$ over $\mathbb{F}$. 

(1) \ There exists an isomorphism of $\mathcal{H}(\mathrm{GL}_n(\mathbb{A}_F^{\infty}), K)_{\overline{\mathbb{Q}}_l}$-modules $$H^*(X_K, \mathcal{V}_{\lambda})_{\mathfrak{m}} \otimes_{\mathcal{O}} \overline{\mathbb{Q}}_l \cong (\oplus_{\pi \in \mathcal{T}_{\mathfrak{m}}} (\pi^{\infty})^K \otimes_{\mathbb{C}} H^*(\mathfrak{g}, \prod_{v|\infty}U(n);\pi_{\infty} \otimes_{\mathbb{C}} V_{\iota \lambda})) \otimes_{\mathbb{C}, \iota^{-1}} \overline{\mathbb{Q}}_l.$$ (Here, $\mathfrak{g}$ denotes the Lie algebra of the Lie group $\mathrm{Ker}(N_{F/\mathbb{Q}} \circ \mathrm{det} : \mathrm{Res}_{F/\mathbb{Q}}\mathrm{GL}_{n, F} \rightarrow \mathrm{GL}_{1, \mathbb{Q}})(\mathbb{R})$ and $V_{\iota \lambda}$ denote an irreducible algebraic representation of $\prod_{\mathrm{Hom}(F, \mathbb{C})} \mathrm{GL}_n(\mathbb{C})$ having highest weight $\iota \lambda$.)

(2) \ Let $l_0:=[F^+:\mathbb{Q}]n-1$ and $q_0:=[F^+:\mathbb{Q}]\frac{n(n-1)}{2}$.

If $\mathcal{T}_{\mathfrak{m}}$ is not empty, then $H^i(X_K, \mathcal{V}_{\lambda})_{\mathfrak{m}}[\frac{1}{l}] \neq 0$ for any $i \in [q_0, q_0 + l_0]$ and $H^i(X_K, \mathcal{V}_{\lambda})_{\mathfrak{m}}[\frac{1}{l}] = 0$ for any $i \notin [q_0, q_0 + l_0]$.
    
(3) \ The $E$-algebra $\mathbb{T}^S(H^*(X_K, \mathcal{V}_{\lambda})_{\mathfrak{m}}[\frac{1}{l}]) = \mathbb{T}^S(R\Gamma(X_K, \mathcal{V}_{\lambda})_{\mathfrak{m}})[\frac{1}{l}]$ is finite $\acute{e}$tale over $E$. Moreover, by sending $\pi$ to $\varphi_{\pi, \iota}$, we obtain a bijection between $\mathcal{T}_{\mathfrak{m}}$ and $\{ f: \mathbb{T}^S(R\Gamma(X_K, \mathcal{V}_{\lambda})_{\mathfrak{m}}) \rightarrow \overline{\mathbb{Q}}_l \mid \mathcal{O} \textrm{-} \mathrm{morphism} \}$. (Note that $\varphi_{\pi, \iota} \circ \rho_{\mathfrak{m}} \cong r_{\iota}(\pi)$, where $\rho_{\mathfrak{m}}$ denote the Galois representation of Theorem \ref{Hecke algebra valued Galois}.)

\end{thm}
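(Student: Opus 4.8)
The proof is a recollection of the arguments of \cite[\S 2.3]{10}, which I now outline. The starting point is the Matsushima--Borel--Wallach--Franke description of the complex cohomology: $H^*(X_K,\mathcal{V}_\lambda)\otimes_{\mathcal{O},\iota}\mathbb{C}$ contains, as a $\mathcal{H}(\mathrm{GL}_n(\mathbb{A}_F^\infty),K)_{\mathbb{C}}$-equivariant direct summand, the cuspidal part $\bigoplus_\pi (\pi^\infty)^K\otimes_{\mathbb{C}} H^*(\mathfrak{g},\prod_{v\mid\infty}U(n);\pi_\infty\otimes_{\mathbb{C}}V_{\iota\lambda})$, the sum running over cuspidal automorphic representations $\pi$ of $\mathrm{GL}_n(\mathbb{A}_F)$. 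For Part 1, if $\pi$ is cohomological of weight $\iota\lambda$ with $(\pi^\infty)^K\neq 0$, then the $(\mathfrak{g},\prod_{v\mid\infty}U(n))$-cohomology is nonzero, so $(\pi^\infty)^K$ is a nonzero Hecke-equivariant direct summand of $H^*(X_K,\mathcal{V}_\lambda)\otimes\mathbb{C}$ cut out by idempotents in the Hecke algebra; transporting back along $\iota$ gives the first bullet. Since the eigensystem $\varphi_{\pi,\iota}$ is then realized on $H^*(X_K,\mathcal{V}_\lambda)$, it factors through $\mathbb{T}^S(H^*(X_K,\mathcal{V}_\lambda))$, hence through $\mathbb{T}^S(R\Gamma(X_K,\mathcal{V}_\lambda))$ by Proposition \ref{perfect complex} and Lemma \ref{nilpotent}, giving the second bullet. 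For the third, the semisimple representations $\overline{\rho_\mathfrak{m}}$ of Theorem \ref{torsion Galois} and $\overline{r_\iota(\pi)}$ have $\mathrm{det}(TI_n-\overline{\rho_\mathfrak{m}}(\mathrm{Frob}_v))=P_v(T)\bmod\mathfrak{m}=\mathrm{det}(TI_n-\overline{r_\iota(\pi)}(\mathrm{Frob}_v))$ for every $v\notin S$ --- the latter by the defining property of $r_\iota(\pi)$ (Theorem \ref{Ila Varma}, which forces $\iota\mathrm{WD}(r_\iota(\pi)|_{G_{F_v}})^{ss}\cong\mathrm{rec}_{F_v}(\pi_v|\mathrm{det}|_v^{\frac{1-n}{2}})^{ss}$) together with the identity $\mathrm{det}(XI_n-\mathrm{rec}_{F_v}(\pi_v|\mathrm{det}|_v^{\frac{1-n}{2}})(\mathrm{Frob}_v))=\varphi_{\pi_v}(P_v)(X)$ --- so Brauer--Nesbitt and the Chebotarev density theorem give $\overline{\rho_\mathfrak{m}}\cong\overline{r_\iota(\pi)}$, whence the fourth bullet is immediate.

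For Part 2, the decomposition in (1) follows once one knows that localization at the non-Eisenstein ideal $\mathfrak{m}$ both kills the boundary cohomology of the Borel--Serre compactification and removes the non-cuspidal (residual Eisenstein) contributions to the interior cohomology: every Hecke eigensystem appearing there arises, up to a twist, from a proper Levi subgroup and therefore has a reducible attached residual Galois representation, which cannot be isomorphic to the absolutely irreducible $\overline{\rho_\mathfrak{m}}$. Hence $H^*(X_K,\mathcal{V}_\lambda)_\mathfrak{m}\otimes\overline{\mathbb{Q}_l}$ is identified with $\bigoplus_{\pi\in\mathcal{T}_\mathfrak{m}}(\pi^\infty)^K\otimes H^*(\mathfrak{g},\prod_{v\mid\infty}U(n);\pi_\infty\otimes V_{\iota\lambda})$. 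Assertion (2) follows because, for a cohomological cuspidal $\pi$, the relevant $(\mathfrak{g},\prod_{v\mid\infty}U(n))$-cohomology is concentrated precisely in degrees $[q_0,q_0+l_0]$ and is nonzero at both endpoints. For (3), the natural surjection $\mathbb{T}^S(R\Gamma(X_K,\mathcal{V}_\lambda)_\mathfrak{m})\to\mathbb{T}^S(H^*(X_K,\mathcal{V}_\lambda)_\mathfrak{m})$ has nilpotent kernel by Lemma \ref{nilpotent}, and after inverting $l$ it is an isomorphism since $R\Gamma(X_K,\mathcal{V}_\lambda)_\mathfrak{m}[\frac{1}{l}]$ splits as the sum of its cohomology, which is a semisimple module over the commutative algebra $\mathbb{T}^S(R\Gamma(X_K,\mathcal{V}_\lambda)_\mathfrak{m})[\frac{1}{l}]$ through which it acts; a faithful action on a semisimple module over a finite-dimensional commutative $E$-algebra forces that algebra to be reduced, hence (in characteristic zero) finite étale over $E$, and its $\overline{\mathbb{Q}_l}$-valued points are exactly the $\varphi_{\pi,\iota}$ for $\pi\in\mathcal{T}_\mathfrak{m}$, which are pairwise distinct by strong multiplicity one for $\mathrm{GL}_n$. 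The compatibility $\varphi_{\pi,\iota}\circ\rho_\mathfrak{m}\cong r_\iota(\pi)$ again follows by matching Frobenius traces and Chebotarev, using Theorem \ref{Hecke algebra valued Galois}.

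The one genuinely non-formal ingredient is the vanishing of the boundary cohomology (equivalently, the removal of all non-cuspidal contributions) after localizing at a non-Eisenstein maximal ideal; this rests on the known structure of the residual Galois representations attached to Eisenstein classes together with Theorem \ref{torsion Galois} applied to the Levi subgroups occurring in the boundary strata, and I would import it directly from \cite[\S 2.3]{10}. Everything else is bookkeeping around Matsushima's formula, the theory of $(\mathfrak{g},\prod_{v\mid\infty}U(n))$-cohomology of cohomological representations, and strong multiplicity one.
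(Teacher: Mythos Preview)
Your proposal is correct and follows the same approach as the paper, which simply defers to \cite[proof of Theorem 2.4.10]{10}; your outline accurately summarizes that argument. One minor imprecision: for Part 2(2) you need that the $(\mathfrak{g},\prod_{v\mid\infty}U(n))$-cohomology is nonzero in \emph{every} degree of $[q_0,q_0+l_0]$ (it is an exterior algebra on $l_0$ generators, shifted by $q_0$), not merely at the two endpoints.
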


\begin{proof} See \cite[proof of Theorem 2.4.10]{10}. \end{proof}

We recall some important properties of the Galois representation of Theorem \ref{Hecke algebra valued Galois}.

\begin{prop} \label{unipotently ramified}

Let $\mathfrak{m}$ be a non-Eisenstein ideal of $\mathbb{T}^S(R\Gamma(X_K, \mathcal{V}_{\lambda}))$, $R$ be a subset of $S$ not containing any $l$-adic place such that for any $v \in R$, the prime lying below $v$ splits in an imaginary quadratic field contained in $F$.

We assume $K_v=\mathrm{Iw}_v$ for any $v \in R$ and $K_v = \mathrm{GL}_n(\mathcal{O}_{F_v})$ for any $v \in R^c \setminus R$.

Let $\chi_{v,1}, \chi_{v,2}, \cdots, \chi_{v,n} : \mathbb{F}_v^{\times} \rightarrow 1 + \varpi\mathcal{O}$ be characters for $v \in R$ and $\mathcal{V}_{\lambda}(\chi^{-1}) := \mathcal{V}_{\lambda} \otimes_{\mathcal{O}} \mathcal{O}(\chi^{-1})$ be the $\mathcal{O}[K_S]$-module, where $K_S$ acts on $\mathcal{V}_{\lambda}$ by projection to $K_l$ and on $\mathcal{O}(\chi^{-1})$ by projection to $K_{R}$. We put $T := S \setminus (R^c \setminus R)$.

Then there exist an positive integer $\delta$ depending only on $n$ and $[F:\mathbb{Q}]$, an ideal $I$ of $\mathbb{T}^{S}(R\Gamma(X_K, \mathcal{V}_{\lambda}(\chi^{-1}))_{\mathfrak{m}})$ satisfying $I^{\delta} = 0$ and a continuous representation $$\rho_{\mathfrak{m}}: G_{F,T} \rightarrow \mathrm{GL}_n(\mathbb{T}^T(R\Gamma(X_K, \mathcal{V}_{\lambda}(\chi^{-1}))_{\mathfrak{m}})/I)$$ such that for any $v \notin T$, $\mathrm{det}(TI_n - \rho_{\mathfrak{m}}(\mathrm{Frob}_v)) = P_v(T)$ and $\mathrm{det}(T - \rho_{\mathfrak{m}}(\sigma)) = \prod_{i=1}^n(T-\chi_{v,i}(\mathrm{Art}_{F_v}^{-1}(\sigma)))$ for any $v \in R$ and $\sigma \in I_{F_v}$.

\end{prop}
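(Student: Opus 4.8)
The plan is to rerun the proof of Theorems \ref{torsion Galois} and \ref{Hecke algebra valued Galois} with the twisted coefficient system $\mathcal{V}_{\lambda}(\chi^{-1})$ in place of $\mathcal{V}_{\lambda}$ and with Iwahori level at the places of $R$, keeping careful track of the inertial behaviour at $R$. First I would observe that, since each $\chi_{v,i} : \mathbb{F}_v^{\times} \to 1 + \varpi\mathcal{O}$ is trivial modulo $\varpi$, there is an isomorphism $\mathcal{V}_{\lambda}(\chi^{-1}) \otimes_{\mathcal{O}} \mathbb{F} \cong \mathcal{V}_{\lambda} \otimes_{\mathcal{O}} \mathbb{F}$ of $\mathbb{F}[K_S]$-modules; hence $H^*(X_K, \mathcal{V}_{\lambda}(\chi^{-1}))/\varpi \cong H^*(X_K, \mathcal{V}_{\lambda})/\varpi$ compatibly with the action of $\mathbb{T}^S$, so $\mathfrak{m}$ is the restriction of a maximal ideal of $\mathbb{T}^S(R\Gamma(X_K, \mathcal{V}_{\lambda}(\chi^{-1})))$ with the same residual representation $\overline{\rho_{\mathfrak{m}}}$, which remains absolutely irreducible. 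Set $C := R\Gamma(X_K, \mathcal{V}_{\lambda}(\chi^{-1}))_{\mathfrak{m}}$: by Proposition \ref{perfect complex} this is a perfect complex of $\mathcal{O}$-modules, and because $K_v = \mathrm{GL}_n(\mathcal{O}_{F_v})$ for $v \in R^c \setminus R$ and for $v \notin S$, the larger Hecke algebra $\mathbb{T}^T$ acts on $C$ in $D(\mathcal{O})$ via Proposition \ref{perfect complex}. By Lemma \ref{nilpotent} the kernel of $\mathbb{T}^T(C) \to \mathbb{T}^T(H^*(C))$ is nilpotent of exponent bounded in terms of $n$ and $[F^+:\mathbb{Q}]$, so it is enough to construct $\rho_{\mathfrak{m}}$ modulo a nilpotent ideal of $\mathbb{T}^T(H^*(C))$; this is the source of the bound $\delta = \delta(n,[F:\mathbb{Q}])$.

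\textbf{Local picture at $R$ and the rational statement.} Next I would invert $l$ and, exactly as in Theorem \ref{cohomology of locally symmetric space}, decompose $H^*(C)[\tfrac{1}{l}] \otimes_E \overline{\mathbb{Q}_l}$ in a Hecke-equivariant way over the cohomological cuspidal automorphic representations $\pi$ of $\mathrm{GL}_n(\mathbb{A}_F)$ of weight $\iota\lambda$ with $\overline{r_{\iota}(\pi)} \cong \overline{\rho_{\mathfrak{m}}}$ for which $(\pi^{\infty})^{K'}$ contains a vector on which, for each $v \in R$, the torus $\mathrm{Iw}_v/\mathrm{Iw}_{v,1} \cong (\mathbb{F}_v^{\times})^{\oplus n}$ acts through $\chi_v = (\chi_{v,1},\dots,\chi_{v,n})$. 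For such a $\pi$ and $v \in R$, the theory of the Iwahori--Hecke algebra of $\mathrm{GL}_n(F_v)$ shows that $\pi_v$ is a subquotient of a principal series representation induced from characters $\psi_{v,1},\dots,\psi_{v,n}$ of $F_v^{\times}$ whose restrictions to $\mathcal{O}_{F_v}^{\times}$ are inflated from $\chi_{v,1},\dots,\chi_{v,n}$, so $\mathrm{rec}_{F_v}(\pi_v)^{ss}|_{I_{F_v}} \cong \bigoplus_i (\chi_{v,i} \circ \mathrm{Art}_{F_v}^{-1})|_{I_{F_v}}$. Combined with Ila Varma's theorem (Theorem \ref{Ila Varma}) this gives, for each $v \notin T$, that $\det(TI_n - \iota^{-1}\mathrm{WD}(r_{\iota}(\pi)|_{G_{F_v}})(\mathrm{Frob}_v))$ equals the image of $P_v(T)$, and, for each $v \in R$ and $\sigma \in I_{F_v}$, that $\det(T - r_{\iota}(\pi)(\sigma)) = \prod_{i=1}^n (T - \chi_{v,i}(\mathrm{Art}_{F_v}^{-1}(\sigma)))$.

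\textbf{Assembling $\rho_{\mathfrak{m}}$.} Then I would glue these representations. The reduced quotient of $\mathbb{T}^T(H^*(C))[\tfrac{1}{l}]$ is finite \'etale over $E$, a product of fields each corresponding to one automorphic orbit $\pi_j$; the direct sum $\bigoplus_j r_{\iota}(\pi_j)$ is a representation over that product whose Frobenius characteristic polynomials at all $v \notin T$ have coefficients in the image of $\mathbb{T}^T$ (namely the $P_v$), and since $\overline{\rho_{\mathfrak{m}}}$ is absolutely irreducible a standard pseudo-character / Carayol descent produces a genuine homomorphism $G_{F,T} \to \mathrm{GL}_n$ of the reduced quotient of $\mathbb{T}^T(H^*(C))$ realizing these characteristic polynomials. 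To extend this over the non-reduced and torsion part one argues as in the proof of Theorem \ref{torsion Galois}, using the boundary cohomology of the unitary Shimura variety attached to $F$; this is exactly where the hypotheses on $S$ and $R$ are used, in particular that the primes below the places of $R$ and of $R^c$ split in an imaginary quadratic subfield of $F$ and that the level is hyperspecial at $R^c \setminus R$, so that the construction (and hence $\overline{\rho_{\mathfrak{m}}}$) is unramified there with characteristic polynomial of Frobenius $P_v$ for $v \in R^c \setminus R$. This yields $\rho_{\mathfrak{m}}$ over $\mathbb{T}^T(H^*(C))/I'$ with $I'$ nilpotent; pulling back along $\mathbb{T}^T(C) \twoheadrightarrow \mathbb{T}^T(H^*(C))$ and enlarging $I'$ by its nilpotent kernel gives $\rho_{\mathfrak{m}}$ over $\mathbb{T}^T(C)/I$ with $\det(TI_n - \rho_{\mathfrak{m}}(\mathrm{Frob}_v)) = P_v(T)$ for $v \notin T$. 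Finally the inertial identity at $v \in R$ holds after passing to the reduced quotient by the computation above (its right-hand side is $\mathcal{O}$-valued), and one absorbs any remaining nilpotent discrepancy into $I$.

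\textbf{Main obstacle.} The local input at the places of $R$ is a clean consequence of Iwahori--Hecke theory and Theorem \ref{Ila Varma}; the genuine difficulty is the non-reducedness of the Hecke algebra and the possible absence, a priori, of characteristic-zero automorphic forms congruent to a torsion Hecke eigensystem. Handling this requires, exactly as in Theorem \ref{torsion Galois}, going through the boundary cohomology of unitary Shimura varieties, together with a careful bookkeeping of the nilpotence exponents so that $\delta$ depends only on $n$ and $[F:\mathbb{Q}]$. A secondary subtlety is that the relation between the twisted coefficient system and passage to a deeper level at $R$ is only clean after inverting $l$ (since $\mathcal{O}[(\mathbb{F}_v^{\times})^{\oplus n}]$ is not semisimple), which is why I would rerun the argument directly rather than reduce formally to Theorem \ref{Hecke algebra valued Galois}.
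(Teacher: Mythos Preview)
Your approach differs substantially from the paper's. The paper does not rerun the Scholze--type construction; instead it passes to the deeper level subgroup $K_1$ with $(K_1)_v = \mathrm{Iw}_{v,1} = \mathrm{Ker}(\mathrm{Iw}_v \to (\mathbb{F}_v^{\times})^{\oplus n})$ for $v \in R$, and invokes \cite[Theorem~3.1.1]{10} as a black box. That theorem already furnishes a representation valued in the enlarged Hecke algebra $\mathbb{T}^T_R = \mathbb{T}^T[t_{v,i}(\sigma)]$ (where the $t_{v,i}$ come from the torus $\mathrm{Iw}_v/\mathrm{Iw}_{v,1}$ acting on cohomology at level $K_1$) satisfying $\det(T - \rho_{\mathfrak{m}}(\sigma)) = \prod_i (T - \chi_{v,i}(\mathrm{Art}_{F_v}^{-1}(\sigma))\, t_{v,i}(\sigma))$ for $\sigma \in I_{F_v}$, with all the $\delta$-bookkeeping already done. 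The proof then concludes in one line: the natural map $\mathbb{T}^T_R(R\Gamma_{K/K_1}(X_{K_1}, \mathcal{V}_{\lambda}(\chi^{-1}))_{\mathfrak{m}}) \to \mathbb{T}^T(R\Gamma(X_K, \mathcal{V}_{\lambda}(\chi^{-1}))_{\mathfrak{m}})$ sends each $t_{v,i}(\sigma)$ with $\sigma \in I_{F_v}$ to $1$, because the torus action becomes trivial on $\mathrm{Iw}_v$-invariants.

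Your outline is essentially a sketch of a reproof of the relevant case of \cite[Theorem~3.1.1]{10}, which is legitimate in principle but far more work. The specific weak point is the last step, where you propose to establish the inertial identity only on the reduced (equivalently, characteristic-zero) quotient and then ``absorb any remaining nilpotent discrepancy into $I$.'' The discrepancies are indeed nilpotent (they lie in the $l$-power torsion of a finite local $\mathcal{O}$-algebra), but enlarging $I$ by the ideal they generate gives no a priori bound on the new nilpotence exponent in terms of $n$ and $[F:\mathbb{Q}]$ alone: Lemma~\ref{nilpotent} only controls the kernel of $\mathbb{T}(C) \to \mathbb{T}(H^*(C))$, not the full nilradical. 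The paper's route avoids this entirely because at level $\mathrm{Iw}_{v,1}$ the inertial action is recorded by genuine Hecke operators, so the identity is built into the integral construction rather than deduced a posteriori from the rational picture.
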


\begin{proof} Let $K_1$ be the good subgroup of $\mathrm{GL}_n(\mathbb{A}_F^{\infty})$ defined by $(K_1)_v = K_v$ for any $v \notin R$ and $(K_1)_v = \mathrm{Iw}_{v,1}:=\mathrm{Ker}(\mathrm{Iw}_{v} \rightarrow (\mathbb{F}_v^{\times})^{\oplus n})$ for any $v \in R$. 

By \cite[Theorem 3.1.1]{10}, we obtain an integer $\delta$ depending only on $n$ and $[F:\mathbb{Q}]$, an ideal $I_1$ of $\mathbb{T}^{T}_R(R\Gamma_{K/K_1}(X_{K_1}, \mathcal{V}_{\lambda}(\chi^{-1}))_{\mathfrak{m}}) = \mathbb{T}^{T}_R(R\Gamma_{K/K_1}(X_{K_1}, \mathcal{V}_{\lambda})_{\mathfrak{m}}(\chi^{-1}))$ satisfying $I_1^{\delta} = 0$ and a continuous representation $$\rho^1_{\mathfrak{m}}: G_{F,T} \rightarrow \mathrm{GL}_n(\mathbb{T}^T_R(R\Gamma_{K/K_1}(X_{K_1}, \mathcal{V}_{\lambda}(\chi^{-1}))_{\mathfrak{m}})/I_1)$$ such that for any $v \notin T$, $\mathrm{det}(TI_n - \rho_{\mathfrak{m}}(\mathrm{Frob}_v)) = P_v(T)$ and $\mathrm{det}(T - \rho_{\mathfrak{m}}(\sigma)) = \prod_{i=1}^n(T-\chi_{v,i}(\mathrm{Art}_{F_v}^{-1}(\sigma))t_{v,i}(\sigma))$ for any $v \in T$ and $\sigma \in I_{F_v}$. 

Here, $t_{v,i}$ is a certain morphism $W_{F_v}^{ab} \rightarrow \mathcal{H}(\mathrm{GL}_n(F_v), \mathrm{Iw}_{v,1})_{\mathcal{O}}^{\times} \subset \mathcal{H}(\mathrm{GL}_n(\mathbb{A}_F^{\infty}), K_1)^{\times}_{\mathcal{O}}$ such that $t_{v,i} \circ \mathrm{Art}_{F_v}(x) = \mathrm{diag}(1, \cdots, 1, \underset{i}{x}, 1, \cdots, 1)\mathrm{Iw}_{v,1} \in \mathrm{Iw}_v/\mathrm{Iw}_{v,1}$ for any $x \in \mathcal{O}_{F_v}^{\times}$ (see \cite[p925]{10} for the precise definition of $t_{v,i}$ ) and $\mathbb{T}^T_R := \mathbb{T}^T[t_{v,i}(\sigma) \mid v \in R, i = 1, \cdots, n, \sigma \in I_{F_v}] \subset \mathcal{H}(\mathrm{GL}_n(\mathbb{A}_F^{\infty}), K_1)_{\mathcal{O}}$. (Note that $\mathbb{T}^T_R$ is commutative.)

This implies the result since the canonical morphism $$\mathbb{T}^T_R(R\Gamma_{K/K_1}(X_{K_1}, \mathcal{V}_{\lambda}(\chi^{-1}))_{\mathfrak{m}}) \rightarrow \mathbb{T}^T_R(R\Gamma(X_K, \mathcal{V}_{\lambda}(\chi^{-1}))_{\mathfrak{m}}) = \mathbb{T}^T(R\Gamma(X_K, \mathcal{V}_{\lambda}(\chi^{-1}))_{\mathfrak{m}})$$ sends $t_{v,i}(\sigma)$ ($\sigma \in I_{F_v}$) to $1$. \end{proof}

We recall the following technical property, which is required for studying properties at $l$-adic places of automorphic Galois representations.

\begin{dfn} \label{decomposed generic}

Let $L$ be a number field, $\overline{r}: G_L \rightarrow \mathrm{GL}_n(\overline{\mathbb{F}_l})$ be a continuous representation and $p \neq l$ be a prime.
        
We say that $p$ is decomposed generic for $\overline{r}$ if $p$ splits completely in $L$ and for all $v|p$, $\overline{r}|_{G_{L_v}}$ is unramified and the eigenvalues $\alpha_{v,1}, \cdots, \alpha_{v,n}$ of $\overline{r}(\mathrm{Frob}_v)$ satisfy $\frac{\alpha_{v,i}}{\alpha_{v,j}} \neq p$ for all $i \neq j$. 
   
We say that $\overline{r}$ is decomposed generic if there exists a prime $p$ which is decomposed generic for $\overline{r}$.

Note that if $\overline{r}$ is decomposed generic, then there exist positive Dirichlet density primes which are decomposed generic for $\overline{r}$ by Lemma \ref{decomposed genericity density}.

\end{dfn}

We recall important results of \cite{MEI}.

\begin{thm} \label{Caraiani-Newton}

Let $\mathfrak{m}$ be a non-Eisenstein ideal of $\mathbb{T}^S(R\Gamma(X_K, \mathcal{V}_{\lambda}))$ and $\overline{v}$ be an $l$-adic place of $F^+$. We suppose the following conditions.

1 \ $\overline{\rho_{\mathfrak{m}}}$ is decomposed generic.

2 \ $l$ splits in an imaginary quadratic field contained in $F$.

3 \ For any $v|\overline{v}$, $K_v=\mathrm{GL}_n(\mathcal{O}_{F_v})$.

4 \ There exists an $l$-adic place $\overline{v}' \neq \overline{v}$ of $F^+$ such that $$\sum_{\overline{v}, \overline{v}' \neq \overline{v}''|l} [F^+_{\overline{v}''}:\mathbb{Q}_l] \ge \frac{1}{2}[F^+:\mathbb{Q}].$$

Then there exist an integer $\delta$ depending only on $n$ and $[F:\mathbb{Q}]$, an ideal $I$ of $\mathbb{T}^S(R\Gamma(X_K, \mathcal{V}_{\lambda})_{\mathfrak{m}})$ and a continuous representation $\rho_{\mathfrak{m}}: G_{K,S} \rightarrow \mathrm{GL}_n(\mathbb{T}^S(R\Gamma(X_K,\mathcal{V}_{\lambda})_{\mathfrak{m}})/I)$ satisfying the following conditions.

(1) \ For any $v \notin S$, we have $\mathrm{det}(TI_n - \rho_{\mathfrak{m}}(\mathrm{Frob}_v)) = P_v(T)$.

(2) \ For any $v|\overline{v}$, the $\mathcal{O}$-morphism $R_{\overline{\rho_{\mathfrak{m}}}|_{G_{F_v}}} \rightarrow \mathbb{T}^S(R\Gamma(X_K,\mathcal{V}_{\lambda})_{\mathfrak{m}})/I$ corresponding to $\rho_{\mathfrak{m}}|_{G_{F_v}}$ factors through $R_{\overline{\rho_{\mathfrak{m}}}|_{G_{F_v}}} \twoheadrightarrow R^{\mathrm{cris}, \lambda_v}_{\overline{\rho_{\mathfrak{m}}}|_{G_{F_v}}}$.

\end{thm}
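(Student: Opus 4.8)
The representation $\rho_{\mathfrak{m}}$ itself, together with property $(1)$, is already supplied by Theorem~\ref{Hecke algebra valued Galois} (with the same $\delta$ and a suitable $I$), so the entire content of the statement is the crystalline factorization $(2)$ at the places $v \mid \overline{v}$, where the tame level is hyperspecial. The plan is to deduce this from $p$-adic Hodge theory of Shimura varieties, exploiting that $F$ contains an imaginary quadratic field $F_0$ in which $l$ splits (condition $2$). First I would pass from the locally symmetric space $X_K$ to a $p$-adically completed setting and then to the \'etale cohomology of a unitary Shimura variety attached to a group $G/\mathbb{Q}$ with $G(\mathbb{A}^{\infty})$ closely related to $\mathrm{GL}_1(\mathbb{A}^{\infty}_{F_0}) \times \mathrm{GL}_n(\mathbb{A}^{\infty}_F)$ near $\mathfrak{m}$, in the spirit of Scholze's construction of $r_{\iota}(\pi)$: there is a Hecke-equivariant map from (a shift of) $R\Gamma(X_K, \mathcal{V}_{\lambda})_{\mathfrak{m}}$ into the cohomology of such a Shimura variety, compatible with the Galois actions after composing $\rho_{\mathfrak{m}}$ with an induction from $G_{F_0}$ to $G_F$.

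The hypothesis that $\overline{\rho_{\mathfrak{m}}}$ is decomposed generic, together with the largeness condition $(4)$ on the $l$-adic places of $F^+$, is used exactly to make this comparison as injective as possible: the boundary and Eisenstein contributions localized at $\mathfrak{m}$ vanish (Caraiani--Scholze type vanishing and generic concentration of cohomology), $R\Gamma(X_K, \mathcal{V}_{\lambda})_{\mathfrak{m}}$ is concentrated in the expected range of degrees $[q_0, q_0 + l_0]$ after inverting $l$, and --- crucially --- the cutting-out of $\rho_{\mathfrak{m}}$ survives integrally, that is, with coefficients in the non-reduced ring $\mathbb{T}^S(R\Gamma(X_K,\mathcal{V}_{\lambda})_{\mathfrak{m}})/I$ rather than merely in its residue field. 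On the Shimura variety side $p$-adic Hodge theory is available: the relevant cohomology, being that of a smooth variety over an $l$-adic field, is de Rham, and since the tame level at $v \mid \overline{v}$ is hyperspecial the associated local Galois representation is in fact crystalline; the comparison with the coherent cohomology of the Shimura variety (or the Hodge--Tate period map) identifies its Hodge--Tate weights with those forced by the weight $\lambda$, i.e. with $\mathbf{v}_{\lambda_v}$. Transporting this back through the Hecke-equivariant map shows that the $\mathcal{O}$-morphism $R_{\overline{\rho_{\mathfrak{m}}}|_{G_{F_v}}} \to \mathbb{T}^S(R\Gamma(X_K,\mathcal{V}_{\lambda})_{\mathfrak{m}})/I$ attached to $\rho_{\mathfrak{m}}|_{G_{F_v}}$ factors through the crystalline quotient $R^{\mathrm{cris},\lambda_v}_{\overline{\rho_{\mathfrak{m}}}|_{G_{F_v}}}$.

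At the level of $\overline{\mathbb{Q}_l}$-points this last step is just the classical local--global compatibility at $l$ (Theorem~\ref{polarizable local-global compatibility}) for the essentially conjugate self-dual automorphic representations realized in the Shimura variety, combined with the fact that $R^{\mathrm{cris},\lambda_v}$ is $\mathcal{O}$-flat and reduced (Proposition~\ref{psd}), so that such a factorization can be tested on a Zariski-dense set of points: if $J$ denotes the kernel of $R_{\overline{\rho_{\mathfrak{m}}}|_{G_{F_v}}} \twoheadrightarrow R^{\mathrm{cris},\lambda_v}$, then $\mathcal{O}$-flatness means $J$ is the intersection of the crystalline type-$\lambda_v$ $E$-points, and all such points that occur in the Hecke algebra satisfy the classical statement.

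The main obstacle is precisely to upgrade this from $\overline{\mathbb{Q}_l}$-points to the non-reduced Hecke algebra, equivalently to prove that $r_{\iota}(\pi)|_{G_{F_v}}$ is crystalline of $l$-adic Hodge type $\mathbf{v}_{\lambda_v}$ for every $\pi \in \mathcal{T}_{\mathfrak{m}}$ --- which need not be polarizable --- and that this is compatible with the integral structure. This is the core of \cite{MEI}: its proof must simultaneously invoke the vanishing and concentration results that force the required integral injectivity (this is where decomposed genericity and condition $(4)$ really enter) and carry out $p$-adic Hodge theory in families over the Hecke algebra, controlling $l$-power torsion and nilpotents so that ``crystalline of type $\mathbf{v}_{\lambda_v}$ on $\overline{\mathbb{Q}_l}$-points'' propagates to the full ring. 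I would not expect a shortcut using only the results recalled in the excerpt, since de Rham-ness of $r_{\iota}(\pi)|_{G_{F_v}}$ for non-polarizable cuspidal $\pi$ over a CM field is itself the hard input.
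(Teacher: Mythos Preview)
The paper's proof is a single citation to \cite[Theorem 4.2.15]{MEI}, and your outline is a faithful high-level sketch of the strategy there: pass to a quasi-split unitary Shimura variety via the boundary of the Borel--Serre compactification, use decomposed genericity and the degree condition (4) to get the needed vanishing and concentration, and then run $p$-adic Hodge theory in families (not just on $\overline{\mathbb{Q}_l}$-points) to obtain crystallinity with the prescribed Hodge--Tate weights. You correctly flag that the integral upgrade is the crux and is not derivable from the results recalled in the excerpt, which is exactly why the paper simply defers to \cite{MEI}.
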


\begin{proof} See \cite[Theorem 4.2.15]{MEI}. \end{proof}

\begin{cor} \label{Caraiani-Newton 2}

Let $\iota : \overline{\mathbb{Q}}_l \stackrel{\sim}{\rightarrow} \mathbb{C}$ be an isomorphism of fields, $\pi$ be a cohomological cuspidal automorphic representation of $\mathrm{GL}_n(\mathbb{A}_F)$ of weight $\iota \lambda$ and $\overline{v}$ be an $l$-adic place of $F^+$.

We assume the following conditions.

1 \ $\overline{r_{\iota}(\pi)}$ is absolutely irreducible and decomposed generic.

2 \ For all $v|\overline{v}$, $\pi_v$ is unramified.

Then $r_{\iota}(\pi)|_{G_{F_v}}$ is crystalline of $l$-adic Hodge type $\mathbf{v}_{\lambda_v}$ for all $v|\overline{v}$.
 
\end{cor}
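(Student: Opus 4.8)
The plan is to deduce Corollary \ref{Caraiani-Newton 2} from Theorem \ref{Caraiani-Newton} by the standard solvable base change trick that kills the awkward hypotheses 2 and 4 of the theorem. First I would fix an $l$-adic place $\overline{v}$ of $F^+$ and recall from Theorem \ref{cohomology of locally symmetric space} that, after choosing a good level $K$ with $K_w = \mathrm{GL}_n(\mathcal{O}_{F_w})$ for all $w \mid \overline{v}$ (possible since $\pi_w$ is unramified there) and $K$ deep enough elsewhere so that $(\pi^\infty)^K \neq 0$, the Hecke eigensystem $\varphi_{\pi,\iota}$ gives a non-Eisenstein maximal ideal $\mathfrak{m}$ of $\mathbb{T}^S(R\Gamma(X_K,\mathcal{V}_\lambda))$ with $\overline{\rho_\mathfrak{m}} \cong \overline{r_\iota(\pi)}$, and $r_\iota(\pi)$ is (a subquotient of) $\rho_\mathfrak{m}$ via $\varphi_{\pi,\iota}$ by part 2(3) of that theorem. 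So it suffices to show $\rho_\mathfrak{m}|_{G_{F_w}}$ lies on $R^{\mathrm{cris},\lambda_w}_{\overline{\rho_\mathfrak{m}}|_{G_{F_w}}}$ for $w \mid \overline{v}$, i.e. to verify hypotheses 1--4 of Theorem \ref{Caraiani-Newton}, and then specialize the resulting factorization $R_{\overline{\rho_\mathfrak{m}}|_{G_{F_w}}} \to \mathbb{T}^S(\cdots)/I$ through $\varphi_{\pi,\iota}$ to get that $r_\iota(\pi)|_{G_{F_w}}$ is a point of $\mathrm{Spec}\,R^{\mathrm{cris},\lambda_w}_{\overline{\rho_\mathfrak{m}}|_{G_{F_w}}}[\tfrac1l]$, hence crystalline of $l$-adic Hodge type $\mathbf{v}_{\lambda_w}$ by the defining universal property of $R^{\mathrm{cris},\lambda_w}$ in Proposition \ref{psd}.

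The issue is that hypotheses 2 and 4 of Theorem \ref{Caraiani-Newton} ($l$ split in an imaginary quadratic subfield, and a second $l$-adic place of $F^+$ carrying at least half the degree) need not hold for $F$ itself. The remedy is to replace $F$ by a suitable solvable CM extension $F'/F$. I would choose $F' = FM$ where $M/\mathbb{Q}$ is an auxiliary imaginary quadratic field in which $l$ splits, so that hypothesis 2 holds over $F'$; and, if necessary, further enlarge $F'$ by a totally real solvable extension chosen so that there are at least two $l$-adic places of $F'^+$ over the place below $\overline{v}$ and the degree condition 4 is met — one can for instance arrange that $l$ splits completely in $F'^+$, which forces every $l$-adic place to have local degree $1$ and makes condition 4 automatic once there is more than one such place. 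Throughout, $F'/F$ is solvable and one takes $\pi' := \mathrm{BC}_{F'/F}(\pi)$, which is again a cohomological cuspidal automorphic representation (cuspidality is preserved because $\overline{r_\iota(\pi)}|_{G_{F'(\zeta_l)}}$ is still absolutely irreducible, or more simply because solvable base change of a cuspidal form is either cuspidal or induced and the latter is excluded by the irreducibility of a Galois representation attached to it — but here one only needs $\pi'$ to be the base change, and $r_\iota(\pi') = r_\iota(\pi)|_{G_{F'}}$). Decomposed genericity of $\overline{r_\iota(\pi)}$ passes to $\overline{r_\iota(\pi)}|_{G_{F'}}$ after possibly replacing the decomposed-generic prime $p$ by one split in $F'$ (Lemma \ref{decomposed genericity density} supplies a positive-density set of such primes), and $\pi'_w$ remains unramified for all $w \mid \overline{v}$ since $\pi_w$ is; absolute irreducibility of $\overline{r_\iota(\pi')}$ is part of the running hypothesis. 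Thus over $F'$ all four hypotheses of Theorem \ref{Caraiani-Newton} are satisfied, and it yields that $r_\iota(\pi')|_{G_{F'_w}} = r_\iota(\pi)|_{G_{F'_w}}$ is crystalline of type $\mathbf{v}_{\lambda_w}$ for every place $w$ of $F'$ above $\overline{v}$.

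Finally I would descend from $F'$ back to $F$: a de Rham (resp.\ crystalline) representation of $G_{F_w}$ is detected by its restriction to $G_{F'_{w'}}$ for a finite extension only up to a potential issue, so one must use that $F'_{w'}/F_w$ is unramified (which is guaranteed because in the construction above $F'/F$ is unramified at the places over $\overline{v}$ — $M$ can be chosen with $l$ split and $F'^+/F^+$ chosen split at $\overline{v}$). For an unramified base field extension, crystallinity descends: $r_\iota(\pi)|_{G_{F_w}}$ is crystalline because its restriction to the unramified extension $G_{F'_{w'}}$ is crystalline, and the Hodge type is unchanged under unramified base change (the Hodge--Tate weights are read off the same way for each embedding, cf.\ Lemma \ref{coefficient}). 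Therefore $r_\iota(\pi)|_{G_{F_w}}$ is crystalline of $l$-adic Hodge type $\mathbf{v}_{\lambda_w}$ for every $w \mid \overline{v}$, as claimed. The main obstacle I anticipate is purely bookkeeping rather than conceptual: arranging the auxiliary solvable CM field $F'$ so that it simultaneously (i) makes $l$ split in an imaginary quadratic subfield, (ii) produces enough $l$-adic places of $F'^+$ with enough total degree for condition 4, (iii) stays unramified at the places over $\overline{v}$ so descent is clean, and (iv) keeps $\pi'$ cuspidal and $\overline{r_\iota(\pi')}$ decomposed generic and absolutely irreducible — all of these are achievable but require choosing $F'$ with some care, and one should double-check that decomposed genericity is not lost when passing to $F'$ and that the base change $\mathrm{BC}_{F'/F}$ interacts correctly with the cohomology of the locally symmetric space so that $\mathfrak{m}$ really does give back $r_\iota(\pi)$.
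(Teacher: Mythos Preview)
Your approach is essentially the same as the paper's: reduce to Theorem \ref{Caraiani-Newton} by a solvable CM base change $F'/F$ arranged via Corollary \ref{extension of Q} and Corollary \ref{imaginary quadratic 2} so that hypotheses 2 and 4 hold over $F'$, while keeping $\overline{r_\iota(\pi)}|_{G_{F'}}$ absolutely irreducible and decomposed generic (the latter via Lemma \ref{decomposed genericity density}) and $\pi'$ cuspidal. The only cosmetic difference is that the paper makes all $l$-adic places of $F$ split \emph{completely} in $F'$, so that $G_{F'_w}=G_{F_v}$ literally and no ``unramified descent'' step is needed; your phrase ``arrange that $l$ splits completely in $F'^+$'' should be read as ``arrange that the $l$-adic places of $F^+$ split completely in $F'^+$,'' since the former cannot be forced when $l$ is not already totally split in $F^+$.
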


\begin{rem} Note that we need not assume that $F$ contains an imaginary quadratic field. \end{rem}

\begin{proof}
    
Let $p \neq l$ be a prime which is decomposed generic for $\overline{r_{\iota}(\pi)}$. By Corollary \ref{extension of Q} and Corollary \ref{imaginary quadratic 2}, there exists a finite solvable CM extension $F'/F$ satisfying the following conditions. (Let $\overline{w}$ be an $l$-adic place of $F'^+$ lying above $\overline{v}$.)

1 \ $F'$ is linearly disjoint from $\overline{F}^{\mathrm{Ker}\overline{r_{\iota}(\pi)}}$ over $F$.

2 \ $l$ splits in an imaginary quadratic field contained in $F'$.

3 \ $p$ splits completely in $F$.

4 \ All $l$-adic places of $F$ split completely in $F'$.

5 \ There exists $\overline{w}' \neq \overline{w} | l$ such that $$\sum_{\overline{w}, \overline{w}' \neq \overline{w}''|l} [F'^+_{\overline{w}''}:\mathbb{Q}_l] \ge \frac{1}{2}[F'^+:\mathbb{Q}].$$

Then $\mathrm{BC}_{F'/F}(\pi)$ is a cohomological cuspidal, $r_{\iota}(\mathrm{BC}_{F'/F}(\pi)) \cong r_{\iota}(\pi)|_{G_{F'}}$ and $r_{\iota}(\mathrm{BC}_{F'/F}(\pi))|_{G_{F'_w}} \cong r_{\iota}(\pi)|_{G_{F_v}}$ if we identify $G_{F'_w} = G_{F_v}$ for all $w|\overline{w}$ and $w|v$ by Proposition \ref{base change}. Therefore, we may assume that $F$ satisfies the conditions of Theorem \ref{Caraiani-Newton}. Moreover, after replacing $K$ and $S$, we may assume that $(\pi^{\infty})^K \neq 0$ and $K_v = \mathrm{GL}_n(\mathcal{O}_{F_v})$ for $v \mid \overline{v}$. 

Let $\mathfrak{m}$ be the non-Eisenstein ideal of $\mathbb{T}^S(R\Gamma(X_{K}, \mathcal{V}_{\lambda}))$ corresponding to $\overline{r_{\iota}(\pi)}$. (See Theorem \ref{cohomology of locally symmetric space}.) Since the Galois representation $\rho_{\mathfrak{m}}$ of Theorem \ref{Caraiani-Newton} satisfies $\varphi_{\iota, \pi} \circ \rho_{\mathfrak{m}} \cong r_{\iota}(\pi)$, we obtain the result. \end{proof}

\begin{thm} \label{Lambert 2}

    Let $\iota : \overline{\mathbb{Q}}_l \stackrel{\sim}{\rightarrow} \mathbb{C}$ be an isomorphism of fields and $\pi$ be a cohomological cuspidal automorphic representation of $\mathrm{GL}_n(\mathbb{A}_F)$ of weight $\iota\lambda$. We assume that $\overline{r_{\iota}(\pi)}$ is absolutely irreducible and decomposed generic.
    
    Then $r_{\iota}(\pi)|_{G_{F_v}}$ is de Rham of $l$-adic Hodge type $\mathbf{v}_{\lambda_v}$ for all $v|l$.
    
    \end{thm}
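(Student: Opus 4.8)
The plan is to reduce the de Rham statement at an arbitrary $l$-adic place to the crystalline statement already available (Corollary \ref{Caraiani-Newton 2}), by passing to a solvable CM base change over which $\pi$ becomes unramified at the chosen place. Fix an $l$-adic place $\overline{v}$ of $F^+$ and a place $v \mid \overline{v}$ of $F$. The key point is that de Rham-ness of $l$-adic Hodge type $\mathbf{v}_{\lambda_v}$ can be tested after restriction to $G_{F'_w}$ for a finite extension $F'_w/F_v$: if $r_{\iota}(\pi)|_{G_{F'_w}}$ is de Rham (indeed crystalline) of $l$-adic Hodge type $\mathbf{v}_{\lambda_w}$, then $r_{\iota}(\pi)|_{G_{F_v}}$ is de Rham of $l$-adic Hodge type $\mathbf{v}_{\lambda_v}$, since $D_{\mathrm{dR}}$ commutes with finite base change and the graded pieces are detected $\tau$ by $\tau$ over $\mathrm{Hom}_{\mathbb{Q}_l}(F_v, E)$, each of which is refined by a set of embeddings of $F'_w$ with the same weight multiset.

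First I would invoke the base change results cited in the proof of Corollary \ref{Caraiani-Newton 2} (Corollary \ref{extension of Q}, Corollary \ref{imaginary quadratic 2}, Proposition \ref{base change}) to produce a finite solvable CM extension $F'/F$, linearly disjoint from $\overline{F}^{\mathrm{Ker}\overline{r_{\iota}(\pi)}}$ over $F$, such that: $l$ splits in an imaginary quadratic subfield of $F'$; a prime $p$ that is decomposed generic for $\overline{r_{\iota}(\pi)}$ splits completely in $F'$; all $l$-adic places of $F$ split completely in $F'$; and $\mathrm{BC}_{F'/F}(\pi)_w$ is unramified for all $w \mid \overline{w}$ above $v$ — the last point requires that $v$ becomes deeply enough ramified in $F'$ so that the conductor of $\pi_v$ is absorbed, which is exactly the kind of local condition these base change lemmas permit. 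Then $\mathrm{BC}_{F'/F}(\pi)$ is a cohomological cuspidal automorphic representation of $\mathrm{GL}_n(\mathbb{A}_{F'})$ of weight obtained by pulling back $\iota\lambda$, with $r_{\iota}(\mathrm{BC}_{F'/F}(\pi)) \cong r_{\iota}(\pi)|_{G_{F'}}$ by Proposition \ref{base change}, and $\overline{r_{\iota}(\mathrm{BC}_{F'/F}(\pi))} = \overline{r_{\iota}(\pi)}|_{G_{F'}}$ is still absolutely irreducible (by linear disjointness) and decomposed generic (since $p$ still splits completely in $F'$ and the Frobenius eigenvalue ratios are unchanged).

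Next, apply Corollary \ref{Caraiani-Newton 2} to $\mathrm{BC}_{F'/F}(\pi)$ at the $l$-adic place $\overline{w}$ of $F'^+$ above $\overline{v}$: since $\mathrm{BC}_{F'/F}(\pi)_w$ is unramified for all $w \mid \overline{w}$, we conclude that $r_{\iota}(\pi)|_{G_{F'_w}} \cong r_{\iota}(\mathrm{BC}_{F'/F}(\pi))|_{G_{F'_w}}$ is crystalline of $l$-adic Hodge type $\mathbf{v}_{\lambda_w}$ for all such $w$. Finally, descend: for each embedding $\tau \in \mathrm{Hom}_{\mathbb{Q}_l}(F_v, E)$ and each embedding $\tilde\tau \in \mathrm{Hom}_{\mathbb{Q}_l}(F'_w, E)$ extending it, the Hodge–Tate weight multiset $\lambda_{\tilde\tau}$ equals $\lambda_\tau$ by construction of the pulled-back weight, so the graded pieces of $D_{\mathrm{dR}}(r_{\iota}(\pi)|_{G_{F_v}})$ in each $\tau$-component already have the correct jumps (being unchanged after the flat base change $E\otimes_{\mathbb{Q}_l}F_v \to E\otimes_{\mathbb{Q}_l}F'_w$), giving that $r_{\iota}(\pi)|_{G_{F_v}}$ is de Rham of $l$-adic Hodge type $\mathbf{v}_{\lambda_v}$. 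Running this over all $v \mid l$ yields the theorem.

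The main obstacle I anticipate is purely bookkeeping at the base change step: one must arrange a single solvable CM extension $F'/F$ that simultaneously satisfies the global splitting conditions needed to run Theorem \ref{Caraiani-Newton} (via Corollary \ref{Caraiani-Newton 2}) \emph{and} makes $\pi$ unramified at \emph{every} place above the chosen $l$-adic place of $F^+$, while preserving absolute irreducibility and decomposed genericity of the residual representation. This is exactly the juggling already carried out inside the proof of Corollary \ref{Caraiani-Newton 2}, so the extra work here is only to additionally impose sufficient local ramification at the finitely many bad places of $\pi$ lying over $\overline v$ — which is harmless since those lie over primes one is free to ramify — and then to note the argument is uniform in $\overline v$. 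No genuinely new input beyond the cited results is required; the de Rham descent itself is a standard compatibility of $D_{\mathrm{dR}}$ with finite base change.
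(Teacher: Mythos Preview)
Your reduction has a genuine gap. The claim that one can always arrange $\mathrm{BC}_{F'/F}(\pi)_w$ to be unramified for $w\mid v$ by choosing $F'/F$ sufficiently ramified at $v$ is false in general: local base change of an irreducible smooth representation corresponds to restricting the Weil--Deligne parameter, and this does \emph{not} kill the monodromy operator. If $\pi_v$ has a Steinberg-type constituent (equivalently $\mathrm{rec}_{F_v}(\pi_v)$ has $N\neq 0$), then $\mathrm{BC}_{F'_w/F_v}(\pi_v)$ is never unramified for any finite $F'_w/F_v$, so you cannot place yourself in the hypotheses of Corollary~\ref{Caraiani-Newton 2}. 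Your proposal also contains an internal inconsistency: you simultaneously demand that all $l$-adic places of $F$ split completely in $F'$ (copied from the proof of Corollary~\ref{Caraiani-Newton 2}) and that $v$ ``becomes deeply enough ramified in $F'$''; for an $l$-adic place $v$ these are incompatible, and if $v$ splits completely then $\mathrm{BC}_{F'/F}(\pi)_w=\pi_v$, giving nothing.

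The paper does not prove this theorem by reduction to the crystalline case; it cites \cite[Theorem~4.3.3]{RG}, whose argument requires substantially more $p$-adic Hodge-theoretic input (handling the potentially semistable, not merely potentially crystalline, situation) than what Corollary~\ref{Caraiani-Newton 2} provides. In short, the de Rham statement when $\pi_v$ is ramified with nontrivial monodromy is not a formal consequence of the unramified crystalline statement, and your descent step---while correct as a statement about $D_{\mathrm{dR}}$ under finite base change---never gets invoked because the crystallinity input is unavailable.
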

    
    \begin{rem} Note that we need not assume that $F$ contains an imaginary quadratic field. \end{rem}
    
    \begin{proof} See \cite[Theorem 4.3.3]{RG}. \end{proof}

We also recall results about ordinary parts. 

We assume that $K_v=\mathrm{Iw}_v$ for all $v|l$.

For $0 \le b \le c$ with $1 \le c$, we write $\mathrm{Iw}_{v}(b,c)$ for the set of all elements of $\mathrm{GL}_n(\mathcal{O}_{F_v})$ which are upper-triangular matrices modulo $\varpi_v^c$ and are unipotent upper-triangular matrices modulo $\varpi_v^b$.

Let $K(b,c)$ denotes the good subgroup of $\mathrm{GL}_n(\mathbb{A}_F^{\infty})$ defined by $K(b,c)_v:=K_v$ for all $v \nmid l$ and $K(b,c)_v = \mathrm{Iw}_v(b,c)$ for all $v | l$.

We put $T_n(\mathcal{O}_{F_l})_b := \prod_{v|l}T_n(\mathcal{O}_{F_v}/\varpi_v^b)$ and $T_n(\mathcal{O}_{F_l})(b):=\prod_{v | l} T_n(\mathcal{O}_{F_v})(b) := \prod_{v|l}\mathrm{Ker}(T_n(\mathcal{O}_{F_v}) \rightarrow T_n(\mathcal{O}_{F_v}/\varpi_v^b))$. Then we have $T_n(\mathcal{O}_{F_l})_b = K(0,c)/K(b,c)$.

We have open submonoids $\Delta_v^{T_n}:=\sqcup_{\mu_1 \ge \cdots \ge \mu_n} T_n(\mathcal{O}_{F_v})\mathrm{diag}(\varpi_v^{\mu_1}, \cdots, \varpi_v^{\mu_n})T_n(\mathcal{O}_{F_v})$, $\Delta_v := \mathrm{Iw}_v\Delta_v^{T_n}\mathrm{Iw}_v$ of $T_n(F_v)$, $\mathrm{GL}_n(F_v)$ respectively.

For $\lambda \in (\mathbb{Z}_+^n)^{\mathrm{Hom}(F,E)}$, we have the continuous morphism $\alpha_{\lambda} : \Delta := \prod_v \Delta_v \rightarrow E^{\times}$ defined by $$(k_{v,1}\mathrm{diag}(\varpi_v^{\mu_{v,1}}, \cdots, \varpi_v^{\mu_{v,n}})k_{v,2})_{v|l} \mapsto \displaystyle {\prod_{v|l}\prod_{\tau \in \mathrm{Hom}_{\mathbb{Q}_l}(F_v, E)} \prod_{i=1}^n \tau(\varpi_v)^{\mu_{v,i} (w_0\lambda)_{\tau, i}}}.$$ (We put $w_0\lambda := (\lambda_{\tau, n}, \lambda_{\tau, n-1}, \cdots, \lambda_{\tau, 1})_{\tau} \in (\mathbb{Z}^n)^{\mathrm{Hom}(F, E)}$.)

Then we have an action of $\Delta$ on $\mathcal{V}_{\lambda}$ defined by the formula $\delta \cdot x := \alpha_{\lambda}(\delta)^{-1} \delta x$. ($\delta x$ denotes the element obtained by the natural action of $\delta$ on $x \in \mathcal{V}_{\lambda} \otimes_{\mathcal{O}} E$. See \cite[Lemma 2.2 and Definition 2.8]{OG} for the proof.)

Therefore, $\mathbb{T}^S \otimes_{\mathcal{O}} \mathcal{H}(\Delta, K(b,c)_l)_{\mathcal{O}}$ acts on $R\Gamma_{K(0,c)/K(b,c)}(X_{K(b,c)}, \mathcal{V}_{\lambda}) = R\Gamma(K(b,c), \Gamma(\mathfrak{X}, \mathcal{O}) \otimes_{\mathcal{O}} \mathcal{V}_{\lambda})$. (See Proposition \ref{perfect complex}.)

For $v|l$ and $i=1, \cdots, n$, we put $$U_{v,i}:=[\mathrm{Iw}_v(b,c) \mathrm{diag}(\underbrace{\varpi_v, \cdots, \varpi_v}_{i}, 1, \cdots, 1)\mathrm{Iw}_v(b,c)] \in \mathcal{H}(\mathrm{GL}_n(F_v), \mathrm{Iw}_v(b,c))_{\mathcal{O}}.$$ Note that the map $\mathcal{H}(\Delta_v^{T_n}, T_n(\mathcal{O}_{F_v})(b))_{\mathcal{O}} \rightarrow \mathcal{H}(\Delta_v, \mathrm{Iw}_v(b,c))_{\mathcal{O}}$ defined by $$[T_n(\mathcal{O}_{F_v})(b)gT_n(\mathcal{O}_{F_v})(b)] \mapsto [\mathrm{Iw}_v(b,c)g\mathrm{Iw}_v(b,c)]$$ is a morphism of $\mathcal{O}$-algebras by \cite[Lemma 2.1.12]{10}.

Thus, by $X_{v, i} \mapsto U_{v,i}, \prod_{v|l} (\mathcal{O}_{F_v}^{\times})^{\oplus n} \twoheadrightarrow K(0,c)/K(b,c)$, we obtain an $\mathcal{O}$-morphism $\mathbb{T}^{S, \mathrm{ord}}:=\mathbb{T}^{S} \otimes_{\mathcal{O}} \mathcal{O}[[\prod_{v|l}(\mathcal{O}_{F_v}^{\times})^{\oplus n}]][\{ X_{v, 1}, \cdots, X_{v,n}, X_{v,n}^{-1} \}_{v|l}] \rightarrow \mathbb{T}^S \otimes_{\mathcal{O}} \mathcal{H}(\Delta, K(b,c)_l)_{\mathcal{O}}$. 

By this morphism, we obtain an action of $\mathbb{T}^{S, \mathrm{ord}}$ on $R\Gamma_{K(0,c)/K(b,c)}(X_{K(b,c)}, \mathcal{V}_{\lambda})$.

Note that or any $0 \le b' \le c'$ satisfying $b \le b'$ and $c \le c'$, the canonical morphism $R\Gamma(K(b,c), \Gamma(\mathfrak{X}, \mathcal{O}) \otimes_{\mathcal{O}} \mathcal{V}_{\lambda}) \rightarrow  R\Gamma( K(b',c'), \Gamma(\mathfrak{X}, \mathcal{O}) \otimes_{\mathcal{O}} \mathcal{V}_{\lambda})$ in $D(\mathcal{O})$ is $\mathbb{T}^{S,\mathrm{ord}}$-equivariant. (See \cite[Lemma 2.10]{OG}.)

Let $R\Gamma_{K(0,c)/K(b,c)}(X_{K(b,c)}, \mathcal{V}_{\lambda})^{\mathrm{ord}}$ denote the image in $D(\mathcal{O}[K(0,c)/K(b,c)])$ of the localization of $R\Gamma(K(b,c), R\Gamma(\mathfrak{X}, \mathcal{O}) \otimes_{\mathcal{O}} \mathcal{V}_{\lambda}) \in D(\mathbb{T}^{S, \mathrm{ord}})$ by $\displaystyle \prod_{v|l, i}U_{v,i}$. This is a perfect complex of $\mathcal{O}[K(0,c)/K(b,c)]$-modules by Proposition \ref{perfect complex}.

Then we obtain the following fundamental results.

\begin{prop} (Level independence) \label{level independence}
    
We put $d:=\mathrm{max}\{ 1, b \}$ and identify $K(0,c)/K(b,c) = K(0, d)/K(b,d) = T_n(\mathcal{O}_{F_l})_b$. Then we have a canonical $\mathbb{T}^{S, \mathrm{ord}}$-equivalent isomorphism

$R\Gamma_{K(0,d)/K(b,d)}(X_{K(b,d)}, \mathcal{V}_{\lambda})^{\mathrm{ord}} \cong R\Gamma_{K(0,c)/K(b,c)}(X_{K(b,c)}, \mathcal{V}_{\lambda})^{\mathrm{ord}}$ in $D(\mathcal{O}[T_n(\mathcal{O}_{F_l})_b])$.

\end{prop}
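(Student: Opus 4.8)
The plan is to produce, after passing to the ordinary direct summands, mutually inverse isomorphisms between the complexes at levels $(b,d)$ and $(b,c)$. Since $\mathrm{Iw}_v(b,c+1) \subset \mathrm{Iw}_v(b,c)$ for all $v \mid l$ we have $K(b,c+1) \subset K(b,c)$ and a finite covering $X_{K(b,c+1)} \to X_{K(b,c)}$; composing isomorphisms step by step, it suffices to treat the case $c' = c+1$ for each $c \ge d$ and then iterate from $c = d$. Throughout, $K(0,c')/K(b,c') = T_n(\mathcal{O}_{F_l})_b$ for every $c' \ge d$, so all the complexes below are objects of $D(\mathcal{O}[T_n(\mathcal{O}_{F_l})_b])$, functorially in the level, by Proposition \ref{perfect complex}.

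The pullback $p^{\ast} \colon R\Gamma_{K(0,c)/K(b,c)}(X_{K(b,c)}, \mathcal{V}_\lambda) \to R\Gamma_{K(0,c+1)/K(b,c+1)}(X_{K(b,c+1)}, \mathcal{V}_\lambda)$ is already available, and is $\mathbb{T}^{S,\mathrm{ord}}$-equivariant by \cite[Lemma 2.10]{OG}; I would pair it with the trace (transfer) map $p_{\ast}$ in the opposite direction, coming from the finiteness of the covering $X_{K(b,c+1)} \to X_{K(b,c)}$, and first check --- by a direct double coset computation of the kind used for \cite[Lemma 2.1.12]{10} --- that $p_{\ast}$ is likewise $\mathcal{O}[T_n(\mathcal{O}_{F_l})_b]$- and $\mathbb{T}^{S,\mathrm{ord}}$-equivariant, hence induces a map on the ordinary direct summands cut out by $\prod_{v \mid l}\prod_{i=1}^{n} U_{v,i}$.

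The heart of the argument is then the computation of the composites $p_{\ast} \circ p^{\ast}$ and $p^{\ast} \circ p_{\ast}$. Using the Iwahori factorization at the places $v \mid l$, one identifies each of these (on the shallower, resp.\ deeper, level complex) with the action of a fixed power of $\prod_{v \mid l}\prod_{i=1}^{n} U_{v,i}$ up to a unit in $\mathbb{T}^{S,\mathrm{ord}}$; equivalently, the kernel of $p_{\ast}$ and a complement of the image of $p^{\ast}$ are annihilated by the ordinary idempotent. Since $\prod_{v \mid l}\prod_{i=1}^{n} U_{v,i}$ acts invertibly on $R\Gamma(-)^{\mathrm{ord}}$ by definition of the ordinary part, both composites become automorphisms after localization, so $p^{\ast}$ and $p_{\ast}$ restrict to mutually inverse isomorphisms of the ordinary complexes in $D(\mathcal{O}[T_n(\mathcal{O}_{F_l})_b])$. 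Iterating from $c = d$ to the given $c$, and checking independence of the chosen factorization, yields the asserted canonical isomorphism.

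The hard part will be the double coset bookkeeping: matching $p_{\ast}\circ p^{\ast}$ and $p^{\ast}\circ p_{\ast}$ with the correct normalization of the $U$-operators, carrying this out at the level of complexes in $D(\mathcal{O}[T_n(\mathcal{O}_{F_l})_b])$ rather than only on cohomology (so that the isomorphism is functorial and $\mathbb{T}^{S,\mathrm{ord}}$-equivariant), and verifying that the part of the deeper-level cohomology invisible at level $(b,c)$ is genuinely killed by the ordinary projector. The remaining points --- equivariance of $p_{\ast}$ and compatibility of the step-by-step isomorphisms --- are routine applications of the formalism recalled around Proposition \ref{perfect complex}.
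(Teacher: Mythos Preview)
The paper does not give a proof but simply cites \cite[Corollary 5.2.16]{10}. Your overall strategy --- the classical Hida-theoretic level-independence argument --- is the right one and is indeed what underlies the cited result; however, there is a genuine gap in your implementation.

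The composite $p_{\ast}\circ p^{\ast}$, with $p^{\ast}$ the pullback and $p_{\ast}$ the trace along the single covering $X_{K(b,c+1)}\to X_{K(b,c)}$, is simply multiplication by the index $[K(b,c):K(b,c+1)]=\prod_{v\mid l}q_{v}^{n(n-1)/2}$, a power of $l$; it is not a $U$-operator, and no Iwahori factorization changes this. Over $\mathcal{O}$ this scalar is not a unit, so the pair $(p^{\ast},p_{\ast})$ cannot give mutually inverse isomorphisms, even after passing to the ordinary part. Likewise $p^{\ast}\circ p_{\ast}$ is the norm $\sum_{\gamma\in K(b,c)/K(b,c+1)}\gamma$, which again acts by that same power of $l$ on the image of $p^{\ast}$.

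The fix, as in Hida's original work and in \cite[{\S}5.2]{10}, is to replace the trace by the map $[t]$ induced by right translation by an element $t=\prod_{v\mid l}t_{v}$, with each $t_v$ a diagonal matrix whose diagonal entries are decreasing powers of $\varpi_v$, chosen so that conjugation by $t$ carries the deeper Iwahori subgroup into the shallower one. It is the composites $[t]\circ p^{\ast}$ and $p^{\ast}\circ [t]$ that realize (a power of) $\prod_{v,i}U_{v,i}$ at the two levels, and invertibility of this operator on the ordinary summand then yields the desired isomorphism. Once you make this correction, the rest of your outline --- equivariance, working in $D(\mathcal{O}[T_n(\mathcal{O}_{F_l})_b])$, iterating over $c$ --- goes through as you describe.
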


\begin{proof} See \cite[Corollary 5.2.16]{10}. \end{proof}

\begin{prop} (Weight independence) \label{weight independence}

Let $\lambda' \in (\mathbb{Z}^n_{+})^{\mathrm{Hom}(F,E)}$ and $m$ be a positive integer such that $\mathcal{O}(w_0\lambda)/ \varpi^m = \mathcal{O}(w_0\lambda')/ \varpi^m$ as $\mathcal{O}/\varpi^m[T_n(\mathcal{O}_{F_l})(b)]$-modules.

Then we have a canonical $\mathbb{T}^{S, \mathrm{ord}}$-equivalent isomorphism \begin{align*}R\Gamma_{K(0,c)/K(b,c)}(X_{K(b,c)}, \mathcal{V}_{\lambda}/\varpi^m)^{\mathrm{ord}} \otimes_{\mathcal{O}} \mathcal{O}(-w_0\lambda) \\
\cong R\Gamma_{K(0,c)/K(b,c)}(X_{K(b,c)}, \mathcal{V}_{\lambda'}/\varpi^m)^{\mathrm{ord}} \otimes_{\mathcal{O}} \mathcal{O}(-w_0\lambda')\end{align*} in $D(\mathcal{O}/\varpi^m[K(0,c)/K(b,c)])$.

\end{prop}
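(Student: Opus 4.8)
The plan is to reduce the assertion to a purely local-at-$l$ comparison of the coefficient systems and then invoke the standard structure theory of ordinary parts, exactly in the spirit in which Proposition \ref{level independence} is deduced from \cite[Corollary 5.2.16]{10}. By Proposition \ref{perfect complex} one has $R\Gamma_{K(0,c)/K(b,c)}(X_{K(b,c)}, \mathcal{V}_{\lambda}) = R\Gamma(K(b,c), \Gamma(\mathfrak{X}, \mathcal{O}) \otimes_{\mathcal{O}} \mathcal{V}_{\lambda})$ functorially in $\mathcal{V}_{\lambda}$, and the whole $\mathbb{T}^{S,\mathrm{ord}}$-module structure — the $\mathbb{T}^S$-action together with the operators $U_{v,i}$ and the diamond operators at $l$, and hence the $\prod_{v|l,i}U_{v,i}$-localization — is induced from the $\mathrm{GL}_n(\mathbb{A}_F^{\infty,S})\times K_S$- and $\Delta$-module structure on $\mathcal{V}_{\lambda}$ (the twisted $\Delta$-action $\delta\cdot x=\alpha_{\lambda}(\delta)^{-1}\delta x$). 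Thus it is enough to produce a morphism
$$\mathcal{V}_{\lambda}/\varpi^m \otimes_{\mathcal{O}} \mathcal{O}(-w_0\lambda) \longrightarrow \mathcal{V}_{\lambda'}/\varpi^m \otimes_{\mathcal{O}} \mathcal{O}(-w_0\lambda')$$
of $\mathcal{O}/\varpi^m[K(b,c)_l]$-modules compatible with the twisted $\Delta$-actions (the normalizing twists being those appearing in the statement) which induces an isomorphism on the $\prod_{v|l,i}U_{v,i}$-localizations of cohomology; functoriality then upgrades this to the required $\mathbb{T}^{S,\mathrm{ord}}$-equivariant isomorphism in $D(\mathcal{O}/\varpi^m[K(0,c)/K(b,c)])$, the equivariance being automatic away from $l$ and, at $l$, a consequence of compatibility of the comparison map with the $U_{v,i}$ and the diamond operators.

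To build this map I would stratify $\mathcal{V}_{\lambda}=\otimes_{\tau}\bigl(\mathrm{Ind}^{\mathrm{GL}_n}_{B_n}w_0\lambda_{\tau}\bigr)(\mathcal{O})$ by restricting algebraic sections to the big Bruhat cell and filtering by the order of vanishing along the opposite unipotent radical of $B_n$. The decisive Hida-theoretic point is that each $U_{v,i}$ is given by a double coset containing $\mathrm{diag}(\varpi_v,\dots,\varpi_v,1,\dots,1)$, conjugation by which contracts the opposite unipotent; hence $U_{v,i}$ acts topologically nilpotently on every graded piece of this filtration except the top one, which is the cohighest-weight line $L_{\lambda}$. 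The factor $\alpha_{\lambda}$ is built precisely so that, after $\otimes_{\mathcal{O}}\mathcal{O}(-w_0\lambda)$, the reduction $L_{\lambda}/\varpi^m$ is identified as an $\mathcal{O}/\varpi^m$-module with compatible $K(b,c)_l$- and $\Delta$-actions with a copy of $\mathcal{O}(w_0\lambda)/\varpi^m\otimes\mathcal{O}(-w_0\lambda)$ on which the $U_{v,i}$ act invertibly. The hypothesis $\mathcal{O}(w_0\lambda)/\varpi^m=\mathcal{O}(w_0\lambda')/\varpi^m$ over $\mathcal{O}/\varpi^m[T_n(\mathcal{O}_{F_l})(b)]$, together with the fact that the $\varpi$-power parts of $\alpha_{\lambda}$ and $\alpha_{\lambda'}$ agree once normalized against the $U_{v,i}$, then gives a canonical identification $L_{\lambda}/\varpi^m\otimes\mathcal{O}(-w_0\lambda)\cong L_{\lambda'}/\varpi^m\otimes\mathcal{O}(-w_0\lambda')$. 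Composing the inclusion of the top piece on one side, this identification, and a splitting of the complementary filtration valid after ordinary localization on the other side yields the desired morphism; passing to cohomology and applying the idempotent $e=\lim_N\bigl(\prod_{v|l,i}U_{v,i}\bigr)^{N!}$, which exists on the finite $\mathcal{O}/\varpi^m$-modules $H^*$ and annihilates the contributions of the lower graded pieces, produces the isomorphism.

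The main obstacle is exactly this coefficient computation: writing down the $K(b,c)_l$- and $\Delta$-equivariant comparison map explicitly, and verifying that the contraction of the opposite unipotent by the $U_{v,i}$ genuinely kills the lower strata after ordinary localization with the $\varpi$-power normalization matching up on the nose. This is the content of the weight-independence arguments in Hida theory and is carried out in \cite[\S5.2]{10} (see also \cite{OG}); since the computation is purely local at $l$, it is insensitive to the passage to a general CM field $F$ and to the absence of any self-duality hypothesis, so after the reduction of the first paragraph I would simply quote those references, just as Proposition \ref{level independence} quotes \cite[Corollary 5.2.16]{10}. A slightly more structural alternative would be to first identify $R\Gamma_{K(0,c)/K(b,c)}(X_{K(b,c)},\mathcal{V}_{\lambda})^{\mathrm{ord}}$ with the ordinary part of the cohomology of a single weight-independent ``universal ordinary'' coefficient module twisted by $w_0\lambda$, which makes the statement immediate at the cost of developing a little more machinery.
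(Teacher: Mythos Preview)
Your proposal is correct and ends up in the same place as the paper: the paper's proof is simply ``See \cite[Corollary 5.2.18]{10}'', and you correctly identify that the result is a standard Hida-theoretic weight-independence statement to be quoted from \cite[\S5.2]{10}. Your sketch of the underlying mechanism (contraction of the opposite unipotent by the $U_{v,i}$, the cohighest-weight line surviving ordinary localization) is accurate and more detailed than what the paper records, but the approach is the same.
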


\begin{proof} See \cite[Corollary 5.2.18]{10}. \end{proof}

For a maximal ideal $\mathfrak{m}$ of $\mathbb{T}^{S, \mathrm{ord}}(R\Gamma_{K(0, c)/K(b,c)}(X_{K(b,c)}, \mathcal{V}_{\lambda})^{\mathrm{ord}})$, we get a maximal ideal $\mathfrak{n}$ of $\mathbb{T}^S(R\Gamma(X_{K(b,c)}, \mathcal{V}_{\lambda}))$ by the pullback $$\mathbb{T}^S(R\Gamma_{K(0,c)/K(b,c)}(X_{K(b,c)}, \mathcal{V}_{\lambda})^{\mathrm{ord}}) \hookrightarrow \mathbb{T}^{S, \mathrm{ord}}(R\Gamma_{K(0,c)/K(b,c)}(X_{K(b,c)}, \mathcal{V}_{\lambda})^{\mathrm{ord}}),$$ the quotient map $\mathbb{T}^S(R\Gamma_{K(0,c)/K(b,c)}(X_{K(b,c)}, \mathcal{V}_{\lambda})^{\mathrm{ord}}) \twoheadrightarrow \mathbb{T}^{S}(R\Gamma(X_{K(b,c)}, \mathcal{V}_{\lambda})^{\mathrm{ord}})$ by a nilpotent ideal (see Lemma \ref{nilpotent}) and the pullback $\mathbb{T}^S(R\Gamma(X_{K(b,c)}, \mathcal{V}_{\lambda})) \twoheadrightarrow \mathbb{T}^{S}(R\Gamma(X_{K(b,c)}, \mathcal{V}_{\lambda})^{\mathrm{ord}})$. Therefore, we obtain the continuous semisimple representation $$\overline{\rho_{\mathfrak{m}}} : G_{F,S} \rightarrow  \mathrm{GL}_n(\overline{\mathbb{T}^{S, \mathrm{ord}}(R\Gamma_{K(0,c)/K(b,c)}(X_{K(b,c)}, \mathcal{V}_{\lambda})^{\mathrm{ord}})/\mathfrak{m}})$$ such that $\mathrm{det}(TI_n - \rho_{\mathfrak{m}}(\mathrm{Frob}_v)) = P_v(T)$ by Theorem \ref{torsion Galois}. We say that $\mathfrak{m}$ is non-Eisenstein if $\overline{\rho_{\mathfrak{m}}}$ is absolutely irreducible.

For $v|l$ and $i = 1, \cdots, n$, let $\chi_{v, \lambda, i} : G_{F_v} \rightarrow \mathbb{T}^{S, \mathrm{ord}}(R\Gamma_{K(0,c)/K(b,c)}(X_{K(b,c)}, \mathcal{V}_{\lambda})^{\mathrm{ord}})^{\times}$ denote the continuous character defined by $$\chi_{v, \lambda, i} \circ \mathrm{Art}_{F_v}(u) = \prod_{\tau \in \mathrm{Hom}_{\mathbb{Q}_l}(F_v, E)} [K(b,c)_v\mathrm{diag}(1, \cdots, \underset{i}{u}, 1, \cdots, 1)K(b,c)_v]\tau(u)^{-\lambda_{\tau,n-i+1} - i + 1 }$$ for any $u \in \mathcal{O}_{F_v}^{\times}$ and $\chi_{v, \lambda, i} \circ \mathrm{Art}_{F_v}(\varpi_v) = \varepsilon_l^{1-i}(\mathrm{Art}_{F_v}(\varpi_v)) \frac{U_{v,i}}{U_{v,i-1}}$. (Note that $\chi_{v,\lambda, i}$ is independent of the choice of $\varpi_v$.)

Then we have the following result.

\begin{thm}\label{ordinarirty of automorphic Galois}

    Let $\mathfrak{m}$ be a non-Eisenstein ideal of $\mathbb{T}^{S, \mathrm{ord}}(R\Gamma_{K(0,c)/K(b,c)}(X_{K(b,c)}, \mathcal{V}_{\lambda})^{\mathrm{ord}})$. We suppose the following conditions.

    1 \ $\overline{\rho_{\mathfrak{m}}}$ is decomposed generic.
    
    2 \ $l$ splits in an imaginary quadratic field contained in $F$.

    Then there exist a positive integer $\delta$ depending only on $n$ and $[F:\mathbb{Q}]$, an ideal $I$ of $\mathbb{T}^{S, \mathrm{ord}}(R\Gamma_{K(0,c)/K(b,c)}(X_K, \mathcal{V}_{\lambda})_{\mathfrak{m}}^{\mathrm{ord}})$ such that $I^{\delta}=0$ and a continuous representation $\rho_{\mathfrak{m}}: G_{F,S} \rightarrow \mathrm{GL}_n(\mathbb{T}^{S, \mathrm{ord}}(R\Gamma_{K(0,c)/K(b,c)}(X_{K(b,c)},\mathcal{V}_{\lambda})_{\mathfrak{m}}^{\mathrm{ord}})/I)$ satisfying the following conditions.

    (1) \ For any $v \notin S$, we have $\mathrm{det}(TI_n - \rho_{\mathfrak{m}}(\mathrm{Frob}_v)) = P_v(T)$.
    
    (2) \ For any $v|l$, we have $\mathrm{det}(T-\rho_{\mathfrak{m}}(g)) = \prod_{i=1}^n(T-\chi_{v, \lambda, i}(g))$ for any $g \in G_{F_v}$.
    
(3) \ For any $v|l$, we have $(\rho_{\mathfrak{m}}(g_1) -\chi_{v, \lambda, 1}(g_1))(\rho_{\mathfrak{m}}(g_2) -\chi_{v, \lambda, 2}(g_2)) \cdots (\rho_{\mathfrak{m}}(g_n) -\chi_{v, \lambda, n}(g_n)) = 0$ for any $g_1, \cdots, g_n \in G_{F_v}$.

\end{thm}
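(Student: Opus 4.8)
The plan is to run the ordinary analogue of the argument proving Theorem \ref{Caraiani-Newton}, using the Hida-theoretic ordinary cohomology complexes in place of $R\Gamma(X_K,\mathcal{V}_\lambda)$ and feeding in the decomposed-generic hypothesis through Theorem \ref{Lambert 2}. First I would produce the candidate representation and property (1): for the non-Eisenstein ideal $\mathfrak{n}$ of $\mathbb{T}^S(R\Gamma(X_{K(b,c)},\mathcal{V}_\lambda))$ pulled back from $\mathfrak{m}$, Theorem \ref{Hecke algebra valued Galois} supplies $\rho_{\mathfrak{n}}$ over $\mathbb{T}^S(R\Gamma(X_{K(b,c)},\mathcal{V}_\lambda)_{\mathfrak{n}})/I_0$ with $I_0$ nilpotent, and since $\mathbb{T}^{S,\mathrm{ord}}(R\Gamma^{\mathrm{ord}}_{\mathfrak{m}})$ is, up to a nilpotent ideal (Lemma \ref{nilpotent}), a quotient of $\mathbb{T}^S(R\Gamma(X_{K(b,c)},\mathcal{V}_\lambda)_{\mathfrak{n}})$, one gets $\rho_{\mathfrak{m}}$ over $\mathbb{T}^{S,\mathrm{ord}}(R\Gamma^{\mathrm{ord}}_{\mathfrak{m}})/I$ with $I^\delta=0$ and $\det(TI_n-\rho_{\mathfrak{m}}(\mathrm{Frob}_v))=P_v(T)$ for $v\notin S$.

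The substance is properties (2) and (3) at $l$-adic places, and the method is Zariski density of classical points. Using the level- and weight-independence isomorphisms (Propositions \ref{level independence} and \ref{weight independence}) together with Hida-style classicality, the ring $\mathbb{T}^{S,\mathrm{ord}}(R\Gamma^{\mathrm{ord}}_{\mathfrak{m}})[\tfrac{1}{l}]$ is finite over $E$ and its $\overline{\mathbb{Q}_l}$-points are Zariski densely accounted for by $\iota$-ordinary cohomological cuspidal automorphic representations $\pi$ (of suitable level and varying weight) with $\overline{r_\iota(\pi)}\cong\overline{\rho_{\mathfrak{m}}}$, so it suffices to check (2) and (3) after specializing $\rho_{\mathfrak{m}}$ at each such $\pi$, i.e.\ for $r_\iota(\pi)|_{G_{F_v}}$. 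Since $\overline{\rho_{\mathfrak{m}}}$ is decomposed generic, so is $\overline{r_\iota(\pi)}$, hence $r_\iota(\pi)|_{G_{F_v}}$ is de Rham of $l$-adic Hodge type $\mathbf{v}_{\lambda_v}$ by Theorem \ref{Lambert 2}; Theorem \ref{Ila Varma} determines the Frobenius eigenvalues of $\mathrm{WD}(r_\iota(\pi)|_{G_{F_v}})$ up to semisimplification, and the $\iota$-ordinarity of $\pi$ forces their $l$-adic valuations to be exactly those of (2) in Lemma \ref{ordinary Galois representation}. Thus $r_\iota(\pi)|_{G_{F_v}}$ is ordinary of weight $\lambda$; unwinding the normalization of $\chi_{v,\lambda,i}$ (whose value at $\mathrm{Frob}_v$ is read off from the $U_{v,i}$-eigenvalue on the $\pi$-isotypic part, via $U_{v,i}/U_{v,i-1}$ and the $\varepsilon_l^{1-i}$ twist) identifies the graded pieces of the ordinary filtration with the $\chi_{v,\lambda,i}$ specialized at $\pi$. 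Property (2) is then the characteristic-polynomial identity for that filtration at $\pi$, and property (3) is the Cayley--Hamilton relation for the resulting block-upper-triangular shape of $\rho_{\mathfrak{m}}|_{G_{F_v}}$ at $\pi$.

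Finally I would spread these out: the coefficients of $\det(T-\rho_{\mathfrak{m}}(g))-\prod_{i=1}^n(T-\chi_{v,\lambda,i}(g))$ and the matrix entries of $\prod_{i=1}^n(\rho_{\mathfrak{m}}(g_i)-\chi_{v,\lambda,i}(g_i))$ are elements of $\mathbb{T}^{S,\mathrm{ord}}(R\Gamma^{\mathrm{ord}}_{\mathfrak{m}})/I$ that vanish at every classical point, hence lie in the nilradical; enlarging $I$ to a still-nilpotent ideal of bounded exponent (depending only on $n$ and $[F:\mathbb{Q}]$) absorbs them, so (2) and (3) hold over $\mathbb{T}^{S,\mathrm{ord}}(R\Gamma^{\mathrm{ord}}_{\mathfrak{m}})/I$ as stated (the case with no classical points being handled, as in the proof of Theorem \ref{Caraiani-Newton}, by the corresponding argument with the ordinary deformation ring of Proposition \ref{ordinary deformation ring}). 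The main obstacle is the density input of the middle step: one needs the Hida control/classicality package relating $R\Gamma^{\mathrm{ord}}_{K(0,c)/K(b,c)}$ at varying level and weight to $\iota$-ordinary classical forms, so that classical points are Zariski dense in the ordinary Hecke algebra, together with the careful bookkeeping matching each $U_{v,i}$-eigenvalue to the $i$-th character of the ordinary filtration of $r_\iota(\pi)|_{G_{F_v}}$; this machinery is exactly that of \cite[\S 5]{10}, and the only new ingredient is that the ordinarity and de Rham-ness of $r_\iota(\pi)|_{G_{F_v}}$ now follow from Theorem \ref{Lambert 2}, Lemma \ref{ordinary Galois representation} and Theorem \ref{Ila Varma} under the single hypothesis that $\overline{r_\iota(\pi)}$ is decomposed generic.
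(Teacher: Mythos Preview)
Your density-of-classical-points strategy has two genuine gaps.

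First, and most seriously, your appeal to Theorem \ref{Ila Varma} at places $v\mid l$ is invalid: that result is stated and proved only for $v\nmid l$. To verify condition (2) of Lemma \ref{ordinary Galois representation} at a classical point $\pi$, you need the $l$-adic valuations of the eigenvalues of $\mathrm{WD}(r_\iota(\pi)|_{G_{F_v}})(\mathrm{Frob}_v)$ for $v\mid l$, and nothing in the paper prior to this theorem supplies them. Indeed, the statement $\iota\mathrm{WD}(r_\iota(\pi)|_{G_{F_v}})^{ss}\cong\mathrm{rec}_{F_v}(\pi_v|\det|_v^{(1-n)/2})^{ss}$ at $v\mid l$ is exactly Theorem \ref{ordinary semisimple}, whose proof \emph{uses} the present theorem. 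So your argument is circular: you cannot establish ordinarity of $r_\iota(\pi)|_{G_{F_v}}$ at classical points, nor match the graded pieces with the specializations of the $\chi_{v,\lambda,i}$, without already knowing the conclusion.

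Second, even granting the verification at classical points, vanishing at all characteristic-zero points of the finite local $\mathcal{O}$-algebra $\mathbb{T}^{S,\mathrm{ord}}(R\Gamma^{\mathrm{ord}}_{\mathfrak{m}})/I$ only places the elements in its $l$-power torsion. While that ideal is nilpotent, its exponent depends on the particular Hecke algebra, hence on $(b,c)$, $\lambda$, and $K$, not merely on $n$ and $[F:\mathbb{Q}]$. The uniformity of $\delta$ is essential for the patching in Theorem \ref{ordinary automorphy lifting}, where one must control all Taylor--Wiles levels $Q_N$ and all $c$ simultaneously.

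The paper's proof is simply a citation to \cite[Theorem 5.5.1]{10} (with \cite[Theorem 1.3]{kos} to drop the hypothesis $[F^+:\mathbb{Q}]>1$). That result is proved by passing to the boundary cohomology of unitary-group Shimura varieties and using the Hodge--Tate period map, which yields relations (2) and (3) directly at the integral level with a uniform $\delta$. This machinery is not replaceable by interpolation from classical points.
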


\begin{proof} See \cite[Theorem 5.5.1]{10}. By \cite[Theorem 1.3]{kos}, we can remove the assumption $[F^+:\mathbb{Q}] > 1$ in \cite[Theorem 5.5.1]{10}. \end{proof}

\begin{rem} In the above situation, we can prove more detailed properties by using the method in \cite[{\S} 3.2]{MEI} and we can use the ordinary deformation ring $R^{\triangle}_v$ in Theorem \ref{ordinary automorphy lifting}. (See \cite[{\S} 3.2]{red} for the definition of $R^{\triangle}_v$.)  \end{rem}

\begin{dfn} \label{iota ordinary definition}

Let $\iota : \overline{\mathbb{Q}}_l \stackrel{\sim}{\rightarrow} \mathbb{C}$ be an isomorphism of fields and $\pi$ be a cohomological cuspidal automorphic representation of weight $\iota\lambda$.

1 \  For $c \ge b \ge 0$ with $c \ge 1$ and $v|l$, let $\iota^{-1} (\pi_v^{\mathrm{Iw}_v(b,c)})^{\mathrm{ord}}$ be the sum of all generalized eigenspaces of $$\prod_{i=1}^{n}(\prod_{\tau \in \mathrm{Hom}_{\mathbb{Q}_l}(F_v, E)} \prod_{j=1}^i \tau(\varpi_v)^{-\lambda_{\tau, n-j+1} }) [\mathrm{Iw}_v(b,c)\mathrm{diag}(\underbrace{\varpi_v, \cdots, \varpi_v}_{i}, 1, \cdots, 1) \mathrm{Iw}_v(b,c)]$$ in $\iota^{-1} (\pi_v^{\mathrm{Iw}_v(b,c)})$ corresponding to the eigenvalues which are $l$-adic units. Note that $\Delta_v^{T_n}$ acts on $\iota^{-1} (\pi_v^{\mathrm{Iw}_v(b,c)})^{\mathrm{ord}}$ by $\Delta_v^{T_n} \rightarrow \mathcal{H}(\Delta_v^{T_n}, T_n(\mathcal{O}_{F_v})(b))_{\mathcal{O}} \rightarrow \mathcal{H}(\Delta_v, \mathrm{Iw}_v(b,c))_{\mathcal{O}}$.

2 \  We also put $\iota^{-1}(\pi^{\infty})^{K(b,c), \mathrm{ord}} := \iota^{-1}(\pi^{\infty, l})^{K(b,c)^l} \otimes (\otimes_{v \mid l} \iota^{-1} (\pi_v^{\mathrm{Iw}_v(b,c)})^{\mathrm{ord}})$. 

3 \ We say that $\pi$ is $\iota$-ordinary if there exist integers $c \ge b \ge 0$ with $c \ge 1$ such that $\iota^{-1} (\pi_v^{\mathrm{Iw}_v(b,c)})^{\mathrm{ord}} \neq 0$ for any $v|l$.

\end{dfn}

\begin{rem}For any $c' \ge b' \ge 0$, $c \ge b \ge 0$ satisfying $c' \ge c \ge 1$ and $b' \ge b$, we have $\iota^{-1} (\pi_v^{\mathrm{Iw}_v(b,c)})^{\mathrm{ord}} \subset \iota^{-1} (\pi_v^{\mathrm{Iw}_v(b',c')})^{\mathrm{ord}}$. \end{rem}

\begin{lem} \label{iota-ordinary}
    
Let $\iota : \overline{\mathbb{Q}}_l \stackrel{\sim}{\rightarrow} \mathbb{C}$ be an isomorphism of fields, $\pi$ be a cohomological cuspidal automorphic representation of $\mathrm{GL}_n(\mathbb{A}_F)$ of weight $\iota \lambda$ and $0 \le b \le c$ be integers such that $1 \le c$.

Then we have the following results.

(1) \ The dimension of $\iota^{-1} (\pi_v^{\mathrm{Iw}_v(b,c)})^{\mathrm{ord}}$ is at most 1 for any $v|l$.

(2) \ Let $c, b$ be integers satisfying $c \ge b \ge 0$ and $c \ge 1$. Then the following conditions are equivalent.  

1 \ $\iota^{-1} (\pi_v^{\mathrm{Iw}_v(b,c)})^{\mathrm{ord}} \neq 0$.

2 \ There exist characters $\chi_{v,1}, \cdots, \chi_{v,n} : F_v^{\times}/(1 + \varpi_v^b\mathcal{O}_{F_v}) \rightarrow \mathbb{C}^{\times}$ satisfying the following conditions.

$\cdot$ $\pi_v$ is a subquotient of $\mathrm{n}\textrm{-}\mathrm{Ind}^{\mathrm{GL}_n(F_v)}_{B_n(F_v)} \chi_{v,1} \otimes \cdots \chi_{v,n}$.

$\cdot$ \ $\mathrm{val}_l(\iota^{-1}\chi_{v, i}(\varpi_v)) = \displaystyle \frac{1}{e_v} \sum_{\tau \in \mathrm{Hom}_{\mathbb{Q}_l}(F_v, \overline{\mathbb{Q}}_l)} ( \lambda_{\tau, n + 1 - i} + i - 1 + \frac{1-n}{2})$ for all $i = 1, \cdots, n$.

(Here, $\mathrm{val}_l$ denotes the $l$-adic valuation normalized to $\mathrm{val}_l(l) = 1$ and $e_v$ denotes the ramification index of $F_v/\mathbb{Q}_l$.)

Moreover, if these equivalent conditions hold, we have $$\iota^{-1} (\pi_v^{\mathrm{Iw}_v(b,c)})^{\mathrm{ord}} \cong \displaystyle \iota^{-1}(\chi_{v,1}| \ |_v^{\frac{1-n}{2}} \otimes \chi_{v,2}| \ |_v^{\frac{3-n}{2}} \otimes \cdots \otimes \chi_{v,n}| \ |_v^{\frac{n-1}{2}})$$ as $\Delta_v^{T_n}$-modules.

(3) \ The following conditions are equivalent.

1 \ $\pi$ is $\iota$-ordinary.
    
2 \ For all $v|l$, the eigenvalues $\alpha_1, \cdots, \alpha_n$ of $\iota^{-1}\mathrm{rec}(\pi_v|\mathrm{det}|_v^{\frac{1-n}{2}})(\mathrm{Frob}_v)$ satisfies $\mathrm{val}_l(\alpha_i) = \displaystyle \frac{1}{e_v} \sum_{\tau \in \mathrm{Hom}_{\mathbb{Q}_l}(F_v, \overline{\mathbb{Q}}_l)} ( \lambda_{\tau, n + 1 - i} + i - 1)$ for all $i = 1, \cdots, n$ after arranging $\alpha_i$'s so that $\mathrm{val}_l(\alpha_1) \le \mathrm{val}_l(\alpha_2) \le \cdots \le \mathrm{val}_l(\alpha_n)$.

\end{lem}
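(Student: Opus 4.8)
The plan is to prove the three parts of Lemma \ref{iota-ordinary} in order, reducing everything to the local harmonic analysis at a fixed place $v \mid l$ together with the dictionary between Hecke eigenvalues and the Langlands parameter recorded at the end of {\S}\ref{Notation} (namely $\mathrm{det}(XI_n - \mathrm{rec}_{F_v}(\pi_v|\mathrm{det}|_v^{\frac{1-n}{2}})(\mathrm{Frob}_v)) = \varphi_{\pi}(P_v)(X)$ in the unramified case, and its Iwahori-level refinement via $t_{v,i}$). First I would fix $v \mid l$ and reduce to a statement purely about the irreducible smooth representation $\pi_v$ of $\mathrm{GL}_n(F_v)$: since $\pi$ is cohomological of weight $\iota\lambda$, $\pi_v$ has an Iwahori fixed vector (cohomologicality forces $\pi_v$ to be a subquotient of an unramified principal series $\mathrm{n}\textrm{-}\mathrm{Ind}^{\mathrm{GL}_n(F_v)}_{B_n(F_v)}\chi_{v,1}\otimes\cdots\otimes\chi_{v,n}$ up to a Weyl twist, by Borel's theorem applied at the right level and the classification of the infinitesimal character). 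The operator $\prod_{i=1}^{n}(\prod_{\tau}\prod_{j=1}^i \tau(\varpi_v)^{-\lambda_{\tau,n-j+1}})[\mathrm{Iw}_v(b,c)\mathrm{diag}(\varpi_v,\dots,\varpi_v,1,\dots,1)\mathrm{Iw}_v(b,c)]$ acts on the Jacquet module (equivalently on the $T_n(\mathcal{O}_{F_v})(b)$-isotypic pieces of $\pi_v^{\mathrm{Iw}_v(b,c)}$), and one identifies its eigenvalues with the normalized Satake/Atkin--Lehner parameters $\iota^{-1}\chi_{v,i}(\varpi_v)| \ |_v^{\frac{1-n}{2}}$ up to the standard $q_v$-power normalization factors coming from $\delta_{B_n}^{1/2}$.

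For part (1), I would argue that the ordinary subspace is at most one-dimensional as follows. The $U_{v,i}$-operators on $\pi_v^{\mathrm{Iw}_v(b,c)}$ can be analyzed through the Jacquet module $r_{B_n}(\pi_v)$, which is a quotient (after semisimplification) of $\bigoplus_{w\in\mathfrak{S}_n}\chi_{v,w(1)}\delta_{B_n}^{1/2}\otimes\cdots$; among all $n!$ orderings, exactly one ordering of the $\chi_{v,i}$ makes the tuple of $l$-adic valuations of $U_{v,i}$-eigenvalues the prescribed (strictly increasing after normalization) sequence. The strict increase — guaranteed by the fact that the prescribed valuations $\frac{1}{e_v}\sum_\tau(\lambda_{\tau,n+1-i}+i-1)$ are genuinely distinct because $\lambda$ is dominant — pins down the ordered line uniquely, so at most one Weyl-chamber contributes an $l$-adic unit eigenvalue string, giving dimension $\le 1$. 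This is essentially the Hida-theoretic ordinary-part argument; I would cite \cite{OG} or \cite{10} for the precise multiplicity-one statement on locally symmetric spaces and transcribe it to $\pi_v$ via Theorem \ref{cohomology of locally symmetric space}.

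For part (2), the equivalence $1\Leftrightarrow 2$ is by direct computation: $\iota^{-1}(\pi_v^{\mathrm{Iw}_v(b,c)})^{\mathrm{ord}}\neq 0$ means precisely that one of the $n!$ exponent strings $\iota^{-1}\chi_{v,w(i)}(\varpi_v)$, after multiplying by $\delta_{B_n}^{1/2}$ (which contributes the $| \ |_v^{(n-2i+1)/2}$ factors, matching the $\frac{1-n}{2},\frac{3-n}{2},\dots,\frac{n-1}{2}$ shift in the statement), has the $i$-th entry an $l$-adic unit times the right power of $\varpi_v$ — and comparing valuations this is exactly the condition $\mathrm{val}_l(\iota^{-1}\chi_{v,i}(\varpi_v)) = \frac{1}{e_v}\sum_\tau(\lambda_{\tau,n+1-i}+i-1+\frac{1-n}{2})$, where the extra $\frac{1-n}{2}$ is the $| \ |_v^{\frac{1-n}{2}}$ normalization built into $\mathrm{rec}$. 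The explicit description of $\iota^{-1}(\pi_v^{\mathrm{Iw}_v(b,c)})^{\mathrm{ord}}$ as a $\Delta_v^{T_n}$-module then follows because the ordinary line is the image in the Jacquet module of the unique unit-eigenvalue chamber, on which $\Delta_v^{T_n}$ acts through exactly the character $\iota^{-1}(\chi_{v,1}| \ |_v^{\frac{1-n}{2}}\otimes\cdots\otimes\chi_{v,n}| \ |_v^{\frac{n-1}{2}})$ — one must check the normalization constant $\prod\tau(\varpi_v)^{-\lambda}$ absorbs the $\alpha_\lambda$-twist and the $\delta_{B_n}^{1/2}$ so that no residual $q_v$-power survives; this bookkeeping is the one genuinely fiddly point. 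Part (3) is then immediate: combining the valuation condition of (2) over all $v\mid l$ with the identity $\mathrm{det}(XI_n - \iota^{-1}\mathrm{rec}(\pi_v|\mathrm{det}|_v^{\frac{1-n}{2}})(\mathrm{Frob}_v))(X) = \prod(X - \iota^{-1}\chi_{v,i}(\varpi_v)| \ |_v^{\frac{1-n}{2}}(\varpi_v))$ shows $\pi$ is $\iota$-ordinary iff the Frobenius eigenvalues of $\iota^{-1}\mathrm{rec}(\pi_v|\mathrm{det}|_v^{\frac{1-n}{2}})(\mathrm{Frob}_v)$ have the prescribed valuations, which after removing the uniform $| \ |_v^{\frac{1-n}{2}}$ shift (note $\mathrm{val}_l(| \ |_v^{\frac{1-n}{2}}(\varpi_v)) = \frac{1}{e_v}\cdot[F_v:\mathbb{Q}_l]\cdot\frac{n-1}{2} = \frac{[F_v:\mathbb{Q}_l](n-1)}{2e_v}$, matching the difference between the two valuation formulas summed over $\tau$) is exactly condition 2 of (3). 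The main obstacle I expect is keeping the three normalizations — the $\delta_{B_n}^{1/2}$ of normalized induction, the $| \ |_v^{\frac{1-n}{2}}$ twist inside $\mathrm{rec}$, and the $\alpha_\lambda^{-1}$-twist defining the $\Delta$-action on $\mathcal{V}_\lambda$ — straight, since a sign or an off-by-one in any of them corrupts the valuation identity; I would handle this by first treating $n=1$ and the unramified case (where $P_v$ directly gives the answer) as a sanity check before writing the general computation, and by citing \cite[Lemma 2.4]{red} or \cite[proof of Theorem 2.4]{red} for the precise ordinary/de Rham normalization already used in Lemma \ref{ordinary Galois representation}.
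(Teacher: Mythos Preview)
Your approach is essentially what the paper does by citation: the paper's entire proof is ``See \cite[proof of Lemma 5.4 and 5.7]{OG} and \cite[Lemma 2.3]{red} for (1) and (2). (3) follows from (2).'' Your Jacquet-module analysis, the multiplicity-one argument via the unique ordinary Weyl chamber, and the valuation bookkeeping for (3) are exactly the content of those references, so the strategy is correct and aligned with the paper.

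There is, however, one genuine slip in your reduction step. You write that ``since $\pi$ is cohomological of weight $\iota\lambda$, $\pi_v$ has an Iwahori fixed vector (cohomologicality forces $\pi_v$ to be a subquotient of an unramified principal series \ldots)''. This is false: cohomologicality is a condition on $\pi_\infty$ and imposes nothing on $\pi_v$ for finite $v$; the infinitesimal character is archimedean and irrelevant here. The correct input is Borel--Casselman: \emph{if} $\pi_v^{\mathrm{Iw}_v(b,c)} \neq 0$ then $\pi_v$ is a subquotient of a principal series induced from characters of conductor $\le b$ (not necessarily unramified --- your phrase ``unramified principal series'' is also inaccurate when $b > 0$). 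Fortunately your argument only ever uses the principal-series description when the ordinary part is nonzero, which already forces $\pi_v^{\mathrm{Iw}_v(b,c)} \neq 0$, so the logic survives once you replace the false justification by Borel--Casselman. The case $\pi_v^{\mathrm{Iw}_v(b,c)} = 0$ is trivial for (1), and for (2) you should note that condition 2 (subquotient of a conductor-$\le b$ principal series) would itself force $\pi_v^{\mathrm{Iw}_v(b,c)} \neq 0$, so both sides of the equivalence fail together.
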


\begin{proof} See \cite[proof of Lemma 5.4 and 5.7]{OG} and \cite[Lemma 2.3]{red} for (1) and (2). (3) follows from (2). \end{proof}

\begin{cor} \label{ordinary cohomology of locally symmetric space}

    Let $\iota: \overline{\mathbb{Q}}_l \stackrel{\sim}{\rightarrow} \mathbb{C}$ be an isomorphism of fields, $\lambda \in (\mathbb{Z}^{n}_{+})^{\mathrm{Hom}(F,\overline{\mathbb{Q}}_l)}$ and $c, b$ be integers satisfying $c \ge b \ge 0, c > 0$.
        
    1 \ Let $\pi$ be am $\iota$-ordinary cohomological cuspidal automorphic representation $\pi$ of weight $\iota \lambda$. Then we have the following results.
    
    $\cdot$ \ $\iota^{-1}(\pi^{\infty})^{K(b,c), \mathrm{ord}}$ is a $\mathbb{T}^{S, \mathrm{ord}}$-equivariant direct summand of $H^*(X_{K(b,c)}, \mathcal{V}_{\lambda})^{\mathrm{ord}} \otimes_{\mathcal{O}} \overline{\mathbb{Q}}_l$. \footnote{$\mathbb{T}^{S, \mathrm{ord}}$ acts on $\iota^{-1}(\pi^{\infty})^{K(b,c), \mathrm{ord}}$ by sending $X_{v,i}$ to $ \displaystyle (\prod_{\tau \in \mathrm{Hom}_{\mathbb{Q}_k}(F_v, E)} \prod_{j=1}^i \tau(\varpi_v)^{-\lambda_{\tau, n-j+1} })[\mathrm{Iw}_v(b,c)\mathrm{diag}(\underbrace{\varpi_v, \cdots, \varpi_v}_{i}, 1, \cdots, 1) \mathrm{Iw}_v(b,c)].$ Note that $X_{v, i}$'s acts on $\iota^{-1}(\pi^{\infty})^{K(b,c), \mathrm{ord}}$ as a scalar by (1) of Lemma \ref{iota-ordinary}.}
    
    $\cdot$ The Hecke eigensystem $\varphi_{\pi, \iota}^{\mathrm{ord}}: \mathbb{T}^{S, \mathrm{ord}} \rightarrow \overline{\mathbb{Q}}_l$ corresponding to $\iota^{-1}(\pi^{\infty})^{K(b,c), \mathrm{ord}}$ factors through $\mathbb{T}^{S,\mathrm{ord}} \twoheadrightarrow \mathbb{T}^{S,\mathrm{ord}}(R\Gamma(X_{K(b,c)}, \mathcal{V}_{\lambda})^{\mathrm{ord}})$.
    
    $\cdot$ \ The maximal ideal $\mathfrak{m}:=\mathrm{Ker}(\mathbb{T}^{S,\mathrm{ord}}(R\Gamma(X_{K(b,c)}, \mathcal{V}_{\lambda})^{\mathrm{ord}}) \xrightarrow{\varphi_{\iota, \pi}} \mathcal{O}_{\overline{\mathbb{Q}}_l} \twoheadrightarrow \overline{\mathbb{F}_l})$ induces $\overline{\rho_{\mathfrak{m}}} \cong \overline{r_{\iota}(\pi)}$.
    
    $\cdot$ \ If $\overline{r_{\iota}(\pi)}$ is absolutely irreducible, then $\mathfrak{m}$ is a non-Eisenstein ideal of $\mathbb{T}^{S, \mathrm{ord}}(R\Gamma(X_{K(b,c)}, \mathcal{V}_{\lambda})^{\mathrm{ord}})$.
        
    2 \ Let $\mathfrak{m}$ be a non-Eisenstein ideal of $\mathbb{T}^{S, \mathrm{ord}}(R\Gamma(X_{K(b,c)}, \mathcal{V}_{\lambda})^{\mathrm{ord}})$ and $\mathcal{T}_{\mathfrak{m}}^{\mathrm{ord}}$ be the set of all $\iota$-ordinary cohomological cuspidal automorphic representations of $\mathrm{GL}_n(\mathbb{A}_F)$ of weight $\iota\lambda$ such that $\iota^{-1}(\pi^{\infty})^{K(b,c), \mathrm{ord}} \neq 0$ and $\overline{r_{\iota}(\pi)} \cong \sigma \circ \overline{\rho_{\mathfrak{m}}}$ for some isomorphism $\sigma : \overline{\mathbb{T}^{S, \mathrm{ord}}(R\Gamma(X_{K(b,c)}, \mathcal{V}_{\lambda})^{\mathrm{ord}})/\mathfrak{m}} \stackrel{\sim}{\rightarrow} \overline{\mathbb{F}_l}$ over $\mathbb{F}$. 
        
    (1) \ There exists an isomorphism of $\mathbb{T}^{S, \mathrm{ord}}$-modules \begin{gather*}H^*(X_{K(b,c)}, \mathcal{V}_{\lambda})^{\mathrm{ord}}_{\mathfrak{m}} \otimes_{\mathcal{O}} \overline{\mathbb{Q}}_l \\
    \cong \oplus_{\pi \in \mathcal{T}^{\mathrm{ord}}_{\mathfrak{m}}} (\iota^{-1} (\pi^{\infty})^{K(b,c), \mathrm{ord}} \otimes_{\overline{\mathbb{Q}_{l}}} (H^*(\mathfrak{g}, \prod_{v|\infty}U(n);\pi_{\infty} \otimes_{\mathbb{C}} V_{\iota \lambda}) \otimes_{\mathbb{C}, \iota^{-1}} \overline{\mathbb{Q}}_l)).\end{gather*}
        
    (2) \ Let $l_0:=[F^+:\mathbb{Q}]n-1$ and $q_0:=[F^+:\mathbb{Q}]\frac{n(n-1)}{2}$.
        
    If $\mathcal{T}^{\mathrm{ord}}_{\mathfrak{m}}$ is not empty, then $H^i(X_{K(b,c)}, \mathcal{V}_{\lambda})^{\mathrm{ord}}_{\mathfrak{m}}[\frac{1}{l}] \neq 0$ for any $i \in [q_0, q_0 + l_0]$ and $H^i(X_{K(b,c)}, \mathcal{V}_{\lambda})^{\mathrm{ord}}_{\mathfrak{m}}[\frac{1}{l}] = 0$ for any $i \notin [q_0, q_0 + l_0]$.
            
    (3) \ The $E$-algebra $\mathbb{T}^{S, \mathrm{ord}}(H^*(X_{K(b,c)}, \mathcal{V}_{\lambda})^{\mathrm{ord}}_{\mathfrak{m}}[\frac{1}{l}]) = \mathbb{T}^{S, \mathrm{ord}}(R\Gamma(X_{K(b,c)}, \mathcal{V}_{\lambda})^{\mathrm{ord}}_{\mathfrak{m}})[\frac{1}{l}]$ is finite $\acute{e}$tale over $E$. Moreover, by sending $\pi$ to $\varphi^{\mathrm{ord}}_{\pi, \iota}$, we obtain a bijection between $\mathcal{T}_{\mathfrak{m}}^{\mathrm{ord}}$ and $\{ f: \mathbb{T}^{S, \mathrm{ord}}(R\Gamma(X_{K(b,c)}, \mathcal{V}_{\lambda})_{\mathfrak{m}}^{\mathrm{ord}}) \rightarrow \overline{\mathbb{Q}}_l \mid \mathcal{O} \textrm{-} \mathrm{morphism} \}$. (Note that $\varphi_{\pi, \iota}^{\mathrm{ord}} \circ \rho_{\mathfrak{m}} \cong r_{\iota}(\pi)$, where $\rho_{\mathfrak{m}}$ denote the Galois representation of Theorem \ref{ordinarirty of automorphic Galois}.)
        
    \end{cor}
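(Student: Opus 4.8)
The plan is to derive the Corollary from Theorem \ref{cohomology of locally symmetric space} applied to the good subgroup $K=K(b,c)$, by passing to ordinary parts, following the pattern of \cite[\S 5.5]{10}; the only genuinely new input is the local description of ordinary parts recorded in Lemma \ref{iota-ordinary}. First I would observe that $H^*(X_{K(b,c)},\mathcal{V}_{\lambda})^{\mathrm{ord}}\otimes_{\mathcal{O}}\overline{\mathbb{Q}_l}$, being the localization at the operators $\prod_{v\mid l,\,i}U_{v,i}$, is a $\mathbb{T}^{S,\mathrm{ord}}$-equivariant direct summand of $H^*(X_{K(b,c)},\mathcal{V}_{\lambda})\otimes_{\mathcal{O}}\overline{\mathbb{Q}_l}$. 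Combining this with the $\mathcal{H}(\mathrm{GL}_n(\mathbb{A}_F^{\infty}),K(b,c))_{\overline{\mathbb{Q}_l}}$-decomposition of Theorem \ref{cohomology of locally symmetric space} (part 2 (1)) and the factorization $(\pi^{\infty})^{K(b,c)}=(\pi^{\infty,l})^{K(b,c)^l}\otimes\bigl(\bigotimes_{v\mid l}\pi_v^{\mathrm{Iw}_v(b,c)}\bigr)$, the ordinary projector acts factor by factor at the places above $l$ and, by Definition \ref{iota ordinary definition} together with Lemma \ref{iota-ordinary} (1) and (2), singles out exactly $\iota^{-1}(\pi_v^{\mathrm{Iw}_v(b,c)})^{\mathrm{ord}}$ at each $v\mid l$. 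This should yield the isomorphism of part 2 (1), once one checks that the indexing set $\mathcal{T}^{\mathrm{ord}}_{\mathfrak{m}}$ of surviving summands is precisely the set of $\pi\in\mathcal{T}_{\mathfrak{n}}$ that are $\iota$-ordinary, where $\mathfrak{n}$ is the non-Eisenstein ideal of $\mathbb{T}^S(R\Gamma(X_{K(b,c)},\mathcal{V}_{\lambda}))$ underlying $\mathfrak{m}$; this last point is Lemma \ref{iota-ordinary} (3), applied to the eigenvalues of $\iota^{-1}\mathrm{rec}(\pi_v|\mathrm{det}|_v^{\frac{1-n}{2}})(\mathrm{Frob}_v)$, which are read off from the normalized $U_{v,i}$-eigenvalues. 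The $\mathbb{T}^{S,\mathrm{ord}}$-equivariance of the identification and the compatibility $\varphi^{\mathrm{ord}}_{\pi,\iota}\circ\rho_{\mathfrak{m}}\cong r_{\iota}(\pi)$ would then follow from the formulas for $P_v(T)$ in Theorem \ref{ordinarirty of automorphic Galois} and the Chebotarev density theorem, since $r_{\iota}(\pi)$ is determined by its Frobenius traces outside $S$.

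For part 1, $\iota$-ordinarity furnishes integers $c\ge b\ge 0$ with $c\ge 1$ and $\iota^{-1}(\pi_v^{\mathrm{Iw}_v(b,c)})^{\mathrm{ord}}\neq 0$ for all $v\mid l$, hence $\iota^{-1}(\pi^{\infty})^{K(b,c),\mathrm{ord}}\neq 0$. By Theorem \ref{cohomology of locally symmetric space} (part 1) it is a Hecke-equivariant direct summand of $H^*(X_{K(b,c)},\mathcal{V}_{\lambda})\otimes\overline{\mathbb{Q}_l}$, and since by Lemma \ref{iota-ordinary} (2) the normalized operators $\bigl(\prod_{\tau}\prod_{j=1}^{i}\tau(\varpi_v)^{-\lambda_{\tau,n-j+1}}\bigr)U_{v,i}$ act on it by $l$-adic units, it lies in the ordinary summand $H^*(X_{K(b,c)},\mathcal{V}_{\lambda})^{\mathrm{ord}}\otimes\overline{\mathbb{Q}_l}$. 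Its Hecke eigensystem $\varphi^{\mathrm{ord}}_{\pi,\iota}$ thus factors through $\mathbb{T}^{S,\mathrm{ord}}(R\Gamma(X_{K(b,c)},\mathcal{V}_{\lambda})^{\mathrm{ord}})$, and the reduction of $\varphi^{\mathrm{ord}}_{\pi,\iota}$ modulo the maximal ideal of $\mathcal{O}_{\overline{\mathbb{Q}_l}}$, restricted to $\mathbb{T}^S$, coincides with the eigensystem attached to $\pi$ in Theorem \ref{cohomology of locally symmetric space} (part 1); so $\overline{\rho_{\mathfrak{m}}}\cong\overline{r_{\iota}(\pi)}$ by Theorem \ref{torsion Galois}, and the non-Eisenstein assertion follows at once from absolute irreducibility of $\overline{r_{\iota}(\pi)}$.

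For part 2 (2) I would argue exactly as in Theorem \ref{cohomology of locally symmetric space} (part 2 (2)): by (1) and the fact that $H^*(\mathfrak{g},\prod_{v\mid\infty}U(n);\pi_{\infty}\otimes V_{\iota\lambda})$ is nonzero exactly in the degree range $[q_0,q_0+l_0]$ for a cohomological $\pi$, together with the hypothesis that $\mathcal{T}^{\mathrm{ord}}_{\mathfrak{m}}$ is nonempty. For part 2 (3), (1) presents $H^*(X_{K(b,c)},\mathcal{V}_{\lambda})^{\mathrm{ord}}_{\mathfrak{m}}[\tfrac1l]$ as a direct sum, over $\pi\in\mathcal{T}^{\mathrm{ord}}_{\mathfrak{m}}$, of the lines $\iota^{-1}(\pi^{\infty})^{K(b,c),\mathrm{ord}}$ (of dimension $\le 1$ at each $v\mid l$ by Lemma \ref{iota-ordinary} (1)) tensored with nonzero Lie algebra cohomology, on which $\mathbb{T}^{S,\mathrm{ord}}$ acts through pairwise distinct $\overline{\mathbb{Q}_l}$-valued characters; hence $\mathbb{T}^{S,\mathrm{ord}}(R\Gamma(X_{K(b,c)},\mathcal{V}_{\lambda})^{\mathrm{ord}}_{\mathfrak{m}})[\tfrac1l]$ is a finite product of finite field extensions of $E$, i.e.\ finite \'etale over $E$, and $\pi\mapsto\varphi^{\mathrm{ord}}_{\pi,\iota}$ is the asserted bijection, with $\varphi^{\mathrm{ord}}_{\pi,\iota}\circ\rho_{\mathfrak{m}}\cong r_{\iota}(\pi)$ as already noted.

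I expect the only delicate point to be the bookkeeping at the $l$-adic places: matching the localization at $\prod_{v\mid l,\,i}U_{v,i}$ that defines the ordinary cohomology with the normalized generalized eigenspaces of Definition \ref{iota ordinary definition}, and tracking the weight-dependent normalizing scalars $\prod_{\tau}\prod_{j\le i}\tau(\varpi_v)^{-\lambda_{\tau,n-j+1}}$ consistently through the $\mathbb{T}^{S,\mathrm{ord}}$-action. This is precisely what Lemma \ref{iota-ordinary}, Proposition \ref{level independence} and Proposition \ref{weight independence} are built to supply, so I do not anticipate any difficulty beyond a careful comparison of normalizations.
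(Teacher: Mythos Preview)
Your proposal is correct and follows the same approach as the paper, which simply states that the corollary follows from Theorem \ref{cohomology of locally symmetric space} with the remark that the action of $\Delta$ on $\mathcal{V}_{\lambda}$ is twisted by $\alpha_{\lambda}^{-1}$. Your detailed elaboration---applying the ordinary projector to the decomposition of Theorem \ref{cohomology of locally symmetric space} and using Lemma \ref{iota-ordinary} to identify the surviving summands---is exactly what underlies the paper's one-line proof, and the ``delicate bookkeeping'' you flag concerning the normalizing scalars $\prod_{\tau}\prod_{j\le i}\tau(\varpi_v)^{-\lambda_{\tau,n-j+1}}$ is precisely the $\alpha_{\lambda}^{-1}$-twist the paper highlights.
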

        
    \begin{proof} This follows from Theorem \ref{cohomology of locally symmetric space}. Note that the action of $\Delta$ on $\mathcal{V}_{\lambda}$ is twisted by $\alpha_{\lambda}^{-1}$. \end{proof}

\begin{thm} \label{ordinary semisimple}

    Let $l$ be a prime, $\iota: \overline{\mathbb{Q}}_l \rightarrow \mathbb{C}$ and $\pi$ be an $\iota$-ordinary cohomological cuspidal automorphic representation of $\mathrm{GL}_n(\mathbb{A}_F)$ of weight $\iota\lambda$.
    
    We assume that $\overline{r_{\iota}(\pi)}$ is absolutely irreducible and decomposed generic.
    
    1 \ For all $v|l$, $r_{\iota}(\pi)|_{G_{F_v}}$ is ordinary of weight $\lambda_v$ and $\iota\mathrm{WD}(r_{\iota}(\pi)|_{G_{F_v}})^{ss} \cong \mathrm{rec}_{F_v}(\pi_v| \mathrm{det} |_v^{\frac{1-n}{2}})^{ss}$.
    
    2 \ Let $v$ be an $l$-adic place of $F$. If $\pi_v$ is unramified, then $r_{\iota}(\pi)|_{G_{F_v}}$ is crystalline and $\iota\mathrm{WD}(r_{\iota}(\pi)|_{G_{F_v}}) \cong \mathrm{rec}_{F_v}(\pi_v| \mathrm{det} |_v^{\frac{1-n}{2}})$.
         
    \end{thm}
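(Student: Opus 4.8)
The plan is to reduce part 1 to the ordinary automorphic machinery recalled above and then read off the local structure at $v\mid l$ almost formally, and to deduce part 2 from part 1 together with genericity of $\pi_v$.

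For part 1, I would first perform a solvable base change. Using Corollary \ref{extension of Q} and Corollary \ref{imaginary quadratic 2} (as in the proof of Corollary \ref{Caraiani-Newton 2}), choose a finite solvable CM extension $F'/F$ which is linearly disjoint from $\overline{F}^{\mathrm{Ker}(\overline{r_{\iota}(\pi)})}$ over $F$, in which $l$ splits in some imaginary quadratic subfield of $F'$, in which every $l$-adic place of $F$ splits completely, and in which a fixed decomposed-generic prime $p$ for $\overline{r_{\iota}(\pi)}$ splits completely. Then $\Pi:=\mathrm{BC}_{F'/F}(\pi)$ is cohomological cuspidal (Proposition \ref{base change}), $\overline{r_{\iota}(\Pi)}\cong\overline{r_{\iota}(\pi)}|_{G_{F'}}$ is absolutely irreducible and decomposed generic, $\Pi$ is again $\iota$-ordinary (the valuation criterion of Lemma \ref{iota-ordinary} (3) is unchanged because the $l$-adic places split), and $r_{\iota}(\Pi)|_{G_{F'_w}}\cong r_{\iota}(\pi)|_{G_{F_v}}$ for $w\mid v\mid l$. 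So it suffices to prove both assertions for $\Pi$ over $F'$, and I rename $F'=F$, $\Pi=\pi$.

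Next I would set up the ordinary Hecke module: choose a good level $K$ with $K_v=\mathrm{Iw}_v$ for all $v\mid l$ and a large enough finite set $S$, together with integers $c\ge b\ge 0$, $c\ge 1$, so that $\iota^{-1}(\pi^{\infty})^{K(b,c),\mathrm{ord}}\neq 0$ (possible since $\pi$ is $\iota$-ordinary, enlarging $b,c$ preserving nonvanishing). By Corollary \ref{ordinary cohomology of locally symmetric space} this produces a non-Eisenstein ideal $\mathfrak{m}$ with $\overline{\rho_{\mathfrak{m}}}\cong\overline{r_{\iota}(\pi)}$ and a Hecke eigensystem $\varphi:=\varphi^{\mathrm{ord}}_{\pi,\iota}$ with $\varphi\circ\rho_{\mathfrak{m}}\cong r_{\iota}(\pi)$, where $\rho_{\mathfrak{m}}$ is the Galois representation of Theorem \ref{ordinarirty of automorphic Galois} (applicable since $\overline{\rho_{\mathfrak{m}}}$ is decomposed generic and $l$ splits in an imaginary quadratic subfield). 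Pushing the identities of Theorem \ref{ordinarirty of automorphic Galois} (2), (3) forward along $\varphi$ and killing the nilpotent ideal shows that for each $v\mid l$ the representation $r_{\iota}(\pi)|_{G_{F_v}}$ has characteristic polynomials $\prod_i(T-\varphi(\chi_{v,\lambda,i})(g))$ and satisfies $\prod_i(r_{\iota}(\pi)(g_i)-\varphi(\chi_{v,\lambda,i})(g_i))=0$; since the valuations of $\varphi(\chi_{v,\lambda,i})(\mathrm{Frob}_v)$ are strictly increasing, the standard argument (as in the proof of Lemma \ref{ordinary Galois representation}, following \cite{OG} and \cite{red}) yields a $G_{F_v}$-stable increasing filtration on $r_{\iota}(\pi)|_{G_{F_v}}$ with graded pieces $\varphi(\chi_{v,\lambda,i})$. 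Computing $\varphi(\chi_{v,\lambda,i})\circ\mathrm{Art}_{F_v}$ from the definition of $\chi_{v,\lambda,i}$ and the identification of $\iota^{-1}(\pi_v^{\mathrm{Iw}_v(b,c)})^{\mathrm{ord}}$ in Lemma \ref{iota-ordinary} (2) gives exactly condition (1) of Lemma \ref{ordinary Galois representation}, so $r_{\iota}(\pi)|_{G_{F_v}}$ is ordinary of weight $\lambda_v$; and $\mathrm{WD}(r_{\iota}(\pi)|_{G_{F_v}})^{ss}=\bigoplus_i\mathrm{WD}(\varphi(\chi_{v,\lambda,i}))$ matches $\mathrm{rec}_{F_v}(\pi_v|\mathrm{det}|_v^{\frac{1-n}{2}})^{ss}$ by the same identification, proving part 1.

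For part 2, suppose moreover $\pi_v$ is unramified with $v\mid l$. By part 1, $r:=r_{\iota}(\pi)|_{G_{F_v}}$ is ordinary of weight $\lambda_v$, hence de Rham of $l$-adic Hodge type $\mathbf{v}_{\lambda_v}$ (Lemma \ref{ordinary Galois representation}), and $\iota\mathrm{WD}(r)^{ss}\cong\mathrm{rec}_{F_v}(\pi_v|\mathrm{det}|_v^{\frac{1-n}{2}})^{ss}$. Since $\pi_v$ is unramified, $\mathrm{rec}_{F_v}(\pi_v|\mathrm{det}|_v^{\frac{1-n}{2}})$ is a direct sum of unramified characters $\mu_1\oplus\cdots\oplus\mu_n$, so it equals its semisimplification and $\mathrm{WD}(r)$ has trivial inertia action (the image of $I_{F_v}$ is finite, hence unipotent, hence trivial). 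By Proposition \ref{purity}, $\pi_v|\mathrm{det}|_v^{w/2}$ is generic, so $\pi_v|\mathrm{det}|_v^{\frac{1-n}{2}}$ is generic, so by Lemma \ref{generic equivalent} $\mathrm{rec}_{F_v}(\pi_v|\mathrm{det}|_v^{\frac{1-n}{2}})$ is generic; for a sum of unramified characters this forces $\mu_i(\mathrm{Frob}_v)q_v\neq\mu_j(\mathrm{Frob}_v)$ for all $i,j$. Since the monodromy operator of $\mathrm{WD}(r)$ carries the $\alpha$-eigenspace of $\mathrm{Frob}_v$ into the $q_v^{-1}\alpha$-eigenspace and the Frobenius eigenvalues of $\mathrm{WD}(r)$ are the $\mu_i(\mathrm{Frob}_v)$, this operator vanishes. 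Thus $r$ is de Rham with $\mathrm{WD}(r)$ unramified and trivial monodromy, hence crystalline of $l$-adic Hodge type $\mathbf{v}_{\lambda_v}$, and $\iota\mathrm{WD}(r)=\iota\mathrm{WD}(r)^{ss}\cong\mathrm{rec}_{F_v}(\pi_v|\mathrm{det}|_v^{\frac{1-n}{2}})$.

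The main obstacle is the bookkeeping in part 1: identifying the abstract Hecke characters $\chi_{v,\lambda,i}$ pushed forward along $\varphi^{\mathrm{ord}}_{\pi,\iota}$ with the actual local parameters of $\pi_v$, keeping track of the $\alpha_{\lambda}$-twist on $\mathcal{V}_{\lambda}$, the normalization of the operators $U_{v,i}$, and the cyclotomic twist in the definition of $\chi_{v,\lambda,i}$, together with the extraction of the ordinary flag from the two universal identities; these are precisely the computations of \cite{OG} and \cite{red}, combined with the routine check that solvable base change preserves cuspidality, $\iota$-ordinarity, absolute irreducibility and decomposed genericity. Once part 1 is established, part 2 is formal.
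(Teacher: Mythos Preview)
Your proposal is correct and follows essentially the same approach as the paper: the reduction via solvable base change, the use of Theorem \ref{ordinarirty of automorphic Galois} to obtain the characteristic polynomial and product identities, the extraction of the ordinary flag from these, and the identification of the graded pieces via Lemma \ref{iota-ordinary} are exactly what the paper does (the paper spells out the inductive construction of the filtration $V_i$ explicitly, but the content is the same). For part 2, the paper kills the monodromy by invoking Theorem \ref{generic preunitary} and Proposition \ref{purity} to get the absolute-value bound $|\alpha/\beta|<q_v$, whereas you invoke Lemma \ref{generic equivalent} directly to obtain $\mu_i(\mathrm{Frob}_v)q_v\neq\mu_j(\mathrm{Frob}_v)$; both routes yield the same eigenvalue separation and are equivalent here.
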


    \begin{rem} Note that we need not assume that $F$ contains an imaginary quadratic field. \end{rem}

    \begin{proof} By the same proof as Theorem \ref{Caraiani-Newton 2}, we may assume that $F$ satisfies the conditions of Theorem \ref{ordinarirty of automorphic Galois} and $\iota^{-1}(\pi^{\infty})^{K(b,c), \mathrm{ord}} \neq 0$ for some $0 \le b \le c$ such that $1 \le c$. 

    Let $\mathfrak{m}$ be the non-Eisenstein ideal of $\mathbb{T}^{S, \mathrm{ord}}(R\Gamma(X_{K(b,c)}, \mathcal{V}_{\lambda})^{\mathrm{ord}})$ corresponding to $\overline{r_{\iota}(\pi)}$ and $\varphi_{\pi, \iota}^{\mathrm{ord}} : \mathbb{T}^{S, \mathrm{ord}}(R\Gamma(X_{K(b,c)}, \mathcal{V}_{\lambda})_{\mathfrak{m}}^{\mathrm{ord}}) \rightarrow \overline{\mathbb{Q}}_l$ be the Hecke eigensystem corresponding to $\iota^{-1}(\pi^{\infty})^{K(b,c), \mathrm{ord}}$. Then the Galois representation $\rho_{\mathfrak{m}}$ in Theorem \ref{ordinarirty of automorphic Galois} satisfies $\varphi^{\mathrm{ord}}_{\iota, \pi} \circ \rho_{\mathfrak{m}} \cong r_{\iota}(\pi)$. Therefore, we obtain the following properties. (We fix an $l$-adic place $v$ of $F$ and we put $\psi_{v,i} := \varphi_{\pi, \iota}^{\mathrm{ord}}(\chi_{v, \lambda, i})$.)

(1) \ $\mathrm{det}(T-r_{\iota}(\pi)(g)) = \prod_{i=1}^n(T-\psi_{v, i}(g))$ for any $g \in G_{F_v}$.
    
(2) \ $(r_{\iota}(\pi)(g_1) - \psi_{v, 1}(g_1))(r_{\iota}(\pi)(g_2) - \psi_{v, 2}(g_2)) \cdots (r_{\iota}(\pi)(g_n) -\psi_{v, n}(g_n)) = 0$ for any $g_1, \cdots, g_n \in G_{F_v}$.

We define $G_{F_v}$-stable subspaces $0 = V_0 \subset V_1 \subset \cdots \subset V_n \subset \overline{\mathbb{Q}}_l^{\oplus n}$ of $\overline{\mathbb{Q}}_l^{\oplus n}$ inductively by the condition that $V_i$ is the maximal $G_{F_v}$-stable subspace of $\overline{\mathbb{Q}}_l^{\oplus n}$ satisfying that $G_{F_v}$ acts on $V_i/V_{i-1}$ by the character $\psi_{v, i}$. We fix $v \in \overline{\mathbb{Q}}_l^{\oplus n}$. By the property (2), for any $g_2, \cdots, g_n \in G_{F_v}$, we have $(r_{\iota}(\pi)(g_2) - \psi_{v, 2}(g_2)) \cdots (r_{\iota}(\pi)(g_n) - \psi_{v, n}(g_n))v \in V_1$. Thus, we obtain $(r_{\iota}(\pi)(g_3) - \psi_{v, 3}(g_3)) \cdots (r_{\iota}(\pi)(g_n) -\psi_{v, n}(g_n)) v \in V_2$ for any $g_3, \cdots, g_n \in G_{F_v}$. By using the same argument repeatedly, we obtain $v \in V_n$. This implies $\overline{\mathbb{Q}}_l^{\oplus n} = V_n$. By the property (1) and the fact that $\psi_{v,i}$'s are distinct, we obtain $\mathrm{dim}_{\overline{\mathbb{Q}}_l} V_i/V_{i-1} = 1$ and consequently $V_i/V_{i-1} \cong \psi_{v, i}$.  Thus $r_{\iota}(\pi)|_{G_{F_v}}$ is ordinary of weight $\lambda_v$.

By Lemma \ref{iota-ordinary}, there exist characters $\chi_{v,1}, \cdots, \chi_{v,n} : F_v^{\times}/(1 + \varpi_v^b\mathcal{O}_{F_v}) \rightarrow \mathbb{C}^{\times}$ satisfying the following conditions.

$\cdot$ $\pi_v$ is a subquotient of $\mathrm{n}\textrm{-}\mathrm{Ind}^{\mathrm{GL}_n(F_v)}_{B_n(F_v)} \chi_{v,1} \otimes \cdots \chi_{v,n}$.

$\cdot$ $\iota^{-1} (\pi_v^{\mathrm{Iw}_v(b,c)})^{\mathrm{ord}} \cong \displaystyle \iota^{-1}(\chi_{v,1}| \ |_v^{\frac{1-n}{2}} \otimes \chi_{v,2}| \ |_v^{\frac{3-n}{2}} \otimes \cdots \otimes \chi_{v,n}| \ |_v^{\frac{n-1}{2}})$ as $\Delta_v^{T_n}$-modules.

Then $\mathrm{WD}(\psi_{v,i})(\mathrm{Art}_{F_v}(u)) = \psi_{v,i}(\mathrm{Art}_{F_v}(u)) \prod_{\tau \in \mathrm{Hom}_{\mathbb{Q}_l}(F_v, E)}\tau(u)^{\lambda_{\tau,n-i+1} + i - 1 } = \iota^{-1}\chi_{v,i}(u) $ for any $u \in \mathcal{O}_{F_v}^{\times}$ and \begin{align*}\mathrm{WD}(\psi_{v,i})(\mathrm{Art}_{F_v}(\varpi_v)) &= \psi_{v,i}(\mathrm{Art}_{F_v}(\varpi_v)) \prod_{\tau \in \mathrm{Hom}_{\mathbb{Q}_l}(F_v, E)}\tau(\varpi_v)^{\lambda_{\tau,n-i+1} + i - 1 }\\
     &= \varepsilon_l^{1-i}(\mathrm{Art}_{F_v}(\varpi_v))\iota^{-1}(\chi_{v,i}(\varpi_v)q_v^{\frac{1+n-2i}{2}})\prod_{\tau \in \mathrm{Hom}_{\mathbb{Q}_k}(F_v, E)} \tau(\varpi_v)^{ i-1 } \\
     &= \iota^{-1}(\chi_{v,i}(\varpi_v)q_{v}^{\frac{n-1}{2}}).\end{align*}
        
(Note that since $\varepsilon_l(\mathrm{Art}_{\mathbb{Q}_l}(x)) x^{-1} = |x|_l$ for any $x \in \mathbb{Q}_l^{\times}$, we have $$\varepsilon_l(\mathrm{Art}_{F_v}(x)) N_{F_v/\mathbb{Q}_l}(x)^{-1} = |x|_v$$ for any $x \in F_v^{\times}$.)
    
Therefore, we obtain $\iota \mathrm{WD}(r_{\iota}(\pi)|_{G_{F_v}})^{ss} \cong \mathrm{rec}_{F_v}(\pi_v|\mathrm{det}|_v^{\frac{1-n}{2}})^{ss}$.

    We assume that $\pi_v$ is unramified. By Theorem \ref{generic unitary} and Proposition \ref{purity}, all eigenvalues $\alpha, \beta$ of $\mathrm{rec}_{F_v}(\pi_v|\mathrm{det}|_v^{\frac{1-n}{2}})(\mathrm{Frob}_v)$ satisfy $|\frac{\alpha}{\beta}| < q_v$. Therefore, the monodromy operator of $\iota \mathrm{WD}(r_{\iota}(\pi)|_{G_{F_v}})$ is zero by $\iota\mathrm{WD}(r_{\iota}(\pi)|_{G_{F_v}})^{ss} \cong \mathrm{rec}_{F_v}(\pi_v|\mathrm{det}|_v^{\frac{1-n}{2}})^{ss}$. \end{proof}

Finally, we recall the results of \cite{ade}. We assume $l > n$.

Let $\mathfrak{m}$ be a non-Eisenstein ideal of $\mathbb{T}^S(R\Gamma(X_K, \mathcal{V}_{\lambda}))$ such that $\mathbb{T}^S(R\Gamma(X_K, \mathcal{V}_{\lambda}))/\mathfrak{m} = \mathbb{F}$ and $(Q, \{ \alpha_v \}_{v \in Q})$ be a Taylor-Wiles datum for $(\overline{\rho_{\mathfrak{m}}}, S)$. 

For $v \in Q$, let $d_v$ denote the multiplicity of the root $\alpha_v$ in $\mathrm{det}(TI_n - \overline{\rho_{\mathfrak{m}}}(\mathrm{Frob}_v))$, $\mathfrak{p}_{v,0}$ denote the parahoric subgroup of $\mathrm{GL}_n(F_v)$ corresponding to the decomposition $(d_v, n-d_v)$ and $\mathfrak{p}_{v,1}:=\mathrm{Ker}(\mathfrak{p}_{v,0} \twoheadrightarrow \mathrm{GL}_{d_v}(\mathcal{O}_{F_v}) \times \mathrm{GL}_{n-d_v}(\mathcal{O}_{F_v}) \rightarrow \mathrm{GL}_{d_v}(\mathcal{O}_{F_v}) \xrightarrow{\mathrm{det}} \mathcal{O}_{F_v}^{\times} \rightarrow \mathbb{F}_v^{\times} \twoheadrightarrow \mathbb{F}^{\times}_v(l))$.

Let $K_0(Q)$ and $K_1(Q)$ be good subgroups defined by the following conditions.

For $v \notin Q$, $K_0(Q)_v = K_1(Q)_v = K_v$.

For $v \in Q$, $K_0(Q)_v:=\mathfrak{p}_{v,0}$ and $K_1(Q)_v:=\mathfrak{p}_{v,1}$.

We put $\Delta_v:=K_0(Q)_v/K_1(Q)_v \cong \mathbb{F}_v^{\times}(l)$, $\Xi_{v,1}:= \Delta_v \times \mathbb{Z}^n$ for any $v \in Q$ and $\mathbb{T}^{S \cup Q \cup Q^c}_{Q} := \mathbb{T}^{S \cup Q \cup Q^c} \otimes_{\mathcal{O}} ( \otimes_{v \in Q} \mathcal{O}[\Xi_{v,1}]^{\mathfrak{S}_{d_v} \times \mathfrak{S}_{n-d_v}})$. Then we have a $\mathcal{O}$-morphism $\mathcal{O}[\Xi_{v,1}]^{\mathfrak{S}_{d_v} \times \mathfrak{S}_{n-d_v}} \twoheadrightarrow \mathcal{O}[\mathbb{Z}^n]^{\mathfrak{S}_{d_v} \times \mathfrak{S}_{n-d_v}} \twoheadrightarrow \mathbb{F}$ corresponding to $(T-\alpha_v)^{d_v}$ and $\mathrm{det}(T-\overline{\rho_{\mathfrak{m}}}(\mathrm{Frob}_v))/(T-\alpha_v)^{d_v}$. Let $\mathfrak{m}_v$ denote the kernel of this map. We also have certain actions of $\mathbb{T}^{S \cup Q \cup Q^c}_{Q}$ on $R\Gamma_{K_0(Q)/K_1(Q)}(X_{K_1(Q)}, \mathcal{V}_{\lambda})$ and $R\Gamma(X_{K_0(Q)}, \mathcal{V}_{\lambda})$. (See \cite[Proposition 3.10]{ade} for the definition of the action of $\mathcal{O}[\Xi_{v,1}]^{\mathfrak{S}_{d_v} \times \mathfrak{S}_{n-d_v}}$.)

Let $\mathfrak{m}_{Q}$ denotes the maximal ideal of $\mathbb{T}_Q^{S \cup Q \cup Q^c}$ generated by the pullback of $\mathfrak{m}$ and $(\mathfrak{m}_v)_{v \in Q}$. We put $\Delta_{Q}:=K_0(Q)/K_1(Q) \cong \prod_{v \in Q}\mathbb{F}_v^{\times}(l)$. We put $\mathbb{T}^{S \cup Q \cup Q^c}_{\Delta_Q} := \mathbb{T}^{S \cup Q \cup Q^c} \otimes_{\mathcal{O}} \mathcal{O}[\Delta_Q] \subset \mathbb{T}^{S \cup Q \cup Q^c}_{Q}$. Then we have a natural morphism $\mathbb{T}^{S \cup Q \cup Q^c}_{\Delta_Q} \rightarrow \mathbb{T}^{S}$.

\begin{thm} \label{Patching argument} In the above situation, we have the following results. (We put $\mathcal{S}:=(\overline{\rho_{\mathfrak{m}}}, S, \{ \mathcal{O} \}_{v \in S}, \{ \mathcal{D}_v^{\Box} \}_{v \in S})$.)

1 \ We have canonical $\mathbb{T}^{S \cup Q \cup Q^c}_{\Delta_Q}$-equivalent isomorphisms $$R\Gamma(\Delta_{Q}, R\Gamma_{K_0(Q)/K_1(Q)}(X_{K_1(Q)}, \mathcal{V}_{\lambda})_{\mathfrak{m}_{Q}}) \cong R\Gamma(X_{K_0(Q)}, \mathcal{V}_{\lambda})_{\mathfrak{m}_{Q}} \cong R\Gamma(X_K, \mathcal{V}_{\lambda})_{\mathfrak{m}}.$$

2 \ There exist a positive integer $\delta$, an ideal $I$ of $$\mathbb{T}^{S \cup Q \cup Q^c}_{\Delta_{Q}}(R\Gamma_{K_0(Q)/K_1(Q)}(X_{K_1(Q)}, \mathcal{V}_{\lambda})_{\mathfrak{m}_Q})$$ and a continuous representation $$\rho_{\mathfrak{m}_{Q}} : G_{F \cup Q} \rightarrow \mathrm{GL}_n(\mathbb{T}^{S \cup Q \cup Q^c}_{\Delta_Q}(R\Gamma_{K_0(Q)/K_1(Q)}(X_{K_1(Q)}, \mathcal{V}_{\lambda})_{\mathfrak{m}_Q})/I)$$ satisfying the following conditions.

$\cdot$ \ $\delta$ depends only on $n$ and $[F:\mathbb{Q}]$.

$\cdot$ \ $I^{\delta} = 0$.

$\cdot$ For all $v \notin S \cup Q \cup Q^c$, $\mathrm{det}(TI_n - \rho_{\mathfrak{m}_Q}(\mathrm{Frob}_v)) = P_v(T)$.

$\cdot$ $\rho_{\mathfrak{m}_{Q}}$ is type $\mathcal{S}(Q_N)$ and the $\mathcal{O}$-morphism $$R_{\rho_{\mathfrak{m}_Q}, \mathcal{S}(Q)} \rightarrow \mathbb{T}^{S \cup Q \cup Q^c}_{\Delta_Q}(R\Gamma_{K_0(Q)/K_1(Q)}(X_{K_1(Q)}, \mathcal{V}_{\lambda})_{\mathfrak{m}_Q})/I$$ corresponding to $\rho_{\mathfrak{m}_{Q}}$ is an $\mathcal{O}[\Delta_Q]$-morphism.

\end{thm}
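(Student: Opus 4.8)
The plan is to follow the Taylor--Wiles patching preparation of \cite{ade} (which extends the arguments of \cite{small} and \cite[\S 6]{10} from the totally real, essentially conjugate self-dual setting) and to adapt it to the present CM situation; as in Theorem \ref{ordinarirty of automorphic Galois}, one invokes \cite[Theorem 1.3]{kos} to remove any hypothesis that $[F^+:\mathbb{Q}]>1$. The proof splits into the two assertions: first, the two $\mathbb{T}^{S\cup Q\cup Q^c}_{\Delta_Q}$-equivariant isomorphisms of complexes, and second, the construction of $\rho_{\mathfrak{m}_Q}$ together with the verification of its listed properties.

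For the first assertion, the isomorphism $R\Gamma(\Delta_Q, R\Gamma_{K_0(Q)/K_1(Q)}(X_{K_1(Q)},\mathcal{V}_{\lambda})_{\mathfrak{m}_Q}) \cong R\Gamma(X_{K_0(Q)},\mathcal{V}_{\lambda})_{\mathfrak{m}_Q}$ is Hochschild--Serre descent for the finite free $\Delta_Q$-cover $X_{K_1(Q)}\to X_{K_0(Q)}$ (free since $K_1(Q)$ is good), using that the relevant complexes are perfect over $\mathcal{O}[\Delta_Q]$ by Proposition \ref{perfect complex} and that localisation at $\mathfrak{m}_Q$ is exact. The isomorphism $R\Gamma(X_{K_0(Q)},\mathcal{V}_{\lambda})_{\mathfrak{m}_Q} \cong R\Gamma(X_K,\mathcal{V}_{\lambda})_{\mathfrak{m}}$ is an Ihara-type level-lowering statement: the degeneracy maps between $X_{K_0(Q)}$ and $X_K$ at the places $v\in Q$ (paraholic $\mathfrak{p}_{v,0}$ versus $\mathrm{GL}_n(\mathcal{O}_{F_v})$), composed with the idempotent cutting out the generalised $\alpha_v$-eigenspace of the relevant Hecke operator at $v$ --- this idempotent being encoded by the factor $\mathfrak{m}_v$ of $\mathfrak{m}_Q$ attached to the factorisation $\det(TI_n-\overline{\rho_{\mathfrak{m}}}(\mathrm{Frob}_v)) = (T-\alpha_v)^{d_v}\cdot(\det(TI_n-\overline{\rho_{\mathfrak{m}}}(\mathrm{Frob}_v))/(T-\alpha_v)^{d_v})$ --- furnish mutually inverse maps after localisation. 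The needed local input is the decomposition of paraholic invariants of unramified principal series of $\mathrm{GL}_n(F_v)$ compatibly with this idempotent, of the kind underlying Lemma \ref{paraholic}.

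For the second assertion, the plan is to apply the Hecke-algebra-valued Galois representation machinery as in Theorem \ref{Hecke algebra valued Galois} and Proposition \ref{unipotently ramified} to the localised complex $R\Gamma_{K_0(Q)/K_1(Q)}(X_{K_1(Q)},\mathcal{V}_{\lambda})_{\mathfrak{m}_Q}$. This produces a nilpotent ideal $I$ with $I^{\delta}=0$ for $\delta$ depending only on $n$ and $[F:\mathbb{Q}]$ (via Lemma \ref{nilpotent} and the bound on the range of cohomological degrees) and a continuous $\rho_{\mathfrak{m}_Q}$ over $\mathbb{T}^{S\cup Q\cup Q^c}_{\Delta_Q}(\cdots)/I$ with $\det(TI_n-\rho_{\mathfrak{m}_Q}(\mathrm{Frob}_v))=P_v(T)$ for $v\notin S\cup Q\cup Q^c$, unramified at $v\in Q^c$. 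To see that $\rho_{\mathfrak{m}_Q}$ is of type $\mathcal{S}(Q)$: at $v\in S$ the condition is $\mathcal{D}_v^{\Box}$ and hence automatic; at $v\in Q$ one uses that, because $\overline{\rho_{\mathfrak{m}}}(\mathrm{Frob}_v)$ acts as the scalar $\alpha_v$ on its $d_v$-dimensional generalised $\alpha_v$-eigenspace (condition $4$ of the Taylor--Wiles datum), the paraholic Hecke action at $v$ localised at $\mathfrak{m}_v$ forces $\rho_{\mathfrak{m}_Q}|_{G_{F_v}}$ to split, by Hensel's lemma as in Proposition \ref{Taylor-Wiles deformation}, as $\psi_v\oplus s_v$ with $s_v$ unramified and $I_{F_v}$ acting on $\psi_v$ through the character of $\Delta_v=\mathbb{F}_v^{\times}(l)$ coming from the paraholic level structure via $\mathrm{Art}_{F_v}$, i.e. by a scalar; this is exactly $\mathcal{D}_v^{TW,\alpha_v}$. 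Finally, the induced $\mathcal{O}$-morphism $R_{\rho_{\mathfrak{m}_Q},\mathcal{S}(Q)}\to \mathbb{T}^{S\cup Q\cup Q^c}_{\Delta_Q}(\cdots)/I$ is an $\mathcal{O}[\Delta_Q]$-morphism because both $\mathcal{O}[\Delta_Q]$-structures record the same datum: on the target $\mathcal{O}[\Delta_Q]\subset \mathbb{T}^{S\cup Q\cup Q^c}_{\Delta_Q}$ acts through the paraholic level at $Q$, and on the source $\mathcal{O}[\Delta_Q]\to R_{\mathcal{S}(Q)}$ records the inertia character $\prod_{v\in Q}\psi_v$; these agree under the trace identities already defining $\rho_{\mathfrak{m}_Q}$, together with density (Lemma \ref{trace density}) and Chebotarev.

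The main obstacle is the local analysis at the Taylor--Wiles primes in the second assertion: propagating the paraholic Hecke action through the derived, non-commutative cohomology complex and the nilpotent-ideal formalism so as to pin down the decomposition $\rho_{\mathfrak{m}_Q}|_{G_{F_v}}=\psi_v\oplus s_v$ with the precise inertia behaviour and, simultaneously, the $\mathcal{O}[\Delta_Q]$-equivariance of the deformation-ring map. This is the technical heart of \cite{ade}; by contrast, the cohomological identities of the first assertion are comparatively formal once the localisations at $\mathfrak{m}_Q$ and the degeneracy maps are in place.
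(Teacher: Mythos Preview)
Your proposal is correct and follows precisely the route the paper takes: the paper simply cites \cite[Theorem~7.6 and Theorem~7.7]{ade}, and your sketch accurately unpacks what those results provide (Hochschild--Serre descent plus the Ihara-type level comparison for part~1, and the Hecke-algebra-valued Galois representation with local analysis at the Taylor--Wiles primes for part~2). The invocation of \cite{kos} is unnecessary here, as the hypothesis $[F^+:\mathbb{Q}]>1$ does not arise in \cite[Theorems~7.6--7.7]{ade}; it is only needed in the ordinary setting of Theorem~\ref{ordinarirty of automorphic Galois}.
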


\begin{proof} See \cite[Theorem 7.6 and Theorem 7.7]{ade}. \end{proof}

\subsubsection{Some results on extensions of number fields}

\begin{prop}\label{strong multiplicity one}

Let $F$ be a CM field, $n$ be a positive integer, $\pi, \pi'$ be cohomological cuspidal automorphic representations of $\mathrm{GL}_n(\mathbb{A}_F)$, $l$ be a prime and $\iota : \overline{\mathbb{Q}}_l \stackrel{\sim}{\rightarrow} \mathbb{C}$ be an isomorphism of fields.

If $r_{\iota}(\pi) \cong r_{\iota}(\pi')$, then $\pi \cong \pi'$.

\end{prop}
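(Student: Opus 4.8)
The plan is to reduce everything to strong multiplicity one for $\mathrm{GL}_n$. First I would exploit Theorem~\ref{Ila Varma}: since $r_{\iota}(\pi) \cong r_{\iota}(\pi')$, the relation $\iota\mathrm{WD}(r_{\iota}(\pi)|_{G_{F_v}})^{F-ss} \prec \mathrm{rec}_{F_v}(\pi_v|\mathrm{det}|_v^{\frac{1-n}{2}})$ (which already forces an isomorphism after semisimplification) gives, for every finite place $v \nmid l$,
$$\mathrm{rec}_{F_v}(\pi_v|\mathrm{det}|_v^{\frac{1-n}{2}})^{ss} \cong \iota\mathrm{WD}(r_{\iota}(\pi)|_{G_{F_v}})^{ss} \cong \iota\mathrm{WD}(r_{\iota}(\pi')|_{G_{F_v}})^{ss} \cong \mathrm{rec}_{F_v}(\pi'_v|\mathrm{det}|_v^{\frac{1-n}{2}})^{ss}.$$

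Next I would promote this to an honest isomorphism of Weil--Deligne representations. By Proposition~\ref{purity}, both $\pi_v$ and $\pi'_v$ are (character twists of) generic preunitary irreducible smooth representations of $\mathrm{GL}_n(F_v)$, so by 1 of Remark~\ref{weakly tempered} the Weil--Deligne representations $\mathrm{rec}_{F_v}(\pi_v|\mathrm{det}|_v^{\frac{1-n}{2}})$ and $\mathrm{rec}_{F_v}(\pi'_v|\mathrm{det}|_v^{\frac{1-n}{2}})$ are Frobenius semisimple and weakly tempered. Lemma~\ref{semisimple} then upgrades the isomorphism after semisimplification to $\mathrm{rec}_{F_v}(\pi_v|\mathrm{det}|_v^{\frac{1-n}{2}}) \cong \mathrm{rec}_{F_v}(\pi'_v|\mathrm{det}|_v^{\frac{1-n}{2}})$, hence $\pi_v \cong \pi'_v$ for every finite $v \nmid l$. (If one only wants almost all places, it is even simpler: at the places where $\pi_v$ and $\pi'_v$ are unramified one just compares the characteristic polynomials of $\mathrm{Frob}_v$ on $r_{\iota}(\pi)|_{G_{F_v}}$ and $r_{\iota}(\pi')|_{G_{F_v}}$ and uses the unramified local Langlands correspondence.)

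Finally I would invoke strong multiplicity one for cuspidal automorphic representations of $\mathrm{GL}_n$ (Jacquet--Shalika): since $\pi$ and $\pi'$ are cuspidal and $\pi_v \cong \pi'_v$ for all but finitely many $v$, we conclude $\pi \cong \pi'$, including at the finitely many remaining finite places and at the archimedean places. The only ingredient here not already contained in the excerpt is the classical strong multiplicity one theorem, and I do not expect any genuine obstacle: the rest is a formal consequence of Theorem~\ref{Ila Varma}, Proposition~\ref{purity}, and Lemma~\ref{semisimple}.
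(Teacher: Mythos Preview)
Your proposal is correct and follows essentially the same route as the paper: compare local components via Theorem~\ref{Ila Varma} and then invoke strong multiplicity one for $\mathrm{GL}_n$. The paper's proof is the terse version you sketch in your parenthetical remark (compare characteristic polynomials of $\mathrm{Frob}_v$ at almost all unramified $v \nmid l$); your extra step through Lemma~\ref{semisimple} to get $\pi_v \cong \pi'_v$ at \emph{all} finite $v \nmid l$ is correct but unnecessary, since strong multiplicity one only needs agreement at almost all places.
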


\begin{proof} If $r_{\iota}(\pi) \cong r_{\iota}(\pi')$, then $\mathrm{rec}_{F_v}(\pi_v|\mathrm{det}|_v^{\frac{1-n}{2}}) \cong \mathrm{rec}_{F_v}(\pi'_v|\mathrm{det}|_v^{\frac{1-n}{2}})$ and consequently $\pi_v \cong \pi_v'$ for almost all $v$. By strongly multiplicity one theorem, we have $\pi \cong \pi'$.  \end{proof}

\begin{prop}\label{base change}
    
Let $E/F$ be a solvable extension of CM fields and $n$ be a positive integer.

1 \ Let $\pi$ be a cohomological cuspidal automorphic representation of $\mathrm{GL}_n(\mathbb{A}_F)$ such that $r_{\iota}(\pi)|_{G_{E}}$ is irreducible.

Then $\mathrm{BC}_{E/F}(\pi)$ is a cohomological cuspidal automorphic representation of $\mathrm{GL}_n(\mathbb{A}_E)$ such that $r_{\iota}(\mathrm{BC}_{E/F}(\pi)) \cong r_{\iota}(\pi)|_{G_{E}}$.

2 \ Let $\rho : G_{F} \rightarrow \mathrm{GL}_n(\overline{\mathbb{Q}}_l)$ be a continuous representation and $\Pi$ be a cohomological cuspidal automorphic representation of $\mathrm{GL}_n(\mathbb{A}_E)$ such that $r_{\iota}(\Pi) \cong \rho|_{G_E}$. We assume that $\rho|_{G_E}$ is irreducible. Then there exists a cohomological cuspidal automorphic representation $\pi$ such that $\mathrm{BC}_{E/F}(\pi) = \Pi$ and $\rho \cong r_{\iota}(\pi)$.

\end{prop}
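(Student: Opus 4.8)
The plan is to reduce both statements to the case of a cyclic extension of prime degree by dévissage on the solvable tower, and then invoke the cyclic base change of Arthur--Clozel together with the compatibility of base change with the attached Galois representations. For part 1, write $E/F$ as a tower $F = F_0 \subset F_1 \subset \cdots \subset F_k = E$ with each $F_{i+1}/F_i$ cyclic of prime degree. It suffices to treat one step, so assume $E/F$ is cyclic of prime degree. Since $r_\iota(\pi)|_{G_E}$ is irreducible, in particular $\pi$ itself is not isomorphic to $\pi \otimes \chi$ for any nontrivial character $\chi$ of $\mathrm{Gal}(E/F)$ (otherwise $r_\iota(\pi)|_{G_E}$ would be reducible, being induced essentially from a representation of $G_E$); hence by the theory of cyclic base change for $\mathrm{GL}_n$ (Arthur--Clozel), $\mathrm{BC}_{E/F}(\pi)$ is cuspidal. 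It is cohomological because base change preserves the infinitesimal character at the archimedean places (the archimedean local base change is the obvious restriction of Langlands parameters). Finally, $r_\iota(\mathrm{BC}_{E/F}(\pi)) \cong r_\iota(\pi)|_{G_E}$: both are continuous semisimple representations of $G_E$, and by Theorem \ref{Ila Varma} they have isomorphic Weil--Deligne parameters (up to semisimplification, hence up to isomorphism since they are semisimple) at almost all finite places $w$ of $E$, because $\mathrm{rec}_{E_w}(\mathrm{BC}_{E/F}(\pi)_w) = \mathrm{rec}_{F_v}(\pi_v)|_{W_{E_w}}$ by the defining property of local base change; Chebotarev then forces the two Galois representations to be isomorphic.

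For part 2, again reduce to $E/F$ cyclic of prime degree $p$ with Galois group generated by $\sigma$. The representation $\Pi$ is a cuspidal automorphic representation of $\mathrm{GL}_n(\mathbb{A}_E)$ with $r_\iota(\Pi) \cong \rho|_{G_E}$. Since $\rho|_{G_E}$ is $\mathrm{Gal}(E/F)$-stable (indeed $\rho|_{G_E}$ extends to $\rho$ on $G_F$), we get $r_\iota({}^\sigma\Pi) \cong {}^\sigma(r_\iota(\Pi)) \cong r_\iota(\Pi)$, and so ${}^\sigma\Pi \cong \Pi$ by the strong multiplicity one statement of Proposition \ref{strong multiplicity one}. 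By the descent part of Arthur--Clozel cyclic base change, a $\mathrm{Gal}(E/F)$-invariant cuspidal automorphic representation of $\mathrm{GL}_n(\mathbb{A}_E)$ descends: there exist exactly $p$ cuspidal automorphic representations $\pi^{(0)}, \ldots, \pi^{(p-1)}$ of $\mathrm{GL}_n(\mathbb{A}_F)$, differing by twists by the characters of $\mathrm{Gal}(E/F)$, with $\mathrm{BC}_{E/F}(\pi^{(j)}) = \Pi$. Each $\pi^{(j)}$ is cohomological since $\mathrm{BC}$ on archimedean parameters is restriction and $\Pi_\infty$ is cohomological. It remains to pin down $j$ so that $r_\iota(\pi^{(j)}) \cong \rho$. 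For this, note that $r_\iota(\pi^{(j)})|_{G_E} \cong r_\iota(\Pi) \cong \rho|_{G_E}$, which is irreducible; hence $r_\iota(\pi^{(j)}) \cong \rho \otimes \psi_j$ for some character $\psi_j$ of $\mathrm{Gal}(E/F)$ (Mackey theory: two extensions of an irreducible $G_E$-representation to $G_F$ differ by a character of the quotient). Twisting $\pi^{(j)}$ by the automorphic character corresponding to $\psi_j^{-1}$ gives the desired $\pi$, and this twisted representation still base-changes to $\Pi$ because $\psi_j$ is trivial on $G_E$. Set $\pi := \pi^{(j)} \otimes (\psi_j^{-1}\text{-character})$.

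The main obstacle I anticipate is not any single deep input — cyclic base change and its compatibility with Galois representations are all available — but rather the careful bookkeeping in the dévissage: one must check at each step of the tower $F = F_0 \subset \cdots \subset F_k = E$ that the hypothesis ``$r_\iota(\pi)|_{G_E}$ irreducible'' (resp. ``$\rho|_{G_E}$ irreducible'') propagates to give irreducibility of the relevant restriction to $G_{F_{i+1}}$ for the intermediate cuspidal representation one has produced, so that the inductive hypothesis applies. This follows because irreducibility of $\rho|_{G_E}$ forces irreducibility of $\rho|_{G_{F_i}}$ for all $i$ (a subrepresentation of $\rho|_{G_{F_i}}$ would restrict to a subrepresentation of $\rho|_{G_E}$), and similarly for $r_\iota(\pi)$; but one should state this cleanly before running the induction. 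A secondary point requiring a line of care is the identification $r_\iota(\pi^{(j)}) \cong \rho \otimes \psi_j$: one invokes that any two lifts to $G_F$ of an irreducible representation of the index-$p$ normal subgroup $G_E$ differ by a one-dimensional character of $G_F/G_E$, which is elementary but should be cited or spelled out.
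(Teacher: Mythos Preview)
Your argument is correct and is the standard proof via Arthur--Clozel cyclic base change plus d\'evissage along a solvable tower; the paper itself does not give a proof but simply cites \cite[Proposition 6.5.13]{10}, whose content is exactly what you have written out. The points you flag as needing care (propagation of irreducibility along the tower, and the Clifford-theoretic identification of the two extensions of $\rho|_{G_E}$ up to a character of $G_F/G_E$) are indeed the only places requiring attention, and your treatment of them is adequate.
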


\begin{proof} See \cite[Propostion 6.5.13]{10}. \end{proof}

\begin{prop}\label{ordinary base change}

Let $E/F$ be a solvable extension of CM fields, $\iota : \overline{\mathbb{Q}}_l \stackrel{\sim}{\rightarrow} \mathbb{C}$ be an isomorphism of fields and $\pi$ be a cohomological cuspidal automorphic representation of $\mathrm{GL}_n(\mathbb{A}_F)$ such that $\mathrm{BC}_{E/F}(\pi)$ is cuspidal.

Then $\pi$ is $\iota$-ordinary if and only if $\mathrm{BC}_{E/F}(\pi)$ is $\iota$-ordinary.

\end{prop}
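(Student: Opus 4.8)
The plan is to reduce the statement to the valuation-theoretic criterion for $\iota$-ordinarity provided by $(3)$ of Lemma~\ref{iota-ordinary}. Write $\Pi := \mathrm{BC}_{E/F}(\pi)$, which by hypothesis is a cohomological cuspidal automorphic representation of $\mathrm{GL}_n(\mathbb{A}_E)$; if $\pi$ has weight $\iota\lambda$ with $\lambda \in (\mathbb{Z}^n_+)^{\mathrm{Hom}(F,\overline{\mathbb{Q}_l})}$, then $\Pi$ has weight $\iota\mu$ with $\mu_{\tau'} = \lambda_{\tau'|_F}$ for $\tau' \in \mathrm{Hom}(E,\overline{\mathbb{Q}_l})$, by the compatibility of base change with archimedean infinitesimal characters (this is standard and could be cited from the literature on base change). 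By $(3)$ of Lemma~\ref{iota-ordinary}, $\pi$ is $\iota$-ordinary if and only if for every $l$-adic place $v$ of $F$ the increasingly sorted multiset of $l$-adic valuations of the eigenvalues of $\iota^{-1}\mathrm{rec}_{F_v}(\pi_v|\mathrm{det}|_v^{\frac{1-n}{2}})(\mathrm{Frob}_v)$ equals the vector $\mathbf{t}_v := \big(\tfrac{1}{e_v}\sum_{\tau \in \mathrm{Hom}_{\mathbb{Q}_l}(F_v,\overline{\mathbb{Q}_l})}(\lambda_{\tau,n+1-i}+i-1)\big)_{i=1}^n$, and likewise $\Pi$ is $\iota$-ordinary if and only if for every $l$-adic place $w$ of $E$ the analogous condition holds with the vector $\mathbf{s}_w$ attached to $(E_w,\mu)$.

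First I would record two compatibilities. For $l$-adic places $w \mid v$ of $E/F$, the definition of base change together with $|\cdot|_w = |\cdot|_v \circ N_{E_w/F_v}$ and $\mathrm{Art}_{F_v}^{-1}|_{W_{E_w}} = N_{E_w/F_v}\circ\mathrm{Art}_{E_w}^{-1}$ gives
\[
\mathrm{rec}_{E_w}\big(\Pi_w|\mathrm{det}|_w^{\frac{1-n}{2}}\big) \;=\; \mathrm{rec}_{F_v}\big(\pi_v|\mathrm{det}|_v^{\frac{1-n}{2}}\big)\big|_{W_{E_w}}.
\]
Second, a direct computation using $e_w = e_v e_{w/v}$, $f_w = f_v f_{w/v}$, $|\mathrm{Hom}_{\mathbb{Q}_l}(E_w,\overline{\mathbb{Q}_l})| = e_w f_w$ and $\mu_{\tau'} = \lambda_{\tau'|_F}$ shows $\mathbf{s}_w = f_{w/v}\,\mathbf{t}_v$, where $f_{w/v}$ denotes the residue degree of $w$ over $v$.

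The crux is the following local fact: for a Weil--Deligne representation $r$ of $W_{F_v}$ that arises as $\mathrm{rec}_{F_v}$ of an irreducible smooth representation and a finite extension $E_w/F_v$, the multiset of $l$-adic valuations of the eigenvalues of $r|_{W_{E_w}}(\mathrm{Frob}_{E_w})$ is $f_{w/v}$ times the multiset of $l$-adic valuations of the eigenvalues of $r(\mathrm{Frob}_{F_v})$. I would prove this by decomposing the underlying Frobenius-semisimple $W_{F_v}$-representation of $r$ into irreducible constituents $\rho$ and treating each one. Since $\rho$ is smooth, $\rho(I_{F_v})$ is finite; as $\mathrm{Frob}_{F_v}$ normalizes $I_{F_v}$, a suitable power $\rho(\mathrm{Frob}_{F_v})^M$ centralizes $\rho(I_{F_v})$, hence is central in $\rho(W_{F_v})$, hence scalar by Schur's lemma, so all eigenvalues of $\rho(\mathrm{Frob}_{F_v})$ have one common $l$-adic valuation $s$ (this is the $l$-adic analogue of Lemma~\ref{supercuspidal absolute values}, proved the same way). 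Passing to the Galois closure $\widetilde E$ of $E_w/F_v$ and using Clifford theory, together with the observation that conjugation in $W_{F_v}$ carries a geometric Frobenius lift of $\widetilde E$ to another one, all irreducible constituents of $\rho|_{W_{\widetilde E}}$—and hence of $\rho|_{W_{E_w}}$—share a single Frobenius slope; evaluating $\det\rho$ on $\mathrm{Frob}_{E_w}$ via the local Artin map, and using that a uniformizer of $E_w$ has $F_v$-valuation $f_{w/v}$ under $N_{E_w/F_v}$, identifies this common slope as $f_{w/v}s$.

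Granting the crux, for each $l$-adic place $v$ of $F$ and each $l$-adic place $w\mid v$ of $E$ the valuation multiset attached to $\Pi_w$ is $f_{w/v}$ times the one attached to $\pi_v$; since multiplication by the positive integer $f_{w/v}$ commutes with increasing sorting and since $\mathbf{s}_w = f_{w/v}\mathbf{t}_v$, the condition ``(valuations at $v$)$=\mathbf{t}_v$'' is equivalent to ``(valuations at $w$)$=\mathbf{s}_w$''. As every $l$-adic place of $F$ lies below some $l$-adic place of $E$ and every $l$-adic place of $E$ lies above an $l$-adic place of $F$, quantifying over all such places yields that $\pi$ is $\iota$-ordinary if and only if $\Pi$ is $\iota$-ordinary. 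The main obstacle is the crux, i.e. controlling the behaviour of Frobenius slopes under local base change; the rest is bookkeeping with ramification and residue degrees and the arithmetic identity $\mathbf{s}_w = f_{w/v}\mathbf{t}_v$.
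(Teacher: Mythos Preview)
Your proposal is correct and follows exactly the paper's approach: both reduce the question to the valuation criterion in (3) of Lemma~\ref{iota-ordinary}. The paper's proof is the single line ``This follows by (3) of Lemma~\ref{iota-ordinary}'', leaving to the reader precisely the local slope computation under base change that you have carefully spelled out (the identity $\mathbf{s}_w = f_{w/v}\mathbf{t}_v$ together with your ``crux'' that Frobenius slopes multiply by $f_{w/v}$).
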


\begin{proof} This follows by (3) of Lemma \ref{iota-ordinary}. \end{proof}

\begin{prop}\label{extension of field}

Let $K$ be a number field, $K^{(a)}$ be a finite extension of $K$, $S$ be a finite set of places of $K$ ( note that $S$ may contain infinite places) and for any $v \in S$, $L_v/K_v$ be a finite Galois extension.

Then there exists a finite solvable extension $M/K$ linearly disjoint from $K^{(a)}$ over $K$ such that $M_u \cong L_v$ over $K_v$ for any $u|v \in S$. 

\end{prop}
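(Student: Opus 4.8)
The plan is to reduce the statement to the classical Grunwald--Wang theorem by an induction that removes one cyclic layer of each local Galois group at a time. I would first record that $\mathrm{Gal}(L_v/K_v)$ is solvable for every $v\in S$: for a finite place this is because the Galois group of a finite extension of a non-archimedean local field is solvable (an extension of the cyclic Galois group of the residue extension and the cyclic tame quotient by a pro-$p$ wild inertia group), and for an archimedean place it is trivial or cyclic of order $2$. Hence each $\mathrm{Gal}(L_v/K_v)$ has a composition series with factors of prime order; let $m$ be the maximum over $v\in S$ of the lengths of such series, so that $m=0$ exactly when $L_v=K_v$ for all $v$. The argument is then by induction on $m$, the case $m=0$ being trivial with $M=K$.

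For the inductive step, choose for each $v\in S$ a maximal proper normal subgroup $H_v\trianglelefteq\mathrm{Gal}(L_v/K_v)$ (with $H_v=1$ and $L_v^{H_v}=K_v$ when $L_v=K_v$); then $L_v^{H_v}/K_v$ is cyclic of prime degree $p_v$ (or of degree $1$), while $\mathrm{Gal}(L_v/L_v^{H_v})=H_v$ has composition length $\le m-1$. Put $n:=\mathrm{lcm}_{v\in S}\,p_v$, a squarefree integer, and pick via Chebotarev a place $v_0\notin S$ with $(K^{(a)})_{v_0}=K_{v_0}$. Applying the Grunwald--Wang theorem over $K$, I would obtain a cyclic extension $K'/K$ of degree $n$ such that $K'_u\cong L_v^{H_v}$ over $K_v$ for every $u\mid v$ with $v\in S$, and such that $K'_{v_0}$ is the unramified extension of $K_{v_0}$ of degree $n$. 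Since $n$ is squarefree, $8\nmid n$, so the exceptional (``special'') case of Grunwald--Wang does not arise and this step is unconditional. Moreover $K'/K$ is linearly disjoint from $K^{(a)}$ over $K$: as $K'/K$ is cyclic of squarefree degree, $K'\cap K^{(a)}$ is the compositum of the prime-degree subfields $K'_j\subseteq K'$ that it contains, but each $K'_j$ localizes at $v_0$ to the nontrivial unramified extension of $(K^{(a)})_{v_0}=K_{v_0}$, whence $K'_j\not\subseteq K^{(a)}$; so $K'\cap K^{(a)}=K$, and this gives linear disjointness because $K'/K$ is Galois.

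Finally I would invoke the inductive hypothesis over $K'$, applied to the finite extension $K^{(a)}K'/K'$, the set $S'$ of places of $K'$ lying over $S$, and at each $u\mid v$ the local Galois extension $L_v/L_v^{H_v}=L_v/K'_u$ (of composition length $\le m-1$); this yields a solvable $M/K'$, linearly disjoint from $K^{(a)}K'$ over $K'$, with $M_{u'}\cong L_v$ over $K'_u$ for $u'\mid u\mid v$. Then $M/K$ is as required: it is solvable, being a tower $K\subset K'\subset M$ of cyclic extensions; its completion at $u'\mid v$ equals $L_v$, since $M_{u'}/K_v$ is the tower $L_v/L_v^{H_v}/K_v$; and it is linearly disjoint from $K^{(a)}$ over $K$ by the standard tower property of linear disjointness ($K'$ disjoint from $K^{(a)}$ over $K$, and $M$ disjoint from $K'K^{(a)}$ over $K'$, force $[MK^{(a)}:K]=[M:K]\,[K^{(a)}:K]$). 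The only ingredient that is not purely formal is the Grunwald--Wang step, and the one delicate point there -- the special case of the theorem -- is sidestepped by arranging every layer of the tower to have squarefree degree; the rest (padding the composition series to a common length, tracking completions up the tower, and the auxiliary split place forcing linear disjointness) is routine bookkeeping.
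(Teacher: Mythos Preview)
Your proof is correct and follows the standard route to this result: induct on the maximal composition length of the local Galois groups, peel off one cyclic-of-prime-order layer at a time via Grunwald--Wang, and control linear disjointness by imposing an inert condition at an auxiliary place that splits completely in $K^{(a)}$. The only delicate point---avoiding the special case of Grunwald--Wang---is handled exactly as you say, since each layer has squarefree degree.

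The paper itself gives no argument for this proposition; it simply cites \cite[Lemma A.2.1]{CW}. Your proof is essentially the argument one finds there (and in similar appendices throughout the literature), so there is nothing to compare beyond noting that you have reconstructed the cited proof rather than merely invoking it.
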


\begin{proof} See \cite[Lemma A 2.1]{CW}. \end{proof}

\begin{cor}\label{extension of Q}

    Let $K$ be a number field, $K^{(a)}$ be a finite extension of $\mathbb{Q}$, $S, T$ be finite sets of primes such that $S \cap T$ is empty and for any $p \in S$ and any $v|p$, $L_v/K_v$ be a finite extension.
    
    Then there exists a finite totally real solvable extension $M/\mathbb{Q}$ linearly disjoint from $K^{(a)}$ over $\mathbb{Q}$ such that there exists an injection $L_v \hookrightarrow (KM)_u$ over $K_v$ for any $u|v|p \in S$ and any $p \in T$ splits completely in $M$.
    
    \end{cor}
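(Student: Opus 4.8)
The plan is to reduce this to Proposition~\ref{extension of field} applied with base field $\mathbb{Q}$ in place of $K$, after repackaging all the given data as local conditions at places of $\mathbb{Q}$. The requirement that $M$ be totally real becomes the condition $M_{\mathfrak{p}} \cong \mathbb{R}$ at the archimedean place, the requirement that each $p \in T$ split completely becomes the condition $M_{\mathfrak{p}} \cong \mathbb{Q}_p$, and the collection $\{ L_v / K_v \}_{v \mid p}$ for $p \in S$ will be absorbed into a single finite Galois extension of $\mathbb{Q}_p$.

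First I would build, for each $p \in S$, a finite Galois extension $N_p / \mathbb{Q}_p$ inside a fixed algebraic closure $\overline{\mathbb{Q}_p}$ which dominates all the $L_v$. Concretely, for each place $v \mid p$ of $K$ fix an embedding $K \hookrightarrow \overline{\mathbb{Q}_p}$ inducing $v$; then $L_v$ is a finite extension of $\mathbb{Q}_p$, and I let $\widehat{L_v} \subset \overline{\mathbb{Q}_p}$ be its Galois closure over $\mathbb{Q}_p$ and set $N_p := \prod_{v \mid p} \widehat{L_v}$, the compositum inside $\overline{\mathbb{Q}_p}$. This $N_p$ is a finite Galois extension of $\mathbb{Q}_p$, and by construction it contains $K_v$ and $L_v$ compatibly for each $v \mid p$, so there is an injection $L_v \hookrightarrow N_p$ of $K_v$-algebras. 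Now I would apply Proposition~\ref{extension of field} with base field $\mathbb{Q}$, auxiliary finite extension $K^{(a)}$, the finite set of places $\{\infty\} \cup S \cup T$ of $\mathbb{Q}$, and the (Galois) local extensions $\mathbb{R}/\mathbb{R}$ at $\infty$, $N_p / \mathbb{Q}_p$ at each $p \in S$, and $\mathbb{Q}_p / \mathbb{Q}_p$ at each $p \in T$. This yields a finite solvable extension $M / \mathbb{Q}$, linearly disjoint from $K^{(a)}$ over $\mathbb{Q}$, whose archimedean completions are all $\mathbb{R}$ (so $M$ is totally real), in which every $p \in T$ splits completely, and with $M_{\mathfrak{p}} \cong N_p$ over $\mathbb{Q}_p$ for every place $\mathfrak{p} \mid p$ and every $p \in S$.

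It then remains to check the condition over $S$. Fix $p \in S$, a place $v \mid p$ of $K$, and a place $u$ of $KM$ above $v$, and let $\mathfrak{p}$ be the place of $M$ below $u$; then $(KM)_u$ is the compositum of $K_v$ and $M_{\mathfrak{p}}$ inside $\overline{\mathbb{Q}_p}$. Because $M_{\mathfrak{p}} \cong N_p$ and $N_p / \mathbb{Q}_p$ is Galois, the image of $M_{\mathfrak{p}}$ in $\overline{\mathbb{Q}_p}$ is precisely $N_p$, independently of the chosen embedding, so $(KM)_u = K_v N_p$, which contains $L_v$ over $K_v$ by the construction of $N_p$; this is the desired injection $L_v \hookrightarrow (KM)_u$ over $K_v$. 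I expect the only delicate point to be the bookkeeping of embeddings required to make the phrase ``over $K_v$'' precise --- keeping the copies of $K_v$, $L_v$, and $M_{\mathfrak{p}}$ inside $\overline{\mathbb{Q}_p}$ mutually compatible --- which is exactly why one passes to the Galois closures $\widehat{L_v}$ and to the Galois extension $N_p$, each of which has a unique copy in $\overline{\mathbb{Q}_p}$. If needed for applications one can moreover arrange $M$ to be linearly disjoint from $K$ over $\mathbb{Q}$ by enlarging the auxiliary field to the compositum of $K^{(a)}$ with the Galois closure of $K / \mathbb{Q}$.
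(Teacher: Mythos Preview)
Your proposal is correct and follows essentially the same approach as the paper: both construct, for each $p \in S$, the Galois closure over $\mathbb{Q}_p$ of the compositum of the $L_v$'s (your $N_p$ is the paper's $L_p$), apply Proposition~\ref{extension of field} over $\mathbb{Q}$ with these local conditions together with $\mathbb{Q}_p/\mathbb{Q}_p$ at $p \in T$ and $\mathbb{R}/\mathbb{R}$ at $\infty$, and then use the Galois property of $N_p$ to align the embedding of $K_v$ coming from $u$ with the one used to place $L_v$ inside $N_p$. The paper makes this last ``bookkeeping'' step slightly more explicit than you do, writing down a $\sigma \in \mathrm{Gal}(M_w/\mathbb{Q}_p)$ such that $\sigma \circ f : L_v \hookrightarrow M_w \to (KM)_u$ is a $K_v$-morphism; you should spell this out rather than leaving it at ``contains $L_v$ over $K_v$ by the construction of $N_p$'', since the copy of $K_v$ inside $N_p$ determined by $u$ need not be the one you fixed when building $N_p$.
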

    
    \begin{proof} We may assume that $K^{(a)}$ is Galois over $\mathbb{Q}$.

        After extending $T$, we may assume that for any Galois extension $K'$ of $\mathbb{Q}$ contained in $K^{(a)}$, there exists a prime $p \in T$ such that $p$ doesn't split in $K'$.
        
        For any $p \in S$, we fix an embedding $L_v \hookrightarrow \overline{\mathbb{Q}_p}$ over $\mathbb{Q}_p$ for any $v|p$ and $L_p$ denotes the Galois closure of $L_v$'s over $\mathbb{Q}_p$. By Proposition \ref{extension of field}, there exists a finite totally real solvable extension $M/\mathbb{Q}$ linearly disjoint from $K^{(a)}$ over $\mathbb{Q}$ such that for any $p \in S$ and $w|p$, we have $M_w \cong L_p$ over $\mathbb{Q}_p$ and for any $p \in T$ and $w|p$, we have $M_w \cong \mathbb{Q}_p$ over $\mathbb{Q}_p$. Since any prime $p \in T$ splits completely in $M$, $M$ is linearly disjoint from $K^{(a)}$ over $\mathbb{Q}$. Fix a place $u$ of $KM$ and let $v$, $w$ and $p$ be the place of $K$, $M$ and $\mathbb{Q}$ respectively. Then we obtain a $\mathbb{Q}_p$-embedding $f : L_v \hookrightarrow M_w$. Since $M_w$ is Galois over $\mathbb{Q}_p$, the image of the natural embedding $K_v \hookrightarrow (MK)_u$ is contained in $M_w$. Therefore, there exists $\sigma \in \mathrm{Gal}(M_w/\mathbb{Q}_p)$ such that $\sigma \circ f : L_v \hookrightarrow M_w \rightarrow (MK)_u$ is a $K_v$-morphism. This implies the result.  \end{proof}

\begin{lem} \label{imaginary quadratic}

Let $K$ be a number field, $K^{(a)}$ be a finite extension of $\mathbb{Q}$ and $S$ be a finite set of odd primes.

Then there exist odd primes $p_1, p_2, p_3$ such that $2$ splits in $\mathbb{Q}(\sqrt{-p_1})$, any prime in $S$ splits in $\mathbb{Q}(\sqrt{-p_i})$ for any $i$, $\mathbb{Q}(\sqrt{-p_1}, \sqrt{-p_2}, \sqrt{-p_3})$ is linearly disjoint from $K^{(a)}$ over $\mathbb{Q}$ and any prime ramified in $K(\sqrt{-p_1}, \sqrt{-p_2}, \sqrt{-p_3})$ splits in one of the fields $\mathbb{Q}(\sqrt{-p_1})$, $\mathbb{Q}(\sqrt{-p_2})$ and $\mathbb{Q}(\sqrt{-p_3})$.

\end{lem}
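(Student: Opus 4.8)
The plan is to translate every requirement on $p_1,p_2,p_3$ into a splitting condition in a quadratic field, hence into a congruence condition, and then to produce the three primes one after another using Dirichlet's theorem on primes in arithmetic progressions. First I would record the elementary facts that, for squarefree $d$, the prime $2$ splits in $\mathbb{Q}(\sqrt d)$ iff $d\equiv 1\pmod 8$, and that for distinct odd primes $p,q$ the prime $q$ splits in $\mathbb{Q}(\sqrt p)$ iff $\left(\frac pq\right)=1$. Writing $R_K$ for the finite set of odd primes ramifying in $K$, the primes that can ramify in $M:=K(\sqrt{-p_1},\sqrt{-p_2},\sqrt{-p_3})$ all lie in $\{2\}\cup\{p_1,p_2,p_3\}\cup R_K$. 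Thus, besides arranging that $p_1,p_2,p_3$ be distinct and unramified in $K^{(a)}$, it suffices to arrange: that $p_1\equiv 7\pmod 8$ (so $2$ splits in $\mathbb{Q}(\sqrt{-p_1})$); that $\left(\frac{p_i}{q}\right)=1$ for every $q\in S$ and $i=1,2,3$ (so every $q\in S$ splits in each $\mathbb{Q}(\sqrt{p_i})$, which in particular forces $p_i\notin S$); that $p_2\equiv 1\pmod 8$ (so $2$ splits in $\mathbb{Q}(\sqrt{p_2})$, which also handles a possible ramification of $2$ in $K$ or in $\mathbb{Q}(\sqrt{-p_2}),\mathbb{Q}(\sqrt{-p_3})$); that $p_1$ splits in $\mathbb{Q}(\sqrt{p_2})$ and that $p_2,p_3$ split in $\mathbb{Q}(\sqrt{p_1})$; and that $\left(\frac{p_3}{r}\right)=1$ for every $r\in R_K$ (so every $r\in R_K$ splits in $\mathbb{Q}(\sqrt{p_3})$).

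Then I would choose the primes successively. For $p_1$ I would impose $p_1\equiv 7\pmod 8$, $\left(\frac{p_1}{q}\right)=1$ for all $q\in S$, and $p_1\notin R_K$ with $p_1\nmid\mathrm{disc}(K^{(a)})$: each of the first conditions is a union of residue classes prime to the respective modulus $8$ or $q$, the moduli being pairwise coprime, so by the Chinese remainder theorem and Dirichlet infinitely many primes $p_1$ satisfy all of them (the finitely many exclusions leaving infinitely many choices). For $p_2$ I would impose $p_2\equiv 1\pmod 8$, $\left(\frac{p_2}{q}\right)=1$ for $q\in S$, $\left(\frac{p_2}{p_1}\right)=1$ (a union of unit classes modulo $p_1$), together with $p_2\notin R_K\cup\{p_1\}$ and $p_2\nmid\mathrm{disc}(K^{(a)})$; again Dirichlet applies. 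Here $\left(\frac{p_2}{p_1}\right)=1$ says $p_1$ splits in $\mathbb{Q}(\sqrt{p_2})$, and since $p_2\equiv 1\pmod 4$ quadratic reciprocity gives $\left(\frac{p_1}{p_2}\right)=\left(\frac{p_2}{p_1}\right)=1$, so $p_2$ automatically splits in $\mathbb{Q}(\sqrt{p_1})$. Finally, for $p_3$ I would impose $\left(\frac{p_3}{q}\right)=1$ for $q\in S$, $\left(\frac{p_3}{r}\right)=1$ for $r\in R_K$, the condition that $p_3$ split in $\mathbb{Q}(\sqrt{p_1})$ (a congruence modulo $4p_1$, that being the discriminant of $\mathbb{Q}(\sqrt{p_1})$ since $p_1\equiv 3\pmod 4$), together with $p_3\notin\{p_1,p_2\}$ and $p_3\nmid\mathrm{disc}(K^{(a)})$; these are conditions to the pairwise coprime moduli $\{q\}_{q\in S}$, $\{r\}_{r\in R_K}$ and $4p_1$, each a union of unit classes, so Dirichlet produces such a $p_3$. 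By construction each prime of $\{2\}\cup\{p_1,p_2,p_3\}\cup R_K$, hence each prime ramifying in $M$, splits in one of $\mathbb{Q}(\sqrt{p_1}),\mathbb{Q}(\sqrt{p_2}),\mathbb{Q}(\sqrt{p_3})$, while every $q\in S$ splits in all three.

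It then remains to verify that $L:=\mathbb{Q}(\sqrt{-p_1},\sqrt{-p_2},\sqrt{-p_3})$ is linearly disjoint from $K^{(a)}$ over $\mathbb{Q}$, equivalently---since $L/\mathbb{Q}$ is Galois---that $L\cap K^{(a)}=\mathbb{Q}$. As $p_1,p_2,p_3$ are distinct primes, $-p_1,-p_2,-p_3$ are independent in $\mathbb{Q}^\times/(\mathbb{Q}^\times)^2$, so $\mathrm{Gal}(L/\mathbb{Q})\cong(\mathbb{Z}/2\mathbb{Z})^3$ and every nontrivial subfield of $L$ is multiquadratic; hence $L\cap K^{(a)}$ can be nontrivial only if some quadratic subfield $\mathbb{Q}(\sqrt d)$ of $L$---with $d$ a squarefree representative of one of $-p_1,-p_2,-p_3,p_1p_2,p_1p_3,p_2p_3,-p_1p_2p_3$---lies inside $K^{(a)}$. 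But each of these fields is ramified at one of $p_1,p_2,p_3$, which we arranged to be unramified in $K^{(a)}$, so no such subfield can occur and $L\cap K^{(a)}=\mathbb{Q}$. I expect no genuine difficulty here beyond bookkeeping: the only point that needs care is to impose the finitely many congruence conditions to pairwise coprime moduli and to keep the resulting intersection of unit classes non-vacuous, which is exactly why the primes are chosen in the order $p_1$, then $p_2$, then $p_3$.
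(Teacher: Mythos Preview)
Your proposal is correct and follows essentially the same strategy as the paper: translate each requirement into congruence conditions and construct $p_1,p_2,p_3$ sequentially via Dirichlet's theorem. Two minor points of comparison are worth noting. First, the paper's proof actually arranges that primes in $S$ (and ramified primes of $K$) split in $\mathbb{Q}(\sqrt{-p_i})$ rather than $\mathbb{Q}(\sqrt{p_i})$; the statement as printed appears to contain a sign typo, and you have proved the version as literally stated, which is equally fine (and the arguments are interchangeable up to adjusting the congruences). Second, for linear disjointness the paper uses a splitting argument---it first enlarges $S$ to a set $T_1$ so that every nontrivial Galois subextension of $K^{(a)}$ has a prime in $T_1$ that does not split, and then arranges all of $T_1$ to split completely in $\mathbb{Q}(\sqrt{-p_1},\sqrt{-p_2},\sqrt{-p_3})$---whereas you use a ramification argument, observing that every quadratic subfield of $L$ is ramified at one of the $p_i$, which you chose unramified in $K^{(a)}$. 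Both arguments are standard and equally short; yours avoids the preliminary enlargement of $S$.
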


\begin{proof}

We may assume that $K^{(a)}$ is Galois over $\mathbb{Q}$. Let $T_1$ be a finite set of odd primes containing $S$ and all primes ramified in $K$. After extending $T_1$, we may assume that for any Galois extension $K'$ of $\mathbb{Q}$ contained in $K^{(a)}$, there exists an odd prime $p \in T_1$ such that $p$ doesn't split in $K'$. We can take an odd prime $p_1 \notin T_1$ such that $p_1 \equiv -1 \mod 8$ and $\chi_q(-p_1) = 1$ for any odd primes $q \in T_1$. (Here, $\chi_q : (\mathbb{Z}/q\mathbb{Z})^{\times} \rightarrow \mathbb{C}^{\times}$ denotes the unique quadratic character.) Then any prime in $T_1 \cup \{ 2 \}$ splits in $\mathbb{Q}(\sqrt{-p_1})$. We take an odd prime $p_2 \notin T_1 \cup \{ 2, p_1 \}$ such that $p_2 \equiv 1 \mod 4$ and $\chi_q(-p_2) = 1$ for any odd primes $q \in T_1 \cup \{ p_1 \}$. Then any prime in $T_1 \cup \{ p_1 \}$ splits in $\mathbb{Q}(\sqrt{-p_2})$. We also take an odd prime $p_3 \notin T_1 \cup \{ 2, p_1, p_2 \}$ such that $p_3 \equiv 1 \mod 4$ and $\chi_q(-p_3) = 1$ for any odd primes $q \in T_1 \cup \{ p_1, p_2 \}$. Then any prime in $T_1 \cup \{p_1, p_2\}$ splits in $\mathbb{Q}(\sqrt{-p_3})$. Moreover, $p_3$ splits in $\mathbb{Q}(\sqrt{-p_2})$. Moreover, $\mathbb{Q}(\sqrt{-p_1}, \sqrt{-p_2}, \sqrt{-p_3})$ is linearly disjoint from $K^{(a)}$ over $\mathbb{Q}$ since any prime in $T_1$ splits completely in $\mathbb{Q}(\sqrt{-p_1}, \sqrt{-p_2}, \sqrt{-p_3})$. \end{proof}

\begin{cor} \label{imaginary quadratic 2}

Let $K^{(a)}$ be a finite extension of $\mathbb{Q}$ and $S$ be a finite set of primes. Then there exist odd primes $p$ such that all primes in $S$ splits in $\mathbb{Q}(\sqrt{-p})$ and $\mathbb{Q}(\sqrt{-q})$ and moreover $K^{(a)}$ is linearly disjoint over $\mathbb{Q}$.
    
\end{cor}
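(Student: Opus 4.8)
The plan is to reprove this by the same Dirichlet--quadratic-reciprocity argument as Lemma~\ref{imaginary quadratic}, which here needs to be carried out for only one imaginary quadratic field and hence simplifies considerably. Concretely, I would construct an odd prime $p$, lying below no finite place at which $K^{(a)}/\mathbb{Q}$ ramifies, such that every prime of $S$ splits in $\mathbb{Q}(\sqrt{-p})$, and then check that such a $p$ does the job; in particular $p \notin S$ automatically, since the splitting conditions below force $p$ to be prime to every element of $S$.

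First I would reduce the linear disjointness to a ramification condition. Let $\widetilde{K}$ be the Galois closure of $K^{(a)}$ over $\mathbb{Q}$; only finitely many primes ramify in $\widetilde{K}$. For an odd prime $p$, the quadratic field $L_p := \mathbb{Q}(\sqrt{-p})$ is Galois over $\mathbb{Q}$ and ramified at $p$, as is $\mathbb{Q}(\sqrt{p})$; so if $p$ is unramified in $\widetilde{K}$, then $L_p \not\subseteq \widetilde{K}$, whence $L_p \cap K^{(a)} = \mathbb{Q}$, and since $L_p/\mathbb{Q}$ is Galois this forces $L_p$ and $K^{(a)}$ to be linearly disjoint over $\mathbb{Q}$ (and the same holds for $\mathbb{Q}(\sqrt{p})$). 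Thus it suffices to find an odd prime $p$ unramified in $\widetilde{K}$ such that every prime of $S$ splits in $\mathbb{Q}(\sqrt{-p})$.

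Next I would produce such a $p$ by Dirichlet's theorem on primes in arithmetic progressions. Write $S = S_{\mathrm{odd}} \sqcup (S \cap \{2\})$. For an odd prime $q$, using the quadratic character $\chi_q$ of Lemma~\ref{imaginary quadratic}, the prime $q$ splits in $\mathbb{Q}(\sqrt{-p})$ exactly when $\chi_q(-p) = 1$, a nonvacuous condition on $p \bmod q$ (which in particular requires $q \nmid p$); and $2$ splits in $\mathbb{Q}(\sqrt{-p})$ exactly when $p \equiv 7 \pmod 8$. Imposing one such condition for each $q \in S_{\mathrm{odd}}$, together with a fixed odd residue modulo $8$ (the class $7$ if $2 \in S$, and e.g. the class $3$ otherwise), the Chinese remainder theorem combines these into a single residue class prime to $N := 8\prod_{q \in S_{\mathrm{odd}}} q$, which by Dirichlet contains infinitely many primes $p$; discarding the finitely many of them that ramify in $\widetilde{K}$ leaves an odd prime $p$ with all the required properties.

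There is no real obstacle here. The only two points needing care are that the congruence conditions attached to distinct primes of $S$ are compatible --- which is automatic because they are imposed modulo pairwise coprime moduli --- and the use of the standard fact that a quadratic, hence Galois, extension $L/\mathbb{Q}$ is linearly disjoint from $K^{(a)}$ over $\mathbb{Q}$ if and only if $L \cap K^{(a)} = \mathbb{Q}$, i.e. if and only if $L$ is not a subfield of the Galois closure of $K^{(a)}$.
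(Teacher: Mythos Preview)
Your argument is correct and is precisely the simplified version of the proof of Lemma~\ref{imaginary quadratic} that the paper has in mind: the paper's own proof is the one-line observation that the corollary is contained in Lemma~\ref{imaginary quadratic} (take $p=p_1$, noting that the construction there already forces $2$ to split in $\mathbb{Q}(\sqrt{-p_1})$ and gives linear disjointness of the larger field, hence of $\mathbb{Q}(\sqrt{-p_1})$). Your write-up simply unfolds that citation into the underlying Dirichlet/quadratic-reciprocity step, so the two approaches coincide.
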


\begin{proof} This is contained in Lemma \ref{imaginary quadratic}. \end{proof}

\begin{lem}\label{quadratic extension}

Let $K$ be a number field, $S_1$ be a finite set of places of $K$ and $S_2$ be a finite set of finite places of $K$ such that $S_1$ doesn't contain any $2$-adic places and $S_1 \cap S_2$ is empty.

Then there exists a quadratic extension $M/K$ such that all places in $S_1$ splits in $M$ and all places in $S_2$ ramifies in $M$. 

\end{lem}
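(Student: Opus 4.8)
\textbf{Proof proposal for Lemma \ref{quadratic extension}.}

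The plan is to construct the quadratic extension $M/K$ by class field theory, exhibiting a suitable open subgroup of the idele class group $\mathbb{A}_K^{\times}/K^{\times}$ of index $2$ whose local conditions at the places in $S_1 \cup S_2$ are the prescribed splitting/ramification behavior. First I would fix, for each finite place $v \in S_2$, a ramified quadratic character $\chi_v : K_v^{\times} \to \{\pm 1\}$; such a character exists since $K_v^{\times}$ has a subgroup of index $2$ not containing $\mathcal{O}_{K_v}^{\times}$ (indeed, for $v$ not $2$-adic one may take the unramified-times-sign twist, and even at $2$-adic places $K_v^{\times}/(K_v^{\times})^2$ has order $>2$, so a ramified quadratic character always exists). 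For each place $v \in S_1$ I impose the trivial local character $\chi_v = 1$ on $K_v^{\times}$ (this forces $v$ to split, and is automatically consistent with $v$ being real or complex since the trivial character is a valid local component). At all remaining places I leave the local component unconstrained.

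Next I would package these into a global quadratic character. Consider the group $\mathrm{Hom}(\mathbb{A}_K^{\times}/K^{\times}, \{\pm 1\})$, which by global class field theory is identified with $\mathrm{Hom}(G_K^{\mathrm{ab}}, \{\pm 1\})$ and hence with the set of (at most) quadratic extensions of $K$. The key input is that the natural restriction map from global quadratic characters to the finite product $\prod_{v \in S_1 \cup S_2} \mathrm{Hom}(K_v^{\times}, \{\pm 1\})$ has image of finite index; more precisely, by the Grunwald--Wang theorem (the quadratic, non-special case — the places $S_1$ are not $2$-adic and we do not need a global character of exponent $2$ with a prescribed behavior at the "special" set, so the exceptional case does not arise here), any prescribed finite collection of local quadratic characters is realized by some global quadratic character $\chi$ of $\mathbb{A}_K^{\times}/K^{\times}$. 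Then $M := \overline{K}^{\ker \chi}$ is the desired field: it is quadratic over $K$ provided $\chi \neq 1$, which holds because $\chi_v$ is ramified (hence nontrivial) for $v \in S_2$ and $S_2$ may be taken nonempty — or, if $S_2 = \emptyset$, one simply picks any nontrivial global quadratic character that is trivial at all $v \in S_1$, which exists by the same Grunwald--Wang argument together with the fact that $K$ admits infinitely many quadratic extensions. By construction $v$ splits in $M$ for $v \in S_1$ (local component trivial) and $v$ ramifies in $M$ for $v \in S_2$ (local component ramified).

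The main obstacle is the correct invocation of Grunwald--Wang: one must check that we are not in the special case, which would require $2 \in S_{\mathrm{special}}$ and certain $2$-adic places to be constrained in a way incompatible with a global exponent-$2$ character. Since the hypothesis explicitly excludes $2$-adic places from $S_1$ and places $S_2$ disjoint from $S_1$, and since ramified quadratic characters at $2$-adic places in $S_2$ are harmless (we only need their existence locally, and the global realizability of a ramified-at-$S_2$, split-at-$S_1$ quadratic character), the special case is avoided and the classical approximation theorem for quadratic characters suffices. Alternatively, and perhaps more cleanly for the reader, I would give the elementary construction: choose a nonzero $\alpha \in K^{\times}$ that is a square in $K_v^{\times}$ for all $v \in S_1$ and a non-square (indeed a uniformizer times a unit in an odd-residue-characteristic situation, adjusted at even places) in $K_v^{\times}$ for all $v \in S_2$, using strong approximation in $\mathbb{A}_K^{\times}$ to find such an $\alpha$; then $M = K(\sqrt{\alpha})$ works, with the place-splitting behavior read off from whether $\alpha$ is a local square. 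The verification that the local conditions at $S_1 \cup S_2$ can be simultaneously met by a single global $\alpha$ is exactly where strong approximation enters, and is the only nontrivial point.
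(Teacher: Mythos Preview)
Your proposal is correct, but the paper's proof is far more elementary than the Grunwald--Wang route you lead with: it simply uses weak approximation to find $x \in \mathcal{O}_K$ with $|x-1|_v < 1$ for $v \in S_1$ and $|x-\varpi_v|_v < q_v^{-1}$ for $v \in S_2$, and sets $M = K(\sqrt{x})$. The hypothesis that $S_1$ contains no $2$-adic places is used exactly to guarantee $1+\mathfrak{p}_v \subset (K_v^{\times})^2$ at finite $v \in S_1$ (and $|x-1|_v<1$ gives $x>0$ at real $v$), while $v(x)=1$ at $v \in S_2$ forces ramification regardless of residue characteristic. This is essentially the ``elementary construction'' you sketch at the end, except that ordinary weak approximation suffices---no strong approximation in $\mathbb{A}_K^{\times}$ is needed. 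Your class-field-theory argument works and has the virtue of making the local conditions transparent, but it is heavier machinery than required; note also that for exponent $2$ the Grunwald--Wang special case never occurs, so your caution there, while not wrong, is unnecessary.
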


\begin{proof} We may assume that $S_2$ is nonempty. By approximation theorem, there exists $x \in \mathcal{O}_K$ such that $|x - 1|_v < 1$ for any $v \in S_1$ and $|x - \varpi_v|_{v} < q_v^{-1}$ for any $v \in S_2$. Then $M:=K(\sqrt{x})$ satisfies the conditions of the lemma. \end{proof}

\begin{prop}\label{cyclic extension}

Let $K$ be a number field, $K^{(a)}$ be a finite extension of $K$, $S$ be a finite set of places of $K$ (note that $S$ may contain infinite places) and $N$ be a positive integer.

Then there exists a cyclic extension $M/K$ of degree $N$ linearly disjoint from $K^{(a)}$ over $K$ such that all $v \in S$ split completely in $M$.

\end{prop}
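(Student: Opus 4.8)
The plan is to realise $M$ as a compositum $LK$, where $L/\mathbb{Q}$ is an explicit cyclic extension of degree $N$ sitting inside a cyclotomic field. The cyclotomic description makes it painless to control ramification (for the linear disjointness) and, simultaneously, the local behaviour at every place of $S$ — including $2$-adic and archimedean places, where a blunt appeal to Grunwald--Wang would have to worry about the special case. We may assume $N>1$, the case $N=1$ being trivial ($M=K$).

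For each finite place $v\in S$ let $p_v$ be the rational prime below $v$, and set $L_0:=\mathbb{Q}\bigl(\zeta_{2N},\{p_v^{1/N}\}_{v\in S,\,v\nmid\infty}\bigr)$, a finite Galois extension of $\mathbb{Q}$. By Chebotarev's density theorem infinitely many primes $q$ split completely in $L_0$; since only finitely many primes ramify in $K^{(a)}$ or equal some $p_v$, I would choose such a $q$ which is moreover unramified in $K^{(a)}$ and distinct from every $p_v$. Splitting completely in $L_0$ forces $q\equiv 1\pmod{2N}$ and, for every finite $v\in S$, that $q$ splits completely in $\mathbb{Q}(\zeta_N,p_v^{1/N})$; since $q\nmid Np_v$ this is equivalent to $p_v$ being an $N$-th power residue modulo $q$.

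Now $\mathrm{Gal}(\mathbb{Q}(\zeta_q)/\mathbb{Q})\cong(\mathbb{Z}/q\mathbb{Z})^{\times}$ is cyclic of order $q-1$ with $N\mid q-1$, so there is a unique subfield $L\subseteq\mathbb{Q}(\zeta_q)$ with $[L:\mathbb{Q}]=N$; it is cyclic over $\mathbb{Q}$, ramified only at $q$, and totally real because $2N\mid q-1$ forces $-1$ into the index-$N$ subgroup $H$ fixing $L$. Put $M:=LK$. It remains to check three things. (i) $L\cap K^{(a)}$ is a subextension of $L/\mathbb{Q}$ that is unramified at $q$ (as $q$ is unramified in $K^{(a)}$) and at all other primes (as these are unramified already in $L$), hence $L\cap K^{(a)}=\mathbb{Q}$ by Minkowski; since $L/\mathbb{Q}$ is Galois this gives linear disjointness of $L$ and $K^{(a)}$ over $\mathbb{Q}$, and as $K\subseteq K^{(a)}$ one concludes $[M:K]=[L:\mathbb{Q}]=N$ with $M/K$ cyclic and $M$ linearly disjoint from $K^{(a)}$ over $K$. (ii) For finite $v\in S$, the Frobenius of $p_v$ in $\mathrm{Gal}(\mathbb{Q}(\zeta_q)/\mathbb{Q})=(\mathbb{Z}/q\mathbb{Z})^{\times}$ is the class of $p_v$, which lies in the unique index-$N$ subgroup (the $N$-th powers) precisely because $p_v$ is an $N$-th power residue; hence $p_v$ splits completely in $L$, so under the embedding $\overline{\mathbb{Q}}\hookrightarrow\overline{\mathbb{Q}_{p_v}}$ determined by $v$ one has $L\subseteq\mathbb{Q}_{p_v}\subseteq K_v$, and therefore $v$ splits completely in $M$. (iii) For $v\mid\infty$ in $S$, $L$ being totally real gives $M_w=K_v$ for $w\mid v$, so $v$ splits completely in $M$.

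The only substantive input is Chebotarev's density theorem applied to $L_0$; everything else is bookkeeping. The point where care is needed — and which the totally real, singly-ramified choice of $L$ resolves uniformly — is that the degree is exactly $N$ and that all places of $S$, in particular the $2$-adic and infinite ones, split completely; no genericity or Grunwald--Wald--type hypothesis on $S$ is required since the local conditions imposed are all of ``split completely'' type.
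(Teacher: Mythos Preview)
Your proof is correct. The paper does not give its own argument for this proposition; it simply cites \cite[Lemma A.2.2]{CW}. Your construction---choosing via Chebotarev a prime $q\equiv 1\pmod{2N}$ with each $p_v$ an $N$-th power modulo $q$, then taking $L$ to be the degree-$N$ subfield of $\mathbb{Q}(\zeta_q)$ and $M=LK$---is a standard and fully self-contained route. The key points (that $L$ is totally real because $2N\mid q-1$ forces $-1$ into the index-$N$ subgroup, that $L$ is ramified only at $q$ so $L\cap K^{(a)}=\mathbb{Q}$ by Minkowski, and that complete splitting of $p_v$ in $L$ propagates to complete splitting of $v$ in $LK$) are all handled correctly, and the passage from linear disjointness of $L$ and $K^{(a)}$ over $\mathbb{Q}$ to linear disjointness of $LK$ and $K^{(a)}$ over $K$ is valid since $K\subset K^{(a)}$.
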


\begin{proof} See \cite[Lemma A.2.2]{CW}. \end{proof}

\subsection{Automorphy lifting and local-global compatibility}

We fix a CM field $F$ and a positive integer $n$ in this subsection.

Let us recall some useful lemmas.

\begin{prop}(Auslander-Buchsbaum formula) \label{Auslander-Buchsbaum formula}

    Let $S$ be a regular local ring and $M$ be a finite $S$-module. Then we have $\mathrm{projdim}_{S}M + \mathrm{depth}_{S}M = \mathrm{dim} \, S.$
        
    \end{prop}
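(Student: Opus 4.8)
The statement to prove is the Auslander--Buchsbaum formula: for a regular local ring $S$ and a finite $S$-module $M$, we have $\mathrm{projdim}_S M + \mathrm{depth}_S M = \mathrm{dim}\, S$. This is a classical result, so the plan is simply to cite the standard reference rather than reproduce the proof. The cleanest route is to invoke \cite{Mat} (Matsumura, \emph{Commutative Ring Theory}), where this appears as Theorem 19.1 (or the nearby numbering), noting that a regular local ring is in particular Noetherian local, so the hypotheses of the general Auslander--Buchsbaum theorem (which requires only $S$ Noetherian local and $M$ finitely generated of finite projective dimension) are satisfied; and for a regular local ring every finite module has finite projective dimension by Serre's characterization of regularity (global dimension equals Krull dimension), so no finiteness hypothesis on $\mathrm{projdim}$ need be added to the statement.

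If a short self-contained argument were wanted instead, I would proceed by induction on $\mathrm{projdim}_S M =: d$. The base case $d = 0$ means $M$ is free, hence $\mathrm{depth}_S M = \mathrm{depth}\, S = \mathrm{dim}\, S$ since $S$ is regular (hence Cohen--Macaulay). For the inductive step, take a short exact sequence $0 \to N \to S^{\oplus r} \to M \to 0$ with $\mathrm{projdim}_S N = d - 1$, and apply the depth lemma (the behaviour of depth in short exact sequences, via the long exact sequence of $\mathrm{Ext}^*_S(\mathbb{F}_S, -)$) together with the inductive hypothesis applied to $N$. One splits into the cases $\mathrm{depth}_S M = 0$ and $\mathrm{depth}_S M > 0$; in the latter case one passes to $M/xM$ for $x$ a nonzerodivisor on both $S$ and $M$ lying in $\mathfrak{m}_S$, which drops both $\mathrm{depth}$ and $\mathrm{dim}$ by one while preserving $\mathrm{projdim}$.

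I do not anticipate a genuine obstacle here; the only care needed is to state the result with the implicit understanding that $\mathrm{projdim}_S M$ is finite (automatic over a regular local ring), and to make sure the reference cited actually contains the formula in the generality used. In the paper this proposition is used only as a black box in later commutative-algebra manipulations (controlling depths of patched modules), so a one-line citation proof is entirely appropriate and matches the style of the surrounding lemmas.

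\begin{proof}

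See \cite[Theorem 19.1]{Mat}. (Note that over a regular local ring every finite module has finite projective dimension by Serre's theorem, so the formula applies unconditionally.) \end{proof}
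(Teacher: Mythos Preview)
Your proposal is correct and essentially identical to the paper's proof, which simply reads ``See \cite[Theorem 19.1 and 19.2]{Mat}.'' The only difference is cosmetic: the paper cites Theorem 19.2 (Serre's characterization of regularity via finite global dimension) explicitly, whereas you invoke it in a parenthetical remark.
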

    
    \begin{proof} See \cite[Theorem 19.1 and 19.2]{Mat}. \end{proof}
    
    \begin{prop} \label{Calagari-OG}
        
        Let $S$ be a regular local ring, $l$ be a nonnegative integer, $q$ be an integer and $C$ be a perfect complex of $S$-modules such that $H^i(C \otimes_{S}^{\mathbb{L}} S/\mathfrak{m}_S) = 0$ for any $i \notin [q, q+l]$ and $H^i(C \otimes_S^{\mathbb{L}} S/\mathfrak{m}_S) \neq 0$ for some $i \in [q, q+l]$.
    
        Then $\mathrm{dim} \, \mathrm{Supp}_{S}H^*(C) \ge \mathrm{dim} \, S - l$. Moreover, if this is equality, then we have the following properties.
    
        1 \ $H^{i}(C) \neq 0$ if and only if $i = l + q$.
        
        2 \ $\mathrm{depth}_{S}H^{q+l}(C) = \mathrm{dim} \, S - l$ and $\mathrm{projdim}_SH^{q+l}(C) = l$. 
    
    \end{prop}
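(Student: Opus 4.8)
The plan is to reduce to the case where $H^*(C)$ has finite length and then to play the Peskine--Szpiro acyclicity lemma off against the Auslander--Buchsbaum formula (Proposition \ref{Auslander-Buchsbaum formula}). First I would replace $C$ by a quasi-isomorphic \emph{minimal} complex of finite free $S$-modules, which is possible since $C$ is perfect; then $C \otimes^{\mathbb{L}}_S S/\mathfrak{m}_S = C \otimes_S S/\mathfrak{m}_S$ has zero differentials, so the hypothesis says exactly that the free terms $C^i$ vanish for $i \notin [q, q+l]$ while $C \neq 0$. In particular $C$ is a nonzero complex with at most $l+1$ free terms. Write $d := \mathrm{dim}\,S = \mathrm{depth}\,S$ and note that $\mathrm{Supp}_S H^*(C)$ is the set of primes $\mathfrak{p}$ at which $C_{\mathfrak{p}}$ is not exact.

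For the inequality, let $\mathfrak{p}$ be the generic point of a top-dimensional component of $\mathrm{Supp}_S H^*(C)$, so that $\mathfrak{p}$ is minimal in the support, $\mathrm{dim}\,S/\mathfrak{p} = \mathrm{dim}\,\mathrm{Supp}_S H^*(C)$, and $\mathrm{ht}\,\mathfrak{p} = d - \mathrm{dim}\,S/\mathfrak{p}$ (as $S$ is a regular, hence catenary, domain). Then $C_{\mathfrak{p}}$ is a nonzero complex of at most $l+1$ finite free $S_{\mathfrak{p}}$-modules whose homology $H^*(C_{\mathfrak{p}}) = H^*(C)_{\mathfrak{p}}$ has finite length. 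Since $C_{\mathfrak{p}}$ is supported in an interval of degrees of length at most $l$, we are done if that length exceeds $\mathrm{ht}\,\mathfrak{p}$; otherwise the acyclicity lemma applies over $S_{\mathfrak{p}}$ — every free term has depth $\mathrm{ht}\,\mathfrak{p}$, which is at least its homological index, and each finite-length homology module has depth $0$ — so $H^*(C_{\mathfrak{p}})$ is concentrated in its top degree, and after a shift $C_{\mathfrak{p}}$ becomes a minimal free resolution of that single nonzero module $M$ of length at most $l$. Then Proposition \ref{Auslander-Buchsbaum formula} over $S_{\mathfrak{p}}$ gives $\mathrm{ht}\,\mathfrak{p} = \mathrm{depth}\,S_{\mathfrak{p}} = \mathrm{depth}_{S_{\mathfrak{p}}}\,M + \mathrm{projdim}_{S_{\mathfrak{p}}}\,M = \mathrm{projdim}_{S_{\mathfrak{p}}}\,M \le l$, using $\mathrm{depth}\,M = 0$. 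In either case $\mathrm{ht}\,\mathfrak{p} \le l$, hence $\mathrm{dim}\,\mathrm{Supp}_S H^*(C) = d - \mathrm{ht}\,\mathfrak{p} \ge d - l$.

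Now assume $\mathrm{dim}\,\mathrm{Supp}_S H^*(C) = d-l$. By repeated prime avoidance I would choose $y_1, \dots, y_{d-l} \in \mathfrak{m}_S$ forming an $S$-regular sequence such that, writing $S^{(r)} := S/(y_1, \dots, y_r)$ and $C^{(r)} := C \otimes_S S^{(r)}$ (a minimal complex of finite free $S^{(r)}$-modules still supported in degrees $[q, q+l]$), the support of $H^*(C^{(r)})$ over $S^{(r)}$ has dimension at most $(d-l)-r$; at the $r$-th step one takes $y_r$ outside the minimal primes of the Cohen--Macaulay ring $S^{(r-1)}$ (so $y_r$ is also a nonzerodivisor there) and outside the generic points of the top-dimensional components of $\mathrm{Supp}_{S^{(r-1)}} H^*(C^{(r-1)})$. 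Then $S^{(d-l)}$ is Cohen--Macaulay of dimension $l$ and $\bar C := C^{(d-l)}$ is a nonzero minimal complex of finite free $S^{(d-l)}$-modules with finite-length homology, so the acyclicity lemma over $S^{(d-l)}$ (applied as above, now with $l = \mathrm{depth}\,S^{(d-l)}$) gives $H^j(\bar C) = 0$ for $j \neq q+l$. Finally I would descend the tower: for $r$ from $d-l$ down to $1$, the short exact sequence of complexes $0 \to C^{(r-1)} \xrightarrow{\,y_r\,} C^{(r-1)} \to C^{(r)} \to 0$ — exact because $y_r$ is a nonzerodivisor on $S^{(r-1)}$ and the terms of $C^{(r-1)}$ are free — shows via its long exact cohomology sequence that if $H^j(C^{(r)}) = 0$ for all $j \neq q+l$ then $y_r$ acts surjectively on each $H^j(C^{(r-1)})$ with $j \neq q+l$, whence $H^j(C^{(r-1)}) = 0$ by Nakayama's lemma. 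Descending to $r = 0$ yields assertion 1: $H^j(C) = 0$ for $j \neq q+l$, and $H^{q+l}(C) \neq 0$ because $\mathrm{Supp}_S H^*(C) \neq \emptyset$. Assertion 2 is then immediate: $C$ is a minimal free resolution of $M := H^{q+l}(C)$ of length at most $l$, so $\mathrm{projdim}_S M \le l$, hence $\mathrm{depth}_S M = d - \mathrm{projdim}_S M \ge d-l$ by Proposition \ref{Auslander-Buchsbaum formula}, while $\mathrm{depth}_S M \le \mathrm{dim}_S M = \mathrm{dim}\,\mathrm{Supp}_S H^*(C) = d-l$; so $\mathrm{depth}_S M = d-l$ and $\mathrm{projdim}_S M = l$.

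The step I expect to be the main obstacle is the concentration statement (assertion 1): one must ensure the cutting sequence $y_1, \dots, y_{d-l}$ stays $S$-regular \emph{and} genuinely forces $\mathrm{dim}\,\mathrm{Supp}_{S^{(r)}} H^*(C^{(r)})$ down by one at every stage, and check that reducing modulo $y_r$ is compatible with passing from $C^{(r-1)}$ to $C^{(r)}$ at the level of complexes. Everything else — the acyclicity-lemma bookkeeping and the two applications of Auslander--Buchsbaum — is routine.
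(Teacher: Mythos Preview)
Your argument is correct, and the paper itself does not give a proof here: it simply cites \cite[Lemma 2.9]{Leo}. One harmless slip: the localization $C_{\mathfrak{p}}$ of a minimal complex need not be minimal over $S_{\mathfrak{p}}$, but you only use that it is \emph{some} free resolution of length at most $l$, which suffices for the Auslander--Buchsbaum bound $\mathrm{ht}\,\mathfrak{p} = \mathrm{projdim}_{S_{\mathfrak{p}}}M \le l$.
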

    
    \begin{proof}
    
    See \cite[Lemma 2.9]{Leo}. \end{proof}
    
    \begin{prop}\label{depth}
    
    Let $S, R$ be Noetherian local rings, $f : S \rightarrow R$ be a local finite morphism and $M$ be a finite $R$-module.
    
    Then we have $\mathrm{depth}_{R}M = \mathrm{depth}_{S}M$.
    
    \end{prop}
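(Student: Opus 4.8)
The statement to prove is Proposition \ref{depth}: for Noetherian local rings $S, R$ with a local finite morphism $f : S \to R$ and a finite $R$-module $M$, we have $\mathrm{depth}_R M = \mathrm{depth}_S M$.

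The plan is to reduce everything to the behaviour of depth along a finite local ring map and use the characterisation of depth via the vanishing of local cohomology (or, equivalently, via $\mathrm{Ext}$ modules against the residue field). First I would observe that since $f : S \to R$ is local and finite, the maximal ideal $\mathfrak{m}_S$ of $S$ generates an $\mathfrak{m}_R$-primary ideal of $R$; indeed $R/\mathfrak{m}_S R$ is a finite module over the field $S/\mathfrak{m}_S$, hence Artinian, so its unique prime is $\mathfrak{m}_R$. Therefore a sequence of elements of $\mathfrak{m}_S$ is $M$-regular (as a sequence in $R$ acting on $M$) for as long as possible exactly up to length $\mathrm{depth}_{\mathfrak{m}_S R}(M)$, and by the standard fact that depth with respect to an ideal $I$ equals depth with respect to $\sqrt{I}$, this equals $\mathrm{depth}_{\mathfrak{m}_R}(M) = \mathrm{depth}_R M$. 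On the other hand, the same sequence of elements of $\mathfrak{m}_S$ acting on $M$ viewed as an $S$-module computes $\mathrm{depth}_S M$, because an $M$-regular sequence does not care whether we remember the full $R$-structure or only the $S$-structure — regularity of $x_1, \dots, x_k$ on $M$ is the statement that $x_i$ is a nonzerodivisor on $M/(x_1,\dots,x_{i-1})M$ and $M/(x_1,\dots,x_k)M \neq 0$, and these conditions are intrinsic to $M$ as an abelian group together with the action of the $x_i$.

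Concretely, the key steps in order are: (1) show $\mathfrak{m}_S R$ is $\mathfrak{m}_R$-primary, using finiteness of $R/\mathfrak{m}_S R$ over the field $S/\mathfrak{m}_S$; (2) recall that for a finite module $M$ over a Noetherian local ring $R$ and an ideal $I$ with $\sqrt I = \mathfrak{m}_R$, one has $\mathrm{depth}_I M = \mathrm{depth}_R M$ (this is the ideal-independence of depth for $\mathfrak{m}_R$-primary ideals, via the $\mathrm{Ext}$ or Koszul characterisation of depth); (3) apply this with $I = \mathfrak{m}_S R$ to get $\mathrm{depth}_R M = \mathrm{depth}_{\mathfrak{m}_S R} M$; (4) note that $\mathrm{depth}_{\mathfrak{m}_S R}(M)$, being defined via lengths of maximal $M$-regular sequences drawn from $\mathfrak{m}_S$ (equivalently via $\mathrm{Ext}^{\bullet}_S(S/\mathfrak{m}_S, M)$, since the Koszul complex on generators of $\mathfrak{m}_S$ over $S$ and over $R$ compute the same cohomology), coincides with $\mathrm{depth}_S M$; conclude. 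Alternatively, one can phrase steps (2)–(4) uniformly by the identity $\mathrm{depth}_S M = \min\{ i : \mathrm{Ext}^i_S(S/\mathfrak{m}_S, M) \neq 0 \}$ together with $\mathrm{depth}_R M = \min\{ i : \mathrm{Ext}^i_R(R/\mathfrak{m}_R, M) \neq 0\}$, and a comparison of these two $\mathrm{Ext}$ computations using that $R/\mathfrak{m}_R$ is a finite-length $S/\mathfrak{m}_S$-module and $R/\mathfrak{m}_S R$ is $\mathfrak{m}_R$-primary.

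I expect the only genuine obstacle to be pinning down step (2) cleanly — the assertion that depth is insensitive to which $\mathfrak{m}_R$-primary ideal one uses — but this is entirely standard (see for instance Matsumura, which is already cited in the excerpt as \cite{Mat}, in the chapter on depth and Cohen–Macaulay rings). Everything else is bookkeeping with regular sequences. In the write-up I would simply cite \cite[\S16, \S18]{Mat} for the ideal-independence of depth and for the Koszul/$\mathrm{Ext}$ characterisation, state the primality of $\mathfrak{m}_S R$ as a one-line consequence of finiteness, and assemble the chain of equalities $\mathrm{depth}_R M = \mathrm{depth}_{\mathfrak{m}_S R} M = \mathrm{depth}_S M$.
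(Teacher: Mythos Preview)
Your argument is correct and is the standard proof: $\mathfrak{m}_S R$ is $\mathfrak{m}_R$-primary by finiteness, depth is insensitive to passing to the radical of the ideal, and the Koszul complex on generators of $\mathfrak{m}_S$ computes the same thing whether viewed over $S$ or over $R$. The paper itself gives no argument at all and simply cites \cite[Lemma 2.5.7]{RG}, so there is nothing to compare; your write-up is a clean self-contained replacement for that citation.
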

    
    \begin{proof} See \cite[Lemma 2.5.7]{RG}. \end{proof}
    
    \begin{lem} \label{Tor spectral sequence}
    
    Let $S$ be a Noetherian ring, $R$ be a Noetherian $S$-algebra and $C$ be a perfect complex of $S$-modules with an $S$-morphism $R \rightarrow \mathrm{End}_{D(S)}(C)$. Let $\mathfrak{p} \in \mathrm{Supp}_{R}H^*(C)$, $\mathfrak{q}$ be the pull-back of $\mathfrak{p}$ in $S$ and $r:=\mathrm{max}\{ r' \mid H^{r'}(C)_{\mathfrak{p}} \neq 0 \}$. Then we have $H^r(C)_{\mathfrak{p}}/\mathfrak{q} \cong H^r(C \otimes_{S}^{\mathbb{L}} S/\mathfrak{q})_{\mathfrak{p}}$. In particular, we have $\mathfrak{p} \in \mathrm{Supp}_{R}H^*(C \otimes^{\mathbb{L}}_{S} S/\mathfrak{q})$.
        
    \end{lem}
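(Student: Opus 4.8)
The plan is to deduce this from the hyper-Tor (universal coefficient) spectral sequence attached to the perfect complex $C$ and the base change $S \to S/\mathfrak{q}$, together with a corner argument in top degree. First I would represent $C$ by a bounded complex of finite free $S$-modules, which is possible since $C$ is perfect; in particular each $H^j(C)$ is a finite $S$-module, hence a finite $R$-module via $R \to \mathrm{End}_{D(S)}(C)$, so that localization at the $R$-prime $\mathfrak{p}$ behaves well and Nakayama applies. Since $C$ is a bounded complex of flats, $C \otimes^{\mathbb{L}}_{S} S/\mathfrak{q} = C \otimes_{S} S/\mathfrak{q}$, and the standard construction yields a spectral sequence
\[
E_2^{i,j} = \mathrm{Tor}^{S}_{-i}\bigl(H^j(C),\, S/\mathfrak{q}\bigr) \Longrightarrow H^{i+j}\bigl(C \otimes^{\mathbb{L}}_{S} S/\mathfrak{q}\bigr),
\]
which is functorial in $C$ and therefore $R$-linear, so it survives the exact localization functor $(-)_{\mathfrak{p}}$ over $R$; here I use that $\mathfrak{q}$ is the contraction of $\mathfrak{p}$, so $\mathrm{Tor}^{S}_{\ast}(H^j(C), S/\mathfrak{q})_{\mathfrak{p}} = \mathrm{Tor}^{S_{\mathfrak{q}}}_{\ast}(H^j(C)_{\mathfrak{p}}, S_{\mathfrak{q}}/\mathfrak{q})$.

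Next I would exploit the maximality of $r$. After localizing at $\mathfrak{p}$ we have $E_2^{i,j}{}_{\mathfrak{p}} = 0$ whenever $i > 0$ (negative Tor groups vanish) and whenever $j > r$ (as $H^j(C)_{\mathfrak{p}} = 0$ for $j > r$ by definition of $r$). Thus $E_2^{0,r}{}_{\mathfrak{p}}$ occupies the corner of the nonzero region: every differential $d_k$ out of it lands in a column $i = k > 0$ where the terms vanish, and every differential $d_k$ into it (for $k \ge 2$) originates from $E_k^{-k,\, r+k-1}{}_{\mathfrak{p}}$ with $r+k-1 > r$, which also vanishes. Hence $E_2^{0,r}{}_{\mathfrak{p}} = E_\infty^{0,r}{}_{\mathfrak{p}}$. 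Moreover any other $E_\infty$-term contributing to total degree $r$ has the form $E_\infty^{i,\,r-i}{}_{\mathfrak{p}}$ with $i \le 0$, so its second index $r-i \ge r$ forces it to vanish unless $i = 0$. Therefore the abutment filtration on $H^r(C \otimes^{\mathbb{L}}_{S} S/\mathfrak{q})_{\mathfrak{p}}$ has exactly one nonzero graded piece, yielding the canonical isomorphism
\[
H^r(C)_{\mathfrak{p}}/\mathfrak{q}H^r(C)_{\mathfrak{p}} = E_2^{0,r}{}_{\mathfrak{p}} \cong H^r\bigl(C \otimes^{\mathbb{L}}_{S} S/\mathfrak{q}\bigr)_{\mathfrak{p}},
\]
which is the first assertion.

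Finally, for the "in particular" part I would invoke Nakayama's lemma: $H^r(C)_{\mathfrak{p}}$ is a nonzero finitely generated $R_{\mathfrak{p}}$-module by the choice of $r$, so $H^r(C)_{\mathfrak{p}}/\mathfrak{p}H^r(C)_{\mathfrak{p}} \neq 0$; since $\mathfrak{q}R_{\mathfrak{p}} \subseteq \mathfrak{p}R_{\mathfrak{p}}$, the module $H^r(C)_{\mathfrak{p}}/\mathfrak{q}H^r(C)_{\mathfrak{p}}$ surjects onto it and is hence nonzero, so by the displayed isomorphism $H^r(C \otimes^{\mathbb{L}}_{S} S/\mathfrak{q})_{\mathfrak{p}} \neq 0$ and thus $\mathfrak{p} \in \mathrm{Supp}_{R}H^*(C \otimes^{\mathbb{L}}_{S} S/\mathfrak{q})$. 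I do not expect a genuine obstacle here; the only points needing care are verifying that the universal coefficient spectral sequence is genuinely $R$-equivariant (so that localizing at the $R$-prime $\mathfrak{p}$ is legitimate) and keeping the cohomological sign conventions for the Tor indexing consistent. An equivalent, slightly more elementary route that avoids the spectral sequence is to observe that for a bounded-above complex of flat modules the top nonvanishing cohomology commutes with the base change $- \otimes_{S} S/\mathfrak{q}$, which follows directly from right-exactness of the tensor product applied to the cokernel presentation of $H^r$ after truncating; I would use whichever is cleaner in the write-up.
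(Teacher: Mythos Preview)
Your proof is correct and follows essentially the same approach as the paper: both use the hyper-Tor spectral sequence $E_2^{i,j}=\mathrm{Tor}^S_{-i}(H^j(C),S/\mathfrak{q}) \Rightarrow H^{i+j}(C\otimes^{\mathbb{L}}_S S/\mathfrak{q})$ and the vanishing for $i>0$ or $j>r$ to isolate the corner term. The paper's proof is a two-line version of your argument, omitting the explicit differential check, the Nakayama step, and the discussion of $R$-equivariance of the localization, all of which you handle correctly.
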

    
    \begin{proof}
    
    We have an $R$-equivariant spectral sequence $E_2^{p,q}:=\mathrm{Tor}_{-p}^{S}(H^q(C)_{\mathfrak{p}}, S/\mathfrak{q}) \Rightarrow H^{p+q}(C \otimes_{S}^{\mathbb{L}} S/\mathfrak{q})_{\mathfrak{p}}$. Note that $E_2^{p, q} = 0$ for $p > 0$ or $q > r$. Thus we obtain $E_2^{0, r} = H^r(C)_{\mathfrak{p}}/\mathfrak{q} \cong H^r(C \otimes_{S}^{\mathbb{L}} S/\mathfrak{q})_{\mathfrak{p}}$. \end{proof}

\subsubsection{Automorphy lifting and local-global compatibility in crystalline cases}

\begin{thm}\label{automorphy lifting theorem in crystalline cases}

Let $l$ be an odd prime such that $l > n$, $\iota: \overline{\mathbb{Q}}_l \stackrel{\sim}{\rightarrow} \mathbb{C}$ be an isomorphism of fields, $\lambda \in (\mathbb{Z}_{+}^n)^{\mathrm{Hom}(F, \overline{\mathbb{Q}}_l)}$ and $r: G_F \rightarrow \mathrm{GL}_n(\overline{\mathbb{Q}}_l)$ be an algebraic $l$-adic representation.

We suppose the following conditions.

1 \ $\overline{r}$ is absolutely irreducible and decomposed generic. 

2 \ $\overline{r}(G_{F(\zeta_l)})$ is adequate. 

3 \ $\zeta_l \notin F$.

4 \ For all $v|l$, $r|_{G_{F_v}}$ is crystalline of $l$-adic Hodge type $\mathbf{v}_{\lambda_v}$.

5 \ There exists a cohomological cuspidal automorphic representation $\pi$ of $\mathrm{GL}_n(\mathbb{A}_F)$ of weight $\iota\lambda$ satisfying the following conditions.

(1) \ $\overline{r} \cong \overline{r_{\iota}(\pi)}$.

(2) \ For all $v|l$, $\pi_v$ is unramified and $r_{\iota}(\pi)|_{G_{F_v}} \sim r|_{G_{F_v}}$. (Note that we use the convention \ref{convension}.)

(3) \ For all $v \nmid l$, $\mathrm{WD}(r_{\iota}(\pi)|_{G_{F_v}})$ and $\mathrm{WD}(r|_{G_{F_v}})$ have the same monodromy type. 

(4) \ For all $v \nmid l$, we have $\iota \mathrm{WD}(r_{\iota}(\pi)|_{G_{F_{v}}})^{F-ss} \cong \mathrm{rec}_{F_{v}}(\pi_{v}|\mathrm{det}|_{v}^{\frac{1-n}{2}})$.

Then there exists a cohomological cuspidal automorphic representation $\Pi$ of $\mathrm{GL}_n(\mathbb{A}_F)$ of weight $\iota \lambda$ such that $r \cong r_{\iota}(\Pi)$, $\iota\mathrm{WD}(r_{\iota}(\Pi)|_{G_{F_{v}}})^{F-ss} \cong \mathrm{rec}_{F_{v}}(\Pi_{v}|\mathrm{det}|_{v}^{\frac{1-n}{2}})$ for all $v \nmid l$ and $\Pi_v$ is unramified for all $v|l$. 

\end{thm}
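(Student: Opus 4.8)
The plan is to run the Taylor--Wiles--Calegari--Geraghty patching machine, with the local deformation conditions chosen so that $r$ and $r_{\iota}(\pi)$ lie on the \emph{same} geometrically irreducible component of every local lifting ring, and with the compact open levels at the ramified non-$l$-adic places chosen by the recipe of Proposition~\ref{automorphy lifting and local-global compatibility}, so that the local--global compatibility for $\Pi$ falls out of its having nonzero fixed vectors for a paraholic subgroup. First I would replace $F$ by a solvable CM extension $F'$, linearly disjoint from $\overline{F}^{\ker\overline{r}}\cdot F(\zeta_l)$, chosen (using Proposition~\ref{base change}, Corollary~\ref{extension of Q}, Lemma~\ref{quadratic extension} and Lemma~\ref{imaginary quadratic}) so that $F'$ is imaginary CM and contains an imaginary quadratic field, so that at every $v\in S$ with $v\nmid l$ the representation $\overline{r}|_{G_{F_v}}$ is trivial and $r|_{G_{F_v}}$, $r_{\iota}(\pi)|_{G_{F_v}}$ are unipotently ramified, and so that every $l$-adic place of $F$ splits completely in $F'$. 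Hypotheses $1$--$4$ are preserved, $\mathrm{BC}_{F'/F}(\pi)$ is again cohomological cuspidal with the analogous properties, and the conclusion over $F'$ descends to $F$ by solvable descent (Proposition~\ref{base change}) together with Proposition~\ref{potential automorphy and local-global compatibility}; the splitting at $l$ guarantees $\Pi_v$ remains unramified at $v\mid l$. After enlarging $E$ I would also assume, via Lemma~\ref{geometrically irreducibility}, that all irreducible components of the relevant local lifting rings are geometrically irreducible.

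Next I would set up the global deformation problem $\mathcal{S}=(\overline{r},S,\{\Lambda_v\},\{\mathcal{D}_v\})$ with $\mathcal{D}_v=\mathcal{D}_v^{\mathrm{cris},\lambda_v}$ for $v\mid l$ and $\mathcal{D}_v=\mathcal{D}_v^{1}$ (Proposition~\ref{Level raising}) at the ramified $v\nmid l$, and with compact open level $K_v=\mathrm{GL}_n(\mathcal{O}_{F_v})$ for $v\mid l$ and $K_v$ the paraholic subgroup attached to the dual partition of the monodromy type $\gamma_v$ of $\mathrm{WD}(r|_{G_{F_v}})$ for ramified $v\nmid l$. By hypothesis $(5)(3)$, $\gamma_v$ is also the monodromy type of $\mathrm{WD}(r_{\iota}(\pi)|_{G_{F_v}})$, which by $(5)(4)$ and Theorem~\ref{Ila Varma} equals the monodromy type of $\mathrm{rec}_{F_v}(\pi_v|\mathrm{det}|_v^{\frac{1-n}{2}})$; hence $\pi_v^{K_v}\neq 0$ by Proposition~\ref{automorphy lifting and local-global compatibility}, so $(\pi^{\infty})^{K}\neq 0$ for the resulting level $K$. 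Since $\pi_v$ is generic preunitary up to twist (Proposition~\ref{purity}), $\mathrm{WD}(r_{\iota}(\pi)|_{G_{F_v}})$ is generic, so by Lemma~\ref{regular point} (combined with Corollary~\ref{moduli of Weil-Deligne 2} at $v\nmid l$ and Proposition~\ref{psd} at $v\mid l$) the point $r_{\iota}(\pi)$ lies on a \emph{unique} irreducible component of each local deformation ring. The crucial point is that $r|_{G_{F_v}}\sim r_{\iota}(\pi)|_{G_{F_v}}$ for every $v\in S$: for $v\mid l$ this is hypothesis $(5)(2)$, and for ramified $v\nmid l$ it is Proposition~\ref{monodromy type irreducible component}, as $\overline{r}|_{G_{F_v}}$ is trivial and the monodromy types agree. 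Thus $r$ and $r_{\iota}(\pi)$ determine points of $\mathrm{Spec}\,R_{\mathcal{S}}[\tfrac1l]$ lying over one and the same irreducible component of the local lifting ring $R^{\mathrm{loc}}_{\mathcal{S}}:=\widehat{\otimes}_{v\in S}R_{\overline{r}|_{G_{F_v}},\mathcal{D}_v}$, which by Lemma~\ref{completed tensor product irreducibility}, Proposition~\ref{psd} and Proposition~\ref{Level raising} is $\mathcal{O}$-flat and geometrically irreducible per component, of the expected dimension.

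I would then choose Taylor--Wiles data $(Q_N,\{\alpha_v\})$ by Proposition~\ref{chebotarev} (applicable since $\overline{r}(G_{F(\zeta_l)})$ is adequate, $\zeta_l\notin F$ and $l>n$) and patch the localized complexes of Theorem~\ref{Patching argument}, using Theorem~\ref{cohomology of locally symmetric space}: the cohomology is concentrated in degrees $[q_0,q_0+l_0]$ with $l_0=[F^+:\mathbb{Q}]n-1$ and $q_0=[F^+:\mathbb{Q}]\tfrac{n(n-1)}{2}$, and the numerology $\dim R_\infty=\dim R^{\mathrm{loc}}_\infty+q-n^2[F^+:\mathbb{Q}]$ combined with Proposition~\ref{Calagari-OG}, Proposition~\ref{Auslander-Buchsbaum formula} and Proposition~\ref{depth} forces the patched module $M_\infty$ to be maximal Cohen--Macaulay over $R_\infty$. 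Since $r_{\iota}(\pi)$ is automorphic, $M_\infty$ is supported along the component of $\mathrm{Spec}\,R_\infty$ lying over the distinguished (unique) local component through $r_{\iota}(\pi)$; as $R_\infty$ is a power series ring over $R^{\mathrm{loc}}_{\mathcal{S}}$ and that local component is geometrically irreducible, the support contains its entire preimage, in particular the point attached to $r$. Unwinding the patching, $r$ gives a Hecke eigensystem occurring in $H^*(X_K,\mathcal{V}_\lambda)_{\mathfrak{m}}[\tfrac1l]$; by Theorem~\ref{cohomology of locally symmetric space}(3) this eigensystem is $\varphi_{\Pi,\iota}$ for a cohomological cuspidal $\Pi$ of weight $\iota\lambda$ with $(\Pi^\infty)^K\neq 0$ and $r_{\iota}(\Pi)\cong r$ (matching traces of Frobenius at the unramified places, using semisimplicity), and the level $\mathrm{GL}_n(\mathcal{O}_{F_v})$ at $v\mid l$ makes $\Pi_v$ unramified there.

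Finally, for ramified $v\nmid l$ one has $\Pi_v^{K_v}\neq 0$, $\Pi_v$ is generic with unipotently ramified $\mathrm{rec}$ hence has an Iwahori fixed vector, and $\iota\mathrm{WD}(r_{\iota}(\Pi)|_{G_{F_v}})^{F-ss}\prec\mathrm{rec}_{F_v}(\Pi_v|\mathrm{det}|_v^{\frac{1-n}{2}})$ by Theorem~\ref{Ila Varma}, so Proposition~\ref{automorphy lifting and local-global compatibility} $(3)\Rightarrow(1)$ (through Lemma~\ref{paraholic}) yields $\iota\mathrm{WD}(r_{\iota}(\Pi)|_{G_{F_v}})^{F-ss}\cong\mathrm{rec}_{F_v}(\Pi_v|\mathrm{det}|_v^{\frac{1-n}{2}})$; at unramified $v\nmid l$ this is immediate. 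Descending through the initial solvable extension as above completes the proof. I expect the main difficulty to lie in the patching step: checking that $M_\infty$ is maximal Cohen--Macaulay of the correct codimension and that its support meets the full preimage of the distinguished local component, which is exactly where the precise dimensions of the crystalline and unipotently ramified local lifting rings (Propositions~\ref{psd} and~\ref{Level raising}), the geometric irreducibility arrangements, and the Calegari--Geraghty degree shift must be assembled with care.
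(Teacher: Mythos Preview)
Your approach is essentially the paper's: solvable base change to a convenient $F$, patching with paraholic levels dictated by the monodromy types, and reading off $\Pi$ with $(\Pi^\infty)^K\neq 0$. Two minor differences worth noting. First, the paper fixes \emph{one} place $u\nmid l$ at a time (legitimate by Proposition~\ref{strong multiplicity one}), putting the paraholic $K_u$ only there and $\mathrm{Iw}_{v,1}$ at the remaining $v\in S\setminus(S_l\cup\{u\})$; this is also what guarantees $K$ is good (via two such places of distinct residue characteristic), a point you should address if you keep paraholic levels everywhere. Second, the paper takes $\mathcal{D}_v^\Box$ at $v\nmid l$ rather than $\mathcal{D}_v^1$; both have the same generic-fibre dimension, so your choice works too. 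You should also make explicit that the base change must arrange the hypotheses of Theorem~\ref{Caraiani-Newton} (the paper's conditions (e) and (f): primes below $S$ split in an imaginary quadratic subfield of $F$, and the inequality $\sum_{\overline{v}''\neq\overline{v},\overline{v}'}[F^+_{\overline{v}''}:\mathbb{Q}_l]\ge\tfrac12[F^+:\mathbb{Q}]$), since that theorem is what actually produces the surjection $R_{\mathcal{S}}\twoheadrightarrow \mathbb{T}^S(R\Gamma(X_K,\mathcal{V}_\lambda)_\mathfrak{m})/I$ with the crystalline condition at $v\mid l$.

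There is one genuine imprecision in your patching step. You assert that ``$M_\infty$ is maximal Cohen--Macaulay over $R_\infty$'' and invoke Auslander--Buchsbaum, but $M_\infty$ is not naturally an $R_\infty$-module (only a $T_\infty$-module, and $R_\infty$ surjects merely onto $T_\infty/I_\infty$), and $R_\infty$ is not regular, so Auslander--Buchsbaum does not apply globally. The paper instead localizes at the point $\mathfrak{p}$ of $\mathrm{Spec}\,R_\infty$ corresponding to $\pi$, which \emph{is} regular by Lemma~\ref{regular point}, Proposition~\ref{psd} and 3 of Lemma~\ref{completed tensor product regularity}; then $\widehat{R_{\infty,\mathfrak{p}}}$ is a formal power series ring over $E$, one lifts the surjection $\widehat{R_{\infty,\mathfrak{p}}}\twoheadrightarrow\widehat{T_{\infty,\mathfrak{p}}}/I_\infty$ to a (finite) map $\widehat{R_{\infty,\mathfrak{p}}}\to\widehat{T_{\infty,\mathfrak{p}}}$, transfers the depth bound via Proposition~\ref{depth}, and applies Auslander--Buchsbaum over the regular ring $\widehat{R_{\infty,\mathfrak{p}}}$ to get that $\widehat{M}$ is free. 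This yields $R_{\infty,\mathfrak{p}}\xrightarrow{\sim}T_{\infty,\mathfrak{p}}/I_\infty$, so the full irreducible component $\mathcal{C}$ through $\mathfrak{p}$ lies in $\mathrm{Supp}_{R_\infty}H^*(C_\infty)/I_\infty$, and hence contains the point attached to $r$. Your sketch has all the right ingredients, but this localization-at-a-regular-point manoeuvre is exactly the ``care'' you anticipate in your last paragraph, and it should be made explicit.
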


\begin{proof} We put $S_l:= \{ v|l \}$.

We can take a finite extension $E$ of $\mathbb{Q}_l$ contained in $\overline{\mathbb{Q}}_l$ such that $\tau (F) \subset E$ for all $\tau \in \mathrm{Hom}(F, \overline{\mathbb{Q}}_l)$ and the residue field $\mathbb{F}$ of $E$ contains all eigenvalues of $\overline{r}(g)$ for all $g \in G_F$. Moreover, we may assume that $\mathrm{Im}(r)$ and $\mathrm{Im}(r_{\iota}(\pi))$ are contained in $\mathrm{GL}_n(\mathcal{O})$ and $\overline{r} = \overline{r_{\iota}(\pi)}$. (Here, $\mathcal{O}$ denotes the ring of integers of $E$.) 

Fix a finite place $u \nmid l$ of $F$. By Proposition \ref{strong multiplicity one}, it suffices to show the existence of $\Pi$ and the local-global compatibility for $\Pi$ at $u$.

By Proposition \ref{base change}, Corollary \ref{extension of Q}, Lemma \ref{imaginary quadratic}, Proposition \ref{potential automorphy and local-global compatibility} and Proposition \ref{monodromy type irreducible component}, we may assume the following conditions. (Note that there exist positive Dirichlet density primes which are decomposed generic for $\overline{r}$ by Lemma \ref{strongly decomposed generic} later.)

$(a)$ \ We have a finite set $S$ of finite places of $F$ containing $S_l$ and $u$ such that for all $v \notin S$, $r$ and $\pi$ are unramified at $v$ and $S = S^c$.

$(b)$ \ There exist $s, t \in S \setminus (S_l\cup\{ u \})$ such that $\mathrm{char} \, \mathbb{F}_s \neq \mathrm{char} \, \mathbb{F}_t$.

$(c)$ \ For all $v \in S$, $r_{\iota}(\pi)|_{G_{F_v}} \sim r|_{G_{F_v}}$.

$(d)$ \ For all $v \in S \setminus S_{l}$, $\pi_v^{\mathrm{Iw}_v} \neq 0$ and $r|_{G_{F_v}}$ is unipotently ramified.

$(e)$ \ Any prime $p$ lying below $S$ or ramified in $F$, splits in an imaginary quadratic field contained in $F$. (In particular, $F$ is an imaginary CM field.)

$(f)$ \ For any $l$-adic place $\overline{v}$ of $F^+$, there exists an $l$-adic place $\overline{v}' \neq \overline{v}$ of $F^+$ such that $\sum_{\overline{v}, \overline{v}' \neq \overline{v}''|l} [F^+_{\overline{v}''}:\mathbb{Q}_l] \ge \frac{1}{2}[F^+:\mathbb{Q}]$. 

We put $\mathcal{S}:= ( \overline{r}, S, \{ \mathcal{O} \}_{v \in S}, \{ \mathcal{D}_v \}_{v \in S}) :=( \overline{r}, S, \{ \mathcal{O} \}_{v \in S}, \{ \mathcal{D}_v^{\Box} \}_{v \in S \setminus S_l} \cup \{ \mathcal{D}_v^{\mathrm{cris}, \lambda_v} \}_{v \in S_l} )$. Then $r$ and $r_{\iota}(\pi)$ are liftings of type $\mathcal{S}$ to $\mathcal{O}$. After extending $E$, we may assume for all $v \in S$, all irreducible components of $\mathrm{Spec} \, R_{\overline{r}|_{G_{F_v}}, \mathcal{D}_v}[\frac{1}{l}]$ are geometrically irreducible.

We define an open compact subgroup $K = \prod_{v} K_v$ of $\mathrm{GL}_n(\mathbb{A}_F^{\infty})$ as follows.

$\cdot$ If $v \notin S$ or $v|l$, $K_v:=\mathrm{GL}_n(\mathcal{O}_{F_v})$.

$\cdot$ If $v \in S \setminus (S_l \cup \{u\})$, $K_v:=\mathrm{Iw}_{v,1}$.

$\cdot$ $K_{u}$ is the parahoric subgroup of $\mathrm{GL}_n(F_{u})$ corresponding to the dual partition of the monodromy type of $\mathrm{WD}(r|_{G_{F_u}})$.

Then $K$ is a good subgroup of $\mathrm{GL}_n(\mathbb{A}_F^{\infty})$ by the assumption $(b)$. By Proposition \ref{automorphy lifting and local-global compatibility}, the assumptions (2), (3) and (4) of $5$, $(a)$ and $(d)$, we have $(\pi^{\infty})^{K} \neq 0$.

 Let $\mathfrak{m}$ be the non-Eisenstein ideal of $\mathbb{T}^S(R\Gamma(X_K, \mathcal{V}_{\lambda}))$ corresponding to $\overline{r_{\iota}(\pi)}$. (See Theorem \ref{cohomology of locally symmetric space}.) We may assume $\overline{\rho_{\mathfrak{m}}} = \overline{r_{\iota}(\pi)}$. 

By Theorem \ref{Caraiani-Newton}, there exist a positive integer $\delta$ depending only on $n$ and $[F:\mathbb{Q}]$, an ideal $I_0$ of $\mathbb{T}^S(R\Gamma(X_K, \mathcal{V}_{\lambda})_{\mathfrak{m}})$ such that $I_0^{\delta}=0$ and a lifting $\rho_{\mathfrak{m}}: G_{F, S} \rightarrow \mathrm{GL}_n(\mathbb{T}^S(R\Gamma(X_K, \mathcal{V}_{\lambda})_{\mathfrak{m}})/I_0)$ of $\overline{\rho_{\mathfrak{m}}}$ of type $\mathcal{S}$ such that for all $v \notin S$, we have det$(T-\rho_{\mathfrak{m}}(\mathrm{Frob}_v)) = P_v(T)$. Thus, we obtain the surjective morphism of $\mathcal{O}$-algebras $R_{\mathcal{D}} \twoheadrightarrow \mathbb{T}^S(R\Gamma(X_K, \mathcal{V}_{\lambda})_{\mathfrak{m}})/I_0$ corresponding to $\rho_{\mathrm{\mathfrak{m}}}$.

Let $q := $$\mathrm{dim}_{\mathbb{F}}H^1(G_{F,S}, \mathrm{ad} \, \overline{\rho_{\mathfrak{m}}}(1))$. By Proposition \ref{chebotarev}, for any positive integer $N$, there exists a Taylor-Wiles datum ($Q_N, \{ \alpha_v \}_{v\in Q_N})$ satisfying the following conditions.

$(a')$ \ $|Q_N| = q$.

$(b')$ \ For any $v \in Q_N$, we have $q_v \equiv 1$ \ mod $l^N$.

$(c')$ \ $\mathrm{dim}_{\mathbb{F}} \mathfrak{m}_{R_{\mathcal{S}(Q_N)}^S}/(\mathfrak{m}_{R_{\mathcal{S}(Q_N)}^S}^2, \mathfrak{m}_{R_{\mathcal{D}_v}})_{v \in S} = q-n^2[F^+:\mathbb{Q}]=:g$.

Then by Theorem \ref{Patching argument} and Theorem \ref{Caraiani-Newton} again, after changing $\delta$ (but independent of $N$), there exist an ideal $I_N$ of $$\mathbb{T}_{\Delta_{Q_N}}^{S \cup Q_N \cup Q_N^c}(R\Gamma_{K_0(Q_N)/K_1(Q_N)}(X_{K_1(Q_N)}, \mathcal{V}_{\lambda})_{\mathfrak{m}_{Q_N}})$$ such that $I_N^{\delta}=0$ and a lifting $$\rho_{\mathfrak{m}, N}: G_{F, S \cup Q_N} \rightarrow \mathrm{GL}_n(\mathbb{T}_{\Delta_{Q_N}}^{S \cup Q_N \cup Q_N^c}(R\Gamma_{K_0(Q_N)/K_1(Q_N)}(X_{K_1(Q_N)}, \mathcal{V}_{\lambda})_{\mathfrak{m}_{Q_N}})/I_N)$$ of $\overline{\rho_{\mathfrak{m}}}$ of type $\mathcal{S}(Q_N)$ such that for all $v \notin S \cup Q_N \cup Q_N^c$, det$(T-\rho_{\mathfrak{m}, N}(\mathrm{Frob}_v)) = P_v(T)$ and the corresponding morphism of $\mathcal{O}$-algebras $$R_{\mathcal{S}(Q_N)} \twoheadrightarrow \mathbb{T}_{Q_N}^{S \cup Q_N \cup Q_N^c}(R\Gamma_{K_0(Q_N)/K_1(Q_N)}(X_{K_1(Q_N)}, \mathcal{V}_{\lambda})_{\mathfrak{m}_{Q_N}})/I_N$$ is a surjective morphism of $\mathcal{O}[\Delta_{Q_N}]$-algebras. Note that $$R\Gamma_{K_0(Q_N)/K_1(Q_N)}(X_{K_1(Q_N)}, \mathcal{V}_{\lambda})_{\mathfrak{m}_{Q_N}}$$ is a perfect complex of $\mathcal{O}[\Delta_{Q_N}]$-modules by Proposition \ref{perfect complex}.

Let $d:=[F^+:\mathbb{Q}]n^2-1$, $\Delta_0$ be the trivial group, $R_0:=R_{\mathcal{S}}$, $T_0:= \mathbb{T}^S(R\Gamma(X_K, \mathcal{V}_{\lambda})_{\mathfrak{m}})$ and $C_0 := R\mathrm{Hom}_{\mathcal{O}}(R\Gamma(X_K, \mathcal{V}_{\lambda})_{\mathfrak{m}}, \mathcal{O})[-d]$. For any $N$, let $\Delta_N:=\Delta_{Q_N}$, $R_N:=R_{\mathcal{S}(Q_N)}$, $T_N:=\mathbb{T}_{\Delta_N}^{S \cup Q_N \cup Q_N^c}(R\Gamma_{K_0(Q_N)/K_1(Q_N)}(X_{K_1(Q_N)}, \mathcal{V}_{\lambda})_{\mathfrak{m}_{Q_N}})$ and $$C_N := R\mathrm{Hom}_{\mathcal{O}[\Delta_N]}(R\Gamma_{K_0(Q_N)/K_1(Q_N)} (X_{K_1(Q_N)}, \mathcal{V}_{\lambda})_{\mathfrak{m}_{Q_N}}, \mathcal{O}[\Delta_N])[-d].$$ Let $\mathcal{T}:=\mathcal{O}[[X_{v,i,j} \mid v \in S, i, j = 1, \cdots, n]]/(X_{v_0,1,1})$, $\Delta_{\infty}:=\mathbb{Z}_l^{\oplus q}$, $S_{\infty} := \mathcal{T}[[\Delta_{\infty}]]$ and $R^{\mathrm{loc}}:=\widehat{\otimes}_{v \in S} R_{\overline{r}|_{G_{F_v}}, \mathcal{D}_v}$. ($v_0$ is some fixed place in $S$.) We fix a surjection $\Delta_{\infty} \twoheadrightarrow \Delta_{N}$ for each $N$. Then the kernel of this map is contained in $(l^N\mathbb{Z}_l)^{\oplus q}$ by $(b')$. We regard $S_{\infty}$ as an augmented algebra over $\mathcal{O}$ and $\mathfrak{a}$ denotes the augmentation ideal of $S_{\infty}$

Moreover, we have a canonical isomorphism $C_N \otimes^{\mathbb{L}}_{\mathcal{O}[\Delta_N]} \mathcal{O} \cong C_0$ in $D(\mathcal{O})$ and this induces a $\mathcal{O}$-morphism $T_N \otimes_{\mathcal{O}[\Delta_N]} \mathcal{O} \rightarrow T_0$ by Theorem \ref{Patching argument}. We also have a canonical isomorphism of $\mathcal{O}$-algebras $R_N \otimes_{\mathcal{O}[\Delta_N]} \mathcal{O} \cong R_0$ fitting in the following commutative diagram by Lemma \ref{trace density}

\[\xymatrix{
 R_{N} \ar@{->>}[d] \ar@{->>}[r] & T_{N}/I_{N} \ar@{->>}[d]  \\
 R_0 \ar@{->>}[r] & T_0/(I_0+I_{N}). \\
}\] 

We put $\mathcal{S}(Q_0):=\mathcal{S}$.

We take a representative $\rho_{\mathcal{S}}: G_F \rightarrow \mathrm{GL}_n(R_{0})$ of the universal deformation of $\overline{\rho_{\mathfrak{m}}}$ of type $\mathcal{S}=\mathcal{S}(Q_0)$. Moreover, for each $N$, we take a representative $\rho_{\mathcal{S}(Q_N)}: G_F \rightarrow \mathrm{GL}_n(R_{N})$ of the universal deformation of $\overline{\rho_{\mathfrak{m}}}$ of type $\mathcal{S}(Q_N)$ whose push-forward to $R_0$ is equal to $\rho_{\mathcal{S}}$. This induces an isomorphism $R^S_{\mathcal{S}(Q_N)} \cong R_N\widehat{\otimes}_{\mathcal{O}}\mathcal{T}$ by Lemma \ref{framed deformation} and $R^{\mathrm{loc}}$-algebra structures on $R_N \widehat{\otimes}_{\mathcal{O}} \mathcal{T}$ and $R_N$ such that the morphism $R_N \widehat{\otimes} \mathcal{T} \twoheadrightarrow R_0 \widehat{\otimes} \mathcal{T}$ becomes an $R^{\mathrm{loc}}$-morphism. Moreover, for each $N$, we can take a surjection $R_{\infty}:=R^{\mathrm{loc}}[[Y_1, \cdots, Y_g]] \twoheadrightarrow R_N\widehat{\otimes}_{\mathcal{O}}\mathcal{T}$ of $R^{\mathrm{loc}}$-algebras such that the morphism $R_N \widehat{\otimes} \mathcal{T} \twoheadrightarrow R_0 \widehat{\otimes} \mathcal{T}$ becomes an $R_{\infty}$-morphism by the condition $(c')$.

Note that the following argument is essentially contained in the proof of \cite[Lemma 2.6.4]{RG}. By \cite[Proposition 6.4.12 and 6.4.16]{10} (or more easily \cite[{\S} 2]{RG}), we obtain the following.

(i) \ A bounded complex $C_{\infty}$ of finite free $S_{\infty}$-modules, a commutative $S_{\infty}$-subalgebra $T_{\infty}$ of $\mathrm{End}_{D(S_{\infty})}(C_{\infty})$ and an ideal $I_{\infty}$ of $T_{\infty}$ satisfying $I_{\infty}^{\delta} = 0$. 

(ii) \ An $S_{\infty}$-algebra structure on $R_{\infty}$ such that $R_{\infty} \twoheadrightarrow R_0$ factors through $R_{\infty} \twoheadrightarrow R_{\infty}/\mathfrak{a}$ and an $S_{\infty}$-algebra surjection $R_{\infty} \twoheadrightarrow T_{\infty}/I_{\infty}$.

(iii) \ An isomorphism $C_{\infty}\otimes^{\mathbb{L}}_{S_{\infty}} \mathcal{O} \cong C_0$ in $D(\mathcal{O})$ inducing an $\mathcal{O}$-morphism $T_{\infty}/\mathfrak{a} \rightarrow T_0$ fitting in the following commutative diagram

\[\xymatrix{
 R_{\infty} \ar@{->>}[d] \ar@{->>}[r] & T_{\infty}/I_{\infty} \ar@{->>}[d]  \\
 R_0 \ar@{->>}[r] & T_0/(I_0+I_{\infty}). \\
}\]

We put $l_0:=[F^+:\mathbb{Q}]n-1$ and $q_0:=[F^+:\mathbb{Q}]\frac{n(n-1)}{2}$.

Then we have $\mathrm{dim} \, R_{\infty}[\frac{1}{l}] = g + |S|n^2 + [F:\mathbb{Q}]\frac{n(n-1)}{2} = q - n^2[F^+:\mathbb{Q}] + |S|n^2 + [F:\mathbb{Q}]\frac{n(n-1)}{2} = |S|n^2 + q - [F^+:\mathbb{Q}]n = $$\mathrm{dim} \, S_{\infty, \mathfrak{a}} - l_0$ by Proposition \ref{dimension of unristricted}, Proposition \ref{psd} and Lemma \ref{completed tensor product irreducibility}. Note that since $T_{\infty, \mathfrak{a}}$ is finite over $S_{\infty, \mathfrak{a}}$ and Ann$_{T_{\infty, \mathfrak{a}}} (H^*(C_{\infty, \mathfrak{a}}))$ is a nilpotent ideal of $T_{\infty, \mathfrak{a}}$ by Lemma \ref{nilpotent}, we have $\mathrm{dim} \, \mathrm{Supp}_{S_{\infty, \mathfrak{a}}} H^*(C_{\infty, \mathfrak{a}}) = \mathrm{dim} \, T_{\infty, \mathfrak{a}}$. Moreover, since $I_{\infty}$ is a nilpotent ideal of $T_{\infty, \mathfrak{a}}$ and $\mathrm{Spec} \, T_{\infty, \mathfrak{a}}/I_{\infty}$ is a closed subscheme of $\mathrm{Spec} \, R_{\infty, \mathfrak{a}}$, we obtain dimSupp$_{S_{\infty, \mathfrak{a}}}H^*(C_{\infty, \mathfrak{a}}) \le \mathrm{dim} \, R_{\infty, \mathfrak{a}} \le \mathrm{dim} \, R_{\infty}[\frac{1}{l}] = $$\mathrm{dim} \, S_{\infty, \mathfrak{a}} - l_0$. By (2) of 2 of Theorem \ref{cohomology of locally symmetric space} and by the fact that $\iota^{-1}(\pi^{\infty})^K$ contributes to $H^{*}(X_K, \mathcal{V}_{\lambda})_{\mathfrak{m}} \otimes_{\mathcal{O}} \overline{\mathbb{Q}}_l$, we obtain that $H^i(C_{\infty, \mathfrak{a}} \otimes_{S_{\infty}, \mathfrak{a}}^{\mathbb{L}} S_{\infty, \mathfrak{a}}/\mathfrak{a}) \cong H^i(C_0 \otimes^{\mathbb{L}}_{\mathcal{O}} E) = $Hom$_E(H^{d-i}(X_K, \mathcal{V}_{\lambda})_{\mathfrak{m}}[\frac{1}{l}], E)$ is zero for $i \notin [q_0, q_0 + l_0]$ and nonzero for $i \in [q_0, q_0 + l_0]$. This implies $\mathrm{dim} \, _{S_{\infty, \mathfrak{a}}}$Supp$_{S_{\infty, \mathfrak{a}}}H^*(C_{\infty, \mathfrak{a}}) = \mathrm{dim} \, S_{\infty, \mathfrak{a}} - l_0$, $H^{i}(C_{\infty, \mathfrak{a}})$ is zero for any $i \neq q_0+l_0$, $M := H^{q_0+l_0}(C_{\infty, \mathfrak{a}}) \neq 0$ and depth$_{S_{\infty, \mathfrak{a}}}M = $$\mathrm{dim} \, S_{\infty, \mathfrak{a}} - l_0$ by Proposition \ref{Calagari-OG}.

We write $\mathfrak{p} \in $$\mathrm{Spec} \, T_0$ for the point corresponding to $\pi$. We regard $\mathfrak{p}$ as a point of $\mathrm{Spec} \, R_{\infty}$ by $\mathrm{Spec} \, T_0 \hookrightarrow \mathrm{Spec} \, R_0 \hookrightarrow \mathrm{Spec} \, R_{\infty}$. Then $\mathfrak{p}$ is a regular point of $\mathrm{Spec} \, R_{\infty}$ by the assumption (4), Proposition \ref{purity}, Proposition \ref{regular point}, Proposition \ref{psd} and 3 of Lemma \ref{completed tensor product irreducibility}. (Note that $R_{\infty} \twoheadrightarrow R_0$ is an $R^{\mathrm{loc}}$-morphism.) By 2 of Lemma \ref{completed tensor product regularity}, $\widehat{R_{\infty, \mathfrak{p}}}$ is isomorphic to a formal power series ring over $E$. In particular, we have a lift $\widehat{R_{\infty, \mathfrak{p}}}\rightarrow \widehat{T_{\infty, \mathfrak{p}}}$ of $\widehat{R_{\infty, \mathfrak{p}}} \twoheadrightarrow \widehat{T_{\infty, \mathfrak{p}}}/I_{\infty}$ over $E$. This is a finite map since $I_{\infty}$ is nilpotent. We put $\widehat{M} := M \otimes_{T_{\infty, \mathfrak{a}}} \widehat{T_{\infty, \mathfrak{p}}} (\neq 0)$. Then we obtain $\mathrm{depth}_{\widehat{R_{\infty, \mathfrak{p}}}}\widehat{M} = \mathrm{depth}_{\widehat{T_{\infty, \mathfrak{p}}}}\widehat{M} = \mathrm{depth}_{T_{\infty, \mathfrak{p}}} M_{\mathfrak{p}} \ge \mathrm{depth}_{S_{\infty, \mathfrak{a}}} M = \mathrm{dim} \, S_{\infty, \mathfrak{a}} - l_0 = \mathrm{dim} \, R_{\infty}[\frac{1}{l}] \ge \mathrm{dim} \, \widehat{R_{\infty, \mathfrak{p}}}$. In fact, the first equality follows from Proposition \ref{depth}. The third inequalities follow by the fact that an $M$-regular sequence in $\mathfrak{m}_{S_{\infty, \mathfrak{a}}}$ is also an $M_{\mathfrak{p}}$-regular sequence in $\mathfrak{p}T_{\infty, \mathfrak{p}}$. The second equality is well-known (or $\ge$ is proved by the same way as the third inequality). 

By Proposition \ref{Auslander-Buchsbaum formula}, we obtain that $\widehat{M}$ is a non-zero finite free $\widehat{R_{\infty, \mathfrak{p}}}$-module and consequently $\widehat{R_{\infty, \mathfrak{p}}} \hookrightarrow \widehat{T_{\infty, \mathfrak{p}}}$ is injective. Since $\widehat{R_{\infty, \mathfrak{p}}}$ is regular, $I_{\infty} \cap \widehat{R_{\infty, \mathfrak{p}}}=0$. Therefore, we obtain isomorphisms $\widehat{R_{\infty, \mathfrak{p}}} \stackrel{\sim}{\rightarrow} \widehat{T_{\infty, \mathfrak{p}}}/I_{\infty}$ and $R_{\infty, \mathfrak{p}} \stackrel{\sim}{\rightarrow} T_{\infty, \mathfrak{p}}/I_{\infty}$. In particular, the irreducible component $\mathcal{C}$ of $\mathrm{Spec} \, R_{\infty, \mathfrak{a}}$ containing $\mathfrak{p}$ is contained in $\mathrm{Im}(\mathrm{Spec} \, T_{\infty, \mathfrak{a}}/I_{\infty} \rightarrow \mathrm{Spec} \, R_{\infty, \mathfrak{a}}) = \mathrm{Supp}_{R_{\infty, \mathfrak{a}}}M/I_{\infty}$. By the assumption (c) and Lemma \ref{completed tensor product irreducibility}, the point $\mathfrak{q}$ of $\mathrm{Spec} \, R_{\infty, \mathfrak{a}}$ corresponding to $r$ is contained in $\mathcal{C}$ and consequently is contained in $\mathrm{Im}(\mathrm{Spec} \, T_{\infty, \mathfrak{a}}/I_{\infty} \rightarrow \mathrm{Spec} \, R_{\infty, \mathfrak{a}}) = \mathrm{Supp}_{R_{\infty, \mathfrak{a}}}M/I_{\infty}$. By Lemma \ref{Tor spectral sequence} and $C_{\infty, \mathfrak{a}} \otimes_{S_{\infty, \mathfrak{a}}}^{\mathbb{L}} S_{\infty, \mathfrak{a}}/\mathfrak{a} \cong C_{0}[\frac{1}{l}]$, we obtain $\mathfrak{q} \in \mathrm{Supp}_{R_0[\frac{1}{l}]}H^*(C_{0})[\frac{1}{l}]/I_0 =  \mathrm{Im}(\mathrm{Spec} \, T_0[\frac{1}{l}]/I_0 \rightarrow \mathrm{Spec} \, R_0[\frac{1}{l}])$.

By the bijection in $(3)$ of 2 of Theorem \ref{cohomology of locally symmetric space}, there exists a cohomological cuspidal automorphic representation $\Pi$ of $\mathrm{GL}_n(\mathbb{A}_F)$ of weight $\iota \lambda$ such that $r_{\iota}(\Pi) \cong r$ and $(\Pi^{\infty})^K \neq 0$. In particular, $\Pi_v$ is unramified for all $v|l$ and $\Pi_{u}^{K_{u}} \neq 0$. This implies $\iota \mathrm{WD}(r_{\iota}(\Pi)|_{G_{F_{u}}})^{F-ss} \cong $rec$_{F_{u}}(\Pi_{u}|\mathrm{det}|^{\frac{1-n}{2}}_{u})$ by Proposition \ref{automorphy lifting and local-global compatibility}.\end{proof}

By the above proof, we obtain the following theorem.

\begin{thm}\label{selmer group}

Let $l$ be an odd prime such that $l > n$, $\iota: \overline{\mathbb{Q}}_l \stackrel{\sim}{\rightarrow} \mathbb{C}$ be an isomorphism of fields and $\pi$ be a cohomological cuspidal automorphic representation of $\mathrm{GL}_n(\mathbb{A}_F)$.

We suppose the following conditions.
    
1 \ $\overline{r_{\iota}(\pi)}$ is absolutely irreducible and decomposed generic. 
    
2 \ $\overline{r_{\iota}(\pi)}(G_{F(\zeta_l)})$ is adequate. 

3 \ $\zeta_l \notin F$.

4 \ For any $v \mid l$, there exists a finite extension $F_v'/F_v$ such that $\mathrm{BC}_{F_v'/F_v}(\pi_v)$ is unramified. 

5 \ For all $v \nmid l$, we have $\iota \mathrm{WD}(r_{\iota}(\pi)|_{G_{F_{v}}})^{F-ss} \cong \mathrm{rec}_{F_{v}}(\pi_{v}|\mathrm{det}|_{v}^{\frac{1-n}{2}})$.

We take a finite extension $E$ of $\mathbb{Q}_l$ contained in $\overline{\mathbb{Q}}_l$ such that $\mathrm{Im}(r_{\iota}(\pi)) \subset \mathrm{GL}_n(E)$ and we regard $r_{\iota}(\pi)$ as a representation $G_F \rightarrow \mathrm{GL}_n(E)$.

Then we obtain $H^1_f(F, \mathrm{ad} \, r_{\iota}(\pi)) = 0$.
    
\end{thm}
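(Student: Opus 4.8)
The plan is to re-run the patching argument in the proof of Theorem~\ref{automorphy lifting theorem in crystalline cases}, now with the choice $r := r_{\iota}(\pi)$, and to read off from it that the global deformation ring $R_{\mathcal{S}}$ appearing in Lemma~\ref{definition of selmer group} becomes, after localization and completion at the point attached to $\pi$, equal to the coefficient field $E$; the vanishing of the adjoint Selmer group is then immediate from that lemma. As a preliminary step I would reduce to the case that $\pi_v$ is unramified at every $v \mid l$: by Lemma~\ref{vanishing of selmer group} it is enough to treat $r_{\iota}(\pi)|_{G_{F'}}$ over a suitable finite solvable CM extension $F'/F$, and using hypothesis~4 together with Proposition~\ref{base change}, Corollary~\ref{extension of Q}, Lemma~\ref{imaginary quadratic} and Corollary~\ref{imaginary quadratic 2}, exactly as in the proof of Theorem~\ref{automorphy lifting theorem in crystalline cases}, one can choose $F'$ linearly disjoint from $\overline{F}^{\mathrm{Ker}(\overline{r_{\iota}(\pi)})}$ over $F$ (so that adequacy, absolute irreducibility, decomposed genericity and $\zeta_l \notin F'$ are all preserved), containing an imaginary quadratic field, satisfying the auxiliary conditions on $l$-adic places required for the patching, and such that $\Pi := \mathrm{BC}_{F'/F}(\pi)$ is cohomological cuspidal with $\Pi_w$ unramified for all $w \mid l$. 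Local-global compatibility away from $l$ is inherited by $\Pi$ from hypothesis~5; by Proposition~\ref{purity} the Weil--Deligne representations of $r_{\iota}(\Pi) = r_{\iota}(\pi)|_{G_{F'}}$ at places $w \nmid l$ are generic, and by Corollary~\ref{Caraiani-Newton 2} and Theorem~\ref{Lambert 2} the representation $r_{\iota}(\Pi)|_{G_{F'_w}}$ is crystalline of $l$-adic Hodge type $\mathbf{v}_{\lambda_w}$ for $w \mid l$. So I may replace $(F, \pi)$ by $(F', \Pi)$ and assume from now on that $\pi_v$ is unramified for all $v \mid l$.

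With this assumption all hypotheses of Theorem~\ref{automorphy lifting theorem in crystalline cases} hold for $r = r_{\iota}(\pi)$, and I would carry out its proof verbatim: fix $S$, the good level $K$ (with $K_u$ the paraholic determined by the monodromy type of $\mathrm{WD}(r_{\iota}(\pi)|_{G_{F_u}})$, so that $(\pi^{\infty})^K \neq 0$ by Proposition~\ref{automorphy lifting and local-global compatibility}), and the global deformation problem $\mathcal{S} = (\overline{r}, S, \{\mathcal{O}\}_{v \in S}, \{\mathcal{D}_v^{\Box}\}_{v \in S,\, v \nmid l} \cup \{\mathcal{D}_v^{\mathrm{cris}, \lambda_v}\}_{v \mid l})$, which is precisely the deformation problem of Lemma~\ref{definition of selmer group}. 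Let $\mathfrak{p}$ be the point of $\mathrm{Spec}\,R_{\mathcal{S}}$ attached to $r_{\iota}(\pi)$, equivalently, through the Hecke algebra $T_0 := \mathbb{T}^S(R\Gamma(X_K, \mathcal{V}_{\lambda})_{\mathfrak{m}})$, the point attached to $\pi$. Then the proof of Theorem~\ref{automorphy lifting theorem in crystalline cases} produces the patched data $C_{\infty}$, $S_{\infty}$, $R_{\infty} = R^{\mathrm{loc}}[[Y_1, \dots, Y_g]]$, $T_{\infty}$, the nilpotent ideal $I_{\infty}$, the surjection $R_{\infty}/\mathfrak{a} \twoheadrightarrow R_0 = R_{\mathcal{S}}$, and $M := H^{q_0 + l_0}(C_{\infty})$, and it shows: $\mathfrak{p}$ is a regular point of $\mathrm{Spec}\,R_{\infty}[\frac{1}{l}]$ (by the genericity of $\mathrm{WD}(r_{\iota}(\pi)|_{G_{F_v}})$ for $v \nmid l$, Lemma~\ref{regular point}, Proposition~\ref{psd}, and Lemma~\ref{completed tensor product regularity}), so $\widehat{R_{\infty, \mathfrak{p}}}$ is a formal power series ring over $E$; the patched module $\widehat{M} := M \otimes_{T_{\infty}} \widehat{T_{\infty, \mathfrak{p}}}$ is a non-zero finite free $\widehat{R_{\infty, \mathfrak{p}}}$-module; $\widehat{R_{\infty, \mathfrak{p}}} \xrightarrow{\sim} \widehat{T_{\infty, \mathfrak{p}}}/I_{\infty}$; and $H^i(C_{\infty})$ is supported only in degree $q_0 + l_0$ after localizing at $\mathfrak{p}$.

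The new observation is then short. Put $B := \widehat{R_{\infty, \mathfrak{p}}}/\mathfrak{a}$; it is an Artinian local $E$-algebra with residue field $E$, and there is a surjection $B \twoheadrightarrow \widehat{R_{\mathcal{S}, \mathfrak{p}}}$. Reducing $\widehat{M}$ modulo $\mathfrak{a}$ yields a free $B$-module $\widehat{M}/\mathfrak{a}\widehat{M}$ of rank $r := \mathrm{rank}_{\widehat{R_{\infty, \mathfrak{p}}}}\widehat{M} \geq 1$. By the isomorphism $C_{\infty} \otimes^{\mathbb{L}}_{S_{\infty}} \mathcal{O} \cong C_0$ together with Lemma~\ref{Tor spectral sequence} applied in the top cohomological degree $q_0 + l_0$, one gets $\widehat{M}/\mathfrak{a}\widehat{M} \cong H^{q_0 + l_0}(C_0)[\frac{1}{l}]^{\wedge}_{\mathfrak{p}}$, on which $R_{\mathcal{S}} = R_0$ acts through $\rho_{\mathfrak{m}}$ and the Hecke operators, i.e.\ through the composite $R_0 \to (T_0/I_0)[\frac{1}{l}]_{\mathfrak{p}}$. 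Since $T_0[\frac{1}{l}]$ is finite étale over $E$ and $\mathfrak{p}$ corresponds to the single automorphic representation $\pi$ (Theorem~\ref{cohomology of locally symmetric space}), while $I_0$ is nilpotent, we have $(T_0/I_0)[\frac{1}{l}]_{\mathfrak{p}} = E$; hence the $B$-module structure on $\widehat{M}/\mathfrak{a}\widehat{M}$ factors through $B \twoheadrightarrow \widehat{R_{\mathcal{S}, \mathfrak{p}}} \to E$. Thus $\mathfrak{m}_B$ annihilates a free $B$-module of positive rank, which forces $\mathfrak{m}_B = 0$, so $B = E$ and therefore $\widehat{R_{\mathcal{S}, \mathfrak{p}}} = E$, i.e.\ $\mathfrak{p}R_{\mathcal{S}, \mathfrak{p}}/\mathfrak{p}^2 R_{\mathcal{S}, \mathfrak{p}} = 0$. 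By Lemma~\ref{definition of selmer group} this gives $H^1_f(F', \mathrm{ad}\,r_{\iota}(\Pi)) = 0$, and then $H^1_f(F, \mathrm{ad}\,r_{\iota}(\pi)) = 0$ by Lemma~\ref{vanishing of selmer group}.

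I expect the only real work to be bookkeeping: checking that the $\widehat{R_{\infty, \mathfrak{p}}}$-module structure on $\widehat{M}$ reduces, modulo $\mathfrak{a}$, to the action of $R_{\mathcal{S}}$ on cohomology coming from $\rho_{\mathfrak{m}}$ and the Hecke operators — this is exactly the compatibility built into the commutative diagrams of Theorem~\ref{Patching argument} — and that each hypothesis of Lemma~\ref{definition of selmer group} and of the patching step in Theorem~\ref{automorphy lifting theorem in crystalline cases} survives the base change to $F'$. All of the analytic and geometric input is already present in the proof of Theorem~\ref{automorphy lifting theorem in crystalline cases}; the point ``$\mathfrak{m}_B$ kills a free module of positive rank, hence $B = E$'' is the only extra ingredient needed to upgrade the automorphy conclusion there to the vanishing of the adjoint Bloch--Kato Selmer group.
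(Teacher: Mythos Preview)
Your proposal is correct and follows essentially the same route as the paper's proof: reduce via Lemma~\ref{vanishing of selmer group} and solvable base change to the situation of Theorem~\ref{automorphy lifting theorem in crystalline cases} with $r = r_{\iota}(\pi)$, then use the freeness of $\widehat{M}$ over $\widehat{R_{\infty,\mathfrak{p}}}$ together with $T_{0,\mathfrak{p}}[\tfrac{1}{l}] \cong E$ to force $\widehat{R_{\infty,\mathfrak{p}}}/\mathfrak{a} \cong \widehat{R_{0,\mathfrak{p}}} \cong E$ and conclude by Lemma~\ref{definition of selmer group}. The paper's write-up is terser but the argument and the key ``free module of positive rank with action factoring through $E$'' step are the same.
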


\begin{proof} 

By Lemma \ref{vanishing of selmer group}, we may assume that the conditions $(a) \sim (f)$ hold in the proof of Theorem \ref{automorphy lifting theorem in crystalline cases}. (We put $r:=r_{\iota}(\pi)$.) We will use notations in the proof of Theorem \ref{automorphy lifting theorem in crystalline cases}. Note that $T_{0, \mathfrak{p}} \cong E$ by 2 (3) of Theorem \ref{cohomology of locally symmetric space}. By Lemma \ref{Tor spectral sequence}, we have $H^{l_0 + q_0}(C_0 \otimes_{\mathcal{O}}E)_{\mathfrak{p}} = M_{\mathfrak{p}}/\mathfrak{a} = \widehat{M}/\mathfrak{a}$. This is a nonzero finite free $\widehat{R_{\infty, \mathfrak{p}}}/\mathfrak{a}$-module and the action of $\widehat{R_{\infty, \mathfrak{p}}}/\mathfrak{a}$ on $H^{l_0 + q_0}(C_0 \otimes_{\mathcal{O}}E)_{\mathfrak{p}}$ factors through $(T_0/(I_0 + I_{\infty}))_{\mathfrak{p}}$ as well as $\widehat{R_{0, \mathfrak{p}}}$ since $I_0 + I_{\infty}$ is zero in $T_{0, \mathfrak{p}} \cong E$. Thus, we obtain isomorphisms $\widehat{R_{\infty, \mathfrak{p}}}/\mathfrak{a} \stackrel{\sim}{\rightarrow} \widehat{R_{0, \mathfrak{p}}} = R_{0, \mathfrak{p}} \stackrel{\sim}{\rightarrow} T_{0, \mathfrak{p}} \cong E$. This implies $H^1_f(F, \mathrm{ad} \, r_{\iota}(\pi)) = 0$ by Lemma \ref{definition of selmer group}. \end{proof}

\subsubsection{Automorphy lifting and local-global compatibility in ordinary cases}

In this subsection, we prove the automorphy lifting of some ordinary Galois representations and the local-global compatibility for the cuspidal automorphic representations corresponding to them as in the crystalline cases. In ordinary cases, we already have the automorphy lifting theorem \cite[Theorem 6.1.2]{10}. However, we can remove an assumption for the neatness of the level structure by the methods of \cite[section 8]{small} and \cite[{\S}5.4]{MEI}. First, we recall some foundational results.

\begin{dfn}

Let $R$ be a Noetherian local ring and $C$ be a complex of $R$-modules.
    
We say that $C$ is minimal if $C$ is a bounded complex of finite free $R$-modules and all differentials on $C \otimes_{R} R/\mathfrak{m}_R$ are zero.
    
\end{dfn}
    
\begin{lem} \label{minimal complex}
    
Let $R$ be a Noetherian local ring.
    
1 \ For any perfect complex $C$ of $R$-modules, there exist a minimal complex $F$ of $R$-modules and a quasi-isomorphism $f : F \rightarrow C$.

2 \ Let $C$ be a minimal complex of $R$-modules and $R \rightarrow S$ be a local morphism of Noetherian local algebras. Then $C \otimes_R S$ is minimal.
    
3 \ For minimal complexes $C$ and $D$ of $R$-modules, if $f : C \rightarrow D$ is a quasi-isomorphism, then $f$ is an isomorphism.

4 \ Let $\{ I_n \}_n$ be a decreasing sequence of open ideals of $R$ such that $R \cong \varprojlim_n R/I_n$ and for each positive integer $n$, $C_n$ and $D_n$ be minimal complexes of $R/I_n$-modules such that $C_{n+1}/I_{n} \cong C_n$ and $D_{n+1}/I_n \cong D_n$. Then $C_{\infty} := \varprojlim_{n} C_n$ and $D_{\infty} := \varprojlim_{n} D_n$ are minimal complexes of $R$-modules and the canonical morphism $\mathrm{Hom}_{D(R)}(C_{\infty}, D_{\infty}) \stackrel{\sim}{\rightarrow} \varprojlim_{n} \mathrm{Hom}_{D(R/I_n)}(C_n, D_n)$ is an isomorphism.

\end{lem}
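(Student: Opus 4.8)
The four parts are the standard facts about minimal complexes, and I would establish them in the stated order.

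\emph{Parts 1--3.} For \emph{1}, represent the perfect complex $C$ by a bounded complex $C^{\bullet}$ of finite free $R$-modules; if some differential $d^{i}$ is nonzero modulo $\mathfrak{m}_{R}$, a standard change of bases splits off an acyclic direct summand $\bigl(R \xrightarrow{\ \mathrm{id}\ } R\bigr)$ placed in degrees $i,i+1$, producing a quasi-isomorphic bounded complex of finite free modules of strictly smaller total rank $\sum_{i}\mathrm{rank}\,C^{i}$, so the process terminates at a minimal $F$ and composing the splittings gives $f\colon F\to C$. For \emph{2}, minimality says $d^{i}(C^{i})\subseteq\mathfrak{m}_{R}C^{i}$; as $R\to S$ is local, $\mathfrak{m}_{R}S\subseteq\mathfrak{m}_{S}$, so the differentials of the bounded complex of finite free $S$-modules $C\otimes_{R}S$ land in $\mathfrak{m}_{S}(C^{i}\otimes_{R}S)$. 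For \emph{3}, a quasi-isomorphism of bounded complexes of flat modules stays one after $-\otimes_{R}R/\mathfrak{m}_{R}$; since the differentials of $C$ and $D$ vanish mod $\mathfrak{m}_{R}$ this says each $f^{i}\otimes_{R}R/\mathfrak{m}_{R}$ is an isomorphism of $\mathbb{F}$-vector spaces, so by Nakayama $f^{i}$ is a surjection of finite free $R$-modules of equal rank, hence an isomorphism, and $f$ is an isomorphism of complexes.

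\emph{Part 4.} Since $C_{n+1}^{i}\otimes_{R/I_{n+1}}R/I_{n}\cong C_{n}^{i}$, the rank $r_{i}$ of $C_{n}^{i}$ is independent of $n$, so $C_{\infty}^{i}=\varprojlim_{n}C_{n}^{i}=\varprojlim_{n}(R/I_{n})^{r_{i}}=R^{r_{i}}$ is finite free over $R=\varprojlim_{n}R/I_{n}$, and likewise for $D_{\infty}$. Applying $\varprojlim_{n}$ to $0\to\mathfrak{m}_{R}/I_{n}\to R/I_{n}\to\mathbb{F}\to0$ (the constant system $\{\mathbb{F}\}$ is $\varprojlim^{1}$-acyclic) gives $\varprojlim_{n}\mathfrak{m}_{R}/I_{n}=\mathfrak{m}_{R}$, whence $d_{\infty}^{i}(C_{\infty}^{i})=\varprojlim_{n}d_{n}^{i}(C_{n}^{i})\subseteq\varprojlim_{n}(\mathfrak{m}_{R}/I_{n})C_{n}^{i}=\mathfrak{m}_{R}C_{\infty}^{i}$, so $C_{\infty}$ and $D_{\infty}$ are minimal. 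For the $\mathrm{Hom}$ statement, set $E^{\bullet}:=\mathrm{Hom}_{R}^{\bullet}(C_{\infty},D_{\infty})$, a bounded complex of finite free $R$-modules with $E^{\bullet}/I_{n}E^{\bullet}\cong\mathrm{Hom}_{R/I_{n}}^{\bullet}(C_{n},D_{n})=:E_{n}^{\bullet}$ and $E^{\bullet}=\varprojlim_{n}E_{n}^{\bullet}$ termwise. As $C_{\infty}$ (resp.\ $C_{n}$) is a bounded complex of projectives, $\mathrm{Hom}_{D(R)}(C_{\infty},D_{\infty})=H^{0}(E^{\bullet})$ and $\mathrm{Hom}_{D(R/I_{n})}(C_{n},D_{n})=H^{0}(E_{n}^{\bullet})$, so it suffices to prove $H^{k}(E^{\bullet})\xrightarrow{\ \sim\ }\varprojlim_{n}H^{k}(E_{n}^{\bullet})$ for all $k$. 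I would do this by chasing the inverse systems of cocycles $Z^{k}_{n}:=Z^{k}(E_{n}^{\bullet})$ and coboundaries $B^{k}_{n}:=B^{k}(E_{n}^{\bullet})$: the systems $\{E_{n}^{k}\}$ and $\{B^{k}_{n}\}$ have surjective transition maps, $\varprojlim_{n}Z^{k}_{n}=Z^{k}(E^{\bullet})$ by left exactness, and $\varprojlim_{n}B^{k}_{n}=B^{k}(E^{\bullet})$ because $d(E^{k-1})$ is a finitely generated, hence $\{I_{n}\}$-adically closed, submodule of $E^{k}$ ($R$ complete Noetherian local, Artin--Rees); feeding this into the six-term $\varprojlim$-exact sequences for $0\to Z^{k}_{n}\to E^{k}_{n}\to B^{k+1}_{n}\to0$ and $0\to B^{k}_{n}\to Z^{k}_{n}\to H^{k}(E_{n}^{\bullet})\to0$ yields $\varprojlim^{1}_{n}Z^{k}_{n}=0$, then $\varprojlim^{1}_{n}H^{k}(E_{n}^{\bullet})=0$ and $\varprojlim_{n}H^{k}(E_{n}^{\bullet})=H^{k}(E^{\bullet})$.

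\emph{The main obstacle} is the bookkeeping in Part 4; the one genuinely non-formal input is the Artin--Rees/Krull closedness of $d(E^{k-1})$ in $E^{k}$, which is what makes $\varprojlim_{n}B^{k}_{n}=B^{k}(E^{\bullet})$ and hence forces the $\varprojlim^{1}$'s of the cocycle and cohomology systems to vanish; everything else is routine homological algebra with minimal complexes and Nakayama's lemma.
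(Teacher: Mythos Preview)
Your proof is correct and aligns with the paper's approach: for parts 2 and 3 you give essentially the same Nakayama argument the paper writes out, while for parts 1 and 4 the paper simply cites \cite[Lemma~2.3 and Lemma~2.13]{Leo} and you supply the standard explicit arguments (splitting off acyclic free summands; the $\varprojlim/\varprojlim^{1}$ analysis of the total $\mathrm{Hom}$ complex using Artin--Rees closedness of $d(E^{k-1})$). One minor simplification for the minimality of $C_{\infty}$ in part 4: since each $I_{n}$ is a proper open ideal, $I_{n}\subseteq\mathfrak{m}_{R}$, so $C_{\infty}\otimes_{R}R/\mathfrak{m}_{R}=(C_{\infty}/I_{n})\otimes_{R/I_{n}}R/\mathfrak{m}_{R}=C_{n}\otimes_{R/I_{n}}R/\mathfrak{m}_{R}$, which has zero differentials by part 2, avoiding the $\varprojlim_{n}(\mathfrak{m}_{R}/I_{n})C_{n}^{i}$ computation.
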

    
\begin{proof} 1 is contained in \cite[Lemma 2.3]{Leo}.

2 follows from the definition. 

3 \ $f$ induces a quasi-isomorphism $C/\mathfrak{m}_R \rightarrow D/\mathfrak{m}_R$. Since $H^i(C/\mathfrak{m}_R) = C^i/\mathfrak{m}_R$ and $D^i/\mathfrak{m}_R = H^i(D/\mathfrak{m}_R)$ for any $i$, $f$ induces an isomorphism $C^i/\mathfrak{m}_R \rightarrow D^i/\mathfrak{m}_R$ for any $i$. By Nakayama's Lemma, $\mathrm{Coker}(f) = 0$. Therefore, $\mathrm{Ker}(f)$ is a direct summand of $C$. By Nakayama's Lemma again, we obtain $\mathrm{Ker}(f)=0$.

4 \ See \cite[Lemma 2.13]{Leo}. \end{proof}

\begin{thm}\label{ordinary automorphy lifting}

    Let $l$ be an odd prime such that $l > n$, $\iota: \overline{\mathbb{Q}}_l \stackrel{\sim}{\rightarrow} \mathbb{C}$ be an isomorphism of fields, $r: G_F \rightarrow \mathrm{GL}_n(\overline{\mathbb{Q}}_l)$ be an algebraic $l$-adic representation and $U$ be a set of finite places of $F$ not containing $l$-adic places.

    We suppose the following conditions.
    
    1 \ $\overline{r}$ is absolutely irreducible and decomposed generic. 
    
    2 \ $\overline{r}(G_{F(\zeta_l)})$ is adequate. 
    
    3 \ $\zeta_l \notin F$.
    
    4 \ For all $v|l$, $r|_{G_{F_v}}$ is ordinary.
    
    5 \ There exists an $\iota$-ordinary cohomological cuspidal automorphic representation $\pi$ of $\mathrm{GL}_n(\mathbb{A}_F)$ satisfying the following conditions.
    
    (1) \ $\overline{r} \cong \overline{r_{\iota}(\pi)}$.
    
    (2) \ For all $v \in U$, $\mathrm{WD}(r_{\iota}(\pi)|_{G_{F_v}})$ and $\mathrm{WD}(r|_{G_{F_v}})$ have the same monodromy type. 
    
    (3) \ For all $v \in U$, we have $\iota \mathrm{WD}(r_{\iota}(\pi)|_{G_{F_{v}}})^{F-ss} \cong \mathrm{rec}_{F_{v}}(\pi_{v}|\mathrm{det}|_{v}^{\frac{1-n}{2}})$.
    
    Then there exists an $\iota$-ordinary cohomological cuspidal automorphic representation $\Pi$ of $\mathrm{GL}_n(\mathbb{A}_F)$ such that $r \cong r_{\iota}(\Pi)$ and $\iota\mathrm{WD}(r_{\iota}(\Pi)|_{G_{F_{v}}})^{F-ss} \cong \mathrm{rec}_{F_{v}}(\Pi_{v}|\mathrm{det}|_{v}^{\frac{1-n}{2}})$ for all $v \in U$. 

\end{thm}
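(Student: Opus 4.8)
This is the ordinary counterpart of Theorem \ref{automorphy lifting theorem in crystalline cases}, and the plan is to run the same Taylor--Wiles--Kisin patching argument with four substitutions: at $l$-adic places the crystalline lifting rings $R_{\overline{r}|_{G_{F_v}}}^{\mathrm{cris},\lambda_v}$ are replaced by the ordinary local deformation rings $R_{\overline{r}|_{G_{F_v}},\mathcal{D}_v^{\mathrm{det,ord}}}$ of Proposition \ref{ordinary deformation ring} (more precisely by their refinements $R_v^{\triangle}$ of \cite{red}, as in the Remark after Theorem \ref{ordinarirty of automorphic Galois}); the cohomology complex $R\Gamma(X_K,\mathcal{V}_\lambda)$ is replaced by its ordinary part $R\Gamma_{K(0,c)/K(b,c)}(X_{K(b,c)},\mathcal{V}_\lambda)^{\mathrm{ord}}$ and Corollary \ref{ordinary cohomology of locally symmetric space} is used in place of Theorem \ref{cohomology of locally symmetric space}; the Galois representation of Theorem \ref{Caraiani-Newton} is replaced by that of Theorem \ref{ordinarirty of automorphic Galois}, whose clauses (2) and (3) are exactly the defining conditions of $\mathcal{D}_v^{\mathrm{det,ord}}$; and the patching mechanism of \cite[\S2]{RG} is replaced by that of \cite[\S8]{small} and \cite[\S5.4]{MEI}, which uses minimal complexes (Lemma \ref{minimal complex}) to take the inverse limit over Taylor--Wiles data and thereby removes the neatness hypothesis. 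By Proposition \ref{strong multiplicity one} it suffices, after fixing $v_0\in U$, to produce $\Pi$ and verify local-global compatibility at $v_0$.

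First I would carry out the reduction step exactly as in the proof of Theorem \ref{automorphy lifting theorem in crystalline cases}: using Propositions \ref{base change} and \ref{ordinary base change}, Corollary \ref{extension of Q}, Lemma \ref{imaginary quadratic}, Proposition \ref{potential automorphy and local-global compatibility}, Proposition \ref{monodromy type irreducible component} and the density of decomposed generic primes (Lemma \ref{decomposed genericity density}), replace $F$ by a solvable CM extension, linearly disjoint from $\overline{F}^{\mathrm{Ker}\overline{r}}$, so that $F$ contains an imaginary quadratic field in which $l$ splits, there is a finite set $S=S^c$ of finite places containing $S_l:=\{v\mid l\}$ and the places above $U$ with $r,\pi$ unramified outside $S$, $\overline{r}|_{G_{F_v}}$ is trivial for all $v\in S$, $\pi_v^{\mathrm{Iw}_v}\neq 0$ and $r|_{G_{F_v}}$ is unipotently ramified for $v\in S\setminus S_l$, $[F_v:\mathbb{Q}_l]>1+\tfrac{n(n+1)}{2}$ for $v\in S_l$, and the inequality of $l$-adic local degrees of Theorem \ref{ordinarirty of automorphic Galois} holds; adequacy, absolute irreducibility over $F(\zeta_l)$, $\zeta_l\notin F$ and $\iota$-ordinarity are preserved. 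By Theorem \ref{ordinary semisimple}, $r_\iota(\pi)|_{G_{F_v}}$ is ordinary of weight $\lambda_v$ for $v\mid l$; since $r|_{G_{F_v}}$ is ordinary of the same weight and residually trivial, Proposition \ref{semistable ordinary deformation ring} gives $r|_{G_{F_v}}\sim r_\iota(\pi)|_{G_{F_v}}$ there, and at $v\in S\setminus S_l$ the matching of monodromy types together with Proposition \ref{monodromy type irreducible component} gives $r|_{G_{F_v}}\sim r_\iota(\pi)|_{G_{F_v}}$ as well, exactly as in the crystalline argument.

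Next I would set up the cohomology: choose $c\ge b\ge 0$ with $c\ge 1$ and $\iota^{-1}(\pi_v^{\mathrm{Iw}_v(b,c)})^{\mathrm{ord}}\neq 0$ for all $v\mid l$ (possible since $\pi$ is $\iota$-ordinary, Definition \ref{iota ordinary definition}), and a good level $K$ with $K_v=\mathrm{GL}_n(\mathcal{O}_{F_v})$ outside $S$, $K_v=\mathrm{Iw}_v(b,c)$ for $v\mid l$, $K_{v_0}$ the paraholic subgroup attached to the dual partition of the monodromy type of $\mathrm{WD}(r|_{G_{F_{v_0}}})$, and $K_v=\mathrm{Iw}_{v,1}$ for the remaining $v\in S\setminus S_l$. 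By Proposition \ref{automorphy lifting and local-global compatibility} applied to $\pi$ at $v_0$ (using hypotheses 5(2),(3)) one gets $\iota^{-1}(\pi^\infty)^{K(b,c),\mathrm{ord}}\neq 0$. Let $\mathfrak{m}$ be the non-Eisenstein ideal of $\mathbb{T}^{S,\mathrm{ord}}$ attached to $\overline{r_\iota(\pi)}$ via Corollary \ref{ordinary cohomology of locally symmetric space}, and let $\mathcal{S}$ be the global deformation problem with $\mathcal{D}_v^\Box$ at $v\in S\setminus S_l$ and $\mathcal{D}_v^{\mathrm{det,ord}}$ (resp.\ $\mathcal{D}_v^{\triangle}$) at $v\mid l$; Theorem \ref{ordinarirty of automorphic Galois} provides a type-$\mathcal{S}$ lift of $\overline{\rho_{\mathfrak{m}}}$ into the Hecke algebra modulo a nilpotent ideal. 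Then, with Taylor--Wiles data from Proposition \ref{chebotarev} and the (ordinary version of the) tame-level isomorphisms of Theorem \ref{Patching argument}, I would patch the minimal complexes associated with $R\Gamma_{K_0(Q)/K_1(Q)}(\ldots)^{\mathrm{ord}}$ to obtain $R_\infty$ (a power series ring over $R^{\mathrm{loc}}=\widehat{\otimes}_{v\in S}R_{\overline{r}|_{G_{F_v}},\mathcal{D}_v}$), $S_\infty$, a patched module $M$ and a nilpotent ideal $I_\infty$, fitting into the same commutative diagram $R_\infty\twoheadrightarrow T_\infty/I_\infty$, $R_0\twoheadrightarrow T_0/(I_0+I_\infty)$ as in the crystalline case with $C_\infty\otimes^{\mathbb{L}}_{S_\infty}\mathcal{O}\cong C_0$.

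Finally I would run the dimension and depth bookkeeping verbatim: using Proposition \ref{dimension of unristricted} at $v\in S\setminus S_l$ and Proposition \ref{ordinary deformation ring}(4)--(6) (the distinguished component has the expected dimension) at $v\mid l$, one gets $\dim R_\infty[\tfrac1l]=\dim S_\infty-l_0$ with $l_0=[F^+:\mathbb{Q}]n-1$; Corollary \ref{ordinary cohomology of locally symmetric space}(2) places $H^i(C_\infty\otimes^{\mathbb{L}}S_\infty/\mathfrak{a})$ in degrees $[q_0,q_0+l_0]$ and makes them nonzero there, so Proposition \ref{Calagari-OG} forces $M$ into a single degree with $\mathrm{depth}_{S_\infty}M=\dim S_\infty-l_0$. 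The point $\mathfrak{p}\in\mathrm{Spec}R_\infty$ coming from $\pi$ is regular (at $v\mid l$ because the relevant component of the ordinary local ring $R_v^{\triangle}$ is formally smooth at ordinary automorphic points; at $v\in S\setminus S_l$ by Lemma \ref{regular point} and Proposition \ref{purity}), so by Auslander--Buchsbaum (Propositions \ref{Auslander-Buchsbaum formula} and \ref{depth}) $\widehat{M}$ is free over $\widehat{R_{\infty,\mathfrak p}}$, whence $\widehat{R_{\infty,\mathfrak p}}\xrightarrow{\sim}\widehat{T_{\infty,\mathfrak p}}/I_\infty$; the component of $\mathrm{Spec}R_{\infty,\mathfrak a}$ through $\mathfrak{p}$ lies in $\mathrm{Supp}\,M/I_\infty$ and, by the relations $r|_{G_{F_v}}\sim r_\iota(\pi)|_{G_{F_v}}$ established above together with Lemma \ref{completed tensor product irreducibility}, also contains the point $\mathfrak{q}$ of $r$, so Lemma \ref{Tor spectral sequence} gives $\mathfrak{q}\in\mathrm{Supp}_{R_0[1/l]}H^*(C_0)[1/l]/I_0$. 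The bijection in Corollary \ref{ordinary cohomology of locally symmetric space}(2) then yields an $\iota$-ordinary cohomological cuspidal $\Pi$ with $r_\iota(\Pi)\cong r$ and $\iota^{-1}(\Pi^\infty)^{K(b,c),\mathrm{ord}}\neq0$, hence $\Pi_{v_0}^{K_{v_0}}\neq0$, so $\iota\mathrm{WD}(r_\iota(\Pi)|_{G_{F_{v_0}}})^{F\text{-}ss}\cong\mathrm{rec}_{F_{v_0}}(\Pi_{v_0}|\mathrm{det}|_{v_0}^{\frac{1-n}{2}})$ by Proposition \ref{automorphy lifting and local-global compatibility}; descending through the solvable base change (Propositions \ref{base change}, \ref{ordinary base change}, \ref{potential automorphy and local-global compatibility}) produces the desired $\Pi$ over the original $F$. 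The main obstacle is the patching step itself in the ordinary framework: one must carry out the minimal-complex inverse limit over Taylor--Wiles data for the ordinary cohomology, keeping the dimension and depth counts consistent, and one must have the formal smoothness (equivalently, the regular-point property at the automorphic point) of the relevant component of the ordinary local deformation ring $R_v^{\triangle}$ for $v\mid l$; this is where the inputs of \cite[\S8]{small}, \cite[\S5.4]{MEI} and the precise construction of \cite{red} are essential, exactly as Proposition \ref{semistable ordinary deformation ring} is essential for controlling these components.
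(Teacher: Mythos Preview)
Your plan has a genuine gap: it tacitly imports two hypotheses from Theorem \ref{automorphy lifting theorem in crystalline cases} that are \emph{not} present here. First, hypothesis 5(2) only asserts monodromy-type matching at places in $U$, not at all $v\nmid l$; after base change you will have a set $S$ strictly containing $S_l\cup U$, and at $v\in S\setminus(S_l\cup U)$ there is no reason for $\mathrm{WD}(r|_{G_{F_v}})$ and $\mathrm{WD}(r_\iota(\pi)|_{G_{F_v}})$ to have the same monodromy type, so your line ``at $v\in S\setminus S_l$ the matching of monodromy types together with Proposition \ref{monodromy type irreducible component} gives $r|_{G_{F_v}}\sim r_\iota(\pi)|_{G_{F_v}}$'' is unjustified. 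Second, nothing in the hypotheses forces $r$ and $\pi$ to have the same weight at $l$; the paper explicitly allows distinct weights $\lambda$ for $r$ and $\mu$ for $r_\iota(\pi)$, so your claim that ``$r|_{G_{F_v}}$ is ordinary of the same weight'' is false in general, Proposition \ref{semistable ordinary deformation ring} does not apply, and in a fixed-weight cohomology complex the point of $r$ will not even lie in $\mathrm{Spec}T_0$. Consequently, in your final paragraph the points $\mathfrak p$ and $\mathfrak q$ need not lie on the same irreducible component of $\mathrm{Spec}R_\infty$, and the support argument breaks down.

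The paper supplies two extra ingredients to repair this. For the weight mismatch at $l$, one works over the Iwasawa algebra $\Lambda=\widehat\bigotimes_{v\mid l}\mathcal O[[\mathcal O_{F_v}^\times(l)^{\oplus n}]]/Q_v$, builds the big ordinary complex $B(\mu,\chi)$ as an inverse limit over levels (Proposition \ref{level independence}) and invokes weight independence (Proposition \ref{weight independence}) so that $r$ and $r_\iota(\pi)$, though of different weights, both specialize from the same $\Lambda$-adic family. For the monodromy mismatch at $v\in S\setminus(S_l\cup U)$, one runs Taylor's Ihara-avoidance trick: two deformation problems $\mathcal S^1$ and $\mathcal S^\chi$ (with pairwise distinct $\chi_{v,i}$, so that $\mathrm{Spec}\,R_{\overline r|_{G_{F_v}},\mathcal D_v^\chi}$ is irreducible by Proposition \ref{Level raising}(3)) are patched in parallel, the mod-$\varpi$ comparison $C_\infty/\varpi\cong C_\infty'/\varpi$ is used, and Lemma \ref{Taylor's trick} lets one pass from the component $\mathcal C_0$ of $\mathrm{Spec}R_\infty$ containing $r_\iota(\pi)$, via $\mathrm{Spec}R_\infty'$, to the possibly different component $\mathcal C_1$ containing $r$. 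Your proposal as written contains neither mechanism, so it cannot conclude.
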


\begin{proof} We put $S_l:= \{ v|l \}$. By Proposition \ref{strong multiplicity one}, we may assume that $U$ is finite. (In fact, we may assume that $U$ is empty or consists of one element.)

Let $\lambda$ (resp. $\mu$) denote the element of $(\mathbb{Z}_+^n)^{\mathrm{Hom}(F, \overline{\mathbb{Q}}_l)}$ (resp. $(\mathbb{Z}_+^n)^{\mathrm{Hom}(F, \overline{\mathbb{Q}}_l)}$) such that $\lambda_v$ (resp. $\mu_v$) is the weight of $r|_{G_{F_v}}$ (resp, $r_{\iota}(\pi)|_{G_{F_v}}$) for all $v \mid l$. For any $v|l$, we take a $G_{F_v}$-stable increasing filtration $\{ \mathrm{Fil}_i \}_{i=1}^{n}$ on $r|_{G_{F_v}}$ (resp. $\{ \mathrm{Fil}'_i \}_{i=1}^{n}$ on $r_{\iota}(\pi)|_{G_{F_v}}$) satisfying the condition (1) of Lemma \ref{ordinary Galois representation}. We put $\phi_{\lambda, v, i} := \mathrm{Fil}_{i}/\mathrm{Fil}_{i-1}$ (resp. $\psi_{\mu, v, i} := \mathrm{Fil}'_i/\mathrm{Fil}'_{i-1}$).

We can take a finite extension $E$ of $\mathbb{Q}_l$ contained in $\overline{\mathbb{Q}}_l$ such that $\tau (F) \subset E$ for all $\tau \in \mathrm{Hom}(F, \overline{\mathbb{Q}}_l)$, $\zeta_l \in E$ and the residue field $\mathbb{F}$ of $E$ contains all eigenvalues of $\overline{r}(g)$ for all $g \in G_F$. Moreover, we may assume that $\mathrm{Im}(r)$ and $\mathrm{Im}(r_{\iota}(\pi))$ are contained in $\mathrm{GL}_n(\mathcal{O})$ and $\overline{r} = \overline{r_{\iota}(\pi)}$. (Here, $\mathcal{O}$ denotes the ring of integers of $E$.) Let $\varpi$ be a prime of $\mathcal{O}$.

By Propositions \ref{base change}, \ref{ordinary base change},  Corollary \ref{extension of Q}, Lemma \ref{imaginary quadratic}, Proposition \ref{potential automorphy and local-global compatibility} and Lemma \ref{monodromy type irreducible component}, we may assume the following conditions.

$(a)$ \ We have a finite set $T$ of finite places of $F$ containing $S_l \cup U$ such that for all $v \notin T$, $r$ and $\pi$ are unramified at $v$.

$(b)$ \ For all $v \in U$, $r_{\iota}(\pi)|_{G_{F_v}} \sim r|_{G_{F_v}}$.

$(c)$ \ For all $v \in S_l$, we have that $\pi_v^{\mathrm{Iw}_v} \neq 0$, $\overline{r}|_{G_{F_v}}$ is trivial, $r|_{G_{F_v}}$ is semistable, $\psi_{\mu, v, i} \circ \mathrm{Art}_{F_v}(u) = \phi_{\lambda, v, i} \circ \mathrm{Art}_{F_v}(u) =1$ for all $l$-torsion elements $u \in \mathcal{O}_{F_v}^{\times}$ and $[F_v:\mathbb{Q}_l] > \frac{n(n+1)}{2} + 1$. 

$(d)$ \ For any $v \in T \setminus S_{l}$, we have that $\pi_v^{\mathrm{Iw}_v} \neq 0$, $r|_{G_{F_v}}$ is unipotently ramified, $\overline{r}|_{G_{F_v}}$ is trivial and $q_v \equiv 1 \mod l$.

$(e)$ \ Any prime $p$ lying below $T$ or ramified in $F$, splits in an imaginary quadratic field contained in $F$. (In particular, $F$ is an imaginary CM field.)

Take a finite place $u$ of $F$ such that $u \notin T$ and $q_u \mod l \neq 1$. This is possible by $\zeta_l \notin F$. Note that we have the following properties by the assumption (a), 1 of Lemma \ref{coincide monodromy type} and Theorem \ref{Ila Varma}.

$(f) \ r|_{G_{F_u}} \sim r_{\iota}(\pi)|_{G_{F_u}}$.

$(g) \ \iota\mathrm{WD}(r_{\iota}(\pi)|_{G_{F_u}})^{F-ss} \cong \mathrm{rec}_{F_u}(\pi_u|\mathrm{det}|_u^{\frac{1-n}{2}})$.

By using Lemma \ref{imaginary quadratic} again, we may assume that the prime lying below $u$ splits in an imaginary quadratic field contained in $F$. We put $S := T \cup \{ u \}$. 

For $v|l$, we put $\Lambda_{v, 1} := \mathcal{O}[[\mathcal{O}_{F_v}^{\times}(l)^{\oplus n}]]$ and $p_{v, \lambda}$, $p_{v, \mu}: \Lambda_{v, 1} \rightarrow \mathcal{O}$ denote the $\mathcal{O}$-morphisms induced by $(\phi_{\lambda, v, i}|_{I_{F_v}}), (\psi_{\mu, v, i}|_{I_{F_v}})$ respectively. We put $\Lambda_1:= \widehat{\otimes}_{v|l} \Lambda_{v,1}$. By the condition $(c)$, for any $v|l$, there exists a minimal prime $Q_v$ of $\Lambda_{v, 1}$ contained in $\mathrm{Ker}(p_{v, \lambda})$ and $\mathrm{Ker}(p_{v, \mu})$. Note that $\Lambda_{v} := \Lambda_{v, 1}/Q_v \cong \mathcal{O}[[X_1, \cdots, X_{n[F_v:\mathbb{Q}_l]}]]$. We put $\Lambda := \widehat{\otimes}_{v | l} \Lambda_v$.

For each $v \in T \setminus (S_l \cup U)$, let $\chi_{v, 1}, \cdots, \chi_{v, n}: \mathbb{F}_v^{\times} \rightarrow 1 + \varpi\mathcal{O}$ be characters. (Later, we only consider the case that $\chi_{v, i}$ are trivial and the case that $\chi_{v, i}$ are pairwise different.) We put $\mathcal{S}^{\chi} := ( \overline{r}, S, \{ \Lambda_{v} \}_{v \in S}, \{ \mathcal{D}_v(\chi) \}_{v \in S}) := ( \overline{r}, S, \{ \Lambda_{v} \}_{v | l} \cup \{ \mathcal{O} \}_{v \in S \setminus S_l}, \{ \mathcal{D}_v^{\chi} \}_{v \in T \setminus (S_l \cup U)} \cup \{ \mathcal{D}_v^{\mathrm{det, ord}} \}_{v|l} \cup \{ \mathcal{D}_v^1 \}_{v \in U \cup \{ u \}} )$. Then $r$ (resp. $r_{\iota}(\pi)$) is a lifting of type $\mathcal{S}$ to $\mathcal{O}$ when we regard $\mathcal{O}$ as a $\Lambda$-algebra by $(p_{v, \lambda})$ (resp. $(p_{v, \mu})$). After extending $E$, we may assume for any $v \in S$, all irreducible components of $\mathrm{Spec} \, R_{\overline{r}|_{G_{F_{v}}}, \mathcal{D}_v(\chi)}[\frac{1}{l}]$ and $\mathrm{Spec} \, R_{\overline{r}|_{G_{F_{v}}}, \mathcal{D}_v(\chi)}/\varpi$ are geometrically irreducible.

We define an open compact subgroup $K = \prod_{v} K_v$ of $\mathrm{GL}_n(\mathbb{A}_F^{\infty})$ as follows.

$\cdot$ If $v \notin S$, then $K_v:=\mathrm{GL}_n(\mathcal{O}_{F_v})$.

$\cdot$ If $v \in S_l$, then $K_v:=\mathrm{Iw}_{v,1} = \mathrm{Iw}_{v}(1,1)$.

$\cdot$ If $v \in S \setminus (S_l \cup U)$, then $K_v := \mathrm{Iw}_v$.

$\cdot$ If $v \in U$, then $K_v$ is the parahoric subgroup of $\mathrm{GL}_n(F_{v})$ corresponding to the dual partition of the monodromy type of $\mathrm{WD}(r|_{G_{F_v}})$.

By Proposition \ref{automorphy lifting and local-global compatibility}, the assumption (2) and (3) of $5$, $(a)$, $(c)$ and $(d)$, we have $\iota^{-1}(\pi^{\infty})^{K, \mathrm{ord}} \neq 0$. In order to prove that $K$ is a good subgroup of $\mathrm{GL}_n(\mathbb{A}_F^{\infty})$, it suffices to show that for any $k \in K_u = \mathrm{Iw}_u$, the group $G$ ($\subset \mathcal{O}_{\overline{F_u}}^{\times}$ ) generated by all eigenvalues of $k$ has no $l$-torsion elements. This follows by $G/(G \cap (1 + \mathfrak{m}_{\mathcal{O}_{\overline{F_u}}^{\times}})) \hookrightarrow \mathbb{F}_u^{\times}$ and $q_u \mod l \neq 1$.

Let $\mathfrak{m}$ be the non-Eisenstein ideal of $\mathbb{T}^{S, \mathrm{ord}}(R\Gamma(X_K, \mathcal{V}_{\lambda})^{\mathrm{ord}})$ corresponding to $\overline{r_{\iota}(\pi)}$. (See Corollary \ref{ordinary cohomology of locally symmetric space}.) We may assume $\overline{\rho}_{\mathfrak{m}} = \overline{r_{\iota}(\pi)}$. We define an $\mathcal{O}[K_S]$-module $\mathcal{V}_{\mu}(\chi^{-1}) := \mathcal{V}_{\mu} \otimes_{\mathcal{O}} \mathcal{O}(\chi^{-1})$, where $K_S$ acts on $\mathcal{V}_{\mu}$ by projection to $K_l$ and on $\mathcal{O}(\chi^{-1})$ by projection to $K_{T \setminus (S_l \cup U)}$. For $c \ge 1$, we put $\Lambda_{1, c}:=\mathcal{O}[\prod_{v|l} \mathrm{Ker}(T_n(\mathcal{O}_{F_v}/\varpi_v^c) \rightarrow T_n(\mathbb{F}_v))] \cong \mathcal{O}[K(1,c)/K(c,c)]$. This is naturally regarded as a quotient of $\Lambda_1$ and we have $\Lambda_{1} \cong \varprojlim_c \Lambda_{1,c} \cong \varprojlim_c \Lambda_{1, c}/\varpi^c$.

Then $R\Gamma_{K(1,c)/K(c,c)}(X_{K(c,c)}, \mathcal{V}_{\mu}(\chi^{-1}))_{\mathfrak{m}}^{\mathrm{ord}}$ is a perfect complex of $\Lambda_{1,c}$-modules by Proposition \ref{perfect complex}. By Lemma \ref{minimal complex}, we assume that this is a minimal complex of $\Lambda_{1,c}$-modules. 

Let $d:=[F^+:\mathbb{Q}]n^2-1$. We put $$A_1(\mu, \chi, c) := R\mathrm{Hom}_{\Lambda_{1,c}}(R\Gamma_{K(1,c)/K(c,c)}(X_{K(c,c)}, \mathcal{V}_{\mu}(\chi^{-1}))_{\mathfrak{m}}^{\mathrm{ord}}, \Lambda_{1,c})[-d]. $$ Note that for any $c$, we have canonical $\mathbb{T}^{S,\mathrm{ord}}$-equivariant isomorphisms \begin{equation}A_1(\mu, \chi, c) \otimes_{\Lambda_{1,c}}^{\mathbb{L}}\Lambda_{1,c}/\varpi \cong A_1(\mu, 1, c) \otimes_{\Lambda_{1,c}}^{\mathbb{L}}\Lambda_{1,c}/\varpi\end{equation} in $D(\Lambda_{1,c}/\varpi)$ and \begin{equation} A_1(\mu, \chi, c) \otimes^{\mathbb{L}}_{\Lambda_{c}} \Lambda_{c-1} \cong A_1(\mu, \chi, c-1) \end{equation} in $D(\Lambda_{1, c-1})$ by Proposition \ref{level independence} which are compatible. 

For any $c$, we can take an isomorphism $A_1(\mu, \chi, c) \otimes^{\mathbb{L}}_{\Lambda_{c}} \Lambda_{c-1} \cong A_1(\mu, \chi, c-1)$ of minimal complexes of $\Lambda_{1,c-1}$-modules inducing $(5)$ by 3 of Lemma \ref{minimal complex}. We put $A_1(\mu, \chi) := \varprojlim_c A_1(\mu, \chi, c) \cong \varprojlim_c A_1(\mu, \chi, c)/\varpi^c$. This is a minimal complex of $\Lambda_1$-modules with a $\mathbb{T}^{S, \mathrm{ord}}$-action and there exists a $\mathbb{T}^{S, \mathrm{ord}}$-equivariant isomorphism \begin{equation}A_1(\mu, \chi) \otimes^{\mathbb{L}}_{\Lambda_{1}} \Lambda_{1}/\varpi \cong A_1(\mu, 1) \otimes^{\mathbb{L}}_{\Lambda_{1}} \Lambda_{1}/\varpi\end{equation} in $D(\Lambda_1/\varpi)$ by 4 of Lemma \ref{minimal complex}.

We put $B_1(\mu, \chi) := A_1(\mu, \chi) \otimes_{\mathcal{O}}\mathcal{O}(-\nu - w_0\mu)$. Here, we put $\nu:=(0, 1, \cdots, n-1) \in (\mathbb{Z}_+^n)^{\mathrm{Hom}(F,E)}$ and $w_0\mu:=(\mu_{\tau, n}, \mu_{\tau, n-1}, \cdots, \mu_{\tau, 1})_{\tau \in \mathrm{Hom}(F, \overline{\mathbb{Q}}_l)}$. By Proposition \ref{weight independence}, for any $\mu' \in (\mathbb{Z}^n_{+})^{\mathrm{Hom}(F,E)}$, we have a $\mathbb{T}^{S,\mathrm{ord}}$-equivariant isomorphism $B_1(\mu, \chi) \cong B_1(\mu', \chi)$ in $D(\Lambda_1)$. We put $B(\mu, \chi) := B_1(\mu, \chi) \otimes^{\mathbb{L}}_{\Lambda_1} \Lambda$, $\mathbb{T}^{S, \Lambda_1}:=\mathbb{T}^{S} \otimes_{\mathcal{O}} \Lambda_1 \subset \mathbb{T}^{S,\mathrm{ord}}$. Note that $\mathbb{T}^{S, \Lambda_1}(B(\mu, \chi))$ is a finite local $\Lambda$-algebra.

\begin{prop}\label{ordinary R=T}

There exist an positive integer $\delta$ depending only on $n$, $[F:\mathbb{Q}]$ and $|S|$, an ideal $I_0(\chi)$ of $\mathbb{T}^{S, \Lambda_1}(B(\mu, \chi))$ such that $I_0(\chi)^{\delta} = 0$ and a local surjective $\Lambda$-morphism $f_{\mathcal{S}^{\chi}} : R_{\mathcal{S}^{\chi}} \twoheadrightarrow \mathbb{T}^{S, \Lambda_1}(B(\mu, \chi))/I_0(\chi)$ such that for all $v \notin S$, $\mathrm{det}(T - f_{\mathcal{S}_{\chi}} \circ \rho_{\mathcal{S}^{\chi}}(\mathrm{Frob}_v)) = P_v(T)$. ($\rho_{\mathcal{S}^{\chi}}$ denotes a representative of the universal deformation of $\overline{\rho_{\mathfrak{m}}}$ of type $\mathcal{S}^{\chi}$.)

\end{prop}

\begin{proof} Note that we have $\mathbb{T}^{S, \Lambda_1}(B(\mu, \chi))$ is naturally isomorphic to $$\varprojlim_{c} \mathbb{T}^{S, \Lambda_1}(R\Gamma_{K(1,c)/K(c,c)}(X_{K(c,c)}, \mathcal{V}_{\mu}(\chi^{-1})/\varpi^c)_{\mathfrak{m}}^{\mathrm{ord}} \otimes_{\Lambda_1}^{\mathbb{L}} \Lambda \otimes_{\mathcal{O}} \mathcal{O}(-\nu - w_0\mu))$$ by 4 of Lemma \ref{minimal complex}. By Lemma \ref{trace density}, it suffices to show that there exist an positive integer $\delta$ depending only on $n$, $[F:\mathbb{Q}]$ and $|S|$ such that for any positive integer $c$, there exist an ideal $I_0(c, \chi)$ of $$A_{c} := \mathbb{T}^{S, \Lambda_1}(R\Gamma_{K(1,c)/K(c,c)}(X_{K(c,c)}, \mathcal{V}_{\mu}(\chi^{-1}))_{\mathfrak{m}}^{\mathrm{ord}} \otimes_{\Lambda_1}^{\mathbb{L}} \Lambda \otimes_{\mathcal{O}} \mathcal{O}(-\nu - w_0\mu))$$ such that $I_0(c, \chi)^{\delta} = 0$ and a lifting of type $\mathcal{S}^{\chi}$ $$\rho_{\mathfrak{m}, c} : G_{F, S} \rightarrow \mathrm{GL}_n(A_c/I_0(c, \chi))$$ such that for all $v \notin S$, $\mathrm{det}(T - \rho_{\mathfrak{m}, c}(\mathrm{Frob}_v)) = P_v(T)$. 
    
Let $K'(c,c)$ (resp. $K'(1,c)$) is the good subgroup of $\mathrm{GL}_n(\mathbb{A}_{F}^{\infty})$ defined by $K'(c,c)_v=K(c,c)_v$ (resp. $K'(1,c)_v=K(1,c)_v$) if $v \notin U$ and $K'(c,c)_v= K'(1,c)_v = \mathrm{Iw}_v$ if $v \in U$. By Theorem \ref{unipotently ramified} and Theorem \ref{ordinarirty of automorphic Galois}, there exist an positive integer $\delta$ depending only on $n$, $[F:\mathbb{Q}]$ and $|S|$ such that for any positive integer $c$, there exist an ideal $I_0(c, \chi)$ of $$B_c := \mathbb{T}^{S, \Lambda_1}(R\Gamma_{K'(1,c)/K'(c,c)}(X_{K'(c,c)}, \mathcal{V}_{\lambda}(\chi^{-1}))_{\mathfrak{m}}^{\mathrm{ord}} \otimes_{\Lambda_1}^{\mathbb{L}} \Lambda \otimes_{\mathcal{O}} \mathcal{O}(-\nu - w_0\mu))$$ such that $I_0(c, \chi)^{\delta} = 0$ and a lifting of type $\mathcal{S}^{\chi}$ $$\rho_{\mathfrak{m}, c}' : G_{F, S} \rightarrow \mathrm{GL}_n(B_c/I_0(c, \chi))$$ such that for all $v \notin S$, $\mathrm{det}(T - \rho'_{\mathfrak{m}, c}(\mathrm{Frob}_v)) = P_v(T)$.

Note that for all $c \ge 1$, the canonical morphism $$R\Gamma_{K(1,c)/K(c,c)}(X_{K(c,c)}, \mathcal{V}_{\mu}(\chi^{-1}))^{\mathrm{ord}}_{\mathfrak{m}} \rightarrow R\Gamma_{K'(1,c)/K'(c,c)}(X_{K'(c,c)}, \mathcal{V}_{\mu}(\chi^{-1}))^{\mathrm{ord}}_{\mathfrak{m}}$$ is a $\mathbb{T}^{S,\mathrm{ord}}$-equivariant section of the map $\frac{\mathrm{tr}_{K'(c,c)/K(c,c)}}{(K(c,c):K'(c,c))}$ in $D(\Lambda_{1,c})$. Note that $(K(c,c):K'(c,c)) = \prod_{v \in U}\prod_{i=1}^{k_v}(\mathrm{GL}_{n_{i,v}}(\mathbb{F}_{v}) : B_{n_{i,v}}(\mathbb{F}_{v})) \equiv \prod_{v \in U}((n_{1,v}!) \cdots (n_{k_v,v}!)) \mod l$ is a unit in $\mathcal{O}$. (Here, $[n_{1,v}, \cdots ,n_{k_v,v}]$ is the partition corresponding to $K_v$. Note $q_v \equiv 1 \mod l$.) This implies the result. \end{proof}

Let $q := $$\mathrm{dim}_{\mathbb{F}}H^1(G_{F,S}, \mathrm{ad} \, \overline{\rho_{\mathfrak{m}}}(1))$. By Proposition \ref{adequate}, for any positive integer $N$, there exists a Taylor-Wiles datum ($Q_N, \{ \alpha_v \}_{v\in Q_N})$ satisfying the following conditions. 

$(a')$ \ $|Q_N| = q$.

$(b')$ \ For all $v \in Q_N$, $q_v \equiv 1$ \ mod $l^N$.

$(c')$ \ $\mathrm{dim}_{\mathbb{F}} \mathfrak{m}_{R_{\mathcal{S}^{\chi}(Q_N)}^S}/(\mathfrak{m}_{R_{\mathcal{S}^{\chi}(Q_N)}^S}^2, \mathfrak{m}_{R_{\mathcal{D}^{\chi}_v}})_{v \in S} = q-n^2[F^+:\mathbb{Q}]=:g$. 

(Note that these conditions are independent of $\chi$.)

We put $\Delta_N:=K_0(Q_N)/K_1(Q_N)$ and $A_1(\mu, \chi, c, Q_N)$:= $$R\mathrm{Hom}_{\Lambda_{1,c}[\Delta_{N}]}(R\Gamma_{K(1,c)_0(Q_N)/K(c,c)_1(Q_N)}(X_{K(c,c)_1(Q_N)}, \mathcal{V}_{\mu}(\chi^{-1}))_{\mathfrak{n}_{Q_N}}^{\mathrm{ord}}, \Lambda_{1,c}[\Delta_{N}])[-d].$$ 

(Here, $\mathfrak{n}_{Q_N}$ denotes the maximal ideal of $\mathbb{T}_{Q_N}^{S \cup Q_N \cup Q_N^c, \mathrm{ord}}:=\mathbb{T}^{S \cup Q_N \cup Q_N^c, \mathrm{ord}} \otimes _{\mathbb{T}^{S \cup Q_N \cup Q_N^c}} \mathbb{T}_{Q_N}^{S \cup Q_N \cup Q_N^c}$ generated by the pullback of $\mathfrak{m}$ and $\mathfrak{m}_{Q_N}$. )

This is a perfect complex of $\Lambda_{1,c}[\Delta_{N}]$-modules and we have canonical $\mathbb{T}_{Q_N}^{S \cup Q_N \cup Q_N^c, \mathrm{ord}}$-equivariant isomorphisms \begin{equation}A_1(\mu, \chi, c, Q_N) \otimes_{\Lambda_{1,c}[\Delta_N]}^{\mathbb{L}}\Lambda_{1,c}/\varpi[\Delta_N] \cong A_1(\mu, 1, c, Q_N) \otimes_{\Lambda_{1,c}[\Delta_N]}^{\mathbb{L}} \Lambda_{1,c}/\varpi[\Delta_N]\end{equation} in $D(\Lambda_{1,c}[\Delta_N]/\varpi)$ and \begin{equation}A_1(\mu, \chi, c-1, Q_N) \otimes^{\mathbb{L}}_{\Lambda_{c}[\Delta_N]} \Lambda_{c-1}[\Delta_N] \cong A_1(\mu, \chi, c-1, Q_N)\end{equation} in $D(\Lambda_{1, c-1})$ by Proposition \ref{level independence}, which are compatible.

Moreover, we have a canonical isomorphism \begin{equation}A_1(\mu, \chi, c, Q_N) \otimes_{\Lambda_{1,c}[\Delta_N]}^{\mathbb{L}} \Lambda_{1,c} \cong A_1(\mu, \chi, c)\end{equation} in $D(\Lambda_{1,c})$ which induces a $\Lambda_{1,c}$-morphism $\mathbb{T}^{S \cup Q_N \cup Q_N^c, \mathrm{ord}}_{\Delta_N}(A_1(\mu, \chi, c, Q_N)) \otimes_{\Lambda_{1,c}[\Delta_N]} \Lambda_{1,c} \rightarrow \mathbb{T}^{S, \mathrm{ord}}(A_1(\mu, \chi, c))$ by Theorem \ref{Patching argument}, which is compatible with the isomorphisms (4), (5), (7) and (8). (Here, we put $$\mathbb{T}_{\Delta_N}^{S \cup Q_N \cup Q_N^c, \mathrm{ord}}:=\mathbb{T}^{S \cup Q_N \cup Q_N^c, \mathrm{ord}} \otimes _{\mathbb{T}^{S \cup Q_N \cup Q_N^c}} \mathbb{T}_{\Delta_N}^{S \cup Q_N \cup Q_N^c}.)$$

We may assume that $A_1(\mu, \chi, c, Q_N)$ is a minimal complex and we can take an isomorphism $A_1(\mu, \chi, c, Q_N) \otimes_{\Lambda_{1,c}[\Delta_N]}^{\mathbb{L}} \Lambda_{1,c-1}[\Delta_N] \cong A_1(\mu, \chi, c-1, Q_N)$ of complexes of $\Lambda_{1,c-1}[\Delta_N]$-modules inducing $(8)$ by Lemma \ref{minimal complex}. We put $A_1(\mu, \chi, Q_N):=\varprojlim_c A_1(\mu, \chi, c, Q_N) \cong \varprojlim_c A_1(\mu, \chi, c, Q_N)/\varpi^c$. This is a minimal complex of $\Lambda_1[\Delta_{N}]$-modules with a $\mathbb{T}_{\Delta_N}^{S \cup Q_N \cup Q_N^c, \mathrm{ord}}$-action and there are a $\mathbb{T}_{\Delta_N}^{S \cup Q_N \cup Q_N^c, \mathrm{ord}}$-equivalent isomorphism \begin{equation}A_1(\mu, \chi, Q_N) \otimes^{\mathbb{L}}_{\Lambda_{1}[\Delta_{N}]} \Lambda_{1}/\varpi[\Delta_{N}] \cong A_1(\mu, 1, Q_N) \otimes^{\mathbb{L}}_{\Lambda_{1}[\Delta_{N}]} \Lambda_{1}/\varpi[\Delta_{N}]\end{equation} in $D(\Lambda_1/\varpi[\Delta_{N}])$ and an isomorphism \begin{equation}A_1(\mu, \chi, Q_N) \otimes_{\Lambda_{1}[\Delta_N]}^{\mathbb{L}} \Lambda_1 \cong A_1(\mu, \chi)\end{equation} in $D(\Lambda_1)$ which induces a $\Lambda_1$-morphism $\mathbb{T}^{S \cup Q_N \cup Q_N^c, \mathrm{ord}}_{\Delta_N}(A_1(\mu, \chi, Q_N)) \otimes_{\Lambda_1[\Delta_N]}\Lambda_1 \rightarrow \mathbb{T}^{S, \mathrm{ord}}(A_1(\mu, \chi))$ by 4 of Lemma \ref{minimal complex}. Moreover, the isomorphisms (6), (10) and (11) are compatible.

We put $B_1(\mu, \chi, Q_N) := A_1(\mu, \chi, Q_N) \otimes_{\mathcal{O}}\mathcal{O}(-\nu - w_0\lambda)$, $B(\mu, \chi, Q_N) := B_1(\mu, \chi, Q_N) \otimes^{\mathbb{L}}_{\Lambda_1[\Delta_{N}]} \Lambda[\Delta_{N}]$, $\mathbb{T}_{\Delta_N}^{S \cup Q_N \cup Q_N^c, \Lambda_1}:=\mathbb{T}_{\Delta_N}^{S \cup Q_N \cup Q_N^c} \otimes_{\mathcal{O}} \Lambda_1 \subset \mathbb{T}_{\Delta_N}^{S \cup Q_N \cup Q_N^c,\mathrm{ord}}$. Note that $\mathbb{T}_{\Delta_N}^{S \cup Q_N \cup Q_N^c, \Lambda_1}(B(\mu, \chi, Q_N))$ is a finite local $\Lambda[\Delta_{N}]$-algebra.

By the same proof as in Proposition \ref{ordinary R=T} and by using Proposition \ref{Patching argument}, we obtain the following proposition.

\begin{prop}

After changing $\delta$ (but independent of $N$), there exist an ideal $I_N(\chi)$ of $\mathbb{T}_{\Delta_N}^{S \cup Q_N \cup Q_N^c, \Lambda_1}(B(\mu, \chi, Q_N))$ such that $I_N(\chi)^{\delta}=0$ and a surjection of local $\Lambda[\Delta_{N}]$-algebras $f_{\mathcal{S}^{\chi}(Q_N)}: R_{\mathcal{S}^{\chi}(Q_N)} \twoheadrightarrow \mathbb{T}_{\Delta_N}^{S \cup Q_N \cup Q_N^c, \Lambda_1}(B(\mu, \chi, Q_N))/I_N(\chi)$ such that for all $v \notin S \cup Q_N \cup Q_N^c$, $\mathrm{det}(T-f_{\mathcal{S}^{\chi}(Q_N)} \circ \rho_{\mathcal{S}^{\chi}(Q_N)}(\mathrm{Frob}_v)) = P_v(T)$. ($\rho_{\mathcal{S}^{\chi}(Q_N)}$ denotes a representative of the universal deformation of $\overline{\rho}_{\mathfrak{m}}$ of type $\mathcal{S}^{\chi}(Q_N)$.)

\end{prop}

In the following, we assume that for each $v \in T \setminus (S_l \cup U)$, $\chi_{v, 1}, \cdots, \chi_{v, n}:\mathbb{F}_v^{\times} \rightarrow 1 + \varpi\mathcal{O}$ are pairwise distinct characters. This is possible by $l \ge n$, $\zeta_l \in E$ and $q_v \equiv 1 \mod l$ for all $v \in T \setminus (S_l \cup U)$.

We put $R_0:=R_{\mathcal{S}^{1}}$, $R_0':=R_{\mathcal{S}^{\chi}}$, $T_0:= \mathbb{T}^{S, \Lambda_1}(B(\mu, 1))$, $T_0':=\mathbb{T}^{S, \Lambda_1}(B(\mu, \chi))$, $C_0 := B(\mu, 1)$, $C_0':=B(\mu,\chi)$, $I_0:=I_0(1)$ and $I_0':=I_0(\chi)$. For all $N$, we put $R_N:=R_{\mathcal{S}^{1}(Q_N)}$, $R_N':=R_{\mathcal{S}^{\chi}(Q_N)}$, $T_N:=\mathbb{T}_{\Delta_N}^{S \cup Q_N \cup Q_N^c, \Lambda_1}(B(\mu, 1, Q_N))$, $T_N':= \mathbb{T}_{\Delta_N}^{S \cup Q_N \cup Q_N^c, \Lambda_1}(B(\mu, \chi, Q_N))$, $C_N := B(\mu, 1, Q_N)$, $C_N' := B(\mu, \chi, Q_N)$, $I_N:=I_N(1)$ and $I_N':=I_N(\chi)$. We also put $\mathcal{T}:=\Lambda[[ X_{v,i,j} \mid v \in S, i, j = 1, \cdots, n ]]/(X_{v_{0,1,1}})$ ($v_0$ is some fixed place of $F$ in $S$.), $\Delta_{\infty}:=\mathbb{Z}_l^{\oplus q}$, $S_{\infty}:=\Lambda[[\Delta_{\infty}]] \widehat{\otimes}_{\Lambda} \mathcal{T}$, $R^{\mathrm{loc}} := \widehat{\otimes}_{v \in S}R_{\overline{r}|_{G_{F_v}}, \mathcal{D}_v(1)}$ and $R^{'\mathrm{loc}}:= \widehat{\otimes}_{v \in S}R_{\overline{r}|_{G_{F_v}}, \mathcal{D}_v(\chi)}$. We regard $S_{\infty}$ as an augmented $\Lambda$-algebra and $\mathfrak{a}$ denotes the augmentation ideal of $S_{\infty}$. We fix a surjection $\Delta_{\infty} \twoheadrightarrow \Delta_{N}$ for each $N$. Then the kernel of this map is contained in $(l^N\mathbb{Z}_l)^{\oplus q}$ by $(b')$.

We also have isomorphisms of $\Lambda$-algebras $R_N \otimes_{\Lambda[\Delta_N]} \Lambda \cong R_0$ and $R_N' \otimes_{\Lambda[\Delta_N]} \Lambda \cong R_0'$ fitting in the following commutative diagrams by Lemma \ref{trace density}

\[\xymatrix{
 R_{N} \ar@{->>}[d] \ar@{->>}[r] & T_{N}/I_{N} \ar@{->>}[d] &  R_{N}' \ar@{->>}[d] \ar@{->>}[r] & T_{N}'/I_{N}' \ar@{->>}[d] \\
 R_0 \ar@{->>}[r] & T_0/(I_0+I_{N}) & R_0' \ar@{->>}[r] & T_0'/(I_0'+I_{N}') \\
}\] and the induced morphisms $R_N/\varpi \rightarrow \overline{T_N}/(I_N+I_N')$ and $R_N'/\varpi \rightarrow \overline{T_N'}/(I_{N} + I_N')$ are equal if we identify $R_N/\varpi = R_N'/\varpi$ and $\overline{T}_N = \overline{T_N'}$. (Here, we put $\overline{T_N} = \overline{T_N'}$ is the image of $T_N$, $T_N'$ in $\mathrm{End}_{D(\Lambda[\Delta_N])}(C_N/\varpi) = \mathrm{End}_{D(\Lambda[\Delta_N])}(C_N'/\varpi)$. )

We put $\mathcal{S}^1(Q_0):=\mathcal{S}^1$ and $\mathcal{S}^{\chi}(Q_0):=\mathcal{S}^{\chi}$. We take representatives $\rho_{\mathcal{S}^{1}}: G_F \rightarrow \mathrm{GL}_n(R_{0}), \ \rho_{\mathcal{S}^{\chi}}: G_F \rightarrow \mathrm{GL}_n(R_{0}')$ of the universal deformations of $\overline{\rho_{\mathfrak{m}}}$ of type $\mathcal{S}^1(Q_0), \ \mathcal{S}^{\chi}(Q_0)$ respectively which are equal after modulo $\varpi$.  Moreover, for each $N$, we take representatives $\rho_{\mathcal{S}^1(Q_N)}: G_F \rightarrow \mathrm{GL}_n(R_{N})$, $\rho_{\mathcal{S}^{\chi}(Q_N)}: G_F \rightarrow \mathrm{GL}_n(R_{N}')$ of the universal deformation of $\overline{\rho_{\mathfrak{m}}}$ of type $\mathcal{S}^1(Q_N)$, $\mathcal{S}^{\chi}(Q_N)$ respectively which are equal after modulo $\varpi$ and whose push-forwards to $R_0$, $R_0'$ are equal to $\rho_{\mathcal{S}^1}$, $\rho_{\mathcal{S}^{\chi}}$ respectively. These induce isomorphisms $R^S_{\mathcal{S}^{1}(Q_N)} \cong R_N\widehat{\otimes}_{\Lambda}\mathcal{T}$ and $R^S_{\mathcal{S}^{\chi}(Q_N)} \cong R_N' \widehat{\otimes}_{\Lambda} \mathcal{T}$ by Lemma \ref{framed deformation}, $R^{\mathrm{loc}}$-algebra structures on $R_N \widehat{\otimes}_{\Lambda} \mathcal{T}$ and $R_N$, and $R^{' \mathrm{loc}}$-algebra structures on $R_N' \widehat{\otimes}_{\Lambda} \mathcal{T}$ and $R_N'$ which are equal after modulo $\varpi$. Moreover, the morphism $R_N \widehat{\otimes}_{\Lambda} \mathcal{T} \twoheadrightarrow R_0 \widehat{\otimes}_{\Lambda} \mathcal{T}$ (resp. $R_N' \widehat{\otimes}_{\Lambda} \mathcal{T} \twoheadrightarrow R_0' \widehat{\otimes}_{\Lambda} \mathcal{T}$) becomes an $R^{\mathrm{loc}}$-morphism (resp. $R^{'\mathrm{loc}}$-morphism).  By the above condition $(c')$, we have a surjection $R_{\infty}:=R^{\mathrm{loc}}[[Y_1, \cdots, Y_g]] \twoheadrightarrow R_N\widehat{\otimes}_{\Lambda}\mathcal{T}$ of $R^{\mathrm{loc}}$-algebras and a surjection $R'_{\infty}:=R^{' \mathrm{loc}}[[Y_1, \cdots, Y_g]] \twoheadrightarrow R_N'\widehat{\otimes}_{\Lambda}\mathcal{T}$ of $R^{'\mathrm{loc}}$-algebras which are equal after modulo $\varpi$ such that the morphism $R_N \widehat{\otimes}_{\Lambda} \mathcal{T} \twoheadrightarrow R_0 \widehat{\otimes}_{\Lambda} \mathcal{T}$ (resp. $R_N' \widehat{\otimes}_{\Lambda} \mathcal{T} \twoheadrightarrow R_0' \widehat{\otimes}_{\Lambda} \mathcal{T}$) is a morphism of $R_{\infty}$-algebras (resp. $R_{\infty}'$-algebras). 

Note that the following argument is essentially contained in \cite[\S 5.4]{MEI} and \cite[\S 6.4]{10}. By \cite[Proposition 6.4.12 and 6.4.16]{10}, we obtain the following.

(1) \ Bounded complexes $C_{\infty}$ and $C_{\infty}'$ of finite free $S_{\infty}$-modules, commutative $S_{\infty}$-subalgebras $T_{\infty} \subset \mathrm{End}_{S_{\infty}}(C_{\infty})$ and $T_{\infty}' \subset \mathrm{End}_{S_{\infty}}(C_{\infty}')$, ideals $I_{\infty}$ and $I_{\infty}'$ of $T_{\infty}$, $T_{\infty}'$ respectively satisfying $I_{\infty}^{\delta} = 0$ and $I_{\infty}^{'\delta}=0$. 

(2) \ $S_{\infty}$-algebra structures on $R_{\infty}$ and $R_{\infty}'$ such that the morphisms $R_{\infty} \twoheadrightarrow R_0$ and $R_{\infty}' \twoheadrightarrow R_0'$ factor through $R_{\infty} \twoheadrightarrow R_{\infty}/\mathfrak{a}$ and $R_{\infty}' \twoheadrightarrow R_{\infty}'/\mathfrak{a}$ respectively and surjections of $S_{\infty}$-algebras $R_{\infty} \twoheadrightarrow T_{\infty}/I_{\infty}$ and $R_{\infty}' \twoheadrightarrow T'_{\infty}/I'_{\infty}$.

(3) \ Isomorphisms $C_{\infty}\otimes^{\mathbb{L}}_{S_{\infty}} \Lambda \cong C_0$, $C'_{\infty}\otimes^{\mathbb{L}}_{S_{\infty}} \Lambda \cong C'_0$ in $D(\Lambda)$ inducing $\Lambda$-algebra morphisms $T_{\infty}\otimes_{S_{\infty}} \Lambda \rightarrow T_0$ and $T'_{\infty}\otimes_{S_{\infty}} \Lambda \rightarrow T'_0$ fitting in the following commutative diagrams.

\[\xymatrix{
     R_{\infty} \ar@{->>}[d] \ar@{->>}[r] & T_{\infty}/I_{\infty} \ar@{->>}[d] & R_{\infty}' \ar@{->>}[d] \ar@{->>}[r] & T_{\infty}'/I_{\infty}' \ar@{->>}[d] \\
     R_0 \ar@{->>}[r] & T_0/(I_0+I_{\infty}) & R_0' \ar@{->>}[r] & T_0'/(I_0'+I_{\infty}') \\
    }\] 

(4) \ An isomorphism $C_{\infty} \otimes^{\mathbb{L}}_{S_{\infty}} S_{\infty}/\varpi \cong C_{\infty}' \otimes^{\mathbb{L}}_{S_{\infty}} S_{\infty}/\varpi$ in $D(S_{\infty}/\varpi)$. Under this identification, $T_{\infty}$ and $T_{\infty}'$ have the same image $\overline{T_{\infty}}$ in $\mathrm{End}_{D(S_{\infty}/\varpi)}(C_{\infty} \otimes^{\mathbb{L}}_{S_{\infty}} S_{\infty}/\varpi) = \mathrm{End}_{D(S_{\infty}/\varpi)}(C_{\infty}' \otimes^{\mathbb{L}}_{S_{\infty}} S_{\infty}/\varpi)$ and the actions of $R_{\infty}/\varpi$ and $R_{\infty}'/\varpi$ on $H^*(C_{\infty} \otimes^{\mathbb{L}}_{S_{\infty}} S_{\infty}/\varpi)/(I_{\infty} + I_{\infty}') = H^*(C_{\infty}' \otimes^{\mathbb{L}}_{S_{\infty}} S_{\infty}/\varpi)/(I_{\infty}+I_{\infty}')$ are equal if we identify $R_{\infty}/\varpi = R_{\infty}'/\varpi$.

\vspace{0.5 \baselineskip}

We put $l_0:=[F^+:\mathbb{Q}]n-1$ and $q_0:=[F^+:\mathbb{Q}]\frac{n(n-1)}{2} $. Let $\mathfrak{p}$ denote the kernel of the $\mathcal{O}$-morphism $S_{\infty} \twoheadrightarrow \Lambda \xrightarrow{p_{\mu}} \mathcal{O}$. Then we have $\mathrm{dim} \, R_{\infty}[\frac{1}{l}] = g + |S|n^2 + [F:\mathbb{Q}]\frac{n(n+1)}{2} = q - n^2[F^+:\mathbb{Q}] + |S|n^2 + [F:\mathbb{Q}]\frac{n(n+1)}{2} = |S|n^2 + q + [F^+:\mathbb{Q}]n = $$\mathrm{dim} \, S_{\infty, \mathfrak{p}} - l_0$ by Lemma \ref{completed tensor product irreducibility}, Proposition \ref{Level raising} and Proposition \ref{ordinary deformation ring}. Note that since $T_{\infty, \mathfrak{p}}$ is finite over $S_{\infty, \mathfrak{p}}$ and Ann$_{T_{\infty, \mathfrak{p}}} (H^*(C_{\infty, \mathfrak{p}}))$ is a nilpotent ideal of $T_{\infty, \mathfrak{p}}$ by Lemma \ref{nilpotent}, we have $\mathrm{dim} \, \mathrm{Supp}_{S_{\infty, \mathfrak{p}}} H^*(C_{\infty, \mathfrak{p}}) = \mathrm{dim} \, T_{\infty, \mathfrak{p}}$. Moreover, since $I_{\infty}$ is a nilpotent ideal of $T_{\infty, \mathfrak{p}}$ and $\mathrm{Spec} \, T_{\infty, \mathfrak{p}}/I_{\infty}$ is a closed subscheme of $\mathrm{Spec} \, R_{\infty, \mathfrak{p}}$, we have dimSupp$_{S_{\infty, \mathfrak{p}}}H^*(C_{\infty, \mathfrak{p}}) \le\mathrm{dim} \, R_{\infty, \mathfrak{p}} \le \mathrm{dim} \, R_{\infty}[\frac{1}{l}] = $$\mathrm{dim} \, S_{\infty, \mathfrak{p}} - l_0$. By (2) of 2 of Corollary \ref{ordinary cohomology of locally symmetric space} and the fact that $\iota^{-1}(\pi^{\infty})^{K, \mathrm{ord}}$ contributes to $H^{*}(X_K, \mathcal{V}_{\mu}(1))^{\mathrm{ord}}_{\mathfrak{m}} \otimes_{\mathcal{O}} \overline{\mathbb{Q}}_l$, we obtain that $$H^i(C_{\infty, \mathfrak{p}} \otimes_{S_{\infty, \mathfrak{p}}}^{\mathbb{L}} S_{\infty, \mathfrak{p}}/\mathfrak{p}) = \mathrm{Hom}_E(H^{d-i}(X_K, \mathcal{V}_{\mu}(1))^{\mathrm{ord}}_{\mathfrak{m}}[\frac{1}{l}] \otimes_{E}E(-\nu-w_0\mu), E)$$ is zero for $i \notin [q_0, q_0 + l_0]$ and nonzero for $i \in [q_0, q_0 + l_0]$.

This implies dimSupp$_{S_{\infty, \mathfrak{p}}}H^*(C_{\infty, \mathfrak{p}}) = \mathrm{dim} \, S_{\infty, \mathfrak{p}} - l_0$, $H^{i}(C_{\infty, \mathfrak{p}})$ is zero for $i \neq l_0 + q_0$, $M := H^{l_0+q_0}(C_{\infty, \mathfrak{p}}) \neq 0$ and depth$_{S_{\infty, \mathfrak{p}}}M = $$\mathrm{dim} \, S_{\infty, \mathfrak{p}} - l_0$ by Proposition \ref{Calagari-OG}.

Let $\mathfrak{q}$ be the point of $\mathrm{Spec} \, T_{0}$ corresponding to $\pi$ and we regard $\mathfrak{q}$ as points of $\mathrm{Spec} \, T_{\infty}$ and $\mathrm{Spec} \, R_{\infty}$. Then $\mathfrak{q}$ lies above $\mathfrak{p}$, $M_{\mathfrak{q}} \neq 0$ and we have $\mathrm{depth}_{T_{\infty, \mathfrak{q}}}M_{\mathfrak{q}} \ge \mathrm{depth}_{S_{\infty, \mathfrak{p}}}M = $$\mathrm{dim} \, S_{\infty, \mathfrak{p}} - l_0 = \mathrm{dim} \, R_{\infty}[\frac{1}{l}] \ge $$\mathrm{dim} \, R_{\infty, \mathfrak{q}} \ge $$\mathrm{dim} \, T_{\infty, \mathfrak{q}}$ since an $M$-regular sequence in $\mathfrak{m}_{S_{\infty, \mathfrak{p}}}$ is an $M_{\mathfrak{q}}$-regular sequence in $\mathfrak{q}T_{\infty, \mathfrak{q}}$. This implies that all irreducible components of $\mathrm{Spec} \, T_{\infty}[\frac{1}{l}]$ containing $\mathfrak{q}$ have dimension $\mathrm{dim} \, S_{\infty, \mathfrak{p}} - l_0 = \mathrm{dim} \, R_{\infty}[\frac{1}{l}]$. Therefore, there exists an irreducible component $\mathcal{C}_0$ of $\mathrm{Spec} \, R_{\infty}$ of dimension $\mathrm{dim} \, R_{\infty}$ containing $\mathfrak{q}$ and contained in Im($\mathrm{Spec} \, T_{\infty}/I_{\infty} \rightarrow \mathrm{Spec} \, R_{\infty})= $Supp$_{R_{\infty}}H^*(C_{\infty})/I_{\infty}$.

For any $v \in S_l \cup U \cup \{ u \}$, $r_{\iota}(\pi)|_{G_{F_v}}$ is contained in a unique irreducible components $\mathcal{C}_{0, v}$ of $\mathrm{Spec} \, R_{\overline{r}|_{G_{F_v}}, \mathcal{D}_v(\chi)}=\mathrm{Spec} \, R_{\overline{r}|_{G_{F_v}}, \mathcal{D}_v(1)}$ by the assumption (3) and $(g)$, Proposition \ref{purity}, Proposition \ref{regular point} and Proposition \ref{ordinary deformation ring}. Moreover, if $v \in S_l$, then $\mathcal{C}_{0,v}$ is the irreducible component of $\mathrm{Spec} \, R_{\overline{r}|_{G_{F_v}}, \mathcal{D}_v(1)}$ having the maximum dimension and the generic point of $\mathcal{C}_{0,v}$ is the unique generic point of $\mathrm{Spec} \, R_{\overline{r}|_{G_{F_v}}, \mathcal{D}_v(1)}$ specializing to some generic points of $\mathcal{C}_{0,v}/\varpi$ by Proposition \ref{ordinary deformation ring}.

By Lemma \ref{completed tensor product irreducibility}, the assumptions $(b)$, $(f)$ and Proposition \ref{ordinary deformation ring}, for each $v \in T \setminus (S_l \cup U)$, there exist irreducible components $\mathcal{C}_{0, v}$ and $\mathcal{C}_{1,v}$ of $\mathrm{Spec} \, R_{\overline{r}|_{G_{F_v}}, \mathcal{D}_v(1)}$ such that $$\mathcal{C}_0 = \mathrm{Spec} \, (\widehat{\otimes}_{v \in S}\Gamma(\mathcal{C}_{0,v}, \mathcal{O}_{\mathcal{C}_{0, v}}))[[Y_1, \cdots, Y_g]]$$ and $r$ is contained in the irreducible component $$\mathcal{C}_1 := \mathrm{Spec} \, (\widehat{\otimes}_{v \in S_l \cup U \cup \{u\}}\Gamma(\mathcal{C}_{0, v}, \mathcal{O}_{\mathcal{C}_{0,v}})) \widehat{\otimes} (\widehat{\otimes}_{v \in T \setminus (S_l \cup U)}\Gamma(\mathcal{C}_{1, v}, \mathcal{O}_{\mathcal{C}_{1,v}}))[[Y_1, \cdots, Y_g]]$$ of $\mathrm{Spec} \, R_{\infty}$. Let $\eta_0$ and $\eta_1$ be the generic points of $\mathcal{C}_{0}$ and $\mathcal{C}_1$ respectively and we take generic points $\overline{\eta_0}$ and $\overline{\eta_1}$ of $\mathcal{C}_0/\varpi$ and $\mathcal{C}_1/\varpi$ respectively. We regard $\overline{\eta_0}$ and $\overline{\eta_1}$ as points of $\mathrm{Spec} \, R_{\infty}'$ by the canonical isomorphism $R_{\infty}/\varpi \cong R_{\infty}'/\varpi$. 

By Proposition \ref{Level raising}, Lemma \ref{completed tensor product irreducibility} and Proposition \ref{ordinary deformation ring}, $\overline{\eta_0}$ and $\overline{\eta_1}$ generalize to the generic point $\eta'$ of an irreducible component $$\mathcal{C}' := \mathrm{Spec} \, (\widehat{\otimes}_{v \in S_l \cup U \cup \{u\}}\Gamma(\mathcal{C}_{0, v}, \mathcal{O}_{\mathcal{C}_{0,v}})) \widehat{\otimes} (\widehat{\otimes}_{v \in T \setminus (S_l \cup U)}R_{\overline{r}|_{G_{F_v}}, \mathcal{D}_v(\chi)})[[Y_1, \cdots, Y_g]]$$ of $\mathrm{Spec} \, R_{\infty}'$ and $\eta_1$ (resp. $\eta_2$, $\eta'$) is the unique generic point of $\mathrm{Spec} \, R_{\infty}$ (resp. $\mathrm{Spec} \, R_{\infty}$, $\mathrm{Spec} \, R_{\infty}'$) specializing to $\overline{\eta_0}$ (resp. $\overline{\eta_1}$, $\overline{\eta_0}$ or $\overline{\eta_1}$). In particular, $\mathrm{Spec} \, (R_{\infty})_{\overline{\eta_i}}$ and $\mathrm{Spec} \, (R_{\infty}')_{\overline{\eta_i}}$ are irreducible one-dimensional schemes whose generic points have characteristic zero for each $i=0, 1$. Note that for each $i=0,1$, $(T_{\infty})_{\overline{\eta_i}}$ is empty or irreducible such that $\mathrm{dim} \, (T_{\infty})_{\overline{\eta_i}}/\varpi = 0$ since $\mathrm{Spec} \, (T_{\infty})_{\overline{\eta_i}}$ is a closed subscheme of $\mathrm{Spec} \, (R_{\infty})_{\overline{\eta_i}}$. Similarly, for $i=0,1$, $(T_{\infty}')_{\overline{\eta_i}}$ is empty or irreducible such that $\mathrm{dim} \, (T_{\infty}')_{\overline{\eta_i}}/\varpi = 0$.

We will use the following lemma.

\begin{lem} \label{Taylor's trick}

    Let $S$ be an excellent ring, $f$ be a regular element of $S$, $C$ be a bounded complex of $S$-modules such that $H^i(C)$ is a finite $S$-module for any $i$, $T$ be a finite $S$-algebra with an $S$-algebra morphism $T \rightarrow \mathrm{End}_{D(S)}(C)$ and $\overline{\xi}$ be a prime ideal of $T$ containing $f$ such that $\mathrm{Spec} \, T_{\overline{\xi}}$ is irreducible and $\mathrm{dim} \, T_{\overline{\xi}}/f = 0$. (This implies $\mathrm{dim} \, T_{\overline{\xi}} \le 1$.)
    
    1 \ If $\mathrm{dim} \, T_{\overline{\xi}} = 0$, then $\mathrm{lg}_{T_{\overline{\xi}}/f}((C \otimes_{S}^{\mathbb{L}} S/f)_{\overline{\xi}}) := \sum_i (-1)^i\mathrm{lg}_{T_{\overline{\xi}}/f}H^i(C \otimes_{S}^{\mathbb{L}} S/f)_{\overline{\xi}} = 0$.
    
    2 \ If $\mathrm{dim} \, T_{\overline{\xi}} = 1$, then there exists a positive integer $a_{\overline{\xi}}$ such that $a_{\overline{\xi}} \mathrm{lg}_{T_{\xi}}(C_{\xi}) = \mathrm{lg}_{T_{\overline{\xi}}/f} ((C \otimes_{S}^{\mathbb{L}} S/f)_{\overline{\xi}})$, where $\xi$ denotes the generic point of $\mathrm{Spec} \, T_{\overline{\xi}}$ and $\mathrm{lg}_{T_{\xi}}(C_{\xi}) := \sum_i (-1)^i\mathrm{lg}_{T_{\xi}}H^i(C)_{\xi}$.
    
    \end{lem}
    
    \begin{proof} See \cite[Lemma 6.3.4]{10}. \end{proof}

We already have $\eta_0, \overline{\eta_0} \in \mathrm{Spec} \, T_{\infty}$ (in particular, $\mathrm{dim} \, T_{\infty, \overline{\eta_0}} = 1$) and $\mathrm{lg}_{T_{\infty, \eta_0}}(C_{\infty, \eta_0}) = (-1)^{l_0+q_0}\mathrm{lg}_{T_{\infty,\eta_0}}M_{\eta_0} \neq 0$ since $\eta_0$ lies below $\mathfrak{p}$. By 2 of Lemma \ref{Taylor's trick}, $\mathrm{lg}_{T_{\infty, \overline{\eta_0}}}((C_{\infty} \otimes_{S_{\infty}}^{\mathbb{L}} S_{\infty}/\varpi)_{\overline{\eta_0}}) = \mathrm{lg}_{T'_{\infty, \overline{\eta_0}}}((C'_{\infty} \otimes_{S_{\infty}}^{\mathbb{L}} S_{\infty}/\varpi)_{\overline{\eta_0}}) \neq 0$. Therefore, we have $\mathrm{dim} \, T_{\infty, \overline{\eta_0}}' = 1$ by 1 of Lemma \ref{Taylor's trick} and consequently $\eta' \in \mathrm{Spec} \, T_{\infty}'$. By 2 of Lemma \ref{Taylor's trick}, we obtain $\mathrm{lg}_{T_{\infty, \eta'}'}(C'_{\infty, \eta'}) \neq 0$. Note that we have $\mathrm{dim} \, T_{\infty, \overline{\eta_1}}' = 1$. Again by using 2 of Lemma \ref{Taylor's trick}, we obtain $\mathrm{lg}_{T_{\infty, \overline{\eta_1}}}((C_{\infty} \otimes_{S_{\infty}}^{\mathbb{L}} S_{\infty}/\varpi)_{\overline{\eta_1}}) = \mathrm{lg}_{T'_{\infty, \overline{\eta}_1}}((C'_{\infty} \otimes_{S_{\infty}}^{\mathbb{L}} S_{\infty}/\varpi)_{\overline{\eta_1}}) \neq 0$ and again by using 1 of Lemma \ref{Taylor's trick}, we obtain that $\eta_1$ is contained in $\mathrm{Spec} \, T_{\infty}$. Thus $\mathcal{C}_1$ and $r$ are contained in $\mathrm{Im}( \mathrm{Spec} \, T_{\infty}/I_{\infty} \rightarrow \mathrm{Spec} \, R_{\infty}) = \mathrm{Supp}_{R_{\infty}}H^*(C_{\infty})/I_{\infty}$. By Lemma \ref{Tor spectral sequence}, $r$ is contained in $\mathrm{Supp}_{R_{0}[\frac{1}{l}]}H^*(C_{0} \otimes^{\mathbb{L}}_{\Lambda, p_{\lambda}} E)/I_0.$ Note that we have $\mathrm{Supp}_{R_{0}[\frac{1}{l}]}H^*(C_{0} \otimes^{\mathbb{L}}_{\Lambda, p_{\lambda}} E) = \mathrm{Supp}_{R_{0}[\frac{1}{l}]}\mathrm{Hom}_E(H^{*}(X_K, \mathcal{V}_{\lambda}(1))^{\mathrm{ord}}_{\mathfrak{m}}[\frac{1}{l}] \otimes_{E}E(-\nu-w_0\lambda))/I_0 = \mathrm{Im}(\mathrm{Spec} \, \mathbb{T}^{S, \Lambda_1}(H^{*}(X_K, \mathcal{V}_{\lambda}(1))^{\mathrm{ord}}_{\mathfrak{m}}[\frac{1}{l}] \otimes_{E}E(-\nu-w_0\lambda))[\frac{1}{l}]/I_0 \hookrightarrow \mathrm{Spec} \, T_0[\frac{1}{l}]/I_0 \hookrightarrow \mathrm{Spec} \, R_0[\frac{1}{l}]).$ 

By the bijection in (3) of 2 of Corollary \ref{ordinary cohomology of locally symmetric space}, we obtain an $\iota$-ordinary cohomological cuspidal automorphic representation $\Pi$ of $\mathrm{GL}_n(\mathbb{A}_F)$ of weight $\iota\lambda$ such that $r_{\iota}(\Pi) \cong r$ and $(\Pi^{\infty})^K \neq 0$. In particular, we have $\Pi_{v}^{K_{v}} \neq 0$ for all $v \in U$. This implies $\iota \mathrm{WD}(r_{\iota}(\Pi)|_{G_{F_{v}}})^{F-ss} \cong $rec$_{F_{v}}(\Pi_{v}|\mathrm{det}|^{\frac{1-n}{2}}_{v})$ by Proposition \ref{automorphy lifting and local-global compatibility}. \end{proof}

\section{Potential automorphy and local-global compatibility}

In this section, we prove the potential automorphy of Galois representations $r$ and the local-global compatibility for the cuspidal automorphic representations corresponding to them in the following cases under some technical assumptions on the residual representations. 

1 \ $r$ is essentially self-dual and potentially diagonalizable at all $l$-adic places. (See Theorem \ref{potential diagonalizable automorphy}.)

2 \ $r$ is ordinary at all $l$-adic places (See Theorem \ref{potential ordinary automorphy}.) 

3 \ $r$ has a sufficiently regular weight and is potentially diagonalizable at all $l$-adic places (See Theorem \ref{potential tensor automorphy}.)

We fix a CM field $F$ and a positive integer $n$. 

\subsection{Some results on inductions and characters.}

\begin{lem}\label{local global induction}

Let $v$ be a finite place of $F$, $s_1$, $s_2$, $r_1$ and $r_2$ be $n$-dimensional Frobenius semisimple Weil-Deligne representations of $W_{F_v}$ over $\mathbb{C}$. We assume that $s_1$ and $s_2$ are weakly tempered, $r_1 \oplus r_2 \cong s_1 \oplus s_2$, $r_1 \prec s_1$ and $r_2 \prec s_2$. Then $r_1 \cong s_1$ and $r_2 \cong s_2$.
        
\end{lem}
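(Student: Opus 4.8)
The plan is to reduce everything to the "component-by-component" structure of Frobenius semisimple Weil--Deligne representations that has already been developed in \S 2. First I would decompose all four representations according to the equivalence classes $w \in \mathcal{W}$ of irreducible Weil--Deligne representations of $W_{F_v}$ (Definition \ref{Weil-Deligne def}): for each $w$, the $w$-isotypic pieces $r_1[w], r_2[w], s_1[w], s_2[w]$ satisfy $r_1[w] \oplus r_2[w] \cong s_1[w] \oplus s_2[w]$ as semisimplifications, and the relations $r_1 \prec s_1$, $r_2 \prec s_2$ restrict to $r_i[w] \prec s_i[w]$. Since the statement to be proved ($r_1 \cong s_1$ and $r_2 \cong s_2$) can be checked $w$ by $w$, I can fix a single $w$ and work there. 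This reduces the problem to the case where $s_1, s_2, r_1, r_2$ are each a direct sum of generalized Steinberg representations $\mathrm{Sp}_m(s)$ with $s$ ranging over a fixed equivalence class $w$; after twisting by an unramified character we may further normalize $s$ so that the weakly tempered hypothesis gives the bound $q_{F_v}^{-1/2} < |\alpha| < q_{F_v}^{1/2}$ on the Frobenius eigenvalues of $s$ (the twist affects $s_1, s_2, r_1, r_2$ uniformly and preserves all the hypotheses and conclusions).

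Next, within a fixed class $w$, everything is governed by partitions. Write the monodromy type of $s_i[w]$ as a partition $\tau_i$ and of $r_i[w]$ as $\sigma_i$; then $r_i[w] \prec s_i[w]$ means precisely $\tau_i \succ \sigma_i$ in the sense of Definition \ref{Weil-Deligne def} (using that $r_i[w]^{ss} \cong s_i[w]^{ss}$), and the isomorphism $r_1[w] \oplus r_2[w] \cong s_1[w] \oplus s_2[w]$ on semisimplifications combined with the fact that within a single class $w$ the semisimplification is determined by the total dimension forces $\tau_1 \sqcup \tau_2 = \sigma_1 \sqcup \sigma_2$ (here one must be slightly careful: the isotypic components are indexed by $w$, and the partition describing $\mathrm{Sp}_{n_1}(s_1) \oplus \cdots$ repeats each block $\dim s$ times, but dividing through by $\dim s$ this is exactly the combinatorial identity needed). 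At this point Lemma \ref{partition sum} applies verbatim with $k = 2$: from $\tau_i \succ \sigma_i$ and $\tau_1 \sqcup \tau_2 = \sigma_1 \sqcup \sigma_2$ we conclude $\tau_i = \sigma_i$, i.e.\ $r_i[w]$ and $s_i[w]$ have the same monodromy type for each $i$.

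Finally, I would invoke Proposition \ref{monodromy operator}: for each $i$, the representations $r_i[w]$ and $s_i[w]$ have isomorphic semisimplifications, $s_i[w]$ is weakly tempered (being a summand of the weakly tempered $s_i$, by part 2 of Remark \ref{weakly tempered}), and they have the same monodromy type; hence $r_i[w] \cong s_i[w]$. Since this holds for every $w$ and $r_i = \bigoplus_w r_i[w]$, $s_i = \bigoplus_w s_i[w]$, we obtain $r_1 \cong s_1$ and $r_2 \cong s_2$. The main obstacle I anticipate is purely bookkeeping: making sure the passage from the two-term decomposition of the whole representations to the isotypic pieces is done cleanly, in particular that "$r_1^{ss} \cong s_1^{ss}$" (needed to even make sense of $r_1 \prec s_1$ and to apply Lemma \ref{partition sum} and Proposition \ref{monodromy operator}) genuinely follows from the hypotheses --- it does, since $r_1 \prec s_1$ already \emph{includes} $r_1^{ss} \cong s_1^{ss}$ by definition --- and that the $\dim s$-fold repetition in the monodromy-type partitions is handled consistently when translating between the Weil--Deligne language of \S 2 and the bare combinatorics of Lemma \ref{partition sum}.
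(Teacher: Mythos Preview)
Your approach is essentially the paper's: reduce to a statement about monodromy-type partitions, apply Lemma~\ref{partition sum} with $k=2$, and conclude via the implication $(2)\Rightarrow(1)$ of Proposition~\ref{monodromy operator}. The paper does this in three lines using the global monodromy types of $r_i$ and $s_i$ directly (implicitly using that $w$-wise dominance of partitions implies dominance of their disjoint unions), whereas you run the same argument isotypic component by isotypic component; both organizations are fine.

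One correction: your justification of $\sigma_1 \sqcup \sigma_2 = \tau_1 \sqcup \tau_2$ within a fixed $w$ is not right as written. It is \emph{not} true that ``within a single class $w$ the semisimplification is determined by the total dimension'' (for instance $\mathrm{Sp}_2(s_0)$ and $s_0 \oplus s_0$ lie in the same class and have the same dimension but different semisimplifications), and in any case the semisimplification alone cannot recover the monodromy type. The correct argument is simpler than what you wrote: the hypothesis $r_1 \oplus r_2 \cong s_1 \oplus s_2$ is an isomorphism of Weil--Deligne representations, not merely of semisimplifications, so the monodromy operators already agree and hence their Jordan types coincide. This gives $\sigma_1 \sqcup \sigma_2 = \tau_1 \sqcup \tau_2$ immediately, either globally or after restricting to $[w]$.
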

    
\begin{proof}  For $i = 1,2$, let $\sigma_i$, $\tau_i$ be the monodromy types of $r_i$ and $s_i$ respectively. Then we have $\sigma_i \prec \tau_i$ for any $i = 1, 2$ and $\sigma_1 \sqcup \sigma_2 = \tau_1 \sqcup \tau_2$. By Lemma \ref{partition sum}, we obtain $\sigma_i = \tau_i$ for $i = 1, 2$. This implies the lemma by $(2) \Rightarrow (1)$ of Lemma \ref{monodromy operator}. \end{proof}

\begin{lem}\label{local global induction 2}

Let $l$ be a prime, $\iota : \overline{\mathbb{Q}}_l \stackrel{\sim}{\rightarrow} \mathbb{C}$ be an isomorphism of fields, $M/F$ be a solvable extension of CM fields of degree $d$, $r: G_M \rightarrow \mathrm{GL}_n(\overline{\mathbb{Q}}_l)$ be an algebraic $l$-adic representation and $v$ be a non-$l$-adic places of $F$.

We assume that $\mathrm{Ind}^{G_F}_{G_M}r$ is irreducible and there exists a cohomological cuspidal automorphic representation $\Pi$ of $\mathrm{GL}_{dn}(\mathbb{A}_F)$ such that $\mathrm{Ind}_{G_M}^{G_F}r \cong r_{\iota}(\Pi)$ and $\iota \mathrm{WD}(r_{\iota}(\Pi)|_{G_{F_v}})^{F-ss} \cong \mathrm{rec}_{F_v}(\Pi_v|\mathrm{det}|_v^{\frac{1-dn}{2}})$.

Then there exists a cohomological cuspidal automorphic representation $\pi$ of $\mathrm{GL}_n(\mathbb{A}_M)$ such that $\mathrm{AI}_{M/F}(\pi)|| \mathrm{det} ||_F^{\frac{(d-1)n}{2}} \cong \Pi$, $r \cong r_{\iota}(\pi)$ and $\iota \mathrm{WD}(r_{\iota}(\pi)|_{G_{M_u}})^{F-ss} \cong \mathrm{rec}_{M_u}(\pi_u|\mathrm{det}|_u^{\frac{1-n}{2}})$ for all $u | v$.
        
\end{lem}

\begin{proof}

We may assume that $d$ is a prime.

Let $\sigma$ (resp. $\chi$) be a generator of $\mathrm{Gal}(M/F)$ (resp. $\mathrm{Hom}(\mathrm{Gal}(M/F), \mathbb{C}^{\times})$). Then $\mathrm{Ind}^{G_F}_{G_M}r \otimes \iota^{-1}\chi \cong \mathrm{Ind}^{G_F}_{G_M}r$. This implies $\Pi \otimes \chi \circ \mathrm{Art_F} \circ \mathrm{det} \cong \Pi$ by Lemma \ref{strong multiplicity one}. By \cite[Theorem 3.4.2]{AC}, there exists a cuspidal automorphic representation $\pi'$ of $\mathrm{GL}_n(\mathbb{A}_{M})$ such that $\mathrm{BC}_{M/F}(\Pi) \cong \pi' \boxplus \pi^{'\sigma} \boxplus \cdots \boxplus \pi^{'\sigma^{d-1}}$. Therefore, $\pi:=\pi'||\mathrm{det}||_{M}^{\frac{(1-d)n}{2}}$ is cohomological and we have $r \oplus r^{\sigma} \cdots \oplus r^{\sigma^{d-1}} \cong r_{\iota}(\pi) \oplus r_{\iota}(\pi^{\sigma}) \oplus \cdots \oplus r_{\iota}(\pi^{\sigma^{d-1}})$. By the irreducibility of $r$, we may assume $r \cong r_{\iota}(\pi)$. For any $u | v$, we have $\iota\mathrm{WD}(r_{\iota}(\pi)|_{G_{M_u}})^{F-ss} \oplus \iota \mathrm{WD}(r_{\iota}(\pi^{\sigma})|_{G_{M_u}})^{F-ss} \oplus \cdots \oplus \iota \mathrm{WD}(r_{\iota}(\pi^{\sigma^{d-1}})|_{G_{M_u}})^{F-ss} \cong \mathrm{rec}_{M_u}(\pi_u|\mathrm{det}|_u^{\frac{1-n}{2}}) \oplus \mathrm{rec}_{M_u}(\pi^{\sigma}_u|\mathrm{det}|_u^{\frac{1-n}{2}}) \oplus \cdots \oplus \mathrm{rec}_{M_u}(\pi^{\sigma^{d-1}}_u|\mathrm{det}|_u^{\frac{1-n}{2}})$. By Theorem \ref{Ila Varma}, Proposition \ref{purity}, 1 of Remark \ref{weakly tempered} and Lemma \ref{local global induction}, we obtain $\iota \mathrm{WD}(r_{\iota}(\pi)|_{G_{M_u}})^{F-ss} \cong \mathrm{rec}_{M_u}(\pi_u|\mathrm{det}|_u^{\frac{1-n}{2}})$. \end{proof}

\begin{lem} \label{purity tensor}

Let $l$ be a prime, $\iota : \overline{\mathbb{Q}}_l \stackrel{\sim}{\rightarrow} \mathbb{C}$ be an isomorphism of fields, $\pi$ be a cohomological cuspidal automorphic representation of $\mathrm{GL}_n(\mathbb{A}_F)$ and $v$ be a non-$l$-adic place of $F$.
    
We assume that there exist a finite extension $F'/F$ of CM fields, a positive integer $d$, an algebraic $l$-adic representation $r : G_{F'} \rightarrow \mathrm{GL}_d(\overline{\mathbb{Q}}_l)$, a cohomological cuspidal automorphic representation $\Pi$ of $\mathrm{GL}_{dn}(\mathbb{A}_{F'})$ and a finite place $u$ of $F'$ lying above $v$ such that $r_{\iota}(\Pi) \cong r_{\iota}(\pi)|_{G_{F'}} \otimes r$, $\iota\mathrm{WD}(r_{\iota}(\Pi)|_{G_{F'_u}})^{F-ss} \cong \mathrm{rec}_{F'_u}(\Pi_u|\mathrm{det}|_u^{\frac{1-dn}{2}})$ and $r|_{G_{F'_u}}$ is unramified pure. 
        
Then we have $\iota \mathrm{WD}(r_{\iota}(\pi)|_{G_{F_v}})^{F-ss} \cong \mathrm{rec}_{F_v}(\pi_v|\mathrm{det}|_v^{\frac{1-n}{2}})$.
    
\end{lem}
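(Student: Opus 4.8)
\textbf{Proof plan for Lemma \ref{purity tensor}.}

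The plan is to deduce the local-global compatibility at $v$ for $\pi$ from the given compatibility at $u$ for $\Pi$ by a ``purity cancellation'' argument, using that tensoring with a nonzero pure Weil-Deligne representation preserves purity (5 of Lemma \ref{purity lemma}) together with the characterization of local-global compatibility via purity (2 of Proposition \ref{purity local-global}). First I would record that $r_{\iota}(\pi)|_{G_{F_v}}$ satisfies the hypotheses of Proposition \ref{purity local-global}: by Theorem \ref{Ila Varma} we have $\iota\mathrm{WD}(r_{\iota}(\pi)|_{G_{F_v}})^{ss} \cong \mathrm{rec}_{F_v}(\pi_v|\mathrm{det}|_v^{\frac{1-n}{2}})^{ss}$, and since $\pi$ is a cohomological cuspidal automorphic representation, all eigenvalues of Frobenius on this semisimplification are Weil numbers by \cite[Lemma I.5.7]{LLC} (or directly from Proposition \ref{purity}, Theorem \ref{generic preunitary} and Lemma \ref{supercuspidal absolute values}), and one checks the Weil-number condition is stable under $W_{F_v}$; hence the hypothesis of Proposition \ref{purity local-global} holds for $\pi$ at $v$. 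So by 2 of Proposition \ref{purity local-global}, it suffices to prove that $\mathrm{WD}(r_{\iota}(\pi)|_{G_{F_v}})$ is pure.

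Next I would work over $F'$ and the place $u$. By assumption $r_{\iota}(\Pi) \cong r_{\iota}(\pi)|_{G_{F'}} \otimes r$, and $\iota\mathrm{WD}(r_{\iota}(\Pi)|_{G_{F'_u}})^{F-ss} \cong \mathrm{rec}_{F'_u}(\Pi_u|\mathrm{det}|_u^{\frac{1-dn}{2}})$; since $\Pi$ is cohomological cuspidal, Proposition \ref{purity local-global} applied to $\Pi$ at $u$ (its hypothesis holds for the same reason as above) shows that $\mathrm{WD}(r_{\iota}(\Pi)|_{G_{F'_u}})$ is pure. Thus $\mathrm{WD}((r_{\iota}(\pi)|_{G_{F'}} \otimes r)|_{G_{F'_u}}) = \mathrm{WD}(r_{\iota}(\pi)|_{G_{F'_u}}) \otimes \mathrm{WD}(r|_{G_{F'_u}})$ is pure, where I use that $\mathrm{WD}$ is compatible with tensor products and restriction. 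By hypothesis $r|_{G_{F'_u}}$ is unramified and pure, in particular a nonzero pure Weil-Deligne representation, so by 5 of Lemma \ref{purity lemma} the purity of $\mathrm{WD}(r_{\iota}(\pi)|_{G_{F'_u}}) \otimes \mathrm{WD}(r|_{G_{F'_u}})$ is equivalent to the purity of $\mathrm{WD}(r_{\iota}(\pi)|_{G_{F'_u}})$. Hence $\mathrm{WD}(r_{\iota}(\pi)|_{G_{F'_u}})$ is pure, and then by 2 of Lemma \ref{purity lemma} (purity descends along finite extensions) $\mathrm{WD}(r_{\iota}(\pi)|_{G_{F_v}})$ is pure. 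Combining with the previous paragraph and 2 of Proposition \ref{purity local-global} applied to $\pi$ at $v$ gives $\iota\mathrm{WD}(r_{\iota}(\pi)|_{G_{F_v}})^{F-ss} \cong \mathrm{rec}_{F_v}(\pi_v|\mathrm{det}|_v^{\frac{1-n}{2}})$, as desired.

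The only genuine point requiring care is making sure the Weil-number hypothesis of Proposition \ref{purity local-global} is actually available for both $\pi$ at $v$ and $\Pi$ at $u$ — i.e. that Frobenius eigenvalues of $r_{\iota}(\pi)(g)$ (resp. $r_{\iota}(\Pi)(g)$) for $g \in W_{F_v}$ (resp. $W_{F'_u}$) are Weil numbers. This follows from the fact that the semisimplifications of these Weil-Deligne representations are, up to a half-integer twist, local components of cohomological cuspidal automorphic representations, which are generic preunitary by Proposition \ref{purity}, so their Frobenius eigenvalues have archimedean absolute value a fixed power of $q$ by Theorem \ref{generic preunitary} and Lemma \ref{supercuspidal absolute values}, and this persists under all $\iota$ since the relevant absolute-value constraints are Galois-stable (exactly as in the proof of Lemma \ref{purity and Weil number} and Remark \ref{quasi pure}); extending from $\mathrm{Frob}$ to all of $W_{F_v}$ is immediate since $I_{F_v}$ acts through a finite quotient on the semisimplification. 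Everything else is a formal chase through Lemma \ref{purity lemma} and Proposition \ref{purity local-global}, so I do not expect a substantive obstacle beyond bookkeeping.
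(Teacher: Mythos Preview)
Your purity-cancellation strategy has a genuine gap: the hypothesis of Proposition \ref{purity local-global} is \emph{not} available for $\pi$ at $v$ (nor for $\Pi$ at $u$). That proposition assumes every eigenvalue of $r_{\iota}(\pi)(g)$, $g \in W_{F_v}$, is a Weil $q_v^w$-number for some integer $w$. You claim this follows from generic preunitarity (Proposition \ref{purity}, Theorem \ref{generic preunitary}, Lemma \ref{supercuspidal absolute values}), but those results only give that the Frobenius eigenvalues have absolute value in the open interval $(q_v^{a-1/2}, q_v^{a+1/2})$ for some half-integer $a$ --- not that the absolute value is exactly an integral power of $q_v$, let alone that this holds under every complex embedding. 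Your appeal to Lemma \ref{purity and Weil number} is circular: that lemma has as a hypothesis precisely the Weil-number condition you are trying to verify. In fact, the Weil-number hypothesis is essentially equivalent to temperedness of $\pi_v$ (cf.\ part 1 of Proposition \ref{purity local-global}), which is not assumed and is in general open. The same obstruction blocks your use of Proposition \ref{purity local-global} for $\Pi$ at $u$: knowing the local-global compatibility for $\Pi$ at $u$ does not by itself force $\mathrm{WD}(r_{\iota}(\Pi)|_{G_{F'_u}})$ to be pure.

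The paper avoids this by never invoking purity of the automorphic Weil--Deligne representations. Instead it exploits only the \emph{weakly tempered} condition (which genuinely does follow from generic preunitarity, Remark \ref{weakly tempered}). Writing $\iota\mathrm{WD}(r|_{G_{F'_u}})^{F-ss} \cong \chi_1 \oplus \cdots \oplus \chi_d$ with all $|\chi_i(\varpi_u)|$ equal (possible since $r|_{G_{F'_u}}$ is unramified pure), both $\oplus_i \mathrm{rec}_{F'_u}(\mathrm{BC}_{F'_u/F_v}(\pi_v)|\mathrm{det}|_u^{\frac{1-n}{2}})\chi_i$ and $\mathrm{rec}_{F'_u}(\Pi_u|\mathrm{det}|_u^{\frac{1-dn}{2}})$ are weakly tempered with isomorphic semisimplifications, hence isomorphic by Lemma \ref{semisimple}. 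Combined with the given compatibility for $\Pi$, one gets $\oplus_i \mathrm{rec}_{F'_u}(\mathrm{BC}_{F'_u/F_v}(\pi_v)|\mathrm{det}|_u^{\frac{1-n}{2}})\chi_i \cong \oplus_i \iota\mathrm{WD}(r_{\iota}(\pi)|_{G_{F'_u}})^{F-ss}\chi_i$; Lemma \ref{local global induction} then cancels one summand, matching monodromy types, and Proposition \ref{automorphy lifting and local-global compatibility} finishes. The key point is that this route only needs Lemma \ref{semisimple} and Lemma \ref{local global induction}, both of which require merely weak temperedness rather than purity.
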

    
\begin{proof}
    
There exist unramified characters $\chi_1, \cdots, \chi_n : {F'}_u^{\times} \rightarrow \mathbb{C}^{\times}$ such that $\iota\mathrm{WD}(r|_{G_{F'_u}})^{F-ss} \cong (\chi_1 \oplus \cdots \oplus \chi_d) \circ \mathrm{Art}^{-1}_{F'_u}$ and $|\chi_i(\varpi_u)| = |\chi_j(\varpi_u)|$ for all $i, j$. 

Then we have $(\oplus_{i=1}^d \mathrm{rec}_{F'_u}(\mathrm{BC}_{F'_u/F_v}(\pi_v)|\mathrm{det}|_u^{\frac{1-n}{2}}\chi_i))^{ss} \cong \iota\mathrm{WD}((r_{\iota}(\pi)|_{G_{F'}} \otimes r)|_{G_{F'_u}})^{ss} \cong \iota\mathrm{WD}(r_{\iota}(\Pi)|_{G_{F'_u}})^{ss} \cong \mathrm{rec}_{F'_u}(\Pi_u |\mathrm{det}|_u^{\frac{1-dn}{2}})^{ss}$ by Theorem \ref{Ila Varma}. This implies $\boxplus_{i=1}^d\mathrm{BC}_{F'_u/F_v}(\pi_v)|\mathrm{det}|_u^{\frac{1-n}{2}}\chi_i \cong \Pi_u |\mathrm{det}|_u^{\frac{1-dn}{2}}$ by 1 of Remark \ref{weakly tempered}, Proposition \ref{purity} and Lemma \ref{semisimple}. Thus, we obtain $$\oplus_{i=1}^d \mathrm{rec}_{F'_u}(\mathrm{BC}_{F'_u/F_v}(\pi_v)|\mathrm{det}|_u^{\frac{1-n}{2}}\chi_i) \cong \oplus_{i=1}^d \iota\mathrm{WD}(r_{\iota}(\pi)|_{G_{F'_u}})^{F-ss} \chi_i \circ \mathrm{Art}_{F'_u}^{-1}.$$
    
Therefore, we obtain $\mathrm{rec}_{F'_u}(\mathrm{BC}_{F'_u/F_v}(\pi_v)|\mathrm{det}|_u^{\frac{1-n}{2}}\chi_1) \cong \iota\mathrm{WD}(r_{\iota}(\pi)|_{G_{F'_u}})^{F-ss} \chi_1 \circ \mathrm{Art}_{F'_u}^{-1}|_{G_{F'_u}}$ by Lemma \ref{local global induction}. This implies $\iota \mathrm{WD}(r_{\iota}(\pi)|_{G_{F_v}})^{F-ss} \cong \mathrm{rec}_{F_v}(\pi_v|\mathrm{det}|_v^{\frac{1-n}{2}})$ by Proposition \ref{automorphy lifting and local-global compatibility}. \end{proof}

We can prove the following lemma by the same way as Lemma \ref{coefficient}.

\begin{lem}\label{coefficient 2} Let $l$ be a prime, $r : G_F \rightarrow \mathrm{GL}_n(\overline{\mathbb{Q}}_l)$ be a continuous representation, $\tau \in \mathrm{Aut}(F)$.

Then, we obtain the following results.
        
1 \ If $r$ is crystalline (resp. semistable, de Rham, Hodge-Tate) at $v \mid l$, then $r^{\tau}$ is crystalline (resp. semistable, de Rham, Hodge-Tate) at $v^{\tau}$.
        
2 \ If $r$ is Hodge-Tate at all $v \mid l$, then $\mathrm{HT}_{\delta \tau}(r^{\tau}) = \mathrm{HT}_{\delta}(r)$ for any $\delta \in \mathrm{Hom}(F, \overline{\mathbb{Q}}_l)$. 
        
\end{lem}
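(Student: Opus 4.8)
The statement is the global analogue of Lemma~\ref{coefficient}, so the plan is to reduce the global assertion to the local one already proved. First I would fix $\tilde\tau\in\mathrm{Aut}(\overline F)$ with $\tilde\tau|_F=\tau$ and recall the definition $r^\tau(g):=r(\tilde\tau g\tilde\tau^{-1})$; as in Lemma~\ref{coefficient} the isomorphism class of $r^\tau$ depends only on $\tau$, not on the choice of $\tilde\tau$. The key observation is purely local: for a place $v\mid l$ of $F$, the element $\tilde\tau$ carries a decomposition group at $v$ onto a decomposition group at the place $v^\tau$ (i.e.\ $\tilde\tau\, G_{F_v}\,\tilde\tau^{-1}=G_{F_{v^\tau}}$ for a suitable choice of places above $v$ and $v^\tau$ in $\overline F$), and under this identification $(r^\tau)|_{G_{F_{v^\tau}}}$ is isomorphic to $(r|_{G_{F_v}})^{\tau_v}$, where $\tau_v:F_v\to F_{v^\tau}$ is the local embedding induced by $\tau$ and $(\cdot)^{\tau_v}$ is the twist from Lemma~\ref{coefficient} (more precisely from its evident variant for an isomorphism of local fields rather than an automorphism).

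Granting this compatibility, part~1 is immediate: if $r$ is crystalline (resp.\ semistable, de~Rham, Hodge–Tate) at every $v\mid l$, then for each such $v$ the local representation $(r^\tau)|_{G_{F_{v^\tau}}}\cong (r|_{G_{F_v}})^{\tau_v}$ has the same property by the local statement, and since $v\mapsto v^\tau$ is a bijection on the set of $l$-adic places this says exactly that $r^\tau$ is crystalline (resp.\ \dots) at all $l$-adic places. (One should also note $r^\tau$ is unramified at almost all finite places, since $\tilde\tau$ permutes the finite places and $r$ has this property, so $r^\tau$ is again algebraic; this is routine.) For part~2, fix $\delta\in\mathrm{Hom}(F,\overline{\mathbb Q_l})$ and a place $v\mid l$ with $\delta\in\mathrm{Hom}_{\mathbb Q_l}(F_v,\overline{\mathbb Q_l})$. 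Then $\delta\tau$ factors through $F_{v^\tau}$ (as $\delta\tau=\delta\circ\tau_v^{-1}\colon F_{v^\tau}\to\overline{\mathbb Q_l}$ up to the identifications), and the local part~2 of Lemma~\ref{coefficient} (in the variant for $\tau_v$) gives $\mathrm{HT}_{\delta\tau}\big((r^\tau)|_{G_{F_{v^\tau}}}\big)=\mathrm{HT}_{\delta\circ\tau_v^{-1}}\big((r|_{G_{F_v}})^{\tau_v}\big)=\mathrm{HT}_{\delta}\big(r|_{G_{F_v}}\big)$. Unwinding the definition $\mathrm{HT}_{\delta\tau}(r^\tau)=\mathrm{HT}_{\delta\tau}(r^\tau|_{G_{F_{v^\tau}}})$ and $\mathrm{HT}_\delta(r)=\mathrm{HT}_\delta(r|_{G_{F_v}})$ yields $\mathrm{HT}_{\delta\tau}(r^\tau)=\mathrm{HT}_\delta(r)$.

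The one point that needs care — and the main obstacle — is bookkeeping the identifications of decomposition groups and local fields: one must choose, once and for all, places of $\overline F$ compatibly so that $\tilde\tau$ really does send the chosen decomposition group at $v$ to the chosen one at $v^\tau$, and then check that the resulting local isomorphism $F_v\xrightarrow{\sim}F_{v^\tau}$ is the one induced by $\tau$ on completions. This is exactly the global-to-local translation that makes Lemma~\ref{coefficient} applicable, and once it is set up correctly everything else is a direct transcription of the proof of Lemma~\ref{coefficient}, replacing the completion $\widehat{\overline K}$ and the period rings $B_{\mathrm{cris}},B_{\mathrm{st}},B_{\mathrm{dR}},B_{\mathrm{HT}}$ for $K=F_v$ by those for $F_{v^\tau}$ and transporting via $1\otimes\tilde\tau$. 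Since the excerpt explicitly says ``We can prove the following lemma by the same way as Lemma~\ref{coefficient}'', I would keep the write-up short, stating the decomposition-group compatibility as the single substantive step and then citing the local lemma.
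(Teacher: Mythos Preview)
Your proposal is correct and matches the paper's intent: the paper gives no separate proof, only the sentence ``We can prove the following lemma by the same way as Lemma~\ref{coefficient}'', and your reduction via decomposition groups followed by the period-ring computation of Lemma~\ref{coefficient} is exactly that same way. The only cosmetic slip is that part~1 of the statement is place-by-place (if $r$ is crystalline at $v$ then $r^\tau$ is crystalline at $v^\tau$), not ``at all $v\mid l$'' simultaneously, but your argument already works one place at a time.
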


\begin{prop}\label{automorphic induction}

Let $M/F$ be a solvable extension of imaginary CM fields of degree $d$, $l$ be a prime, $\lambda \in (\mathbb{Z}_{+}^n)^{\mathrm{Hom}(M, \overline{\mathbb{Q}}_l)}$, $\iota : \overline{\mathbb{Q}}_l \stackrel{\sim}{\rightarrow} \mathbb{C}$ be an isomorphism of fields and $\pi$ be a cohomological cuspidal automorphic representation of $\mathrm{GL}_n(\mathbb{A}_M)$ of weight $\iota \lambda$ such that $r_{\iota}(\pi)|_{G_{F_v}}$ is de Rham of $l$-adic Hodge type $\mathbf{v}_{\lambda_v}$ for any $v|l$, $(\mathrm{Ind}^{G_F}_{G_M} r_{\iota}(\pi))|_{G_{F_w}}$ is Hodge-Tate regular for any $w|l$ and $\mathrm{Ind}^{G_F}_{G_M} r_{\iota}(\pi)$ is irreducible.
        
Then $\mathrm{AI}_{M/F}(\pi)||\mathrm{det}||_F^{\frac{(d-1)n}{2}}$ is a cohomological cuspidal automorphic representation of $\mathrm{GL}_n(\mathbb{A}_F)$ of weight $\iota \mu$ such that $r_{\iota}(\mathrm{AI}_{M/F}(\pi)||\mathrm{det}||_F^{\frac{(d-1)n}{2}}) \cong \mathrm{Ind}^{G_F}_{G_M} r_{\iota}(\pi)$. (Here, $\mu$ denotes the element of $(\mathbb{Z}^{dn}_+)^{\mathrm{Hom}(F,\overline{\mathbb{Q}}_l)}$ such that $\mathbf{v}_{\mu_v}$ is the $l$-adic Hodge type of $(\mathrm{Ind}^{G_{F}}_{G_M}r_{\iota}(\pi))|_{G_{F_v}}$ for all $v \mid l$.)
        
\end{prop}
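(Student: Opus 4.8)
The plan is to identify $\mathrm{AI}_{M/F}(\pi)\|\mathrm{det}\|_F^{\frac{(d-1)n}{2}}$ by first handling the cyclic case and then bootstrapping up the solvable tower, exactly as one does for ordinary automorphic induction. First I would reduce to the case where $M/F$ is cyclic of prime degree $d$: given a general solvable chain $F = F_0 \subset F_1 \subset \cdots \subset F_k = M$ with each $F_i/F_{i-1}$ cyclic of prime degree, I can apply the prime-degree case inductively, provided that at each stage the intermediate induced representation $\mathrm{Ind}^{G_{F_i}}_{G_M}r_{\iota}(\pi)$ is irreducible (which follows from the irreducibility of $\mathrm{Ind}^{G_F}_{G_M}r_{\iota}(\pi)$, since it contains $\mathrm{Ind}^{G_F}_{G_{F_i}}$ of the intermediate one as a direct summand after restriction) and Hodge--Tate regular at all $l$-adic places of $F_i$ (which follows since $\mathrm{Ind}^{G_{F_i}}_{G_M}r_{\iota}(\pi)$ is a direct summand of $\bigl(\mathrm{Ind}^{G_F}_{G_M}r_{\iota}(\pi)\bigr)|_{G_{F_i}}$, and restriction preserves Hodge--Tate regularity). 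I would also need to know the intermediate $\mathrm{AI}_{F_i/F_{i-1}}$ applied to a cohomological cuspidal representation whose Galois representation is de Rham with the right Hodge type is again de Rham with the right Hodge type at all $l$-adic places; this is exactly the content of the statement being proved, so the induction is clean.

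So assume $M/F$ is cyclic of prime degree $d$ with $\sigma$ a generator of $\mathrm{Gal}(M/F)$. Because $\mathrm{Ind}^{G_F}_{G_M}r_{\iota}(\pi)$ is irreducible, Mackey theory (Frobenius reciprocity / the usual criterion for irreducibility of an induced representation) forces $r_{\iota}(\pi) \not\cong r_{\iota}(\pi)^{\sigma^j}$ for $1 \le j \le d-1$, hence $\pi \not\cong \pi^{\sigma^j}$ by strong multiplicity one (Proposition \ref{strong multiplicity one} applied over $M$, noting $r_{\iota}(\pi^{\sigma^j}) \cong r_{\iota}(\pi)^{\sigma^j}$). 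By the cyclic base change / automorphic induction results of Arthur--Clozel (\cite[Chapter 3]{AC}), since $\pi$ is not $\mathrm{Gal}(M/F)$-invariant, there is a cuspidal automorphic representation $\Pi'$ of $\mathrm{GL}_{dn}(\mathbb{A}_F)$ with $\mathrm{BC}_{M/F}(\Pi') \cong \pi \boxplus \pi^{\sigma} \boxplus \cdots \boxplus \pi^{\sigma^{d-1}}$, and $\Pi' = \mathrm{AI}_{M/F}(\pi)$ up to the standard normalization; I would set $\Pi := \mathrm{AI}_{M/F}(\pi)\|\mathrm{det}\|_F^{\frac{(d-1)n}{2}}$ and check that the twist makes it unitarily normalized correctly. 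Then $r_{\iota}(\Pi)|_{G_M} \cong r_{\iota}(\mathrm{BC}_{M/F}(\Pi)) \cong r_{\iota}(\pi) \oplus r_{\iota}(\pi)^{\sigma} \oplus \cdots \oplus r_{\iota}(\pi)^{\sigma^{d-1}} \cong \bigl(\mathrm{Ind}^{G_F}_{G_M}r_{\iota}(\pi)\bigr)|_{G_M}$; since $\mathrm{Ind}^{G_F}_{G_M}r_{\iota}(\pi)$ is irreducible, Proposition \ref{base change}(2) (the Galois-side cyclic descent) gives a cuspidal automorphic representation of $\mathrm{GL}_{dn}(\mathbb{A}_F)$ matching $\mathrm{Ind}^{G_F}_{G_M}r_{\iota}(\pi)$, and comparing with $\Pi$ via strong multiplicity one at unramified places yields $r_{\iota}(\Pi) \cong \mathrm{Ind}^{G_F}_{G_M}r_{\iota}(\pi)$.

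It remains to verify that $\Pi$ is cohomological of the asserted weight $\iota\mu$. Cohomologicality is automatic once I know that $r_{\iota}(\Pi) = \mathrm{Ind}^{G_F}_{G_M}r_{\iota}(\pi)$ is de Rham at all $l$-adic places with regular Hodge--Tate weights: the hypothesis that $r_{\iota}(\pi)$ is de Rham of $l$-adic Hodge type $\mathbf{v}_{\lambda_v}$ at each $v\mid l$, combined with Lemma \ref{coefficient} (preservation of de Rham-ness and the Hodge--Tate weight formula under the twist $r \mapsto r^{\tau}$ for $\tau$ in a decomposition group) and the explicit description of $\bigl(\mathrm{Ind}^{G_F}_{G_M}r_{\iota}(\pi)\bigr)|_{G_{F_w}} \cong \bigoplus_{w'\mid w}\mathrm{Ind}^{G_{F_w}}_{G_{M_{w'}}}\bigl(r_{\iota}(\pi)|_{G_{M_{w'}}}\bigr)$, shows the induced representation is de Rham and lets me compute $\mathrm{HT}_{\tau}$ of it in terms of the $\mathrm{HT}$ of $r_{\iota}(\pi)$; the given hypothesis of Hodge--Tate regularity of $\mathrm{Ind}^{G_F}_{G_M}r_{\iota}(\pi)$ is precisely what guarantees the resulting multiset $\mu_v$ is strictly regular. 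Then I invoke the characterization of the Hodge--Tate weights of automorphic Galois representations (as in Theorem \ref{Lambert 2} or the polarizable case, Theorem \ref{polarizable local-global compatibility}(2)) to read off that $\Pi$ has weight $\iota\mu$, which forces $\Pi$ to be cohomological. The main obstacle, I expect, is the bookkeeping around normalizations: matching the Arthur--Clozel automorphic induction (which is defined up to a unitary twist) with the geometrically normalized $\mathrm{Ind}^{G_F}_{G_M}r_{\iota}(\pi)$ requires care with the $\|\mathrm{det}\|_F^{\frac{(d-1)n}{2}}$ factor and with the shift by $\frac{1-n}{2}$ versus $\frac{1-dn}{2}$ in the local Langlands normalization, but this is routine once tracked carefully; everything else is a direct assembly of the cited results.
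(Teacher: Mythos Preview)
Your argument has a circularity. You form $r_{\iota}(\Pi)$ for $\Pi := \mathrm{AI}_{M/F}(\pi)||\mathrm{det}||_F^{(d-1)n/2}$, compute its Hodge--Tate weights, and then use Theorem~\ref{Lambert 2} (or Theorem~\ref{polarizable local-global compatibility}) to read off that $\Pi$ is cohomological of weight $\iota\mu$. But the construction of $r_{\iota}(\Pi)$ (Harris--Lan--Taylor--Thorne, Scholze) is only available once $\Pi$ is already known to be cohomological; you cannot invoke $r_{\iota}(\Pi)$ before establishing that. The same issue infects your appeal to Proposition~\ref{base change}(2), whose hypotheses require a cohomological cuspidal input. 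Even setting the circularity aside, Theorem~\ref{Lambert 2} goes in the wrong direction (it computes Hodge--Tate weights \emph{from} the automorphic weight, not conversely) and carries hypotheses on $\overline{r_{\iota}(\Pi)}$ (decomposed generic, residually absolutely irreducible) that you have not arranged.

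The paper avoids this by working at the archimedean places. After reducing to $d$ prime and deducing cuspidality from Arthur--Clozel exactly as you do, it computes the infinitesimal character of $(\mathrm{AI}_{M/F}(\pi)||\mathrm{det}||_F^{(d-1)n/2})_\infty$ directly on the automorphic side: at each $\tau\in\mathrm{Hom}(F,\mathbb{C})$ it is the multiset $\{-\lambda_{\iota^{-1}\widetilde\tau,j}-n+j+\tfrac{dn-1}{2} : \widetilde\tau|_F=\tau,\ j=1,\dots,n\}$, since archimedean automorphic induction sums the Langlands parameters of $\pi_{\widetilde\tau}$ over the extensions $\widetilde\tau$ of $\tau$. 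A parallel $l$-adic computation of $\mathrm{HT}_{\tau}(\mathrm{Ind}^{G_F}_{G_M}r_{\iota}(\pi))$ via Lemma~\ref{coefficient 2} shows these $dn$ numbers are distinct (this is exactly where the Hodge--Tate regularity hypothesis is used), so the infinitesimal character is regular integral and $\Pi$ is cohomological of the stated weight. The identification $r_{\iota}(\Pi)\cong\mathrm{Ind}^{G_F}_{G_M}r_{\iota}(\pi)$ then follows a posteriori by matching Satake parameters at unramified places.
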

        
\begin{proof} We may assume that $d$ is a prime. Since $\mathrm{Ind}^{G_F}_{G_M} r_{\iota}(\pi)$ is irreducible, we have $r_{\iota}(\pi) \ncong r_{\iota}(\pi)^{\sigma} = r_{\iota}(\pi^{\sigma})$, where $\sigma$ denotes the generator of $\mathrm{Gal}(M/F)$. This is equivalent to $\pi \ncong \pi^{\sigma}$ by Lemma \ref{strong multiplicity one}. This implies the cuspidality of $\mathrm{AI}_{M/F}(\pi)||\mathrm{det}||_F^{\frac{(d-1)n}{2}}$ by \cite[Theorem 3.4.2]{AC}. 
    
For $v \mid l$, $\tau \in \mathrm{Hom}_{\mathbb{Q}_l}(F_v, \overline{\mathbb{Q}}_l)$, $w \mid v$ and $\widetilde{\tau} \in \mathrm{Hom}_{\mathbb{Q}_l}(M_w, \overline{\mathbb{Q}}_l)$, we have $\mathrm{HT}_{\tau}((\mathrm{Ind}^{G_{F}}_{G_M}r_{\iota}(\pi))|_{G_{F_v}}) = \mathrm{HT}_{\widetilde{\tau}}((\mathrm{Ind}^{G_{F}}_{G_M}r_{\iota}(\pi))|_{G_{M_w}}) = \sqcup_{i=0}^{d-1} \mathrm{HT}_{\widetilde{\tau}}(r_{\iota}(\pi)^{\sigma^{-i}}|_{G_{M_w}}) = \sqcup_{i=0}^{d-1} \mathrm{HT}_{\widetilde{\tau} \sigma^{i}}(r_{\iota}(\pi)|_{G_{M_{w^{\sigma^i}}}}) = \sqcup_{i=0}^{d-1} \{ \lambda_{\widetilde{\tau} \sigma^i, j} + n - j \mid j = 1, \cdots, n \} \in ((\mathbb{Z} + \frac{dn-1}{2})^{dn}/\mathfrak{S}_{dn})$ by Lemma \ref{coefficient 2}. This consists of $dn$-distinct integers by the Hodge-Tate regularity of $(\mathrm{Ind}^{G_F}_{G_M} r_{\iota}(\pi))|_{G_{F_v}}$.
    
On the other hand, the infinitesimal character $z \in ((\mathbb{Z} + \frac{dn-1}{2})^{dn}/\mathfrak{S}_{dn})^{\mathrm{Hom}(F, \mathbb{C})}$ of $(\mathrm{AI}_{M/F}(\pi)||\mathrm{det}||_F^{\frac{(d-1)n}{2}})_{\infty}$ is given by $z_{\tau} = \{ - \lambda_{\iota^{-1} \widetilde{\tau}, j} - n + j + \frac{dn-1}{2} \mid \widetilde{\tau} \in \mathrm{Hom}(M, \mathbb{C}) \ \mathrm{satisfying} \ \widetilde{\tau}|_F=\tau , j=1, \cdots, n \} \in ((\mathbb{Z} + \frac{dn-1}{2})^{dn}/\mathfrak{S}_{dn})$ for any $\tau \in \mathrm{Hom}(F, \mathbb{C})$. This implies that $\mathrm{AI}_{M/F}(\pi)||\mathrm{det}||_F^{\frac{(d-1)n}{2}}$ is cohomological. \end{proof}

\begin{prop}\label{character}

We assume that $F$ is an imaginary CM field.

Let $l$ be a prime, $S$ be a finite set of finite places of $F$ containing all $l$-adic places and satisfying $S = S^c$.

Let $\chi : G_{F^+} \rightarrow \overline{\mathbb{Q}}_l^{\times}$ be an algebraic $l$-adic character such that $\chi(c_v) = \chi(c_w)$ for all $v, w \mid \infty$ and for $v \in S$, $\psi_v : G_{F_v} \rightarrow \overline{\mathbb{Q}}_l^{\times}$ be a continuous character such that $(\psi_v\psi_{v^c}^c)|_{I_{F_v}} = \chi|_{I_{F_v}}$ and if $v \mid l$, then $\psi_v$ is de Rham.

(1) \ If all places in $S$ are unramified over $F^+$, then there exists an algebraic $l$-adic character such that $\theta \theta^c = \chi|_{G_{F}}$ and $\theta|_{I_{F_v}} = \psi_v|_{I_{F_v}}$ for all $v \in S$.

(2) \ We assume that $l > 2$. Let $\overline{\theta} : G_{F} \rightarrow \overline{\mathbb{F}_l}^{\times}$ be a continuous character satisfying $\overline{\chi}|_{G_{F}} = \overline{\theta} \overline{\theta}^c$ and $\overline{\theta}|_{G_{F_v}} = \overline{\psi_v}$.

Then there exists an algebraic $l$-adic character $\theta : G_F \rightarrow \overline{\mathbb{Q}}_l^{\times}$ which is a lifting of $\overline{\theta}$ such that $\theta \theta^c = \chi|_{G_F}$ and $\theta|_{I_{F_v}} = \psi_v|_{I_{F_v}}$ for all $v \in S$.

\end{prop}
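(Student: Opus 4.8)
Both parts follow the classical strategy for constructing algebraic Hecke characters of CM fields by interpolating prescribed local conditions and a prescribed infinity type, then solving for a global character via class field theory. The key structural observation is that the "polarization" condition $\theta\theta^c = \chi|_{G_F}$ together with $\chi(c_v) = \chi(c_w)$ for all infinite places forces the infinity type of $\theta$ to be determined up to the ambiguity of a finite-order character: if $\chi$ has infinity type $(a_\tau)$ (in the usual sense, via $\mathrm{Art}$), then for each pair $\{\tau, \tau c\}$ of embeddings of $F$ restricting to a fixed embedding of $F^+$, one needs $b_\tau + b_{\tau c} = a$ where $a$ is the common value. So first I would fix, for each such pair, a choice of $(b_\tau, b_{\tau c})$ with that sum; this pins down a candidate infinity type $\xi = (b_\tau)_{\tau \in \mathrm{Hom}(F,\overline{\mathbb{Q}_l})}$ for $\theta$.

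Next I would set up the target: I want $\theta : G_F \to \overline{\mathbb{Q}_l}^\times$, equivalently (after the usual $l$-adic/automorphic dictionary) a continuous character $\mathbb{A}_F^\times/F^\times \to \overline{\mathbb{Q}_l}^\times$, such that (i) at each $v \in S$ the restriction to $\mathcal{O}_{F_v}^\times$ agrees with $\psi_v|_{\mathcal{O}_{F_v}^\times}$ (this is the translation of $\theta|_{I_{F_v}} = \psi_v|_{I_{F_v}}$ via local class field theory), (ii) the archimedean component has the prescribed type $\xi$, and (iii) $\theta\theta^c = \chi|_{G_F}$. The compatibility $(\psi_v\psi_{v^c}^c)|_{I_{F_v}} = \chi|_{I_{F_v}}$ and the de Rham hypothesis at $l$-adic places are exactly what make the system of local conditions consistent with (iii) on inertia and with the chosen infinity type. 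For part (1), the unramifiedness of all $v \in S$ over $F^+$ means each such place is either split or inert in $F/F^+$; in the split case $v$ and $v^c$ are distinct and one has genuine freedom to prescribe $\psi_v$ on one of them, while in the inert case the condition $(\psi_v\psi_{v^c}^c)|_{I_{F_v}} = \chi|_{I_{F_v}}$ becomes $(\psi_v \psi_v^c)|_{I_{F_v}} = \chi|_{I_{F_v}}$ and one must check it is solvable — this is where unramifiedness over $F^+$ is used, since then $\mathcal{O}_{F_v}^\times = \mathcal{O}_{F_v^+}^\times$ up to the norm-one subgroup and one can split the condition. I would construct $\theta$ by first building a character $\theta_0$ on the finite idèles matching the local conditions at $S$ and unramified outside $S$, then adjusting by the infinity type; the obstruction to extending a character of $\prod_{v \le \infty} (\text{local pieces})$ to one trivial on $F^\times$ is controlled by the value on global units and on a set of representatives of the class group, and the polarization relation lets one reduce this to the analogous (solvable) problem for $F^+$ plus a finite-order correction. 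For part (2), where $l > 2$ and one is given a residual lift $\overline\theta$ with $\overline\theta\,\overline\theta^c = \overline\chi|_{G_F}$ and $\overline\theta|_{G_{F_v}} = \overline\psi_v$, the same construction applies but now one additionally demands $\theta \equiv \overline\theta \pmod{\mathfrak{m}}$; the hypothesis $l > 2$ is what guarantees that the group of continuous characters of a local unit group (resp. the relevant global quotient) is $l$-divisible enough / torsion-free enough to lift $\overline\theta$ compatibly, and $\overline\theta\,\overline\theta^c = \overline\chi$ ensures the residual conditions are themselves consistent with the polarization.

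More concretely, I would invoke the well-known existence results for algebraic Hecke characters with prescribed ramification and infinity type (e.g. the lemmas of this type in \cite{CW} and in the work of Harris--Lan--Taylor--Thorne / BLGGT) rather than re-deriving class field theory: the content here is purely that (a) the prescribed infinity type $\xi$ is compatible with being algebraic (it is, by construction), (b) the prescribed local conditions at $S$ are compatible with each other and with $\xi$ on all of $\mathcal{O}_{F_v}^\times$ and at $\infty$ — which follows from $(\psi_v\psi_{v^c}^c)|_{I_{F_v}} = \chi|_{I_{F_v}}$, the de Rham/Hodge--Tate compatibility at $l$, and $\chi(c_v)$ being independent of $v$ — and (c) in part (2) the residual character $\overline\theta$ is compatible with all of this. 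Then the polarization identity $\theta\theta^c = \chi|_{G_F}$ is checked on inertia at $S$ (built in), at infinity (built into $\xi$), and at the unramified places (where both sides are unramified and one matches Frobenius eigenvalues using $\chi^c = \chi$ up to the cyclotomic twist coming from $\chi$ being a character of $G_{F^+}$) — a Chebotarev argument upgrades equality of $\theta\theta^c$ and $\chi|_{G_F}$ on a density-one set of Frobenii to equality of characters.

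**Main obstacle.** The genuinely delicate point, and the one I would spend the most care on, is the solvability of the inert-place conditions in part (1) and the simultaneous-lifting problem in part (2): one is prescribing $\theta$ on $\prod_{v \in S}\mathcal{O}_{F_v}^\times$ and on the archimedean part, and the obstruction to finding a global $\theta$ with these data living in the cokernel of the map from global units (and the class group) — showing this obstruction vanishes requires using the polarization relation to transfer the problem to $F^+$, where the analogous problem is unobstructed because $F^+$ is totally real and the data descend, together with the fact (needing $l > 2$ in part (2)) that the relevant torsion does not interfere. Everything else is bookkeeping with the $l$-adic/idèlic dictionary.
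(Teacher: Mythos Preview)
The paper does not give a proof here; it simply cites \cite[Lemma~A.2.5]{CW}. Your sketch is essentially a recapitulation of how that lemma (and its predecessors in the BLGGT line) is proved, so in content you are aligned with what the paper relies on.

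One small correction to your exposition: you speak of \emph{choosing} an infinity type $(b_\tau)$ for $\theta$ subject to $b_\tau + b_{\tau c}$ matching the weight of $\chi$. But since $S$ contains all $l$-adic places and $\theta|_{I_{F_v}} = \psi_v|_{I_{F_v}}$ is prescribed with $\psi_v$ de Rham for $v\mid l$, the Hodge--Tate weights of $\theta$ --- hence its infinity type --- are already \emph{determined} by the data, not chosen. The compatibility $(\psi_v\psi_{v^c}^c)|_{I_{F_v}} = \chi|_{I_{F_v}}$ at $v\mid l$ is then precisely what guarantees this forced infinity type satisfies the required sum condition. This does not break your argument, but the first step is a verification rather than a choice.
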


\begin{proof} See \cite[Lemma A.2.5]{CW}.  \end{proof}

\begin{rem}

See \cite[A.2]{CW} for more detailed properties of characters.

\end{rem}

\subsection{Potential automorphy and local-global compatibility in self-dual potentially diagonalizable cases}

We will use the following proposition in polarizable cases.

\begin{prop}\label{potential conjugate}
 
(1) Let $F$ be an imaginary CM field.

(2) Let $d$ be a positive integer. 
    
(3) \ Let $l \ge 2(d + 1)$ be a prime such that $F \nsubseteq F^{+} (\zeta_l)$.

(4) Let $\mu: G_{F^+} \longrightarrow \mathcal{O}^{\times}_{\overline{\mathbb{Q}}_l}$ be an algebraic $l$-adic character such that $\mu(c_v) = \mu(c_w)$ for all $v, w|\infty$.
    
    (5) Let $\overline{r}: G_F \longrightarrow \mathrm{GL}_n(\overline{\mathbb{F}_l})$ be a continuous representation satisfying the following conditions.
    
    $\cdot$ $\overline{r}|_{G_{F(\zeta_l)}}$ is absolutely irreducible.
    
    $\cdot$ $d$ is bigger than or equal to the maximal dimension of irreducible constituents of the restriction of $\overline{r}$ to the closed subgroup of $G_{F(\zeta_l)}$ topologically generated by all Sylow pro-$l$-subgroups.
    
    $\cdot$ There exists a perfect symmetric $G_F$-equivalent pairing $\overline{r} \times \overline{r}^c \rightarrow \overline{\mu}|_{G_{F}}$.

    (6) Let $S$ be a finite set of finite places of $F$ satisfying the following conditions.

    $\cdot$ $S = S^c$. 
    
    $\cdot$ All $v \in S$ split over $F^+$.
    
    $\cdot$ $S$ contains all $l$-adic places.
    
    $\cdot$ $\overline{r}$ and $\mu$ are unramified outside $S$.

(7) \ For any $v \in S$, let $\rho_v : G_{F_v} \rightarrow \mathrm{GL}_n(\mathcal{O}_{\overline{\mathbb{Q}}_l})$ be a lifting of $\overline{r}|_{G_{F_v}}$ satisfying the following conditions.

$\cdot$ If $v \nmid l$, $\rho_v$ is unipotently ramified and $\mathrm{WD}(\rho_{v} \otimes \overline{\mathbb{Q}}_l)$ and $\mathrm{WD}(\rho_{v^c} \otimes \overline{\mathbb{Q}}_l)$ have the same monodromy type.

$\cdot$ If $v | l$, $\rho_v$ is Hodge-Tate regular potentially diagonalizable crystalline such that $\rho_{v^c}^c \sim \rho_v^{\vee} \mu|_{G_{F_v}}$.

    Then there exists a lifting $r: G_F \rightarrow \mathrm{GL}_n(\mathcal{O}_{\overline{\mathbb{Q}}_l})$ of $\overline{r}$ satisfying the following conditions.
    
    (a) There exists a perfect symmetric $G_{F}$-equivariant pairing $r \times r^c \rightarrow \mu|_{G_{F}}$.
    
    (b) For any $v \notin S$, $r|_{G_{F_v}}$ is unramified.
    
    (c) For any $v|l$, $r|_{G_{F_v}} \sim \rho_v$.
    
    (d) For any $v \in S \setminus \{ u|l \}$, $\mathrm{WD}(r|_{G_{F_v}} \otimes \overline{\mathbb{Q}}_l)$ and $\mathrm{WD}(\rho_v \otimes \overline{\mathbb{Q}}_l)$ have the same monodromy type.

Moreover, for any $\iota : \overline{\mathbb{Q}}_l \stackrel{\sim}{\rightarrow} \mathbb{C}$, there exist a finite CM Galois extension $L/\mathbb{Q}$ linearly disjoint from $F^{(a)}$ over $\mathbb{Q}$ and a polarizable cohomological cuspidal automorphic representation $\pi$ of $\mathrm{GL}_n(\mathbb{A}_{F'})$ such that $r|_{G_{F'}} \cong r_{\iota}(\pi)$. (We put $F':=LF$.)

\end{prop}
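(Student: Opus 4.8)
\textbf{Proof plan for Proposition \ref{potential conjugate}.}

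The plan is to reduce this statement to the established potential automorphy and lifting machinery for essentially conjugate self-dual Galois representations, as developed in \cite{CW} (and \cite{opta}, \cite{small}). The assertion has two parts: first, the construction of a global lift $r$ satisfying (a)--(d); second, the potential automorphy of $r$. For the lifting part, I would set up a global deformation problem $\mathcal{S}$ over $\mathcal{O}_{\overline{\mathbb{Q}_l}}$ with tame level structure at $S$, using the conjugate self-dual deformation condition attached to $\mu$: the local deformation problems are $\mathcal{D}_v^{\mathrm{cris},\lambda_v}$ (or rather the potentially diagonalizable crystalline lifting ring containing $\rho_v$) at $v\mid l$, and the unipotently ramified deformation problems $\mathcal{D}_v^{1}$ or the component-fixing problem of Proposition \ref{monodromy type irreducible component} at $v\in S$, $v\nmid l$; at each place one insists on landing in the irreducible component through $\rho_v$. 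The hypothesis (5) that $\overline{r}|_{G_{F(\zeta_l)}}$ is absolutely irreducible together with $l\ge 2(d+1)$ and the Sylow-dimension bound gives, via Proposition \ref{adequate} applied to the restriction of $\overline{r}$ to $G_{F(\zeta_l)}$ viewed inside $\mathrm{GL}_n$, that $\overline{r}(G_{F(\zeta_l)})$ is adequate; combined with $\zeta_l\notin F$ (from $F\not\subseteq F^+(\zeta_l)$) this is exactly the input needed for the Taylor--Wiles method. Since the local components $\rho_v$ are chosen to be conjugate self-dual-compatible (unipotently ramified with matching monodromy at $v\nmid l$, and $\rho_{v^c}^c\sim\rho_v^\vee\mu$ at $v\mid l$), the local lifting rings have points of the required polarized type, and a standard Galois-cohomology computation (unobstructedness of the polarized lifting problem in large enough Taylor--Wiles level, cf. \cite[Proposition 3.3.1]{CW} or \cite[Corollary 4.5.3]{CW}) produces the desired global lift $r$ with properties (a)--(d).

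For the potential automorphy part, I would invoke the potential automorphy theorem for conjugate self-dual, potentially diagonalizable, Hodge--Tate regular Galois representations. Concretely: by property (c), $r|_{G_{F_v}}$ is potentially diagonalizable at each $v\mid l$; by (a) it is essentially conjugate self-dual with multiplier $\mu$; by (b) it is unramified outside $S$; and $\overline{r}|_{G_{F(\zeta_l)}}$ is (absolutely) irreducible, hence in particular the residual image is adequate after enlarging the coefficient field. These are precisely the hypotheses of the potential automorphy theorems in \cite{CW} (building on \cite{opta} and the Moret-Bailly-type arguments): there one first uses Lemma \ref{diagonalizable induction character} to replace $F$ by a solvable CM extension over which $r$ and a suitable sum of automorphic inductions of algebraic Hecke characters lie in the same irreducible components of all the crystalline lifting rings, then applies the crystalline automorphy lifting theorem (our Theorem \ref{automorphy lifting theorem in crystalline cases}, or its conjugate self-dual analogue) to deduce automorphy over that extension, and finally descends. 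The construction of the auxiliary character and the choice of the solvable CM extension $L/\mathbb{Q}$, linearly disjoint from $F^{(a)}$ over $\mathbb{Q}$, is carried out using Corollary \ref{extension of Q}, Lemma \ref{imaginary quadratic}, and Proposition \ref{character}; one arranges simultaneously that $l$ splits in an imaginary quadratic subfield, that the relevant decomposed-genericity and adequacy conditions persist after restriction to $G_{F'}$ (using Lemma \ref{strongly decomposed generic}-type density statements and the linear disjointness), and that the local lifting rings at places of $F'$ above $S$ remain geometrically irreducible. Then $r|_{G_{F'}}$ is automorphic, say $r|_{G_{F'}}\cong r_\iota(\pi)$ for a cohomological cuspidal $\pi$ of $\mathrm{GL}_n(\mathbb{A}_{F'})$, which is polarizable because the pairing in (a) descends to give $(\pi,\chi)$ polarized for the Hecke character $\chi$ with $r_\iota(\chi)=\mu$ restricted appropriately.

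The main obstacle I expect is bookkeeping rather than conceptual: one must simultaneously preserve, under the passage to the solvable CM extension $F'=LF$, all of the following --- adequacy of the residual image, irreducibility of $\overline{r}|_{G_{F'(\zeta_l)}}$, the conjugate self-duality of the chosen local lifts, geometric irreducibility of every local lifting ring at the (possibly enlarged) set of bad places, and the compatibility of the local deformation conditions with the auxiliary automorphic-induction representation used as the "base point" of the patching argument. This is exactly the kind of delicate chain of solvable base changes handled in \cite[\S\S 3--4]{CW}, and the present proposition is stated so as to be a direct repackaging of \cite[Proposition 3.3.1]{CW} (for the lift) and the potential automorphy theorems there (for the automorphic realization), with the monodromy-type hypotheses at $v\nmid l$ in (7) ensuring, via Proposition \ref{monodromy type irreducible component}, that the component-fixing at the non-$l$-adic places is well-posed in the residually trivial situation one reduces to. So the proof is: (i) build $r$ by Taylor--Wiles patching of the polarized deformation problem, using Proposition \ref{adequate} and the local conditions from (7); (ii) apply the conjugate self-dual potential automorphy theorem of \cite{CW} to $r$, with the extension $L/\mathbb{Q}$ produced by Corollary \ref{extension of Q}, Lemma \ref{imaginary quadratic}, Proposition \ref{character}; (iii) descend the polarization to identify the resulting $\pi$ as polarizable.
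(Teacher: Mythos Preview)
Your overall strategy is correct and matches the paper's. However, there is a genuine gap in your ordering concerning property (d).

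The lifting step (the paper uses \cite[Proposition 3.2.1]{CW}) produces $r$ satisfying (a), (b), and the weaker relation $r|_{G_{F_v}} \sim \rho_v$ for all $v \in S$, which is (c) at $v\mid l$ but \emph{not} (d) at $v\nmid l$: lying on a common irreducible component does not force the same monodromy type unless \emph{both} points lie on a unique component, cf.\ 5 of Lemma \ref{coincide monodromy type}. One first arranges $\rho_v$ robustly smooth via Lemma \ref{robustly smooth monodromy type}, but you have no a priori control over the smoothness of $r|_{G_{F_v}}$; component-fixing in the deformation problem does not help, since $r|_{G_{F_v}}$ could still sit at an intersection of components.

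The paper therefore proves potential automorphy \emph{before} (d): once $r|_{G_{F'}} \cong r_\iota(\pi)$ with $\pi$ polarizable, Theorem \ref{polarizable local-global compatibility} gives that $\mathrm{WD}(r_\iota(\pi)|_{G_{F'_v}})$ is pure (hence generic) at every finite place, so $r|_{G_{F_v}}$ is robustly smooth for all $v\nmid l$ of $F$; only then does 5 of Lemma \ref{coincide monodromy type} (together with Lemma \ref{regular point}) yield (d). You should reorder accordingly.

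A minor difference in detail: for potential automorphy the paper does not cite a black-box theorem but instead forms $\sigma = \mathrm{Ind}^{G_{F^+}}_{G_F}(r\theta)$ for a character $\theta$ with $\theta\theta^c = (\mu^{-1}\varepsilon_l^{1-2n})|_{G_F}$ (chosen via Proposition \ref{character} so that $\sigma$ is Hodge--Tate regular and $\overline{r\theta}|_{G_{F(\zeta_l)}}\not\cong(\overline{r\theta})^c|_{G_{F(\zeta_l)}}$, the latter ensured by making $\overline{\theta}^2|_{I_{F_v}}$ nontrivial at an auxiliary split place), equips $\sigma$ with an explicit alternating pairing $\sigma\times\sigma\to\varepsilon_l^{1-2n}$, applies the Dwork-family argument of \cite[Theorem 3.1.2]{CW} over the totally real field $F^+$ to realize $\overline{\sigma}$ automorphically after a totally real base change $L/\mathbb{Q}$, and then lifts and descends via \cite[Theorem 4.2.1, Lemma 2.2.4]{CW}. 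Your high-level citation of \cite{CW} subsumes this, but the induction-to-$F^+$ trick is the concrete mechanism used.
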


\begin{proof}

We may assume that $\rho_v$ is robustly smooth for all $v \in S \setminus \{ u|l \}$ by Lemma \ref{robustly smooth monodromy type}. Moreover, we may assume that $\rho_{v^c}^c \cong \rho_v^{\vee} \mu|_{G_{F_v}}$ for any $v \in S$ by 1 of Lemma \ref{p=l} and by the fact that for any $v \nmid l$, $\mathrm{WD}(\rho_{v^c} \otimes \overline{\mathbb{Q}}_l)$ and $\mathrm{WD}(\rho_{v}^{\vee}\mu|_{G_{F_v}} \otimes \overline{\mathbb{Q}}_l)$ have the same monodromy type. By \cite[Proposition 3.2.1]{CW}, there exists a lifting $r : G_{F} \rightarrow \mathrm{GL}_n(\mathcal{O}_{\overline{\mathbb{Q}}_l})$ of $\overline{r}$ satisfying the following conditions. 

$(a')$ There exists a $G_{F}$-equivariant symmetric perfect pairing $( \ , \ ): \ r \times r^c \rightarrow \mu|_{G_{F}}$. (We fix a complex conjugation $c \in G_{F^+}$.)
        
$(b')$ For any finite place $v \notin S$, $r|_{G_{F_v}}$ is unramified.
        
$(c')$ For any finite place $v \in S$, $r|_{G_{F_v}} \sim \rho_v$.

There exists a finite place $\overline{v} \nmid 2l$ of $F^+$ such that $\overline{v}$ splits in $F$ and $\overline{v}$ doesn't lie below any place in $S$. Let $v$ be a finite place of $F$ lying above $\overline{v}$. By Proposition \ref{character}, we can take an algebraic $l$-adic character $\theta: G_{F} \rightarrow \mathcal{O}_{\overline{\mathbb{Q}}_l}^{\times}$ such that $\theta \theta^c = (\mu^{-1}\varepsilon_{l}^{1-2n})|_{G_{F}}$, $\sigma := \mathrm{Ind}^{G_{F^+}}_{G_{F}}r\theta$ is Hodge-Tate regular and $\overline{\theta}^2|_{I_{F_v}}$ is not trivial. Then we have that $\overline{\sigma}|_{G_{F^+(\zeta_l)}} = \mathrm{Ind}^{G_{F^+(\zeta_l)}}_{G_{F(\zeta_l)}}\overline{r\theta|_{G_{F(\zeta_l)}}}$ is absolutely irreducible since $F \nsubseteq F^+(\zeta_l)$ and $\overline{r\theta}|_{G_{F(\zeta_l)}} \ncong (\overline{r\theta})^c|_{G_{F(\zeta_l)}} (\cong \overline{\varepsilon_l^{1-2n}r^{\vee}\theta^{-1}}|_{G_{F(\zeta_l)}})$ by the non-triviality of $\overline{\theta}^2|_{I_{F_v}}$ and the unramifiedness of $\overline{r}|_{G_{F_v}}$. Then $\langle \ , \ \rangle : \ \sigma \times \sigma \rightarrow \varepsilon_{l}^{1-2n}$, $\langle f,g \rangle :=(f(e),g(c))-(f(c),g(e))$ \ is a $G_{F^+}$-equivariant alternating perfect pairing. (We use the complex conjugation $c \in G_{F^+}$ in $(a')$. $e$ denotes the unit element of $G_{F^+}$.) 

By the same proof as in \cite[Theorem 3.1.2]{CW}, we obtain a finite totally real Galois extension $L/\mathbb{Q}$ linearly disjoint from $F^{(a)}F^{\mathrm{Ker}(\overline{\sigma}|_{F})}(\zeta_l)$ over $\mathbb{Q}$ and a polarizable $\iota$-ordinary cohomological cuspidal automorphic representation $\pi$ of $\mathrm{GL}_{2n}(\mathbb{A}_{F^{+'}})$ such that $\pi \cong \pi^{\vee}$ and $\overline{r_{\iota}(\pi)} \cong \overline{\sigma}|_{G_{F^{+'}}}$. (We put $F^{+'}:=LF^+$. Note that we need to consider $\mathrm{Res}_{F(\zeta_N)^+/\mathbb{Q}}\tilde{T}$ instead of $\tilde{T}$ in the proof of \cite[Theorem 3.1.2]{CW}.) Then by \cite[Theorem 4.2.1 and Lemma 2.2.4]{CW}, there exists a polarizable cohomological cuspidal automorphic representation $\pi$ of $\mathrm{GL}_n(\mathbb{A}_{F'})$ such that $r|_{G_{F'}}$ such that $r_{\iota}(\pi) \cong r|_{G_{F'}}$. (We put $F':=FF^{+'}$.) Since $\iota \mathrm{WD}(r_{\iota}(\pi)|_{G_{F'_v}})^{F-ss} \cong \mathrm{rec}_{F'_v}(\pi_v |\mathrm{det}|_v^{\frac{1-n}{2}})$ for all $v \nmid l$ by Theorem \ref{polarizable local-global compatibility}, $r|_{G_{F_v}}$ is robustly smooth for all $v \nmid l$. By 5 of Lemma \ref{coincide monodromy type} and Lemma \ref{regular point}, $\mathrm{WD}(r|_{G_{F_v}} \otimes \overline{\mathbb{Q}}_l)$ and $\mathrm{WD}(\rho_{v} \otimes \overline{\mathbb{Q}}_l)$ have the same monodromy type for any $v \in S \setminus \{ u|l \}$.  \end{proof}

For our potential automorphy theorems, we need the following stronger notions than the decomposed genericity.

\begin{dfn}\label{strongly decomposed generic}

Let $l$ be a prime, $\overline{r}: G_F \rightarrow \mathrm{GL}_n(\overline{\mathbb{F}_l})$ be a continuous representation and $p \neq l$ be a prime.
            
We say that $p$ is strongly decomposed generic for $\overline{r}$ if $p$ splits completely in $F$, $\overline{r}|_{G_{F_v}}$ is unramified for all $v|p$ and the eigenvalues $\alpha_{v,1}, \cdots, \alpha_{v,n}$ of $\overline{r}(\mathrm{Frob}_v)$ satisfies $\frac{\alpha_{v,i}}{\alpha_{v,j}} \neq p$ for all $i \neq j$ and $\frac{\alpha_{v,i}}{\alpha_{v^c,j}} \neq p$ for all $i, j$.

We say that $p$ is fully decomposed generic for $\overline{r}$ if
     $p$ splits completely in $F$, for all $v|p$, $\overline{r}|_{G_{F_v}}$ is unramified and the eigenvalues $\alpha_{v,1}, \cdots, \alpha_{v,n}$ of $\overline{r}(\mathrm{Frob}_v)$ satisfies $\frac{\alpha_{v,i}}{\alpha_{v,j}} \neq p$ for all $i, j$. 

We say that $p$ is fully strongly decomposed generic for $\overline{r}$ if $p$ is strongly decomposed generic for $\overline{r}$ and fully decomposed generic for $\overline{r}$.

We say that $\overline{r}$ is strongly (resp. fully, fully strongly) decomposed generic if there exists a prime $p$ which is strongly (resp. fully, fully strongly) decomposed generic for $\overline{r}$.

    \end{dfn}

\begin{lem} \label{decomposed genericity density}
    
    Let $l$ be a prime, $\overline{r}: G_F \rightarrow \mathrm{GL}_n(\overline{\mathbb{F}_l})$ be a continuous decomposed generic (resp. strongly decomposed generic, fully decomposed generic, fully strongly decomposed generic ) representation.

1 \ There exist positive Dirichlet density primes which are decomposed generic (resp. strongly decomposed generic, fully decomposed generic, fully strongly decomposed generic) for $\overline{r}$.

2 \ Let $E$ be a Galois CM extension of $\mathbb{Q}$ linearly disjoint from the Galois closure of $\overline{F}^{\mathrm{Ker}\overline{r}}(\zeta_l)$ over $\mathbb{Q}$. Then $\overline{r}|_{G_{EF}}$ is decomposed generic (resp. strongly decomposed generic, fully decomposed generic, fully strongly decomposed generic).

\end{lem}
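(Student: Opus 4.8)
\textbf{Proof plan for Lemma \ref{decomposed genericity density}.}

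The two assertions are essentially Chebotarev-type statements about the image of $\overline{r}$, and I would treat the four flavours of genericity uniformly, since each is a condition cut out inside the Frobenius conjugacy class by finitely many ``ratio $\neq p$'' inequalities together with ``$p$ splits completely in $F$'' and ``$\overline{r}$ unramified above $p$''. First I would set up the relevant finite Galois extension: let $L_0 := \overline{F}^{\mathrm{Ker}\,\overline{r}}$ and consider $L := L_0(\zeta_l)$, which is a finite Galois extension of $\mathbb{Q}$ (one must be slightly careful that $\overline{r}$ is a representation of $G_F$, not $G_{\mathbb{Q}}$, so I would pass to the Galois closure $\widetilde{L}$ of $L/\mathbb{Q}$ and note that $F \subseteq \widetilde{L}$). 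A prime $p$ is unramified in $\widetilde{L}$ and splits completely in $F$ precisely when $\mathrm{Frob}_p$, viewed in $\mathrm{Gal}(\widetilde{L}/\mathbb{Q})$, lies in a subgroup fixing $F$; the further genericity conditions then depend only on the conjugacy class of $\mathrm{Frob}_p$ in $\mathrm{Gal}(\widetilde{L}/\mathbb{Q})$ through the induced element of $\overline{r}(G_F)$ at the places above $p$ (and its complex conjugate, for the ``strongly'' variants). By hypothesis there already exists one such prime $p_0$; hence the corresponding conjugacy class $\mathcal{C} \subseteq \mathrm{Gal}(\widetilde{L}/\mathbb{Q})$ of $\mathrm{Frob}_{p_0}$ is nonempty and consists entirely of elements witnessing the genericity condition (the condition being conjugation-invariant). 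By the Chebotarev density theorem the set of primes $p$ with $\mathrm{Frob}_p \in \mathcal{C}$ has Dirichlet density $|\mathcal{C}|/[\widetilde{L}:\mathbb{Q}] > 0$, and every such $p$ is decomposed generic (resp.\ strongly/fully/fully strongly decomposed generic) for $\overline{r}$. That proves part 1.

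For part 2, the point is that adjoining a CM extension $E/\mathbb{Q}$ linearly disjoint from $\widetilde{L}$ over $\mathbb{Q}$ does not destroy the genericity. Concretely, let $p$ be a prime which is decomposed generic (in the relevant sense) for $\overline{r}$; I would show $p$ remains so for $\overline{r}|_{G_{EF}}$ after possibly passing to a prime of positive density. The compositum $E\widetilde{L}$ has Galois group $\mathrm{Gal}(E/\mathbb{Q}) \times \mathrm{Gal}(\widetilde{L}/\mathbb{Q})$ by linear disjointness, so by Chebotarev I can choose a prime $q$ of positive density whose Frobenius in $\mathrm{Gal}(E\widetilde{L}/\mathbb{Q})$ is $(\text{identity in }\mathrm{Gal}(E/\mathbb{Q}),\ \mathrm{Frob}_p\text{-class in }\mathrm{Gal}(\widetilde{L}/\mathbb{Q}))$; then $q$ splits completely in $E$ and has the same behaviour in $\widetilde{L}$ as $p$, hence splits completely in $EF$, is unramified for $\overline{r}|_{G_{EF}}$, and the eigenvalues of $\overline{r}(\mathrm{Frob}_{v})$ for $v \mid q$ in $EF$ coincide (up to the inert-degree-one identification) with those of $\overline{r}(\mathrm{Frob}_w)$ for $w \mid p$ in $F$, so all the ratio conditions persist. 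This exhibits a prime witnessing that $\overline{r}|_{G_{EF}}$ is decomposed generic (resp.\ the appropriate stronger notion), which is exactly the claim.

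I do not anticipate a serious obstacle here; the only mild subtleties are bookkeeping ones: (i) tracking that the Frobenius eigenvalue data at a place $v \mid p$ of $F$ is literally encoded in the image of a Frobenius element of $\widetilde{L}/\mathbb{Q}$ under the appropriate composite $G_{\mathbb{Q}} \supseteq G_F \twoheadrightarrow \overline{r}(G_F)$ (this uses that $p$ splits completely in $F$, so the decomposition group at $v$ maps onto a cyclic subgroup generated by a genuine Frobenius), and (ii) for the ``strongly'' variants, simultaneously controlling the eigenvalues at $v$ and at $v^c$, which is automatic because complex conjugation on $F$ is realised by an element of $\mathrm{Gal}(\widetilde{L}/\mathbb{Q})$ and thus the pair $(\mathrm{Frob}_v, \mathrm{Frob}_{v^c})$ is again determined by the $\mathrm{Gal}(\widetilde{L}/\mathbb{Q})$-conjugacy class of a single Frobenius. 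Both points are routine once the field $\widetilde{L}$ is fixed, so the argument is genuinely just Chebotarev plus linear disjointness.
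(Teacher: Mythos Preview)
Your proposal is correct and follows essentially the same route as the paper: both arguments work in the Galois closure $M$ of $\overline{F}^{\mathrm{Ker}\overline{r}}(\zeta_l)$ over $\mathbb{Q}$, use Chebotarev to find primes $q$ with the same Frobenius class as the given witness $p$, and for part 2 exploit $\mathrm{Gal}(EM/\mathbb{Q}) \cong \mathrm{Gal}(E/\mathbb{Q}) \times \mathrm{Gal}(M/\mathbb{Q})$ to force $q$ to split completely in $E$. The one point you leave slightly implicit, which the paper makes explicit, is that the inequality ``ratio $\neq p$'' for the new prime $q$ becomes ``ratio $\neq q$'', and these coincide in $\overline{\mathbb{F}_l}$ precisely because $\zeta_l \in \widetilde{L}$ forces $p \equiv q \pmod l$; you have included $\zeta_l$ for exactly this reason, so there is no gap.
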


\begin{proof} We assume that there exists a prime $p \neq l$ such that $p$ is fully strongly decomposed generic for $\overline{r}$. (The proofs of other cases are similar.) Let $M$ be the Galois closure of $\overline{F}^{\mathrm{Ker}\overline{r}}(\zeta_l)$ over $\mathbb{Q}$.

1 \ By Chebotarev's density theorem, there exist positive Dirichlet density primes $q \neq p$ which is unramified in $M$ and satisfies $\mathrm{Frob}_q = \mathrm{Frob}_p$ in $\mathrm{Gal}(M/\mathbb{Q})$ (up to conjugations by $\mathrm{Gal}(M/\mathbb{Q})$). Then $q$ splits completely in $F$ since $\mathrm{Frob}_{q}$ is trivial on the Galois closure of $F$ over $\mathbb{Q}$, for any finite place $v \mid q$ of $F$, there exists $w \mid p$ such that $\mathrm{Frob}_{v} = \mathrm{Frob}_{w}$ in $\mathrm{Gal}(M / F)$ and $p \equiv q \mod l$. Therefore, $q$ is fully strongly decomposed generic for $\overline{r}$.

2 \ By the assumption, we obtain the isomorphism $\mathrm{Gal}(EM/\mathbb{Q}) \stackrel{\sim}{\rightarrow} \mathrm{Gal}(M/\mathbb{Q}) \times \mathrm{Gal}(E/\mathbb{Q}), \sigma \mapsto (\sigma|_{M}, \sigma|_{E})$. By Chebotarev's density theorem, there exists a prime $q \neq l$ such that $q$ is unramified in $EM$, $\mathrm{Frob}_q = \mathrm{Frob}_p$ in $\mathrm{Gal}(M/\mathbb{Q})$ and $\mathrm{Frob}_q$ is trivial in $\mathrm{Gal}(E/\mathbb{Q})$. By the same argument as above, $q$ is fully strongly decomposed generic for $\overline{r}|_{G_{EF}}$. \end{proof}

\begin{lem} \label{induction conjugate} We assume that $F$ is an imaginary CM field.

    (1) \ Let $l$ be a prime, $r : G_{F} \rightarrow \mathrm{GL}_n(\mathcal{O}_{\overline{\mathbb{Q}}_l})$ be a continuous representation and $w$ be an integer such that for any $v \mid l$, there exists a finite extension $F_v'/F_v$ such that $r^c|_{G_{F_v'}} \sim r^{\vee}\varepsilon_l^{-w}|_{G_{F_v'}}$.
    
    (2) \ Let $\theta : G_{F} \rightarrow \mathcal{O}_{\overline{\mathbb{Q}}_l}^{\times}$ be an algebraic $l$-adic character such that $\theta \theta^c$ is trivial and $\sigma := \mathrm{Ind}^{G_{F^+}}_{G_{F}}r\theta$ is Hodge-Tate regular at all $l$-adic places. 
    
    (3) \ Let $F^{(a)}$ be a finite extension of $\mathbb{Q}$.
    
    (4) \ Let $S$ be a finite set of primes not containing $l$.
    
    Then there exists a finite totally real solvable extension $L/\mathbb{Q}$ satisfying the following property. (We put $E:=LF^+$.)
    
    (a) \ $L$ is linearly disjoint from $F^{(a)}$ over $\mathbb{Q}$.
    
    (b) \ All primes in $S$ split completely in $L$.
    
    (c) \ For all $v \mid l$, $(\sigma|_{G_{E_v}} =) \sigma^c|_{G_{E_v}} \sim \sigma^{\vee} \varepsilon_l^{-w}|_{G_{E_v}}$.
    
    \end{lem}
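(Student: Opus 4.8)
\textbf{Proof proposal for Lemma \ref{induction conjugate}.}

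The plan is to construct $L$ place by place using the results on extensions of number fields from \S 4.1.3, and then to check the conjugate self-duality condition $(c)$ at $l$-adic places by exploiting the compatibility between induction and the $\sim$-relation. First I would record the basic structural fact: since $\theta\theta^c$ is trivial, for any field $K$ with $F\subseteq K$ we have $\sigma|_{G_{K^+}}^c\cong\sigma|_{G_{K^+}}$ whenever $K^+$ is the maximal totally real subfield and $K/K^+$ is the corresponding CM extension; this is because $\mathrm{Ind}^{G_{F^+}}_{G_F}(r\theta)$ is visibly preserved (up to isomorphism) by the outer conjugation action coming from $\mathrm{Gal}(F/F^+)$, using $\theta^c=\theta^{-1}$. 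So the equality $\sigma^c|_{G_{E_v}}=\sigma|_{G_{E_v}}$ in $(c)$ will be automatic once $L$ is totally real, and the real content is the relation $\sigma|_{G_{E_v}}\sim\sigma^\vee\varepsilon_l^{-w}|_{G_{E_v}}$.

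For the construction: by hypothesis $(1)$, for each $l$-adic place $\bar v$ of $F^+$ and each $v\mid\bar v$ of $F$ there is a finite extension $F'_v/F_v$ with $r^c|_{G_{F'_v}}\sim r^\vee\varepsilon_l^{-w}|_{G_{F'_v}}$. Enlarging, I would take $F'_v$ Galois over the completion of $F^+$ at $\bar v$, and uniform across the places $v\mid\bar v$. Then I apply Corollary \ref{extension of Q} (with $F$ there taken to be $F^+$, the finite set $S$ of the corollary containing $l$ together with places below the chosen local extensions, and the set $T$ of the corollary equal to the set $S$ of primes in the present statement that must split completely): this produces a finite totally real solvable extension $L/\mathbb{Q}$, linearly disjoint from $F^{(a)}$ over $\mathbb{Q}$, such that all primes in $S$ split completely in $L$ and such that for every place $w$ of $E=LF^+$ above a chosen $l$-adic place there is an embedding of the prescribed local extension into $E_w$. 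In particular, after replacing $E$ by a further such extension if needed, we can arrange that $E_w$ contains $F'_v$ for the relevant $v$, so that $r^c|_{G_{E_w}}\sim r^\vee\varepsilon_l^{-w}|_{G_{E_w}}$ by part 6 of Lemma \ref{p=l} (stability of $\sim$ under restriction to a finite extension).

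Finally I would upgrade this from $r$ to $\sigma=\mathrm{Ind}^{G_{F^+}}_{G_F}(r\theta)$. Fix an $l$-adic place $w$ of $E$ and the place $\bar w$ of $E^+$ below it; since $L$ is totally real and $l$ splits appropriately, $E^+_{\bar w}$ is either the completion $E_w$ itself (if $\bar w$ splits in $E$) or a quadratic subextension. In the split case $\sigma|_{G_{E^+_{\bar w}}}\cong (r\theta)|_{G_{E_w}}\oplus (r\theta)^c|_{G_{E_{w'}}}$ for the two places $w,w'$ above $\bar w$, and using $\sim$-compatibility with direct sums (part 6 of Lemma \ref{p=l}), twisting by the de Rham character $\theta$ (part 2 of Lemma \ref{p=l} after reducing to the case $\overline\theta$ trivial, which one arranges by a preliminary solvable base change as in Proposition \ref{character}), and the relation just established for $r$, one gets $\sigma|_{G_{E_w}}\sim\sigma^\vee\varepsilon_l^{-w}|_{G_{E_w}}$; note $(\mathrm{Ind}^{G_{F^+}}_{G_F}\rho)^\vee\cong\mathrm{Ind}^{G_{F^+}}_{G_F}(\rho^\vee)$ and $\theta\theta^c$ trivial make the bookkeeping of the twist work out. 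In the non-split case one restricts further to $E_w$ (using part 6 of Lemma \ref{p=l} again) and reduces to the split computation. I expect the main obstacle to be the bookkeeping around the character twist by $\theta$: one must make $\overline\theta$ trivial on the relevant decomposition groups, keep $\theta\theta^c$ trivial, preserve Hodge–Tate regularity of $\sigma$, and ensure all of this survives the solvable base change — this is exactly the kind of situation Proposition \ref{character} and Lemma \ref{diagonalizable induction character} are designed to handle, so I would lean on them rather than argue by hand, but interleaving their hypotheses with those of Corollary \ref{extension of Q} without circularity will require some care.
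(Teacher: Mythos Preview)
Your overall approach is right --- invoke Corollary \ref{extension of Q} to build $L$, then decompose $\sigma|_{G_{E_v}}$ as a direct sum and use the compatibility of $\sim$ with direct sums and tensor products (part 6 of Lemma \ref{p=l}) --- but you create two unnecessary difficulties for yourself.

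First, the character twist by $\theta$ needs no preparation at all. Since $\theta\theta^c=1$ gives $\theta^{-1}=\theta^c$, in the split case one has
\[
\sigma|_{G_{E_v}} \cong r\theta|_{G_{(FE)_w}} \oplus r^c\theta^c|_{G_{(FE)_w}}
\]
and
\[
\sigma^\vee\varepsilon_l^{-w}|_{G_{E_v}} \cong r^\vee\theta^c\varepsilon_l^{-w}|_{G_{(FE)_w}} \oplus (r^\vee)^c\theta\,\varepsilon_l^{-w}|_{G_{(FE)_w}}.
\]
From $r^c|_{G_{(FE)_w}} \sim r^\vee\varepsilon_l^{-w}|_{G_{(FE)_w}}$ (and the analogue with $c$ applied), tensoring each side by the \emph{same} character $\theta$ or $\theta^c$ preserves $\sim$ directly by part 6 of Lemma \ref{p=l}, since trivially $\theta\sim\theta$. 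There is no need for $\overline\theta$ to be trivial, no need for part 2 of that lemma (which concerns unramified characters), and no need to invoke Proposition \ref{character} or Lemma \ref{diagonalizable induction character}.

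Second, your non-split case has a genuine gap: restricting from $G_{E_v}$ to $G_{(FE)_w}$ and establishing $\sim$ there does not give you $\sim$ at $G_{E_v}$, because the relation $\sim$ only propagates \emph{down} under restriction (Lemma \ref{p=l}, part 5), not up. The paper avoids the issue entirely by including, in the application of Corollary \ref{extension of Q}, the additional requirement that every $l$-adic place of $FE$ split over $E$; this can always be arranged by prescribing the local extensions at $l$ suitably. With that in place the identification $G_{(FE)_w}=G_{E_v}$ holds and only the split computation is needed.
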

    
    \begin{proof}
    
    By Corollary \ref{extension of Q}, there exists a finite totally real solvable extension $L/\mathbb{Q}$ satisfying the following property. (We put $E:=LF^+$.)
    
    (a) \ $L$ is linearly disjoint from $F^{(a)}$ over $\mathbb{Q}$.
    
    (b) \ All primes in $S$ split completely in $L$.
    
    (c) \ $r^c|_{G_{(FE)_v}} \sim r^{\vee} \varepsilon_l^{-w}|_{G_{(FE)_v}}$ and $v$ splits over $E$ for all $v \mid l$. (Thus, we obtain $r|_{G_{(FE)_{v}}} \sim (r^{\vee})^c \varepsilon_l^{-w}|_{G_{(FE)_v}}$.)
    
    Then for any $l$-adic place $v$ of $E$, if we identify $G_{(FE)_w} = G_{E_{v}}$ for a place $w | v$ of $FE$, we have $\sigma|_{G_{E_{v}}} = (r \theta)|_{G_{(FE)_w}} \oplus (r \theta)^c|_{G_{(FE)_w}} = r|_{G_{(FE)_w}} \theta|_{G_{(FE)_w}} \oplus r^c|_{G_{(FE)_w}} \theta^c|_{G_{(FE)_w}}$ and $\sigma^{\vee} \varepsilon_l^{-w}|_{G_{E_{v}}} = (r^{\vee} \theta^{-1} \varepsilon_l^{-w})|_{G_{(FE)_w}} \oplus (r^{\vee} \theta^{-1} \varepsilon_l^{-w})^c|_{G_{(FE)_w}} = (r^{\vee})|_{G_{(FE)_w}} \theta^{c} \varepsilon_l^{-w}|_{G_{(FE)_w}} \oplus (r^{\vee})^c|_{G_{(FE)_w}} \theta \varepsilon_l^{-w}|_{G_{(FE)_w}}$. Thus, we have $\sigma^c|_{G_{(FE)_v}} = \sigma|_{G_{(FE)_{v}}} \sim (\sigma^{\vee}\varepsilon_l^{-w})|_{G_{(FE)_{v}}}$ by 6 of Lemma \ref{p=l}. \end{proof}

\begin{lem}\label{disjoint CM}

Let $l$ be a prime such that $F \nsubseteq F^+(\zeta_l)$ and $E/F$ be a CM extension which is linearly disjoint from $F(\zeta_l)$ over $F$. Then $E \nsubseteq E^+(\zeta_l)$.

\end{lem}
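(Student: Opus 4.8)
\textbf{Proof plan for Lemma \ref{disjoint CM}.}

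The plan is to translate the statement about field inclusions into a statement about the degrees of the relevant compositum fields over $F^+$, and then to argue by comparing these degrees using the linear disjointness hypothesis. First I would observe that $E \subseteq E^+(\zeta_l)$ is equivalent to $E(\zeta_l) = E^+(\zeta_l)$, i.e. to $[E(\zeta_l):E^+] = [E^+(\zeta_l):E^+]$, since $E/E^+$ has degree $2$ the inclusion $E^+(\zeta_l) \subseteq E(\zeta_l)$ always holds and equality fails precisely when $[E(\zeta_l):E^+(\zeta_l)] = 2$. So it suffices to show $[E(\zeta_l):E^+(\zeta_l)] = 2$, equivalently $\zeta_l \notin E$ together with $E \ne E^+(\zeta_l)$; actually the cleanest formulation is: $E \nsubseteq E^+(\zeta_l)$ iff $E(\zeta_l) \ne E^+(\zeta_l)$ iff $[E(\zeta_l):E^+] = 2\,[E^+(\zeta_l):E^+]$.

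The key step is to exploit the hypothesis that $E/F$ is linearly disjoint from $F(\zeta_l)/F$. Since $E^+/F^+$ and $F/F^+$ have compositum $E$ (because $E$ is CM with maximal totally real subfield $E^+$ and $E = E^+ F$, as $E/F$ is a CM extension meaning $E^+ \cap F = F^+$... — here one checks $E = E^+F$ and $E^+ \cap F = F^+$), I would set up the diagram of fields $F^+, F, E^+, E, F^+(\zeta_l), F(\zeta_l), E^+(\zeta_l), E(\zeta_l)$. From $F \nsubseteq F^+(\zeta_l)$ one gets $[F(\zeta_l):F^+] = 2[F^+(\zeta_l):F^+]$. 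Next I would use linear disjointness of $E$ and $F(\zeta_l)$ over $F$ to compute $[E(\zeta_l):F]$: namely $E(\zeta_l) = E \cdot F(\zeta_l)$ and $[E(\zeta_l):F] = [E:F]\,[F(\zeta_l):F]$. Then compare with $[E^+(\zeta_l):F^+]$, which divides $[E^+:F^+]\,[F^+(\zeta_l):F^+]$; combining these numerical identities should force $[E(\zeta_l):E^+] > [E^+(\zeta_l):E^+]$, giving the conclusion. One should be a little careful that $[E:E^+] = 2$ and $[E^+:F^+] = [E:F]$ (which follows from $E = E^+F$ and $E^+ \cap F = F^+$).

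The main obstacle I anticipate is bookkeeping the tower of degrees correctly, in particular verifying that $E^+$ and $F$ are linearly disjoint over $F^+$ (so that $E = E^+F$ with $[E^+:F^+] = [E:F]$) and that the linear disjointness of $E/F$ from $F(\zeta_l)/F$ propagates to the disjointness needed between $E^+(\zeta_l)$ and the quadratic subextension of $E/E^+$. A clean way to finish is: suppose for contradiction $E \subseteq E^+(\zeta_l)$; then $E(\zeta_l) = E^+(\zeta_l)$, so $[E(\zeta_l):F^+] = [E^+(\zeta_l):F^+] \le [E^+:F^+]\,[F^+(\zeta_l):F^+] = [E:F]\,[F^+(\zeta_l):F^+]$. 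On the other hand $[E(\zeta_l):F^+] = [E(\zeta_l):F]\,[F:F^+] = 2\,[E(\zeta_l):F]$, and by linear disjointness of $E$ and $F(\zeta_l)$ over $F$ this equals $2\,[E:F]\,[F(\zeta_l):F] = [E:F]\,[F(\zeta_l):F^+] = [E:F]\cdot 2\,[F^+(\zeta_l):F^+]$, using $F\nsubseteq F^+(\zeta_l)$. Comparing the two gives $[E:F]\cdot 2\,[F^+(\zeta_l):F^+] \le [E:F]\,[F^+(\zeta_l):F^+]$, a contradiction. This completes the proof, modulo the routine justifications of the degree identities above.
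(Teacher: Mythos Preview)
Your proof is correct and follows essentially the same idea as the paper's: both arguments reduce to the linear disjointness of $E$ from $F^+(\zeta_l)$ over $F^+$, which is exactly the combination of the two hypotheses via the tower property. The paper states this directly in terms of linear disjointness (noting that $E \nsubseteq E^+(\zeta_l)$ is the same as $E$ being linearly disjoint from $E^+(\zeta_l)$ over $E^+$, which follows by base-changing the disjointness of $E$ from $F^+(\zeta_l)$ over $F^+$ up to $E^+$), whereas you unpack the same content into an explicit degree-count and a contradiction; the bookkeeping you flag ($E = E^+F$, $E^+\cap F = F^+$, hence $[E^+:F^+]=[E:F]$) is indeed routine once one notes the hypothesis forces $F$ (and hence $E$) to be imaginary CM.
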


\begin{proof} It suffices to show that $E$ is linearly disjoint from $E^+(\zeta_l)$ over $E^+$ and this follows from that $E$ is linearly disjoint from $F^+(\zeta_l)$ over $F^+$. This is equivalent to that $F$ is linearly disjoint from $F^+(\zeta_l)$ over $F^+$ and $E$ is linearly disjoint from $F(\zeta_l)$ over $F$. \end{proof}

The main theorem in this subsection is the following.

\begin{thm}\label{potential diagonalizable automorphy}

Let $l \geq 2(n+1)$ be a prime, $r: G_F \rightarrow \mathrm{GL}_n(\overline{\mathbb{Q}}_l)$ be an algebraic $l$-adic representation and $F^{(a)}$ be a finite extension of $\mathbb{Q}$.

We assume the following conditions.

1 \ $F \nsubseteq F^+(\zeta_l)$ or $F$ is a totally real field.

2 \ $\overline{r}$ is strongly decomposed generic.

3 \ $\overline{r}|_{G_{F(\zeta_l)}}$ is absolutely irreducible.

4 \ For all $v|l$, $r|_{G_{F_v}}$ is potentially diagonalizable and Hodge-Tate regular.

5 \ There exists an integer $w$ such that for any $v|l$, there exists a finite extension $F_v'/F_v$ such that $r^c|_{G_{F'_v}} \sim (r^{\vee}\varepsilon_l^{-w}) |_{G_{F_v'}}$. 

6 \ There exists a continuous character $\overline{\chi}: G_{F^+} \rightarrow \overline{\mathbb{F}_l}^{\times}$ such that we have $\overline{\chi}(c_v)=\overline{\chi}(c_w)$ for all $v, w|\infty$ and $\overline{r} \cong \overline{r}^{\vee} \overline{\chi}|_{G_{F}}$. 

Then for any $\iota: \overline{\mathbb{Q}}_l \stackrel{\sim}{\longrightarrow} \mathbb{C}$, there exist a finite Galois CM extension $L/\mathbb{Q}$ linearly disjoint from $F^{(a)}$ over $\mathbb{Q}$ and a cohomological cuspidal automorphic representation $\pi$ of $\mathrm{GL}_n(\mathbb{A}_{F'})$ such that $r_{\iota}(\pi) \cong r|_{G_{F'}}$, $\iota\mathrm{WD}(r_{\iota}(\pi)|_{G_{F_{v}'}})^{F-ss} \cong \mathrm{rec}_{F_{v}'}(\pi_v|\mathrm{det}|_v^{\frac{1-n}{2}})$ for $v \nmid l$ and $\pi_v$ is unramified for all $v|l$. (Here, we put $F':=LF$.)

\end{thm}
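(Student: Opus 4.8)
\textbf{Proof proposal for Theorem \ref{potential diagonalizable automorphy}.}

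The plan is to reduce to the essentially conjugate self-dual case over a totally real base by the induction trick, and then apply Proposition \ref{potential conjugate} via the crystalline automorphy lifting machinery, finally descending along the induction. First I would fix $\iota$ and a finite set $S$ of finite places of $F$ containing all $l$-adic places, all ramified places of $r$ and $\chi$, with $S = S^c$. After replacing $F$ by a solvable CM extension (using Proposition \ref{extension of field}, Corollary \ref{extension of Q}, Lemma \ref{imaginary quadratic} and Lemma \ref{disjoint CM} to preserve the hypotheses $F \nsubseteq F^+(\zeta_l)$, the adequacy of $\overline{r}(G_{F(\zeta_l)})$, strong decomposed genericity by Lemma \ref{decomposed genericity density}, and to arrange that all places in $S$ split over $F^+$ and are unramified over $F^+$, and that at each $v \mid l$ we have $r^c|_{G_{F_v}} \sim (r^{\vee}\varepsilon_l^{-w})|_{G_{F_v}}$ with $r|_{G_{F_v}}$ diagonalizable after base change) I may assume $F$ is an imaginary CM field with these good properties. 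If $F$ was totally real to begin with, I first pass to an imaginary CM quadratic extension linearly disjoint from $F(\zeta_l)$ via Lemma \ref{quadratic extension} and use solvable base change (Proposition \ref{base change}) at the end; I will suppress this case below.

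Next, following the strategy outlined in the introduction, I would choose by Proposition \ref{character} an algebraic $l$-adic character $\theta : G_F \to \mathcal{O}_{\overline{\mathbb{Q}_l}}^{\times}$ such that $\theta\theta^c = (\chi^{-1}\varepsilon_l^{-w}\cdot(\text{appropriate power}))|_{G_F}$ is trivial-on-$c$-orbits in the sense needed, $\overline{\theta}$ trivial, and such that $\sigma := \mathrm{Ind}_{G_F}^{G_{F^+}}(r\theta)$ is Hodge-Tate regular at all $l$-adic places (such $\theta$ exists after possibly one more solvable base change, exactly as in Lemma \ref{induction conjugate}(2)); here the key point is that one twists inside the induction to fix the Hodge-Tate regularity failure of $\mathrm{Ind}_{G_F}^{G_{F^+}}r$ itself. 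Then $\sigma$ is an essentially self-dual representation of $G_{F^+}$ (hence, being a representation of a totally real field, essentially conjugate self-dual in the trivial way $G_{(F^+)}= G_{(F^+)^+}$), with $\overline{\sigma}|_{G_{F^+(\zeta_l)}}$ absolutely irreducible since $F \nsubseteq F^+(\zeta_l)$ and $\overline{r\theta}|_{G_{F(\zeta_l)}} \ncong (\overline{r\theta})^c|_{G_{F(\zeta_l)}}$ (which follows from condition 3 together with the twist; if they were isomorphic one derives a contradiction with $\overline{r}|_{G_{F(\zeta_l)}}$ being absolutely irreducible and the polarization). By Lemma \ref{induction conjugate}, after a further finite totally real solvable base change $L_0/\mathbb{Q}$ disjoint from $F^{(a)}$, at each $l$-adic place of $E_0 := L_0 F^+$ we get $\sigma|_{G_{(E_0)_v}} = \sigma^c|_{G_{(E_0)_v}} \sim \sigma^{\vee}\varepsilon_l^{-w}|_{G_{(E_0)_v}}$, and by potential diagonalizability of $r$ together with Lemma \ref{p=l} each $\sigma|_{G_{(E_0)_v}}$ is potentially diagonalizable; at the finite places away from $l$ the monodromy types of $\mathrm{WD}(\sigma|_{G_{(E_0)_v}})$ and $\mathrm{WD}(\sigma^{\vee}\varepsilon_l^{-w}|_{G_{(E_0)_v}})$ agree. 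Now I apply Proposition \ref{potential conjugate} (with $2n$ in place of $n$, and $d=n$ or an appropriate bound; the adequacy hypothesis $l \ge 2(n+1)$ together with Proposition \ref{adequate} supplies what is needed) to produce a finite CM Galois extension $L_1/\mathbb{Q}$ disjoint from everything and a polarizable cohomological cuspidal automorphic representation $\Pi$ of $\mathrm{GL}_{2n}(\mathbb{A}_{E})$, $E := L_1 E_0$ taken compatibly CM, with $r_{\iota}(\Pi) \cong \sigma|_{G_E}$, together with the local-global compatibility at all $v \nmid l$ for $\Pi$ (from Theorem \ref{polarizable local-global compatibility}, since $r_{\iota}(\Pi)|_{G_{E_v}}$ is pure) and unramifiedness of $\Pi_v$ at $v \mid l$ (from the paraholic-level bookkeeping of Theorem \ref{automorphy lifting theorem in crystalline cases}, via Proposition \ref{automorphy lifting and local-global compatibility}). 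Finally, set $F' := FE$; then $\sigma|_{G_{F'^+}}$ remains essentially self-dual and $\mathrm{Ind}_{G_{F'}}^{G_{F'^+}}(r\theta|_{G_{F'}}) \cong \sigma|_{G_{F'^+}}$ is irreducible (after another solvable base change if necessary, arranged so that $\overline{r}|_{G_{F'}}$ and its $c$-twist remain non-isomorphic), so Lemma \ref{local global induction 2} produces a cohomological cuspidal $\pi_1$ of $\mathrm{GL}_n(\mathbb{A}_{F'^+E})$-wait, more precisely $\pi_1$ of $\mathrm{GL}_n(\mathbb{A}_{F'})$ with $r_{\iota}(\pi_1) \cong (r\theta)|_{G_{F'}}$ and local-global compatibility at all $u \nmid l$; untwisting by $\theta|_{G_{F'}}$ and using that $\theta$ is de Rham at $l$-adic places with known Hodge-Tate weights gives the desired $\pi := \pi_1 \otimes (\iota\theta^{-1}|_{G_{F'}}$ as Hecke character$)$ with $r_{\iota}(\pi) \cong r|_{G_{F'}}$, local-global compatibility at $v \nmid l$, and $\pi_v$ unramified at $v \mid l$ (the last by Theorem \ref{automorphy lifting theorem in crystalline cases}, condition that $r|_{G_{F'_v}}$ is crystalline unramified after the base change).

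The main obstacle I anticipate is twofold and lies in the bookkeeping of step two. First, ensuring $\overline{\sigma}|_{G_{F^+(\zeta_l)}}$ is absolutely irreducible requires that $\overline{r\theta}|_{G_{F(\zeta_l)}} \ncong \overline{(r\theta)^c}|_{G_{F(\zeta_l)}}$; one must check this is not destroyed by the choice of $\theta$ (whose reduction $\overline\theta$ is trivial), and if $\overline r$ happened to satisfy $\overline r \cong \overline r^c \overline\chi|_{G_F}$ one must argue, using the strong decomposed genericity hypothesis (condition 2, as opposed to mere decomposed genericity) to separate eigenvalue ratios across complex conjugation, that this does not force an isomorphism after restriction to $G_{F(\zeta_l)}$ — this is precisely why the "strongly decomposed generic" notion of Definition \ref{strongly decomposed generic} was introduced. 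Second, Proposition \ref{potential conjugate} has hypothesis (3) requiring $F \nsubseteq F^+(\zeta_l)$ for the CM field appearing there, which here is $E$ (a CM extension of $F$); Lemma \ref{disjoint CM} is exactly what guarantees this is preserved, provided the solvable extensions built along the way are taken linearly disjoint from $F(\zeta_l)$, so care is needed that every base change in steps one through three respects this disjointness. Everything else — the descent via Lemma \ref{local global induction 2} and Lemma \ref{purity tensor}, the untwisting by the character $\theta$, and the verification of cohomologicality and weights via Proposition \ref{automorphic induction} — is routine given the results already established in the paper.
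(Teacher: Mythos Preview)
Your overall strategy matches the paper's: induce $r\theta$ to $G_{F^+}$, apply the polarizable potential automorphy machinery (Proposition \ref{potential conjugate}) in dimension $2n$, then descend via Lemma \ref{local global induction 2}. However, there are two genuine gaps.

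First, your handling of the irreducibility of $\overline{\sigma}|_{G_{F^+(\zeta_l)}}$ is incorrect. You need $\overline{r\theta}|_{G_{F(\zeta_l)}} \ncong (\overline{r\theta})^c|_{G_{F(\zeta_l)}}$; since $\overline{\theta}$ is trivial this reduces to $\overline{r}|_{G_{F(\zeta_l)}} \ncong \overline{r}^c|_{G_{F(\zeta_l)}}$, which is \emph{not} implied by conditions 2, 3, or 6 (for instance $\overline{r}$ could descend to $G_{F^+}$). The paper fixes this by a quadratic twist: choose a place $u \nmid 2pl$ split over $F^+$ with $\overline{r}$ unramified at $u, u^c$, and use Lemma \ref{quadratic extension} to build a quadratic extension $F'/F$ ramified at $u$ but unramified at $u^c$; twisting $r$ by $\delta_{F'/F}$ preserves all hypotheses (including strong decomposed genericity at the fixed prime $p$, since $p$ splits in $F'$) but forces $\overline{r}|_{I_{F_u}} \neq \overline{r}^c|_{I_{F_u}}$. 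Your claim that strong decomposed genericity itself resolves this is wrong; that hypothesis is used for a different purpose, namely to ensure that a prime $p$ strongly decomposed generic for $\overline{r}$ becomes decomposed generic for $\mathrm{Ind}_{G_F}^{G_{F^+}}\overline{r}$ (the Frobenius eigenvalues of the induction at a place of $F^+$ are the union over the two places of $F$ above it, so one needs the cross-ratios $\alpha_{v,i}/\alpha_{v^c,j} \neq p$).

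Second, Proposition \ref{potential conjugate} does not directly give $r_\iota(\Pi) \cong \sigma|_{G_E}$ as you write. It produces a lift $r'$ of $\overline{\sigma}|_{G_{F_1}}$ with $r'|_{G_{F_{1,v}}} \sim \sigma|_{G_{F_{1,v}}}$ at every $v$, together with a polarizable cuspidal $\pi_1$ realizing $r'|_{G_{F_2}}$ (which satisfies full local-global compatibility by Theorem \ref{polarizable local-global compatibility}). To conclude automorphy of $\sigma|_{G_{F_2}}$ itself, the paper then applies the crystalline automorphy lifting Theorem \ref{automorphy lifting theorem in crystalline cases} with $\pi_1$ as the input; this is where the paraholic bookkeeping (the ``new method'') enters to propagate local-global compatibility to the resulting $\Pi$. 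You also omit the construction of the required \emph{symmetric} pairing $\overline{\sigma} \times \overline{\sigma}^c \to \overline{\chi}$ demanded by hypothesis (5) of Proposition \ref{potential conjugate}; the paper builds this explicitly from the pairing on $\overline{r}$, replacing $\chi$ by $\delta_{F/F^+}\chi$ if the latter is alternating.
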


\begin{rem}

If we assume that $F^{(a)}$ contains the Galois closure of $\overline{F}^{\mathrm{Ker}(\overline{r})}(\zeta_l)$ over $\mathbb{Q}$, $\overline{r_{\iota}(\pi)}$ is absolutely irreducible and decomposed generic by 2 of Lemma \ref{decomposed genericity density}. Therefore, the weight of $\pi$ is determined by the Hodge-Tate weight of $r$ by Corollary \ref{Caraiani-Newton 2}.

\end{rem}

\begin{proof} We may assume that $F$ is an imaginary CM field, all $l$-adic places of $F^+$ split in $F$ by Corollary \ref{imaginary quadratic 2} and Lemma \ref{disjoint CM} and $F^{(a)}$ contains the Galois closure of $\overline{F}^{\mathrm{Ker}(\overline{r})}(\zeta_l)$ over $\mathbb{Q}$.

We can take an algebraic $l$-adic character $\chi : G_{F^+} \rightarrow \mathcal{O}_{\overline{\mathbb{Q}}_l}^{\times}$ such that $\chi \mod \mathfrak{m}_{\mathcal{O}_{\overline{\mathbb{Q}}_l}} = \overline{\chi}$ and $\mathrm{HT}_{\tau}(\chi) = \{ w \}$ for all $\tau \in \mathrm{Hom}(F^+, \overline{\mathbb{Q}}_l)$. By Lemma \ref{decomposed genericity density}, there exists an odd prime $p \neq l$ such that $p$ is strongly decomposed generic for $\overline{r}$ and $r$ and $\chi$ are unramified above $p$. 

There exists a finite place $u$ of $F$ such that $u$ splits over $F^+$, $r$ is unramifed at $u$, $u^c$ and $u$ doesn't divide $2pl$. By Lemma \ref{quadratic extension}, we can construct a quadratic extension $F'$ of $F$ such that $F'/F$ splits at all $v|p$, $F'/F$ ramifies at $u$ and $F'/F$ is unramified at $u^c$. After twisting $r$ by the corresponding quadratic character, we may assume $\overline{r}|_{G_{F(\zeta_l)}} \ncong \overline{r}^c|_{G_{F(\zeta_l)}}$. Note that we still have $\overline{r} \cong \overline{r}^{\vee} \overline{\chi}|_{G_F}$ and $p$ is strongly decomposed generic for $\overline{r}$. Therefore, (Ind$_{G_{F}}^{G_{F^+}}\overline{r})|_{G_{F^+(\zeta_l)}} = \mathrm{Ind}^{G_{F^+(\zeta_l)}}_{G_{F(\zeta_l)}}(\overline{r}|_{G_{F(\zeta_l)}})$ is absolutely irreducible. (We use $F \nsubseteq F^+(\zeta_l)$.) Since $p$ is strongly decomposed generic for $\overline{r}$, $p$ is decomposed generic for Ind$_{G_{F}}^{G_{F^+}}\overline{r}$.

By Proposition \ref{character}, we can take an algebraic $l$-adic character $\theta: G_{F} \rightarrow \mathcal{O}_{\overline{\mathbb{Q}}_l}^{\times}$ such that $\theta \theta^c$ and $\overline{\theta}$ are trivial, $\sigma:= \mathrm{Ind}^{G_{F^+}}_{G_{F}} (r\theta)$ is Hodge-Tate regular and $\theta$ is unramified above $p$. Note that $p$ is decomposed generic for $\overline{\sigma}$ and $\overline{\sigma}|_{G_{F^+(\zeta_l)}}$ is absolutely irreducible.

By the assumption 6, there exists a $G_F$-equivalent perfect pairing $( \ , \ ): \overline{r} \times \overline{r} \rightarrow \overline{\chi}|_{G_F}$. By the irreducibility of $\overline{r}$, $( \ , \ )$ is symmetric or alternating. After replacing $\chi$ by $\delta_{F^+/F}\chi$ if necessary, we may assume that $\chi$ is totally even (resp. totally odd) if $( \ , \ )$ is symmetric (resp. alternating). We fix a complex conjugation $c \in G_{F^+}$ and define the perfect $G_{F^+}$-equivalent symmetric pairing $\langle \ , \ \rangle : \overline{\sigma} \times \overline{\sigma}^c \rightarrow \chi$ as follows. ($e$ denotes the unit element of $G_{F^+}$.)

$\langle f, g \rangle := (f(e), g(c)) + (f(c), g(e))$ if ( \ , \ ) is symmetric.

$\langle f, g \rangle := (f(e), g(c)) - (f(c), g(e))$ if ( \ , \ ) is alternating.

We take a finite set $S$ of primes such that $S$ contains $p, l$ and all primes lying below ramified places of $\sigma$ and $\chi$, and satisfies $S = S^c$. By Corollary \ref{imaginary quadratic 2}, we can take an imaginary quadratic field $E_1$ linearly disjoint from $F^{(a)}\overline{F^+}^{\mathrm{Ker}(\overline{\sigma})}$ over $\mathbb{Q}$ such that all $q \in S$ split in $E_1$.

By 2 of Lemma \ref{p=l}, Lemma \ref{induction conjugate} and Corollary \ref{extension of Q}, there exists a finite solvable totally real extension $M/\mathbb{Q}$ satisfying the following conditions. (We put $F_1:=ME_1F^+$ and we write $(S \setminus \{l, p\})_{F_1}$ for the set of all finite places of $F_1$ lying above primes contained in $S \setminus \{ p, l \}$.)

(1) \ $M$ is linearly disjoint from $E_1F^{(a)}\overline{F^+}^{\mathrm{Ker}(\overline{\sigma})}$ over $\mathbb{Q}$.

(2) \ $p$ splits completely in $M$.

(3) \ For any $v|l$, $(\sigma|_{G_{F_{1, v}}} = ) \sigma^c|_{G_{F_{1,v}}} \sim (\chi \sigma^{\vee})|_{G_{F_{1, v}}}$.

(4) \ For any $v \in (S \setminus \{l, p\})_{F_1}$, $\sigma|_{G_{F_{1,v}}}$ is unipotently ramified. 

This implies $\overline{\sigma}|_{G_{F_1(\zeta_l)}}$ is absolutely irreducible and $p$ is decomposed generic for $\overline{\sigma}|_{G_{F_1}}$. Note that for any $v \nmid l$, $\mathrm{WD}(\sigma|_{G_{F_{1,v}}})$ and $\mathrm{WD}(\sigma|_{G_{F_{1, v^c}}})$ have the same monodromy type since $\sigma|_{G_{F_1}} = \sigma^c|_{G_{F_1}}$. Note also that all Sylow pro-$l$-subgroups of $G_{F_1(\zeta_l)}$ are contained in $G_{F_1F(\zeta_l)}$ and $l \ge 2(n+1)$ (this implies that $\overline{\sigma}(G_{F_1(\zeta_l)})$ is adequate by Proposition \ref{adequate}) and $F_1^+(\zeta_l) \subsetneq F_1(\zeta_l)$ by Lemma \ref{disjoint CM}. 

By Proposition \ref{potential conjugate}, there exist a finite CM Galois extension $L$ of $\mathbb{Q}$ and a polarizable cohomological cuspidal automorphic representation $\pi_1$ of $\mathrm{GL}_{2n}(\mathbb{A}_{F_2})$ satisfying the following conditions. (We put $F_2:=LF_1$.)

    $(a)$ $L$ is linearly disjoint from the Galois closure of $F^
    {(a)}\overline{F_1}^{\mathrm{Ker}(\overline{\sigma}|_{G_{F_1}})}$ over $\mathbb{Q}$.
    
    $(b)$ $\overline{r_{\iota}(\pi_1)} \cong \overline{\sigma}|_{G_{F_2}}$.
    
    $(c)$ $r_{\iota}(\pi_1)^c \cong \chi|_{G_{{F_2}}}r_{\iota}(\pi_1)^{\vee}$.

    $(d)$ For all $v \notin S_{F_2}$, $\pi_{1, v}$ is unramified.
    
    $(e)$ For all $v | l$, $\pi_{1, v}$ is unramified. 
    
$(f)$ For all $v|l$, $r_{\iota}(\pi_1)|_{G_{F_{2, v}}} \sim \sigma|_{G_{F_{2, v}}}$.
    
$(g)$ For $v \in S_{F_2} \setminus \{ w|l \}$, $\mathrm{WD}(r_{\iota}(\pi_1)|_{G_{F_{2, v}}})$ and $\mathrm{WD}(\sigma|_{G_{F_{2,v}}})$ have the same monodromy type.

Note that $\overline{\sigma}|_{G_{F_2}}$ is decomposed generic by Lemma \ref{decomposed genericity density}, $\overline{\sigma}|_{G_{F_2(\zeta_l)}}$ is absolutely irreducible and $\overline{\sigma}(G_{F_2(\zeta_l)})$ is adequate. Note also that $\mathrm{WD}(r_{\iota}(\pi_1)|_{G_{F_{2, v}}})^{F-ss} \cong \mathrm{rec}_{F_{2,v}}(\pi_1 |\mathrm{det}|_v^{\frac{1-2n}{2}})$ for any finite place $v$ of $F_2$ by Theorem \ref{polarizable local-global compatibility}.

By Theorem \ref{automorphy lifting theorem in crystalline cases}, there exists a cohomological cuspidal automorphic representation $\Pi$ of $\mathrm{GL}_{2n}(\mathbb{A}_{F_2})$ such that $r_{\iota}(\Pi) \cong \sigma|_{G_{F_2}}$, $\iota \mathrm{WD}(r_{\iota}(\Pi)|_{G_{F_{2, v}}})^{F-ss} \cong \mathrm{rec}_{F_{2,v}}(\Pi_{v}|\mathrm{det}|_v^{\frac{1-2n}{2}})$ for all $v \nmid l$ and $\Pi_v$ is unramified for all $v|l$. We put $F':=FF_2$. By Lemma \ref{local global induction 2}, there exists a cuspidal automorphic representation $\pi$ of $\mathrm{GL}_n(\mathbb{A}_{F'})$ such that $r|_{G_{F'}} \cong r_{\iota}(\pi)$ and $\iota \mathrm{WD}(r_{\iota}(\pi)|_{G_{F'_{v}}})^{F-ss} \cong \mathrm{rec}_{F'_{v}}(\pi_{v}|\mathrm{det}|_v^{\frac{1-n}{2}})$ for all $v \nmid l$ and $\pi_{v}$ is unramified for all $v|l$.  \end{proof}

\subsection{Potential automorphy and local-global compatibility in ordinary cases}

\begin{prop}\label{ordinary potential autormorphy}

Let $l$ be a prime such that $l \ge 2(n+1)$, $\iota: \overline{\mathbb{Q}}_l \stackrel{\sim}{\longrightarrow} \mathbb{C}$ be an isomorphism of fields, $F^{(a)}$ be a finite extension of $\mathbb{Q}$ and $r: G_F \rightarrow \mathrm{GL}_n(\overline{\mathbb{Q}}_l)$ be an algebraic $l$-adic representation.

We suppose the following conditions.

1 \ $\overline{r}|_{G_{F(\zeta_l)}}$ is absolutely irreducible. 
    
2 \ $\overline{r}$ is decomposed generic. 

3 \ $\zeta_l \notin F$.
    
4 \ For all $v|l$, $r|_{G_{F_v}}$ is ordinary.

Then there exist a finite CM Galois extension $L$ of $\mathbb{Q}$ linearly disjoint from $F^{(a)}$ over $\mathbb{Q}$ and an $\iota$-ordinary cohomological cuspidal automorphic representation $\pi$ of $\mathrm{GL}_n(\mathbb{A}_{F'})$ such that $r|_{G_{F'}} \cong r_{\iota}(\pi)$. (We put $F':=FL$.)

\end{prop}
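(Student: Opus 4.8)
The plan is to deduce this ordinary potential automorphy statement from the ordinary residual potential automorphy theorem of \cite{opta} (cited earlier as Theorem \ref{ordinary residual potential automorphy}) together with the ordinary automorphy lifting theorem, Theorem \ref{ordinary automorphy lifting}, proved in the previous section. First I would reduce to a convenient situation: replacing $F$ by a solvable CM extension is harmless since by Proposition \ref{base change} and Proposition \ref{ordinary base change} an $\iota$-ordinary cohomological cuspidal automorphic representation of $\mathrm{GL}_n(\mathbb{A}_{F''})$ with the right Galois representation descends (up to base change) to one over $F$ whenever $F''/F$ is solvable CM, and the irreducibility and decomposed genericity hypotheses on $\overline{r}$ are preserved (or can be arranged) by Lemma \ref{decomposed genericity density}. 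Using Corollary \ref{imaginary quadratic 2} I may assume $F$ contains an imaginary quadratic field in which $l$ splits, keeping linear disjointness from $F^{(a)}$, and I may enlarge $F^{(a)}$ to contain the Galois closure of $\overline{F}^{\mathrm{Ker}(\overline{r})}(\zeta_l)$ over $\mathbb{Q}$.

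The next step is to apply Theorem \ref{ordinary residual potential automorphy}: there is a finite CM extension $F_1/F$, linearly disjoint from $F^{(a)}$ over $F$, and an $\iota$-ordinary cohomological cuspidal automorphic representation $\pi_0$ of $\mathrm{GL}_n(\mathbb{A}_{F_1})$ with $\overline{r_{\iota}(\pi_0)} \cong \overline{r}|_{G_{F_1}}$, where moreover $r_{\iota}(\pi_0)$ is (a direct summand of) the cohomology of a suitable variety, so that in particular its Weil--Deligne representations are pure and $\pi_0$ satisfies the local-global compatibility at all finite $v\nmid l$ by Proposition \ref{purity local-global}. Now I want to compare $r|_{G_{F_1}}$ with $r_{\iota}(\pi_0)$ via the ordinary automorphy lifting theorem. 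The hypotheses of Theorem \ref{ordinary automorphy lifting} that must be checked are: $\overline{r}|_{G_{F_1}}$ absolutely irreducible (from condition 1, after base change by Lemma \ref{decomposed genericity density}), $\overline{r}(G_{F_1(\zeta_l)})$ adequate (follows from $l\ge 2(n+1)$ and absolute irreducibility by Proposition \ref{adequate}), $\zeta_l\notin F_1$ (arrange by keeping the extension suitably disjoint, using condition 3), decomposed genericity of $\overline{r}|_{G_{F_1}}$, ordinarity of $r|_{G_{F_1,v}}$ for $v\mid l$ (from condition 4 — ordinarity is preserved under restriction), and the matching conditions (1)--(3) of hypothesis 5 of that theorem with $\pi = \pi_0$. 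Condition (1) is $\overline{r}|_{G_{F_1}}\cong \overline{r_{\iota}(\pi_0)}$, which holds by construction; for condition (2), I take the set $U$ to be \emph{empty} (or a single auxiliary place where both are unramified), so conditions (2) and (3) on monodromy types and local-global compatibility over $U$ are vacuous or trivially satisfied.

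Applying Theorem \ref{ordinary automorphy lifting} with $U=\emptyset$ then yields an $\iota$-ordinary cohomological cuspidal automorphic representation $\pi$ of $\mathrm{GL}_n(\mathbb{A}_{F_1})$ with $r|_{G_{F_1}} \cong r_{\iota}(\pi)$. Setting $F' := F_1$, rewriting $F_1 = LF$ for an appropriate finite CM Galois extension $L/\mathbb{Q}$ linearly disjoint from $F^{(a)}$ over $\mathbb{Q}$ (this Galois closure step is where one invokes Corollary \ref{extension of Q} or Proposition \ref{extension of field} to arrange $F_1/F$ — hence $L/\mathbb{Q}$ — Galois and disjoint), gives the conclusion. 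The main obstacle I expect is purely bookkeeping: arranging all the simultaneous conditions — $F'$ Galois over $\mathbb{Q}$ and CM, linearly disjoint from $F^{(a)}$, with $l$ splitting in an imaginary quadratic subfield, $\zeta_l\notin F'$, $\overline{r}|_{G_{F'}}$ still absolutely irreducible and decomposed generic — in a single solvable extension; these are all standard manipulations available from Corollary \ref{extension of Q}, Corollary \ref{imaginary quadratic 2}, Lemma \ref{decomposed genericity density}, and Lemma \ref{disjoint CM}, but care is needed so that Theorem \ref{ordinary residual potential automorphy} and Theorem \ref{ordinary automorphy lifting} are each applicable over the chosen field. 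No new automorphy input beyond those two theorems is required, since $U$ can be taken empty.
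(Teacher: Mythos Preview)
Your approach is essentially the paper's: obtain residual potential automorphy from \cite{opta} and then apply Theorem \ref{ordinary automorphy lifting} with $U=\emptyset$. One correction is needed, however: you must cite \cite[proof of Theorem 1.1]{opta} directly rather than Theorem \ref{ordinary residual potential automorphy}. The latter is not merely a citation of \cite{opta} but a refinement proved in this paper, and its proof invokes Proposition \ref{ordinary potential autormorphy} itself, so using it here would be circular. Fortunately the extra conclusions of Theorem \ref{ordinary residual potential automorphy} (purity, local-global compatibility at all places, unramifiedness above $S$) are irrelevant once you take $U=\emptyset$, so the weaker input from \cite{opta} suffices. Your preliminary reductions (solvable base change of $F$, adjoining an imaginary quadratic field in which $l$ splits) are also unnecessary: \cite{opta} already produces $L$ CM Galois over $\mathbb{Q}$ and linearly disjoint from $F^{(a)}$, and the remaining field-theoretic hypotheses of Theorem \ref{ordinary automorphy lifting} ($\zeta_l\notin F'$, decomposed genericity, adequacy) follow directly from the linear disjointness of $L$ from $F^{(a)}\supset \overline{F}^{\mathrm{Ker}(\overline{r})}(\zeta_l)$ together with Proposition \ref{adequate} and Lemma \ref{decomposed genericity density}, exactly as in the paper's short argument.
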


\begin{proof}

We may assume $F^{(a)}$ contains the Galois closure of $\overline{F}^{\mathrm{Ker}(\overline{r})}(\zeta_l)$ over $\mathbb{Q}$. By \cite[proof of Theorem 1.1]{opta}, there exist a finite CM Galois extensions $L$ of $\mathbb{Q}$ linearly disjoint from $F^{(a)}$ over $\mathbb{Q}$ and an $\iota$-ordinary cohomological cuspidal automorphic representation $\Pi$ of $\mathrm{GL}_n(\mathbb{A}_{F'})$ such that $\overline{r_{\iota}(\Pi)} \cong \overline{r}|_{G_{F'}}$. (We put $F':=FL$. ) Since $F'(\zeta_l)$ is linearly disjoint from $\overline{F}^{\mathrm{Ker}(\overline{r})}(\zeta_l)$ over $F(\zeta_l)$, $\overline{r}|_{G_{F'(\zeta_l)}}$ is absolutely irreducible by the condition 1. Thus $\overline{r}(G_{F'(\zeta_l)})$ is adequate by Proposition \ref{adequate}. Since $F'$ is linearly disjoint from $F(\zeta_l)$ over $F$, we have $\zeta_l \notin F'$ by the condition 3. $\overline{r}|_{G_{F'}}$ is decomposed generic by Lemma \ref{decomposed genericity density} and the condition 2. Therefore, by Theorem \ref{ordinary automorphy lifting} in the case that $U$ is empty, there exists a cohomological cuspidal automorphic representation $\pi$ of $\mathrm{GL}_n(\mathbb{A}_{F'})$ such that $r_{\iota}(\pi) \cong r|_{G_{F'}}$. \end{proof}

For our applications, we need to modify the result \cite[Theorem 1.1]{opta} as the following.

\begin{thm}\label{ordinary residual potential automorphy}

Let $l$ be an odd prime such that $l \ge 2(n+1)$, $\overline{r}: G_F \rightarrow \mathrm{GL}_n(\overline{\mathbb{F}_l})$ be a continuous representation, $F^{(a)}$ be a finite extension of $\mathbb{Q}$, $\iota: \overline{\mathbb{Q}}_l \stackrel{\sim}{\rightarrow} \mathbb{C}$ be an isomorphism of fields and $T$ be a finite set of primes containing $l$.

We assume the following conditions.

1 \ $\overline{r}|_{G_{F(\zeta_l)}}$ is absolutely irreducible. 

2 \ $\overline{r}$ is decomposed generic. 

3 \ $\zeta_l \notin F$.

Then there exist a finite CM Galois extension $L$ of $\mathbb{Q}$ linearly disjoint from $F^{(a)}$ over $\mathbb{Q}$ and an $\iota$-ordinary cohomological cuspidal automorphic representation $\pi$ of $\mathrm{GL}_n(\mathbb{A}_{F'})$ satisfying the following conditions. (We put $F':=FL$.)
        
(1) \ $\overline{r_{\iota}(\pi)} \cong \overline{r}|_{G_{F'}}$.

(2) \ $\pi_v$ is unramified for all finite places $v$ of $F'$ lying above $T$.
        
(3) \ For all $v|l$, $r_{\iota}(\pi)|_{G_{F'_v}}$ is crystalline ordinary and $\mathrm{HT}_{\tau}(r_{\iota}(\pi)|_{G_{F_v'}}) = \{ n - 1, n - 2, \cdots, 0 \}$ for all $\tau \in \mathrm{Hom}_{\mathbb{Q}_l}(F'_v, \overline{\mathbb{Q}}_l)$. 

(4) \ For all finite places $v$ of $F'$, $\mathrm{WD}(r_{\iota}(\pi)|_{G_{F'_v}})$ is pure.
        
(5) \ For all finite places $v$ of $F'$, $\iota\mathrm{WD}(r_{\iota}(\pi)|_{G_{F'_v}})^{F-ss} \cong \mathrm{rec}_{F_v'}(\pi_v|\mathrm{det}|_v^{\frac{1-n}{2}})$.

\end{thm}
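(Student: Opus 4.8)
The plan is to deduce Theorem~\ref{ordinary residual potential automorphy} from the already-cited refinement of the Geraghty-type potential ordinary automorphy theorem \cite{opta} together with the local-global compatibility machinery of this section. First I would reduce to a convenient form of the target automorphic representation: by \cite[proof of Theorem 1.1]{opta}, after enlarging $F^{(a)}$ to contain the Galois closure of $\overline{F}^{\mathrm{Ker}(\overline{r})}(\zeta_l)$ over $\mathbb{Q}$, there exist a finite CM Galois extension $L/\mathbb{Q}$ linearly disjoint from $F^{(a)}$ over $\mathbb{Q}$ and an $\iota$-ordinary cohomological cuspidal automorphic representation $\pi_0$ of $\mathrm{GL}_n(\mathbb{A}_{F'_0})$ (where $F'_0 := FL$) with $\overline{r_\iota(\pi_0)} \cong \overline{r}|_{G_{F'_0}}$, parallel weight $0$ (so $\mathrm{HT}_\tau = \{n-1,\dots,0\}$), and such that $r_\iota(\pi_0)$ occurs as a direct summand of the étale cohomology of a smooth hypersurface in projective space over $F'_0$; by Remark~\ref{Scholze}$(b)$ and Proposition~\ref{purity local-global} this gives properties (3), (4), (5) at \emph{all} finite places simultaneously. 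The one thing \cite{opta} does not directly give is the unramifiedness condition (2) at places above $S$.

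To arrange (2), the idea is to apply \cite{opta} not over $F$ but after first passing to a solvable CM extension in which all primes of $S$ are split into a ``nice'' local shape, and more importantly to choose the auxiliary Hida family / Dwork-type hypersurface so that the level at those places is hyperspecial. Concretely: using Corollary~\ref{extension of Q} and Corollary~\ref{imaginary quadratic 2} I would first replace $F$ by a solvable CM extension $F_1$, linearly disjoint from $F^{(a)}$ over $F$, in which every prime of $S$ splits completely and $\overline{r}|_{G_{F_1(\zeta_l)}}$ remains absolutely irreducible and decomposed generic (Lemma~\ref{decomposed genericity density}) and $\zeta_l \notin F_1$ (Lemma~\ref{disjoint CM}). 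The construction in \cite{opta} produces the automorphic $\pi$ by a composite of two automorphy lifting steps applied to a tensor product $\overline{r} \otimes \overline{s}$ with $\overline{s}$ induced from a character, where one sets the level to be hyperspecial wherever the ambient representations are unramified; since over $F_1$ one can additionally demand that the auxiliary character (and hence $\overline s$) be unramified above $S$, and since $\overline r$ is already unramified above $S$ (we may assume $S$ avoids the ramified primes of $\overline r$, enlarging $S$ by primes below them only forces extra unramifiedness which is harmless), the level at places above $S$ can be taken hyperspecial throughout. This yields $\pi$ with $\pi_v$ unramified for $v \mid S$, i.e.\ (2).

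Then I would descend: the representation $r_\iota(\pi)$ constructed over $FL$ (with $L$ now the compositum coming from $F_1$ and the CM field produced by \cite{opta}) has all the required properties (1)--(5) over $F' := FL$, using Theorem~\ref{ordinary semisimple} or Theorem~\ref{Lambert 2} to pin down the Hodge--Tate weights as $\{n-1,\dots,0\}$ in (3) — here I use that the weight of $\pi$ is forced by the Hodge--Tate weights of $r_\iota(\pi)$ via Corollary~\ref{Caraiani-Newton 2}, and these are $0$ since we built $\pi$ with parallel weight $0$. For (4) and (5): the geometric origin of $r_\iota(\pi)$ (direct summand of cohomology of a hypersurface in $\mathbb{P}^N_{F'}$) gives purity of $\mathrm{WD}(r_\iota(\pi)|_{G_{F'_v}})$ at every finite place by Remark~\ref{Scholze}$(b)$, and then Proposition~\ref{purity local-global}.2 converts purity into the local-global compatibility $\iota\mathrm{WD}(r_\iota(\pi)|_{G_{F'_v}})^{F\text{-}ss} \cong \mathrm{rec}_{F'_v}(\pi_v|\mathrm{det}|_v^{\frac{1-n}{2}})$; the hypothesis of that proposition (all Frobenius eigenvalues are Weil numbers) holds by Remark~\ref{Scholze}$(a)$. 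Finally, to conclude I just need $r_\iota(\pi)|_{G_{F'}}$ rather than $r_\iota(\pi_0)|_{G_{F'}}$ — but $\overline{r_\iota(\pi)} \cong \overline r|_{G_{F'}}$ is exactly (1), so no further identification is needed.

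The main obstacle I expect is the bookkeeping in the second paragraph: showing that the potential automorphy construction of \cite{opta} can be run with hyperspecial level at the chosen set $S$ of places. This requires unwinding that their argument uses an automorphy lifting theorem (a precursor of Theorem~\ref{automorphy lifting theorem in crystalline cases} / Theorem~\ref{ordinary automorphy lifting}) in which the level at $v \nmid l$ is determined by the local behaviour of the test representation, together with a Moret-Bailly / Dwork-family argument to produce the starting automorphic point; one must check that after passing to $F_1$ all the relevant local representations at $v \mid S$ (both $\overline r$ and the induced auxiliary piece) are genuinely unramified, so that $\mathrm{GL}_n(\mathcal{O}_{F'_v})$ is an allowed level, and that this choice is compatible with the patching. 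If one prefers to avoid re-examining \cite{opta}, an alternative is: obtain $\pi_0$ from \cite{opta} without controlling the level, then over a further solvable CM extension use the ``new method'' automorphy lifting theorems of Section~4 (Theorem~\ref{automorphy lifting theorem in crystalline cases}, applied with $r = r_\iota(\pi_0)$ and the test automorphic representation $\pi_0$ itself, but with level set hyperspecial at $S$) to produce a \emph{new} cuspidal $\pi$ with $r_\iota(\pi) \cong r_\iota(\pi_0)$ and $\pi_v$ unramified for $v \mid S$; this works because $r_\iota(\pi_0)$ is unramified above $S$ by construction, so conditions 5(2),5(3),5(4) of Theorem~\ref{automorphy lifting theorem in crystalline cases} are satisfied there. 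Either route is routine given the results already assembled; the only real content is the verification that unramifiedness at $S$ can be imposed, which is where I would spend most of the write-up.
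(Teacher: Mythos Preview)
Your overall plan matches the paper's, but there are two concrete gaps.

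First, your treatment of $l$-adic places is incomplete. You claim Remark~\ref{Scholze}$(b)$ and Proposition~\ref{purity local-global} give (4) and (5) at \emph{all} finite places, but both are stated and valid only for $v\nmid l$. At $v\mid l$ the paper needs a separate input: once the Dwork hypersurface is arranged to have good reduction at $l$, crystalline purity of $\mathrm{WD}(r_\iota(\pi)|_{G_{F'_v}})$ comes from \cite[Theorem~1]{cp}, and then the semisimplified local-global compatibility from Theorem~\ref{ordinary semisimple} upgrades (via the purity argument of Proposition~\ref{purity local-global}) to the full identity (5), from which unramifiedness of $\pi_v$ follows.

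Second, your ``alternative'' route does not work. You write that $r_\iota(\pi_0)$ is unramified above $S$ ``by construction'', but this is precisely what \cite{opta} does \emph{not} give without modification: absent control on the reduction of the hypersurface at $v\mid S$, the representation $r_\iota(\pi_0)|_{G_{F'_{0,v}}}$ can be pure but ramified with nonzero monodromy, and that monodromy persists under base change. Feeding $r = r_\iota(\pi_0)$ and test $\pi_0$ into an automorphy lifting theorem then just returns $\pi_0$ by strong multiplicity one; it cannot manufacture a less ramified $\Pi$.

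The paper's actual mechanism for (2) is the one you gesture at in your first approach, but executed on the Galois side rather than via level structures: one modifies the Moret-Bailly step in \cite{opta} so that the Dwork-type hypersurface $Y$ has good reduction at every place above $S$. Concretely, the paper replaces $V_\lambda$ by the scheme $\mathcal{W}_\lambda$ of \cite{pw}, adds the primes of $S$ to the set $S_3$ in \cite[Proposition~4.2]{opta}, imposes good-reduction open conditions $\Omega_{p,0}$ at each $p\in S\setminus\{l\}$, and at $l$ replaces $\Omega_{l,0}$ by the preimage of $\prod_\tau U(v_\tau,\lambda)(\overline{\mathbb{F}_l})$ coming from \cite[Proposition~4.2.6]{pw}. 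This produces $s$ (up to a finite-order twist $\chi$, which you also omit and which must be trivialised over a further solvable extension) unramified at $v\mid p\in S\setminus\{l\}$ and crystalline ordinary at $v\mid l$; once one has the automorphic $\pi$ with $r_\iota(\pi)\cong s|_{G_{F'}}$ and local-global compatibility from purity, unramifiedness of $\pi_v$ at $v\mid S$ is a consequence, not an input. So the ``bookkeeping'' you anticipate is in fact the substance of the proof.
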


\begin{proof} We may assume that $F^{(a)}$ contains the Galois closure of $\overline{F}^{\mathrm{Ker}(\overline{r})}(\zeta_l)$ over $\mathbb{Q}$. We only explain the case $n > 2$. In the case $n = 2$, we can prove the theorem similarly. (Note that \cite[Proposition 4.2.6]{pw} also holds for $U_{\lambda}$ by the proof. See \cite[p53]{Calabi} for the definition of $U_{\lambda}$.)

By \cite[proof of Theorem 1.1]{opta} and \cite[Proposition 4.2.6]{pw}, there exist finite CM Galois extensions $M$ of $\mathbb{Q}$ linearly disjoint from $F^{(a)}$ over $\mathbb{Q}$, an algebraic $l$-adic representation $s : G_{F_1} \rightarrow \mathrm{GL}_n(\overline{\mathbb{Q}}_l)$ and a finite order character $\chi : G_{F_1} \rightarrow \overline{\mathbb{Q}}_l^{\times}$ satisfying the following conditions. (We put $F_1:=FM$. In \cite[proof of Theorem 1.1]{opta}, we don't need to consider the prime $l'$ and the elliptic curve over $\mathbb{Q}$ and we use $\mathcal{W}_{\lambda}$ instead of $V_{\lambda}$. See \cite[below of Lemma 4.2.4]{pw} for the definition of $\mathcal{W}_{\lambda}$ and \cite[below of Lemma 3.1]{opta} for the definition of $V_{\lambda}$. In \cite[Proposition 4.2]{opta}, we add all primes in $T$ to $S_3$, define $\Omega_{p, 0}$ similarly as $\Omega_{l', 0}$ for any $p \in S_3 \setminus \{l\}$ and replace $\Omega_{l,0}$ by the inverse image of $\prod_{\tau \in \mathrm{Hom}(E, \overline{\mathbb{Q}}_l)} U(v_{\tau}, \lambda)(\overline{\mathbb{F}_l})$ via the reduction map $\prod_{\tau} (T_{0}(\mathcal{O}_{\overline{\mathbb{Q}}_l}) \rightarrow T_{0}(\overline{\mathbb{F}_l}))$, where $v_{\tau}$ denotes the valuation on $\mathbb{Q}(\zeta_N)$ induced by $\tau|_{\mathbb{Q}(\zeta_N)}$ and $U(v_{\tau}, \lambda)$ is a set satisfying the property of \cite[Proposition 4.2.6]{pw}.)

(a) \  $s\chi$ is a direct summand of $H^{N-2}_{\acute{e}t}(Y_{\overline{F_1}}, \overline{\mathbb{Q}}_l)$, where $Y$ is a smooth hypersurface in $\mathbb{P}^{N-1}_{F_1}$ for a sufficiently large $N$ such that $Y$ has good reduction at all finite places of $F_1$ lying above $T$.

(b) \ $\overline{s} \cong \overline{r}|_{G_{F_1}}$. 

(c) \ For any $v|l$, $(s \chi)|_{G_{F_{1,v}}}$ is crystalline ordinary.

(d) \ $\mathrm{HT}_{\tau}(s) = \{ n - 1, n - 2, \cdots, 0 \}$ for all $\tau \in \mathrm{Hom}(F_{1}, \overline{\mathbb{Q}}_l)$. (See \cite[Theorem 3.11]{pw}.)

Note that $\zeta_l \notin E$, $\overline{r}|_{G_{F_1}}$ is absolutely irreducible and decomposed generic by Lemma \ref{decomposed genericity density}. By Corollary \ref{extension of Q}, we may assume that $\chi|_{G_{F'}}$ is unramified at all finite places.

By Proposition \ref{ordinary potential autormorphy}, there exist a finite CM Galois extension $L$ of $\mathbb{Q}$ linearly disjoint from $F^{(a)}E$ over $\mathbb{Q}$ and an $\iota$-ordinary cohomological cuspidal automorphic representation $\pi$ of $\mathrm{GL}_n(\mathbb{A}_{F'})$ such that $r_{\iota}(\pi) \cong s|_{G_{F'}}$. (We put $F':=F_1L$.)

By Proposition \ref{purity local-global} and Remark \ref{Scholze}, we obtain the properties (4) and (5) at all non-$l$-adic places of $F'$. This implies the property (2) for $v \mid p \in T \setminus \{ l \}$ since $r_{\iota}(\pi)|_{G_{F_v}}$ is unramified. Moreover, for all $v|l$ of $F'$, $\mathrm{WD}(r_{\iota}(\pi)|_{G_{F'_v}})$ is unramified pure by \cite[Theorem 1]{cp} and $\iota\mathrm{WD}(r_{\iota}(\pi)|_{G_{F'_v}})^{ss} \cong \mathrm{rec}_{F'_v}(\pi_v|\mathrm{det}|_v^{\frac{1-n}{2}})^{ss}$ by Theorem \ref{ordinary semisimple}. Therefore, we obtain $\iota\mathrm{WD}(r_{\iota}(\pi)|_{G_{F'_v}})^{F-ss} \cong \mathrm{rec}_{F'_v}(\pi_v|\mathrm{det}|_v^{\frac{1-n}{2}})$ by the same argument as Proposition \ref{purity local-global} and consequently $\pi_v$ is unramified for all $v|l$. \end{proof}

We will use the following result in polarizable cases again.

\begin{prop}\label{conjugate self-dual ordinary}

    (1) Let $F$ be an imaginary CM field.

    (2) Let $d$ be a positive integer. 
    
(3) \ Let $l \ge 2(d + 1)$ be a prime such that $\zeta_l \notin F$.
    
    (4) Let $\mu: G_{F^+} \longrightarrow \mathcal{O}^{\times}_{\overline{\mathbb{Q}}_l}$ be an algebraic $l$-adic character such that $\mu(c_v) = \mu(c_w)$ for all $v, w|\infty$.
    
    (5) Let $\iota : \overline{\mathbb{Q}}_l \stackrel{\sim}{\rightarrow} \mathbb{C}$ be an isomorphism of fields and $\pi$ be a polarizable $\iota$-ordinary cohomological cuspidal automorphic representation of $\mathrm{GL}_n(\mathbb{A}_F)$ satisfying the following conditions.
    
    $\cdot$ $\overline{r_{\iota}(\pi)}|_{G_{F(\zeta_l)}}$ is absolutely irreducible.
    
    $\cdot$ $d$ is bigger than or equal to the maximal dimension of irreducible constituents of the restriction of $\overline{r_{\iota}(\pi)}$ to the closed subgroup of $G_{F(\zeta_l)}$ topologically generated by all Sylow pro-$l$-subgroups.
    
    $\cdot$ There exists a perfect symmetric $G_F$-equivalent pairing $\overline{r_{\iota}(\pi)} \times \overline{r_{\iota}(\pi)}^c \rightarrow \overline{\mu}|_{G_{F}}$. 

    (6) Let $S$ be a finite set of finite places of $F$ satisfying the following conditions.

    $\cdot$ $S = S^c$.
    
    $\cdot$ All $v \in S$ split over $F^+$.
    
    $\cdot$ $S$ contains all $l$-adic places.
    
    $\cdot$ $\pi$ and $\mu$ are unramified outside $S$.

(7) \ For any $v \in S$, let $\rho_v : G_{F_v} \rightarrow \mathrm{GL}_n(\mathcal{O}_{\overline{\mathbb{Q}}_l})$ be a lifting of $\overline{r_{\iota}(\pi)}|_{G_{F_v}}$ satisfying the following conditions.

$\cdot$ If $v \nmid l$, $\rho_v$ is unipotently ramified and $\mathrm{WD}(\rho_{v} \otimes \overline{\mathbb{Q}}_l)$ and $\mathrm{WD}(\rho_{v^c} \otimes \overline{\mathbb{Q}}_l)$ have the same monodromy type.

$\cdot$ If $v | l$, $\rho_v$ is ordinary crystalline such that $\rho_{v^c}^c \sim \rho_v^{\vee} \mu|_{G_{F_v}}$.

Then there exists a polarizable $\iota$-ordinary cohomological cuspidal automorphic representation $\Pi$ of $\mathrm{GL}_n(\mathbb{A}_{F})$ satisfying the following conditions.

(a) \ $\overline{r_{\iota}(\Pi)} = \overline{r_{\iota}(\pi)}$.

(b) There exists a perfect symmetric $G_F$-equivariant paring $r_{\iota}(\Pi)^c \times r_{\iota}(\Pi) \rightarrow \mu|_{G_{F}}$.
    
(c) For any $v \notin S$ or $v \mid l$, $\Pi_v$ is unramified.
    
(d) For any $v|l$, $r_{\iota}(\Pi)|_{G_{F_v}} \sim \rho_v$. 
    
(e) For $v \in S \setminus \{ u|l \}$, $\mathrm{WD}(r_{\iota}(\Pi)|_{G_{F_v}})$ and $\mathrm{WD}(\rho_v \otimes \overline{\mathbb{Q}}_l)$ have the same monodromy type.

\end{prop}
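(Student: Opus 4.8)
The plan is to imitate the proof of Proposition~\ref{potential conjugate}, but with the crystalline local conditions at $l$ replaced by ordinary ones and with the final appeal to potential automorphy replaced by a conjugate self-dual \emph{ordinary} automorphy lifting theorem, so that the resulting $\Pi$ lives over $F$ itself. First I would carry out the usual local reductions. By Lemma~\ref{robustly smooth monodromy type} we may assume $\rho_v$ is robustly smooth for every $v\in S$ with $v\nmid l$. The residual pairing gives $\overline{r_\iota(\pi)}\cong\overline{r_\iota(\pi)}^\vee\overline{\mu}|_{G_F}$, so for $v\nmid l$ the liftings $\rho_{v^c}^c$ and $\rho_v^\vee\mu|_{G_{F_v}}$ of $\overline{r_\iota(\pi)^c}|_{G_{F_v}}$ both have the monodromy type of $\mathrm{WD}(\rho_{v^c}\otimes\overline{\mathbb{Q}_l})$; after a harmless preliminary solvable CM base change making the relevant residual local representations trivial, Proposition~\ref{monodromy type irreducible component} yields $\rho_{v^c}^c\sim\rho_v^\vee\mu|_{G_{F_v}}$ for all $v\in S$ (the case $v\mid l$ being assumed), and one descends at the end via Proposition~\ref{base change} using that $r$, being essentially conjugate self-dual with absolutely irreducible reduction, is irreducible.

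With the local data in this conjugate self-dual form, I would invoke the lifting argument of \cite[Proposition~3.2.1]{CW} to produce a lifting $r:G_F\to\mathrm{GL}_n(\mathcal{O}_{\overline{\mathbb{Q}_l}})$ of $\overline{r_\iota(\pi)}$ carrying a perfect symmetric $G_F$-equivariant pairing $r\times r^c\to\mu|_{G_F}$, unramified outside $S$, and with $r|_{G_{F_v}}\sim\rho_v$ for all $v\in S$. Since $\rho_v$ is ordinary crystalline for $v\mid l$, Proposition~\ref{semistable ordinary deformation ring} shows $r|_{G_{F_v}}$ is ordinary crystalline for $v\mid l$ as well; and for $v\in S$ with $v\nmid l$, part~5 of Lemma~\ref{coincide monodromy type} shows $\mathrm{WD}(r|_{G_{F_v}}\otimes\overline{\mathbb{Q}_l})$ and $\mathrm{WD}(\rho_v\otimes\overline{\mathbb{Q}_l})$ have the same monodromy type.

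Next I would show $r$ is automorphic. The reduction $\overline{r}=\overline{r_\iota(\pi)}$ is the reduction of $r_\iota(\pi)$ with $\pi$ polarizable and $\iota$-ordinary, $\overline{r}(G_{F(\zeta_l)})$ is adequate by Proposition~\ref{adequate} (the hypotheses $l\ge 2(d+1)$ and $d\ge$ the maximal dimension of an irreducible constituent of $\overline{r}$ restricted to the closed subgroup generated by the Sylow pro-$l$-subgroups give exactly the needed inequality), and $\zeta_l\notin F$. As $r$ is essentially conjugate self-dual and ordinary at every $v\mid l$, a conjugate self-dual ordinary automorphy lifting theorem — which, being in the polarizable world, does not require decomposed genericity and relies only on the good behaviour of the ordinary local deformation rings (Propositions~\ref{ordinary deformation ring} and~\ref{semistable ordinary deformation ring}), e.g.\ \cite[Theorem~6.1.2]{10} — produces a polarizable $\iota$-ordinary cohomological cuspidal automorphic representation $\Pi$ of $\mathrm{GL}_n(\mathbb{A}_F)$ with $r_\iota(\Pi)\cong r$ (so $\overline{r_\iota(\Pi)}=\overline{r_\iota(\pi)}$, giving (a), and the pairing on $r$ gives (b); $\iota$-ordinarity of $\Pi$ is consistent with $r$ ordinary via Theorem~\ref{polarizable local-global compatibility} and Lemma~\ref{iota-ordinary}). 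The remaining local claims are then read off: for $v\notin S$, $r|_{G_{F_v}}$ is unramified so $\Pi_v$ is unramified by Theorem~\ref{polarizable local-global compatibility}(4); for $v\mid l$, $r|_{G_{F_v}}$ is crystalline, hence $\mathrm{WD}(r|_{G_{F_v}})$ is unramified with zero monodromy, and by Theorem~\ref{polarizable local-global compatibility}(3),(4) together with Proposition~\ref{purity} (genericity of $\Pi_v$) this forces $\Pi_v$ unramified, giving (c); (d) is $r|_{G_{F_v}}\sim\rho_v$; and (e) follows from part~5 of Lemma~\ref{coincide monodromy type}.

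The hard part will be the construction of the global ordinary conjugate self-dual lifting $r$: one must run the \cite[Proposition~3.2.1]{CW}-style lifting argument with the \emph{ordinary} local condition at $l$ (rather than ``crystalline in a chosen component'') while simultaneously imposing the prescribed unipotently ramified monodromy types at the non-$l$-adic places of $S$ and preserving the conjugate self-duality — this needs the fine control of the ordinary local deformation rings in Proposition~\ref{ordinary deformation ring} and of the unipotently ramified deformation rings from section~3 — and one must verify that the conjugate self-dual ordinary automorphy lifting theorem genuinely applies in this generality, in particular that the ordinary local condition at $l$ simultaneously accommodates $\rho_v$ and $r_\iota(\pi)_v$ inside the distinguished ordinary component, none of which requires decomposed genericity precisely because we stay polarizable.
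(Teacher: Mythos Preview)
Your overall strategy—construct a conjugate self-dual global lift $r$ with prescribed local behavior via a \cite{CW}-type argument, then apply a polarizable ordinary automorphy lifting theorem to produce $\Pi$, and finally read off (a)–(e) from Theorem~\ref{polarizable local-global compatibility}—is exactly the paper's approach. But the preliminary solvable base change you propose is a genuine problem. After passing to $F'/F$ you would construct the lift $r'$ and the automorphic $\Pi'$ only over $F'$; Proposition~\ref{base change}(2) then lets you descend only if you already possess a continuous $\rho:G_F\to\mathrm{GL}_n(\overline{\mathbb{Q}_l})$ with $\rho|_{G_{F'}}\cong r'$, which you do not (the \cite{CW} lifting produces $r'$ intrinsically over $F'$, with no Galois-equivariance for $\mathrm{Gal}(F'/F)$). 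Even if you could descend $\Pi'$, conclusions (b), (d), (e) are statements about $r_\iota(\Pi)$ over $F$ and would not follow from their analogues over $F'$. The paper avoids this entirely: rather than proving $\rho_{v^c}^c\sim\rho_v^\vee\mu|_{G_{F_v}}$ for the given $\rho_v$, it simply \emph{redefines} $\rho_{v^c}:=(\rho_v^\vee\mu|_{G_{F_v}})^c$ for one place out of each pair $\{v,v^c\}$. This is legitimate because the conclusion only depends on the monodromy type (for $v\nmid l$) and the $\sim$-class (for $v\mid l$), and the new choice agrees with the old in both respects (using dualizing and twisting preserve monodromy type, and the hypothesis $\rho_{v^c}^c\sim\rho_v^\vee\mu$ at $l$). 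No base change is needed; everything stays over $F$.

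Two smaller points. First, the lifting and automorphy inputs you cite are not quite the right ones: \cite[Prop.~3.2.1]{CW} is the potentially diagonalizable version and \cite[Thm.~6.1.2]{10} is the non-polarizable ordinary lifting theorem (which \emph{does} need decomposed genericity). The paper instead invokes \cite[Prop.~1.5.1 and Thm.~2.4.2]{CW} for the ordinary conjugate self-dual lift and \cite[Thm.~2.4.1]{CW} for the polarizable ordinary automorphy lifting. Second, your verification of (e) via part~5 of Lemma~\ref{coincide monodromy type} immediately after constructing $r$ is premature: that lemma requires both $\rho_v$ and $r|_{G_{F_v}}$ to lie on a unique irreducible component, and at that stage you only know this for $\rho_v$ (via robust smoothness). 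The paper establishes it for $r|_{G_{F_v}}$ \emph{after} proving automorphy, using Theorem~\ref{polarizable local-global compatibility} to see $\mathrm{WD}(r|_{G_{F_v}})$ is pure (hence generic), and then Lemma~\ref{regular point}.
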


\begin{proof} We may assume that $\rho_v$ is robustly smooth for all $v \in S \setminus \{ u|l \}$ by Lemma \ref{robustly smooth monodromy type}. Moreover, we may assume that $\rho_{v^c}^c \cong \rho_v^{\vee} \mu|_{G_{F_v}}$ for any $v \in S$ by 1 of Lemma \ref{p=l} and the fact that for $v \nmid l$, $\mathrm{WD}(\rho_{v} \otimes \overline{\mathbb{Q}}_l)$ and $\mathrm{WD}(\rho_{v}^{\vee}\mu|_{G_{F_v}} \otimes \overline{\mathbb{Q}}_l)$ have the same monodromy type.

By \cite[Proposition 1.5.1 and Theorem 2.4.2]{CW}, there exists a lifting $r$ of $\overline{r_{\iota}(\pi)}$ satisfying the following conditions.

(a) There exists a perfect symmetric $G_F$-equivalent pairing $r \times r^c \rightarrow \mu|_{G_{F}}$.

(b) For any $v \notin S$, $r|_{G_{F_v}}$ is unramified.
    
(c) For any $v|l$, $r|_{G_{F_v}}$ is crystalline ordinary and $r|_{G_{F_v}} \sim \rho_v$.

(d) For $v \in S \setminus \{ u|l \}$, $r|_{G_{F_v}} \sim \rho_{v}$.

By \cite[Theorem 2.4.1]{CW}, there exists a polarizable $\iota$-ordinary cohomological cuspidal automorphic representation $\Pi$ of $\mathrm{GL}_n(\mathbb{A}_F)$ such that $r_{\iota}(\Pi) \cong r$. 

Since $\iota \mathrm{WD}(r_{\iota}(\Pi)|_{G_{F_v}})^{F-ss} \cong \mathrm{rec}_{F_v}(\Pi_v|\mathrm{det}|_v^{\frac{1-n}{2}})$ for all $v$ by Theorem \ref{polarizable local-global compatibility}, $r|_{G_{F_v}}$ is robustly smooth for $v \in S \setminus \{ u \mid l \}$ and $\Pi_v$ is unramified for all $u \mid l$. By Lemma \ref{regular point} and 5 of Lemma \ref{coincide monodromy type}, $\mathrm{WD}(r|_{G_{F_v}} \otimes \overline{\mathbb{Q}}_l)$ and $\mathrm{WD}(\rho_{v} \otimes \overline{\mathbb{Q}}_l)$ have the same monodromy type for $v \in S \setminus \{ u \mid l \}$. \end{proof}

\begin{lem} \label{Thorne}

Let $l$ be a prime, $\iota: \overline{\mathbb{Q}}_l \stackrel{\sim}{\rightarrow} \mathbb{C}$ be an isomorphism of fields and $\pi$ be a cohomological cuspidal automorphic representation of $\mathrm{GL}_n(\mathbb{A}_F)$ of weight $\iota\lambda$.

We assume that for any $v|l$, $r_{\iota}(\pi)|_{G_{F_v}}$ is de Rham of $l$-adic Hodge type $\mathbf{v}_{\lambda_v}$ and $\iota \mathrm{WD}(r_{\iota}(\pi)|_{G_{F_v}})^{ss} \cong \mathrm{rec}_{F_v}(\pi_v|\mathrm{det}|_v^{\frac{1-n}{2}})^{ss}$.

Then $r_{\iota}(\pi)|_{G_{F_v}}$ is ordinary for all $v | l$ if and only if $\pi$ is $\iota$-ordinary.

\end{lem}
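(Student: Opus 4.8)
The plan is to reduce both halves of the asserted equivalence to one and the same numerical condition on the Frobenius eigenvalues of $\mathrm{WD}(r_{\iota}(\pi)|_{G_{F_v}})$ at the $l$-adic places $v$, using the characterizations in Lemma \ref{ordinary Galois representation} and Lemma \ref{iota-ordinary} together with the hypothesis identifying semisimplifications. Since both ``$r_{\iota}(\pi)|_{G_{F_v}}$ ordinary for all $v\mid l$'' and ``$\pi$ is $\iota$-ordinary'' (via (3) of Lemma \ref{iota-ordinary}) are conjunctions over $v\mid l$, it suffices to work one place at a time.

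First I would fix $v\mid l$ and unwind the hypothesis $\iota\mathrm{WD}(r_{\iota}(\pi)|_{G_{F_v}})^{ss}\cong \mathrm{rec}_{F_v}(\pi_v|\mathrm{det}|_v^{\frac{1-n}{2}})^{ss}$: applying $\iota^{-1}$, i.e. extending scalars along $\iota^{-1}:\mathbb{C}\xrightarrow{\sim}\overline{\mathbb{Q}_l}$, this yields an isomorphism $\mathrm{WD}(r_{\iota}(\pi)|_{G_{F_v}})^{ss}\cong \iota^{-1}\mathrm{rec}_{F_v}(\pi_v|\mathrm{det}|_v^{\frac{1-n}{2}})^{ss}$ of Weil--Deligne representations over $\overline{\mathbb{Q}_l}$. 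In particular the multiset of eigenvalues of $\mathrm{WD}(r_{\iota}(\pi)|_{G_{F_v}})(\mathrm{Frob}_v)$ coincides with that of $\iota^{-1}\mathrm{rec}_{F_v}(\pi_v|\mathrm{det}|_v^{\frac{1-n}{2}})(\mathrm{Frob}_v)$; passing to Frobenius semisimplifications does not change these eigenvalues, so the same holds for the un-semisimplified Weil--Deligne representations as well.

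Next I would treat the Galois side. Since $r_{\iota}(\pi)|_{G_{F_v}}$ is de Rham of $l$-adic Hodge type $\mathbf{v}_{\lambda_v}$ by hypothesis, and the Hodge--Tate type determines $\lambda_v$ (written in decreasing order), the representation $r_{\iota}(\pi)|_{G_{F_v}}$ is ordinary in the sense of the definition following Lemma \ref{ordinary Galois representation} if and only if it is ordinary of weight $\lambda_v$. By the equivalence $(1)\Leftrightarrow(2)$ of Lemma \ref{ordinary Galois representation}, this holds if and only if the eigenvalues $\alpha_1,\dots,\alpha_n$ of $\mathrm{WD}(r_{\iota}(\pi)|_{G_{F_v}})(\mathrm{Frob}_v)$, arranged so that $\mathrm{val}_l(\alpha_1)\le\cdots\le\mathrm{val}_l(\alpha_n)$, satisfy $\mathrm{val}_l(\alpha_i)=\frac{1}{e_v}\sum_{\tau\in\mathrm{Hom}_{\mathbb{Q}_l}(F_v,\overline{\mathbb{Q}_l})}(\lambda_{\tau,n+1-i}+i-1)$ for all $i$. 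On the automorphic side, by (3) of Lemma \ref{iota-ordinary}, $\pi$ is $\iota$-ordinary if and only if for every $v\mid l$ the eigenvalues of $\iota^{-1}\mathrm{rec}_{F_v}(\pi_v|\mathrm{det}|_v^{\frac{1-n}{2}})(\mathrm{Frob}_v)$ satisfy exactly this same numerical condition. Combining with the eigenvalue comparison of the previous paragraph gives the equivalence.

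The argument is essentially formal once the two characterization lemmas are invoked, and no deformation theory or global automorphy input is needed; the only point requiring care --- and hence the closest thing to an obstacle --- is the bookkeeping: checking that the valuation normalisation ($\mathrm{val}_l(l)=1$), the ramification index $e_v=e_{F_v/\mathbb{Q}_l}$, and the indexing of $\lambda_v$ used in Lemma \ref{ordinary Galois representation} match those in Lemma \ref{iota-ordinary}, and confirming that ``ordinary'' unqualified forces the weight of $r_{\iota}(\pi)|_{G_{F_v}}$ to be $\lambda_v$ precisely because its Hodge--Tate type is already fixed to be $\mathbf{v}_{\lambda_v}$.
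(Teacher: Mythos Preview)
Your proposal is correct and follows essentially the same approach as the paper: the paper's proof simply cites Lemma \ref{ordinary Galois representation} and part (3) of Lemma \ref{iota-ordinary} (the paper writes ``(4)'', evidently a typo), and your argument is precisely the explicit unpacking of that citation via the common numerical condition on the $l$-adic valuations of the Frobenius eigenvalues.
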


\begin{proof}

This follows from Lemma \ref{ordinary Galois representation} and (3) of Proposition \ref{iota-ordinary}. \end{proof}

\begin{cor} \label{ordinary automorphic induction}

Let $l$ be a prime, $\iota: \overline{\mathbb{Q}}_l \stackrel{\sim}{\rightarrow} \mathbb{C}$ be an isomorphism of fields, $M/F$ be a solvable extension of imaginary CM field of degree $d$ and $\pi$ be an $\iota$-ordinary cohomological cuspidal automorphic representation of $\mathrm{GL}_n(\mathbb{A}_M)$ of weight $\iota \lambda$ such that $\overline{r_{\iota}(\pi)}$ is absolutely irreducible and decomposed generic.

If $\mathrm{Ind}^{G_F}_{G_M}\overline{r_{\iota}(\pi)}$ is irreducible and $\mathrm{Ind}^{G_F}_{G_M}r_{\iota}(\pi)|_{G_{F_v}}$ is ordinary for all $v|l$, then $\mathrm{AI}_{M/F}(\pi)||\mathrm{det}||_F^{\frac{n(d-1)}{2}}$ is an $\iota$-ordinary cohomological cuspidal automorphic representation of $\mathrm{GL}_{nd}(\mathbb{A}_F)$.

\end{cor}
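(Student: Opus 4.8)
The plan is to deduce the statement formally from Proposition \ref{automorphic induction} and Lemma \ref{Thorne}. First I would verify the hypotheses of Proposition \ref{automorphic induction}. Since $\overline{r_{\iota}(\pi)}$ is absolutely irreducible and decomposed generic and $\pi$ is $\iota$-ordinary cohomological of weight $\iota\lambda$, Theorem \ref{ordinary semisimple} (or Theorem \ref{Lambert 2}) shows that $r_{\iota}(\pi)|_{G_{M_v}}$ is de Rham of $l$-adic Hodge type $\mathbf{v}_{\lambda_v}$, indeed ordinary of weight $\lambda_v$, for every $v\mid l$ of $M$. The hypothesis that $\mathrm{Ind}^{G_F}_{G_M}\overline{r_{\iota}(\pi)}$ is irreducible forces $\mathrm{Ind}^{G_F}_{G_M}r_{\iota}(\pi)$ to be irreducible, and the hypothesis that $\mathrm{Ind}^{G_F}_{G_M}r_{\iota}(\pi)|_{G_{F_w}}$ is ordinary for all $w\mid l$ implies it is Hodge-Tate regular there, because an ordinary representation of weight in $(\mathbb{Z}^n_+)$ has pairwise distinct Hodge-Tate weights (the numbers $\lambda_{\tau,i}+n-i$ being strictly decreasing in $i$). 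Hence Proposition \ref{automorphic induction} applies and yields that $\Pi:=\mathrm{AI}_{M/F}(\pi)||\mathrm{det}||_F^{\frac{n(d-1)}{2}}$ is a cohomological cuspidal automorphic representation of $\mathrm{GL}_{nd}(\mathbb{A}_F)$ of some weight $\iota\mu$ with $r_{\iota}(\Pi)\cong \mathrm{Ind}^{G_F}_{G_M}r_{\iota}(\pi)$, where $\mathbf{v}_{\mu_v}$ is the $l$-adic Hodge type of $\mathrm{Ind}^{G_F}_{G_M}r_{\iota}(\pi)|_{G_{F_v}}$ for $v\mid l$.

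It remains to prove that $\Pi$ is $\iota$-ordinary, for which I would apply Lemma \ref{Thorne} to $\Pi$ (with $n$ replaced by $nd$ and $\lambda$ by $\mu$). Its first hypothesis is immediate: $r_{\iota}(\Pi)|_{G_{F_v}}=\bigoplus_{w\mid v}\mathrm{Ind}^{G_{F_v}}_{G_{M_w}}\left(r_{\iota}(\pi)|_{G_{M_w}}\right)$ is de Rham of $l$-adic Hodge type $\mathbf{v}_{\mu_v}$ by the very definition of $\mu$. For the second hypothesis I would compare the two sides of $\iota\mathrm{WD}(r_{\iota}(\Pi)|_{G_{F_v}})^{ss}\cong \mathrm{rec}_{F_v}(\Pi_v|\mathrm{det}|_v^{\frac{1-nd}{2}})^{ss}$ place by place over $M$. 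On the Galois side, using that the Frobenius-semisimple Weil--Deligne functor commutes with induction, $\iota\mathrm{WD}(r_{\iota}(\Pi)|_{G_{F_v}})^{ss}\cong \bigoplus_{w\mid v}\mathrm{Ind}^{W_{F_v}}_{W_{M_w}}\left(\iota\mathrm{WD}(r_{\iota}(\pi)|_{G_{M_w}})^{ss}\right)$, and by part 1 of Theorem \ref{ordinary semisimple} applied to $\pi$ this is $\bigoplus_{w\mid v}\mathrm{Ind}^{W_{F_v}}_{W_{M_w}}\mathrm{rec}_{M_w}(\pi_w|\mathrm{det}|_w^{\frac{1-n}{2}})^{ss}$. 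On the automorphic side, $\Pi_v=\left(\boxplus_{w\mid v}\mathrm{AI}_{M_w/F_v}(\pi_w)\right)|\mathrm{det}|_v^{\frac{n(d-1)}{2}}$, so using $\frac{n(d-1)}{2}+\frac{1-nd}{2}=\frac{1-n}{2}$ and the compatibility of the local automorphic induction with $\mathrm{rec}_{F_v}$ (which is the defining property of $\mathrm{AI}_{M_w/F_v}$ in \S \ref{Notation}), $\mathrm{rec}_{F_v}(\Pi_v|\mathrm{det}|_v^{\frac{1-nd}{2}})^{ss}\cong \bigoplus_{w\mid v}\mathrm{Ind}^{W_{F_v}}_{W_{M_w}}\mathrm{rec}_{M_w}(\pi_w|\mathrm{det}|_w^{\frac{1-n}{2}})^{ss}$, which agrees with the Galois side.

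Both hypotheses of Lemma \ref{Thorne} being satisfied, and since $r_{\iota}(\Pi)|_{G_{F_v}}=\mathrm{Ind}^{G_F}_{G_M}r_{\iota}(\pi)|_{G_{F_v}}$ is ordinary for all $v\mid l$ by assumption, Lemma \ref{Thorne} gives that $\Pi$ is $\iota$-ordinary, which completes the proof. The only genuine bookkeeping is the norm-twist computation and the identification of local automorphic induction with Weil-group induction in the second step; I do not expect any real obstacle, since the corollary is essentially a formal consequence of Proposition \ref{automorphic induction}, Theorem \ref{ordinary semisimple}, and Lemma \ref{Thorne}.
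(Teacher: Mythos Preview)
Your proof is correct and follows the same overall strategy as the paper: verify the hypotheses of Proposition \ref{automorphic induction} via Theorem \ref{ordinary semisimple}, then deduce $\iota$-ordinarity from Lemma \ref{Thorne} by checking its two hypotheses. The one genuine difference is in how you verify the semisimplified local-global compatibility $\iota\mathrm{WD}(r_{\iota}(\Pi)|_{G_{F_v}})^{ss}\cong \mathrm{rec}_{F_v}(\Pi_v|\mathrm{det}|_v^{\frac{1-nd}{2}})^{ss}$ at $l$-adic places. The paper first invokes Propositions \ref{ordinary base change}, \ref{base change} and Corollary \ref{extension of Q} to reduce to the case where every $l$-adic place of $F$ splits completely in $M$, so that the local induction becomes a direct sum of conjugates and the computation is elementary. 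You instead work directly with the general Mackey decomposition $r_{\iota}(\Pi)|_{G_{F_v}}\cong\bigoplus_{w\mid v}\mathrm{Ind}^{G_{F_v}}_{G_{M_w}}(r_{\iota}(\pi)|_{G_{M_w}})$ and use that the Weil--Deligne functor (at $p=l$, via $D_{\mathrm{pst}}$) commutes with local induction, matching it against the defining property of $\mathrm{AI}_{M_w/F_v}$ and the projection formula. Your route is a bit more direct and avoids the base-change reduction, at the cost of invoking the compatibility of $\mathrm{WD}$ with induction; the paper's route keeps the Weil--Deligne computation trivial but needs the auxiliary reduction step. Both are valid.
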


\begin{proof} 
    
By Theorem \ref{ordinary semisimple}, $r_{\iota}(\pi)|_{G_{F_v}}$ is ordinary of weight $\lambda_v$ and $\iota \mathrm{WD}(r_{\iota}(\pi)|_{G_{F_v}})^{ss} \cong \mathrm{rec}_{F_v}(\pi_v |\mathrm{det}|_v^{\frac{1-n}{2}})^{ss}$. Let $\mu$ be the element of $(\mathbb{Z}^{dn}_{+})^{\mathrm{Hom}(F, \overline{\mathbb{Q}}_l)}$ such that $(\mathrm{Ind}^{G_{F}}_{G_{M}}r_{\iota}(\pi))|_{G_{F_v}}$ is $l$-adic Hodge type $\mu_{v}$. By Proposition \ref{automorphic induction}, $\mathrm{AI}_{M/F}(\pi)||\mathrm{det}||_F^{\frac{n(d-1)}{2}}$ is a cohomological cuspidal automorphic representation of $\mathrm{GL}_{nd}(\mathbb{A}_F)$ of weight $\iota \mu$. 

We will prove the $\iota$-ordinarity of $\mathrm{AI}_{M/F}(\pi)||\mathrm{det}||_F^{\frac{n(d-1)}{2}}$. By Propositions \ref{ordinary base change}, \ref{base change} and Corollary \ref{extension of Q}, we may assume that all $l$-adic places of $F$ split completely in $M$. 

Then we have \begin{align*} \iota \mathrm{WD}(\mathrm{Ind}^{G_F}_{G_M}r_{\iota}(\pi)|_{G_{F_v}})^{ss} \\
= \oplus_{i=0}^{d-1} \iota \mathrm{WD}(r_{\iota}(\pi)^{\sigma^{-i}}|_{G_{M_{w}}})^{ss} \\
= (\oplus_{i=0}^{d-1} \iota \mathrm{WD}(r_{\iota}(\pi)|_{G_{M_{w^{\sigma^i}}}})^{\sigma^{-i}})^{ss} \\
 = (\oplus_{i=0}^{d-1} \mathrm{rec}_{M_{w^{\sigma^{i}}}}(\pi_{w^{\sigma^{i}}}| \mathrm{det} |_{w^{\sigma^{i}}}^{\frac{1-n}{2}})^{\sigma^{-i}})^{ss} \\
 = \mathrm{rec}_{F_v}((\mathrm{AI}_{M/F}(\pi)_v|\mathrm{det}|_{v}^{\frac{n(d-1)}{2}})|\mathrm{det}|_v^{\frac{1-dn}{2}})^{ss},\end{align*} where $w$ is a place of $M$ lying above $v$ and $\sigma$ is a generator of $\mathrm{Gal}(M/F)$. By Lemma \ref{Thorne}, $\mathrm{AI}_{M/F}(\pi)||\mathrm{det}||_F^{\frac{n(d-1)}{2}}$ is $\iota$-ordinary. \end{proof}

\begin{lem} \label{ordinary character induction}
    
We assume that $F$ is an imaginary CM field.

(a) \ Let $l$ be an odd prime such that all $l$-adic places of $F^+$ splits in $F$.

(b) \ Let $m$ and $k$ be positive integers,.

(c) \ Let $S$ be a finite set of finite places of $F$ not containing $2$-adic places.

(d) \ Let $\overline{r} : G_F \rightarrow \mathrm{GL}_n(\overline{\mathbb{F}_l})$ be a continuous representation such that $\overline{r}|_{G_{F(\zeta_l)}}$ is absolutely irreducible.

(e) \ Let $F^{(a)}$ be a finite extension of $F$.

Then there exist a cyclic extension $M/F$ of CM fields of degree $m$ linearly disjoint from $F^{(a)}$ over $F$ and an algebraic $l$-adic character $\theta : G_{M} \rightarrow \mathcal{O}_{\overline{\mathbb{Q}}_l}^{\times}$ satisfying the following conditions.

1 \ All places in $S$ split completely in $M$.

2 \ $\theta \theta^c$ is trivial

3 \ $(\overline{r} \otimes \overline{\mathrm{Ind}^{G_F}_{G_M}}\theta)|_{G_{F(\zeta_l)}}$ is absolutely irreducible.

4 \ For any $v \mid l$, $(\mathrm{Ind}^{G_{F}}_{G_M}\theta)|_{G_{F_v}}$ is crystalline ordinary and the weight $\lambda$ of $(\mathrm{Ind}^{G_{F}}_{G_M}\theta)|_{G_{F_v}}$ satisfy $\lambda_{\tau, i} - \lambda_{\tau, i+1} \ge k$ for any $\tau \in \mathrm{Hom}_{\mathbb{Q}_l}(F_v, \overline{\mathbb{Q}}_l)$ and $i=1, \cdots, n-1$.

5 \ $\overline{\theta}|_{G_{M_v}}$ is trivial for any finite place $v$ of $M$ lying above a place in $S$. 

6 \ $\theta|_{G_{M_v}}$ is unramified for any finite place $v$ of $M$ lying above a place in $S \setminus \{ u \mid l \}$.

\end{lem}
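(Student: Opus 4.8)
The plan is to first build the cyclic CM extension $M/F$ with Proposition \ref{cyclic extension}, then construct a residual character $\overline{\theta}$ on $G_M$ with the required local behaviour and a non-invariance property under $\mathrm{Gal}(M/F)$, and finally lift it to the desired algebraic $\theta$ by Proposition \ref{character}.

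First I would apply Proposition \ref{cyclic extension} to the field $F^+$, with $K^{(a)}$ the Galois closure over $\mathbb{Q}$ of $F^{(a)} \cdot F \cdot \overline{F}^{\mathrm{Ker}(\overline{r})}(\zeta_l)$ and with the prescribed split places taken to be the places of $F^+$ lying below $S$ together with all $l$-adic places of $F^+$. This gives a cyclic totally real extension $M_0/F^+$ of degree $m$, linearly disjoint from $K^{(a)}$ over $F^+$, in which all those places split completely. Put $M := F M_0$. Then $M$ is a CM field with $M^+ = M_0$, the extension $M/F$ is cyclic of degree $m$ and linearly disjoint from $F^{(a)}$ over $F$, all places of $S$ split completely in $M$ (condition 1), and — using the hypothesis that the $l$-adic places of $F^+$ split in $F$ — all $l$-adic places of $F$ split completely in $M$. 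Linear disjointness from $\overline{F}^{\mathrm{Ker}(\overline{r})}(\zeta_l)$ further gives that $\overline{r}|_{G_{M(\zeta_l)}}$ is absolutely irreducible and that $\mathrm{Gal}(M(\zeta_l)/F(\zeta_l)) \cong \mathrm{Gal}(M/F)$, with $\mathrm{Ind}^{G_F}_{G_M}(\,\cdot\,)$ commuting with restriction to $G_{F(\zeta_l)}$.

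Next I would construct $\overline{\theta}$. Let $\mathcal{X}$ be the group of characters $\eta$ of $G_{M(\zeta_l)}$ with $\overline{r}|_{G_{M(\zeta_l)}} \otimes \eta \cong \overline{r}|_{G_{M(\zeta_l)}}$; this is finite, and each such $\eta$ is unramified outside a fixed finite set $T_0$ of rational primes (wherever $\overline{r}$ is unramified, so is $\eta$). By Chebotarev choose a rational prime $q \notin T_0 \cup S \cup \{l\}$ splitting completely in $M(\zeta_l)$ (so $q \equiv 1 \bmod l$), fix a prime $Q$ of $M$ above $q$ (note $Q \neq Q^c$ since $q$ splits in $F$), and take a character $\eta_0 : G_M \to \overline{\mathbb{F}_l}^{\times}$ of $l$-power order, ramified only at $Q$, and trivial on $\mathrm{Frob}_v$ for every place $v$ of $M$ above $S$ (possible for $q$ large, by class field theory applied to a suitable ray class group of $M$). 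Set $\overline{\theta} := \eta_0 \cdot (\eta_0^c)^{-1}$ (and $\overline{\theta} = 1$ if $m = 1$). Then $\overline{\theta}^c = \overline{\theta}^{-1}$, $\overline{\theta}$ is unramified outside $\{Q, Q^c\}$ and trivial on $G_{M_v}$ for $v \mid S$, and for every $1 \neq \sigma \in \mathrm{Gal}(M/F)$ the character $\overline{\theta}^\sigma/\overline{\theta}$ is ramified at $Q$ (because $\sigma^{-1}Q \notin \{Q, Q^c\}$, $\mathrm{Gal}(M/F)$ acting simply transitively on the primes above $q$ lying over a given prime of $F$), hence lies outside $\mathcal{X}$.

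Finally I would lift. Apply Proposition \ref{character}(2) with $M$ in place of $F$, with $\chi$ the trivial character of $G_{M^+}$, with the finite $c$-stable set of places containing all places of $M$ above $S$, all $l$-adic places of $M$, and $\{Q, Q^c\}$, and with local characters $\psi_v$ chosen thus: for non-$l$-adic $v$ in this set take $\psi_v$ unramified with trivial reduction; for $v \mid l$ take $\psi_v$ algebraic with $c$-antisymmetric algebraic part (so $(\psi_v\psi_{v^c}^c)|_{I_{M_v}} = 1$) arranged so that, over each $l$-adic place $w$ of $F$, the resulting $m$ Hodge--Tate weights are pairwise distinct with consecutive gaps $\geq k$, and with $\overline{\psi_v} = \overline{\theta}|_{G_{M_v}}$. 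This produces an algebraic $\theta : G_M \to \mathcal{O}_{\overline{\mathbb{Q}_l}}^{\times}$ lifting $\overline{\theta}$ with $\theta\theta^c = 1$ (condition 2), unramified at the $v \mid S$ away from $l$ (condition 6), and with $\overline{\theta}$ trivial at the places above $S$ (condition 5). Since the $l$-adic places of $F$ split completely in $M$, $(\mathrm{Ind}^{G_F}_{G_M}\theta)|_{G_{F_w}} \cong \bigoplus_{v\mid w}\theta|_{G_{M_v}}$ is a direct sum of de Rham characters with $k$-spread graded Hodge--Tate weights, hence ordinary of a $k$-regular weight by Lemma \ref{ordinary Galois representation} (condition 4). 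For condition 3, since reduction commutes with induction and by the projection formula, $\overline{r} \otimes \overline{\mathrm{Ind}^{G_F}_{G_M}\theta}$ restricted to $G_{F(\zeta_l)}$ is $\mathrm{Ind}^{G_{F(\zeta_l)}}_{G_{M(\zeta_l)}}(\overline{r}|_{G_{M(\zeta_l)}} \otimes \overline{\theta})$, which is absolutely irreducible by Mackey's criterion (using $\overline{r}|_{G_{M(\zeta_l)}}$ absolutely irreducible and $\overline{\theta}^\sigma/\overline{\theta} \notin \mathcal{X}$ for $1 \neq \sigma$). The main obstacle is precisely this last point — manufacturing a $\overline{\theta}$ that is at once $c$-antisymmetric, trivial at $S$, and not invariant up to a self-twist of $\overline{r}$ under any nontrivial element of $\mathrm{Gal}(M/F)$ — which is what forces the introduction of the auxiliary split prime $Q$ avoiding $T_0$.
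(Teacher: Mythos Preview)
Your overall strategy matches the paper's, but there is a fatal error in constructing $\overline{\theta}$: a character $\eta_0 : G_M \to \overline{\mathbb{F}_l}^{\times}$ of $l$-power order is necessarily trivial, since $\overline{\mathbb{F}_l}^{\times}$ has no $l$-torsion. This forces $\overline{\theta} = 1$, so $\mathrm{Ind}^{G_F}_{G_M}\overline{\theta}$ contains the trivial representation as a summand and condition~3 fails for $m > 1$. (Even if you replace $\eta_0$ by a prime-to-$l$ order character, two further points are not settled: the existence of an $\eta_0$ ramified only at $Q$ yet trivial on all the specified Frobenii is asserted rather than proved, and in your application of Proposition~\ref{character} you include $\{Q, Q^c\}$ among the constrained places and then take $\psi_v$ unramified there, contradicting the fact that $\overline{\theta}|_{G_{M_Q}}$ is ramified.)

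The paper avoids all of this by taking $\overline{\theta}$ of order~$2$. It picks an auxiliary place $\overline{v_0}$ of $F^+$ away from $2$, from the primes $U$ below $S \cup \{v \mid l\}$, and from the ramification of $\overline{r}$; writes $\overline{w_1}, \ldots, \overline{w_m}$ for the places of $M^+$ above $\overline{v_0}$; and uses Lemma~\ref{quadratic extension} to produce a totally real quadratic extension $M^{+'}/M^+$ that splits at every place above $U$ but ramifies at $\overline{w_1}$ and is unramified at $\overline{w_2}, \ldots, \overline{w_m}$. Setting $M' := FM^{+'}$ and $\overline{\theta} := \delta_{M'/M}$, one gets $\overline{\theta} = \overline{\theta}^c$ (since $M'$ is CM), hence $\overline{\theta}\overline{\theta}^c = \overline{\theta}^2 = 1$; the splitting above $U$ gives condition~5 directly with no Frobenius conditions to arrange; and the asymmetric ramification among the $\overline{w_i}$ makes $\overline{\theta}^{\sigma^i}/\overline{\theta}$ ramified above $\overline{v_0}$ for each $i \ne 0$, where $\overline{r}$ is unramified, so this ratio cannot be a self-twist of $\overline{r}|_{G_{M(\zeta_l)}}$ and your Mackey argument goes through. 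The lift via Proposition~\ref{character} then imposes no condition at the auxiliary ramified places at all.
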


\begin{rem} Note that in the above situation, we have a perfect symmetric $G_F$-equivariant pairing $\langle \ \ , \ \ \rangle : \mathrm{Ind}^{G_{F}}_{G_M}\theta \times (\mathrm{Ind}^{G_{F}}_{G_M}\theta)^c \rightarrow \mathcal{O}_{\overline{\mathbb{Q}}_l}, \ (f, g) \mapsto \sum_{\sigma \in \mathrm{Gal}(M/F)} f(\sigma)g(c \sigma c)$. \end{rem}

\begin{proof} Let $U$ be the set of primes consisting of all primes lying below places in $S$ and $l$. We fix a finite place $\overline{v_0}$ of $F^+$ not lying above any prime in $U$, $\overline{v_0} \nmid 2$ and $\overline{r}$ is unramified at all $v|\overline{v_0}$. By Lemma \ref{cyclic extension}, we can take a cyclic totally real extension $M^+/F^+$ of degree $m$ such that $\overline{v_0}$ and all finite places of $F^+$ lying above $U$ split completely in $M^+$ and $M^+$ is linearly disjoint from $F^{(a)}\overline{F}^{\mathrm{Ker}(\overline{r})}(\zeta_l)$ over $F^+$. We write $\overline{w_1}, \cdots, \overline{w_{m}}$ for the places of $M^+$ lying above $\overline{v_0}$. By Lemma \ref{quadratic extension}, we can also take a quadratic totally real extension $M^{+'}$ of $M^{+}$ such that all finite places of $M^+$ lying above $U$ split completely in $M^{+'}$ and $\overline{w_1}$ ramifies in $M^{+'}$ and $\overline{w_2}, \cdots, \overline{w_{m}}$ are unramified in $M^{+'}$. 

We put $M:=FM^+$, $M':=FM^{+'}$ and $\overline{\theta}: G_M \twoheadrightarrow \mathrm{Gal}(M'/M) \stackrel{\sim}{\rightarrow} \{ \pm 1 \} \subset \overline{\mathbb{F}_l}^{\times}$. Then we have $\overline{\theta} = \overline{\theta}^c$ and $\overline{\theta}\overline{\theta}^c$ is trivial since $M'$ is a CM field. Moreover, we have that $\overline{\theta}|_{G_{M_v}}$ is trivial for any finite place $v$ of $M$ lying above primes in $U$. Let $\sigma$ be a generator of $\mathrm{Gal}(M/F) = \mathrm{Gal}(M(\zeta_l)/F(\zeta_l))$. Then $(\overline{r}|_{G_{F}} \otimes \mathrm{Ind}^{G_{F}}_{G_M}\overline{\theta})|_{G_{F(\zeta_l)}} = \mathrm{Ind}_{G_{M(\zeta_l)}}^{G_{F(\zeta_l)}}(\overline{r}|_{G_{M(\zeta_l)}} \otimes \overline{\theta}|_{G_{M(\zeta_l)}})$ is absolutely irreducible since $\overline{r}|_{G_{M(\zeta_l)}} \otimes \overline{\theta}|_{G_{M(\zeta_l)}} \ncong (\overline{r}|_{G_{M(\zeta_l)}} \otimes \overline{\theta}|_{G_{M(\zeta_l)}})^{\sigma^{i}}$ for any $i = 1, \cdots, m-1$.

Let $T$ be a set of $l$-adic places of $F$ such that $T \sqcup T^c$ is the set of all $l$-adic places of $F$. For any $v \in T$, we fix an $l$-adic place $w_v$ of $M$ lying above $v$. Then $w_v, w_v^{\sigma}, \cdots, w_v^{\sigma^{m-1}}$ are all $l$-adic places of $M$ lying above $v$.

By Proposition \ref{character}, there exists an algebraic $l$-adic character $\theta : G_{M} \rightarrow \mathcal{O}_{\overline{\mathbb{Q}}_l}^{\times}$ satisfying the following condition.

1 \ $\theta$ is a lifting of $\overline{\theta}$.

2 \ $\theta \theta^c$ is trivial.

3 \ $\theta$ is unramified at all finite places lying above primes in $U \setminus \{ l \}$.

4 \ $\theta$ is crystalline at all $v \mid l$.

5 \ $\mathrm{HT}_{\tau}(\theta|_{G_{M_{w_v^{\sigma^i}}}}) = \{ k'i \}$ for any $v \in T$, $i = 0, \cdots, m-1$ and $\tau \in \mathrm{Hom}_{\mathbb{Q}_l}(M_{w_v^{\sigma^i}}, \overline{\mathbb{Q}}_l)$, where $k' \ge k$ is a positive integer such that $\theta$ is residually trivial on the inertia groups at all $l$-adic places. (Therefore, we have $\mathrm{HT}_{\tau}(\theta|_{G_{M_{w_v^{c\sigma^i}}}}) = \{ -k'i \}$ for any $v \in T$, $i = 0, \cdots, m-1$ and $\tau \in \mathrm{Hom}_{\mathbb{Q}_l}(M_{w_v^{c\sigma^i}}, \overline{\mathbb{Q}}_l)$.)

Then for any $v \in T$, if we identify $G_{M_{w_v}}=G_{F_v}$, then we have $(\mathrm{Ind}^{G_F}_{G_M}\theta)|_{G_{F_v}} = \theta|_{G_{M_{w_v}}} \oplus \theta^{\sigma^{-1}}|_{G_{M_{w_v}}} \oplus \cdots \oplus \theta^{\sigma^{1-m}}|_{G_{M_{w_v}}} = \theta|_{G_{M_{w_v}}} \oplus (\theta|_{G_{M_{w_v^{\sigma}}}})^{\sigma^{-1}} \oplus \cdots \oplus (\theta|_{G_{M_{w_v^{\sigma^{m-1}}}}})^{\sigma^{1-m}}$ and $(\mathrm{Ind}^{G_F}_{G_M}\theta)|_{G_{F_{v^c}}} = ((\mathrm{Ind}^{G_F}_{G_M}\theta)|_{G_{F_v}})^{\vee, c}$. Therefore, $\mathrm{Ind}^{G_{F}}_{G_M}\theta$ satisfy the condition 4. \end{proof}

The main theorem in this subsection is the following.

\begin{thm}\label{potential ordinary automorphy}

    Let $l > n^2$ be a prime, $r: G_F \rightarrow \mathrm{GL}_n(\overline{\mathbb{Q}}_l)$ be an algebraic $l$-adic representation and $F^{(a)}$ be a finite extension of $\mathbb{Q}$.

    We assume the following conditions.
        
    1 \ $\zeta_l \notin F$.
        
    2 \ $\overline{r}$ is fully decomposed generic.
        
    3 \ $\overline{r}|_{G_{F(\zeta_l)}}$ is absolutely irreducible.
        
    4 \ For all $v|l$, $r|_{G_{F_v}}$ is ordinary.

    Then for any $\iota: \overline{\mathbb{Q}}_l \stackrel{\sim}{\longrightarrow} \mathbb{C}$, there exist a finite Galois CM extension $L/\mathbb{Q}$ linearly disjoint from $F^{(a)}$ over $\mathbb{Q}$ and an $\iota$-ordinary cohomological cuspidal automorphic representation $\pi$ of $\mathrm{GL}_n(\mathbb{A}_{F'})$ such that $r_{\iota}(\pi) \cong r|_{G_{F'}}$ and $\iota\mathrm{WD}(r_{\iota}(\pi)|_{G_{F_{u}'}})^{F-ss} \cong \mathrm{rec}_{F_{u}'}(\pi_u|\mathrm{det}|_u^{\frac{1-n}{2}})$ for any $u \nmid l$. (Here, we put $F':=LF$.)
        
 \end{thm}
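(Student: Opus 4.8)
The plan is to combine the new method (via Theorem \ref{ordinary automorphy lifting}) with a tensor product trick that replaces $r$ by an induced conjugate self-dual representation whose monodromy types at ramified places match those of $r$. First I would reduce to a convenient situation: by Proposition \ref{base change}, Corollary \ref{extension of Q}, Corollary \ref{imaginary quadratic 2}, Lemma \ref{disjoint CM} and Proposition \ref{ordinary base change}, I may assume $F$ is an imaginary CM field in which all $l$-adic places of $F^+$ split, and that $F^{(a)}$ contains the Galois closure of $\overline{F}^{\mathrm{Ker}(\overline{r})}(\zeta_l)$ over $\mathbb{Q}$. Applying Theorem \ref{ordinary residual potential automorphy} to $\overline{r}$ (with a set $S$ of primes chosen to include a fully decomposed generic prime $p$ for $\overline{r}$, which exists by Lemma \ref{decomposed genericity density}, and all primes below places where $r$ is ramified), I obtain a finite CM Galois extension $L_0/\mathbb{Q}$ and an $\iota$-ordinary cohomological cuspidal automorphic representation $\pi_0$ of $\mathrm{GL}_n(\mathbb{A}_{F_0})$ (with $F_0 := F L_0$) such that $\overline{r_{\iota}(\pi_0)} \cong \overline{r}|_{G_{F_0}}$, $\pi_{0,v}$ is unramified above $S$ (in particular at all $l$-adic places and at the fixed place $u$), $\mathrm{WD}(r_{\iota}(\pi_0)|_{G_{F_{0,v}}})$ is pure for all $v$, and the local-global compatibility holds for $\pi_0$ at all finite places. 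Note $r_{\iota}(\pi_0)$ is polarizable, say with multiplier $\overline{\mu}$ built from $\overline{r_{\iota}(\pi_0)} \cong \overline{r_{\iota}(\pi_0)}^{\vee}\varepsilon_l^{1-n}|_{G_{F_0}}$ after replacing $F_0$ if necessary (this uses that $r$, coming from an algebraic representation, has $\overline{r} \cong \overline{r}^{\vee}\varepsilon_l^{1-n}\overline{\chi}$; here I should be careful — in general $r$ need not be essentially self-dual, so I use instead that $\mathrm{Ind}^{G_{F_0^+}}_{G_{F_0}}\overline{r}$ is essentially self-dual and work with inductions as in Theorem \ref{potential diagonalizable automorphy}).

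The heart of the argument is the tensor product trick of Harris. Using Lemma \ref{ordinary character induction} (applied to $\overline{r}|_{G_{F_0}}$, with $k$ large), I produce a cyclic CM extension $M/F_0$ and an algebraic character $\theta : G_M \to \mathcal{O}_{\overline{\mathbb{Q}_l}}^{\times}$ with $\theta\theta^c$ trivial, $\overline{\theta}$ trivial at the relevant places, such that $\phi := \mathrm{Ind}^{G_{F_0}}_{G_M}\theta$ is $n$-dimensional conjugate self-dual, ordinary at all $l$-adic places, sufficiently regular, and $\overline{r_{\iota}(\pi_0)\otimes \phi}$ absolutely irreducible after restriction to $G_{F_0(\zeta_l)}$. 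Next I apply Proposition \ref{conjugate self-dual ordinary} to $\pi_0$ (or rather to its base change, with $d$ chosen $\ge l$-multiplicity bound; the hypothesis $l > n^2$ guarantees enough room): choosing the local lifts $\rho_v$ at $v \in S\setminus\{u\mid l\}$ to be a conjugate self-dual ordinary lift of $\overline{r_{\iota}(\pi_0)}|_{G_{F_{0,v}}}$ whose monodromy type agrees with that of $\mathrm{WD}(r|_{G_{F_{0,v}}})$ (such a lift exists because $r_{\iota}(\pi_0)^c|_{G_{F_{0,v^c}}} \cong r_{\iota}(\pi_0)^{\vee}\mu|_{G_{F_{0,v}}}$ and the monodromy data are self-dual-symmetric), and crystalline ordinary lifts at $l$-adic places, I obtain a polarizable $\iota$-ordinary cuspidal $\pi_1$ of $\mathrm{GL}_n(\mathbb{A}_{F_0})$ with $\overline{r_{\iota}(\pi_1)} = \overline{r_{\iota}(\pi_0)}$, unramified at $l$-adic places, and with $\mathrm{WD}(r_{\iota}(\pi_1)|_{G_{F_{0,v}}})$ and $\mathrm{WD}(r|_{G_{F_{0,v}}})$ of the same monodromy type for $v\in S\setminus\{u\mid l\}$. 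Then I compare the two $n^2$- (or $dn$-) dimensional representations $r|_{G_{F_0}} \otimes \phi$ and $r_{\iota}(\pi_0)|_{G_{F_0}} \otimes r_{\iota}(\pi_1')$ where $\pi_1'$ is built from $\pi_1$ so that $r_{\iota}(\pi_1')$ is conjugate to $\phi$ residually; here the point is that $\overline{r_{\iota}(\pi_0)}\otimes \overline{\phi} \cong \overline{\mathrm{Ind}^{G_{F_0}}_{G_M}(r_{\iota}(\pi_0)|_{G_M}\theta)}$ is automorphic (by Theorem \ref{ordinary semisimple}, Corollary \ref{ordinary automorphic induction} and automorphic descent/induction), and the tensor product $r_{\iota}(\pi_0\otimes\pi_1)$ has pure Weil–Deligne parameters at non-$l$-adic places by Lemma \ref{purity lemma}(5), so it satisfies local-global compatibility everywhere by Proposition \ref{purity local-global}.

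With these ingredients in place, I apply the new method: by construction the monodromy operators of $\mathrm{WD}((r|_{G_{F_0}}\otimes\phi)|_{G_{F_{0,v}}})$ and $\mathrm{WD}((r_{\iota}(\pi_0)\otimes r_{\iota}(\pi_1'))|_{G_{F_{0,v}}})$ have the same Jordan canonical form at all $v$ (at $l$-adic places both are unramified/crystalline and the residual representations match, so $\sim$ holds by Lemma \ref{p=l}; at $v\in S\setminus\{u\mid l\}$ this follows from the matching monodromy types of the factors and the explicit tensor decomposition of $\mathrm{Sp}$'s, cf.\ the formula in the proof of Lemma \ref{purity lemma}; at $u$, $r$ is unramified), and $r_{\iota}(\pi_0)\otimes r_{\iota}(\pi_1')$ is automorphic with local-global compatibility. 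Thorough bookkeeping — checking decomposed genericity and adequacy of the residual tensor product on $G_{F_0(\zeta_l)}$, which needs $l > n^2 \ge 2(n^2+1)$-type bounds, and checking that $\zeta_l\notin F_0$ — lets me invoke Theorem \ref{ordinary automorphy lifting} (with $U$ the set of places above $u$) to conclude that $r|_{G_{F_0}}\otimes\phi$ corresponds to an $\iota$-ordinary cohomological cuspidal representation satisfying local-global compatibility at places above $u$. Finally, Lemma \ref{purity tensor} (using that $\phi|_{G_{F_{0,u'}}}$ is unramified pure, since $\theta$ is unramified above $u$) yields $\iota\mathrm{WD}(r|_{G_{F_{0,u}}})^{F\textrm{-}ss} \cong \mathrm{rec}_{F_{0,u}}((\text{the cuspidal rep for }r|_{G_{F_0}})_u|\det|_u^{\frac{1-n}{2}})$; running this for all $u\nmid l$ and collecting the finitely many solvable extensions into one CM Galois $L/\mathbb{Q}$ linearly disjoint from $F^{(a)}$ (and using Proposition \ref{strong multiplicity one} for independence of $u$), together with Theorem \ref{ordinary automorphy lifting} applied directly to $r|_{G_{F'}}$ with $U$ running over non-$l$-adic places, gives the $\iota$-ordinary cuspidal $\pi$ of $\mathrm{GL}_n(\mathbb{A}_{F'})$ with $r_{\iota}(\pi)\cong r|_{G_{F'}}$, $\pi_v$ unramified for $v\mid l$, and full local-global compatibility at $u\nmid l$. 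The main obstacle I anticipate is verifying, uniformly in $u$, that the residual tensor product representation $\overline{r_{\iota}(\pi_0)}\otimes\overline{\phi}$ restricted to $G_{F_0(\zeta_l)}$ is both absolutely irreducible with adequate image and decomposed generic — this is exactly where one needs to arrange $M$ and $\theta$ carefully via Lemma \ref{ordinary character induction} and where the hypothesis $l > n^2$ is used, and it is also the place where one must ensure that $\mathrm{Ind}^{G_{F_0}}_{G_M}(r_{\iota}(\pi_0)|_{G_M}\theta)$ is genuinely cuspidal (irreducibility of the induction) so that Proposition \ref{ordinary automorphy lifting}'s automorphy input is available.
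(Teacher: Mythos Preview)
Your overall architecture is close to the paper's, but there is one genuine misstep that breaks the argument: you apply Proposition~\ref{conjugate self-dual ordinary} to $\pi_0$ (the automorphic representation with $\overline{r_\iota(\pi_0)}\cong\overline{r}$) in order to produce a polarizable $\pi_1$ with the same residual representation and with prescribed monodromy matching $r$. You cannot do this. Condition (5) of Proposition~\ref{conjugate self-dual ordinary} requires a perfect symmetric $G_F$-equivariant pairing $\overline{r_\iota(\pi)}\times\overline{r_\iota(\pi)}^c\to\overline\mu|_{G_F}$, i.e.\ that the residual representation is essentially conjugate self-dual. In Theorem~\ref{potential ordinary automorphy} no such hypothesis on $\overline r$ is made, and you correctly flag this (``in general $r$ need not be essentially self-dual''); but your suggested fix of passing to $\mathrm{Ind}^{G_{F_0^+}}_{G_{F_0}}\overline r$ ``as in Theorem~\ref{potential diagonalizable automorphy}'' does not repair the ordinary tensor-product argument here---that route leads to the self-dual potentially diagonalizable case, not the ordinary one, and in any event would still not let you deform $\overline r$ itself via Proposition~\ref{conjugate self-dual ordinary}.

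The paper resolves this by swapping the roles of the two tensor factors. It applies Proposition~\ref{conjugate self-dual ordinary} not to $\pi_0$ but to the polarizable automorphic induction $\mathrm{AI}_{N/F_2}(\chi)\|\det\|^{\frac{n-1}{2}}$, whose residual representation $\overline{\mathrm{Ind}^{G_{F_2}}_{G_N}\phi}$ \emph{is} conjugate self-dual by construction ($\phi\phi^c$ trivial). This yields a polarizable $\pi'$ with $\overline{r_\iota(\pi')}\cong\overline{\mathrm{Ind}\phi}$ and with monodromy at the fixed place $u$ matching that of $r$. One then runs a three-step chain on $n^2$-dimensional tensors sharing the common residual $\overline r\otimes\overline{\mathrm{Ind}\phi}$: first $\tau_1=r_\iota(\Pi)\otimes\mathrm{Ind}\phi$ is automorphic by induction; then $\tau_2=r_\iota(\Pi)\otimes r_\iota(\pi')$ is automorphic by Theorem~\ref{ordinary automorphy lifting} with $U=\emptyset$, and since both factors of $\tau_2$ are pure at $u$ it satisfies local--global compatibility there by Proposition~\ref{purity local-global}; finally $\tau_3=r\otimes\mathrm{Ind}\phi$ has the same monodromy at $u$ as $\tau_2$ (each is an unramified $n$-dimensional twist of a representation with the monodromy of $r$), so Theorem~\ref{ordinary automorphy lifting} with $U=\{u\}$ gives $\tau_3$ automorphic with local--global compatibility at $u$. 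Lemma~\ref{local global induction 2} and descent then recover the desired $\pi$ for $r$ over $F_2$. If you rewrite your argument with Proposition~\ref{conjugate self-dual ordinary} applied to the induced-character side rather than to $\pi_0$, the rest of your sketch (decomposed genericity and adequacy checks, the final bootstrapping over all $u\nmid l$, and the direct application of Theorem~\ref{ordinary automorphy lifting} with $U=\emptyset$ to obtain $\iota$-ordinarity) goes through as in the paper.
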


\begin{proof}

    We may assume that $n \ge 2$, $F$ is an imaginary CM field, all $l$-adic places of $F^+$ split in $F$ by Corollary \ref{imaginary quadratic 2} and $F^{(a)}$ contains the Galois closure of $\overline{F}^{\mathrm{Ker}(\overline{r})}(\zeta_l)$ over $\mathbb{Q}$. First, note that it suffices to prove that for any finite place $u' \nmid l$ of $F$, there exist $L$ and $\pi$ as in the statement but satisfying that $\iota\mathrm{WD}(r_{\iota}(\pi)|_{G_{F_{u}'}})^{F-ss} \cong \mathrm{rec}_{F_{u}'}(\pi_u|\mathrm{det}|_u^{\frac{1-n}{2}})$ for some $u \nmid u'$ instead of for any $u \nmid l$. In fact, if we can prove this result, then by applying this result for $r_{\iota}(\pi)$ again at other non-$l$-adic places and using Proposition \ref{potential automorphy and local-global compatibility}, we obtain the result. In the following, we fix a prime $q \neq l$ and a place $u' \mid q$ of $F$. We may assume that $q$ splits in an imaginary quadratic field in $F$.

    By Theorem \ref{ordinary residual potential automorphy}, there exist a finite CM Galois extension of $L/\mathbb{Q}$ linearly disjoint from $F^{(a)}$ over $\mathbb{Q}$ and an $\iota$-ordinary cohomological cuspidal automorphic representation $\Pi$ of $\mathrm{GL}_n(\mathbb{A}_{F_1})$ such that $\overline{r_{\iota}(\Pi)} \cong \overline{r}|_{G_{F_1}}$, $\Pi_{u}$ is unramified and $\mathrm{WD}(r_{\iota}(\Pi)|_{G_{F_{1,u}}})$ is pure for any $u \mid u'$. (We put $F_1:=LF$. We only need the purity at $u \mid u'$ in this proof.) Therefore, $\overline{r}|_{G_{F_1}}$ is fully decomposed generic by Lemma \ref{decomposed genericity density}, $\overline{r}|_{G_{F_1(\zeta_l)}}$ is absolutely irreducible and $\zeta_l \notin F_1$. We can take an odd prime $p \neq l, q$ such that $p$ is fully decomposed generic for $\overline{r}|_{G_{F_1}}$ by Lemma \ref{decomposed genericity density}.
    
    By Lemma \ref{ordinary character induction}, there exist a cyclic CM extension $M/F_1$ of degree $n$ and an algebraic $l$-adic character $\phi : G_{M} \rightarrow \mathcal{O}_{\overline{\mathbb{Q}}_l}^{\times}$ satisfying the following conditions.

    $(a)$ \ $p$ splits completely in $M$.

    $(b)$ \ $(\overline{r|_{G_{F_1}} \otimes \mathrm{Ind}^{G_{F_1}}_{G_M}\phi})|_{G_{F_1(\zeta_l)}}$ is absolutely irreducible.

    $(c)$ \ $r|_{G_{F_1}} \otimes \mathrm{Ind}^{G_{F_1}}_{G_M}\phi$, $r_{\iota}(\Pi) \otimes \mathrm{Ind}^{G_{F_1}}_{G_M}\phi$ and $\mathrm{Ind}^{G_{F_1}}_{G_M}\phi$ are ordinary at all $l$-adic places.

    $(d)$ \ $\phi \phi^c$ is trivial.

    $(e)$ \ $\overline{\phi}|_{G_{M_v}}$ is trivial for any $v|p$.

$(f)$ \ $\phi|_{G_{M_v}}$ is unramified for any $v|p$.

By the conditions $(a)$ and $(e)$, we obtain that $p$ is decomposed generic for $\overline{r|_{G_{F_1}} \otimes \mathrm{Ind}^{G_{F_1}}_{G_M} \phi}$.

    By Corollary \ref{imaginary quadratic 2} and Corollary \ref{extension of Q}, there exists a finite solvable CM extension $L'/\mathbb{Q}$ satisfying the following conditions. (We put $F_2:=L'F_1$.)
    
    $\cdot$ \ $L'$ is linearly disjoint from $MF^{(a)}\overline{F_1}^{\mathrm{Ker}(\overline{\mathrm{Ind}^{G_{F_1}}_{G_M}\phi})}$ over $\mathbb{Q}$.
    
    $\cdot$ \ $p$ splits completely in $L'$.
    
    $\cdot$ \ For all $w|l$, $(\mathrm{Ind}^{G_{F_1}}_{G_{M}}\phi)|_{G_{F_{2,w}}}$ is crystalline.

$\cdot$ \ For all $w \nmid l$, $(\mathrm{Ind}^{G_{F_1}}_{G_{M}}\phi)|_{G_{F_{2,w}}}$ is unramified.

$\cdot$ \ For all $u \mid q$, $(\overline{\mathrm{Ind}^{G_{F_1}}_{G_M}\phi})|_{G_{F_{2,u}}}$ and $\overline{r}|_{G_{F_{2,u}}}$ are trivial and $r|_{G_{F_{2,u}}}$ is unipotently ramified. 
    
We put $N:=F_2M$, $\chi:=r_{\iota}^{-1}(\phi|_{G_N})$ and we fix a finite place $u$ of $F_2$ lying above $u'$. By Corollary \ref{ordinary automorphic induction}, $\mathrm{AI}_{N/F_2}(\chi)||\mathrm{det}||_{F_2}^{\frac{n-1}{2}}$ is a polarizable $\iota$-ordinary cohomological cuspidal automorphic representation of $\mathrm{GL}_{n}(\mathbb{A}_{F_2})$ and $r_{\iota}(\mathrm{AI}_{N/F_2}(\chi)||\mathrm{det}||^{\frac{n-1}{2}}_{F_2}) \cong \mathrm{Ind}^{G_{F_2}}_{G_{N}}\phi|_{G_N}$. By Proposition \ref{conjugate self-dual ordinary}, there exists an $\iota$-ordinary polarized cohomological cuspidal automorphic representations $\pi'$ of $\mathrm{GL}_{n}(\mathbb{A}_{F_2})$ satisfying the following conditions.
    
    $\cdot$ \ $r_{\iota}(\pi')^c \cong r_{\iota}(\pi')^{\vee}$.
    
    $\cdot$ \ $\overline{r_{\iota}(\pi')} \cong \mathrm{Ind}^{G_{F_2}}_{G_{N}}\overline{\phi}|_{G_{N}}$.
    
    $\cdot$ \ For any $u \mid u'$, $\mathrm{WD}(r|_{G_{F_{2,u}}})$ and $\mathrm{WD}(r_{\iota}(\pi')|_{G_{F_{2,u}}})$ have the same monodromy type.
    
    We put $\tau_1 := r_{\iota}(\Pi)|_{G_{F_2}} \otimes \mathrm{Ind}^{G_{F_2}}_{G_{N}}\phi|_{G_{N}}$, $\tau_2:= r_{\iota}(\Pi)|_{G_{F_2}} \otimes r_{\iota}(\pi')$ and $\tau_3 := r|_{G_{F_2}} \otimes \mathrm{Ind}^{G_{F_2}}_{G_N}\phi|_{G_{N}}$. By the above constructions, we have the following properties.
    
    $\cdot$ \ $\zeta_l \notin F_2$. 
    
    $\cdot$ \ $\overline{\tau_1} \cong \overline{\tau_2} \cong \overline{\tau_3}$.
    
    $\cdot$ \ $\overline{\tau_1}|_{G_{F_2(\zeta_l)}}$ is absolutely irreducible.
    
    $\cdot$ \ $\overline{\tau_1}$ is decomposed generic.
    
    $\cdot$ \  $\overline{\tau_1}(G_{F_2(\zeta_l)})$ is adequate by Proposition \ref{adequate}. (We use $l \nmid n$ and $l \ge 2(n+1)$.)
    
    $\cdot$ \ $\tau_1|_{G_{F_{2,w}}}$, $\tau_2|_{G_{F_{2,w}}}$ and $\tau_3|_{G_{F_{2,w}}}$ are ordinary for all $w|l$.
    
    $\cdot$ \ $\tau_2|_{G_{F_{2,u}}}$ and $\tau_3|_{G_{F_{2,u}}}$ have the same monodromy type for all $u \mid u'$.
    
    $\cdot$ \ $\mathrm{WD}(\tau_2|_{G_{F_{2,u}}})$ is pure for all $u \mid u'$ by Theorem \ref{polarizable local-global compatibility} and 5 of Lemma \ref{purity lemma}.

   We put $\Pi_1 := \mathrm{AI}_{N/F_2}(\mathrm{BC}_{N/F_2}(\mathrm{BC}_{F_2/F_1}(\Pi))\chi)||\mathrm{det}||_{F_2}^{\frac{n(n-1)}{2}}$. This is an $\iota$-ordinary cohomological cuspidal automorphic representation of $\mathrm{GL}_{n^2}(\mathbb{A}_{F_2})$ and we have $r_{\iota}(\Pi_1) \cong \tau_1$ by Proposition \ref{ordinary base change} and Corollary \ref{ordinary automorphic induction}.
    
    By using Theorem \ref{ordinary automorphy lifting} (when $U = \emptyset$), there exists an $\iota$-ordinary cohomological cuspidal automorphic representation $\Pi_2$ of $\mathrm{GL}_{n^2}(\mathbb{A}_{F_2})$ such that $\tau_2 \cong r_{\iota}(\Pi_2)$. Then we have $\iota \mathrm{WD}(r_{\iota}(\Pi_2)|_{G_{F_{2,u}}})^{F-ss} \cong \mathrm{rec}_{F_{2,u}}(\Pi_{2,u}|\mathrm{det}|_u^{\frac{1-n^2}{2}})$ for any $u \mid u'$ by Proposition \ref{purity local-global} and the purity of $\mathrm{WD}(\tau_2|_{G_{F_{2,u}}})$. By Theorem \ref{ordinary automorphy lifting} again (when $U = \{ u \mid u' \}$), there exists an $\iota$-ordinary cohomological cuspidal automorphic representation $\Pi_3$ of $\mathrm{GL}_{n^2}(\mathbb{A}_{F_2})$ such that $r_{\iota}(\Pi_3) \cong \tau_3$ and $\iota \mathrm{WD}(r_{\iota}(\Pi_3)|_{G_{F_{2,u}}})^{F-ss} \cong \mathrm{rec}_{F_{2,u}}(\Pi_{3,u}|\mathrm{det}|_u^{\frac{1-n^2}{2}})$ for any $u \mid u'$. By Lemma \ref{local global induction 2}, we obtain a cohomological cuspidal automorphic representation $\pi''$ of $\mathrm{GL}_{n}(\mathbb{A}_{N})$ such that $r_{\iota}(\pi'') \cong r|_{G_{N}}$ and $\iota \mathrm{WD}(r_{\iota}(\pi'')|_{G_{N_{u}}})^{F-ss} \cong \mathrm{rec}_{N_{u}}(\pi_{u}''|\mathrm{det}|_u^{\frac{1-n}{2}})$ for all $u \mid u'$. Thus we obtain a cohomological cuspidal automorphic representation $\pi$ of $\mathrm{GL}_{n}(\mathbb{A}_{F_2})$ such that $r_{\iota}(\pi) \cong r|_{G_{F_2}}$ and $\iota \mathrm{WD}(r_{\iota}(\pi)|_{G_{F_{2, u}}})^{F-ss} \cong \mathrm{rec}_{F_{2 ,u}}(\pi_{u}|\mathrm{det}|_u^{\frac{1-n}{2}})$ for any $u \mid u'$ by Proposition \ref{base change} and Proposition \ref{potential automorphy and local-global compatibility}. 

Finally, we will prove the $\iota$-ordinarity of $\pi$. Note that $\mathrm{BC}_{F_2/F_1}(\Pi)$ is an $\iota$-ordinary cohomological cuspidal automorphic representation of $\mathrm{GL}_n(\mathbb{A}_{F_2})$ and $\overline{r}|_{G_{F_2}} \cong \overline{r_{\iota}(\mathrm{BC}_{F_2/F_1}(\Pi))}$ by Propositions \ref{ordinary base change} and \ref{base change}. Therefore, we obtain an $\iota$-ordinary cohomological automorphic representation $\Pi'$ of $\mathrm{GL}_n(\mathbb{A}_{F_2})$ such that $r|_{G_{F_2}} \cong r_{\iota}(\Pi')$ by Theorem \ref{ordinary automorphy lifting} (when $U = \emptyset$). Then Propositions \ref{strong multiplicity one} induces $\Pi' \cong \pi$ and consequently we obtain the $\iota$-ordinarity of $\pi$. \end{proof}

\subsection{Potential automorphy and local-global compatibility in sufficiently regular weight potentially diagonalizable cases} \label{potentially diagonalizable}

\begin{lem}\label{diagonalizable induction character}

We assume that $F$ is an imaginary CM field.

(a) \ Let $l$ be an odd prime such that all $l$-adic places of $F^+$ split in $F$.
        
(b) \ Let $m$ be a positive integer. 

(c) \ Let $S$ be a finite set of finite places of $F$ not containing $2$-adic places.

(d) \ Let $\overline{r} : G_F \rightarrow \mathrm{GL}_n(\overline{\mathbb{F}_l})$ be a continuous representation such that $\overline{r}|_{G_{F(\zeta_l)}}$ is absolutely irreducible.

(e) \ Let $w_1$ and $w_2$ be integers.

(f) \ For any $v \mid l$, let $\rho_{1, v}, \rho_{2, v} : G_{F_v} \rightarrow \mathrm{GL}_m(\mathcal{O}_{\overline{\mathbb{Q}}_l})$ be residually trivial diagonalizable Hodge-Tate regular $l$-adic representations such that $\rho_{1, v^c}^c \sim \rho_{1, v}^{\vee} \varepsilon_l^{-w_1}$ and $\rho_{2, v^c}^c \sim \rho_{2, v}^{\vee} \varepsilon_l^{-w_2}$.

(g) \ Let $F^{(a)}$ be a finite extension of $F$.

Then there exist a cyclic extension $M/F$ of CM fields of degree $m$ linearly disjoint from $F^{(a)}$ over $F$ and algebraic $l$-adic characters $\phi_1, \phi_2 : G_{M} \rightarrow \mathcal{O}_{\overline{\mathbb{Q}}_l}^{\times}$ satisfying the following conditions.

1 \ All places in $S$ split completely in $M$.
        
2 \ $\overline{\phi_1} = \overline{\phi_2}$.

3 \ $(\overline{r} \otimes \overline{\mathrm{Ind}^{G_F}_{G_M}}\phi_1)|_{G_{F(\zeta_l)}}$ is absolutely irreducible.

4 \ For any $v|l$, $(\mathrm{Ind}^{G_{F}}_{G_{M}}\phi_1)|_{G_{F_v}} \sim \rho_{1, v}|_{G_{F_v}}$ and $(\mathrm{Ind}^{G_{F}}_{G_{M}}\phi_2)|_{G_{F'_v}} \sim \rho_{2, v}|_{G_{F_{v}}}$.

5 \ For any $i = 1, 2$, there exists an algebraic $l$-adic character $\chi_i : G_{F^+} \rightarrow \mathcal{O}_{\overline{\mathbb{Q}}_l}^{\times}$ such that $\chi_i(c_v) = \chi_i(c_w)$ for any $v, w \mid \infty$ and $\phi_i\phi_i^c = \chi_i|_{G_{M}}$.

6 \ $\overline{\phi_1}|_{G_{M_v}}$ is trivial for any finite place $v$ of $M$ lying above a place in $S$. 

7 \ $\phi_1|_{G_{M_v}}$ and $\phi_2|_{G_{M_v}}$ are unramified for any finite place $v$ of $M$ lying above a place in $S \setminus \{ u \mid l \}$.

\end{lem}

\begin{rem} Note that in the above situation, we have a perfect symmetric $G_F$-equivariant pairing $\langle \ \ , \ \ \rangle : \mathrm{Ind}^{G_{F}}_{G_M}\phi_i \times (\mathrm{Ind}^{G_{F}}_{G_M}\phi_i)^c \rightarrow \chi_i, \ (f, g) \mapsto \sum_{\sigma \in \mathrm{Gal}(M/F)} \chi_i^{-1}(\sigma)f(\sigma)g(c \sigma c)$.  \end{rem}

\begin{proof} By Lemma \ref{ordinary character induction}, there exist a cyclic extension $M/F$ of CM fields of degree $m$ and an algebraic $l$-adic character $\psi : G_{M} \rightarrow \mathcal{O}_{\overline{\mathbb{Q}}_l}^{\times}$ satisfying the following conditions.

$\cdot$ \ All places in $S \cup \{ v|l \}$ split completely in $M$.

$\cdot$ \ $\psi \psi^c$ is trivial.

$\cdot$ \ $(\overline{r} \otimes \overline{\mathrm{Ind}^{G_F}_{G_M}}\psi)|_{G_{F(\zeta_l)}}$ is absolutely irreducible.

$\cdot$ \ $\overline{\psi}|_{G_{M_v}}$ is trivial for any finite place $v$ of $M$ lying above a place in $S$. 

$\cdot$ \ $M$ is linearly disjoint from $F^{(a)}$ over $F$.

$\cdot$ \ $\psi|_{G_{M_v}}$ is unramified for any finite place $v$ of $M$ lying above a place in $S \setminus \{ u \mid l \}$.

Let $T$ be a set of $l$-adic places of $F$ such that $T \sqcup T^c$ is the set of all $l$-adic places of $F$. For any $v \in T$, we fix an $l$-adic place $w_v$ of $M$ lying above $v$. Then $w_v, w_v^{\sigma}, \cdots, w_v^{\sigma^{m-1}}$ are all $l$-adic places of $M$ lying above $v$, where $\sigma$ denotes the generator of $\mathrm{Gal}(M/F)$.

For $v \in T$ and any $i = 1, 2$, there exist crystalline characters $\chi_{i, v,1}, \cdots, \chi_{i, v,m} : G_{F_v} \rightarrow \mathcal{O}_{\overline{\mathbb{Q}}_l}$ such that $\rho_{i, v} \sim \chi_{i, v,1} \oplus \chi_{i,v,2} \oplus \cdots \oplus \chi_{i,v,m}$ since $\rho_{i, v}$ is diagonalizable. Then we have $\rho_{i, v^c} \sim (\chi_{i,v, 1}^{-c} \oplus \chi_{i,v, 2}^{-c} \oplus \cdots \oplus \chi_{i,v, m}^{-c} )\varepsilon_l^{-w_i}$ by the assumption $(f)$. By Lemma \ref{character}, there exists an algebraic $l$-adic character $\psi_i : G_{M} \rightarrow \mathcal{O}_{\overline{\mathbb{Q}}_l}^{\times}$ such that $\psi_i|_{G_{M_v}}$ is unramified for any finite place $v$ of $M$ lying above a place in $S \setminus \{ u \mid l \}$, $\psi_i \psi^c_i = \varepsilon_l^{-w_i}$ and $\psi_i|_{I_{M_{w_v^{\sigma^{j-1}}}}} = \chi_{i,v,j}|_{I_{F_v}}$ for any $v \in T$ if we identify $G_{F_v} = G_{M_{w_v^{\sigma^{j-1}}}}$.

Let $\phi_i := \psi_i \tilde{\psi_i}^{-1} \tilde{\psi}$, where $\tilde{\psi}_i$ (resp. $\tilde{\psi}$) denotes the Teichmuller lift of $\overline{\psi}_i$ (resp. $\overline{\psi}$). Then we obtain that $\overline{\phi}_1 = \overline{\phi}_2 = \overline{\psi}$ and since the considered residual representations are trivial at all $v \in T$, we have $(\mathrm{Ind}^{G_{F}}_{G_{M}}\phi_i)|_{G_{F_v}} = \phi_i|_{G_{M_{w_v}}} \oplus (\phi_i|_{G_{M_{w_v^{\sigma}}}})^{\sigma^{-1}} \oplus \cdots \oplus (\phi_i|_{G_{M_{w_v^{\sigma^{m-1}}}}})^{\sigma^{1-m}} \sim \chi_{i, v,1} \oplus \chi_{i, v,2} \oplus \cdots \oplus \chi_{i, v,m} \sim \rho_{i, v}$ and $(\mathrm{Ind}^{G_{F}}_{G_{M}}\phi_i)|_{G_{F_{v^c}}} = ((\phi_i|_{G_{M_{w_v}}})^{-c} \oplus (\phi_i|_{G_{M_{w_v^{\sigma}}}})^{-\sigma^{-1}c} \oplus \cdots \oplus (\phi_i|_{G_{M_{w_v^{\sigma^{m-1}}}}})^{-\sigma^{1-m}c}) \varepsilon_l^{-w_i} \sim (\chi_{i, v, 1}^{-c} \oplus \chi_{i, v, 2}^{-c} \oplus \cdots \oplus \chi_{i, v, m}^{-c} )\varepsilon_l^{-w_i} \sim \rho_{i, v^c}$ by 2 and 6 of Lemma \ref{p=l}. Therefore, we obtain $(\mathrm{Ind}^{G_{F}}_{G_{M}}\phi_i)|_{G_{F_v}} \sim \rho_{i, v}$ and $(\mathrm{Ind}^{G_{F}}_{G_{M}}\phi_i)|_{G_{F_{v^c}}} \sim \rho_{i, v^c}$ by 1 of Lemma \ref{p=l}. \end{proof}

\begin{thm}\label{potential tensor automorphy} Let $l > 8n^3$ be a prime, $r: G_F \rightarrow \mathrm{GL}_n(\overline{\mathbb{Q}}_l)$ be an algebraic $l$-adic representation, $F^{(a)}$ be a finite extension of $\mathbb{Q}$ and $U$ be a finite set of primes not containing $l$. We assume the following conditions.
        
1 \ $F \nsubseteq F^+(\zeta_l)$ or $F$ is a totally real field.
        
2 \ $\overline{r}$ is fully strongly decomposed generic.
        
3 \ $\overline{r}|_{G_{F(\zeta_l)}}$ is absolutely irreducible.
        
4 \ For all $v|l$, $r|_{G_{F_v}}$ is potentially diagonalizable and Hodge-Tate regular.
        
5 \ There exists an integer $w$ such that for all $v|l$, there exists a finite extension $F_v'/F_v$ such that $r^c|_{G_{F'_v}} \sim (r^{\vee}\varepsilon_l^{-w}) |_{G_{F_v'}}$. 
    
6 \ For all $\tau \in \mathrm{Hom}(F, \overline{\mathbb{Q}}_l)$, $\lambda \neq \lambda' \in \mathrm{HT}_{\tau}(r)$, we have $|\lambda - \lambda'| \ge n$.
        
Then for any $\iota: \overline{\mathbb{Q}}_l \stackrel{\sim}{\longrightarrow} \mathbb{C}$, there exist a finite Galois CM extension $L/\mathbb{Q}$ linearly disjoint from $F^{(a)}$ over $\mathbb{Q}$, a cohomological cuspidal automorphic representation $\Pi$ of $\mathrm{GL}_{2n^2}(\mathbb{A}_{F'})$ and a polarizable $\iota$-ordinary cohomological cuspidal automorphic representation $\pi$ of $\mathrm{GL}_{2n}(\mathbb{A}_{F'})$ satisfying the following conditions. (We put $F':=LF$.)
        
(a) \ $\overline{r_{\iota}(\pi) \otimes r|_{G_{F'}}}$ is absolutely irreducible and decomposed generic.
    
(b) \ $\pi$ is unramified at all places lying above primes in $U$.

(c) \ $r_{\iota}(\pi) \otimes r|_{G_{F'}} \cong r_{\iota}(\Pi)$.
    
(d) \ $\iota\mathrm{WD}(r_{\iota}(\Pi)|_{G_{F'_{v}}})^{F-ss} \cong \mathrm{rec}_{F'_{v}}(\Pi_v|\mathrm{det}|_v^{\frac{1-2n^2}{2}})$ for all $v \nmid l$.

\end{thm}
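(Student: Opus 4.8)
The plan is to run Harris's tensor product trick, exactly parallel to the proof of Theorem \ref{potential ordinary automorphy}, but in the potentially diagonalizable, sufficiently regular weight setting, with the ordinary automorphy lifting theorem (Theorem \ref{ordinary automorphy lifting}) as the engine. After the usual preliminary reductions — enlarging $F^{(a)}$ so that it contains the Galois closure of $\overline{F}^{\mathrm{Ker}\,\overline{r}}(\zeta_l)$ over $\mathbb{Q}$, and, using hypothesis 1 together with Corollary \ref{imaginary quadratic 2} and Lemma \ref{disjoint CM}, arranging that $F$ is an imaginary CM field with $\zeta_l\notin F$ and every $l$-adic place of $F^+$ split in $F$ — I would invoke Theorem \ref{ordinary residual potential automorphy} (legitimate because $\overline{r}$ is decomposed generic) to obtain a finite CM Galois extension $L_0/\mathbb{Q}$ disjoint from $F^{(a)}$ and an $\iota$-ordinary cohomological cuspidal $\pi_0$ on $\mathrm{GL}_n(\mathbb{A}_{F_1})$, $F_1=FL_0$, with $\overline{r_\iota(\pi_0)}\cong\overline{r}|_{G_{F_1}}$, with $r_\iota(\pi_0)|_{G_{F_{1,v}}}$ crystalline ordinary of regular weight, with $\mathrm{WD}(r_\iota(\pi_0)|_{G_{F_{1,v}}})$ pure at every finite place, and with local--global compatibility at every $v\nmid l$. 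I would also fix a prime $p\neq l$ that is fully strongly decomposed generic for $\overline{r}|_{G_{F_1}}$ (Lemma \ref{decomposed genericity density}).

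The core is the construction of the auxiliary representation. Using a version of Lemma \ref{diagonalizable induction character}/Lemma \ref{ordinary character induction} with $m=2n$, I would build a cyclic degree-$2n$ CM extension $M/F_1$ (disjoint from the relevant fields, with $p$ and the primes in $U$ split in $M$) and an algebraic Hecke character $\theta:G_M\to\mathcal{O}_{\overline{\mathbb{Q}_l}}^\times$ with $\theta\theta^c=\chi|_{G_M}$ for a residually trivial Hecke character $\chi$ of $G_{F^+}$ with matching infinite-place signs, such that $t:=\mathrm{Ind}^{G_{F_1}}_{G_M}\theta$ is ordinary and Hodge--Tate regular at all $v\mid l$, $\overline{r}|_{G_{F_1}}\otimes\overline{t}|_{G_{F_1(\zeta_l)}}$ is absolutely irreducible, $\theta$ is unramified above $U$, $\mathrm{Ind}^{G_{F_1}}_{G_M}\theta$ is irreducible, and — the point of hypothesis 6 — the Hodge--Tate weights of $\theta$ above $l$ are chosen (possible by Lemma \ref{character}, the gap $\geq n$ giving enough room to interleave consistently with polarizability and hypothesis 5) so that $t\otimes r$ is Hodge--Tate regular everywhere above $l$. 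After a further solvable CM base change to $F_2$ (making $r$ crystalline above $l$ and $M$ split above $l$), $\pi:=\mathrm{AI}_{M/F_1}(\theta)\,|\mathrm{det}|^{(1-2n)/2}$ on $\mathrm{GL}_{2n}(\mathbb{A}_{F_2})$ is polarizable, $\iota$-ordinary (Corollary \ref{ordinary automorphic induction}) and unramified above $U$, so (b) holds and $r_\iota(\pi)=t|_{G_{F_2}}$. Since $t|_{G_M}$, hence $t\otimes r|_{G_M}$, is (up to the equivalence $\sim$) a sum of crystalline characters with distinct Hodge--Tate weights, $t\otimes r$ is ordinary at every $v\mid l$ by Proposition \ref{semistable ordinary deformation ring}. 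Finally $\overline{r_\iota(\pi)\otimes r}=\overline{r_\iota(\pi_0)\otimes t}$ and $r_\iota(\pi_0)|_{G_{F_2}}\otimes t|_{G_{F_2}}\cong\mathrm{Ind}^{G_{F_2}}_{G_N}\!\big(r_\iota(\pi_0)|_{G_N}\cdot\theta|_{G_N}\big)$, $N=F_2M$, is cuspidal automorphic — base change $\pi_0$ to $N$, twist by $\theta$, apply automorphic induction — and $\iota$-ordinary (Corollary \ref{ordinary automorphic induction}); call it $\Pi_1$, and observe that it has local--global compatibility at every $v\nmid l$ because $\pi_0$ does and automorphic induction, base change and character twists all respect it. Thus $\overline{r_\iota(\pi)\otimes r}|_{G_{F_2}}$ is automorphic, and (a) holds by the above choices together with Lemma \ref{decomposed genericity density}.

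Now I would apply Theorem \ref{ordinary automorphy lifting} to $\tau:=r|_{G_{F_2}}\otimes r_\iota(\pi)$, which is ordinary at all $v\mid l$, has $\overline{\tau}|_{G_{F_2(\zeta_l)}}$ absolutely irreducible (so $\overline{\tau}(G_{F_2(\zeta_l)})$ is adequate by Proposition \ref{adequate}, using $l>8n^3\geq 2(2n^2+1)$), is decomposed generic, has $\zeta_l\notin F_2$, and satisfies $\overline{\tau}\cong\overline{r_\iota(\Pi_1)}|_{G_{F_2}}$ with $\Pi_1$ $\iota$-ordinary. Taking the set $U$ in that theorem to be the finite set of ramified places of $\tau$ — at which I would first arrange, after suitable solvable base change and via Proposition \ref{conjugate self-dual ordinary} applied to the (polarizable) $\pi_0$, that $r_\iota(\pi_0)$ and $r$, hence $r_\iota(\Pi_1)$ and $\tau$, have the same monodromy type — and using that $\Pi_1$ has local--global compatibility there, I obtain an $\iota$-ordinary cohomological cuspidal $\Pi$ on $\mathrm{GL}_{2n^2}(\mathbb{A}_{F_2})$ with $r_\iota(\Pi)\cong r_\iota(\pi)\otimes r|_{G_{F_2}}$ (which is (c)) and $\iota\mathrm{WD}(r_\iota(\Pi)|_{G_{F_{2,v}}})^{F\text{-}ss}\cong\mathrm{rec}_{F_{2,v}}(\Pi_v|\mathrm{det}|^{(1-2n^2)/2})$ at those $v$; at the remaining, unramified places this compatibility is automatic by Theorem \ref{Ila Varma} and Proposition \ref{automorphy lifting and local-global compatibility}, which yields (d). One then renames $F_2$ as $F'$, which is $FL$ for a finite CM Galois $L/\mathbb{Q}$ disjoint from $F^{(a)}$.

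The step I expect to be the main obstacle is exactly the monodromy matching invoked in the previous paragraph: because $r$ is not assumed globally conjugate self-dual (hypothesis 5 is purely local at $l$), one cannot in general match a polarizable auxiliary representation to $r$ at a ramified place where the monodromy types at $v$ and $v^c$ differ, so Proposition \ref{conjugate self-dual ordinary} does not apply verbatim. I expect this to be circumvented as in the proof of Theorem \ref{potential ordinary automorphy}: prove local--global compatibility at a single, carefully chosen auxiliary place $u$ (with $q_u\not\equiv 1\pmod l$, where the relevant inductions behave well), and then propagate to all $v\nmid l$ by re-running the construction targeting $v$ with a new auxiliary representation unramified above $v$, and descending via Lemma \ref{purity tensor} — whose unramified-pure hypothesis is met since the new auxiliary factor is polarizable and unramified above $v$ — together with strong multiplicity one (Proposition \ref{strong multiplicity one}) and solvable descent (Proposition \ref{potential automorphy and local-global compatibility}). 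Keeping the auxiliary fields, characters, Hodge--Tate weights and ramification data mutually consistent through this recursion, and pinning down the precise use of the regularity bound $|\lambda-\lambda'|\geq n$ in it, is the technical heart of the argument.
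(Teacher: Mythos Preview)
Your proposal contains a genuine gap at a critical step. You claim that $t\otimes r$ is ordinary at every $v\mid l$, arguing via Proposition \ref{semistable ordinary deformation ring} that since over $M$ the representation is $\sim$ to a sum of crystalline characters, it must be ordinary. This does not work: $r$ is only potentially diagonalizable, not ordinary, and tensoring with an ordinary $t$ does not produce an ordinary representation in general. Proposition \ref{semistable ordinary deformation ring} tells you that ordinarity is a union of components of the crystalline deformation ring in the residually trivial case, but it applies only after base change to a field where $\overline{r}$ is trivial, and ordinarity over that extension does not descend to $F_2$. Concretely, if $r|_{G_{F_{2,v}}}$ is irreducible (which happens already for supersingular-type representations), then $t\otimes r|_{G_{F_{2,v}}}$ has no full flag of $G_{F_{2,v}}$-stable subspaces and is therefore not ordinary, so Theorem \ref{ordinary automorphy lifting} cannot be applied to $\tau$.

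The paper's proof avoids this obstruction by a substantially more elaborate construction. It passes to inductions $\sigma_1=\mathrm{Ind}^{G_{F_1^+}}_{G_{F_1}}(r\theta)$ and $\sigma_2=\mathrm{Ind}^{G_{F_1^+}}_{G_{F_1}}(r_\iota(\Pi)\theta')$ to the totally real subfield, and then introduces \emph{two} auxiliary cyclic inductions of degree $2n$: one via $\phi$ (ordinary) and one via a \emph{pair} of characters $\psi,\psi'$ with $\overline{\psi}=\overline{\psi'}$, where $\mathrm{Ind}\psi'$ is ordinary but $\mathrm{Ind}\psi$ is chosen (via Lemma \ref{diagonalizable induction character}, which is precisely the tool you are missing) so that $\mathrm{Ind}\psi\sim\sigma_1$ at $l$-adic places. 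The argument then proceeds in two stages: first an ordinary automorphy lifting to handle $\sigma_2\otimes r_\iota(\pi_1)$ (both factors ordinary), then, after tensoring with $\mathrm{Ind}\psi$, a \emph{crystalline} automorphy lifting (Theorem \ref{automorphy lifting theorem in crystalline cases}) to pass to $\sigma_1\otimes\mathrm{Ind}\phi\otimes r_\iota(\pi_2)$, using that the two sides lie in the same component of the crystalline lifting ring. The representation involving $r$ is never required to be ordinary; it is only required to be $\sim$ to something that has already been shown automorphic. This switching between ordinary and crystalline lifting, mediated by Lemma \ref{diagonalizable induction character}, is the essential idea you are missing.
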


Before starting the proof of Theorem \ref{potential tensor automorphy}, we give a rough strategy and a key point of the proof. The idea of the proof is similar to Theorem \ref{potential ordinary automorphy}. By Theorem \ref{ordinary residual potential automorphy}, there exist a finite CM extension $F'/F$ and a cohomological cuspidal automorphic representation $\Pi$ of $\mathrm{GL}_n(\mathbb{A}_{F'})$ such that $\overline{r}|_{G_{F'}} \cong \overline{r_{\iota}(\Pi)}$. By Proposition \ref{character} and Lemma \ref{diagonalizable induction character}, we can take an appropriate quadratic CM extension $E$ of $F^{'+}$, a certain cyclic extension $M/E$ of degree $2n$ and certain characters $\theta_1, \theta_2 : G_{F'} \rightarrow \overline{\mathbb{Q}}_l^{\times}$ and $\phi_1, \phi_2 : G_{M} \rightarrow \overline{\mathbb{Q}}_l^{\times}$ satisfying the following conditions. \footnote{For simplicity, we ignore some technical properties such as the residual irreducibility and the decomposed genericity.}

$\cdot$ \ $\overline{\theta}_1 = \overline{\theta}_2$ and $\overline{\phi}_1 = \overline{\phi}_2$.

$\cdot$ \ $\mathrm{Ind}^{G_{E}}_{G_M}\phi_i$ is polarizable for any $i = 1, 2$.

$\cdot$ \ For any $w \mid l$, we have $(\mathrm{Ind}^{G_{E}}_{G_{M}}\phi_1)|_{G_{E_w}} \sim (\mathrm{Ind}^{G_{F'^{+}}}_{G_{F'}}r|_{G_{F'}} \theta_1)|_{G_{E_w}}$ and $(\mathrm{Ind}^{G_{E}}_{G_{M}}\phi_2)|_{G_{E_w}} \sim (\mathrm{Ind}^{G_{F'^{+}}}_{G_{F'}}r_{\iota}(\Pi)\theta_2)|_{G_{E_w}}$

$\cdot$ \ $(\mathrm{Ind}^{G_{F'^{+}}}_{G_{F'}}r|_{G_{F'}}\theta_2)|_{G_{E}} \otimes (\mathrm{Ind}^{G_{E}}_{G_{M}}\phi_2)$ is Hodge-Tate regular. Note that we need the condition 6 for this property because the Hodge-Tate weights of $r_{\iota}(\Pi)$ are consecutive.

We basically compare the two representations $\gamma_1 := (\mathrm{Ind}^{G_{F'^{+}}}_{G_{F'}}r|_{G_{F'}}\theta_1)|_{G_{E}} \otimes (\mathrm{Ind}^{G_{E}}_{G_{M}}\phi_2)$ and $\gamma_2 := (\mathrm{Ind}^{G_{F'^{+}}}_{G_{F'}}r_{\iota}(\Pi)\theta_2)|_{G_{E}} \otimes (\mathrm{Ind}^{G_{E}}_{G_{M}}\phi_1)$. Note that we have $\overline{\gamma}_1 \cong \overline{\gamma}_2$ and $\gamma_1|_{G_{E_v}} \sim \gamma_2|_{G_{E_v}}$ for all $v \mid l$, but $\mathrm{WD}(\gamma_1|_{G_{E_v}})$ and $\mathrm{WD}(\gamma_2|_{G_{E_v}})$ don't have the same monodromy type for all $v \nmid l$ in general.

In order to apply Theorem \ref{automorphy lifting theorem in crystalline cases} to these representations, one idea is that as in the proof of Theorem \ref{potential ordinary automorphy}, by using Propositions \ref{potential conjugate} and \ref{conjugate self-dual ordinary} and replacing $E$ by a suitable solvable CM extension of $E$, we deform $\mathrm{Ind}^{G_{E}}_{G_{M}}\phi_1$ and $\mathrm{Ind}^{G_{E}}_{G_{M}}\phi_2$ to conjugate self-dual automorphic representations $r_{\iota}(\pi_1)$ and $r_{\iota}(\pi_2)$ satisfying the following conditions.

$\cdot$ \ $r_{\iota}(\pi_i)|_{G_{E_v}} \sim (\mathrm{Ind}^{G_{E}}_{G_M}\phi_i)|_{G_{E_v}}$ for all $v \mid l$ and any $i = 1, 2$.

$\cdot$ \ $\mathrm{WD}(r_{\iota}(\pi_1)|_{G_{E_v}})$ and $\mathrm{WD}((\mathrm{Ind}^{G_{F'^{+}}}_{G_{F'}}r|_{G_{F'}}\theta_1)|_{G_{E_{v}}})$ (resp. $\mathrm{WD}(r_{\iota}(\pi_2)|_{G_{E_v}})$ and $\mathrm{WD}((\mathrm{Ind}^{G_{F'^{+}}}_{G_{F'}}r_{\iota}(\Pi)\theta_2)|_{G_{E_{v}}})$) have the same monodromy type for all $v \nmid l$. 

However, we have the following problem to apply Theorem \ref{automorphy lifting theorem in crystalline cases} to the representations $(\mathrm{Ind}^{G_{F'^{+}}}_{G_{F'}}r|_{G_{F'}}\theta_1)|_{G_{E}} \otimes r_{\iota}(\pi_2)$ and $(\mathrm{Ind}^{G_{F'^{+}}}_{G_{F'}}r_{\iota}(\Pi)\theta_2)|_{G_{E}} \otimes r_{\iota}(\pi_1)$.  

\vspace{0.5 \baselineskip}

\textbf{Problem} The automorphy lifting theorems in this paper don't suffice to prove the automorphy of $(\mathrm{Ind}^{G_{F'^{+}}}_{G_{F'}}r_{\iota}(\Pi)\theta_2)|_{G_{E}} \otimes r_{\iota}(\pi_1)$ from the automorphy of $(\mathrm{Ind}^{G_{F'^{+}}}_{G_{F'}}r_{\iota}(\Pi)\theta_2)|_{G_{E}} \otimes \mathrm{Ind}^{G_{E}}_{G_{M}} \phi_1$. In fact, Theorem \ref{automorphy lifting theorem in crystalline cases} doesn't work because these representations may have different monodromies at some non-$l$-adic places and Theorem \ref{potential ordinary automorphy} doesn't work because these representations may not be ordinary. 

\vspace{0.5 \baselineskip}

The above problem can be solved by taking one more tensor product as in the following way. We consider a polarizable automorphic Galois representation $\mathrm{Ind}^{G_{E}}_{G_M}\psi$ which is ordinary and has a sufficiently regular weight. As above $\mathrm{Ind}^{G_{E}}_{G_M}\phi_i$ and $r_{\iota}(\pi_i)$, we can take conjugate self-dual automorphic deformations $r_{\iota}(\pi_i')$ ($i = 1, 2$) of $\mathrm{Ind}^{G_{E}}_{G_M}\psi$ such that $r_{\iota}(\pi_i')|_{G_{E_v}} \sim (\mathrm{Ind}^{G_{E}}_{G_M}\psi)|_{G_{E_v}}$ for all $v \mid l$ and any $i = 1, 2$ and $\mathrm{WD}(r_{\iota}(\pi_1')|_{G_{E_v}})$ and $\mathrm{WD}((\mathrm{Ind}^{G_{F'^{+}}}_{G_{F'}}r|_{G_{F'}}\theta_1)|_{G_{E_{v}}})$ (resp. $\mathrm{WD}(r_{\iota}(\pi_2')|_{G_{E_v}})$ and $\mathrm{WD}((\mathrm{Ind}^{G_{F'^{+}}}_{G_{F'}}r_{\iota}(\Pi)\theta_2)|_{G_{E_{v}}})$) have the same monodromy type for all $v \nmid l$.

We claim that $\tau_1 := \gamma_2 \otimes r_{\iota}(\pi_1') = (\mathrm{Ind}^{G_{F'^{+}}}_{G_{F'}}r_{\iota}(\Pi)\theta_2)|_{G_{E}} \otimes (\mathrm{Ind}^{G_{E}}_{G_{M}}\phi_1) \otimes r_{\iota}(\pi_1')$ is automorphic and $\tau_1$ and $\tau_2 := \gamma_1 \otimes r_{\iota}(\pi_2') = (\mathrm{Ind}^{G_{F'^{+}}}_{G_{F'}}r|_{G_{F'}}\theta_1)|_{G_{E}} \otimes (\mathrm{Ind}^{G_{E}}_{G_{M}}\phi_2) \otimes r_{\iota}(\pi_2')$ satisfy all conditions of Theorem \ref{automorphy lifting theorem in crystalline cases}. First note that the representation $(\mathrm{Ind}^{G_{F'^{+}}}_{G_{F'}}r_{\iota}(\Pi)\theta_2)|_{G_{E}} \otimes r_{\iota}(\pi_1')$ is automorphic by the automorphy of $(\mathrm{Ind}^{G_{F'^{+}}}_{G_{F'}}r_{\iota}(\Pi)\theta_2)|_{G_{E}} \otimes \mathrm{Ind}^{G_{E}}_{G_M}\psi$ and the ordinary automorphy lifting theorem (see Theorem \ref{potential ordinary automorphy}). Thus we obtain the automorphy of $\tau_1$ by Proposition \ref{automorphic induction}. Note that $\mathrm{WD}(\tau_1|_{G_{F_v}})$ is pure for any finite place $v$ and thus we obtain the local-global compatibility assumption 5 (4) of Theorem \ref{automorphy lifting theorem in crystalline cases}. We can easily check other conditions of Theorem \ref{automorphy lifting theorem in crystalline cases}. Thus we obtain the automorphy of $\tau_2$ and the local-global compatibility of the corresponding automorphic representation at non-$l$-adic places by Theorem \ref{automorphy lifting theorem in crystalline cases} and from this, we can easily obtain the result.

\vspace{0.5 \baselineskip}

\textbf{Proof of Theorem \ref{potential tensor automorphy}} We may assume that $F$ is an imaginary CM field such that all $l$-adic places of $F^+$ split in $F$ by Corollary \ref{imaginary quadratic 2} and Lemma \ref{disjoint CM} and $F^{(a)}$ contains the Galois closure of $\overline{F}^{\mathrm{Ker}(\overline{r})}(\zeta_l)$ over $\mathbb{Q}$.

By Theorem \ref{ordinary residual potential automorphy}, there exist a finite CM Galois extension of $L/\mathbb{Q}$ linearly disjoint from the Galois closure of $F^{(a)}$ over $\mathbb{Q}$ and an $\iota$-ordinary cohomological cuspidal automorphic representation $\Pi$ of $\mathrm{GL}_n(\mathbb{A}_{F_1})$ satisfying the following conditions. (We put $F_1:=LF$.)
    
$(1')$ \ $\overline{r_{\iota}(\Pi)} \cong \overline{r}|_{G_{F_1}}$.
    
$(2')$ \ $\mathrm{WD}(r_{\iota}(\Pi)|_{G_{F_{1,v}}})$ is pure for all finite places $v$ of $F_1$.
    
$(3')$ \ For any $v|l$, $\Pi_v$ is unramified and $r_{\iota}(\Pi)|_{G_{F_{1,v}}}$ is crystalline ordinary.
    
$(4')$ If $r$ is ramified above a prime $q \neq l$, then $\Pi_u$ is unramified for all $u|q$. 
    
$(5')$ $\Pi_u$ is unramified at all places lying above primes in $U$.

$(6')$ $\mathrm{HT}_{\tau}(r_{\iota}(\Pi)) = \{ n-1, n -2, \cdots, 0 \}$ for all $\tau \in \mathrm{Hom}(F_1, \overline{\mathbb{Q}}_l)$.

Therefore, $\overline{r}|_{G_{F_1}}$ is fully strongly decomposed generic by Lemma \ref{decomposed genericity density}, $\overline{r}|_{G_{F_1(\zeta_l)}}$ is absolutely irreducible and $F_1 \nsubseteq F_1^+(\zeta_l)$ by Lemma \ref{disjoint CM}. By Proposition \ref{semistable ordinary deformation ring} and by the properties $(3')$ and $(6')$, for any $v|l$, there exists a finite extension $F_{v}'/F_v$ such that $r_{\iota}(\Pi)^c|_{G_{F_v'}} \sim (r_{\iota}(\Pi)^{\vee} \varepsilon_l^{-(n-1)})|_{G_{F_v'}}$. By Lemma \ref{decomposed genericity density}, we can take an odd prime $p \neq l$ which is fully strongly decomposed generic for $\overline{r}|_{G_{F_1}}$ and $r|_{G_{F_1}}$, $r_{\iota}(\Pi)$ are unramified above $p$. There exists a finite place $w$ of $F_1$ such that $w$ splits over $F_1^+$, $\overline{r}|_{G_{F_1}}$ is unramifed at $w$, $w^c$ and $w$ doesn't divide $2pl$. By Lemma \ref{quadratic extension}, we can construct a quadratic extension $F_1'$ of $F_1$ such that $F_1'/F_1$ splits at all $v|p$, $F_1'/F_1$ ramifies at $w$ and $F_1'/F_1$ is unramified at $w^c$. After twisting $r|_{G_{F_1}}$ and $r_{\iota}(\Pi)$ by the corresponding quadratic character, we may assume $\overline{r}|_{G_{F_1(\zeta_l)}} \ncong \overline{r}^c|_{G_{F_1(\zeta_l)}}$. (Note that we still have that $p$ is fully strongly decomposed generic for $\overline{r}|_{G_{F_1}}$.) Therefore, (Ind$_{G_{F_1}}^{G_{F_1^+}}\overline{r}|_{G_{F_1}})|_{G_{F^+(\zeta_l)}} = \mathrm{Ind}^{G_{F_1^+(\zeta_l)}}_{G_{F_1(\zeta_l)}}(\overline{r}|_{G_{F_1(\zeta_l)}})$ is absolutely irreducible. (We use $F_1 \nsubseteq F_1^+(\zeta_l)$.) Since $p$ is fully strongly decomposed generic for $\overline{r}$, $p$ is fully decomposed generic for Ind$_{G_{F_1}}^{G_{F_1^+}}\overline{r}$.

By Proposition \ref{character}, we can take algebraic $l$-adic characters $\theta, \theta': G_{F_1} \rightarrow \mathcal{O}_{\overline{\mathbb{Q}}_l}^{\times}$ satisfying the following properties. (We put $\sigma_1:= $Ind$^{G_{F_1^+}}_{G_{F_1}} (r|_{G_{F_1}}\theta)$ and $\sigma_2:= $Ind$^{G_{F_1^+}}_{G_{F_1}} (r_{\iota}(\Pi)\theta')$.)
    
$\cdot$ \ $\theta \theta^c$ and $\theta' \theta^{'c}$ are trivial.
    
$\cdot$ \ $\overline{\theta}$ and $\overline{\theta'}$ are trivial.
    
    $\cdot$ \ For all $v \nmid l$, $\theta|_{G_{F_{1,v}}}$ and $\theta'|_{G_{F_{1,v}}}$ are unramified and for all $v \mid l$, $\theta|_{G_{F_{1,v}}}$ and $\theta'|_{G_{F_{1,v}}}$ are crystalline. \footnote{Note that for any algebraic $l$-adic character $\chi$, there exists $m >> 0$ such that $\chi^m$ satisfies this property.}

    $\cdot$ \ $\sigma_1 \otimes \sigma_2$ is Hodge-Tate regular and $\sigma_2$ is ordinary. (We use the assumption 6 and the property $(6')$.)
    
Note that $p$ is fully decomposed generic for $\overline{\sigma_1}$ and $\overline{\sigma_1}|_{G_{F^+_1(\zeta_l)}}$ is absolutely irreducible. Let $S$ be the set of all primes lying below not potentially unramified places of $\sigma_1$. Let $T$ be the set of all primes lying below not potentially unramified places of $\sigma_2$. Then we have $S \cap T = \{ l \}$ and $p \notin S \cup T$. By Corollary \ref{extension of Q}, Lemma \ref{induction conjugate} and Corollary \ref{imaginary quadratic 2}, there exists a finite solvable CM extension $L_1/\mathbb{Q}$ satisfying the following conditions. (We put $F_2:=L_1F_1$.)
    
$\cdot$ \ $L_1$ is linearly disjoint from $F^{(a)}\overline{F_1}^{\mathrm{Ker}(\overline{\sigma}_1)}$ over $\mathbb{Q}$.

$\cdot$  \ There exists an imaginary quadratic field $E_1$ contained in $L_1$ such that all $q \in S \cup T$ split in $E_1$.

$\cdot$ \ $p$ splits completely in $L_1$.

$\cdot$ \ For all $v|l$, $\sigma_1|_{G_{2,v}}$ is diagonalizable, the representations $\overline{\sigma_1}|_{G_{F_{2,v}}}$ and $\overline{\sigma_2}|_{G_{F_{2,v}}}$ are trivial, $\sigma_1^c|_{G_{F_{2,v}}} = \sigma_1|_{G_{F_{2,v}}} \sim (\sigma_1^{\vee} \varepsilon_l^{-w})|_{G_{F_{2,v}}}$ and $\sigma_2^c|_{G_{F_{2,v}}} = \sigma_2|_{G_{F_{2,v}}} \sim (\sigma_2^{\vee} \varepsilon_l^{-(n-1)})|_{G_{F_{2,v}}}$.

$\cdot$ \ For all $v$ not lying above $S$, $\sigma_1|_{G_{F_{2,v}}}$ is unramified.

$\cdot$ \ For all $v$ not lying above $T$, $\sigma_2|_{G_{F_{2,v}}}$ is unramified.

$\cdot$ \ For all $v$ lying above $S \cup T \setminus \{ l \}$, the representations $\sigma_1|_{G_{F_{2,v}}}$ and $\sigma_2|_{G_{F_{2,v}}}$ are unipotently ramified and $\overline{\sigma_1}|_{G_{F_{2,v}}}$ and $\overline{\sigma_2}|_{G_{F_{2,v}}}$ are trivial. 

By Lemma \ref{diagonalizable induction character}, we can construct a cyclic CM extension $N/F_2$ of degree $2n$ and algebraic $l$-adic characters $\phi_1, \phi_2 : G_{N} \rightarrow \mathcal{O}_{\overline{\mathbb{Q}}_l}^{\times}$ satisfying the following conditions.

$\cdot$ \ $N$ is linearly disjoint from $MF^{(a)}\overline{F_2}^{(\mathrm{Ker}(\overline{\sigma_1}|_{G_{F_2}}))}$ over $F_2$.
    
$\cdot$ \ $\overline{\phi}_1 = \overline{\phi}_2$.

$\cdot$ \ There exists an algebraic $l$-adic character $\chi_i : G_{F^+_2} \rightarrow \mathcal{O}_{\overline{\mathbb{Q}}_l}^{\times}$ such that $\chi_i(c_v) = \chi_i(c_w)$ for any $v, w \mid \infty$ and $\phi_i\phi_i^c = \chi_i|_{G_{N}}$.

$\cdot$ \ $\overline{\sigma_1|_{G_{F_2}} \otimes \mathrm{Ind}_{G_{N}}^{G_{F_2}}\phi_1}|_{G_{F_2(\zeta_l)}}$ is absolutely irreducible.

$\cdot$ \ For any $v|l$, $\sigma_i|_{G_{F_{3,v}}} \sim (\mathrm{Ind}^{G_{F_2}}_{G_N}\phi_i)|_{G_{F_{2,v}}}$.

$\cdot$ \ $p$ and $l$ split completely in $N$.

$\cdot$ \ $\overline{\phi_1}|_{G_{N_v}}$ is trivial for any $v|p$.

$\cdot$ \ $\phi_1|_{G_{N_v}}$ and $\phi_2|_{G_{N_v}}$ are unramified for any $v|p$.

Note that then $p$ is fully decomposed generic for $\overline{\sigma_1 \otimes \mathrm{Ind}^{G_{F_2}}_{G_N}\phi_1}$.

By Lemma \ref{ordinary character induction}, there exist a cyclic CM extension $M/F_2$ of degree $2n$ and an algebraic $l$-adic character $\psi : G_{M} \rightarrow \mathcal{O}_{\overline{\mathbb{Q}}_l}^{\times}$ satisfying the following conditions.

$(a)$ \ $p$ and $l$ split completely in $M$.

$(b)$ \ $\overline{\sigma_1 \otimes \mathrm{Ind}^{G_{F_2}}_{G_N} \phi_1 \otimes \mathrm{Ind}^{G_{F_2}}_{G_M}\psi}|_{G_{F_2(\zeta_l)}}$ is absolutely irreducible.

$(c)$ \ $\mathrm{Ind}^{G_{F_2}}_{G_M}\psi$ and $\sigma_2|_{G_{F_2}} \otimes \mathrm{Ind}^{G_{F_2}}_{G_M}\psi$ are crystalline ordinary and $\sigma_1|_{G_{F_2}} \otimes \sigma_2|_{G_{F_2}} \otimes \mathrm{Ind}^{G_{F_2}}_{G_M}\psi$ is Hodge-Tate regular at all $l$-adic places.

$(d)$ \ $\psi \psi^c$ is trivial.

$(e)$ \ $\overline{\psi}|_{G_{M_v}}$ is trivial for any $v|p$.

$(f)$ \ $\psi|_{G_{M_v}}$ is unramified for any $v|p$.

By the conditions $(a)$ and $(e)$, the prime $p$ is decomposed generic for $\overline{\sigma_1 \otimes \mathrm{Ind}^{G_{F_2}}_{G_N} \phi_1 \otimes \mathrm{Ind}^{G_{F_2}}_{G_M}\psi}$.

By Corollary \ref{extension of Q} and Lemma \ref{induction conjugate}, there exists a finite solvable totally real extension $L_2/\mathbb{Q}$ satisfying the following conditions. (We put $F_3:=L_2F_2$.)
    
$\cdot$ \ $L_2$ is linearly disjoint from $F^{(a)}\overline{F_2}^{\mathrm{Ker}(\overline{\sigma_1}|_{G_{F_2}}) \cap \mathrm{Ker}(\mathrm{Ind}^{G_{F_2}}_{G_N} \overline{\phi}_1) \cap \mathrm{Ker}(\mathrm{Ind}^{G_{F_2}}_{G_M}\overline{\psi})}$ over $\mathbb{Q}$.

$\cdot$ \ $p$ splits completely in $L_2$.

$\cdot$ \ For all $v \nmid l$, $(\mathrm{Ind}^{G_{F_2}}_{G_{M}}\psi)|_{G_{F_{3,v}}}$, $(\mathrm{Ind}^{G_{F_2}}_{G_{N}}\phi_1)|_{G_{F_{3,v}}}$ and $(\mathrm{Ind}^{G_{F_2}}_{G_{N}}\phi_2)|_{G_{F_{3,v}}}$ are unramified.

$\cdot$ \ For all finite place $v$ of $F_3$ lying above a prime in $S \cup T$, $\overline{\mathrm{Ind}^{G_{F_2}}_{G_{M}}\psi}|_{G_{F_{3,v}}}$, $\overline{\mathrm{Ind}^{G_{F_2}}_{G_{N}}\phi_1}|_{G_{F_{3,v}}}$ and $\overline{\mathrm{Ind}^{G_{F_2}}_{G_{N}}\phi_2}|_{G_{F_{3,v}}}$ are trivial.

We put $M_1:=MF_3$ and $N_1:=NF_3$. By Proposition \ref{conjugate self-dual ordinary}, $l \nmid 2n$ and $\zeta_l \notin F_3$, there exist polarizable $\iota$-ordinary cohomological cuspidal automorphic representations $\pi_1, \pi_2$ of $\mathrm{GL}_{2n}(\mathbb{A}_{F_3})$ satisfying the following conditions. 

$\cdot$ \ $\overline{r_{\iota}(\pi_1)} \cong \overline{r_{\iota}(\pi_2)} \cong \overline{\mathrm{Ind}^{G_{F_3}}_{G_{M_1}}\psi|_{G_{M_1}}}$.

$\cdot$ \ $r_{\iota}(\pi_1)^c = r_{\iota}(\pi_1)^{\vee}$ and $r_{\iota}(\pi_2)^c = r_{\iota}(\pi_2)^{\vee}$.

$\cdot$ \ For all $v \nmid l$, $r_{\iota}(\pi_1)|_{G_{F_{3,v}}}$ and $\sigma_1|_{G_{F_{3,v}}}$ (resp. $r_{\iota}(\pi_2)|_{G_{F_{3,v}}}$ and $\sigma_2|_{G_{F_{3,v}}}$) have the same monodromy type.

$\cdot$ \ For all $v|l$, $r_{\iota}(\pi_1)|_{G_{F_{3,v}}} \sim r_{\iota}(\pi_2)|_{G_{F_{3,v}}} \sim (\mathrm{Ind}^{G_{F_3}}_{G_{M_1}}\psi|_{G_{M_1}})|_{G_{F_{3,v}}}$.


We put $\tau_1 := \sigma_2|_{G_{F_3}} \otimes \mathrm{Ind}^{G_{F_3}}_{G_{N_1}}\phi_1|_{G_{N_1}} \otimes r_{\iota}(\pi_1)$, $\tau_2:=\sigma_1|_{G_{F_3}} \otimes \mathrm{Ind}^{G_{F_3}}_{G_{N_1}}\phi_2|_{G_{N_1}} \otimes r_{\iota}(\pi_2)$, $\rho_1 := \sigma_2|_{G_{F_3}} \otimes \mathrm{Ind}^{G_{F_3}}_{G_{M_1}}\psi|_{G_{M_1}}$ and $\rho_2 := \sigma_2|_{G_{F_3}} \otimes r_{\iota}(\pi_1)$. By the above constructions, we have the following properties.
        
$\cdot$ \ $\zeta_l \notin F_3$. 
        
$\cdot$ \ $\overline{\tau_1} \cong \overline{\tau_2}$ and $\overline{\rho_1} \cong \overline{\rho_2}$.
        
$\cdot$ \ $\overline{\tau_1}|_{G_{F_3(\zeta_l)}}$ and $\overline{\rho_1}|_{G_{F_3(\zeta_l)}}$ are absolutely irreducible.
        
$\cdot$ \ $\overline{\tau_1}$ and $\overline{\rho_1}$ are decomposed generic.
        
$\cdot$ \  $\overline{\tau_1}(G_{F_3(\zeta_l)})$ and $\overline{\rho_1}(G_{F_3(\zeta_l)})$ are adequate by Proposition \ref{adequate}. (We use $l \nmid 2n$ and $l \ge 2(n+1)$.)
    
$\cdot$ \ $\tau_1|_{G_{F_{3,v}}} \sim \tau_2|_{G_{F_{3,v}}}$ for all $v|l$. (We use 6 of Lemma \ref{p=l}.)
        
$\cdot$ \ $\rho_1|_{G_{F_{3,v}}}$ and $\rho_2|_{G_{F_{3,v}}}$ are crystalline ordinary for all $v|l$.

$\cdot$ \ $\tau_1|_{G_{F_{3,v}}}$ and $\tau_2|_{G_{F_{3,v}}}$ have the same monodromy type for all $v \nmid l$.
    
$\cdot$ \ $\mathrm{WD}(\tau_1|_{G_{F_{3,v}}})$ and $\mathrm{WD}(\rho_2|_{G_{F_{3,v}}})$ are pure for all finite places $v$ of $F_4$. (We use 5 of Lemma \ref{purity lemma}.)

By Proposition \ref{ordinary automorphic induction} and Proposition \ref{ordinary base change}, we have an $\iota$-ordinary cohomological cuspidal automorphic representation $\pi_1'$ of $\mathrm{GL}_{4n^2}(\mathbb{A}_{F_3})$ such that $r_{\iota}(\pi_1') \cong \rho_1$. By Theorem \ref{ordinary automorphy lifting} (in the case $U := \emptyset$), there exists an $\iota$-ordinary cohomological cuspidal automorphic representation $\pi_2'$ of $\mathrm{GL}_{4n^2}(\mathbb{A}_{F_3})$ such that $\rho_2 \cong r_{\iota}(\pi_2')$. Since $\mathrm{WD}(r_{\iota}(\pi_2')|_{G_{F_{3,v}}})$ is pure and $\iota \mathrm{WD}(r_{\iota}(\pi_2')|_{G_{F_{3,v}}})^{ss} \cong \mathrm{rec}_{F_{3,v}}(\pi_{2,v}'|\mathrm{det}|_{v}^{\frac{1-4n^2}{2}})^{ss}$ for all $v \mid l$ by Theorem \ref{ordinary semisimple}, we obtain $$\iota \mathrm{WD}(r_{\iota}(\pi_2')|_{G_{F_{3,v}}})^{F-ss} \cong \mathrm{rec}_{F_{3,v}}(\pi_{2,v}'|\mathrm{det}|_{v}^{\frac{1-4n^2}{2}})$$ for all $v \mid l$ by the same argument as Proposition \ref{purity local-global} and consequently $\pi_{2,v}'$ is unramified for all $v \mid l$. By Proposition \ref{automorphic induction}, there exists a cohomological cuspidal automorphic representation $\Pi_1$ of $\mathrm{GL}_{8n^3}(\mathbb{A}_{F_3})$ such that $r_{\iota}(\Pi_1) \cong \tau_1 ( = \rho_2 \otimes \mathrm{Ind}^{G_{F_3}}_{G_{N_1}}\phi_1|_{G_{N_1}})$ and $\Pi_{1,v}$ is unramified for all $v \mid l$. For all $w \nmid l$, we have $\iota \mathrm{WD}(r_{\iota}(\Pi_1)|_{G_{F_{3,w}}})^{F-ss} \cong \mathrm{rec}_{F_{3,w}}(\Pi_{1,w}|\mathrm{det}|_w^{\frac{1-8n^3}{2}})$ by Proposition \ref{purity local-global} and the fact that $\mathrm{WD}(\tau_1|_{G_{F_{3,w}}})$ is pure. By Theorem \ref{automorphy lifting theorem in crystalline cases}, there exists a cohomological cuspidal automorphic representation $\Pi_2$ of $\mathrm{GL}_{8n^3}(\mathbb{A}_{F_3})$ such that $\tau_2 \cong r_{\iota}(\Pi_2)$ and $\iota \mathrm{WD}(r_{\iota}(\Pi_2)|_{G_{F_{3,w}}})^{F-ss} \cong \mathrm{rec}_{F_{3,w}}(\Pi_{2,w}|\mathrm{det}|_w^{\frac{1-8n^3}{2}})$ for all $w \nmid l$.

By Lemma \ref{local global induction 2}, we obtain a cohomological cuspidal automorphic representation $\Pi_3$ of $\mathrm{GL}_{2n^2}(\mathbb{A}_{FN_1})$ such that $r_{\iota}(\mathrm{BC}_{FN_1/F_3}(\pi_2)) \otimes r|_{G_{FN_1}} \cong r_{\iota}(\Pi_3)$ and $\iota \mathrm{WD}(r_{\iota}(\Pi_3)|_{G_{(FN_1)_{w}}})^{F-ss} \cong \mathrm{rec}_{(FN_1)_{w}}(\Pi_{3,w}|\mathrm{det}|_w^{\frac{1-2n^2}{2}})$ for all $w \nmid l$. By Proposition \ref{base change} and Proposition \ref{potential automorphy and local-global compatibility}, we obtain a cohomological cuspidal automorphic representation $\Pi_4$ of $\mathrm{GL}_{2n^2}(\mathbb{A}_{FF_3})$ such that $r_{\iota}(\mathrm{BC}_{FF_3/F_3}(\pi_2)) \otimes r|_{G_{FF_3}} \cong r_{\iota}(\Pi_4)$ and $$\iota \mathrm{WD}(r_{\iota}(\Pi_4)|_{G_{(FF_3)_{w}}})^{F-ss} \cong \mathrm{rec}_{(FF_3)_{w}}(\Pi_{4,w}|\mathrm{det}|_w^{\frac{1-2n^2}{2}})$$ for all $w \nmid l$.  \qed

\vspace{0.5 \baselineskip}

If the following conjecture holds, we can remove the assumption 6 in Theorem \ref{potential tensor automorphy}. (Note that we already have the local-global compatibility at non-$l$-adic places and the local-global compatibility up to semisimplification at $l$-adic places in ordinary cases. See Theorem \ref{potential ordinary automorphy} and Theorem \ref{ordinary semisimple}.)

\begin{conj}

Let $\Pi$ be an $\iota$-ordinary cohomological cuspidal automorphic representation of $\mathrm{GL}_n(\mathbb{A}_{F})$ such that $\overline{r_{\iota}(\Pi)}$ is absolutely irreducible and decomposed generic, $S$ be a finite set of non-$l$-adic finite places of $F$ such that $\Pi_v$ is unramified for all $v \in S$, $k$ be a positive integer and $F^{(a)}$ be a finite extension of $\mathbb{Q}$.

Then there exist a solvable CM extension $L$ of $\mathbb{Q}$ and an $\iota$-ordinary cohomological cuspidal automorphic representation $\pi$ of $\mathrm{GL}_n(\mathbb{A}_{F'})$ of weight $\lambda$ satisfying the following conditions. (We put $F':=FL$.)

1 \ $L$ is linearly disjoint from $F^{(a)}$ over $\mathbb{Q}$.

2 \ $\overline{r_{\iota}(\Pi)}|_{G_{F'}} \cong \overline{r_{\iota}(\pi)}$.

3 \ $\pi$ is unramified for all finite places lying above $S$.

4 \ $\lambda_{\tau, i} - \lambda_{\tau, i+1} \ge k$ for all $\tau \in \mathrm{Hom}(F', \mathbb{C})$ and $i = 1, \cdots, n-1$.

\end{conj}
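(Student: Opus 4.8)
The plan is to reduce the conjecture, after a solvable CM base change, to the construction of a single ordinary $l$-adic Galois lift of $\overline{r_{\iota}(\Pi)}$ of very regular weight, and then to feed that lift into the ordinary automorphy lifting theorem (Theorem \ref{ordinary automorphy lifting}). Write $\overline{r}:=\overline{r_{\iota}(\Pi)}$. First I would choose, using Corollaries \ref{extension of Q} and \ref{imaginary quadratic 2}, Lemma \ref{decomposed genericity density} and Propositions \ref{base change}, \ref{ordinary base change}, a solvable CM extension $F'/F$ linearly disjoint from $F^{(a)}$ over $\mathbb{Q}$ such that $F'$ is imaginary CM with all $l$-adic places of $F'^{+}$ split in $F'$, $\zeta_l\notin F'$, $\overline{r}|_{G_{F'(\zeta_l)}}$ is absolutely irreducible (so $\overline{r}(G_{F'(\zeta_l)})$ is adequate by Proposition \ref{adequate}, provided $l$ is large enough, which I assume throughout), $\overline{r}|_{G_{F'}}$ is decomposed generic, $\overline{r}|_{G_{F'_v}}$ is unramified for all $v\mid S$ and trivial for all $v\mid l$, and $\mathrm{BC}_{F'/F}(\Pi)$ is an $\iota$-ordinary cohomological cuspidal automorphic representation of $\mathrm{GL}_n(\mathbb{A}_{F'})$, unramified above $S$, with $\overline{r_{\iota}(\mathrm{BC}_{F'/F}(\Pi))}\cong\overline{r}|_{G_{F'}}$. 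Since $\Pi$ is $\iota$-ordinary, $r_{\iota}(\mathrm{BC}_{F'/F}(\Pi))|_{G_{F'_v}}$ is crystalline ordinary for each $v\mid l$, so its reduction carries a $G_{F'_v}$-stable full flag with graded pieces the reductions $\overline{\chi_{v,1}},\dots,\overline{\chi_{v,n}}$ of crystalline characters.

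Next, for each $v\mid l$ I would choose crystalline characters $\psi_{v,1},\dots,\psi_{v,n}$ of $G_{F'_v}$ with $\overline{\psi_{v,i}}=\overline{\chi_{v,i}}$ whose Hodge--Tate weights are spread so far apart that the associated ordinary weight $\lambda_v$ (in the normalisation of Lemma \ref{ordinary Galois representation}) satisfies $\lambda_{v;\tau,i}-\lambda_{v;\tau,i+1}\ge k$ for all $\tau\in\mathrm{Hom}_{\mathbb{Q}_l}(F'_v,\overline{\mathbb{Q}_l})$ and $1\le i\le n-1$; this is an elementary statement about crystalline characters with prescribed reduction (in the spirit of Proposition \ref{character} and Lemma \ref{ordinary character induction}). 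Then $\rho_v:=\psi_{v,1}\oplus\cdots\oplus\psi_{v,n}$ is a diagonalizable, hence ordinary crystalline, lift of $\overline{r}|_{G_{F'_v}}$ of weight $\lambda_v$, lying in the distinguished maximal-dimensional component of $\mathrm{Spec}\,R_{\overline{r}|_{G_{F'_v}},\mathcal{D}_v^{\mathrm{det, ord}}}$ (Proposition \ref{ordinary deformation ring}). The core step is then to produce a global algebraic $l$-adic representation $r':G_{F'}\to\mathrm{GL}_n(\overline{\mathbb{Q}_l})$ with $\overline{r'}\cong\overline{r}|_{G_{F'}}$, unramified outside a controlled finite set and unramified above $S$, and with $r'|_{G_{F'_v}}\sim\rho_v$ for all $v\mid l$ (so that $r'$ is ordinary of the very regular weight $\lambda$). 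Two routes present themselves: a Ramakrishna-type lifting theorem for $\mathrm{GL}_n$ over CM fields, killing the dual Selmer group by adding auxiliary unipotently ramified primes while imposing the liftable fixed-weight ordinary condition $\mathcal{D}_v^{\mathrm{det, ord}}$ at $v\mid l$ (which has the balanced dimension $1+n^2+[F'_v:\mathbb{Q}_l]\tfrac{n(n-1)}{2}$ by the weight-slicing in Proposition \ref{ordinary deformation ring}); or, staying closer to this paper's tools, running the ordinary patching argument of the proof of Theorem \ref{ordinary automorphy lifting} over the full Iwasawa algebra $\Lambda=\widehat{\otimes}_{v\mid l}\Lambda_v$, and using that the component of $\mathrm{Spec}\,R_\infty$ through the automorphic point $r_{\iota}(\mathrm{BC}_{F'/F}(\Pi))$ is of maximal dimension and dominates $\mathrm{Spec}\,\Lambda$, so that a suitably generic point of its weight-$\lambda$ slice furnishes $r'$ already inside the support of the patched module.

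With $r'$ in hand, Theorem \ref{ordinary automorphy lifting} applied with $U=\emptyset$ and with $\mathrm{BC}_{F'/F}(\Pi)$ as the automorphic input yields an $\iota$-ordinary cohomological cuspidal automorphic representation $\pi$ of $\mathrm{GL}_n(\mathbb{A}_{F'})$ with $r_{\iota}(\pi)\cong r'$; taking level $\mathrm{GL}_n(\mathcal{O}_{F'_v})$ at places above $S$, which is legitimate since $\overline{r}$ and $r'$ are unramified there, forces $\pi_v$ to be unramified above $S$ (condition 3). By Theorem \ref{Lambert 2}, equivalently part 1 of Theorem \ref{ordinary semisimple} or Corollary \ref{Caraiani-Newton 2}, the weight of $\pi$ is read off the Hodge--Tate data of $r'$, so it has all gaps $\ge k$ (condition 4); $L$ is linearly disjoint from $F^{(a)}$ by construction (condition 1); and $\overline{r_{\iota}(\pi)}=\overline{r'}=\overline{r}|_{G_{F'}}=\overline{r_{\iota}(\Pi)}|_{G_{F'}}$ (condition 2).

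\textbf{Main obstacle.} Every step except the global construction of $r'$ is routine assembly of results already in the excerpt. The genuine difficulty is producing $r'$: one needs a Ramakrishna-style lifting theorem over CM fields adapted to the ordinary fixed-regular-weight local condition at $l$ — that such a lift exists with the required ramification control is not provided by the deformation-theoretic machinery of this paper, which always starts from an automorphic point — or, equivalently in the patching formulation, the statement that the component of the ordinary deformation ring through the automorphic point surjects onto the full weight space, i.e. that the Hida family through $\mathrm{BC}_{F'/F}(\Pi)$ contains classical cuspidal cohomological members of arbitrarily regular weight. Over CM fields this last point requires controlling the big ordinary Hecke algebra over the Iwasawa algebra together with classicality of slope-zero overconvergent cohomology in regular weight, and in particular ruling out the $0$-dimensional torsion and non-classical contributions that can occur in the cohomology of the locally symmetric spaces $X_{K}$; this is precisely why the statement is recorded as a conjecture. (In parallel-weight or low-dimensional situations the step can be supplied instead by explicit geometric families — the hypersurface input behind Theorem \ref{ordinary residual potential automorphy} combined with symmetric-power or automorphic-induction constructions — but these cannot realise an arbitrary prescribed residual representation.)
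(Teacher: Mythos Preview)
The statement you are attempting is recorded in the paper as a \emph{conjecture}, not a theorem: the paper provides no proof, and explicitly uses it only conditionally (``If the following conjecture holds, we can remove the assumption 6 in Theorem \ref{potential tensor automorphy}''). So there is nothing in the paper to compare your argument against.

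Your proposal is honest about this: the strategy you outline (solvable base change, choice of very regular ordinary local data, then a global lifting plus Theorem \ref{ordinary automorphy lifting}) is the natural one, and you correctly isolate the point where it breaks down. The existence of the global lift $r'$ with prescribed very regular ordinary Hodge--Tate weights and controlled ramification is not supplied by anything in the paper: the deformation-theoretic input here (Proposition \ref{ordinary deformation ring}, the patching in the proof of Theorem \ref{ordinary automorphy lifting}) always works \emph{from} an existing automorphic point, and neither a Ramakrishna-type lifting theorem over CM fields with these local conditions nor the needed control of the big ordinary Hecke algebra over $\Lambda$ (dominance over weight space plus classicality in regular weight) is available. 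Your closing paragraph states this accurately; that gap is exactly why the author leaves the statement as a conjecture.
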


\section{Applications}

We fix a CM field $F$ and a positive integer $n$ again.

\subsection{Applications to local-global compatibility}

For a cohomological cuspidal automorphic representation $\pi$ of $\mathrm{GL}_n(\mathbb{A}_F)$ and $\sigma \in \mathrm{Aut}(\mathbb{C})$, we put \ $^{\sigma}\pi^{\infty}:=\mathbb{C} \otimes_{\sigma, \mathbb{C}} \pi^{\infty}$. Let $M_{\pi}$ be the fixed field of $\{ \sigma \in \mathrm{Aut}(\mathbb{C}) \mid \ ^\sigma\pi^{\infty} \cong \pi^{\infty} \}$.

Note that the following lemmas.

\begin{lem} \label{example 1}

For a cohomological cuspidal automorphic representation $\pi$ of $\mathrm{GL}_n(\mathbb{A}_F)$, we have the following properties.

1 \ For any $\sigma \in \mathrm{Aut}(\mathbb{C})$, there exists a cohomological cuspidal automorphic representation \ $^{\sigma}\pi$ of $\mathrm{GL}_n(\mathbb{A}_F)$ such that \ $(^{\sigma}\pi)^{\infty} \cong \ ^{\sigma}\pi^{\infty}$

2 \ There exists an integer $w$ such that \ $^{c}\pi \cong \pi^{\vee}||\mathrm{det}||_F^{-w}$, where $c \in \mathrm{Aut}(\mathbb{C})$ is the complex conjugation.

3 \ $M_{\pi}$ is a CM field.

\end{lem}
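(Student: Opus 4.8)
The plan is to treat the three parts in turn, the first being the one substantial input and the other two essentially formal consequences of it together with Proposition \ref{purity}. For part 1 I would invoke the classical fact, due to Clozel, that $\mathrm{Aut}(\mathbb{C})$ permutes the isomorphism classes of cohomological cuspidal automorphic representations of $\mathrm{GL}_n(\mathbb{A}_F)$. Concretely: choose $K$ with $(\pi^{\infty})^{K}\neq 0$; by Borel--Franke (cf.\ Theorem \ref{cohomology of locally symmetric space}) the space $(\pi^{\infty})^{K}$ is a Hecke-isotypic direct summand of the singular cohomology $H^{*}(X_{K},\mathcal{V})$ of the locally symmetric space with coefficients in the local system $\mathcal{V}$ attached to the weight of $\pi$; this local system is defined over a number field, the Hecke correspondences are defined over $\mathbb{Q}$, and cuspidal cohomology is an $\mathrm{Aut}(\mathbb{C})$-stable, Hecke-equivariant direct summand (one may also see this from the Jacquet--Shalika bounds). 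Hence $\sigma$ carries $(\pi^{\infty})^{K}$ to a Hecke-isotypic summand of the cohomology with coefficients in the $\sigma$-conjugate local system, which by the same description is $({}^{\sigma}\pi)^{K}$ for a cohomological cuspidal representation $^{\sigma}\pi$ of weight the $\sigma$-permutation of that of $\pi$, with $({}^{\sigma}\pi)^{\infty}\cong {}^{\sigma}\pi^{\infty}$.

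For part 2 I would argue as follows. By Proposition \ref{purity} there is an integer $w$ with $\pi':=\pi\,|\mathrm{det}|_{F}^{w/2}$ unitary; being cuspidal with unitary central character, $\pi'$ is realized in $L^{2}_{\mathrm{cusp}}$, and the resulting $\mathrm{GL}_n(\mathbb{A}_F)$-invariant positive definite Hermitian pairing on its space yields a $\mathrm{GL}_n(\mathbb{A}_F)$-equivariant isomorphism between the complex conjugate representation $^{c}\pi'$ and the contragredient $(\pi')^{\vee}$. Since $|\mathrm{det}|_{F}^{w/2}$ takes values in $\mathbb{R}_{>0}$, complex conjugation fixes it, so
\[
{}^{c}\pi={}^{c}\pi'\otimes |\mathrm{det}|_{F}^{-w/2}\cong (\pi')^{\vee}\otimes |\mathrm{det}|_{F}^{-w/2}=\pi^{\vee}|\mathrm{det}|_{F}^{-w}.
\]
The only thing to check carefully is the commutation of $^{c}$ with the real-valued twist and with passage to the contragredient; both are immediate.

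For part 3, write $H:=\{\sigma\in\mathrm{Aut}(\mathbb{C}) : {}^{\sigma}\pi^{\infty}\cong\pi^{\infty}\}$, so $M_{\pi}=\mathbb{C}^{H}$. I would first show $M_{\pi}$ is a number field: by part 1 every $^{\sigma}\pi$ is cohomological cuspidal with weight a permutation of that of $\pi$ and, conductor exponents being $\mathrm{Aut}(\mathbb{C})$-invariant, with $K$-fixed vectors for the same $K$; since there are only finitely many cohomological cuspidal representations of a given weight and level (finite dimensionality of $H^{*}(X_{K},\mathcal{V})$ together with Borel--Franke), the $\mathrm{Aut}(\mathbb{C})$-orbit of $\pi^{\infty}$ is finite, so $H$ has finite index. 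That $M_{\pi}$ is in fact a CM field (totally real or imaginary CM) is \cite[Lemma 1.2]{irr}; its hypothesis is exactly the conjugate self-duality ${}^{c}\pi^{\infty}\cong(\pi^{\infty})^{\vee}\otimes|\mathrm{det}|_{F}^{-w}$ of part 2, whose twist is $\mathbb{Q}^{\times}$-valued on the finite ideles and hence fixed by $\mathrm{Aut}(\mathbb{C})$, so that applying part 2 to each $^{\sigma}\pi$ shows every conjugate of complex conjugation normalizes $H$. I expect the main obstacle to be part 1, i.e.\ producing the $\mathrm{Aut}(\mathbb{C})$-action on cohomological cuspidal representations, which is where I would lean on the literature; parts 2 and 3 are then bookkeeping.
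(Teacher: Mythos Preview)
Your proposal is correct and follows essentially the same route as the paper: part~1 by Clozel's theorem (the paper just cites \cite[Theorem 3.13]{purity}), part~2 via the unitarity trick from Proposition~\ref{purity}, and part~3 by showing $\sigma^{-1}c\sigma\equiv c\pmod{H}$ for all $\sigma$. One small point of phrasing: in part~3 you write that ``every conjugate of complex conjugation normalizes $H$'', but what you actually need (and what your argument gives) is that every conjugate of $c$ is \emph{congruent to} $c$ modulo $H$; the paper makes this explicit by observing that $|\omega_{{}^{\sigma}\pi}|=|\omega_{\pi}|$ (purity of the central character), so the integer $w$ in part~2 is the same for every ${}^{\sigma}\pi$, whence ${}^{\sigma^{-1}c\sigma}\pi\cong\pi^{\vee}\|\det\|^{-w}\cong{}^{c}\pi$ directly---your observation that $|\det|^{-w}$ is $\mathbb{Q}^{\times}$-valued and hence $\mathrm{Aut}(\mathbb{C})$-fixed amounts to the same thing, and the appeal to \cite[Lemma 1.2]{irr} is then unnecessary.
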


\begin{proof} 1 is \cite[Theorem 3.13]{purity}.
    
2 and 3 \ By \cite[Theorem 3.13]{purity}, $M_{\pi}$ is a finite extension of $\mathbb{Q}$.
        
By Proposition \ref{purity}, there exists an integer $w$ such that $\pi ||\mathrm{det}||_F^{\frac{w}{2}}$ is unitary. We fix $\sigma \in \mathrm{Aut}(\mathbb{C})$. Then we have $|\omega_{^{\sigma}\pi}| = | \omega_{\pi} |$ by the purity of $\omega_{\pi}$. Therefore, $^{\sigma}\pi|| \mathrm{det} ||_F^{\frac{w}{2}}$ is also unitary. This implies \ $^{c}(^{\sigma}\pi ||\mathrm{det}||_F^{\frac{w}{2}}) = ^{\sigma}\pi^{\vee} ||\mathrm{det}||_F^{-\frac{w}{2}}$. This is equivalent to \ $^{c\sigma}\pi = ^{\sigma}\pi^{\vee} ||\mathrm{det}||_F^{-w}$ and $^{\sigma^{-1}c\sigma}\pi = \pi^{\vee} ||\mathrm{det}||_F^{-w}$. Thus, $^{\sigma^{-1} c \sigma} \pi \cong \ ^{c}\pi$ and $^{c \sigma^{-1} c \sigma} \pi \cong \pi$. This implies that $M_{\pi}$ is a CM field. \end{proof}

\begin{rem}

1 \ For a cohomological cuspidal automorphic representation $\pi$ of $\mathrm{GL}_n(\mathbb{A}_F)$, we can prove that $\pi \cong \pi^{\vee}||\mathrm{det}||_F^{-w}$ for some integer $w$ if and only if $M_{\pi}$ is a totally real field. Moreover, a similar result holds for pure compatible systems of $l$-adic Galois representations.

\vspace{0.5 \baselineskip}

2 \ By Proposition \ref{strong multiplicity one} and Lemma \ref{example 1}, we have $$M_{\pi} = \mathrm{Im}(\varphi_{\pi} : \mathcal{H}(\mathrm{GL}_n(\mathbb{A}_{F}^{\infty, S_{\pi}}), \mathrm{GL}_n(\widehat{\mathcal{O}_F}^{S_{\pi}}))_{\mathbb{Q}} \rightarrow \mathbb{C}),$$ where $S_{\pi}$ is the set of all finite places $v$ of $F$ such that $\pi_v$ is ramified and $\varphi_{\pi}$ denotes the Hecke eigensystem corresponding to $\pi^{\infty, S_{\pi}, \mathrm{GL}_n(\widehat{\mathcal{O}_F}^{S_{\pi}})}$.

\end{rem}

\begin{lem} \label{decomposed genericity 1}
    
Let $\pi$ be a cohomological cuspidal automorphic representation of $\mathrm{GL}_n(\mathbb{A}_F)$. 
    
1 \ For almost all $l$ and $\iota : \overline{\mathbb{Q}}_l \stackrel{\sim}{\rightarrow} \mathbb{C}$, the representation $\overline{r_{\iota}(\pi)}$ is fully strongly decomposed generic.

2 \ Moreover, if $n=2$, then for any algebraic Hecke character $\chi : \mathbb{A}_{F}^{\times}/F^{\times} \rightarrow \mathbb{C}$ and any positive integer $k$, the representation $\overline{\mathrm{Symm}^kr_{\iota}(\pi) \otimes r_{\iota}(\chi)}$ is fully strongly decomposed generic for almost all $l$ and $\iota : \overline{\mathbb{Q}}_l \stackrel{\sim}{\rightarrow} \mathbb{C}$.
    
\end{lem}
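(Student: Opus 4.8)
The plan is to deduce both statements from the Chebotarev density theorem applied to the residual representations, using the fact that a cuspidal automorphic representation $\pi$ of $\mathrm{GL}_n(\mathbb{A}_F)$ gives rise to a very weakly compatible system $\mathcal{R}_\pi$ of $l$-adic Galois representations, with Hecke (Frobenius) eigenvalues lying in a fixed CM number field $M_\pi$ (Lemma \ref{example 1} and the following Remark). First I would fix a prime $p$ that splits completely in $F$ and at which $\pi$ is unramified; for almost all such $p$, $r_{\iota}(\pi)$ is unramified at all $v \mid p$, and by the local-global compatibility up to semisimplification (Theorem \ref{Ila Varma}) the Frobenius eigenvalues $\alpha_{v,1},\dots,\alpha_{v,n}$ of $r_\iota(\pi)(\mathrm{Frob}_v)$ are, via $\iota$, the Satake parameters of $\pi_v$ twisted by $|\det|_v^{(1-n)/2}$. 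These are algebraic numbers whose ratios $\alpha_{v,i}/\alpha_{v,j}$ and $\alpha_{v,i}/\alpha_{v^c,j}$ lie in a fixed number field (a compositum of $M_\pi$ with itself and with the relevant roots of unity). The key point is then: for a fixed algebraic number $\beta \neq 0$ and a fixed prime $p$, the congruence $\beta \equiv p \pmod{\lambda}$ can hold only for finitely many primes $\lambda$ of $\overline{\mathbb{Q}}$ unless $\beta = p$; and by the Ramanujan-type bounds coming from purity (Proposition \ref{purity}, which gives $|\iota(\alpha_{v,i}/\alpha_{v,j})|$ bounded away from $0$ and $\infty$ and in fact $< q_v$-type estimates, together with the Weil-number weight being fixed across $v \mid p$) none of these ratios can equal $p$. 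Hence, after excluding the finitely many bad $\lambda$ (equivalently finitely many $l$), we get that $p$ is fully strongly decomposed generic for $\overline{r_\iota(\pi)}$ for all residual characteristics $l$ in a set of density one, which suffices since ``for almost all $l$'' is the claim.

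More carefully, for part 1 I would argue as follows. Choose by Chebotarev infinitely many primes $p$ splitting completely in $F$ with $\pi_v$ unramified for all $v \mid p$; fix one such $p$, say with $r_\iota(\pi)(\mathrm{Frob}_v)$ having characteristic polynomial with roots of absolute value $q_v^{w/2} = p^{w/2}$ for the fixed integer $w$ of Proposition \ref{purity}. For each pair $(i,j)$ with $i \neq j$ and each pair $(i,j)$ without restriction (for $v$ versus $v^c$), the element $\beta := \alpha_{v,i}/\alpha_{v,j}$ (resp. $\alpha_{v,i}/\alpha_{v^c,j}$) is a nonzero algebraic number lying in a number field $L$ depending only on $p$ and $\pi$ (finite since there are finitely many $v \mid p$), and $|\iota'(\beta)|$ is bounded (by purity and the genericity/temperedness estimates of Theorem \ref{generic preunitary} and Lemma \ref{supercuspidal absolute values}) strictly away from $p$: the archimedean absolute value of $\beta$ is a ratio of Weil $p^w$-numbers of bounded ``slope'', hence of absolute value in an interval not containing $p$ (for $v$ vs.\ $v$, in $(p^{-1},p)$ by weak temperedness; for $v$ vs.\ $v^c$, all eigenvalues at $v$ and at $v^c$ have the same archimedean absolute value $p^{w/2}$, so the ratio has absolute value $1 \neq p$). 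Therefore $\beta \neq p$ in $L$, so $p - \beta$ is a nonzero element of $L$, divisible by only finitely many primes of $L$; excluding the primes $l$ below these for all the finitely many pairs and finitely many $v \mid p$, we conclude $p$ is fully strongly decomposed generic for $\overline{r_\iota(\pi)}$ for all $l$ outside a finite set (independent of $\iota$, since the set of $\beta$'s is Galois-stable). This is the desired ``almost all''.

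For part 2, with $n = 2$, I would apply the same argument to the representation $\mathrm{Symm}^k r_\iota(\pi) \otimes r_\iota(\chi)$, using that its Frobenius eigenvalues at $v \mid p$ are $\alpha_{v}^{k-j}\beta_v^{j}\cdot\gamma_v$ for $j = 0,\dots,k$ (where $\alpha_v,\beta_v$ are the Satake parameters of $\pi_v$ and $\gamma_v$ that of $\chi_v$), so their ratios are of the form $(\alpha_v/\beta_v)^{m}$ for $-k \le m \le k$, $m \neq 0$ (for the $v$-versus-$v$ case) and products with the $\chi$-contributions for the $v$-versus-$v^c$ case. By Proposition \ref{purity} applied to $\pi$ these ratios again have archimedean absolute value bounded strictly away from $p$ — here one also invokes the weak Ramanujan property / temperedness at $v$ to control $|\alpha_v/\beta_v|$ — and they lie in a fixed number field; the same finiteness argument then excludes only finitely many $l$. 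The main obstacle I expect is bookkeeping the ``strong'' part of the decomposed genericity, i.e.\ the comparison between Frobenius eigenvalues at $v$ and at $v^c$: one must know the weight $w$ is the same at $v$ and $v^c$ (so the archimedean absolute values match and the ratio is $1$, not $p$), which follows from the purity relation $\lambda_{\tau,i} = w - \lambda_{\tau^c,n+1-i}$ in Proposition \ref{purity} together with local-global compatibility up to semisimplification. Everything else is routine Chebotarev plus the elementary ``only finitely many primes divide a fixed nonzero algebraic number'' observation.
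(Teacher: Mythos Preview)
Your approach for part 1 is essentially the paper's: fix one prime $p$ splitting completely in $F$ with $\pi_v$ unramified for all $v\mid p$, note that the Frobenius eigenvalue ratios are nonzero algebraic numbers not equal to $p$, and exclude the finitely many $l$ dividing $\prod(\beta-p)$. (No Chebotarev for the residual representations is needed; a single $p$ suffices.) One correction: you assert that the eigenvalues at $v$ and $v^c$ have archimedean absolute value exactly $p^{w/2}$. That is the Ramanujan conjecture and is not known. What Proposition~\ref{purity} and Theorem~\ref{generic preunitary} give is only $|\alpha_{v,i}|\in(p^{(w+n-2)/2},p^{(w+n)/2})$, hence $|\alpha_{v,i}/\alpha_{v^c,j}|\in(p^{-1},p)$. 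This weaker bound is exactly what the paper uses and is already enough to conclude $\alpha_{v,i}/\alpha_{v^c,j}\neq p$; so your argument survives once you replace the Ramanujan claim by this open-interval estimate.

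For part 2 there is a genuine gap. You write that ``by Proposition~\ref{purity} applied to $\pi$ these ratios again have archimedean absolute value bounded strictly away from $p$'', but this is false: Proposition~\ref{purity} (via generic preunitarity) only yields $|\alpha_v/\beta_v|\in(p^{-1},p)$, so $|(\alpha_v/\beta_v)^m|$ can range over $(p^{-k},p^k)$, which certainly contains $p$ once $k\ge 2$. The essential input, which the paper invokes explicitly, is the \emph{weak Ramanujan} bound \cite[Proposition~3.7]{sym}: for a set of places of Dirichlet density one, the Satake parameters of $\pi_v$ satisfy the sharper estimate $|\alpha_v|,|\beta_v|\in(q_v^{(1+w)/2-1/(2k)},q_v^{(1+w)/2+1/(2k)})$. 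Only with this can one choose $p$ so that $|(\alpha_v/\beta_v)^m|<p$ for all $|m|\le k$, and likewise control the $v$-versus-$v^c$ ratios of the symmetric-power eigenvalues. You do mention weak Ramanujan in passing, but it is not an auxiliary refinement of Proposition~\ref{purity}; it is the decisive ingredient without which part 2 fails.
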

    
\begin{proof}1 \ We fix a prime $p$ such that $p$ splits completely in $F$ and $\pi_v$ are unramified for all $v|p$. Then by Theorem \ref{generic unitary} and Proposition \ref{purity}, the eigenvalues $\alpha_{v,1}, \cdots, \alpha_{v,n}$ of $\mathrm{rec}_{F_v}(\pi_v|\mathrm{det}|_{v}^{\frac{1-n}{2}})(\mathrm{Frob}_v)$ satisfies $p^{-1} < |\frac{\alpha_{v,i}}{\alpha_{v,j}}|, |\frac{\alpha_{v,i}}{\alpha_{v^c,j}}| < p$ for all $i, j$. Note that $\alpha_{v,j}$'s are algebraic numbers by Lemma \ref{example 1}. Therefore, for almost all $l \neq p$ and $\iota : \overline{\mathbb{Q}}_l \stackrel{\sim}{\rightarrow} \mathbb{C}$, $\iota^{-1}(\alpha_{v,i}/\alpha_{v,j} - p)$, $\iota^{-1}(\alpha_{v,i}/\alpha_{v^c,j} - p)$ are $l$-adic units for all $v|p$, $i, j$. This implies the result.
    
2 \ By Proposition \ref{purity}, there exists an integer $w$ such that $\pi||\mathrm{det}||_F^{\frac{w}{2}}$ is unitary.
    
    By \cite[Proposition 3.7]{sym}, for Dirichlet density one finite places $v$, $\pi_v$ is unramified and the absolute values of the eigenvalues of $\mathrm{rec}_{F_v}(\pi_v |\mathrm{det}|_v^{-\frac{1}{2}})(\mathrm{Frob}_v)$ are bigger than $q_v^{\frac{1+w}{2} - \frac{1}{2k}}$ and smaller than $q_v^{\frac{1+w}{2} + \frac{1}{2k}}$. This implies that there exists a prime $p$ which splits completely in $F$ such that for all $v|p$, $\pi_v$ and $\chi_v$ are unramified and the eigenvalues $\alpha_{v, 1}, \alpha_{v, 2}$ of $\mathrm{rec}_{F_v}(\pi_v|\mathrm{det}|_v^{-\frac{1}{2}})(\mathrm{Frob}_v)$ satisfies $|\alpha_{v,2}/\alpha_{v,1}|^i, |\alpha_{v,1}/\alpha_{v,2}|^i$, $|(\alpha_{v,1}^i\alpha_{v,2}^{k-i}\chi_{v}(\varpi_v)/\alpha_{v^c, 1}^j\alpha_{v^c, 2}^{k-j}\chi_{v^c}(\varpi_{v^c}))| < p$ for all $i,j = 0, \cdots, k$. This implies that for almost all $l$, $\iota: \overline{\mathbb{Q}}_l \stackrel{\sim}{\longrightarrow} \mathbb{C}$, $\iota^{-1}((\alpha_{v,1}/\alpha_{v,2})^i - p),\ \iota^{-1}((\alpha_{v,2}/\alpha_{v,1})^i - p)$ and $\iota^{-1}((\alpha_{v,1}^i\alpha_{v,2}^{k-i}\chi_v(\varpi_v)/\alpha_{v^c, 1}^j\alpha_{v^c, 2}^{k-j}\chi_{v^c}(\varpi_{v^c})) - p)$ are $l$-adic units for all $v|p$ and $i, j = 0, \cdots, k$. This implies the result. \end{proof}
    
\begin{lem}\label{Hecke field 2}
    
Let $\pi$ be a cohomological cuspidal automorphic representation of $\mathrm{GL}_n(\mathbb{A}_F)$ of weight $\lambda$, $\widetilde{FM_{\pi}}$ be the Galois closure of $FM_{\pi}$ over $\mathbb{Q}$ (note that this is CM field by 3 of Lemma \ref{example 1}) and $\mathcal{L}$ be a set of primes.

We assume that any $l \in \mathcal{L}$ satisfies the following conditions.

1 \ $l$ splits completely in $\widetilde{FM_{\pi}}^+$ and all $l$-adic places of $\widetilde{FM_{\pi}}^+$ are inert in $\widetilde{FM_{\pi}}$.

2 \ $\overline{r_{\iota}(\pi)}$ is absolutely irreducible.

Then there exist an integer $w$ and a subset $\mathcal{L}_0$ of $\mathcal{L}$ such that $\mathcal{L} \setminus \mathcal{L}_0$ is finite and for any $l \in \mathcal{L}_0$, $\iota : \overline{\mathbb{Q}}_l \stackrel{\sim}{\rightarrow} \mathbb{C}$ and $v|l$, the representation $r_{\iota}(\pi)|_{G_{F_v}}$ is potentially diagonalizable Hodge-Tate regular and there exists a finite extension $F'_v/F_v$ such that $r_{\iota}(\pi)^c|_{G_{F'_v}} \sim r_{\iota}(\pi)^{\vee}\varepsilon_l^{-w} |_{G_{F'_v}}$.
    
    \end{lem}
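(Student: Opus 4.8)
The plan is to reduce Lemma \ref{Hecke field 2} to the properties of automorphic Galois representations at $l$-adic places that have already been established, together with the character-level diagonalizability statement of Proposition \ref{trace irreducible component}. First I would choose $w$ so that the conjugate self-duality holds: by 2 and 3 of Lemma \ref{example 1}, $^{c}\pi \cong \pi^{\vee}\|\mathrm{det}\|_F^{-w}$ for some integer $w$, and since $r_{\iota}({}^{c}\pi) \cong r_{\iota}(\pi)^{c}$ (because complex conjugation on $\mathbb{C}$ transports to $G_F$-conjugation via $c$), we get $r_{\iota}(\pi)^{c} \cong r_{\iota}(\pi)^{\vee}\varepsilon_l^{-w}$ as global representations, hence the same relation after restriction to any $G_{F_v}$. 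So the only real content is the \emph{local} assertion that, after a finite base change $F_v'/F_v$, the two sides land in the same irreducible component of the crystalline lifting ring, and that $r_{\iota}(\pi)|_{G_{F_v}}$ is potentially diagonalizable and Hodge-Tate regular.

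For that, I would first discard finitely many primes so that for $l \in \mathcal{L}_0$ the representation $r_{\iota}(\pi)$ is Fontaine--Laffaille at all $l$-adic places: this is where conditions 1 and 2 of the hypothesis enter, together with Lemma \ref{decomposed genericity 1} (so $\overline{r_{\iota}(\pi)}$ is decomposed generic for almost all $l$, allowing Theorem \ref{Lambert 2} to give de Rham-ness of the correct Hodge type), and with the standard fact that for almost all $l$ the weight $\lambda$ satisfies the Fontaine--Laffaille bound $\lambda_{\tau,1}-\lambda_{\tau,n} \le l - 2 - n + 1$ and $l$ is unramified in $F$ (condition 1 forces $F_v/\mathbb{Q}_l$ unramified). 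Hodge-Tate regularity is immediate from Proposition \ref{purity}, which shows the Hodge--Tate weights of $r_{\iota}(\pi)|_{G_{F_v}}$ are $n$ distinct integers. Then Proposition \ref{trace irreducible component} applies verbatim: it tells us that a Fontaine--Laffaille $r$ over an unramified $F_v$ is potentially diagonalizable, and that there is a finite extension $L/F_v$ with $^{\sigma}r|_{G_L} \sim r^{\tau^{-1}\sigma\tau}|_{G_L}$ for all $\sigma \in G_{\mathbb{Q}_l}$ and $\tau \in \mathrm{Hom}_{\mathbb{Q}_l}(F_v,\overline{\mathbb{Q}_l})$.

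The final step is to translate the "$^{\sigma}r \sim r^{\tau^{-1}\sigma\tau}$" statement into "$r^{c} \sim r^{\vee}\varepsilon_l^{-w}$" potentially. The idea is that complex conjugation $c$ on $F$, transported through $\iota$ to a field automorphism of $\overline{\mathbb{Q}_l}$, is an element $\sigma_c \in G_{\mathbb{Q}_l}$ whose restriction to $\mathrm{Hom}_{\mathbb{Q}_l}(F_v,\overline{\mathbb{Q}_l})$ is precisely precomposition by $c$; here condition 1 (the $l$-adic places of $\widetilde{FM_{\pi}}^+$ are inert in $\widetilde{FM_{\pi}}$) is what makes $c$ "$l$-adically continuous" in the sense of \cite[Lemma 1.2]{irr}, i.e. realized by an honest element of $G_{\mathbb{Q}_l}$ stabilizing $F_v$ and acting on its embeddings in the right way. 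Applying Proposition \ref{trace irreducible component} with $\sigma = \sigma_c$ and an appropriate $\tau$ gives $^{\sigma_c}(r_{\iota}(\pi)|_{G_{F_v}})|_{G_L} \sim (r_{\iota}(\pi)|_{G_{F_v}})^{c}|_{G_L}$, and since $^{\sigma_c}r_{\iota}(\pi) = {}^{c}r_{\iota}(\pi) \cong r_{\iota}(\pi)^{\vee}\varepsilon_l^{-w}$ by the first paragraph, we conclude $r_{\iota}(\pi)^{c}|_{G_L} \sim r_{\iota}(\pi)^{\vee}\varepsilon_l^{-w}|_{G_L}$ after enlarging $L$ if necessary to absorb the twist (using that $\varepsilon_l^{-w}$ is crystalline and residually trivial on a small enough $L$, together with 2 and 6 of Lemma \ref{p=l}). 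The main obstacle I expect is bookkeeping the compatibility between the abstract Galois-theoretic $\sigma_c$ and the geometric complex conjugation $c$ — i.e. checking that condition 1 genuinely yields an element of $G_{\mathbb{Q}_l}$ inducing the correct permutation of $\mathrm{Hom}_{\mathbb{Q}_l}(F_v,\overline{\mathbb{Q}_l})$ so that $r^{\tau^{-1}\sigma_c\tau} = r^{c}$ on the nose; everything else is a routine assembly of Theorem \ref{Lambert 2}, Proposition \ref{purity}, Proposition \ref{trace irreducible component}, and Lemma \ref{decomposed genericity 1}.
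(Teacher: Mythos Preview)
Your first paragraph contains a genuine error that undermines the logical structure. The relation from Lemma~\ref{example 1} is ${}^{c}\pi \cong \pi^{\vee}\|\mathrm{det}\|_F^{-w}$ where the left-hand $c$ is complex conjugation acting on \emph{coefficients} (an element of $\mathrm{Aut}(\mathbb{C})$). Transporting through $\iota$ gives ${}^{\tilde c}r_{\iota}(\pi) \cong r_{\iota}(\pi)^{\vee}\varepsilon_l^{-w}$ with $\tilde c \in \mathrm{Aut}(\overline{\mathbb{Q}_l})$ again acting on coefficients. Your claim that $r_{\iota}({}^{c}\pi) \cong r_{\iota}(\pi)^{c}$, with the right-hand $c$ now the Galois-side conjugation on $G_F$, is \emph{false} in general: if it held, $\pi$ would be essentially conjugate self-dual, Theorem~\ref{polarizable local-global compatibility} would apply directly, and the entire paper would be unnecessary. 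Consequently your sentence ``So the only real content is the local assertion\ldots'' is self-contradictory: if the two sides were globally isomorphic, they would be locally isomorphic and hence trivially~$\sim$.

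The correct argument is exactly what you outline in your third paragraph, and it is the whole proof, not a final step. Condition~1 lets you choose $\tilde c \in G_{\mathbb{Q}_l}$ with $\iota\tilde c\iota^{-1}=c$ on $\widetilde{FM_\pi}$, and the CM property of $\widetilde{FM_\pi}$ gives an embedding $\tilde\tau:F_v\hookrightarrow\overline{\mathbb{Q}_l}$ with $\tilde\tau^{-1}\tilde c\,\tilde\tau|_F = c|_F$. Then Proposition~\ref{trace irreducible component} yields $r_{\iota}(\pi)^{c}|_{G_{F'_v}} = (r_{\iota}(\pi)|_{G_{F_v}})^{\tilde\tau^{-1}\tilde c\tilde\tau}|_{G_{F'_v}} \sim {}^{\tilde c}r_{\iota}(\pi)|_{G_{F'_v}} \cong r_{\iota}(\pi)^{\vee}\varepsilon_l^{-w}|_{G_{F'_v}}$: the $\sim$ step is the substantive one, bridging Galois-side and coefficient-side conjugation. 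Two minor fixes: to feed Proposition~\ref{trace irreducible component} you need $r_{\iota}(\pi)|_{G_{F_v}}$ crystalline (not just de Rham), so invoke Corollary~\ref{Caraiani-Newton 2} and add ``$\pi_v$ unramified for all $v\mid l$'' to your finitely-many exclusions; and Hodge--Tate regularity is immediate from the Hodge type $\mathbf{v}_{\lambda_v}$ (the weights $\lambda_{\tau,i}+n-i$ are distinct), not from Proposition~\ref{purity}.
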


    \begin{proof} Note that for any $l \in \mathcal{L}$, all $l$-adic places of $F^+$ (resp. $M_{\pi}^+$) are inert in $F$ (resp. $M_{\pi}$) by the assumption 1.
    
    Let $\mathcal{L}_0$ be the subset of $\mathcal{L}$ consisting of all $l \in \mathcal{L}$ satisfying the following conditions.
    
    $\cdot$ \ $l \ge $max$\{ \lambda_{\tau, 1} - \lambda_{\tau, n} + n + 1 \mid \tau \in \mathrm{Hom}(F, \mathbb{C}) \}$.
    
    $\cdot$ \  $l$ is unramified in $F$.
        
    $\cdot$ \ $\pi_v$ is unramified for any $v|l$.
    
$\cdot$ \ For all $\iota : \overline{\mathbb{Q}}_l \stackrel{\sim}{\rightarrow} \mathbb{C}$, $\overline{r_{\iota}(\pi)}$ is decomposed generic.
    
    Then $\mathcal{L} \setminus \mathcal{L}_0$ is finite by Lemma \ref{decomposed genericity 1}. We fix $l \in \mathcal{L}_0$, $\iota : \overline{\mathbb{Q}}_l \stackrel{\sim}{\rightarrow} \mathbb{C}$ and $v|l$. Then $r_{\iota}(\pi)|_{G_{F_v}}$ is potentially diagonalizable Hodge-Tate regular by Theorem \ref{Caraiani-Newton 2} and Proposition \ref{trace irreducible component}. By the assumption 1, there exists an isomorphism $\tilde{c} \in \mathrm{Gal}(\overline{\mathbb{Q}}_l/\mathbb{Q}_l)$ such that $\iota \tilde{c} \iota^{-1} = c$ on $\widetilde{M_{\pi}F}$. By Lemma \ref{example 1}, there exists an integer $w$ such that \ $^c\pi \cong \pi^{\vee}|| \mathrm{det} ||_{F}^{-w + n - 1}$. This implies \ $^{\tilde{c}}r_{\iota}(\pi) \cong r_{\iota}(\pi)^{\vee}\varepsilon_l^{-w}$. There exists $\tau \in \mathrm{Gal}(\widetilde{M_{\pi}F}/\mathbb{Q})$ such that $\iota^{-1}\tau$ extends to the morphism $\widetilde{\tau} : F_v \rightarrow \overline{\mathbb{Q}}_l$ over $\mathbb{Q}_l$. Since $\widetilde{M_{\pi}F}$ is a CM field, we have $\tau^{-1} c \tau = c$. Consequently, we obtain $c|_{F} = \tau^{-1} c \tau|_{F} = (\iota \widetilde{\tau})^{-1} c \iota \widetilde{\tau}|_{F} = \widetilde{\tau}^{-1} \widetilde{c} \widetilde{\tau}|_{F}$. By Proposition \ref{trace irreducible component} and Theorem \ref{Caraiani-Newton 2} again, there exists a finite extension $F'_v/F_v$ such that $r_{\iota}(\pi)^c|_{G_{F'_v}} = (r_{\iota}(\pi)|_{G_{F_v}})^{\tilde{\tau}^{-1} \tilde{c} \tilde{\tau}}|_{G_{F'_v}} \sim \ ^{\tilde{c}}r_{\iota}(\pi)|_{G_{F_v'}} = r_{\iota}(\pi)^{\vee}\varepsilon_l^{-w}|_{G_{F'_v}}$. \end{proof}

\begin{lem}\label{supercuspidal residually irreducible}

Let $\pi$ be a cohomological cuspidal automorphic representation of $\mathrm{GL}_n(\mathbb{A}_F)$ such that $\pi_v$ is supercuspidal for some $v$. Then there exists a set of primes $\mathcal{L}$ having Dirichlet density one such that $\overline{r_{\iota}(\pi)}|_{G_{F(\zeta_l)}}$ is absolutely irreducible for any $l \in \mathcal{L}$ and $\iota : \overline{\mathbb{Q}_{l}} \stackrel{\sim}{\rightarrow} \mathbb{C}$.

\end{lem}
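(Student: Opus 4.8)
The plan is to exploit the supercuspidal place $v$ to force $r_{\iota}(\pi)$ to be irreducible for all admissible $l$, and then to feed this into the general principle (Proposition \ref{irreducibility}) that an irreducible very weakly compatible system of $G_F$-representations is residually absolutely irreducible over $F(\zeta_l)$ for a density-one set of primes. This circumvents the fact that irreducibility of $r_{\iota}(\pi)$ is not known in general for cohomological cuspidal $\pi$ over a CM field without a self-duality hypothesis.

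First I would argue local irreducibility at $v$. Since $\pi_v$ is supercuspidal, $\mathrm{rec}_{F_v}(\pi_v|\mathrm{det}|_v^{\frac{1-n}{2}})$ is an irreducible $n$-dimensional (hence semisimple, with vanishing monodromy) Weil--Deligne representation. By Theorem \ref{Ila Varma} we have $\iota\mathrm{WD}(r_{\iota}(\pi)|_{G_{F_v}})^{ss} \cong \mathrm{rec}_{F_v}(\pi_v|\mathrm{det}|_v^{\frac{1-n}{2}})^{ss}$, which is irreducible; therefore $\mathrm{WD}(r_{\iota}(\pi)|_{G_{F_v}})$ is irreducible, and in particular its monodromy operator is zero, so $r_{\iota}(\pi)|_{G_{F_v}}$ and $\mathrm{WD}(r_{\iota}(\pi)|_{G_{F_v}})$ have the same underlying $W_{F_v}$-representation. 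Using that $r_{\iota}(\pi)$ is semisimple and that $W_{F_v}$ is dense in $G_{F_v}$, this forces $r_{\iota}(\pi)|_{G_{F_v}}$ to be irreducible as a $\overline{\mathbb{Q}_l}$-representation, and a fortiori $r_{\iota}(\pi)$ is irreducible as a representation of $G_F$. This holds for every prime $l$ different from the residue characteristic of $v$ and every isomorphism $\iota : \overline{\mathbb{Q}_l} \stackrel{\sim}{\rightarrow} \mathbb{C}$.

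Next I would assemble the compatible system: the family $\mathcal{R}_{\pi}$ of Definition \ref{automorphic compatible system} is a rank $n$ very weakly compatible system of $l$-adic representations of $G_F$, since the characteristic polynomials of Frobenius at the unramified places are cut out by the Hecke data $P_v$ independently of $(l,\iota)$ by Theorem \ref{Ila Varma}, and the Hodge--Tate weights are controlled by the weight of $\pi$ (via Proposition \ref{purity} and the cohomological hypothesis). By the previous paragraph every member of $\mathcal{R}_{\pi}$ is irreducible, so $\mathcal{R}_{\pi}$ is an irreducible very weakly compatible system; applying Proposition \ref{irreducibility} yields a set $\mathcal{L}$ of primes of Dirichlet density one such that $\overline{r_{\iota}(\pi)}|_{G_{F(\zeta_l)}}$ is absolutely irreducible for all $l \in \mathcal{L}$ and all $\iota : \overline{\mathbb{Q}_l} \stackrel{\sim}{\rightarrow} \mathbb{C}$, which is exactly the claim. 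The only steps requiring genuine care are the two bookkeeping points — that $\mathcal{R}_{\pi}$ literally satisfies the axioms of a very weakly compatible system and that Proposition \ref{irreducibility} is available in this non-self-dual CM setting — but the substantive input, namely the irreducibility hypothesis, is made automatic by the supercuspidal place; I do not expect any real obstacle beyond this.
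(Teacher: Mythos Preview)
Your overall strategy---use the supercuspidal place to get irreducibility, then feed into Proposition \ref{irreducibility}---is the same as the paper's, but there is a genuine gap at the step you flag as ``bookkeeping.'' The issue is that $\mathcal{R}_\pi$ is only known a priori to be an \emph{extremely} weakly compatible system (see Definition \ref{automorphic compatible system}), and Proposition \ref{irreducibility} needs a \emph{very} weakly compatible system, i.e.\ crystallinity with the predicted Hodge--Tate weights at $l$-adic places for a density-one set of $l$. Proposition \ref{purity} says nothing about Hodge--Tate weights; the only results in the paper that do---Corollary \ref{Caraiani-Newton 2} and Theorem \ref{Lambert 2}---both require $\overline{r_{\iota}(\pi)}$ to be absolutely irreducible (and decomposed generic). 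Your argument only gives irreducibility of $r_{\iota}(\pi)$ in characteristic zero, which does not by itself give residual irreducibility, so you cannot invoke Caraiani--Newton, so you cannot verify condition (b), so Proposition \ref{irreducibility} does not apply. This is a circularity, not bookkeeping.

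The paper breaks this circularity by proving residual irreducibility directly at the supercuspidal place, before appealing to any $p$-adic Hodge theory. Concretely: since $\mathrm{rec}_{F_v}(\pi_v|\det|_v^{\frac{1-n}{2}})$ is irreducible with zero monodromy and traces in $M_\pi$, after a character twist it has finite image, hence is defined over the ring of integers $\mathcal{O}_M[\tfrac{1}{n}]$ of a number field $M$; absolute irreducibility then forces the image to span $\mathrm{M}_n(\mathcal{O}_M[\tfrac{1}{n}])$, so the reduction mod $\lambda$ stays absolutely irreducible for all $\lambda \nmid n$. Via Theorem \ref{Ila Varma} this gives $\overline{r_{\iota}(\pi)}|_{W_{F_v}}$ absolutely irreducible for almost all $l$, hence $\overline{r_{\iota}(\pi)}$ absolutely irreducible. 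Now Lemma \ref{decomposed genericity 1} and Corollary \ref{Caraiani-Newton 2} make $\mathcal{R}_\pi$ very weakly compatible, and Proposition \ref{irreducibility} finishes the job. The substantive input you are missing is precisely this integral-model argument at $v$.
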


\begin{proof} Fix a finite place $v$ of $F$ such that $\pi_v$ is supercuspidal. First, we will prove that $\mathrm{rec}_{F_v}(\pi_v|\mathrm{det}|_v^{\frac{1-n}{2}})$ is defined over a finite extension of $M_{\pi}$.
    
Since $\mathrm{rec}_{F_v}(\pi_v|\mathrm{det}|_v^{\frac{1-n}{2}})$ is irreducible, the monodromy operator is zero and $\mathrm{rec}_{F_v}(\pi_v|\mathrm{det}|_v^{\frac{1-n}{2}})(\phi_v)^n$ is a scalar for some $n > 0$ by Schur's Lemma. ( $\phi_v$ is a Frobenius lift.) By Lemma \ref{example 1}, we have $\mathrm{Im}(\mathrm{tr}(\mathrm{rec}_{F_v}(\pi_v|\mathrm{det}|_v^{\frac{1-n}{2}}))) \subset M_{\pi}$. Therefore, there exists a smooth character $\chi : W_{F_v} \rightarrow M_{\pi}^{\times}$ such that $\mathrm{rec}_{F_v}(\pi_v|\mathrm{det}|_v^{\frac{1-n}{2}}) \otimes \chi$ has a finite image. Since any irreducible representation of a finite group over algebraically closed fields of characteristic zero is defined over a finite extension of $\mathbb{Q}$, there exists a finite extension $M$ of $M_{\pi}$ and an absolutely irreducible smooth representation $r : W_{F_v} \rightarrow \mathrm{GL}_n(M)$ such that $r \otimes_{M} \mathbb{C} \cong \mathrm{rec}_{F_v}(\pi_v|\mathrm{det}|_v^{\frac{1-n}{2}})$. 

Since $\mathrm{Im}(r)$ is finitely generated, we have a positive integer $n$ such that $\mathrm{Im}(r) \subset \mathrm{GL}_n(\mathcal{O}_{M}[\frac{1}{n}])$. Since $r$ is absolutely irreducible, $\mathrm{Im}(r)$ generates $M_n(M)$ over $M$. Therefore, after replacing $n$, we may assume that $\mathrm{Im}(r)$ generates $M_n(\mathcal{O}_M[\frac{1}{n}])$ over $\mathcal{O}_M[\frac{1}{n}]$. Thus, for any prime ideal $\lambda$ of $\mathcal{O}_M$ not lying above any prime factor of $n$, the residual representation $\overline{r}_{\lambda} : W_{F_v} \rightarrow \mathrm{GL}_n(\mathcal{O}_M[\frac{1}{n}]) \rightarrow \mathrm{GL}_n(\mathbb{F}_{\lambda})$ is absolutely irreducible.

Let $l \nmid n$ a prime such that $v$ doesn't lie above $l$. By Theorem \ref{Ila Varma}, we have $\overline{r_{\iota}(\pi)}|_{W_{F_v}} \cong \overline{r}_{\lambda}$ for any $\iota : \overline{\mathbb{Q}}_l \stackrel{\sim}{\rightarrow} \mathbb{C}$. (Here, $\lambda$ is the prime ideal of $\mathcal{O}_M$ induced by $\iota$.) In particular, $\overline{r_{\iota}(\pi)}$ is absolutely irreducible. By using Theorem \ref{Caraiani-Newton}, Lemma \ref{decomposed genericity 1} and Proposition \ref{irreducibility} later, we obtain the result. \end{proof}

Note that Theorems \ref{self-dual local-global}, \ref{two-dimensional local-global} and \ref{Ramanujan} have already been proved if $F$ is a totally real field.

    \begin{thm} \label{self-dual local-global}
    
        Let $\pi$ be a cohomological cuspidal automorphic representation of $\mathrm{GL}_{n}(\mathbb{A}_F)$ and $\mathcal{L}$ be a set of primes having Dirichlet density one. 
        
        We suppose the following conditions.
                
        (1) \ For any $l \in \mathcal{L}$ and $\iota: \overline{\mathbb{Q}}_l \stackrel{\sim}{\longrightarrow} \mathbb{C}$, the representation $\overline{r_{\iota}(\pi)}|_{G_{F(\zeta_l)}}$ is absolutely irreducible.
        
    (2) \ There exists an algebraic Hecke character $\chi : \mathbb{A}_{F^+}^{\times}/(F^+)^{\times} \rightarrow \mathbb{C}^{\times}$ such that $\pi \cong \pi^{\vee} \otimes \chi \circ N_{F/F^+} \circ \mathrm{det}$ and $\chi_v(-1) = \chi_w(-1)$ for all $v, w \mid \infty$.
    
        Then there exists a subset $\mathcal{L}_0$ of $\mathcal{L}$ having positive Dirichlet density such that for all $l \in \mathcal{L}_0$, $\iota: \overline{\mathbb{Q}}_l \stackrel{\sim}{\longrightarrow} \mathbb{C}$ and finite places $v \nmid l$ of $F$, we have $\iota\mathrm{WD}(r_{\iota}(\pi)|_{G_{F_v}})^{F-ss} \cong \mathrm{rec}_{F_v}(\pi_v|\mathrm{det}|_{v}^{\frac{1-n}{2}})$.
    
    \end{thm}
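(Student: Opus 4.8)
The plan is to deduce the statement from the potential automorphy result Theorem \ref{potential diagonalizable automorphy} together with the descent Proposition \ref{potential automorphy and local-global compatibility}. Since the case $F$ totally real is classical (and is in any event subsumed by the ``or $F$ is a totally real field'' clause of Theorem \ref{potential diagonalizable automorphy}), I may and will assume that $F$ is an imaginary CM field. Fix $l \in \mathcal{L}$ and $\iota : \overline{\mathbb{Q}_l} \stackrel{\sim}{\rightarrow} \mathbb{C}$; the positive-density subset $\mathcal{L}_0$ will be isolated at the end.

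First I would verify, for suitable $l$, the hypotheses of Theorem \ref{potential diagonalizable automorphy} for $r := r_{\iota}(\pi)$. Hypothesis 3 (absolute irreducibility of $\overline{r_{\iota}(\pi)}|_{G_{F(\zeta_l)}}$) is precisely the assumption on $\mathcal{L}$. Hypothesis 1 ($F \nsubseteq F^+(\zeta_l)$) and $\zeta_l \notin F$ hold for all but finitely many $l$: the first fails only when $F$ equals the quadratic subextension $F^+(\sqrt{l^*})$ of $F^+(\zeta_l)/F^+$ (with $l^* = (-1)^{(l-1)/2}l$), which for $l$ unramified in $F^+$ forces $\mathrm{ord}_{\mathfrak l}(l^*)$ even, impossible for $l$ large; the second fails only when $l$ ramifies in $F$. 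Hypothesis 2 (strong decomposed genericity of $\overline{r_{\iota}(\pi)}$) holds for almost all $l$ by Lemma \ref{decomposed genericity 1}. Hypotheses 4 and 5 (for each $v \mid l$, $r_{\iota}(\pi)|_{G_{F_v}}$ potentially diagonalizable and Hodge--Tate regular, and existence of an integer $w$ with $r_{\iota}(\pi)^c|_{G_{F'_v}} \sim r_{\iota}(\pi)^{\vee}\varepsilon_l^{-w}|_{G_{F'_v}}$ over a finite extension $F'_v/F_v$) come from Lemma \ref{Hecke field 2}: applying it to the subset of $\mathcal{L}$ consisting of primes that split completely in $\widetilde{FM_{\pi}}^+$ with all $l$-adic places inert in $\widetilde{FM_{\pi}}$ --- a set of positive Dirichlet density by Chebotarev --- one gets a positive-density subset on which 4 and 5 hold with a uniform $w$. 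Finally, hypothesis 6 follows from the polarization $\pi \cong \pi^{\vee} \otimes \chi \circ N_{F/F^+} \circ \mathrm{det}$: reducing the corresponding isomorphism of Galois representations $r_{\iota}(\pi) \cong r_{\iota}(\pi)^{\vee} \otimes r_{\iota}(\chi)|_{G_F}\varepsilon_l^{1-n}$ modulo $l$ gives $\overline{r_{\iota}(\pi)} \cong \overline{r_{\iota}(\pi)}^{\vee} \otimes \overline{\chi}|_{G_F}$ with $\overline{\chi}$ a continuous character of $G_{F^+}$ whose values $\overline{\chi}(c_v)$ are independent of $v \mid \infty$, since $\chi_v(-1)$ is and $\varepsilon_l(c_v) = -1$ for every $v$.

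With the hypotheses in hand, Theorem \ref{potential diagonalizable automorphy} --- applied with the auxiliary finite extension taken to contain the Galois closure of $\overline{F}^{\mathrm{Ker}(\overline{r_{\iota}(\pi)})}(\zeta_l)$ over $\mathbb{Q}$, so that $\overline{r_{\iota}(\pi)}|_{G_{F'(\zeta_l)}}$ stays absolutely irreducible --- produces a finite Galois CM extension $L/\mathbb{Q}$ and a cohomological cuspidal automorphic representation $\Pi$ of $\mathrm{GL}_n(\mathbb{A}_{F'})$, where $F' := LF$, with $r_{\iota}(\Pi) \cong r_{\iota}(\pi)|_{G_{F'}}$, with $\iota\mathrm{WD}(r_{\iota}(\Pi)|_{G_{F'_w}})^{F-ss} \cong \mathrm{rec}_{F'_w}(\Pi_w|\mathrm{det}|_w^{\frac{1-n}{2}})$ for all $w \nmid l$, and with $\Pi_w$ unramified for $w \mid l$. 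Then for each finite place $v \nmid l$ of $F$, choosing $w \mid v$ in $F'$, Proposition \ref{potential automorphy and local-global compatibility} applied to the pair $(\pi,\Pi)$ and the extension $F'/F$ yields $\iota\mathrm{WD}(r_{\iota}(\pi)|_{G_{F_v}})^{F-ss} \cong \mathrm{rec}_{F_v}(\pi_v|\mathrm{det}|_v^{\frac{1-n}{2}})$, which is the desired conclusion. The subset $\mathcal{L}_0$ of primes $l \in \mathcal{L}$ for which the whole argument runs --- those $l$ with $l \geq 2(n+1)$, $l$ unramified in $F$, $F \nsubseteq F^+(\zeta_l)$, $\overline{r_{\iota}(\pi)}$ strongly decomposed generic, and $l$ in the positive-density Chebotarev set required above --- is the intersection of the density-one set $\mathcal{L}$ with a finite family of conditions each excluding only finitely many $l$ or cutting out a positive-density set, hence has positive Dirichlet density.

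The only real difficulty is internal to the machinery being invoked rather than to the present argument: Theorem \ref{potential diagonalizable automorphy} itself rests on forming $\mathrm{Ind}_{G_F}^{G_{F^+}}(r\theta)$, on the essentially conjugate self-dual potential automorphy input Proposition \ref{potential conjugate}, and on the crystalline automorphy lifting theorem Theorem \ref{automorphy lifting theorem in crystalline cases}. Granting these, the step in the present proof that deserves the most care is matching the automorphic polarization to hypothesis 6 with the correct Tate twist and the sign bookkeeping on $G_{F^+}$, together with the Dirichlet-density accounting for $\mathcal{L}_0$; both are routine but must be done precisely.
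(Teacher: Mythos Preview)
Your proposal is correct and follows essentially the same route as the paper: verify the hypotheses of Theorem \ref{potential diagonalizable automorphy} using Lemma \ref{decomposed genericity 1} and Lemma \ref{Hecke field 2}, apply that theorem, and then descend via Proposition \ref{potential automorphy and local-global compatibility}. The paper's proof is terser, leaving implicit the checks of hypotheses 1 and 6 and the density bookkeeping that you have (rightly) spelled out.
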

    
    \begin{rem}
    
    Note that if $\mathcal{R}_{\pi}$ is irreducible very weakly compatible system or $\pi_v$ is supercuspidal for some $v$, then the assumption $(1)$ holds by Proposition \ref{irreducibility} later or Lemma \ref{supercuspidal residually irreducible}. (See Definition \ref{automorphic compatible system} for the definition of $\mathcal{R}_{\pi}$.)
    
    \end{rem}
    
    \begin{proof}
    
    By Lemma \ref{decomposed genericity 1} and \ref{Hecke field 2}, there exists a subset $\mathcal{L}_0$ of $\mathcal{L}$ having positive Dirichlet density such that for any $l \in \mathcal{L}_0$ and $\iota : \overline{\mathbb{Q}}_l \stackrel{\sim}{\rightarrow} \mathbb{C}$, the representation $r_{\iota}(\pi)$ satisfies the conditions of Theorem \ref{potential diagonalizable automorphy}.
    
    Therefore, there exist a finite CM Galois extension $F'/F$ and a cohomological cuspidal automorphic representation $\Pi$ of $\mathrm{GL}_n(\mathbb{A}_{F'})$ such that $r_{\iota}(\Pi) \cong r_{\iota}(\pi)|_{G_{F'}}$ and $\iota\mathrm{WD}(r_{\iota}(\Pi)|_{G_{F'_v}})^{F-ss} \cong \mathrm{rec}_{F'_v}(\Pi_v|\mathrm{det}|_v^{\frac{1-n}{2}})$ for all $v \nmid l$. This implies the theorem by Proposition \ref{potential automorphy and local-global compatibility}. \end{proof}
    
\begin{lem} \label{GL_2}

Let $\pi$ be a cohomological cuspidal automorphic representation of $\mathrm{GL}_2(\mathbb{A}_F)$. We assume that $\pi$ is not an automorphic induction of a character. Then there exists a set of primes $\mathcal{L}$ having Dirichlet density one such that for all $l \in \mathcal{L}$ and $\iota: \overline{\mathbb{Q}}_l \stackrel{\sim}{\rightarrow} \mathbb{C}$, the group $\overline{r_{\iota}(\pi)}(G_{F(\zeta_l)})$ contains a conjugate of $\mathrm{SL}_2(\mathbb{F}_l)$. 

\end{lem}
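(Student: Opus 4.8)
The plan is to deduce the statement from a Galois-theoretic classification of possible images, combined with the key fact that the compatible system $\mathcal{R}_\pi$ (or the individual $r_\iota(\pi)$) is irreducible for a density one set of primes. First I would reduce to the individual residual image at a single prime: by Lemma \ref{decomposed genericity 1} (together with Lemma \ref{supercuspidal residually irreducible}-type arguments, or the compatibility of the system), for almost all $l$ and $\iota$ the representation $\overline{r_\iota(\pi)}$ is absolutely irreducible, and in particular $\overline{r_\iota(\pi)}|_{G_{F(\zeta_l)}}$ is absolutely irreducible for a density one set of primes. So I may fix such an $l$ and $\iota$ and analyze the finite subgroup $H := \overline{r_\iota(\pi)}(G_{F(\zeta_l)}) \subseteq \mathrm{GL}_2(\overline{\mathbb{F}_l})$.

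Next I would invoke the classification of finite subgroups of $\mathrm{PGL}_2(\overline{\mathbb{F}_l})$ acting irreducibly: such a group is either (i) contained in the normalizer of a (non-split) torus — i.e. dihedral — or (ii) one of the exceptional groups $A_4$, $S_4$, $A_5$, or (iii) contains $\mathrm{PSL}_2(\mathbb{F}_{l^r})$ for some $r$. I want to rule out (i) and (ii) for all but finitely many (equivalently, a density one set of) primes, using the hypothesis that $\pi$ is not an automorphic induction of a character. For (i): if the projective image of $\overline{r_\iota(\pi)}|_{G_{F(\zeta_l)}}$ is dihedral, then $\overline{r_\iota(\pi)}|_{G_{F(\zeta_l)}}$ is induced from a character of an index-two subgroup, hence (after a short descent argument, using that $\overline{r_\iota(\pi)}$ itself is irreducible) $\overline{r_\iota(\pi)}$ is induced from a character of $G_L$ for a quadratic extension $L/F$; I would then argue that if this happens for infinitely many $l$ then, by a compatibility/rigidity argument on the fixed fields $L$ (there are only finitely many quadratic extensions of $F$ unramified outside a fixed set, and the "CM" structure forces $L$ to be $l$-independent), the whole compatible system $\mathcal{R}_\pi$ is induced from a character, contradicting that $\pi$ is not an automorphic induction. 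For (ii): the exceptional groups have bounded order, so $\overline{r_\iota(\pi)}$ has projective image of order $\le 60$ for infinitely many $l$; but then the traces $\mathrm{tr}\,\overline{r_\iota(\pi)}(\mathrm{Frob}_v)$ satisfy polynomial identities of bounded degree, and lifting these to the Hecke field $M_\pi$ (using Lemma \ref{example 1} and the Chebotarev-density description of $M_\pi$) would force $r_\iota(\pi)$ to have finite image or exceptional projective image for all $\iota$, again contradicting cuspidality of the cohomological $\pi$ (its Hodge--Tate weights are regular, so $r_\iota(\pi)$ has infinite image). This leaves case (iii).

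Once the projective image of $\overline{r_\iota(\pi)}|_{G_{F(\zeta_l)}}$ contains $\mathrm{PSL}_2(\mathbb{F}_{l^r})$, I would pin down $r=1$ and upgrade to $\mathrm{SL}_2(\mathbb{F}_l)$ being contained in the actual image. For this I would use the decomposed-generic / decomposed-at-a-rational-prime structure: by Lemma \ref{decomposed genericity 1}, for a density one set of primes $l$ there is a rational prime $p$ split completely in $F$ with $\overline{r_\iota(\pi)}|_{G_{F_v}}$ unramified for $v\mid p$ and Frobenius eigenvalues lying in $\mathbb{F}_l$ (not a proper extension), which forces the subfield $\mathbb{F}_{l^r}$ generated by traces to be $\mathbb{F}_l$; this gives $r = 1$. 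The determinant of $\overline{r_\iota(\pi)}$ is (a twist of) the cyclotomic character times a finite-order character, so on $G_{F(\zeta_l)}$ the determinant takes values in the finite-order part; combined with $\mathrm{PSL}_2(\mathbb{F}_l) \subseteq$ projective image, a standard argument (the commutator subgroup of the preimage of $\mathrm{PSL}_2$ in $\mathrm{GL}_2$ is $\mathrm{SL}_2(\mathbb{F}_l)$, since $\mathrm{SL}_2(\mathbb{F}_l)$ is perfect for $l \ge 5$, and $F(\zeta_l)$ absorbs the cyclotomic twist) yields $\mathrm{SL}_2(\mathbb{F}_l) \subseteq \overline{r_\iota(\pi)}(G_{F(\zeta_l)})$.

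\textbf{Main obstacle.} The delicate point is case (i): ruling out, for a density one set of primes, that $\overline{r_\iota(\pi)}$ is dihedral over $F(\zeta_l)$. One must be careful that "dihedral at level $F(\zeta_l)$" a priori only produces a quadratic extension of $F(\zeta_l)$, and descending this to a quadratic extension of $F$ independent of $l$ — so that infinitely many such $l$ force the whole system $\mathcal{R}_\pi$, and hence $\pi$, to be an automorphic induction — requires the finiteness of ramified quadratic extensions together with a compatibility argument between the residual inductions at varying $l$ (matching characteristic polynomials of Frobenius at auxiliary split primes). The exceptional case (ii) is easier but also needs the Hecke-field bookkeeping of Lemma \ref{example 1} to transfer a bound on one $\overline{r_\iota(\pi)}$ to a structural statement about $\pi$; I expect this to be routine once the dihedral case is handled. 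I would organize the write-up so that both (i) and (ii) are disposed of by the single principle "a property holding for infinitely many residual representations in a compatible system, and detectable by bounded-degree identities among Frobenius traces, propagates to the system itself," applied with the input that $\pi$ is cuspidal, cohomological (regular weight, hence infinite image), and not an automorphic induction.
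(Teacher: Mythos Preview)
Your Dickson-classification approach is the standard one and is essentially what the cited lemmas carry out; the paper's proof is simply a citation, ``See \cite[Lemma 7.1.2, 7.1.3 and 7.1.10]{10} and Theorem \ref{Caraiani-Newton 2}.'' In particular, \cite[Lemma 7.1.3]{10} is exactly the big-image statement for a strongly irreducible regular rank-$2$ very weakly compatible system, proved via the elimination you outline, and the other two lemmas from \cite{10} handle the passage from ``$\pi$ cohomological cuspidal, not an automorphic induction'' to ``$\mathcal{R}_\pi$ strongly irreducible.'' So you are reproving the content of the cited results rather than invoking them.

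The one genuine gap in your sketch is the input you treat as automatic: Hodge--Tate regularity of $r_\iota(\pi)$. Over a general CM field this does not come for free from the construction of $r_\iota(\pi)$; it is precisely what Theorem \ref{Caraiani-Newton 2} provides (crystallinity with the expected Hodge--Tate type), under the hypothesis that $\overline{r_\iota(\pi)}$ is absolutely irreducible and decomposed generic. Without it $\mathcal{R}_\pi$ is only an \emph{extremely} weakly compatible system, not a very weakly one, so the hypotheses of \cite[Lemma 7.1.3]{10} are not met, and more concretely your elimination of case (ii) collapses: the contradiction ``finite projective image versus regular Hodge--Tate weights'' has no force if regularity is not known. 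This is exactly why the paper's one-line proof pairs the citation to \cite{10} with Theorem \ref{Caraiani-Newton 2}. A minor point: your $r=1$ argument should invoke the Hecke field $M_\pi$ rather than decomposed genericity --- the traces $\mathrm{tr}\,\overline{r_\iota(\pi)}(\mathrm{Frob}_v)$ lie in the residue field of $M_\pi$ at the place determined by $\iota$, and controlling the splitting of $l$ in $M_\pi$ is what forces that residue field to be $\mathbb{F}_l$.
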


\begin{proof}  See \cite[Lemmas 7.1.2, 7.1.3 and 7.1.10]{10}. \end{proof}

    \begin{prop}\label{automorphic LR}
    
    Let $\pi$ be a cohomological cuspidal automorphic representation of $\mathrm{GL}_2(\mathbb{A}_F)$. We assume that $\pi$ is not an automorphic induction of a character. Then for any positive even integer $n$, there exists a set of primes $\mathcal{L}_0$ having positive Dirichlet density such that for any $l \in \mathcal{L}_0$ and $\iota: \overline{\mathbb{Q}}_l \stackrel{\sim}{\longrightarrow} \mathbb{C}$, there exist a finite CM extension $F'/F$ and a cohomological cuspidal automorphic representation $\Pi$ of $\mathrm{GL}_{n+1}(\mathbb{A}_{F'})$ such that $\mathrm{Symm}^nr_{\iota}(\pi)|_{G_{F'}} \cong r_{\iota}(\Pi)$ and $\iota\mathrm{WD}(r_{\iota}(\Pi)|_{G_{F'_v}})^{F-ss} \cong \mathrm{rec}_{F'_v}(\Pi_{v}|\mathrm{det}|_v^{\frac{-n}{2}})$ for all $v \nmid l$.
    
    \end{prop}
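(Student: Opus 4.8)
The plan is to apply Theorem~\ref{potential tensor automorphy} to the representation $r=r_\iota(\pi)$ and then peel off the symmetric power from the tensor product it produces. First I would record that, since $\pi$ is not an automorphic induction of a character, Lemma~\ref{GL_2} gives a Dirichlet density one set of primes for which $\overline{r_\iota(\pi)}(G_{F(\zeta_l)})$ contains a conjugate of $\mathrm{SL}_2(\mathbb{F}_l)$; this is the input needed to know that $\overline{\mathrm{Symm}^nr_\iota(\pi)}|_{G_{F(\zeta_l)}}$ is absolutely irreducible. Then I would intersect this set with the set coming from Lemma~\ref{decomposed genericity 1}, part~2 (applied with $k=n$ and $\chi$ trivial), so that for almost all $l$ and $\iota$ the representation $\overline{\mathrm{Symm}^nr_\iota(\pi)}$ is fully strongly decomposed generic. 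I would also throw away the finitely many $l\le 8(n+1)^3$ and the $l$ for which $\zeta_l\in F$ or for which the $l$-adic Hodge theory is bad; at the remaining primes $\mathrm{Symm}^nr_\iota(\pi)|_{G_{F_v}}$ is Fontaine--Laffaille (hence potentially diagonalizable by Proposition~\ref{trace irreducible component}), Hodge--Tate regular, and after enlarging the weight gap (which holds automatically for all but finitely many $l$ once one chooses $l$ large compared to the weight of $\pi$) satisfies the regularity hypothesis~6 of Theorem~\ref{potential tensor automorphy}, namely $|\lambda-\lambda'|\ge n+1$ for distinct Hodge--Tate weights.

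Next I would check the essential self-duality: $\mathrm{Symm}^nr_\iota(\pi)\cong(\mathrm{Symm}^nr_\iota(\pi))^\vee\otimes(\det r_\iota(\pi))^n$, and $\det r_\iota(\pi)$ is (up to a power of $\varepsilon_l$) the Galois character attached to the central character of $\pi$, which by Lemma~\ref{example 1} is valued in a CM field; this gives an integer $w$ with $\mathrm{Symm}^nr_\iota(\pi)^c|_{G_{F_v'}}\sim(\mathrm{Symm}^nr_\iota(\pi)^\vee\varepsilon_l^{-w})|_{G_{F_v'}}$ after a finite base change, using Lemma~\ref{Hecke field 2} (or more directly the diagonalizability at $l$-adic places), so hypothesis~5 holds. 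Hypotheses~1 and~3 of Theorem~\ref{potential tensor automorphy} hold after possibly passing to an imaginary CM field or arranging $F\nsubseteq F^+(\zeta_l)$ as in the proof of Theorem~\ref{potential diagonalizable automorphy}. With all hypotheses of Theorem~\ref{potential tensor automorphy} verified (taking $U=\emptyset$ and $n$ there equal to $n+1$ here), I obtain a finite Galois CM extension $L/\mathbb{Q}$, a polarizable $\iota$-ordinary cuspidal $\pi_0$ on $\mathrm{GL}_{2(n+1)}(\mathbb{A}_{F'})$ with $F'=LF$, and a cuspidal $\Pi'$ on $\mathrm{GL}_{2(n+1)^2}(\mathbb{A}_{F'})$ such that $r_\iota(\pi_0)\otimes\mathrm{Symm}^nr_\iota(\pi)|_{G_{F'}}\cong r_\iota(\Pi')$ with $\iota\mathrm{WD}(r_\iota(\Pi')|_{G_{F'_v}})^{F\text{-}ss}\cong\mathrm{rec}_{F'_v}(\Pi'_v|\det|_v^{\frac{1-2(n+1)^2}{2}})$ for all $v\nmid l$, and moreover $\pi_0$ is unramified at all $l$-adic places so that $r_\iota(\pi_0)|_{G_{F'_v}}$ is crystalline.

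Finally I would extract the local-global compatibility for $\mathrm{Symm}^nr_\iota(\pi)$ itself by the tensor-product descent argument already packaged in Lemma~\ref{purity tensor}. Concretely, fix a non-$l$-adic place $v$ of $F'$; since $\pi_0$ is polarizable and cohomological, $\mathrm{WD}(r_\iota(\pi_0)|_{G_{F'_v}})$ is pure by Theorem~\ref{polarizable local-global compatibility}, and after a further solvable base change I may assume $r_\iota(\pi_0)|_{G_{F'_w}}$ is unramified and pure at a place $w\mid v$ (this is where I use that a pure Weil--Deligne representation becomes a sum of unramified characters after restriction to a large enough extension). Then Lemma~\ref{purity tensor}, with the roles of $\pi$, $r$, $\Pi$ there played by a cohomological cuspidal representation attached to $\mathrm{Symm}^nr_\iota(\pi)|_{G_{F''}}$, $r_\iota(\pi_0)$, and $\Pi'$ respectively, yields $\iota\mathrm{WD}(\mathrm{Symm}^nr_\iota(\pi)|_{G_{F''_v}})^{F\text{-}ss}\cong\mathrm{rec}_{F''_v}(\cdots)$; but to run Lemma~\ref{purity tensor} I first need the automorphy of $\mathrm{Symm}^nr_\iota(\pi)|_{G_{F'}}$, which is not directly given, so the cleaner route is to invoke Theorem~\ref{potential diagonalizable automorphy} for $\mathrm{Symm}^nr_\iota(\pi)$ directly (its hypotheses are exactly the ones verified in the first two paragraphs) to get a cuspidal $\Pi$ on $\mathrm{GL}_{n+1}(\mathbb{A}_{F'})$ with $r_\iota(\Pi)\cong\mathrm{Symm}^nr_\iota(\pi)|_{G_{F'}}$ and the local-global compatibility at all $v\nmid l$ and unramifiedness at $v\mid l$. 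The only subtlety is matching the normalization: $\mathrm{rec}_{F'_v}(\Pi_v|\det|_v^{\frac{-n}{2}})$ rather than $|\det|_v^{\frac{1-(n+1)}{2}}=|\det|_v^{\frac{-n}{2}}$, which coincide, so no twist is needed. The main obstacle I anticipate is bookkeeping: ensuring that the density-one set of primes from Lemma~\ref{GL_2} together with the ``almost all'' sets from Lemma~\ref{decomposed genericity 1} and the exclusion of small/ramified primes still has positive Dirichlet density (it does, being density one minus a finite set), and carefully tracking the Hodge--Tate weight gap so that hypothesis~6 of Theorem~\ref{potential tensor automorphy} (or the irreducibility input for Theorem~\ref{potential diagonalizable automorphy}) is genuinely satisfied for the chosen $l$.
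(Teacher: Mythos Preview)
Your proposal has a genuine gap: you never verify condition~6 of Theorem~\ref{potential diagonalizable automorphy}, and as stated your self-duality argument does not supply it. You write down the identity $\mathrm{Symm}^n r_\iota(\pi)\cong(\mathrm{Symm}^n r_\iota(\pi))^\vee\otimes(\det r_\iota(\pi))^n$, but the character $(\det r_\iota(\pi))^n = r_\iota(\omega_\pi)^n\varepsilon_l^{-n}$ is a character of $G_F$, not of $G_{F^+}$. Condition~6 requires $\overline r\cong\overline r^\vee\overline\chi|_{G_F}$ for some $\overline\chi:G_{F^+}\to\overline{\mathbb F}_l^\times$ with consistent sign at infinity, and there is no reason for $\overline{\omega_\pi^n}$ to descend to $G_{F^+}$. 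The paper handles this by twisting: since $n$ is even one can form $\mathrm{Symm}^n r_\iota(\pi)\cdot r_\iota(\omega_\pi)^{-n/2}$, which satisfies $r\cong r^\vee\varepsilon_l^{-n}$, and $\varepsilon_l^{-n}$ is manifestly a character of $G_{F^+}$ (indeed of $G_{\mathbb Q}$) with $\varepsilon_l^{-n}(c_v)=1$ for all $v\mid\infty$. One then applies Theorem~\ref{potential diagonalizable automorphy} to the twisted representation and untwists at the end. Notice that this is the only place in the argument where the evenness of $n$ is used; your write-up never invokes evenness, which is a signal that something is missing.

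Your initial attempt via Theorem~\ref{potential tensor automorphy} has two further problems that you partly recognize. First, its hypothesis~6 (the Hodge--Tate weight gap) for a rank-$(n{+}1)$ representation requires the consecutive Hodge--Tate weights of $\mathrm{Symm}^n r_\iota(\pi)$ to differ by at least $n{+}1$; but these gaps equal $\lambda_{\tau,1}-\lambda_{\tau,2}+1$, which depends only on the weight of $\pi$ and not on $l$, so your claim that it ``holds automatically for all but finitely many $l$'' is false (it fails entirely, for instance, when $\pi$ has weight zero). Second, even when applicable, Theorem~\ref{potential tensor automorphy} only gives automorphy of a tensor product $r_\iota(\pi_0)\otimes\mathrm{Symm}^n r_\iota(\pi)|_{G_{F'}}$; Lemma~\ref{purity tensor} would let you deduce local-global compatibility for $\mathrm{Symm}^n r_\iota(\pi)$ \emph{provided you already know its automorphy}, which is circular. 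You correctly pivot to Theorem~\ref{potential diagonalizable automorphy}, but then the gap above applies.
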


    \begin{proof}
    
    Note that we have $r_{\iota}(\pi) \cong r_{\iota}(\pi)^{\vee}r_{\iota}(\omega_{\pi})\varepsilon_l^{-1}$ for all $l$, $\iota$. This implies $\mathrm{Symm}^nr_{\iota}(\pi) \cong (\mathrm{Symm}^nr_{\iota}(\pi))^{\vee} r_{\iota}(\omega_{\pi})^{n}\varepsilon_l^{-n}$ for all positive integer $n$ and if $n$ is even, then $\mathrm{Symm}^nr_{\iota}(\pi)r_{\iota}(\omega_{\pi})^{-\frac{n}{2}} \cong (\mathrm{Symm}^nr_{\iota}(\pi)r_{\iota}(\omega_{\pi})^{-\frac{n}{2}})^{\vee}\varepsilon_l^{-n}$.
    
    Fix a positive even integer $n$. By Lemmas \ref{decomposed genericity 1}, \ref{Hecke field 2} and \ref{GL_2}, there exists a subset $\mathcal{L}_0$ of $\mathcal{L}$ such that $\mathcal{L}_0$ has a positive Dirichlet density and for all $l \in \mathcal{L}_0$ and $\iota : \overline{\mathbb{Q}}_l \stackrel{\sim}{\rightarrow} \mathbb{C}$ such that $\mathrm{Symm}^nr_{\iota}(\pi)r_{\iota}(\omega_{\pi})^{-\frac{n}{2}}$ satisfies the conditions of Theorem \ref{potential diagonalizable automorphy}. Therefore, there exist a finite CM Galois extension $F'/F$ and a cohomological cuspidal automorphic representation $\Pi'$ of $\mathrm{GL}_{n+1}(\mathbb{A}_{F'})$ such that $r_{\iota}(\Pi') \cong \mathrm{Symm}^nr_{\iota}(\pi)r_{\iota}(\omega_{\pi})^{-\frac{n}{2}}|_{G_{F'}}$ and $\iota\mathrm{WD}(r_{\iota}(\Pi')|_{G_{F'_v}})^{F-ss} \cong \mathrm{rec}_{F'_v}(\Pi'_v|\mathrm{det}|_v^{-\frac{n}{2}})$ for all $v \nmid l$. We obtain the result by putting $\Pi := \Pi' \otimes \omega_{\pi}^{\frac{n}{2}} \circ N_{F'/F} \circ \mathrm{det}$. \end{proof}

       \begin{thm}\label{two-dimensional local-global}
    
        Let $\pi$ be a cohomological cuspidal automorphic representation of $\mathrm{GL}_2(\mathbb{A}_F)$.
        
        Then there exists a set of primes $\mathcal{L}_0$ having positive Dirichlet density such that for any $l \in \mathcal{L}_0$, $\iota : \overline{\mathbb{Q}}_l \stackrel{\sim}{\longrightarrow} \mathbb{C} $ and $v\nmid l $, we have $\iota \mathrm{WD}(r_{\iota}(\pi)|_{G_{F_v}})^{F-ss} \cong \mathrm{rec}_{F_v}(\pi_v|\mathrm{det}|_v^{-\frac{1}{2}})$.
        
    \end{thm}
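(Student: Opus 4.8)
The plan is to reduce to a non-dihedral form, feed $r_\iota(\pi)$ into one of the potential automorphy theorems of \S 5, and descend the local--global compatibility back to $\pi$. Throughout I use that, since $n=2$, a Weil--Deligne representation has monodromy type $[1,1]$ or $[2]$, so by Proposition \ref{automorphy lifting and local-global compatibility} the asserted isomorphism at $v\nmid l$ is equivalent to the monodromy operator of $\iota\mathrm{WD}(r_\iota(\pi)|_{G_{F_v}})$ being nonzero exactly when that of $\mathrm{rec}_{F_v}(\pi_v|\det|_v^{-1/2})$ is; by Theorem \ref{Ila Varma} the former is always $\prec$ the latter, so the only content is to exclude the case where $\pi_v$ is a twist of Steinberg while $r_\iota(\pi)|_{G_{F_v}}$ has vanishing monodromy. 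I also use freely that local--global compatibility at a fixed $v$ is unchanged if $\pi$ is twisted by an algebraic Hecke character (both sides twist by $\mathrm{rec}_{F_v}$ of the local component of that character) and that, by Propositions \ref{base change} and \ref{potential automorphy and local-global compatibility}, it may be verified after passing to a finite (even solvable) CM extension.

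If $\pi$ is an automorphic induction of a Hecke character from a quadratic extension, then $r_\iota(\pi)\cong\mathrm{Ind}_{G_M}^{G_F}r_\iota(\chi)$ has zero monodromy everywhere and $\pi_v$ is never a twist of Steinberg, so both sides of the local--global compatibility have zero monodromy and agree by Theorem \ref{Ila Varma}; this case holds for \emph{every} $l$ (cf. Lemma \ref{local global induction 2}). Assume now $\pi$ is not such an induction. By Lemma \ref{GL_2}, for a density-one set of primes $l$ the image $\overline{r_\iota(\pi)}(G_{F(\zeta_l)})$ contains a conjugate of $\mathrm{SL}_2(\mathbb{F}_l)$, hence in particular $\overline{r_\iota(\pi)}|_{G_{F(\zeta_l)}}$ is absolutely irreducible; by Lemma \ref{decomposed genericity 1} there is a density-one set of $l$ with $\overline{r_\iota(\pi)}$ fully (strongly) decomposed generic; and $\zeta_l\notin F$, $F\nsubseteq F^+(\zeta_l)$, and $l>n^2$ for all but finitely many $l$. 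If moreover $\pi$ is $\iota$-ordinary at $l$, then $r_\iota(\pi)$ meets the hypotheses of Theorem \ref{potential ordinary automorphy}, which yields a finite CM extension $F'/F$ and an $\iota$-ordinary cohomological cuspidal $\Pi$ of $\mathrm{GL}_2(\mathbb{A}_{F'})$ with $r_\iota(\Pi)\cong r_\iota(\pi)|_{G_{F'}}$ and $\iota\mathrm{WD}(r_\iota(\Pi)|_{G_{F'_u}})^{F-ss}\cong\mathrm{rec}_{F'_u}(\Pi_u|\det|_u^{-1/2})$ for all $u\nmid l$; Proposition \ref{potential automorphy and local-global compatibility} then gives the conclusion for $\pi$ at every $v\nmid l$. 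It therefore suffices that $\pi$ be $\iota$-ordinary for a positive-density set of $l$ (for all $\iota$); for the $\pi$ not covered this way one instead invokes, when the weight of $\pi$ is regular, Theorem \ref{potential tensor automorphy} applied with $U$ the rational prime below a given $v$, descending via Lemma \ref{purity tensor} and the purity of the auxiliary polarizable factor at $u\mid v$ (Proposition \ref{purity}, Theorem \ref{polarizable local-global compatibility}), and, in the polarizable-up-to-twist situation, Theorem \ref{potential diagonalizable automorphy} (together with Lemmas \ref{decomposed genericity 1} and \ref{Hecke field 2} for hypotheses 2, 4, 5 there), again descending by Proposition \ref{potential automorphy and local-global compatibility}.

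The main obstacle is that a general two-dimensional $\pi$ over a CM field need not be essentially conjugate self-dual, even after a twist, so the cleanest self-dual input, Theorem \ref{potential diagonalizable automorphy}, is not directly applicable; one must either extract a positive-density set of ordinary primes --- equivalently, control the slopes of the Hecke eigenvalues, which is where the weak Ramanujan property of Lemma \ref{decomposed genericity 1} and the Hecke-field estimates of Lemma \ref{Hecke field 2} enter --- or fall back on the tensor-product construction of Theorem \ref{potential tensor automorphy}, which bypasses self-duality at the cost of requiring sufficiently regular weight. Organizing these routes so that a single positive-density set $\mathcal{L}_0$ works for all $\pi$, while keeping the bookkeeping at the fixed place $v$ (choosing $U$ to contain the prime below $v$ and tracking purity through Lemma \ref{purity tensor}) routine, is the real work; the descent itself is Proposition \ref{potential automorphy and local-global compatibility}. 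The companion Ramanujan statement (Theorem \ref{Ramanujan}) then follows from the local--global compatibility just proved together with the purity inputs of \S 2 (Proposition \ref{purity local-global}).
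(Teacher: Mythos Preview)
Your proposal has a genuine gap: the three routes you sketch do not together cover every cohomological cuspidal $\pi$ on $\mathrm{GL}_2(\mathbb{A}_F)$. Ordinarity at a positive-density set of primes is not known in general for $\pi$ over an imaginary CM field, so the appeal to Theorem \ref{potential ordinary automorphy} is conditional. Theorem \ref{potential tensor automorphy} requires the Hodge--Tate weights at each $\tau$ to differ by at least $n=2$, i.e.\ $\lambda_{\tau,1}>\lambda_{\tau,2}$, which fails whenever some $\lambda_{\tau,1}=\lambda_{\tau,2}$. And condition 6 of Theorem \ref{potential diagonalizable automorphy} asks for $\overline{r}\cong\overline{r}^\vee\,\overline{\chi}|_{G_F}$ with $\overline{\chi}$ defined on $G_{F^+}$; but $\det r_\iota(\pi)=r_\iota(\omega_\pi)\varepsilon_l^{-1}$ need not descend to $G_{F^+}$ when $F$ is imaginary CM, so this is not automatic either. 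Your own sentence ``Organizing these routes so that a single positive-density set $\mathcal{L}_0$ works for all $\pi$\dots\ is the real work'' concedes the point without resolving it.

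The paper bypasses all of this by passing to the symmetric square. Since $r_\iota(\pi)\cong r_\iota(\pi)^\vee r_\iota(\omega_\pi)\varepsilon_l^{-1}$, one has
\[
\mathrm{Symm}^2 r_\iota(\pi)\otimes r_\iota(\omega_\pi)^{-1}\;\cong\;\bigl(\mathrm{Symm}^2 r_\iota(\pi)\otimes r_\iota(\omega_\pi)^{-1}\bigr)^\vee\varepsilon_l^{-2},
\]
so the self-duality hypothesis of Theorem \ref{potential diagonalizable automorphy} holds \emph{automatically}, with $\overline{\chi}=\overline{\varepsilon_l}^{-2}$ genuinely on $G_{F^+}$. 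This is packaged as Proposition \ref{automorphic LR}: for $l$ in a positive-density set there exist $F'/F$ and cuspidal $\Pi$ on $\mathrm{GL}_3(\mathbb{A}_{F'})$ with $r_\iota(\Pi)\cong\mathrm{Symm}^2 r_\iota(\pi)|_{G_{F'}}$ and full local--global at every $u\nmid l$. Now if $\pi_v$ were a Steinberg twist while $\mathrm{WD}(r_\iota(\pi)|_{G_{F_v}})$ had zero monodromy, then $\mathrm{Symm}^2$ of the latter would also have zero monodromy, contradicting $\mathrm{Symm}^2\mathrm{rec}_{F_v}(\pi_v|\det|_v^{-1/2})|_{W_{F'_u}}\cong\iota\mathrm{WD}(r_\iota(\Pi)|_{G_{F'_u}})^{F-ss}$, which has monodromy type $[3]$. (Your closing remark is also off: Ramanujan does not follow from local--global for $\pi$ via Proposition \ref{purity local-global}, since one has no a priori Weil-number input for $r_\iota(\pi)$; the paper instead bounds $|\beta_{v,i}|$ using potential automorphy of $\mathrm{Symm}^n r_\iota(\pi)$ for all even $n$.)
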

    
    \begin{proof} 
        
    We may assume that $\pi$ is not an automorphic induction of a character. 
    
    By Proposition \ref{automorphic LR}, there exists a set of primes $\mathcal{L}_0$ having positive Dirichlet density such that for any $l \in \mathcal{L}_0$ and $\iota: \overline{\mathbb{Q}}_l \stackrel{\sim}{\rightarrow} \mathbb{C}$, we obtain a finite CM extension $F'/F$ and a cohomological cuspidal automorphic representation $\Pi$ of $\mathrm{GL}_{3}(\mathbb{A}_{F'})$ such that $\mathrm{Symm}^2r_{\iota}(\pi)|_{G_{F'}} \cong r_{\iota}(\Pi)$ and $\iota\mathrm{WD}(r_{\iota}(\Pi)|_{G_{F'_v}})^{F-ss} \cong \mathrm{rec}_{F'_v}(\Pi_{v}|\mathrm{det}|_v^{-1})$ for all $v \nmid l$.
    
    We fix $l \in \mathcal{L}_0$, $\iota : \overline{\mathbb{Q}}_l \stackrel{\sim}{\rightarrow} \mathbb{C}$ and $v \nmid l$. Note that by Theorem \ref{Ila Varma} and Lemma \ref{semisimple}, $\mathrm{Symm}^2\mathrm{rec}_{F_v'}(\mathrm{BC}_{F'_v/F_u}(\pi_u)|\mathrm{det}|_v^{-\frac{1}{2}}) \cong \mathrm{rec}_{F'_v}(\Pi_v|\mathrm{det}|_v^{-1})$, where $u$ is the finite place of $F$ lying below $v$. Therefore, we have $$\mathrm{Symm}^2\iota \mathrm{WD}(r_{\iota}(\pi)|_{G_{F_u}})^{F-ss}|_{W_{F_v'}} \cong \mathrm{Symm}^2\mathrm{rec}_{F_u}(\pi_u|\mathrm{det}|_u^{-\frac{1}{2}})|_{W_{F'_v}}$$. By Theorem \ref{Ila Varma}, we may assume that $\pi_u$ is a character twist of the Steinberg representation. Then, it suffices to show that the monodromy operator of $\mathrm{WD}(r_{\iota}(\pi)|_{G_{F_u}})$ is not zero by Theorem \ref{automorphy lifting and local-global compatibility}. This follows by $\mathrm{Symm}^2\iota \mathrm{WD}(r_{\iota}(\pi)|_{G_{F_u}})^{F-ss}|_{W_{F_v'}} \cong \mathrm{Symm}^2\mathrm{rec}_{F_u}(\pi_u|\mathrm{det}|_u^{-\frac{1}{2}})|_{W_{F'_v}}.$ \end{proof}
    
    We next prove the Ramanujan conjecture for cohomological cuspidal automorphic representations of $\mathrm{GL}_2(\mathbb{A}_F)$. 
    
    \begin{thm} (Ramanujan conjecture) \label{Ramanujan}
    
        Let $\pi$ be a cohomological cuspidal automorphic representation of $\mathrm{GL}_2(\mathbb{A}_F)$.
        
        Then $\pi_v$ is tempered for all finite places $v$ in $F$.
        
        \end{thm}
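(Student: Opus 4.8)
The plan is to reduce the statement to excluding a complementary-series local component of $\pi$ and then to eliminate that possibility by combining the potential automorphy of symmetric powers (Proposition \ref{automorphic LR}) with the Jacquet--Shalika bound for Satake parameters of cuspidal representations of $\mathrm{GL}_m$. First I would dispose of the dihedral case: if $\pi$ is an automorphic induction $\mathrm{AI}_{E/F}(\chi)$ of a Hecke character $\chi$ of a quadratic CM extension $E/F$, then cuspidality and the cohomological hypothesis force $\chi$ to be unitary after a suitable twist (Proposition \ref{purity}), so at each place $w$ of $E$ above $v$ the representation $\mathrm{AI}_{E_w/F_v}(\chi_w)$ is, up to an unramified twist, induced from a unitary character and hence tempered; since $\pi_v$ is the isobaric sum of these, a suitable twist of $\pi_v$ is tempered, i.e. $\pi_v$ is tempered, by Theorem \ref{tempered}. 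From now on I may assume that $\pi$ is not an automorphic induction of a character, so that Proposition \ref{automorphic LR} is available. Fix a finite place $v$ of $F$ and suppose, for contradiction, that $\pi_v$ is not tempered. By Proposition \ref{purity} a suitable twist of $\pi_v$ is generic preunitary, so by Theorems \ref{generic preunitary} and \ref{tempered}, and because $n = 2$, it must have the complementary-series form $\psi_v|\mathrm{det}|_v^{a} \boxplus \psi_v|\mathrm{det}|_v^{-a}$ with $\psi_v$ a unitary character of $F_v^{\times}$ and $0 < a < \tfrac{1}{2}$. In particular $\mathrm{rec}_{F_v}(\pi_v|\mathrm{det}|_v^{-\frac{1}{2}})$ has vanishing monodromy operator, and the same parameter $a$ governs the base change of $\pi_v$ to any finite extension $F'_{v'}/F_v$, since the exponent of a complementary series of $\mathrm{GL}_2$ is read off from the ratio of the absolute values of its two unramified characters and is preserved under local base change.

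Next I would bring in symmetric powers. Fix an even integer $N \ge 2$. By Proposition \ref{automorphic LR} there exist a prime $l$ not lying below $v$, an isomorphism $\iota : \overline{\mathbb{Q}_l} \stackrel{\sim}{\rightarrow} \mathbb{C}$, a finite CM extension $F'/F$ and a cohomological cuspidal automorphic representation $\Pi$ of $\mathrm{GL}_{N+1}(\mathbb{A}_{F'})$ with $\mathrm{Symm}^N r_{\iota}(\pi)|_{G_{F'}} \cong r_{\iota}(\Pi)$ and $\iota\mathrm{WD}(r_{\iota}(\Pi)|_{G_{F'_{v'}}})^{F-ss} \cong \mathrm{rec}_{F'_{v'}}(\Pi_{v'}|\mathrm{det}|_{v'}^{-\frac{N}{2}})$ for all $v' \nmid l$; this is where the weak Ramanujan property and Proposition \ref{Hecke field 2} enter the argument, since via Lemma \ref{decomposed genericity 1} and the hypotheses of the underlying potential-automorphy theorem they are exactly what make Proposition \ref{automorphic LR} applicable. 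Choose a place $v' \mid v$ of $F'$. Since Theorem \ref{Ila Varma} already identifies $\iota\mathrm{WD}(r_{\iota}(\pi)|_{G_{F_v}})^{ss}$ with $\mathrm{rec}_{F_v}(\pi_v|\mathrm{det}|_v^{-\frac{1}{2}})^{ss}$, and $\mathrm{Symm}^N$ commutes with restriction to $W_{F'_{v'}}$, the Frobenius eigenvalues of $\mathrm{rec}_{F'_{v'}}(\Pi_{v'})$, once renormalized so that a suitable twist of $\Pi$ is unitary (Proposition \ref{purity} again), form a multiset whose logarithms to base $q_{v'}$, centered at their common average, are exactly $\{(N - 2j)a : j = 0, 1, \dots, N\}$, an arithmetic progression symmetric about $0$ with largest absolute value $Na$.

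Finally, since $\Pi$ is cuspidal on $\mathrm{GL}_{N+1}(\mathbb{A}_{F'})$, the Jacquet--Shalika bound --- equivalently, the absolute convergence of $L(s, \Pi \times \Pi^{\vee})$ for $\mathrm{Re}(s) > 1$ --- forces every Frobenius eigenvalue of $\mathrm{rec}_{F'_{v'}}(\Pi_{v'})$ to have renormalized absolute value strictly between $q_{v'}^{-1/2}$ and $q_{v'}^{1/2}$. Hence $Na < \tfrac{1}{2}$, that is $a < \tfrac{1}{2N}$. As $N$ ranges over all even integers, and Proposition \ref{automorphic LR} supplies the required data for each such $N$, this forces $a = 0$, contradicting $a > 0$. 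Therefore $\pi_v$ is tempered, and as $v$ was an arbitrary finite place the theorem follows.

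The main obstacle is securing a strong enough estimate on $\Pi$: the purely local input ``$\Pi_{v'}$ is generic preunitary'' from Proposition \ref{purity} only bounds the centered logarithms by $\tfrac{N+1}{2}$, and in fact a short computation shows that this local bound alone is consistent with $a = \tfrac{1}{2q}$ for every integer $q \ge 2$, so it can never yield the contradiction; what is genuinely needed is the global cuspidality of $\Pi$, used through the Jacquet--Shalika / Rankin--Selberg estimate on $\mathrm{GL}_{N+1}$, which is the only nontrivial ingredient beyond the potential-automorphy results developed earlier in the paper. A secondary technical point is that $F'_{v'}/F_v$ may be ramified, so the bookkeeping of Frobenius eigenvalues should be carried out in terms of $\mathrm{rec}$ and local base change rather than literal powers of a Frobenius lift.
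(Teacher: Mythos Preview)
Your overall strategy is correct and is essentially the same as the paper's: reduce to the non-dihedral case, pass to $\mathrm{Symm}^N$ via Proposition \ref{automorphic LR}, and use that the Satake parameters of the resulting cuspidal $\Pi$ on $\mathrm{GL}_{N+1}$ are bounded by $q_{v'}^{1/2}$ to force $Na<\tfrac{1}{2}$; letting $N\to\infty$ gives temperedness. The paper carries this out after first reducing to unramified $\pi_v$ by solvable base change, but your direct argument also works because you correctly note that a non-tempered $\pi_v$ has vanishing monodromy.

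Where you go astray is in the last paragraph. Your claim that ``generic preunitary from Proposition \ref{purity} only bounds the centered logarithms by $\tfrac{N+1}{2}$'' is incorrect, and the appeal to Jacquet--Shalika is unnecessary. Since $\pi_v$ has trivial monodromy, so does $\mathrm{Symm}^N r_\iota(\pi)|_{G_{F'_{v'}}}$, and the local-global compatibility in Proposition \ref{automorphic LR} then forces $\mathrm{rec}_{F'_{v'}}(\Pi_{v'})$ to have trivial monodromy as well. Hence $\Pi_{v'}$ has \emph{no} Steinberg factors: in the classification of Theorem \ref{generic preunitary} all $n_i=m_j=1$, and the exponents $a_j$ satisfy $0<a_j<\tfrac{1}{2}$. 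This gives directly that every Frobenius eigenvalue of $\mathrm{rec}_{F'_{v'}}(\Pi_{v'})$ (after the unitary normalization) has absolute value in $(q_{v'}^{-1/2},q_{v'}^{1/2})$, which is exactly the bound you want and exactly what the paper invokes (``by Theorem \ref{generic preunitary} and Proposition \ref{purity}''). The large bound $\tfrac{N+1}{2}$ would only arise if Steinberg factors of large length were allowed, which they are not here. So the ``main obstacle'' you identify is not an obstacle at all, and no extra global input beyond Proposition \ref{purity} is required.
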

    
    \begin{proof}
    
    We may assume that $\pi$ is not an automorphic induction of a character. If $\pi_v$ is a character twist of the Steinberg representation, then $\pi_v$ is tempered. After replacing $F$ by its solvable CM extension, it suffices to show that if $\pi_v$ is unramified, then $\pi_v$ is tempered by Proposition \ref{base change}, Corollary \ref{extension of Q} and Lemma \ref{GL_2}. Fix a finite place $v$ of $F$ such that $\pi_v$ is unramified. There exists an integer $w$ such that $\pi||\mathrm{det}||_F^{\frac{w}{2}}$ is unitary. It suffices to show that the eigenvalues $\beta_{v,1}, \beta_{v,2}$ of $\mathrm{rec}_{F_v}(\pi_v)(\mathrm{Frob}_v)$ satisfy $|\beta_{v,1}| = |\beta_{v,2}| = q_v^{\frac{w}{2}}$. This is equivalent to $q_v^{\frac{wn-1}{2}} \le |\beta_{v,1}|^n, |\beta_{v,2}|^n \le q_v^{\frac{wn+1}{2}}$ for any positive even integer $n$. By Proposition \ref{automorphic LR}, for any positive even integer $n$, there exist a prime $l$, $\iota: \overline{\mathbb{Q}}_l \stackrel{\sim}{\rightarrow} \mathbb{C}$, a finite Galois CM extension $F'/F$ and a cohomological cuspidal automorphic representation $\Pi$ of $\mathrm{GL}_{n+1}(\mathbb{A}_{F'})$ such that $v \nmid l$, $r_{\iota}(\Pi) \cong \mathrm{Symm}^nr_{\iota}(\pi)|_{G_{F'}}$ and $\iota\mathrm{WD}(r_{\iota}(\Pi)|_{G_{F'_u}})^{F-ss} \cong \mathrm{rec}_{F'_u} (\Pi_u|\mathrm{det}|^{\frac{-n}{2}})$ for all $u \nmid l$. Note that $|\omega_{\Pi}| = || \ ||_{F'}^{-\frac{(n+1)nw}{2}}$ and $\Pi||\mathrm{det}||_{F'}^{\frac{nw}{2}}$ is unitary. For all $u|v$, $\Pi_u$ is unramified by $\iota\mathrm{WD}(\mathrm{Symm}^nr_{\iota}(\pi)|_{G_{F'_u}})^{F-ss} \cong \mathrm{rec}_{F'_u} (\Pi_u|\mathrm{det}|^{\frac{-n}{2}})$. This implies that the absolute values of eigenvalues of $\mathrm{rec}_{F'_u}(\Pi_u)(\mathrm{Frob}_u)$ are smaller than $q_u^{\frac{nw+1}{2}}$ and bigger than $q_u^{\frac{nw-1}{2}}$ by Theorem \ref{generic unitary} and Proposition \ref{purity}. In particular, $q_v^{\frac{wn-1}{2}} \le |\beta_{v,1}|^n, |\beta_{v,2}|^n \le q_v^{\frac{wn+1}{2}}$. Therefore, we obtain the result. \end{proof}
    
    \begin{thm} \label{sufficiently regular}
    
    Let $\pi$ be a cohomological cuspidal automorphic representation of $\mathrm{GL}_{n}(\mathbb{A}_F)$ and $\mathcal{L}$ be a set of primes having Dirichlet density one.
            
    We suppose the following conditions.

    (1) \ For all $l \in \mathcal{L}$ and $\iota: \overline{\mathbb{Q}}_l \stackrel{\sim}{\longrightarrow} \mathbb{C}$, the representation $\overline{r_{\iota}(\pi)}|_{G_{F(\zeta_l)}}$ is absolutely irreducible.
        
    (2) \ The weight $\lambda \in (\mathbb{Z}^{n}_+)^{\mathrm{Hom}(F,\mathbb{C})}$ of $\pi$ satisfies $\lambda_{\tau, i} - \lambda_{\tau, i+1} \ge n-1$ for all $\tau \in \mathrm{Hom}(F, \mathbb{C})$ and $i=1, \cdots, n-1$.
        
    Then there exists a subset $\mathcal{L}_0$ of $\mathcal{L}$ having positive Dirichlet density such that for all $l \in \mathcal{L}_0$, $\iota: \overline{\mathbb{Q}}_l \stackrel{\sim}{\longrightarrow} \mathbb{C}$ and finite places $v \nmid l$ of $F$, we have $\iota\mathrm{WD}(r_{\iota}(\pi)|_{G_{F_v}})^{F-ss} \cong \mathrm{rec}_{F_v}(\pi_v|\mathrm{det}|_{v}^{\frac{1-n}{2}})$.
        
    \end{thm}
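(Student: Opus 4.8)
The plan is to deduce the theorem from Theorem \ref{potential tensor automorphy} and Lemma \ref{purity tensor}, following the pattern of the proof of Theorem \ref{self-dual local-global}. By the remark preceding Theorem \ref{self-dual local-global} we may assume $F$ is an imaginary CM field. First I would fix the finite set $S_{\pi}$ of finite places of $F$ at which $\pi$ is ramified, and for a prime $l$ let $U_l$ be the finite set of rational primes $p\neq l$ lying below $S_{\pi}$. I would then cut out $\mathcal{L}_0$ as follows. Starting from the set $\mathcal{L}^{\dagger}$ of $l\in\mathcal{L}$ that split completely in $\widetilde{FM_{\pi}}^{+}$ with every $l$-adic place of $\widetilde{FM_{\pi}}^{+}$ inert in $\widetilde{FM_{\pi}}$ --- a positive Dirichlet density set by Chebotarev, on which moreover $\overline{r_{\iota}(\pi)}$ is absolutely irreducible since $l\in\mathcal{L}$ --- Lemma \ref{Hecke field 2} furnishes an integer $w$ and a cofinite subset of $\mathcal{L}^{\dagger}$, hence of positive Dirichlet density, on which for every $\iota$ and every $v\mid l$ the representation $r_{\iota}(\pi)|_{G_{F_v}}$ is potentially diagonalizable and Hodge--Tate regular and satisfies $r_{\iota}(\pi)^{c}|_{G_{F_v'}}\sim r_{\iota}(\pi)^{\vee}\varepsilon_l^{-w}|_{G_{F_v'}}$ for a suitable finite extension $F_v'/F_v$. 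Intersecting this with the density-one set of $l$ for which $\overline{r_{\iota}(\pi)}$ is fully strongly decomposed generic for every $\iota$ (Lemma \ref{decomposed genericity 1}) and discarding the finitely many $l$ with $l\leq 8n^{3}$ or $F\subseteq F^{+}(\zeta_l)$ yields a set $\mathcal{L}_0$ of positive Dirichlet density.

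Next, for fixed $l\in\mathcal{L}_0$ and $\iota:\overline{\mathbb{Q}_l}\stackrel{\sim}{\longrightarrow}\mathbb{C}$, I would verify that $r:=r_{\iota}(\pi)$ satisfies the six hypotheses of Theorem \ref{potential tensor automorphy}. Hypotheses $1$--$3$ and $5$ are built into the definition of $\mathcal{L}_0$, and hypothesis $4$ is again part of Lemma \ref{Hecke field 2}. For hypothesis $6$ I would invoke Theorem \ref{Lambert 2}: since $\overline{r}$ is absolutely irreducible and decomposed generic, $r|_{G_{F_v}}$ is de Rham of $l$-adic Hodge type $\mathbf{v}_{\lambda_v}$ for all $v\mid l$, so $\mathrm{HT}_{\tau}(r)=\{\lambda_{\iota\tau,1}+n-1,\lambda_{\iota\tau,2}+n-2,\dots,\lambda_{\iota\tau,n}\}$ for each $\tau$; the hypothesis $\lambda_{\tau,i}-\lambda_{\tau,i+1}\geq n-1$ then makes each consecutive gap $\lambda_{\iota\tau,i}-\lambda_{\iota\tau,i+1}+1$ at least $n$, so any two distinct elements of $\mathrm{HT}_{\tau}(r)$ differ by at least $n$. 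Applying Theorem \ref{potential tensor automorphy} with $U:=U_l$ then produces a finite Galois CM extension $F'/F$, a cohomological cuspidal automorphic representation $\Pi$ of $\mathrm{GL}_{2n^2}(\mathbb{A}_{F'})$, and a polarizable $\iota$-ordinary cohomological cuspidal automorphic representation $\pi_1$ of $\mathrm{GL}_{2n}(\mathbb{A}_{F'})$, unramified at every place of $F'$ above $U_l$, with $r_{\iota}(\pi_1)\otimes r|_{G_{F'}}\cong r_{\iota}(\Pi)$ and $\iota\mathrm{WD}(r_{\iota}(\Pi)|_{G_{F'_u}})^{F-ss}\cong\mathrm{rec}_{F'_u}(\Pi_u|\mathrm{det}|_u^{\frac{1-2n^2}{2}})$ for all $u\nmid l$.

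Finally I would check the desired isomorphism at each finite place $v\nmid l$ of $F$, distinguishing two cases. If $\pi_v$ is unramified, then $\mathrm{rec}_{F_v}(\pi_v|\mathrm{det}|_v^{\frac{1-n}{2}})$ has zero monodromy operator, and since $\iota\mathrm{WD}(r|_{G_{F_v}})^{F-ss}\prec\mathrm{rec}_{F_v}(\pi_v|\mathrm{det}|_v^{\frac{1-n}{2}})$ by Theorem \ref{Ila Varma}, the monodromy operator of $\iota\mathrm{WD}(r|_{G_{F_v}})$ is zero as well; hence the two Weil--Deligne representations have the same monodromy type and $(2)\Rightarrow(1)$ of Proposition \ref{automorphy lifting and local-global compatibility} gives the claim. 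If $\pi_v$ is ramified, then $v$ lies above a prime of $U_l$, so $\pi_{1,u}$ is unramified for any place $u\mid v$ of $F'$; hence $r_{\iota}(\pi_1)|_{G_{F'_u}}$ is unramified, and it is pure since $\pi_1$ is polarizable (Theorem \ref{polarizable local-global compatibility}). Lemma \ref{purity tensor}, applied with $d=2n$, with $r_{\iota}(\pi_1)$ in the role of its $r$, and with the place $u\mid v$, then yields $\iota\mathrm{WD}(r|_{G_{F_v}})^{F-ss}\cong\mathrm{rec}_{F_v}(\pi_v|\mathrm{det}|_v^{\frac{1-n}{2}})$, which completes the argument.

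The genuinely delicate points specific to this argument are the density bookkeeping --- ensuring that the Chebotarev conditions demanded by Lemma \ref{Hecke field 2} still leave a set of positive Dirichlet density after all the intersections --- and the passage from the regularity hypothesis on the weight $\lambda$ to the Hodge--Tate spacing required in hypothesis $6$ of Theorem \ref{potential tensor automorphy}; the substantive difficulty is of course already absorbed into Theorem \ref{potential tensor automorphy} itself.
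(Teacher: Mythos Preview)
Your proposal is correct and follows essentially the same route as the paper: cut out $\mathcal{L}_0$ via Lemma \ref{decomposed genericity 1} and Lemma \ref{Hecke field 2}, verify the hypotheses of Theorem \ref{potential tensor automorphy} (the paper cites Corollary \ref{Caraiani-Newton 2} where you cite Theorem \ref{Lambert 2}, but either suffices for the Hodge--Tate weight computation), and then conclude via Lemma \ref{purity tensor} together with Theorem \ref{polarizable local-global compatibility}. Your explicit case split between ramified and unramified $v$ is exactly what the paper's terse ``the result follows from Lemma \ref{purity tensor} and Theorem \ref{polarizable local-global compatibility}'' is abbreviating.
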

    
    \begin{proof} By Lemmas \ref{decomposed genericity 1}, \ref{Hecke field 2} and Theorem \ref{Caraiani-Newton 2}, there exists a subset $\mathcal{L}_0$ of $\mathcal{L}$ having positive Dirichlet density such that for any $l \in \mathcal{L}_0$ and $\iota : \overline{\mathbb{Q}}_l \stackrel{\sim}{\rightarrow} \mathbb{C}$ such that $r_{\iota}(\pi)$ satisfies the conditions of Theorem \ref{potential tensor automorphy}. Therefore, there exist a finite Galois CM extension $F'/F$, a cohomological cuspidal automorphic representation $\Pi$ of $\mathrm{GL}_{2n^2}(\mathbb{A}_{F'})$ and a polarizable $\iota$-ordinary cohomological cuspidal automorphic representation $\pi_1$ of $\mathrm{GL}_{2n}(\mathbb{A}_{F'})$ satisfying the following conditions.
        
1 \ If $\pi$ is ramified at $v \nmid l$, then $\pi_1$ is unramified at all places above $v$.

2 \ $r_{\iota}(\pi_1) \otimes r_{\iota}(\pi)|_{G_{F'}} \cong r_{\iota}(\Pi)$.
    
3 \ $\iota\mathrm{WD}(r_{\iota}(\Pi)|_{G_{F'_{v}}})^{F-ss} \cong \mathrm{rec}_{F'_{v}}(\Pi_v|\mathrm{det}|_v^{\frac{1-2n^2}{2}})$ for $v \nmid l$.
        
Thus, the result follows from Lemma \ref{purity tensor} and Theorem \ref{polarizable local-global compatibility}. \end{proof}

\begin{thm} \label{ordinary local-global}
        
    Let $l$ be a prime such that $l > n^2$, $\iota: \overline{\mathbb{Q}}_l \stackrel{\sim}{\longrightarrow} \mathbb{C}$ be an isomorphism of fields and $\pi$ be an $\iota$-ordinary cohomological cuspidal automorphic representation of $\mathrm{GL}_{n}(\mathbb{A}_F)$. 
        
        We suppose the following conditions.
        
        (1) \ $\overline{r_{\iota}(\pi)}$ is fully decomposed generic.
        
        (2) \ $\overline{r_{\iota}(\pi)}|_{G_{F(\zeta_l)}}$ is absolutely irreducible. 
        
        (3) \ $\zeta_l \notin F$.
        
        Then for all finite places $v \nmid l$ of $F$, we have $\iota\mathrm{WD}(r_{\iota}(\pi)|_{G_{F_v}})^{F-ss} \cong \mathrm{rec}_{F_v}(\pi_v|\mathrm{det}|_{v}^{\frac{1-n}{2}})$.

    \end{thm}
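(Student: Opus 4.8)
The plan is to deduce the statement from the potential automorphy theorem in ordinary cases, Theorem \ref{potential ordinary automorphy}, together with the descent furnished by Proposition \ref{potential automorphy and local-global compatibility}. First I would put $r := r_{\iota}(\pi)$ and check that $r$ meets the four hypotheses of Theorem \ref{potential ordinary automorphy}. Condition (3) gives $\zeta_l \notin F$ directly; condition (1) gives that $\overline{r}$ is fully decomposed generic, and in particular decomposed generic; and condition (2) says $\overline{r}|_{G_{F(\zeta_l)}}$ is absolutely irreducible, which forces $\overline{r}$ itself to be absolutely irreducible.

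The only hypothesis of Theorem \ref{potential ordinary automorphy} that requires an argument is that $r|_{G_{F_v}}$ is ordinary for every $v \mid l$. This I would obtain from Theorem \ref{ordinary semisimple}: since $\pi$ is $\iota$-ordinary and $\overline{r_{\iota}(\pi)}$ is absolutely irreducible and decomposed generic (both established in the previous step), part 1 of that theorem asserts precisely that $r_{\iota}(\pi)|_{G_{F_v}}$ is ordinary (of weight $\lambda_v$) for all $v \mid l$, and moreover yields $\iota\mathrm{WD}(r_{\iota}(\pi)|_{G_{F_v}})^{ss} \cong \mathrm{rec}_{F_v}(\pi_v|\mathrm{det}|_v^{\frac{1-n}{2}})^{ss}$ at the $l$-adic places, which is not needed here but is consistent with the $\iota$-ordinarity assumption. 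Note that $l > n^2$ is exactly the numerical hypothesis under which Theorem \ref{potential ordinary automorphy} is stated, so no extra largeness condition must be imposed.

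With all hypotheses in place, Theorem \ref{potential ordinary automorphy} (applied with the given $\iota$ and any convenient choice of $F^{(a)}$) produces a finite Galois CM extension $F'/F$ and an $\iota$-ordinary cohomological cuspidal automorphic representation $\Pi$ of $\mathrm{GL}_n(\mathbb{A}_{F'})$ such that $r_{\iota}(\Pi) \cong r_{\iota}(\pi)|_{G_{F'}}$ and $\iota\mathrm{WD}(r_{\iota}(\Pi)|_{G_{F'_u}})^{F-ss} \cong \mathrm{rec}_{F'_u}(\Pi_u|\mathrm{det}|_u^{\frac{1-n}{2}})$ for all finite places $u \nmid l$ of $F'$. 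Now fix a finite place $v \nmid l$ of $F$ and choose any place $u \mid v$ of $F'$; I then invoke Proposition \ref{potential automorphy and local-global compatibility} with the data $(F, \pi, l, \iota, v)$ and the extension $F'/F$ together with $\Pi$. Its hypotheses, namely $r_{\iota}(\pi)|_{G_{F'}} \cong r_{\iota}(\Pi)$ and $\mathrm{WD}(r_{\iota}(\Pi)|_{G_{F'_u}})^{F-ss} \cong \mathrm{rec}_{F'_u}(\Pi_u|\mathrm{det}|_u^{\frac{1-n}{2}})$, hold by construction, so the conclusion $\iota\mathrm{WD}(r_{\iota}(\pi)|_{G_{F_v}})^{F-ss} \cong \mathrm{rec}_{F_v}(\pi_v|\mathrm{det}|_v^{\frac{1-n}{2}})$ follows. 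Since $v$ was an arbitrary non-$l$-adic place, this proves the theorem.

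The genuine content is entirely inside Theorem \ref{potential ordinary automorphy} (and the ordinary automorphy lifting theorem it rests on), so at the level of this deduction the only real point of care — the step I would single out as the potential pitfall — is to make sure the passage from the $\iota$-ordinarity of $\pi$ to the ordinarity of $r_{\iota}(\pi)|_{G_{F_v}}$ at every $l$-adic place is legitimate, which in turn needs the absolute irreducibility and decomposed genericity of $\overline{r_{\iota}(\pi)}$ to have already been extracted from the stated hypotheses; once that is in hand, the rest is an application of the two cited results in sequence.
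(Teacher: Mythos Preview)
Your proposal is correct and follows essentially the same approach as the paper: use Theorem \ref{ordinary semisimple} to verify that $r_{\iota}(\pi)$ satisfies the hypotheses of Theorem \ref{potential ordinary automorphy}, apply that theorem to obtain $F'$ and $\Pi$, and then invoke Proposition \ref{potential automorphy and local-global compatibility} to descend the local-global compatibility from $F'$ to $F$. Your write-up is more explicit about checking each hypothesis, but the logical structure is identical.
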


    \begin{proof} By Theorem \ref{ordinary semisimple}, $r_{\iota}(\pi)$ satisfies the conditions of Theorem \ref{potential ordinary automorphy}. Thus, there exist a finite CM Galois extension $F'/F$ and an $\iota$-ordinary cohomological cuspidal automorphic representation $\Pi$ of $\mathrm{GL}_n(\mathbb{A}_{F'})$ such that $r_{\iota}(\Pi) \cong r_{\iota}(\pi)|_{G_{F'}}$ and $\iota\mathrm{WD}(r_{\iota}(\Pi)|_{G_{F'_u}})^{F-ss} \cong \mathrm{rec}_{F'_u}(\Pi_u|\mathrm{det}|_u^{\frac{1-n}{2}})$ for any $u \nmid l$. This implies the theorem by Proposition \ref{potential automorphy and local-global compatibility}.  \end{proof}

\subsection{Applications to the potential automorphy and the purity of compatible systems of $l$-adic Galois representations} \label{Application}

We recall definitions of some properties of compatible systems of $l$-adic Galois representations.

\begin{dfn} A rank $n$ very weakly compatible system $$\mathcal{R}=(M, S, \{ Q_v(X) \}_{v \notin S}, \{ r_{\lambda} \}_{\lambda}, \{ H_{\tau} \}_{\tau \in \mathrm{Hom}(F, \overline{M})})$$ of $l$-adic representations of $G_F$ defined over $M$ means a tuple of the following data.

1 \ $M$ is a number field.

2 \ $S$ is a finite set of finite places of $F$.

3 \ For each $v \notin S$, $Q_v(X) \in M[X]$ is a monic polynomial of degree $n$.

4 \ For each $\tau \in \mathrm{Hom}(F, \overline{M})$, $H_{\tau}$ is an element of $\mathbb{Z}^n/\mathfrak{S}_n$.

5 \ For each finite place $\lambda$ of $M$, $r_{\lambda} : G_{F} \rightarrow \mathrm{GL}_n(\overline{M_{\lambda}})$ is a semisimple continuous representation satisfying the following conditions.

$(a)$ \ If $v \notin S$ and $v \nmid \mathrm{char} \, \mathbb{F}_{\lambda}$, the representation $r_{\lambda}|_{G_{F_v}}$ is unramified and $\mathrm{det}(TI_n - r_{\lambda}(\mathrm{Frob}_v)) = Q_{v}(X)$.

$(b)$ \ There exists a set $\mathcal{L}$ of primes having Dirichlet density one such that for any $l \in \mathcal{L}$, $\lambda|l$ and $v|l$, the representation $r_{\lambda}|_{G_{F_v}}$ is crystalline and $\mathrm{HT}_{\iota \tau}(r_{\lambda}) = H_{\tau}$ for any $\tau \in \mathrm{Hom}(F, \overline{M})$ and any embedding $\iota : \overline{M} \hookrightarrow \overline{M_{\lambda}}$ over $M$.

A rank $n$ extremely weakly compatible system $\mathcal{R}=(M, S, \{ Q_v(X) \}, \{ r_{\lambda} \}, \{ H_{\tau} \})$ of $l$-adic representations of $G_F$ defined over $M$ means a tuple of the above data without the condition $(b)$ of 5.

\end{dfn}

\begin{dfn} \label{automorphic compatible system}

For a cohomological cuspidal automorphic representation $\pi$ of $\mathrm{GL}_n(\mathbb{A}_F)$ of weight $\lambda$, we have the following data.

$\cdot$ $S_{\pi}$ is the set of all finite places $v$ of $F$ such that $\pi_v$ is ramified.

$\cdot$ $M_{\pi}$. (See Lemma \ref{example 1}.)

$\cdot$ For any $v \notin S_{\pi}$, $Q_v(X):=\mathrm{det}(XI_n - \mathrm{rec}_{F_v}(\pi_v|\mathrm{det}|_v^{\frac{1-n}{2}})(\mathrm{Frob}_v))$.

$\cdot$ For a finite place $\lambda$ of $M_{\pi}$, $r_{\pi, \lambda} : G_F \rightarrow \mathrm{GL}_n(\overline{M_{\pi, \lambda}})$ is the semisimple continuous representation such that $r_{\pi, \lambda}|_{G_{F_v}}$ is unramified and $\mathrm{det}(TI_n - r_{\pi, \lambda}(\mathrm{Frob}_v)) = Q_v(T)$ for any $v \notin S_{\pi}$ and $v \nmid \mathrm{char} \, \mathbb{F}_{\lambda}$. (Note that if $\iota : \overline{\mathbb{Q}}_l \stackrel{\sim}{\rightarrow} \mathbb{C}$ induces $\lambda$ on $M_{\pi}$ and we identify $\overline{\mathbb{Q}}_l = \overline{M_{\pi, \lambda}}$, then we have $r_{\pi, \lambda} = r_{\iota}(\pi)$. )

$\cdot$ For any $\tau \in \mathrm{Hom}(F, \overline{M})$, $H_{\pi, \tau} := \{ \lambda_{\tau, i} + n - i \mid i = 1, \cdots, n \} \in (\mathbb{Z}^n/\mathfrak{S}_n)$.

Then $\mathcal{R}_{\pi}:=(M_{\pi}, S_{\pi}, \{ Q_{\pi, v}(X) \}, \{ r_{\pi, \lambda} \}, \{ H_{\pi, \tau} \})$ is a rank $n$ extremely weakly compatible system.

\end{dfn}
    
\begin{dfn}

1 \ For an extremely weakly compatible system $\mathcal{R}:=(M, S, \{ Q_{v}(X) \}, \{ r_{\lambda} \}, \{ H_{\tau} \})$, we say that $\mathcal{R}$ is irreducible if there exists a set $\mathcal{L}$ of primes having Dirichlet density one such that for any $l \in \mathcal{L}$ and $\lambda | l$, $r_{\lambda}$ is irreducible.
    
2 \  For an extremely weakly compatible system $\mathcal{R}$, we say that $\mathcal{R}$ is strongly irreducible if for any finite extension $F'/F$ such that $\mathcal{R}|_{G_{F'}}$ is irreducible.

3 \ For a rank $n$ very weakly compatible system $\mathcal{R}:=(M, S, \{ Q_{v}(X) \}, \{ r_{\lambda} \}, \{ H_{\tau} \})$, we say that $\mathcal{R}$ is regular if for any $\tau$, $H_{\tau}$ consists of $n$-distinct integers.

4 \ For an extremely weakly compatible system $\mathcal{R}:=(M, S, \{ Q_{v}(X) \}, \{ r_{\lambda} \}, \{ H_{\tau} \})$ and an integer $w$, we say that $\mathcal{R}$ is pure of weight $w$ if for any $v \notin S$, any root $\alpha \in \overline{M}$ of $Q_v(X)$ is a Weil $q_v^w$-number.
        
5 \ For an extremely weakly compatible system $\mathcal{R}:=(M, S, \{ Q_{v}(X) \}, \{ r_{\lambda} \}, \{ H_{\tau} \})$, we say that $\mathcal{R}$ is geometrically pure if $\mathcal{R}$ is pure and for any finite place $\lambda$ of $M$, finite place $v \nmid \mathrm{char} \, \mathbb{F}_{\lambda}$ of $F$, $\sigma \in W_{F_v}$ and any eigenvalue $\alpha \in \overline{M_{\lambda}}$ of $r_{\lambda}(\sigma)$, there exists an integer $w$ such that $\alpha$ is a Weil $q_v^w$-number. For example, $\mathcal{R}$ is geometrically pure if for any $\lambda$, $r_{\lambda}$ is a subquotient of $H^*_{\acute{e}t}(X_{\overline{F}}, \overline{M_{\lambda}}(i))$ for a proper smooth variety $X$ over $F$, $i \in \mathbb{Z}$. (See \cite[Lemma I.5.7]{LLC}.)

6 \ If $F$ is totally real, a rank $1$ very weakly compatible system $\mathcal{X} = (M, S, \{ X - \alpha_v \}, \{ \chi_{\lambda} \}, \{ H_{\tau} \} )$ is odd (resp. even) if $\chi_{\lambda}(c_v) = -1$ (resp. $1$) for all $v \mid \infty$ and some $\lambda$. (This implies $\chi_{\lambda}(c_v) = -1$ (resp. $1$) for all $\lambda$.) 

7 \ For a very weakly compatible system $\mathcal{R}:=(M, S, \{ Q_{v}(X) \}, \{ r_{\lambda} \}, \{ H_{\tau} \})$, we say that $\mathcal{R}$ has a parallel weight if $H_{\tau} = H_{\sigma}$ for any $\tau, \sigma \in \mathrm{Hom}(F, \overline{M})$.

\end{dfn}

\begin{dfn} 
    
1 \ For rank $n$ extremely weakly compatible systems $$\mathcal{R}_1=(M, S_1, \{ Q_{1, v}(X) \}, \{ r_{1, \lambda} \}, \{ H_{1, \tau} \}), \mathcal{R}_2 = (M, S_2, \{ Q_{2, v}(X) \}, \{ r_{2, \lambda} \}, \{ H_{2, \tau} \})$$ of $l$-adic representations of $G_F$ defined over $M$, we say $\mathcal{R}_1 \cong \mathcal{R}_2$ if $Q_{1, v}(X) = Q_{2, v}(X)$ for Dirichlet density one finite places $v \notin S_1 \cup S_2$.

Note that if $\mathcal{R}_1 \cong \mathcal{R}_2$, then $r_{1, \lambda} \cong r_{2, \lambda}$ for any $\lambda$ and $Q_{1, v}(X) = Q_{2, v}(X)$ for any $v \notin S_1 \cup S_2$. If $\mathcal{R}_1$ and $\mathcal{R}_2$ are very weakly compatible systems, then $H_{1, \tau} = H_{2, \tau}$ for any $\tau \in \mathrm{Hom}(F, \overline{M})$.

2 \ For a rank $n$ extremely weakly compatible system $$\mathcal{R}=(M, S, \{ Q_{v}(X) \}, \{ r_{\lambda} \}, \{ H_{\tau} \}),$$ we say that $\mathcal{R}$ is automorphic if there exists an embedding $\iota : M \hookrightarrow \mathbb{C}$ and a cohomological cuspidal automorphic representation $\pi$ of $\mathrm{GL}_n(\mathbb{A}_F)$ such that $\iota Q_v(X) = \mathrm{det}(TI_n - \mathrm{rec}_{F_v}(\pi_v|\mathrm{det}|_v^{\frac{1-n}{2}})(\mathrm{Frob}_v))$ for Dirichlet density one places $v \notin S \cup S_{\pi}$. By Lemma \ref{example 1}, this is equivalent to that for any $\iota : M \hookrightarrow \mathbb{C}$, there exists a cohomological cuspidal automorphic representation $\pi$ of $\mathrm{GL}_n(\mathbb{A}_F)$ such that $\iota Q_v(X) = \mathrm{det}(TI_n - \mathrm{rec}_{F_v}(\pi_v|\mathrm{det}|_v^{\frac{1-n}{2}})(\mathrm{Frob}_v))$ for Dirichlet density one places $v \notin S \cup S_{\pi}$.
\end{dfn}

Note that the following lemmas.

\begin{prop}\label{irreducibility}

Let $\mathcal{R} = (M, S, \{ Q_v(X) \}, \{r_{\lambda}\}, \{ H_{\tau} \} )$ be a rank $n$ regular very weakly compatible system of $l$-adic representations of $G_F$ defined over $M$.

Then there exists a set $\mathcal{L}$ of primes having Dirichlet density one such that for any $l \in \mathcal{L}$, $\lambda|l$ and any irreducible subrepresentation $s$ of $r_{\lambda}$, the representation $\overline{s}|_{G_{F(\zeta_l)}}$ is absolutely irreducible.

\end{prop}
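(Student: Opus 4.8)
The plan is to start from the density one set $\mathcal{L}$ supplied by condition $(b)$ in the definition of a very weakly compatible system, and then to cut it down to a still density one set with the required property. Fix $l \in \mathcal{L}$, a place $\lambda \mid l$ of $M$, and an irreducible subrepresentation $s$ of $r_\lambda$; since $r_\lambda$ is semisimple, $s$ is a direct summand. For every $v \mid l$ the restriction $r_\lambda|_{G_{F_v}}$ is crystalline with Hodge--Tate weights the fixed set $H_\tau$ of $n$ pairwise distinct integers, and $s|_{G_{F_v}}$, being a $G_{F_v}$-subrepresentation of a crystalline representation, is again crystalline with Hodge--Tate weights a subset of $H_\tau$; in particular these weights are pairwise distinct and lie in an interval whose length depends only on $\mathcal{R}$. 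First I would record the local consequence: once $l$ exceeds a bound depending only on $n$ and $\mathcal{R}$ (and $l$ is unramified in $F$, which excludes only finitely many $l$), the representation $s|_{G_{F_v}}$ is Fontaine--Laffaille (cf. \cite{MEI} and Proposition \ref{trace irreducible component}), so the semisimplification of $\overline{s}|_{I_{F_v}}$ is a direct sum of products of fundamental characters whose exponents are governed by the Hodge--Tate weights of $s$; a short orbit argument in $\mathbb{Z}/(l-1)$, using that these exponents come from a bounded interval, then shows that no nontrivial tamely ramified character of order prime to $l$ can fix this datum.

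Next I would analyse the possible failure of absolute irreducibility of $\overline{s}|_{G_{F(\zeta_l)}}$ in two cases. Suppose first that $\overline{s}$ is irreducible as a representation of $G_F$ while $\overline{s}|_{G_{F(\zeta_l)}}$ is reducible. Since $\mathrm{Gal}(F(\zeta_l)/F)$ is cyclic of order dividing $l-1$, Clifford theory gives an isomorphism $\overline{s} \cong \overline{s} \otimes \delta$ for some nontrivial character $\delta$ of $\mathrm{Gal}(F(\zeta_l)/F)$. As $F(\zeta_l)/F$ is unramified outside $l$, the character $\delta$ is nontrivial on the inertia subgroup at some place $v \mid l$, and there $\delta|_{I_{F_v}}$ is a nontrivial power of $\overline{\varepsilon_l}|_{I_{F_v}}$; restricting $\overline{s} \cong \overline{s} \otimes \delta$ to $I_{F_v}$ contradicts the conclusion of the previous paragraph once $l$ is large.

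In the remaining case $\overline{s}$ is already reducible as a representation of $G_F$. Here I would invoke the residual-irreducibility phenomenon for regular compatible systems: after a further density one shrinking of $\mathcal{L}$, for every $l \in \mathcal{L}$, $\lambda \mid l$, and every irreducible direct summand $s$ of $r_\lambda$, the reduction $\overline{s}$ is irreducible as a representation of $G_F$. This is the hard input. Its proof proceeds by a Chebotarev and Brauer--Nesbitt argument: a hypothetical residual factorization is recorded, for a positive density of places $v \notin S$, by a factorization of the reduction mod $\lambda$ of $Q_v(X)$ into monic factors of fixed degrees; comparing the tame inertia weights of the putative constituents at the $l$-adic places with the fixed, pairwise distinct Hodge--Tate weights forces this to happen only for a density zero set of $l$. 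It is precisely here that regularity of $\mathcal{R}$ is used, rather than merely the boundedness of its Hodge--Tate weights (compare \cite{irr}). Intersecting $\mathcal{L}$ with the cofinitely many $l$ treated by the local and Clifford-theoretic steps yields the desired density one set.

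The main obstacle is this last case: showing that an irreducible constituent of a regular compatible system stays residually irreducible for a density one set of $\lambda$. The first two steps are essentially bookkeeping around Fontaine--Laffaille theory and the structure of the cyclotomic extension, whereas ruling out genuine residual reducibility is the delicate part, and the one that genuinely exploits regularity of $\mathcal{R}$.
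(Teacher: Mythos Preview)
Your overall architecture is exactly that of \cite[Proposition~5.3.2]{CW}, which is all the paper cites: establish residual $G_F$-irreducibility of each irreducible constituent $s$ of $r_\lambda$ for density-one $l$, then pass to $G_{F(\zeta_l)}$ via Clifford theory over the cyclic prime-to-$l$ extension $F(\zeta_l)/F$ together with Fontaine--Laffaille control of tame inertia. Your treatment of the second step is correct, including the observation that for $l$ unramified in $F$ the inertia groups at $v\mid l$ exhaust $\mathrm{Gal}(F(\zeta_l)/F)$, so any nontrivial self-twist is visible on tame inertia weights.

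The gap is in your account of the first step. The mechanism you sketch --- record a factorization of $\overline{Q_v(X)}$ for unramified $v$ and compare with the possible subsets of $H_\tau$ arising as tame weights of a putative constituent --- does not produce a contradiction: a $d$-dimensional sub of $\overline{s}$ yields a degree-$d$ factor of each $\overline{Q_v(X)}$ and carries \emph{some} $d$-element subset of $H_\tau$, but nothing ties these data together across varying $\lambda$, and there is no evident obstruction to this configuration for a positive-density set of $\lambda$. (The Ribet-style Eisenstein-congruence argument you may be modelling this on is special to $\mathrm{GL}_2$: the constituents are characters, fully determined by the nebentypus and the single tame weight, hence liftable to a fixed characteristic-zero character whose congruence with the trace can fail for only finitely many $l$; none of this survives in higher rank.) In \cite{CW} the residual $G_F$-irreducibility is obtained instead from Larsen's theorem on maximality of Galois images in compatible systems: for a density-one set of $l$ the image of $r_\lambda$ is hyperspecial maximal compact in its Zariski closure $G_\lambda$, so its reduction is essentially $G_\lambda(\mathbb{F}_l)$, and for $l$ large each irreducible algebraic $G_\lambda$-constituent of the standard representation stays irreducible modulo $l$. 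That group-theoretic input, not a trace/weight comparison, is what is missing from your sketch.
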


\begin{proof}

See \cite[Proposition 5.3.2]{CW}. By the proof, we only need to assume that $\mathcal{R}$ is very weakly compatible system. \end{proof}

\begin{lem}\label{Hecke field}

Let $\mathcal{R} = (M, S, \{ Q_v(X) \}, \{r_{\lambda}\}, \{ H_{\tau} \} )$ be a rank $n$ irreducible regular very weakly compatible system of $l$-adic representations of $G_F$ defined over $M$ which is pure of weight $w$. 

Then there exist a set $\mathcal{L}_0$ of primes having positive Dirichlet density such that for any $l \in \mathcal{L}_0$, $\lambda|l$ and $v|l$, we have the following properties.

1 \ $\overline{r_{\lambda}}|_{G_{F(\zeta_l)}}$ is absolutely irreducible.

2 \ $r_{\lambda}|_{G_{F_v}}$ is potentially diagonalizable.

3 \ There exists a finite extension $F'_v/F_v$ such that $r_{\lambda}^c|_{G_{F'_v}} \sim r_{\lambda}^{\vee}\varepsilon_l^{-w} |_{G_{F'_v}}$.

\end{lem}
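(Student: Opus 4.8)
The plan is to follow the outline given after Proposition~\ref{conjugate self-duality}, adapting the proof of Lemma~\ref{Hecke field 2} from a cohomological cuspidal representation to an abstract pure compatible system. First I would enlarge $M$ so that it is a CM field (possible by \cite[Lemma 1.2]{irr}), let $\widetilde{FM}$ be the Galois closure of $FM$ over $\mathbb{Q}$ (again a CM field), and record the consequence of purity at Frobenius elements: for $v\notin S$ the roots $\alpha$ of $Q_v(X)\in M[X]$ are Weil $q_v^{w}$-numbers, hence $\alpha\,c(\alpha)=q_v^{w}$ in $\overline{M}$, where $c$ denotes complex conjugation (well defined since $M$ is CM). Since the roots of $\mathrm{det}(TI_n-r_\lambda^{\vee}\varepsilon_l^{-w}(\mathrm{Frob}_v))$ are $q_v^{w}/\alpha=c(\alpha)$ for $v\notin S$ with $v\nmid\mathrm{char}\,\mathbb{F}_\lambda$, Chebotarev's density theorem together with the semisimplicity of the $r_\lambda$ gives $\,^{c}r_\lambda\cong r_\lambda^{\vee}\varepsilon_l^{-w}$ for every $l$ at which complex conjugation is $l$-adically continuous and every $\lambda\mid l$; here $\,^{c}r_\lambda$ is the twist of $r_\lambda$ by the continuous automorphism of $\overline{M_\lambda}$ extending $c$.

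Next I would fix $\mathcal{L}_0$ to be the set of primes $l$ such that: (i) $l$ is unramified in $FM$ and $l$ is sufficiently large that the Fontaine--Laffaille bound (of Proposition~\ref{trace irreducible component}) holds for the Hodge--Tate weights prescribed by $\{H_\tau\}$; (ii) for all $\lambda\mid l$ the representation $r_\lambda$ is irreducible, $\overline{r_\lambda}|_{G_{F(\zeta_l)}}$ is absolutely irreducible, and $r_\lambda|_{G_{F_v}}$ is crystalline of the prescribed Hodge--Tate type for all $v\mid l$; and (iii) $l$ splits completely in $\widetilde{FM}^{+}$ and all $l$-adic places of $\widetilde{FM}^{+}$ are inert in $\widetilde{FM}$. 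Conditions (i) and (ii) hold for a density-one set of primes (for (ii), by the irreducibility of $\mathcal{R}$, Proposition~\ref{irreducibility}, and the defining property of a very weakly compatible system), while (iii) holds for a set of positive Dirichlet density by Chebotarev, so $\mathcal{L}_0$ has positive density. For $l\in\mathcal{L}_0$, fixing an embedding $\widetilde{FM}\hookrightarrow\overline{\mathbb{Q}_l}$ there is $\tilde c\in\mathrm{Gal}(\overline{\mathbb{Q}_l}/\mathbb{Q}_l)$ inducing $c$ on $\widetilde{FM}$, so complex conjugation is $l$-adically continuous and the conclusion of the first paragraph applies.

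Now fix $l\in\mathcal{L}_0$, $\lambda\mid l$, $v\mid l$. Property~1 is immediate from condition~(ii). For property~2, $F_v/\mathbb{Q}_l$ is unramified and $r_\lambda|_{G_{F_v}}$ is crystalline of $l$-adic Hodge type $\mathbf{v}_{\lambda_v}$ with small weights, so Proposition~\ref{trace irreducible component} (applied after replacing $r_\lambda$ by a conjugate with values in $\mathrm{GL}_n(\mathcal{O})$ over a sufficiently large finite extension of $\mathbb{Q}_l$ inside $\overline{\mathbb{Q}_l}$) shows $r_\lambda|_{G_{F_v}}$ is potentially diagonalizable. For property~3, I would, exactly as in the proof of Lemma~\ref{Hecke field 2}, choose $\tilde\tau\in\mathrm{Hom}_{\mathbb{Q}_l}(F_v,\overline{\mathbb{Q}_l})$ so that $\tilde c$ preserves $\tilde\tau(F_v)$ and $\tilde\tau^{-1}\tilde c\,\tilde\tau$ restricts to $c$ on $F$; this is where the CM-ness of $\widetilde{FM}$ and condition~(iii) are used. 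Then Proposition~\ref{trace irreducible component} yields a finite extension $F'_v/F_v$ with $\,^{\tilde c}(r_\lambda|_{G_{F_v}})|_{G_{F'_v}}\sim(r_\lambda|_{G_{F_v}})^{\tilde\tau^{-1}\tilde c\,\tilde\tau}|_{G_{F'_v}}$. The left-hand side is $(\,^{c}r_\lambda)|_{G_{F'_v}}=(r_\lambda^{\vee}\varepsilon_l^{-w})|_{G_{F'_v}}$ by the first paragraph, and the right-hand side equals $r_\lambda^{c}|_{G_{F'_v}}$ because $\tilde\tau^{-1}\tilde c\,\tilde\tau$ restricts to $c$ on $F$; hence $r_\lambda^{c}|_{G_{F'_v}}\sim r_\lambda^{\vee}\varepsilon_l^{-w}|_{G_{F'_v}}$, which is property~3. (When $F$ is totally real, $c$ is trivial, $r_\lambda^{c}=r_\lambda$, $\tilde\tau^{-1}\tilde c\,\tilde\tau$ restricts to the identity on $F_v$, and the same argument applies.)

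The main obstacle is property~3: reconciling the ``geometric'' conjugate $r_\lambda^{c}$, a twist by an element of $\mathrm{Gal}(F/F^{+})$, with the ``coefficient'' conjugate $\,^{c}r_\lambda$ at the $l$-adic places. This is what forces the rather restrictive splitting condition~(iii) on $l$ (so that complex conjugation is realized $l$-adically and compatibly with the auxiliary embedding $\tilde\tau$), and the precise bookkeeping with decomposition groups, with the embeddings $\overline{M}\hookrightarrow\overline{M_\lambda}$ and $\widetilde{FM}\hookrightarrow\overline{\mathbb{Q}_l}$, and with the automorphism twists is delicate; it follows the template already established for Lemma~\ref{Hecke field 2} and rests on the refinement Proposition~\ref{trace irreducible component} of \cite[Lemma 1.4.3]{CW}.
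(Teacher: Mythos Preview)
Your proposal is correct and follows essentially the same approach as the paper's own proof: enlarge $M$ to a CM field via \cite[Lemma 1.2]{irr}, take the Galois closure $\widetilde{FM}$, impose the splitting/inert condition on $l$ relative to $\widetilde{FM}^{+}\subset\widetilde{FM}$, and then invoke Proposition~\ref{trace irreducible component} together with the CM identity $\tau^{-1}c\tau=c$ to pass between the coefficient twist $^{\tilde c}r_\lambda$ and the geometric twist $r_\lambda^{c}$. Your explicit derivation of $^{c}r_\lambda\cong r_\lambda^{\vee}\varepsilon_l^{-w}$ from the Weil-number condition $\alpha\cdot c(\alpha)=q_v^{w}$ is a bit more detailed than the paper's one-line appeal to purity, but the substance is identical.
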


\begin{proof} By \cite[Lemma 1.2]{irr}, we may assume that $M$ is a CM field.
    
    Let $\widetilde{MF}$ be the Galois closure of $MF$ over $\mathbb{Q}$.
    
    We define the set $\mathcal{L}_0$ of primes consisting of all primes $l \in \mathcal{L}$ satisfying the following conditions.
    
    (a) \ $l \ge $max$\{ |h_{\tau} - h_{\tau}'| + 2 \mid \tau \in \mathrm{Hom}(F, \overline{M}), h_{\tau}, h_{\tau}' \in H_{\tau} \}$.
    
    (b) \  $l$ is unramified in $F$.
    
    (c) \ $l$ splits completely in $\widetilde{MF}^+$ and all $l$-adic places in $\widetilde{MF}^+$ are inert in $\widetilde{MF}$. (Note that in this case, all $l$-adic places of $F^+$ (resp. $M^+$) are inert in $F$ (resp. $M$).)

(d) \ $\overline{r_{\lambda}}|_{G_{F(\zeta_l)}}$ is absolutely irreducible.

    Then $\mathcal{L}_0$ has a Dirichlet density $\frac{1}{[\widetilde{MF}:\mathbb{Q}]}$ by Lemma \ref{irreducibility}.
    
    We fix $l \in \mathcal{L}_0$, $\lambda|l$ and $v|l$. Then $r_{\lambda}|_{G_{F_v}}$ is potentially diagonalizable by Proposition \ref{trace irreducible component}. There exists a finite place $\tilde{\lambda_1}$ (resp. $\tilde{\lambda_2}$) of $\widetilde{MF}$ such that $\tilde{\lambda_1}$ (resp. $\tilde{\lambda_2}$) lies above $v$ (resp. $\lambda$). Then there exists $\tau \in \mathrm{Gal}(\widetilde{MF}/\mathbb{Q})$ such that $\tau(\tilde{\lambda_1}) = \tilde{\lambda_2}$. Therefore, $\tau|_{F}$ induces $\tilde{\tau} : F_{v} \hookrightarrow \widetilde{MF}_{\tilde{\lambda_2}}$ over $\mathbb{Q}_l$.
    
    By the condition (c), the generator $c$ of $\mathrm{Gal}(\widetilde{MF}/\widetilde{MF}^+)$ induces the generator $\tilde{c}$ of $\mathrm{Gal}(\widetilde{MF}_{\tilde{\lambda_2}}/\mathbb{Q}_l)$. Note that we have $\tilde{\tau}^{-1} \tilde{c} \tilde{\tau}|_{G_{F}} = \tau^{-1} c \tau|_{F}$ and \ $^{\tilde{c}}r_{\lambda} \cong r_{\lambda}^{\vee} \varepsilon_l^{-w}$ since $\mathcal{R}$ is pure of weight $w$. Since $\widetilde{MF}$ is a CM field, we have $\tau^{-1} c \tau = c$. By Proposition \ref{trace irreducible component}, there exists a finite extension $F'_v/F_v$ such that $r_{\lambda}^c|_{G_{F'_v}} = r_{\lambda}^{\tau^{-1} c \tau}|_{G_{F'_v}} = (r_{\lambda}|_{G_{F_v}})^{\tilde{\tau}^{-1} \tilde{c} \tilde{\tau}}|_{G_{F'_v}} \sim \ ^{\tilde{c}}r_{\lambda}|_{G_{F_v'}} = r_{\lambda}^{\vee}\varepsilon_l^{-w}|_{G_{F'_v}}$.\end{proof}

\begin{lem} \label{decomposed genericity}

Let $\mathcal{R} = (M, S, \{ Q_v(X) \}, \{r_{\lambda}\}, \{ H_{\tau} \} )$ be a rank $n$ pure extremely weakly compatible system of $l$-adic representations of $G_F$ defined over $M$.

Then for almost all $l$ and $\lambda |l$, the representation $\overline{r_{\lambda}}$ is fully strongly decomposed generic.

\end{lem}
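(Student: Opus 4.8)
The plan is to imitate the proof of Lemma~\ref{decomposed genericity 1}(1), which is in fact easier here because purity of $\mathcal{R}$ is available as a hypothesis rather than having to be extracted from local representation theory. First I would fix, by Chebotarev's density theorem, a prime $p \notin S$ which splits completely in $F$, together with an integer $w$ such that $\mathcal{R}$ is pure of weight $w$. For every place $v \mid p$ one then has $F_v = \mathbb{Q}_p$, so $q_v = q_{v^c} = p$; moreover $v^c \mid p$, hence $v^c \notin S$ as well. By purity all roots $\alpha_{v,1},\dots,\alpha_{v,n}$ of $Q_v(X)$ and all roots $\alpha_{v^c,1},\dots,\alpha_{v^c,n}$ of $Q_{v^c}(X)$ are Weil $p^{w}$-numbers, so $|\iota(\alpha_{v,i})| = |\iota(\alpha_{v^c,j})| = p^{w/2}$ for every embedding $\iota$. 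Consequently $|\iota(\alpha_{v,i}/\alpha_{v,j})| = |\iota(\alpha_{v,i}/\alpha_{v^c,j})| = 1 \ne p$, and therefore the finitely many algebraic numbers $\alpha_{v,i} - p\,\alpha_{v,j}$ and $\alpha_{v,i} - p\,\alpha_{v^c,j}$ (over all $v \mid p$ and all $1 \le i,j \le n$) are all nonzero.

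Next I would pass to reductions mod $\lambda$. Let $l \ne p$ be a prime and $\lambda \mid l$ a place of $M$. Since $p \notin S$ and $v \nmid l$ for every $v \mid p$, condition (a) in the definition of an (extremely weakly) compatible system gives that $r_\lambda|_{G_{F_v}}$ is unramified with $\det(TI_n - r_\lambda(\mathrm{Frob}_v)) = Q_v(T)$; in particular $\overline{r_\lambda}|_{G_{F_v}}$ is unramified. Because $l \ne p$, each $\alpha_{v,i}$ and $\alpha_{v^c,j}$ is a $\lambda$-adic unit, so $Q_v(X)$ and $Q_{v^c}(X)$ have $\lambda$-integral coefficients and, for a suitable place of $\overline{M}$ above $\lambda$, the eigenvalues of $\overline{r_\lambda}(\mathrm{Frob}_v)$ are the reductions $\overline{\alpha_{v,i}}$. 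Each of the finitely many nonzero elements $\alpha_{v,i} - p\,\alpha_{v,j}$ and $\alpha_{v,i} - p\,\alpha_{v^c,j}$ is a nonzero element of a number field containing $M$, hence a unit at all but finitely many primes; thus there is a finite set of primes outside of which (for every $\lambda \mid l$) all these elements are $\lambda$-adic units. For such $l$ and $\lambda$ we obtain $\overline{\alpha_{v,i}} \ne p\,\overline{\alpha_{v,j}}$ and $\overline{\alpha_{v,i}} \ne p\,\overline{\alpha_{v^c,j}}$ in $\overline{\mathbb{F}_l}$ for all $v \mid p$ and all $i,j$. Combined with the fact that $p$ splits completely in $F$ and that $\overline{r_\lambda}$ is unramified above $p$, this is exactly the assertion that $p$ is fully strongly decomposed generic for $\overline{r_\lambda}$, and since $\{p\}$ together with the excluded finite set of $l$ is finite, this proves the lemma.

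There is no serious obstacle here; the only care needed is the elementary bookkeeping in the last paragraph — that a nonzero element of $\overline{M}$ is a $\lambda$-adic unit for all $\lambda$ lying above all but finitely many rational primes, and that for $l \ne p$ the Weil numbers $\alpha_{v,i}$ are $\lambda$-adic units even when $w < 0$ (so that reduction of $Q_v$ mod $\lambda$ behaves as expected). The substantive input is simply the purity of $\mathcal{R}$, which forces every relevant ratio of Frobenius eigenvalues at a completely split prime $p$ to have complex absolute value $1$, and hence to be different from $p$ already at the level of algebraic numbers.
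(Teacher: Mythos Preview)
Your argument is correct and follows essentially the same route as the paper's proof: fix a prime $p\notin S$ splitting completely in $F$, use purity to force all ratios $\alpha_{v,i}/\alpha_{v,j}$ and $\alpha_{v,i}/\alpha_{v^c,j}$ to have complex absolute value $1$ (hence differ from $p$ as algebraic numbers), and then exclude the finitely many $l$ at which one of these finitely many nonzero differences fails to be a unit. The extra bookkeeping you spell out (integrality of the $\alpha_{v,i}$ at places away from $p$, etc.) is exactly what the paper leaves implicit.
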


\begin{proof}

We take a prime $p$ which splits completely in $F$ and doesn't lie below $S$. Then for any $v|p$, the roots $\alpha_{v,1}, \cdots, \alpha_{v,n} \in \overline{M}$ of $Q_{v}(X)$ satisfies $|\iota(\frac{\alpha_{v,i}}{\alpha_{v,j}})| = |\iota(\frac{\alpha_{v,i}}{\alpha_{v^c,j}})| = 1$ for any $i, j$ and any $\iota : \overline{M} \hookrightarrow \mathbb{C}$. Thus, for almost all $l$ and $\iota : \overline{M} \hookrightarrow \overline{\mathbb{Q}}_l$, $\iota(\frac{\alpha_{v,i}}{\alpha_{v,j}} - p)$ and $\iota(\frac{\alpha_{v,i}}{\alpha_{v^c,j}} - p)$ are $l$-adic units for any $i, j$. \end{proof}

\begin{thm}\label{potential pure}

Let $\mathcal{R}=(M, S, \{ Q_v(X) \}, \{r_{\lambda}\}, \{ H_{\tau} \} )$ be a rank $n$ irreducible regular pure very weakly compatible system of $l$-adic representations of $G_F$ defined over $M$ and $F^{(a)}$ be a finite extension of $\mathbb{Q}$. We assume that there exists a rank $1$ very weakly totally even or odd compatible system $\mathcal{X}$ of $l$-adic representations of $G_{F^+}$ defined over $M$ such that $\mathcal{R} \cong \mathcal{R}^{\vee} \otimes \mathcal{X}|_{G_{F}}$.

Then there exists a finite Galois CM extension $L/\mathbb{Q}$ linearly disjoint from $F^{(a)}$ over $\mathbb{Q}$ such that $\mathcal{R}|_{G_{F'}}$ is automorphic. (We put $F':=LF$.)

Moreover, if $\mathcal{R}$ is geometrically pure, then there exists a set $\mathcal{L}_0$ of primes having positive Dirichlet density such that for any $l \in \mathcal{L}_0$, $\lambda | l$ and $v \nmid l$, the representation $\mathrm{WD}(r_{\lambda}|_{G_{F_v}})$ is pure.

\end{thm}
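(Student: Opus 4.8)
The plan is to apply Theorem \ref{potential diagonalizable automorphy} to the $\lambda$-components $r_{\lambda}$ of $\mathcal{R}$ and then transcribe automorphy of $\mathcal{R}|_{G_{F'}}$ from the local-global compatibility of the cuspidal automorphic representation it produces. First I would fix the prime set. Say $\mathcal{R}$ is pure of weight $w$. Lemma \ref{Hecke field} supplies a set $\mathcal{L}_0$ of primes of positive Dirichlet density such that for every $l \in \mathcal{L}_0$, every $\lambda \mid l$ and every $v \mid l$: the reduction $\overline{r_{\lambda}}|_{G_{F(\zeta_l)}}$ is absolutely irreducible, $r_{\lambda}|_{G_{F_v}}$ is potentially diagonalizable, and there is a finite extension $F'_v/F_v$ with $r_{\lambda}^c|_{G_{F'_v}} \sim r_{\lambda}^{\vee}\varepsilon_l^{-w}|_{G_{F'_v}}$. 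Intersecting $\mathcal{L}_0$ with the cofinite set of primes for which $\overline{r_{\lambda}}$ is strongly decomposed generic for all $\lambda \mid l$ (Lemma \ref{decomposed genericity}), with $\{ l \ge 2(n+1) \}$, and---when $F$ is imaginary CM---with the cofinite set of $l$ for which $F \nsubseteq F^+(\zeta_l)$, keeps positive density; regularity of $\mathcal{R}$ makes each $r_{\lambda}|_{G_{F_v}}$ ($v \mid l$) crystalline of the prescribed regular Hodge--Tate type. The last hypothesis of Theorem \ref{potential diagonalizable automorphy}, the residual essential self-duality, comes from the assumption $\mathcal{R} \cong \mathcal{R}^{\vee} \otimes \mathcal{X}|_{G_F}$: taking $\lambda$-components gives $r_{\lambda} \cong r_{\lambda}^{\vee} \otimes \chi_{\lambda}|_{G_F}$ with $\chi_{\lambda}$ the $\lambda$-component of $\mathcal{X}$, and reduction modulo $\lambda$ yields $\overline{r_{\lambda}} \cong \overline{r_{\lambda}}^{\vee} \otimes \overline{\chi_{\lambda}}|_{G_F}$ where, since $\mathcal{X}$ is totally even or odd, $\overline{\chi_{\lambda}}(c_v)$ is independent of $v \mid \infty$. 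Thus all the hypotheses of Theorem \ref{potential diagonalizable automorphy} hold for $r = r_{\lambda}$ (any $l \in \mathcal{L}_0$, any $\lambda \mid l$), with $\overline{\chi} := \overline{\chi_{\lambda}}$.

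Next, fix one $l \in \mathcal{L}_0$, one $\lambda \mid l$ and an isomorphism $\iota : \overline{\mathbb{Q}_l} \stackrel{\sim}{\rightarrow} \mathbb{C}$ compatible with $\lambda$, and enlarge $F^{(a)}$ if necessary so that it contains the Galois closure over $\mathbb{Q}$ of $\overline{F}^{\mathrm{Ker}(\overline{r_{\lambda}})}(\zeta_l)$. Theorem \ref{potential diagonalizable automorphy} then gives a finite Galois CM extension $L/\mathbb{Q}$ linearly disjoint from $F^{(a)}$ over $\mathbb{Q}$ and a cohomological cuspidal automorphic representation $\pi$ of $\mathrm{GL}_n(\mathbb{A}_{F'})$, $F' := LF$, with $r_{\iota}(\pi) \cong r_{\lambda}|_{G_{F'}}$, $\iota\mathrm{WD}(r_{\iota}(\pi)|_{G_{F'_v}})^{F-ss} \cong \mathrm{rec}_{F'_v}(\pi_v|\mathrm{det}|_v^{\frac{1-n}{2}})$ for all $v \nmid l$, and $\pi_v$ unramified for $v \mid l$. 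The isomorphism $r_{\iota}(\pi) \cong r_{\lambda}|_{G_{F'}}$ identifies, at every finite place $v$ of $F'$ outside $S$ and not above $l$, the Frobenius characteristic polynomial of $\mathrm{rec}_{F'_v}(\pi_v|\mathrm{det}|_v^{\frac{1-n}{2}})$ with the image under $\iota$ of $Q_v(X)$ for the restricted compatible system $\mathcal{R}|_{G_{F'}}$; as such places have Dirichlet density one, this is precisely the statement that $\mathcal{R}|_{G_{F'}}$ is automorphic, proving the first assertion.

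For the last assertion I retain the positive-density set $\mathcal{L}_0$. Geometric purity of $\mathcal{R}$ is inherited by $\mathcal{R}|_{G_{F'}}$: an element $g \in W_{F'_{v'}} \subset W_{F_v}$ has Frobenius content inside $W_{F_v}$ divisible by the residue degree $f$ of $F'_{v'}/F_v$, so any eigenvalue of $r_{\lambda}(g)$, being a Weil $q_v^m$-number for some integer $m$, is a Weil $q_{v'}^{m/f}$-number with $m/f \in \mathbb{Z}$. Hence for $l \in \mathcal{L}_0$, $\lambda \mid l$ and a place $v' \nmid l$ of $F'$ the hypothesis of Proposition \ref{purity local-global} holds for $\pi$ at $v'$; combining part 2 of that proposition with the local-global compatibility already obtained shows that $\mathrm{WD}(r_{\iota}(\pi)|_{G_{F'_{v'}}}) = \mathrm{WD}(r_{\lambda}|_{G_{F'_{v'}}})$ is pure. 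Finally, for a place $v \nmid l$ of $F$ and any $v' \mid v$ of $F'$, part 2 of Lemma \ref{purity lemma} gives purity of $\mathrm{WD}(r_{\lambda}|_{G_{F_v}})$ from purity of its restriction to $W_{F'_{v'}}$. The genuinely hard input---potential diagonalizability of the $r_{\lambda}|_{G_{F_v}}$ at $l$-adic places together with compatibility of $r_{\lambda}^c$ with the self-dual structure over a set of $l$ of positive density---is exactly what Lemma \ref{Hecke field} provides (resting in turn on Proposition \ref{trace irreducible component} and Corollary \ref{Caraiani-Newton 2}); the only remaining point requiring care is the descent of geometric purity along the auxiliary CM extension $F'/F$, which is the Frobenius-content observation above.
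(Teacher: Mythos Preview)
Your proof is correct and follows the same route as the paper: combine Lemma \ref{Hecke field} and Lemma \ref{decomposed genericity} to verify the hypotheses of Theorem \ref{potential diagonalizable automorphy} for a positive-density set of primes, and then deduce purity from Proposition \ref{purity local-global} together with Lemma \ref{purity lemma}. Your treatment is in fact more explicit than the paper's---you spell out the verification of condition 6 of Theorem \ref{potential diagonalizable automorphy} from the self-duality of $\mathcal{R}$ and the parity of $\mathcal{X}$, and you address the descent of the Weil-number hypothesis to $F'$ (which the paper leaves implicit); the only cosmetic point is that your divisibility claim $m/f \in \mathbb{Z}$ for arbitrary $g \in W_{F'_{v'}}$ is cleanest when one simply takes $g$ to be a power of a chosen Frobenius lift in $W_{F_v}$, which is all the proof of Proposition \ref{purity local-global} actually uses.
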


\begin{proof}

By Lemma \ref{Hecke field} and Lemma \ref{decomposed genericity}, there exists a set $\mathcal{L}_0$ of primes having positive Dirichlet density such that for any $l \in \mathcal{L}_0$, $\lambda|l$, $r_{\lambda}$ satisfies the condition of Theorem \ref{potential diagonalizable automorphy}. Therefore, for any $l \in \mathcal{L}_0$, $\lambda | l$ and $\iota : \overline{M_{\lambda}} \stackrel{\sim}{\rightarrow} \mathbb{C}$, we obtain a finite CM Galois extension $L/\mathbb{Q}$ linearly disjoint from $F^{(a)}$ over $\mathbb{Q}$ and a cohomological cuspidal automorphic representation $\pi$ of $\mathrm{GL}_n(\mathbb{A}_{F'})$ such that $r_{\iota}(\pi) \cong r_{\lambda}|_{G_{F'}}$ and $\iota\mathrm{WD}(r_{\iota}(\pi)|_{G_{F'_v}})^{F-ss} \cong \mathrm{rec}_{F'_v}(\pi_v|\mathrm{det}|^{\frac{1-n}{2}}_{v})$ for all $v \nmid l$. (We put $F':=LF$.) 

If $\mathcal{R}$ is geometrically pure, $\iota^{-1}\mathrm{rec}_{F'_v}(\pi_v|\mathrm{det}|_v^{\frac{1-n}{2}}) \cong \mathrm{WD}(r_{\lambda}|_{G_{F'_v}})^{F-ss}$ is pure for any $v \nmid l$ by 1 of Proposition \ref{purity local-global}. By 1 and 2 of Lemma \ref{purity lemma}, $\mathrm{WD}(r_{\lambda}|_{G_{F_v}})$ is pure for any $v \nmid l$. \end{proof}

\begin{cor}\label{potential symmetric power}
     
Let $\mathcal{R}=(M, S, \{ Q_v(X) \}, \{r_{\lambda}\}, \{ H_{\tau} \} )$ be a rank $2$ strongly irreducible regular pure very weakly compatible system of $l$-adic representations of $G_F$ defined over $M$ and $F^{(a)}$ be a finite extension of $\mathbb{Q}$. We suppose that $\mathrm{det}\mathcal{R}$ has a parallel weight.

Then for each positive integer $n$, there exists a finite Galois CM extension $L/\mathbb{Q}$ linearly disjoint from $F^{(a)}$ over $\mathbb{Q}$ such that $\mathrm{Symm}^n\mathcal{R}|_{G_{F'}}$ is automorphic. (We put $F':=LF$.)
    
\end{cor}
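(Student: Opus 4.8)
The plan is to derive this from Theorem \ref{potential pure} applied to the rank $n+1$ symmetric power compatible system $\mathrm{Symm}^n\mathcal{R}$. First I would record its structural properties. Since $\mathcal{R}$ has rank $2$ one has, for every $v\notin S$, the identity $\mathrm{tr}\,r_\lambda^\vee(\mathrm{Frob}_v)\cdot\det r_\lambda(\mathrm{Frob}_v)=\mathrm{tr}\,r_\lambda(\mathrm{Frob}_v)$, so $\mathcal{R}\cong\mathcal{R}^\vee\otimes\det\mathcal{R}$ as compatible systems, and hence $\mathrm{Symm}^n\mathcal{R}\cong(\mathrm{Symm}^n\mathcal{R})^\vee\otimes(\det\mathcal{R})^{\otimes n}$. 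The system $\mathrm{Symm}^n\mathcal{R}$ is again very weakly compatible (symmetric powers of crystalline, resp. unramified, representations are of the same type, with Hodge--Tate weights the $n+1$ monomial combinations of those of $r_\lambda$), it is regular because those $n+1$ combinations are pairwise distinct once the two weights of $r_\lambda$ are distinct, and it is pure of weight $nw$ whenever $\mathcal{R}$ is pure of weight $w$. It is also irreducible, and remains so after any finite base change: strong irreducibility of $\mathcal{R}$ rules out $r_\lambda$ being potentially reducible or potentially dihedral for a density one set of $\lambda$, so the Zariski closure of the image contains $\mathrm{SL}_2$ and $\mathrm{Symm}^n r_\lambda$ (of the standard representation of $\mathrm{SL}_2$) is irreducible.

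Next I would arrange the self-duality hypothesis of Theorem \ref{potential pure}, i.e. that $(\det\mathcal{R})^{\otimes n}$ is, up to restriction from $G_{F^+}$, a totally even or odd rank $1$ compatible system defined over $M$. Purity of $\mathcal{R}$ together with the parallel weight hypothesis forces $\det\mathcal{R}$ to be crystalline at $l$-adic places with parallel Hodge--Tate weight $w$ and pure of weight $2w$; since a continuous character of $G_F$ that is unramified outside the $l$-adic places, of finite order on the inertia groups there, and everywhere else finite on inertia has finite image (class field theory), one gets $\det\mathcal{R}\cong\chi_0\otimes\varepsilon_l^{-w}$ for a finite order Artin character $\chi_0$ of $G_F$. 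The cyclotomic factor $\varepsilon_l^{-nw}$ already descends to a rank $1$ system over $\mathbb{Q}$, totally even or totally odd according to the parity of $nw$, so the remaining task is to make $\chi_0^{\otimes n}$ descend. I would handle this by combining a twist of $\mathcal{R}$ by an auxiliary finite order character with a well-chosen finite solvable CM base change $F_1/F$ (built from the extension lemmas of {\S}\ref{Notation}, e.g. Proposition \ref{extension of field} and Proposition \ref{cyclic extension}, keeping $F_1$ CM, linearly disjoint from $F^{(a)}$, and absorbing the fixed fields of $\chi_0$ and of $\chi_0^{c}/\chi_0$), after which $\chi_0^{\otimes n}|_{G_{F_1}}$ becomes the restriction of a finite order, complex-conjugation-invariant character of $G_{F_1^+}$ of constant sign at the infinite places. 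This descent of the polarization character is the main obstacle: $\chi_0$ need not be conjugation-invariant and it need not possess an Artin square root, so one really must exhibit a controlled CM extension over which the relevant power becomes conjugation-invariant with constant infinite sign, and check it can be taken disjoint from $F^{(a)}$.

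Granting this, Theorem \ref{potential pure} applied over $F_1$ to $\mathrm{Symm}^n(\mathcal{R}|_{G_{F_1}})$ twisted appropriately yields a finite Galois CM extension of $\mathbb{Q}$, linearly disjoint from $F^{(a)}$, over whose compositum with $F_1$ that system is automorphic; undoing the finite order twist changes the associated cuspidal representation only by a finite order Hecke character, so $\mathrm{Symm}^n\mathcal{R}$ itself is automorphic there. Finally, since $\mathrm{Symm}^n\mathcal{R}$ stays irreducible over every finite extension, the corresponding cuspidal automorphic representation stays cuspidal under solvable base change by Proposition \ref{base change}, and using Proposition \ref{strong multiplicity one} one assembles the solvable CM layer $F_1/F$ and the Galois CM extension from Theorem \ref{potential pure} into a single finite Galois CM extension $L/\mathbb{Q}$, linearly disjoint from $F^{(a)}$, with $F'=LF$ as required.
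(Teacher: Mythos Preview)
Your overall plan---verify that $\mathrm{Symm}^n\mathcal{R}$ is regular, pure, irreducible, and self-dual up to a rank~$1$ twist, then invoke Theorem~\ref{potential pure}---is exactly the paper's strategy. Your irreducibility argument via the Zariski closure is fine; the paper instead quotes \cite[Lemma~7.1.3]{10} to get $\mathrm{SL}_2(\mathbb{F}_l)$ inside the residual image for a density-one set of $\lambda$, which yields the same conclusion.

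The real divergence, and the place where your argument has a gap, is the treatment of the polarization character. You correctly write $\det\mathcal{R}\cong\chi_0\otimes\{\varepsilon_l^{-w}\}$ with $\chi_0$ of finite order, and you propose to pass to a solvable CM extension $F_1/F$ that ``absorbs the fixed fields of $\chi_0$ and of $\chi_0^c/\chi_0$''. But those fixed fields are abelian extensions of $F$ that need not be CM, nor contained in any CM field. For instance, the fixed field of $\chi_0^c/\chi_0$ is Galois over $F^+$ with abelian $\mathrm{Gal}(\,\cdot\,/F)$ on which complex conjugation acts by inversion, so complex conjugation is central (the CM condition) only when $\chi_0^c/\chi_0$ has order at most~$2$. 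The extension lemmas you cite (Propositions~\ref{extension of field} and~\ref{cyclic extension}) produce solvable extensions with prescribed \emph{local} behaviour, not extensions containing a prescribed global field; they do not let you force $\chi_0|_{G_{F_1}}$ to be trivial while keeping $F_1$ CM and linearly disjoint from $F^{(a)}$. Even if you only make $\chi_0$ $c$-invariant, you still have to arrange that its extension to $G_{F_1^+}$ has constant sign at infinity, and the only available twist ($\delta_{F_1/F_1^+}$) flips all signs simultaneously, so a non-constant sign pattern cannot be repaired.

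The paper avoids this descent problem altogether with a rank-$2$ trick: by \cite[Lemma~5.3.3]{pw} there is a finite Galois \emph{totally real} extension $L/\mathbb{Q}$ and a finite-order character $\psi$ of $G_{FL}$ with $\psi^2=\chi_0|_{G_{FL}}$. Twisting $\mathcal{R}$ by $\psi^{-1}$ over $FL$ (still CM, since $L$ is totally real) gives $\det(\mathcal{R}\otimes\psi^{-1})=\{\varepsilon_l^{-w}\}$, which is restricted from $G_{\mathbb{Q}}$ and has constant sign $(-1)^w$ at infinity. Now $\mathrm{Symm}^n\mathcal{R}\cong(\mathrm{Symm}^n\mathcal{R})^\vee\otimes\{\varepsilon_l^{-nw}\}$ and Theorem~\ref{potential pure} applies directly. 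The point is that one does not try to descend $\chi_0^n$; one twists $\chi_0$ away, which is possible precisely because in rank~$2$ the determinant changes by the \emph{square} of the twisting character.
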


\begin{rem}

From this corollary, we can obtain a certain result about the distribution of the Frobenius eigenvalues as in \cite[Theorem 7.1.5 $\sim$ 7.1.7]{STH} and \cite[Theorem 7.2.3]{pw}. 

\end{rem}

\begin{proof} There exists an integer $w$ such that $\mathrm{det}\mathcal{R} \otimes \{ \varepsilon_l^{w} \}$ corresponds to a finite order character $\eta$ of $G_{F}$. By \cite[Lemma 5.3.3]{pw}, there exists a finite Galois totally real extension $L/\mathbb{Q}$ and a finite order character $\psi$ of $G_{FL}$ such that $\eta|_{G_{FL}} = \psi^2$. After replacing $F$ by $FL$ and twisting $\mathcal{R}$ by $\psi^{-1}$, we may assume $\mathrm{det}\mathcal{R} = \{ \varepsilon_l^{-w} \}$. By \cite[Lemma 7.1.3]{10}, there exists a set of primes $\mathcal{L}$ having Dirichlet density one such that for all $l \in \mathcal{L}$ and $\lambda|l$, the group $\overline{r_{\lambda}}(G_{F(\zeta_l)})$ contains a conjugate of $\mathrm{SL}_2(\mathbb{F}_l)$. This implies that for all positive integer $n$, Symm$^n\mathcal{R}$ is irreducible and Symm$^n\mathcal{R} \cong \mathrm{Symm}^n\mathcal{R}^{\vee} \otimes \{\varepsilon_l^{-wn}\}$. Therefore, the result follows by Theorem \ref{potential pure}. \end{proof}

\begin{thm} \label{regular pure}

Let $\mathcal{R}=(M, S, \{ Q_v(X) \}, \{r_{\lambda}\}, \{ H_{\tau} \} )$ be a rank $n$ pure irreducible very weakly compatible system of $l$-adic representations of $G_F$ defined over $M$. We assume $|\lambda - \lambda'| \ge n$ for all $\tau \in \mathrm{Hom}(F, \overline{M})$ and $\lambda \neq \lambda' \in H_{\tau}$. 

Then there exists a finite Galois CM extension $L/\mathbb{Q}$ linearly disjoint from $F^{(a)}$ over $\mathbb{Q}$ such that $\mathcal{R}|_{G_{F'}}$ is automorphic. (We put $F':=LF$.)

Moreover, if $\mathcal{R}$ is geometrically pure, then there exists a set $\mathcal{L}_0$ of primes having positive Dirichlet density such that for any $l \in \mathcal{L}_0$, $\lambda | l$ and $v \nmid l$, the representation $\mathrm{WD}(r_{\lambda}|_{G_{F_v}})$ is pure.

\end{thm}

\begin{proof}

    By Lemma \ref{Hecke field}, \ref{decomposed genericity} and Theorem \ref{potential tensor automorphy}, there exists a set of primes $\mathcal{L}_0$ having positive Dirichlet density such that for any $l \in \mathcal{L}_0$, $\lambda|l$ and $\iota: \overline{M_{\lambda}} \stackrel{\sim}{\rightarrow} \mathbb{C}$, there exist a finite CM extension $F'$ of $F$, a polarizable cohomological cuspidal $\pi_1$ of $\mathrm{GL}_{2n}(\mathbb{A}_{F'})$ and a cohomological cuspidal automorphic representation $\Pi$ of $\mathrm{GL}_{2n^2}(\mathbb{A}_{F'})$ such that $r_{\lambda}|_{G_{F'}} \otimes r_{\iota}(\pi_1) \cong r_{\iota}(\Pi)$, $\iota \mathrm{WD}(r_{\iota}(\Pi)|_{G_{F'_v}})^{F-ss} \cong \mathrm{rec}_{F'_v}(\Pi_v|\mathrm{det}|_v^{\frac{1-2n^2}{2}})$ for all $v \nmid l$ and $\pi_1$ is unramified at all finite places of $F'$ lying above $S$. By Theorem \ref{polarizable local-global compatibility} and Proposition \ref{purity local-global}, $\iota^{-1}\mathrm{rec}_{F'_v}(\Pi_v|\mathrm{det}|_v^{\frac{1-2n^2}{2}}) \cong \mathrm{WD}(r_{\lambda}|_{G_{F'_{v}}})^{F-ss} \otimes \mathrm{WD}(r_{\iota}(\pi_1)|_{G_{F'_v}})^{F-ss}$ is pure for all $v \nmid l$. By 1, 2, 5 of Lemma \ref{purity lemma} and Theorem \ref{polarizable local-global compatibility}, $\mathrm{WD}(r_{\lambda}|_{G_{F_v}})$ is pure for any $v \nmid l$. \end{proof}

\begin{thm}

Let $l$ be a prime such that $l > n^2$ and $r : G_{F} \rightarrow \mathrm{GL}_n(\overline{\mathbb{Q}}_l)$ be an algebraic $l$-adic representation. We assume the following conditions.

1 \ $\overline{r}|_{G_{F(\zeta_l)}}$ is absolutely irreducible.

2 \ $\overline{r}$ is fully decomposed generic.

3 \ $\zeta_l \notin F$.

4 \ For any $v \mid l$, $r|_{G_{F_v}}$ is ordinary.

5 \ For any $v \nmid l$, $g \in W_{F_v}$ and any eigenvalue $\alpha$ of $r(g)$, there exists an integer $w$ such that $\alpha$ is a Weil $q_v^{w}$-number.

Then for any $v \nmid l$, $\mathrm{WD}(r|_{G_{F_v}})$ is pure.
 
\end{thm}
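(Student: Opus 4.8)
The plan is to apply Theorem \ref{potential ordinary automorphy} to produce a potentially automorphic model of $r$ with good local-global compatibility, and then transfer purity back. First I would invoke Theorem \ref{potential ordinary automorphy}: the hypotheses 1, 2, 3, 4 of that theorem are exactly hypotheses 1, 2, 3, 4 here (note $l > n^2 \ge 2(n+1)$ for $n \ge 2$, and the $n = 1$ case is classical), so for any fixed $\iota : \overline{\mathbb{Q}_l} \stackrel{\sim}{\rightarrow} \mathbb{C}$ we obtain a finite Galois CM extension $F'/F$ and an $\iota$-ordinary cohomological cuspidal automorphic representation $\pi$ of $\mathrm{GL}_n(\mathbb{A}_{F'})$ such that $r_{\iota}(\pi) \cong r|_{G_{F'}}$ and $\iota\mathrm{WD}(r_{\iota}(\pi)|_{G_{F'_u}})^{F-ss} \cong \mathrm{rec}_{F'_u}(\pi_u|\mathrm{det}|_u^{\frac{1-n}{2}})$ for all $u \nmid l$.

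Next I would fix a non-$l$-adic place $v$ of $F$ and a place $u \mid v$ of $F'$. The assumption 5 says that every eigenvalue of $r(g)$ for $g \in W_{F_v}$ is a Weil $q_v^w$-number for some $w$; restricting to $W_{F'_u}$, every eigenvalue of $r_{\iota}(\pi)(g) = r|_{G_{F'}}(g)$ for $g \in W_{F'_u}$ is a Weil $q_u^{w'}$-number for a suitable $w'$ (since $q_u$ is a power of $q_v$ and Weil numbers are stable under this). Thus $r_{\iota}(\pi)|_{G_{F'_u}}$ satisfies the hypothesis of Proposition \ref{purity local-global}. Combining part 2 of that proposition with the local-global compatibility $\iota\mathrm{WD}(r_{\iota}(\pi)|_{G_{F'_u}})^{F-ss} \cong \mathrm{rec}_{F'_u}(\pi_u|\mathrm{det}|_u^{\frac{1-n}{2}})$ established above, we conclude that $\mathrm{WD}(r_{\iota}(\pi)|_{G_{F'_u}}) = \mathrm{WD}(r|_{G_{F'_u}})$ is pure.

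Finally, since $F'_u/F_v$ is a finite extension and $\mathrm{WD}(r|_{G_{F'_u}}) = \mathrm{WD}(r|_{G_{F_v}})|_{W_{F'_u}}$, part 2 of Lemma \ref{purity lemma} gives that $\mathrm{WD}(r|_{G_{F_v}})$ itself is pure. As $v \nmid l$ was arbitrary, this proves the claim. The only step requiring any care is checking that hypothesis 5 descends correctly to $W_{F'_u}$ and feeds into Proposition \ref{purity local-global}; this is the main (though minor) obstacle, and it is handled by the elementary observation that a Weil $q_v^w$-number is a Weil $q_u^{w[{F'_u}:F_v]}$-number. Everything else is a direct citation of results already in the paper.
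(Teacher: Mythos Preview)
Your argument is exactly the paper's: apply Theorem \ref{potential ordinary automorphy}, then Proposition \ref{purity local-global} at a place $u\mid v$ of $F'$, then part 2 of Lemma \ref{purity lemma} to descend purity to $F_v$. The only cosmetic difference is that you quote part 2 of Proposition \ref{purity local-global} directly, whereas the paper quotes part 1 together with part 1 of Lemma \ref{purity lemma} to pass from the Frobenius-semisimplification to the full Weil--Deligne representation; these amount to the same thing.

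One slip worth flagging: your ``elementary observation that a Weil $q_v^w$-number is a Weil $q_u^{w[F'_u:F_v]}$-number'' is false as written (take $q_v=2$, residue degree $f=2$, $w=1$: then $|\iota\alpha|^2=2$, but your formula would demand $|\iota\alpha|^2=q_u^{2}=16$). The descent of hypothesis 5 to $W_{F'_u}$ is nonetheless true, but for a structural rather than a purely numerical reason: each irreducible constituent of the Weil representation underlying $\mathrm{WD}(r|_{G_{F_v}})$ is an unramified twist of a finite-image representation, so for $g\in W_{F'_u}$ mapping to $\mathrm{Frob}_v^{mf}$ the eigenvalues of $r(g)$ are, up to roots of unity, $mf$-th powers of the $\mathrm{Frob}_v$-eigenvalues and hence Weil $q_v^{mfw_i}=q_u^{mw_i}$-numbers. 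The paper simply writes ``By Proposition \ref{purity local-global}'' without spelling this out. (Your parenthetical ``$l>n^2\ge 2(n+1)$ for $n\ge 2$'' also fails at $n=2$, but this is harmless since Theorem \ref{potential ordinary automorphy} already assumes $l>n^2$.)
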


\begin{proof} By Theorem \ref{potential ordinary automorphy}, there exist a finite CM extension $F'/F$ and a cohomological cuspidal automorphic representation $\pi$ of $\mathrm{GL}_n(\mathbb{A}_{F'})$ such that $r|_{G_{F'}} \cong r_{\iota}(\pi)$ such that $\iota \mathrm{WD}(r_{\iota}(\pi)|_{G_{F'_u}})^{F-ss} \cong \mathrm{rec}_{F_{u}'}(\pi_u|\mathrm{det}|_u^{\frac{1-n}{2}})$ for any $u \nmid l$. By Proposition \ref{purity local-global}, $\iota^{-1}\mathrm{rec}_{F_u'}(\pi_u|\mathrm{det}|_u^{\frac{1-n}{2}}) \cong \mathrm{WD}(r|_{G_{F_u'}})^{F-ss}$ is pure. By 1 and 2 of Lemma \ref{purity lemma}, $\mathrm{WD}(r|_{G_{F_v}})$ is pure.  \end{proof}

\printbibliography

@misc{DworkCampo,
      title={Dwork Motives, Monodromy and Potential Automorphy}, 
      author={Lambert A'Campo},
      year={2024},
      eprint={2407.16481},
      archivePrefix={arXiv},
      primaryClass={math.NT},
      url={https://arxiv.org/abs/2407.16481}, 
}

@article{OG,
	author = {Geraghty, David},
	date-added = {2024-05-18 07:58:49 +0900},
	date-modified = {2024-05-18 07:58:49 +0900},
	doi = {10.1007/s00208-018-1742-4},
	fjournal = {Mathematische Annalen},
	issn = {0025-5831,1432-1807},
	journal = {Math. Ann.},
	mrclass = {11F70},
	mrnumber = {3953131},
	mrreviewer = {Volker\ J.\ Heiermann},
	number = {3-4},
	pages = {1341--1427},
	title = {Modularity lifting theorems for ordinary {G}alois representations},
	url = {https://doi.org/10.1007/s00208-018-1742-4},
	volume = {373},
	year = {2019},
	bdsk-url-1 = {https://doi.org/10.1007/s00208-018-1742-4}}

@book{Fer,
	author = {Saito, Takeshi},
	date-added = {2024-05-18 07:58:49 +0900},
	date-modified = {2024-05-18 07:58:49 +0900},
	doi = {10.1090/mmono/243},
	isbn = {978-0-8218-9848-2},
	mrclass = {11D41 (11F11 11F80 11G05 11G18)},
	mrnumber = {3136492},
	mrreviewer = {Henri\ Darmon},
	note = {Basic tools, Translated from the Japanese original by Masato Kuwata},
	pages = {xiv+200},
	publisher = {American Mathematical Society, Providence, RI},
	series = {Translations of Mathematical Monographs},
	title = {Fermat's last theorem},
	url = {https://doi.org/10.1090/mmono/243},
	volume = {243},
	year = {2013},
	bdsk-url-1 = {https://doi.org/10.1090/mmono/243}}

@article{ZI,
	author = {Zelevinsky, A. V.},
	date-added = {2024-05-18 07:58:49 +0900},
	date-modified = {2024-05-18 07:58:49 +0900},
	fjournal = {Annales Scientifiques de l'\'{E}cole Normale Sup\'{e}rieure. Quatri\`eme S\'{e}rie},
	issn = {0012-9593},
	journal = {Ann. Sci. \'{E}cole Norm. Sup. (4)},
	mrclass = {22E50},
	mrnumber = {584084},
	number = {2},
	pages = {165--210},
	title = {Induced representations of reductive {${p}$}-adic groups. {II}. {O}n irreducible representations of {${\rm GL}(n)$}},
	url = {http://www.numdam.org/item?id=ASENS_1980_4_13_2_165_0},
	volume = {13},
	year = {1980},
	bdsk-url-1 = {http://www.numdam.org/item?id=ASENS_1980_4_13_2_165_0}}

@misc{YY,
	archiveprefix = {arXiv},
	author = {Yuji Yang},
	date-added = {2024-05-18 07:58:49 +0900},
	date-modified = {2024-05-18 07:58:49 +0900},
	eprint = {2111.00318},
	primaryclass = {math.NT},
	title = {An Ordinary Rank-Two case of Local-Global Compatibility for Automorphic Representations of Arbitrary Weight Over {CM} Fields},
	year = {2021}}

@misc{TW,
	archiveprefix = {arXiv},
	author = {Dmitri Whitmore},
	date-added = {2024-05-18 07:58:49 +0900},
	date-modified = {2024-05-18 07:58:49 +0900},
	eprint = {2205.05062},
	primaryclass = {math.NT},
	title = {The {T}aylor-{W}iles method for reductive groups},
	year = {2023}}

@misc{ss,
	archiveprefix = {arXiv},
	author = {Ila Varma},
	date-added = {2024-05-18 07:58:49 +0900},
	date-modified = {2024-05-18 07:58:49 +0900},
	eprint = {1411.2520},
	primaryclass = {math.NT},
	title = {Local-global compatibility for regular algebraic cuspidal automorphic representation when $\ell \neq p$},
	year = {2014}}

@article{GLLC,
	author = {Taylor, Richard and Yoshida, Teruyoshi},
	date-added = {2024-05-18 07:58:49 +0900},
	date-modified = {2024-05-18 07:58:49 +0900},
	doi = {10.1090/S0894-0347-06-00542-X},
	fjournal = {Journal of the American Mathematical Society},
	issn = {0894-0347,1088-6834},
	journal = {J. Amer. Math. Soc.},
	mrclass = {11R39 (11F70 11F80 14G35 22E55)},
	mrnumber = {2276777},
	mrreviewer = {Alexey\ A.\ Panchishkin},
	number = {2},
	pages = {467--493},
	title = {Compatibility of local and global {L}anglands correspondences},
	url = {https://doi.org/10.1090/S0894-0347-06-00542-X},
	volume = {20},
	year = {2007},
	bdsk-url-1 = {https://doi.org/10.1090/S0894-0347-06-00542-X}}

@article{red,
	author = {Thorne, Jack A.},
	date-added = {2024-05-18 07:58:49 +0900},
	date-modified = {2024-05-18 07:58:49 +0900},
	doi = {10.1090/S0894-0347-2014-00812-2},
	fjournal = {Journal of the American Mathematical Society},
	issn = {0894-0347,1088-6834},
	journal = {J. Amer. Math. Soc.},
	mrclass = {11F80 (13D10)},
	mrnumber = {3327536},
	mrreviewer = {Matteo\ Longo},
	number = {3},
	pages = {785--870},
	title = {Automorphy lifting for residually reducible {$l$}-adic {G}alois representations},
	url = {https://doi.org/10.1090/S0894-0347-2014-00812-2},
	volume = {28},
	year = {2015},
	bdsk-url-1 = {https://doi.org/10.1090/S0894-0347-2014-00812-2}}

@article{small,
	author = {Thorne, Jack},
	date-added = {2024-05-18 07:58:49 +0900},
	date-modified = {2024-05-18 07:58:49 +0900},
	doi = {10.1017/S1474748012000023},
	fjournal = {Journal of the Institute of Mathematics of Jussieu. JIMJ. Journal de l'Institut de Math\'{e}matiques de Jussieu},
	issn = {1474-7480,1475-3030},
	journal = {J. Inst. Math. Jussieu},
	mrclass = {11F80 (11R34)},
	mrnumber = {2979825},
	mrreviewer = {Fausto\ Jarqu\'{\i}n Z\'{a}rate},
	note = {With an appendix by Robert Guralnick, Florian Herzig, Richard Taylor and Thorne},
	number = {4},
	pages = {855--920},
	title = {On the automorphy of {$l$}-adic {G}alois representations with small residual image},
	url = {https://doi.org/10.1017/S1474748012000023},
	volume = {11},
	year = {2012},
	bdsk-url-1 = {https://doi.org/10.1017/S1474748012000023}}

@article{IA,
	author = {Taylor, Richard},
	date-added = {2024-05-18 07:58:49 +0900},
	date-modified = {2024-05-18 07:58:49 +0900},
	doi = {10.1007/s10240-008-0015-2},
	fjournal = {Publications Math\'{e}matiques. Institut de Hautes \'{E}tudes Scientifiques},
	issn = {0073-8301,1618-1913},
	journal = {Publ. Math. Inst. Hautes \'{E}tudes Sci.},
	mrclass = {11F80 (11G18 11R34)},
	mrnumber = {2470688},
	mrreviewer = {Mark\ Kisin},
	number = {108},
	pages = {183--239},
	title = {Automorphy for some {$l$}-adic lifts of automorphic mod {$l$} {G}alois representations. {II}},
	url = {https://doi.org/10.1007/s10240-008-0015-2},
	year = {2008},
	bdsk-url-1 = {https://doi.org/10.1007/s10240-008-0015-2}}

@article{GP,
	author = {Tadi\'{c}, Marko},
	date-added = {2024-05-18 07:58:49 +0900},
	date-modified = {2024-05-18 07:58:49 +0900},
	fjournal = {Annales Scientifiques de l'\'{E}cole Normale Sup\'{e}rieure. Quatri\`eme S\'{e}rie},
	issn = {0012-9593},
	journal = {Ann. Sci. \'{E}cole Norm. Sup. (4)},
	mrclass = {22E50},
	mrnumber = {870688},
	mrreviewer = {D.\ Mili\v{c}i\'{c}},
	number = {3},
	pages = {335--382},
	title = {Classification of unitary representations in irreducible representations of general linear group (non-{A}rchimedean case)},
	url = {http://www.numdam.org/item?id=ASENS_1986_4_19_3_335_0},
	volume = {19},
	year = {1986},
	bdsk-url-1 = {http://www.numdam.org/item?id=ASENS_1986_4_19_3_335_0}}

@article{CS,
	author = {Shin, Sug Woo},
	date-added = {2024-05-18 07:58:49 +0900},
	date-modified = {2024-05-18 07:58:49 +0900},
	doi = {10.4007/annals.2011.173.3.9},
	fjournal = {Annals of Mathematics. Second Series},
	issn = {0003-486X,1939-8980},
	journal = {Ann. of Math. (2)},
	mrclass = {11F80 (11G18)},
	mrnumber = {2800722},
	mrreviewer = {Peter\ Scholze},
	number = {3},
	pages = {1645--1741},
	title = {Galois representations arising from some compact {S}himura varieties},
	url = {https://doi.org/10.4007/annals.2011.173.3.9},
	volume = {173},
	year = {2011},
	bdsk-url-1 = {https://doi.org/10.4007/annals.2011.173.3.9}}

@article{mult,
	author = {Shalika, J. A.},
	date-added = {2024-05-18 07:58:49 +0900},
	date-modified = {2024-05-18 07:58:49 +0900},
	doi = {10.2307/1971071},
	fjournal = {Annals of Mathematics. Second Series},
	issn = {0003-486X},
	journal = {Ann. of Math. (2)},
	mrclass = {22E50},
	mrnumber = {348047},
	mrreviewer = {A.\ J.\ Coleman},
	pages = {171--193},
	title = {The multiplicity one theorem for {${\rm GL}\sb{n}$}},
	url = {https://doi.org/10.2307/1971071},
	volume = {100},
	year = {1974},
	bdsk-url-1 = {https://doi.org/10.2307/1971071}}

@article{Per,
	author = {Scholze, Peter},
	date-added = {2024-05-18 07:58:49 +0900},
	date-modified = {2024-05-18 07:58:49 +0900},
	doi = {10.1007/s10240-012-0042-x},
	fjournal = {Publications Math\'{e}matiques. Institut de Hautes \'{E}tudes Scientifiques},
	issn = {0073-8301,1618-1913},
	journal = {Publ. Math. Inst. Hautes \'{E}tudes Sci.},
	mrclass = {14G99},
	mrnumber = {3090258},
	mrreviewer = {Jean-Marc\ Fontaine},
	pages = {245--313},
	title = {Perfectoid spaces},
	url = {https://doi.org/10.1007/s10240-012-0042-x},
	volume = {116},
	year = {2012},
	bdsk-url-1 = {https://doi.org/10.1007/s10240-012-0042-x}}

@article{opta,
	author = {Qian, Lie},
	date-added = {2024-05-18 07:58:49 +0900},
	date-modified = {2024-05-18 07:58:49 +0900},
	doi = {10.1007/s00222-022-01161-6},
	fjournal = {Inventiones Mathematicae},
	issn = {0020-9910,1432-1297},
	journal = {Invent. Math.},
	mrclass = {11F80},
	mrnumber = {4549090},
	mrreviewer = {Shiang\ Tang},
	number = {3},
	pages = {1239--1275},
	title = {Potential automorphy for {$GL_n$}},
	url = {https://doi.org/10.1007/s00222-022-01161-6},
	volume = {231},
	year = {2023},
	bdsk-url-1 = {https://doi.org/10.1007/s00222-022-01161-6}}

@article{irr,
	author = {Patrikis, Stefan and Taylor, Richard},
	date-added = {2024-05-18 07:58:49 +0900},
	date-modified = {2024-05-18 07:58:49 +0900},
	doi = {10.1112/S0010437X14007519},
	fjournal = {Compositio Mathematica},
	issn = {0010-437X,1570-5846},
	journal = {Compos. Math.},
	mrclass = {11F80 (11R39)},
	mrnumber = {3314824},
	mrreviewer = {Fucheng\ Tan},
	number = {2},
	pages = {207--229},
	title = {Automorphy and irreducibility of some {$l$}-adic representations},
	url = {https://doi.org/10.1112/S0010437X14007519},
	volume = {151},
	year = {2015},
	bdsk-url-1 = {https://doi.org/10.1112/S0010437X14007519}}

@article{NTT,
	author = {Newton, James and Thorne, Jack A.},
	date-added = {2024-05-18 07:58:49 +0900},
	date-modified = {2024-05-18 07:58:49 +0900},
	doi = {10.1017/fms.2016.16},
	fjournal = {Forum of Mathematics. Sigma},
	issn = {2050-5094},
	journal = {Forum Math. Sigma},
	mrclass = {11F80 (11F75)},
	mrnumber = {3528275},
	mrreviewer = {Alan\ Koch},
	pages = {Paper No. e21, 88},
	title = {Torsion {G}alois representations over {CM} fields and {H}ecke algebras in the derived category},
	url = {https://doi.org/10.1017/fms.2016.16},
	volume = {4},
	year = {2016},
	bdsk-url-1 = {https://doi.org/10.1017/fms.2016.16}}

@article{ade,
	author = {Miagkov, Konstantin and Thorne, Jack A.},
	date-added = {2024-05-18 07:58:49 +0900},
	date-modified = {2024-05-18 07:58:49 +0900},
	doi = {10.1017/fms.2023.3},
	fjournal = {Forum of Mathematics. Sigma},
	issn = {2050-5094},
	journal = {Forum Math. Sigma},
	mrclass = {11F80 (11F70)},
	mrnumber = {4545201},
	mrreviewer = {Shiang\ Tang},
	pages = {Paper No. e8, 31},
	title = {Automorphy lifting with adequate image},
	url = {https://doi.org/10.1017/fms.2023.3},
	volume = {11},
	year = {2023},
	bdsk-url-1 = {https://doi.org/10.1017/fms.2023.3}}

@book{Mat,
	author = {Matsumura, Hideyuki},
	date-added = {2024-05-18 07:58:49 +0900},
	date-modified = {2024-05-18 07:58:49 +0900},
	isbn = {0-521-25916-9},
	mrclass = {13-01},
	mrnumber = {879273},
	mrreviewer = {W.\ V.\ Vasconcelos},
	note = {Translated from the Japanese by M. Reid},
	pages = {xiv+320},
	publisher = {Cambridge University Press, Cambridge},
	series = {Cambridge Studies in Advanced Mathematics},
	title = {Commutative ring theory},
	volume = {8},
	year = {1986}}

@article{Leo,
	author = {Khare, Chandrashekhar B. and Thorne, Jack A.},
	date-added = {2024-05-18 07:58:49 +0900},
	date-modified = {2024-05-18 07:58:49 +0900},
	doi = {10.1353/ajm.2017.0030},
	fjournal = {American Journal of Mathematics},
	issn = {0002-9327,1080-6377},
	journal = {Amer. J. Math.},
	mrclass = {11F80 (11F25 11F85 11R23 18E30)},
	mrnumber = {3702498},
	mrreviewer = {Matthew\ A.\ Papanikolas},
	number = {5},
	pages = {1205--1273},
	title = {Potential automorphy and the {L}eopoldt conjecture},
	url = {https://doi.org/10.1353/ajm.2017.0030},
	volume = {139},
	year = {2017},
	bdsk-url-1 = {https://doi.org/10.1353/ajm.2017.0030}}

@article{kos,
	archiveprefix = {arXiv},
	author = {Teruhisa Koshikawa},
	date-added = {2024-05-18 07:58:49 +0900},
	date-modified = {2024-05-18 07:58:49 +0900},
	eprint = {2106.10602},
	primaryclass = {math.NT},
	title = {On the generic part of the cohomology of local and global {S}himura varieties},
	year = {2021}}

@article{cp,
	author = {Katz, Nicholas M. and Messing, William},
	date-added = {2024-05-18 07:58:49 +0900},
	date-modified = {2024-05-18 07:58:49 +0900},
	doi = {10.1007/BF01405203},
	fjournal = {Inventiones Mathematicae},
	issn = {0020-9910,1432-1297},
	journal = {Invent. Math.},
	mrclass = {14G10},
	mrnumber = {332791},
	mrreviewer = {S.\ S.\ Shatz},
	pages = {73--77},
	title = {Some consequences of the {R}iemann hypothesis for varieties over finite fields},
	url = {https://doi.org/10.1007/BF01405203},
	volume = {23},
	year = {1974},
	bdsk-url-1 = {https://doi.org/10.1007/BF01405203}}

@article{psd,
	author = {Kisin, Mark},
	date-added = {2024-05-18 07:58:49 +0900},
	date-modified = {2024-05-18 07:58:49 +0900},
	doi = {10.1090/S0894-0347-07-00576-0},
	fjournal = {Journal of the American Mathematical Society},
	issn = {0894-0347,1088-6834},
	journal = {J. Amer. Math. Soc.},
	mrclass = {11S20 (11F80)},
	mrnumber = {2373358},
	mrreviewer = {Laurent\ N.\ Berger},
	number = {2},
	pages = {513--546},
	title = {Potentially semi-stable deformation rings},
	url = {https://doi.org/10.1090/S0894-0347-07-00576-0},
	volume = {21},
	year = {2008},
	bdsk-url-1 = {https://doi.org/10.1090/S0894-0347-07-00576-0}}

@article{sym,
	author = {Kim, Henry H.},
	date-added = {2024-05-18 07:58:49 +0900},
	date-modified = {2024-05-18 07:58:49 +0900},
	doi = {10.1090/S0894-0347-02-00410-1},
	fjournal = {Journal of the American Mathematical Society},
	issn = {0894-0347,1088-6834},
	journal = {J. Amer. Math. Soc.},
	mrclass = {11F70 (11R39 22E46)},
	mrnumber = {1937203},
	mrreviewer = {Mahdi\ Asgari},
	note = {With appendix 1 by Dinakar Ramakrishnan and appendix 2 by Kim and Peter Sarnak},
	number = {1},
	pages = {139--183},
	title = {Functoriality for the exterior square of {${\rm GL}_4$} and the symmetric fourth of {${\rm GL}_2$}},
	url = {https://doi.org/10.1090/S0894-0347-02-00410-1},
	volume = {16},
	year = {2003},
	bdsk-url-1 = {https://doi.org/10.1090/S0894-0347-02-00410-1}}

@book{JZ,
	author = {Jantzen, Jens Carsten},
	date-added = {2024-05-18 07:58:49 +0900},
	date-modified = {2024-05-18 07:58:49 +0900},
	edition = {Second},
	isbn = {0-8218-3527-0},
	mrclass = {20G05 (17B10)},
	mrnumber = {2015057},
	pages = {xiv+576},
	publisher = {American Mathematical Society, Providence, RI},
	series = {Mathematical Surveys and Monographs},
	title = {Representations of algebraic groups},
	volume = {107},
	year = {2003}}

@book{LLC,
	author = {Harris, Michael and Taylor, Richard},
	date-added = {2024-05-18 07:58:49 +0900},
	date-modified = {2024-05-18 07:58:49 +0900},
	isbn = {0-691-09090-4},
	mrclass = {11G18 (11F70 11S37 14G35 22E45)},
	mrnumber = {1876802},
	mrreviewer = {James\ Milne},
	note = {With an appendix by Vladimir G. Berkovich},
	pages = {viii+276},
	publisher = {Princeton University Press, Princeton, NJ},
	series = {Annals of Mathematics Studies},
	title = {The geometry and cohomology of some simple {S}himura varieties},
	volume = {151},
	year = {2001}}

@article{GH,
	author = {Harris, Michael and Lan, Kai-Wen and Taylor, Richard and Thorne, Jack},
	date-added = {2024-05-18 07:58:49 +0900},
	date-modified = {2024-05-18 07:58:49 +0900},
	doi = {10.1186/s40687-016-0078-5},
	fjournal = {Research in the Mathematical Sciences},
	issn = {2522-0144,2197-9847},
	journal = {Res. Math. Sci.},
	mrclass = {11G18 (11F75 14G35)},
	mrnumber = {3565594},
	mrreviewer = {Fausto\ Jarqu\'{\i}n Z\'{a}rate},
	pages = {Paper No. 37, 308},
	title = {On the rigid cohomology of certain {S}himura varieties},
	url = {https://doi.org/10.1186/s40687-016-0078-5},
	volume = {3},
	year = {2016},
	bdsk-url-1 = {https://doi.org/10.1186/s40687-016-0078-5}}

@incollection{TPT,
	author = {Harris, Michael},
	booktitle = {Algebra, arithmetic, and geometry: in honor of {Y}u. {I}. {M}anin. {V}ol. {II}},
	date-added = {2024-05-18 07:58:49 +0900},
	date-modified = {2024-05-18 07:58:49 +0900},
	doi = {10.1007/978-0-8176-4747-6\_1},
	isbn = {978-0-8176-4746-9},
	mrclass = {11F80 (11F70 11G05 22E55)},
	mrnumber = {2641185},
	mrreviewer = {Neil\ P.\ Dummigan},
	pages = {1--21},
	publisher = {Birkh\"{a}user Boston, Boston, MA},
	series = {Progr. Math.},
	title = {Potential automorphy of odd-dimensional symmetric powers of elliptic curves and applications},
	url = {https://doi.org/10.1007/978-0-8176-4747-6_1},
	volume = {270},
	year = {2009},
	bdsk-url-1 = {https://doi.org/10.1007/978-0-8176-4747-6_1},
	bdsk-url-2 = {https://doi.org/10.1007/978-0-8176-4747-6%5C_1}}

@article{GG,
	author = {Gee, Toby and Geraghty, David},
	date-added = {2024-05-18 07:58:49 +0900},
	date-modified = {2024-05-18 07:58:49 +0900},
	doi = {10.1215/00127094-1507376},
	fjournal = {Duke Mathematical Journal},
	issn = {0012-7094,1547-7398},
	journal = {Duke Math. J.},
	mrclass = {11F80 (11F33 11F55)},
	mrnumber = {2876931},
	mrreviewer = {James\ Newton},
	number = {2},
	pages = {247--303},
	title = {Companion forms for unitary and symplectic groups},
	url = {https://doi.org/10.1215/00127094-1507376},
	volume = {161},
	year = {2012},
	bdsk-url-1 = {https://doi.org/10.1215/00127094-1507376}}

@article{WC,
	author = {Deligne, Pierre},
	date-added = {2024-05-18 07:58:49 +0900},
	date-modified = {2024-05-18 07:58:49 +0900},
	fjournal = {Institut des Hautes \'{E}tudes Scientifiques. Publications Math\'{e}matiques},
	issn = {0073-8301,1618-1913},
	journal = {Inst. Hautes \'{E}tudes Sci. Publ. Math.},
	mrclass = {14G13 (10H10)},
	mrnumber = {601520},
	mrreviewer = {Spencer\ J.\ Bloch},
	number = {52},
	pages = {137--252},
	title = {La conjecture de {W}eil. {II}},
	url = {http://www.numdam.org/item?id=PMIHES_1980__52__137_0},
	year = {1980},
	bdsk-url-1 = {http://www.numdam.org/item?id=PMIHES_1980__52__137_0}}

@misc{RG,
	archiveprefix = {arXiv},
	author = {Lambert A'Campo},
	date-added = {2024-05-18 07:58:49 +0900},
	date-modified = {2024-05-18 07:58:49 +0900},
	eprint = {2202.14022},
	primaryclass = {math.NT},
	title = {Rigidity of Automorphic Galois Representations over {CM} Fields},
	year = {2023}}

@misc{MEI,
	archiveprefix = {arXiv},
	author = {Ana Caraiani and James Newton},
	date-added = {2024-05-18 07:58:49 +0900},
	date-modified = {2024-05-18 07:58:49 +0900},
	eprint = {2301.10509},
	primaryclass = {math.NT},
	title = {On the modularity of elliptic curves over imaginary quadratic fields},
	year = {2023}}

@incollection{purity,
	author = {Clozel, Laurent},
	booktitle = {Automorphic forms, {S}himura varieties, and {$L$}-functions, {V}ol. {I} ({A}nn {A}rbor, {MI}, 1988)},
	date-added = {2024-05-18 07:58:49 +0900},
	date-modified = {2024-05-18 07:58:49 +0900},
	isbn = {0-12-176651-9},
	mrclass = {11F70 (11F75 11F80 11R39 22E55)},
	mrnumber = {1044819},
	mrreviewer = {Ernst-Wilhelm\ Zink},
	pages = {77--159},
	publisher = {Academic Press, Boston, MA},
	series = {Perspect. Math.},
	title = {Motifs et formes automorphes: applications du principe de fonctorialit\'{e}},
	volume = {10},
	year = {1990}}

@article{CD,
	author = {Clozel, Laurent and Harris, Michael and Taylor, Richard},
	date-added = {2024-05-18 07:58:49 +0900},
	date-modified = {2024-05-18 07:58:49 +0900},
	doi = {10.1007/s10240-008-0016-1},
	fjournal = {Publications Math\'{e}matiques. Institut de Hautes \'{E}tudes Scientifiques},
	issn = {0073-8301,1618-1913},
	journal = {Publ. Math. Inst. Hautes \'{E}tudes Sci.},
	mrclass = {11F80 (11G18 11R34)},
	mrnumber = {2470687},
	mrreviewer = {Mark\ Kisin},
	note = {With Appendix A, summarizing unpublished work of Russ Mann, and Appendix B by Marie-France Vign\'{e}ras},
	number = {108},
	pages = {1--181},
	title = {Automorphy for some {$l$}-adic lifts of automorphic mod {$l$} {G}alois representations},
	url = {https://doi.org/10.1007/s10240-008-0016-1},
	year = {2008},
	bdsk-url-1 = {https://doi.org/10.1007/s10240-008-0016-1}}

@article{pWM,
	author = {Caraiani, Ana},
	date-added = {2024-05-18 07:58:49 +0900},
	date-modified = {2024-05-18 07:58:49 +0900},
	doi = {10.2140/ant.2014.8.1597},
	fjournal = {Algebra \& Number Theory},
	issn = {1937-0652,1944-7833},
	journal = {Algebra Number Theory},
	mrclass = {11F80 (11G18 11R39)},
	mrnumber = {3272276},
	mrreviewer = {Fausto\ Jarqu\'{\i}n Z\'{a}rate},
	number = {7},
	pages = {1597--1646},
	title = {Monodromy and local-global compatibility for {$l=p$}},
	url = {https://doi.org/10.2140/ant.2014.8.1597},
	volume = {8},
	year = {2014},
	bdsk-url-1 = {https://doi.org/10.2140/ant.2014.8.1597}}

@article{CM,
	author = {Caraiani, Ana},
	date-added = {2024-05-18 07:58:49 +0900},
	date-modified = {2024-05-18 07:58:49 +0900},
	doi = {10.1215/00127094-1723706},
	fjournal = {Duke Mathematical Journal},
	issn = {0012-7094,1547-7398},
	journal = {Duke Math. J.},
	mrclass = {22E57 (11F70 11F80 11R39 14G35)},
	mrnumber = {2972460},
	mrreviewer = {Kai-Wen\ Lan},
	number = {12},
	pages = {2311--2413},
	title = {Local-global compatibility and the action of monodromy on nearby cycles},
	url = {https://doi.org/10.1215/00127094-1723706},
	volume = {161},
	year = {2012},
	bdsk-url-1 = {https://doi.org/10.1215/00127094-1723706}}

@article{BS,
	author = {Borel, A. and Serre, J.-P.},
	date-added = {2024-05-18 07:58:49 +0900},
	date-modified = {2024-05-18 07:58:49 +0900},
	doi = {10.1007/BF02566134},
	fjournal = {Commentarii Mathematici Helvetici},
	issn = {0010-2571,1420-8946},
	journal = {Comment. Math. Helv.},
	mrclass = {22E40},
	mrnumber = {387495},
	mrreviewer = {M.\ S.\ Raghunathan},
	pages = {436--491},
	title = {Corners and arithmetic groups},
	url = {https://doi.org/10.1007/BF02566134},
	volume = {48},
	year = {1973},
	bdsk-url-1 = {https://doi.org/10.1007/BF02566134}}

@article{Calabi,
	author = {Barnet-Lamb, Tom and Geraghty, David and Harris, Michael and Taylor, Richard},
	date-added = {2024-05-18 07:58:49 +0900},
	date-modified = {2024-05-18 07:58:49 +0900},
	doi = {10.2977/PRIMS/31},
	fjournal = {Publications of the Research Institute for Mathematical Sciences},
	issn = {0034-5318,1663-4926},
	journal = {Publ. Res. Inst. Math. Sci.},
	mrclass = {11F80 (11F11 11G18 14J32)},
	mrnumber = {2827723},
	mrreviewer = {Neil\ P.\ Dummigan},
	number = {1},
	pages = {29--98},
	title = {A family of {C}alabi-{Y}au varieties and potential automorphy {II}},
	url = {https://doi.org/10.2977/PRIMS/31},
	volume = {47},
	year = {2011},
	bdsk-url-1 = {https://doi.org/10.2977/PRIMS/31}}

@article{CW,
	author = {Barnet-Lamb, Thomas and Gee, Toby and Geraghty, David and Taylor, Richard},
	date-added = {2024-05-18 07:58:49 +0900},
	date-modified = {2024-05-18 07:58:49 +0900},
	doi = {10.4007/annals.2014.179.2.3},
	fjournal = {Annals of Mathematics. Second Series},
	issn = {0003-486X,1939-8980},
	journal = {Ann. of Math. (2)},
	mrclass = {11F33},
	mrnumber = {3152941},
	mrreviewer = {Wen-Wei\ Li},
	number = {2},
	pages = {501--609},
	title = {Potential automorphy and change of weight},
	url = {https://doi.org/10.4007/annals.2014.179.2.3},
	volume = {179},
	year = {2014},
	bdsk-url-1 = {https://doi.org/10.4007/annals.2014.179.2.3}}

@article{STH,
	author = {Barnet- Lamb, Thomas and Gee, Toby and Geraghty, David},
	date-added = {2024-05-18 07:58:49 +0900},
	date-modified = {2024-05-18 07:58:49 +0900},
	doi = {10.1090/S0894-0347-2010-00689-3},
	fjournal = {Journal of the American Mathematical Society},
	issn = {0894-0347,1088-6834},
	journal = {J. Amer. Math. Soc.},
	mrclass = {11F33 (11F41 11F80)},
	mrnumber = {2748398},
	mrreviewer = {Neil\ P.\ Dummigan},
	number = {2},
	pages = {411--469},
	title = {The {S}ato-{T}ate conjecture for {H}ilbert modular forms},
	url = {https://doi.org/10.1090/S0894-0347-2010-00689-3},
	volume = {24},
	year = {2011},
	bdsk-url-1 = {https://doi.org/10.1090/S0894-0347-2010-00689-3}}

@misc{pw,
	archiveprefix = {arXiv},
	author = {George Boxer and Frank Calegari and Toby Gee and James Newton and Jack A. Thorne},
	date-added = {2024-05-18 07:58:49 +0900},
	date-modified = {2024-05-18 07:58:49 +0900},
	eprint = {2309.15880},
	primaryclass = {math.NT},
	title = {The {R}amanujan and {S}ato-{T}ate {C}onjectures for {B}ianchi modular forms},
	year = {2023}}

@article{AN,
	author = {Allen, Patrick B. and Newton, James},
	date-added = {2024-05-18 07:58:49 +0900},
	date-modified = {2024-05-18 07:58:49 +0900},
	fjournal = {Documenta Mathematica},
	issn = {1431-0635,1431-0643},
	journal = {Doc. Math.},
	mrclass = {11F80 (11R39)},
	mrnumber = {4213130},
	mrreviewer = {Andrea\ Conti},
	pages = {2487--2506},
	title = {Monodromy for some rank two {G}alois representations over {CM} fields},
	volume = {25},
	year = {2020}}

@book{AC,
	author = {Arthur, James and Clozel, Laurent},
	date-added = {2024-05-18 07:58:49 +0900},
	date-modified = {2024-05-18 07:58:49 +0900},
	isbn = {0-691-08517-X; 0-691-08518-8},
	mrclass = {22E55 (11F70 11F72 11R39)},
	mrnumber = {1007299},
	mrreviewer = {Stephen\ Gelbart},
	pages = {xiv+230},
	publisher = {Princeton University Press, Princeton, NJ},
	series = {Annals of Mathematics Studies},
	title = {Simple algebras, base change, and the advanced theory of the trace formula},
	volume = {120},
	year = {1989}}

@article{Allen,
	author = {Allen, Patrick B.},
	date-added = {2024-05-18 07:58:49 +0900},
	date-modified = {2024-05-18 07:58:49 +0900},
	doi = {10.1215/00127094-3477342},
	fjournal = {Duke Mathematical Journal},
	issn = {0012-7094,1547-7398},
	journal = {Duke Math. J.},
	mrclass = {11F80 (11F70 11R34)},
	mrnumber = {3546966},
	mrreviewer = {Ivan\ Mati\'{c}},
	number = {13},
	pages = {2407--2460},
	title = {Deformations of polarized automorphic {G}alois representations and adjoint {S}elmer groups},
	url = {https://doi.org/10.1215/00127094-3477342},
	volume = {165},
	year = {2016},
	bdsk-url-1 = {https://doi.org/10.1215/00127094-3477342}}

@article{SG,
	author = {Scholze, Peter},
	date-added = {2024-05-18 07:58:49 +0900},
	date-modified = {2024-05-18 07:58:49 +0900},
	doi = {10.4007/annals.2015.182.3.3},
	fjournal = {Annals of Mathematics. Second Series},
	issn = {0003-486X,1939-8980},
	journal = {Ann. of Math. (2)},
	mrclass = {11S37},
	mrnumber = {3418533},
	mrreviewer = {Kimball\ L.\ Martin},
	number = {3},
	pages = {945--1066},
	title = {On torsion in the cohomology of locally symmetric varieties},
	url = {https://doi.org/10.4007/annals.2015.182.3.3},
	volume = {182},
	year = {2015},
	bdsk-url-1 = {https://doi.org/10.4007/annals.2015.182.3.3}}

@article{10,
	author = {Allen, Patrick B. and Calegari, Frank and Caraiani, Ana and Gee, Toby and Helm, David and Le Hung, Bao V. and Newton, James and Scholze, Peter and Taylor, Richard and Thorne, Jack A.},
	date-added = {2024-05-18 07:58:49 +0900},
	date-modified = {2024-05-18 07:58:49 +0900},
	doi = {10.4007/annals.2023.197.3.2},
	fjournal = {Annals of Mathematics. Second Series},
	issn = {0003-486X,1939-8980},
	journal = {Ann. of Math. (2)},
	mrclass = {11F80 (11F55 11F75 11G18)},
	mrnumber = {4564261},
	number = {3},
	pages = {897--1113},
	rating = {0},
	read = {0},
	title = {Potential automorphy over {CM} fields},
	url = {https://doi.org/10.4007/annals.2023.197.3.2},
	volume = {197},
	year = {2023},
	bdsk-url-1 = {https://doi.org/10.4007/annals.2023.197.3.2}}

Graduate School of Mathematical Sciences, The University of Tokyo, 3-8-1 Komaba, Meguro-ku, Tokyo 153-8914, Japan.

Email address: soccerkjr2252@g.ecc.u-tokyo.ac.jp

\end{document}